\documentclass[11pt]{amsart}
\usepackage{amscd,amssymb,longtable, rotating, lscape, graphicx}
\usepackage[matrix,arrow,curve]{xy}
\usepackage{supertabular}
\usepackage{setspace}

\sloppy

\pagestyle {headings}

%\addtolength{\topmargin}{-70pt}%
%\addtolength{\oddsidemargin}{-1.1cm}%
%\addtolength{\evensidemargin}{-2.4cm}%
%\setlength{\textwidth}{6in} \setlength{\textheight}{9in}
%\textwidth=6.0in \textheight=9.0in
\oddsidemargin=0cm \evensidemargin=0cm \topmargin=-20pt
\textwidth=16cm \textheight=20cm

%\addtolength{\topmargin}{-70pt}%
%\addtolength{\oddsidemargin}{-1.6cm}%
%\addtolength{\evensidemargin}{-1.6cm}%
%\renewcommand{\baselinestretch}{1.2}

%\pagenumbering{arabic}

\pagenumbering{arabic}

\theoremstyle{definition}
\newtheorem{theorem}[equation]{Theorem}
\newtheorem*{theorem*}{Theorem}
\newtheorem{lemma}[equation]{Lemma}
\newtheorem{proposition}[equation]{Proposition}
\newtheorem{corollary}[equation]{Corollary}
\newtheorem{conjecture}[equation]{Conjecture}
\newtheorem{example}[equation]{Example}
\newtheorem{definition}[equation]{Definition}
\newtheorem*{definition*}{Definition}

\theoremstyle{remark}

%\makeatletter\@addtoreset{equation}{section}
%\makeatother
%\renewcommand{\thesection}{\thepart.\arabic{section}}
%\makeatother
%\renewcommand{\theequation}{\thesection.\arabic{equation}}

\makeatletter\@addtoreset{equation}{section}
\makeatletter\@addtoreset{section}{part}

\makeatother

%%%%%%%%%%%%%%%%%%%%%%%%

\def \P {\mathbb{P}}
\def \Q {\mathbb{Q}}

\def \eps {\varepsilon}

\def \qlineq {\sim_{\Q}}

%%%% mathrm %%%%%%%%%%%%
\def \Supp {\mathrm{Supp}\,}
\def \mult {\mathrm{mult}}

\def \lct {\mathrm{lct}}

%%%%%%%%%%%%%%%%%%%%%%%%

\def \le {\leqslant}
\def \geq {\geqslant}
\def \leq {\leqslant}

%%%%%% v pomosh' poligrafistu %%%%%%%%%%%%

\def\nlb {\nolinebreak}
%%%%%%%%%%%%%%%%%%%%%%%%%%%%%%%%%%%%%%%%%%

%%%%%%%%%%%%%%%%%%%%%%%%%%%%%%%%%%%%%%%%%%%%%%%%%%%%%%%%%%%%%

\author{Ivan Cheltsov, Jihun Park, Constantin Shramov}

\title{Exceptional del Pezzo hypersurfaces}

\begin{document}

\begin{abstract}
We compute global log canonical thresholds of a large class of
quasismooth well-formed del Pezzo weighted hypersurfaces in
$\mathbb{P}(a_{1},a_{2},a_{3},a_{4})$. As a corollary we obtain
the existence of orbifold K\"ahler--Einstein metrics on many of
them, and classify exceptional and weakly exceptional quasismooth
well-formed del Pezzo weighted hypersurfaces in
$\mathbb{P}(a_{1},a_{2},a_{3},a_{4})$.
\end{abstract}

\address{\emph{Ivan Cheltsov}\newline \textnormal{School of
Mathematics, The University of Edinburgh,
%Mayfield Road,
Edinburgh, EH9 3JZ, UK;
 \texttt{cheltsov@yahoo.com}}}
\address{ \emph{Jihun Park}\newline \textnormal{Department of
Mathematics, POSTECH, Pohang, Kyungbuk 790-784, Korea;
\texttt{wlog@postech.ac.kr}}}
\address{\emph{Constantin Shramov}\newline \textnormal{School of
Mathematics, The University of Edinburgh,
%Mayfield Road,
Edinburgh, EH9 3JZ, UK;
 \texttt{shramov@mccme.ru}}}
\maketitle

\tableofcontents

\part{Introduction} \label{section:intro}
All varieties are always assumed to be
complex, algebraic, projective and normal unless otherwise
stated.
\section{Background} \label{subsection:back}

The multiplicity of a nonzero polynomial
$f\in\mathbb{C}[z_1,\ldots, z_n]$ at a point $P\in \mathbb{C}^n$
is the nonnegative integer $m$ such that
$f\in\mathfrak{m}_P^m\setminus\mathfrak{m}_P^{m+1}$, where
$\mathfrak{m}_P$ is the maximal ideal of polynomials vanishing at
the point $P$ in $\mathbb{C}[z_1,\ldots, z_n]$. It can be also
defined by derivatives: the multiplicity of $f$ at the point $P$
is the number
\[\mult_P(f)=\min\left\{m \ \Big|\ \frac{\partial^m f}{\partial^{m_1}
z_1\partial^{m_2} z_2\ldots\partial^{m_n} z_n}(P)\ne0 \right\}.\]

On the other hand, we have a similar invariant that is defined by
integrations. This invariant, which is called the complex
singularity exponent of $f$ at the point $P$, is given by

\[c_P(f)=\mathrm{sup}\left\{c\ \Big|\ |f|^{-c}~\text{is locally}~L^2~\text{near the
 point $P\in\mathbb{C}^n$}\right\}.\]
 In algebraic geometry this invariant is usually called a log canonical threshold.
 Let $X$ be a variety with at most log canonical singularities, let $Z\subseteq X$ be a closed subvariety, and let
$D$ be an effective $\mathbb{Q}$-Cartier $\mathbb{Q}$-divisor on
the variety $X$. Then the number
$$
\mathrm{lct}_{Z}\big(X,D\big)=\mathrm{sup}\left\{\lambda\in\mathbb{Q}\
\Big|\ \text{the log pair}\
 \big(X, \lambda D\big)\ \text{is log canonical along}~Z\right\}%
$$
is called a log canonical threshold of the divisor $D$ along $Z$.
It follows from \cite{Ko97} that for a polynomial $f$ in $n$
variables over $\mathbb{C}$ and a point $P\in \mathbb{C}^n$
$$
\mathrm{lct}_{P}\Big(\mathbb{C}^n,D\Big)=c_P\big(f\big),
$$
where the divisor $D$ is defined by the equation $f=0$ on $\mathbb{C}^n$.
We can define the log canonical threshold of $D$ on
$X$ by
\[
\begin{split}
\mathrm{lct}_{X}\big(X,D\big)&=\mathrm{inf}\left\{\mathrm{lct}_P\big(X,D\big)\
\Big\vert\ P\in
 X\right\}\\ &=\mathrm{sup}\left\{\lambda\in\mathbb{Q}\ \Big|\ \text{the log pair}\ \big(X, \lambda
 D\big)\
 \text{is log canonical}\right\}.
\end{split}\]
For simplicity, the log canonical threshold
 $\mathrm{lct}_X(X,D)$ will be denoted by $\mathrm{lct}(X,D)$.

%It is hard to calculate log canonical thresholds in general. However for some cases
%there are easy ways to calculate them.
%\begin{example}
%\label{example:Igusa} Let $f$ be a polynomial in
%$\mathbb{C}[z_1,z_2]$. Suppose that the polynomial defines an
%irreducible curve passing through the origin $O$ in
%$\mathbb{C}^{2}$. We then have
%$$
%c_{O}(f)=\mathrm{min}\left(1,\frac{1}{m}+\frac{1}{n}\right),%
%$$
%where $(m,n)$ is the first pair of Puiseux exponents of $f$ (see
%\cite{Ku99}). In particular, we have
%$$
%c_{O}\left(z_{1}^{n_{1}}z_{2}^{n_{2}}\Big(z_{1}^{km_{1}}+z_{2}^{km_{2}}\Big)\right)
%=\mathrm{min}\left(\frac{1}{n_{1}},\frac{1}{n_{2}},\frac{\frac{1}{m_{1}}+\frac{1}{m_{2}}}{k+\frac{n_{1}}{m_{1}}+\frac{n_{2}}{m_{2}}}\right),%
%$$
%where $n_{1}$, $n_{2}$, $m_{1}$, $m_{2}$, $k$ are non-negative
%integers.
%\end{example}

%\begin{example}
%\label{remark:sum-of-powers} Let $m_{1},\ldots,m_{n}$ be positive
%integers. Then
%$$
%\mathrm{min}\left(1,\ \sum_{i=1}^{n} \frac{1}{m_{i}}\right)
%=c_{O}\left(\sum_{i=1}^{n}z_{i}^{m_{i}}\right)
%\geqslant c_{O}\left(\prod_{i=1}^{n}z_{i}^{m_{i}}\right)
%=\mathrm{min}\left(\frac{1}{m_{1}},\frac{1}{m_{2}},\ldots,\frac{1}{m_{n}}\right).%
%$$
%\end{example}

\begin{example}
\label{example:cubics} Let $D$ be a cubic curve on the projective plane $\mathbb{P}^2$.  Then
$$
\mathrm{lct}\big(\mathbb{P}^2,D\big)=\left\{%
\aligned
&1\ \ \text{if}\ D\ \text{is a smooth curve},\\%
&1\ \ \text{if}\ D\ \text{is a curve with ordinary double points},\\%
&\frac{5}{6}\ \ \text{if}\ D\ \text{is a curve with one cuspidal point},\\%
&\frac{3}{4}\ \ \text{if}\ D\ \text{consists of a conic and a line that are tangent},\\%
&\frac{2}{3}\ \ \text{if}\ D\ \text{consists of three lines intersecting at one point},\\%
&\frac{1}{2}\ \ \text{if}\ \mathrm{Supp}\big(D\big)\ \text{consists of two lines},\\%
&\frac{1}{3}\ \ \text{if}\ \mathrm{Supp}\big(D\big)\ \text{consists of one line}.\\%
\endaligned\right.%
$$
\end{example}

Now we suppose that $X$ is a Fano variety with at most log
terminal singularities.

\begin{definition}
\label{definition:threshold} The global log canonical threshold of
the Fano variety $X$ is the number
$$
\mathrm{lct}\big(X\big)
=\mathrm{inf}\left\{\mathrm{lct}\big(X,D\big)\ \Big\vert\ D\
\text{is an effective $\mathbb{Q}$-divisor on $X$ with}\ D\
\sim_{\mathbb{Q}} -K_{X}\right\}.%
$$
\end{definition}
The number $\mathrm{lct}(X)$ is an algebraic counterpart of the
$\alpha$-invariant introduced in \cite{Ti87} and \cite{TiYa87}
(see \cite[Appendix~A]{ChSh08c}). Because $X$ is rationally
connected (see \cite{Zh06}), we have
$$
\mathrm{lct}\big(X\big)=\mathrm{sup}\left\{\lambda\in\mathbb{Q}\ \left|\ %
\aligned
&\text{the log pair}\ \Big(X, \lambda D\Big)\ \text{is log canonical for every }\\
&\text{effective $\mathbb{Q}$-divisor numerically equivalent to $-K_{X}$}\\
\endaligned\right.\right\}.%%
$$
It immediately follows from Definition~\ref{definition:threshold}
that
$$
\mathrm{lct}\big(X\big)=\mathrm{sup}\left\{\eps\in\mathbb{Q}\ \left|\ %
\aligned
&\text{the log pair}\ \left(X, \frac{\eps}{n}D\right)\ \text{is log canonical for every }\\
&\text{divisor}\ D\in\big|-nK_{X}\big|\ \text{and every positive integer }\ n\\
\endaligned\right.\right\}.%
$$

\begin{example}[\cite{ChSh08c}]
\label{example:WPS} Suppose that
$\mathbb{P}(a_{0},a_{1},\ldots,a_{n})$ is a well-formed weighted
projective space with $a_0\leqslant a_1\leqslant\ldots \leqslant
a_n$ (see \cite{IF00}). Then
$$
\mathrm{lct}\Big(\mathbb{P}\big(a_{0},a_{1},\ldots,a_{n}\big)\Big)=\frac{a_{0}}{\sum_{i=0}^{n}a_{i}}.%
$$
\end{example}

\begin{example}
\label{example:Cheltsov-Park} Let $X$ be a smooth hypersurface in
$\mathbb{P}^{n}$ of degree $m\leqslant n$. The paper \cite{Ch01b}
shows that
$$
\mathrm{lct}\big(X\big)=\frac{1}{n+1-m}%
$$
if $m<n$. For the case $m=n\geqslant  2$ it also shows that
$$1-\frac{1}{n}\leqslant \mathrm{lct}(X)\leqslant 1$$
and the left equality holds if $X$ contains a cone of
dimension $n-2$. Meanwhile, the papers \cite{ChPaWo} and
\cite{Pu04d} show that
$$
1\geqslant\mathrm{lct}\big(X\big)\geqslant \left\{%
\aligned
&1\ \text{if}\ n\geqslant  6,\\%
&\frac{22}{25}\ \text{if}\ n=5,\\%
&\frac{16}{21}\ \text{if}\ n=4,\\%
&\frac{3}{4}\ \text{if}\ n=3,\\%
\endaligned\right.%
$$
if $X$ is general.
\end{example}

\begin{example}
\label{example:double-cover} Let $X$ be a smooth hypersurface in
the weighted projective space $\mathbb{P}(1^{n+1},d)$ of degree
$2d\geqslant 4$. Then
$$
\mathrm{lct}\big(X\big)=\frac{1}{n+1-d}%
$$
in the case when $d<n$ (see \cite[Proposition~20]{Ch08a}). Suppose
that $d=n$. Then the inequalities
$$
 \frac{2n-1}{2n}\leqslant \mathrm{lct}\big(X\big)\leqslant 1 %
$$
hold (see \cite{ChPaWo}). But $\mathrm{lct}(X)=1$ if $X$ is
general and $n\geqslant  3$.  Furthermore for the case $n=3$ the
papers \cite{ChPaWo} and  \cite{Pu04d} prove that
$$
\mathrm{lct}\big(X\big)\in\left\{\frac{5}{6},\frac{43}{50},\frac{13}{15},\frac{33}{38},\frac{7}{8},\frac{33}{38},\frac{8}{9}, \frac{9}{10},\frac{11}{12},\frac{13}{14},\frac{15}{16},\frac{17}{18},\frac{19}{20},\frac{21}{22},\frac{29}{30},1\right\}%
$$
and all these values are attained. For instance, if the
hypersurface $X$ is given by
$$
w^{2}=x^6+y^6+z^6+t^6+x^2y^2zt\subset\mathbb{P}\big(1,1,1,1,3\big)\cong\mathrm{Proj}\Big(\mathbb{C}\big[x,y,z,t,w\big]\Big),
$$
where
$\mathrm{wt}(x)=\mathrm{wt}(y)=\mathrm{wt}(z)=\mathrm{wt}(t)=1$
and $\mathrm{wt}(w)=3$, then $\mathrm{lct}(X)=1$ (see
\cite{ChPaWo}).
\end{example}

\begin{example}[\cite{Hw06b}]
\label{example:Hwang} Let $X$ be a rational homogeneous space such
that the Picard group of $X$ is generated by an ample Cartier divisor $D$ and  $-K_{X}\sim rD$ for some positive integer $r$.
Then $\mathrm{lct}(X)=\frac{1}{r}$.
\end{example}

\begin{example}
\label{example:IHES} Let $X$ be a quasismooth well-formed (see
\cite{IF00}) hypersurface in
$\mathbb{P}(1,a_{1},a_{2},a_{3},a_{4})$ of
degree~$\sum_{i=1}^{4}a_{i}$ with at most terminal singularities,
where $a_{1}\nlb\leqslant\nlb\ldots\nlb\leqslant\nlb a_{4}$. Then
there are exactly $95$ possibilities for the quadruple
$(a_{1},a_{2},a_{3},a_{4})$ (see \cite{IF00}, \cite{JoKo01}). For
a general hypersurface $X$, it follows from
    \cite{Ch07a},~\cite{Ch08},~\cite{Ch08d} and ~\cite{ChPaWo}
    that
$$
1\geqslant \mathrm{lct}\big(X\big)\geqslant \left\{%
\aligned
&\frac{16}{21}\ \ \ \text{if}\ a_{1}=a_{2}=a_{3}=a_{4}=1,\\%
&\frac{7}{9}\ \ \ \text{if}\ (a_{1},a_{2},a_{3},a_{4})=(1,1,1,2),\\%
&\frac{4}{5}\ \ \ \text{if}\ (a_{1},a_{2},a_{3},a_{4})=(1,1,2,2),\\%
&\frac{6}{7}\ \ \ \text{if}\ (a_{1},a_{2},a_{3},a_{4})=(1,1,2,3),\\%
&1\ \ \ \text{otherwise}. \\%
\endaligned\right.%
$$
The global log canonical threshold of the hypersurface
$$
w^{2}=t^{3}+z^{9}+y^{18}+x^{18}\subset\mathbb{P}\big(1,1,2,6,9\big)\cong\mathrm{Proj}\Big(\mathbb{C}\big[x,y,z,t,w\big]\Big)
$$
is equal to $\frac{17}{18}$, where
$\mathrm{wt}(x)=\mathrm{wt}(y)=1$, $\mathrm{wt}(z)=2$,
$\mathrm{wt}(t)=6$, $\mathrm{wt}(w)=9$  (see  \cite{Ch07a}).%
\end{example}

\begin{example}[\cite{Ch07c}]
\label{example:singular-cubics} Let $X$ be a singular cubic
surface in $\mathbb{P}^{3}$ with at most canonical
singularities. The possible singularities of $X$ are listed in
\cite{BW79}. The global log canonical threshold of $X$ is as follows:
$$
\mathrm{lct}\big(X\big)=\left\{%
\aligned
&\frac{2}{3}\ \ \ \text{if}\ \mathrm{Sing}\big(X\big)=\big\{\mathbb{A}_{1}\big\},\\%
&\frac{1}{3}\ \ \ \text{if}\ \mathrm{Sing}\big(X\big)\supseteq\big\{\mathbb{A}_{4}\big\},\
 \mathrm{Sing}\big(X\big)=\big\{\mathbb{D}_{4}\big\}\
\text{or}\ \mathrm{Sing}\big(X\big)\supseteq\big\{\mathbb{A}_{2},\mathbb{A}_{2}\big\},\\%
&\frac{1}{4}\ \ \ \text{if}\ \mathrm{Sing}\big(X\big)\supseteq\big\{\mathbb{A}_{5}\big\}\ \text{or}\ \mathrm{Sing}\big(X\big)=\big\{\mathbb{D}_{5}\big\},\\%
&\frac{1}{6}\ \ \ \text{if}\ \mathrm{Sing}\big(X\big)=\big\{\mathbb{E}_{6}\big\},\\%
&\frac{1}{2}\ \ \ \text{otherwise}.\\%
\endaligned\right.%
$$
\end{example}
So far we have not seen any single variety whose global log
canonical threshold is irrational. In general, it is unknown
whether global log canonical thresholds are rational numbers or
not(cf. Question~1 in \cite{Ti90b}). Even for  del Pezzo surfaces with log terminal
singularities the rationality of their global log canonical
thresholds is unknown.  However,
we expect more than this as follows:

\begin{conjecture}
\label{conjecture:stabilization} There is an effective
$\mathbb{Q}$-divisor $D$ on the variety $X$ such that it is $\mathbb{Q}$-linearly equivalent to $-K_X$  and
$$
\mathrm{lct}\big(X\big)=\mathrm{lct}\big(X,D\big).%
$$
\end{conjecture}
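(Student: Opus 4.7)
The plan is to attack Conjecture~\ref{conjecture:stabilization} by extracting a limit from a minimising sequence. Pick effective $\mathbb{Q}$-divisors $D_{i}\sim_{\mathbb{Q}}-K_{X}$ with $\mathrm{lct}(X,D_{i})\to \mathrm{lct}(X)$ and, after clearing denominators, write $D_{i}=\frac{1}{n_{i}}E_{i}$ with $E_{i}\in|-n_{i}K_{X}|$ and positive integers $n_{i}$.

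The easy case is when the $n_{i}$ are bounded. Then the $E_{i}$ lie in finitely many of the finite-dimensional projective linear systems $|-nK_{X}|$, so after passing to a subsequence they converge to some $E\in|-nK_{X}|$. Lower semicontinuity of log canonical thresholds in flat families (equivalently, openness of log canonicity) produces $\mathrm{lct}(X,\frac{1}{n}E)\leq \lim \mathrm{lct}(X,D_{i})=\mathrm{lct}(X)$, while the reverse inequality is automatic from Definition~\ref{definition:threshold}; hence $D=\frac{1}{n}E$ realises the infimum.

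The genuine difficulty, and the reason the statement remains a conjecture, is the case $n_{i}\to\infty$. The natural framework is to pass to valuations and rewrite
$$
\mathrm{lct}(X)=\inf_{v}\frac{A_{X}(v)}{T_{X}(v)},
$$
where $v$ ranges over divisorial valuations on $X$, $A_{X}(v)$ is the log discrepancy, and $T_{X}(v)=\sup\{v(D):D\sim_{\mathbb{Q}}-K_{X}\}$. To conclude one would need (i) that the infimum on the right-hand side is realised by some distinguished valuation $v_{0}$, and (ii) that the supremum $T_{X}(v_{0})$ is in turn computed by an honest effective $\mathbb{Q}$-divisor $D_{0}\sim_{\mathbb{Q}}-K_{X}$, which is a finite-generation statement for the graded sequence of ideals associated with $v_{0}$. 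Granting both, the divisor $D=(A_{X}(v_{0})/T_{X}(v_{0}))^{-1}D_{0}$ would compute $\mathrm{lct}(X)$ exactly.

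The main obstacle is the combination of (i) and (ii): without a bound on the complexity of a minimising valuation, or equivalently on the denominators $n_{i}$, a minimising sequence can drift off in any reasonable moduli space of divisors, and neither the existence of a minimising valuation nor the finite generation of its graded algebra is available in the generality of all Fano varieties with log terminal singularities. In the absence of such a general mechanism, the pragmatic path, taken in the rest of this paper, is to prove the conjecture one family at a time by classifying the divisors which could realise small log canonical thresholds on a given $X$ and exhibiting one of them explicitly.
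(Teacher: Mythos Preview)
The paper does not prove this statement; it is explicitly labeled a \emph{conjecture} and is never established in the generality stated. No proof is given or attempted in the paper, so there is nothing to compare your argument against at the level of a proof.

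Your write-up is accordingly not a proof either, and you are aware of this: you correctly isolate the obstruction (unbounded denominators $n_i$, equivalently the lack of a minimising valuation together with finite generation of its graded algebra) and you explicitly flag that these ingredients are unavailable in general. What you have produced is a heuristic programme plus an honest account of the gap, not an argument that closes it. Your final paragraph is exactly right about what the paper actually does: rather than attacking the conjecture abstractly, Theorem~\ref{theorem:main} verifies it for the specific del Pezzo hypersurfaces under consideration by exhibiting, in each case, an explicit divisor (built from the coordinate hyperplane sections $C_x$, $C_y$, $C_z$) that computes $\mathrm{lct}(X)$.

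If this was meant to be submitted as a proof of the conjecture, it is not one, and the gap you yourself name in steps (i) and (ii) is genuine. If it was meant as commentary on why the conjecture is open and how the paper sidesteps it, then it is accurate and well observed.
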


The following definition is due to \cite{Sho00} (cf.
\cite{IshiiPr01}, \cite{Kud02}, \cite{MarPr99},  \cite{Pr01}).

\begin{definition}
\label{definition:exceptional-del-Pezzo}  The Fano variety $X$ is
exceptional (resp. weakly exceptional, strongly exceptional) if
for every effective $\mathbb{Q}$-divisor $D$ on the variety $X$
such that $D\sim_{\mathbb{Q}}-K_X$ and the pair $(X,D)$ is log
terminal (resp. $\lct(X)\geqslant 1$, $\lct(X)>1$).
\end{definition}
It is easy to see the implications
\[\text{strongly exceptional} \ \ \Longrightarrow \ \ \text{exceptional}
\Longrightarrow \ \ \text{weakly exceptional}.\] However, if
Conjecture~\ref{conjecture:stabilization} holds for
    $X$, then we see that $X$ is exceptional if and only if $X$ is strongly
    exceptional.
Exceptional del Pezzo surfaces, which are called del Pezzo
surfaces without tigers~in~\cite{KeMa99}, lie in finitely many
families (see \cite{Sho00}, \cite{Pr01}). We expect that strongly
exceptional Fano varieties with quotient singularities enjoy very
interesting geometrical properties (cf. \cite[Theorem~3.3]{Rub08},
\cite[Theorem~1]{PhSeSt07}).

The main motivation for this article is that the global log
canonical threshold turns out to play important roles both in
birational geometry and in complex geometry. We have two
significant applications of the global log canonical threshold of
a Fano variety $X$. The first one is for the case when
$\lct(X)\geqslant 1$. This inequality has serious applications to
rationality problems for Fano varieties in birational geometry.
The other is for the case when
$\lct(X)>\frac{\dim(X)}{1+\dim(X)}$. This has important
applications to K\"ahler-Einstein metrics on Fano varieties in
complex geometry.

For a simple application of the first inequality, we can mention
the following.
\begin{theorem}[\cite{Ch07a} and \cite{Pu04d}]\label{theorem:prodcut-of-Fano}
Let $X_i$ be birationally super-rigid Fano variety with
$\lct(X_i)\geqslant 1$ for each $i=1, \ldots, r$. Then the variety
$X_{1}\times\ldots\times X_{r}$ is non-rational and
$$
\mathrm{Bir}\Big(X_{1}\times\ldots\times X_{r}\Big)= \mathrm{Aut}\Big(X_{1}\times\ldots\times X_{r}\Big).%
$$%
For every dominant map $\rho\colon
    X_{1}\times\ldots\times X_{r}\dasharrow Y$ whose
    general fiber is rationally connected, there is a
    subset $\{i_{1},\ldots,i_{k}\}\subseteq\{1,\ldots,r\}$
    and a commutative diagram
$$
\xymatrix{ X_{1}\times\ldots\times
X_{r}\ar@{->}[d]_{\pi}\ar@{-->}[rr]^{\sigma}&&X_{1}\times\ldots\times
 X_{r}\ar@{-->}[rrd]^{\rho}\\
X_{i_{1}}\times\ldots\times X_{i_{k}}\ar@{-->}[rrrr]_{\xi}&&&&Y,}%
$$
where $\xi$ and $\sigma$  are birational maps, and $\pi$
is the natural projection.%
\end{theorem}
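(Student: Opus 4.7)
The plan is to extend the Noether--Fano method for birationally super-rigid Fano varieties to products, following \cite{Pu04d} and \cite{Ch07a}. All three conclusions are handled by induction on $r$, with the base case $r=1$ immediate from birational super-rigidity of $X_1$, which implies both non-rationality and $\Bir(X_1)=\Aut(X_1)$. Given a dominant $\rho\colon X=X_1\times\cdots\times X_r\dashrightarrow Y$ with rationally connected general fibre, I pull back a very ample linear system on $Y$ (after resolving indeterminacies) to obtain a mobile linear system $\mathcal{M}$ on $X$. Birational super-rigidity of each factor gives $\Pic(X_i)=\mathbb{Z}\cdot(-K_{X_i})$, and a K\"unneth-type decomposition of $\Pic(X)$ modulo torsion yields
\[
\mathcal{M}\sim_{\mathbb{Q}}\sum_{i=1}^{r}n_i\,p_i^{*}(-K_{X_i}),\qquad n_i\in\mathbb{Z}_{\ge 0}.
\]
I set $I=\{i\mid n_i>0\}=\{i_1,\ldots,i_k\}$ and consider the projection $\pi_I\colon X\to X_I:=\prod_{i\in I}X_i$.

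The factorization through $\pi_I$ is a multidegree argument. For a general fibre $F\cong\prod_{i\notin I}X_i$ of $\pi_I$ and a general $D\in\mathcal{M}$, the divisor $D\cap F$ has numerical class zero on $F$ and hence vanishes as an effective divisor; this forces every component of $D$ to be a fibre divisor of $\pi_I$, so $\mathcal{M}=\pi_I^{*}\mathcal{M}'$ for a mobile system $\mathcal{M}'$ on $X_I$, and $\rho=\xi\circ\pi_I$ with $\xi$ the rational map defined by $\mathcal{M}'$ and $\sigma=\mathrm{id}_X$. To see that $\xi$ is birational I observe that a general fibre of $\rho$ equals $\pi_I^{-1}(\xi^{-1}(y))$, a disjoint union of $|\xi^{-1}(y)|$ copies of the rationally connected variety $\prod_{i\notin I}X_i$; rational connectedness of $\rho^{-1}(y)$ forces connectedness, so $|\xi^{-1}(y)|=1$.

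The identity $\Bir(X)=\Aut(X)$ is then obtained by applying the already-proven factorization to each composition $p_j\circ\phi\colon X\dashrightarrow X_j$, $\phi\in\Bir(X)$, whose general fibre $\cong\prod_{i\ne j}X_i$ is rationally connected. One obtains a subset $I_j$ and a birational map $\xi_j\colon X_{I_j}\dashrightarrow X_j$; since every $X_i$ has positive dimension, the dimension equation $\dim X_{I_j}=\dim X_j$ forces $|I_j|=1$, say $I_j=\{\tau(j)\}$. Birational super-rigidity of $X_{\tau(j)}$ promotes $\xi_j$ to an honest isomorphism, and $\tau$ is a permutation preserving isomorphism classes among the $X_i$; the map $\phi$ is then determined by these isomorphisms and the permutation and is therefore biregular. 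For non-rationality I induct on $r$: if $X$ were rational, a birational $X\dashrightarrow\mathbb{P}^{\dim X}$ composed with a generic linear projection $\mathbb{P}^{\dim X}\dashrightarrow\mathbb{P}^{\dim X-1}$ produces a dominant map with $\mathbb{P}^1$-fibres; the factorization then yields a birational $\xi'\colon X_{I'}\dashrightarrow\mathbb{P}^{\dim X-1}$ with $|I'|<r$, contradicting the inductive hypothesis applied to $X_{I'}$.

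I expect the genuine obstacle to be the use of birational super-rigidity in the $\Bir=\Aut$ step: promoting $\xi_j$ from a birational map between super-rigid Fanos to an honest isomorphism is where the full Noether--Fano machinery is required, and it is precisely here that the hypothesis $\lct(X_i)\ge 1$ enters, as the clean form of the canonical-threshold bound needed to exclude maximal singularities of the proper-transform mobile system. A subsidiary issue, already in the first step, is that the vanishing of the multidegree on a general fibre of $\pi_I$ must be converted into an honest factorization of $\mathcal{M}$; this uses crucially that $\mathcal{M}$ is mobile and that the Picard class group of each factor is torsion-free of rank one.
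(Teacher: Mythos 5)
The paper does not actually prove this statement; it is imported from \cite{Ch07a} and \cite{Pu04d}. Measured against those proofs, your proposal has a fatal gap at its central step, namely the claim that $\xi$ is birational. Your K\"unneth decomposition and the factorization $\mathcal{M}=\pi_I^{*}\mathcal{M}'$, $\rho=\xi\circ\pi_I$ are fine (indeed the projection formula $H^0(X,\pi_I^{*}L')=H^0(X_I,L')$ gives the descent of the whole linear system at once). But you then treat $\xi^{-1}(y)$ as a finite set and invoke connectedness of the fibre of $\rho$. Nothing you have said rules out the possibility that $\xi\colon X_I\dashrightarrow Y$ has positive-dimensional general fibre: each $X_i$ is rationally connected (it is a Fano variety with log terminal singularities), so the general fibre of $\rho$, which is birational to $\big(\text{general fibre of }\xi\big)\times\prod_{i\notin I}X_i$, is rationally connected as soon as the general fibre of $\xi$ is, and rational connectedness yields no contradiction. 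Concretely, your argument does not exclude a pencil $\mathcal{M}\subset|n_1H_1+n_2H_2|$ with $n_1,n_2>0$ on $X_1\times X_2$ defining a rationally connected fibration over $\mathbb{P}^1$; excluding precisely this is the entire content of the theorem.

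The missing ingredient is the Noether--Fano half of the argument, and it is where both hypotheses are actually consumed. One must prove that for $n=\min_{i\in I}n_i$ the pair $(X,\frac{1}{n}\mathcal{M})$ has canonical singularities; in \cite{Pu04d} and \cite{Ch07a} this is done by induction on the number of factors using inversion of adjunction on fibres of the projections, and it is exactly here that birational super-rigidity (canonicity of mobile pairs on each factor) and $\lct(X_i)\geqslant 1$ (log canonicity of $(X_i,\frac{1}{n}D)$ for every effective $D\sim_{\mathbb{Q}}-nK_{X_i}$, needed because the restriction of $\mathcal{M}$ to a fibre of a projection is in general no longer mobile) both enter. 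Once canonicity is known, $K_W+\frac{1}{n}\mathcal{M}_W$ is pseudo-effective on a resolution $W$ of $\xi$, and pairing it with the covering family of $K$-negative rational curves in the fibres of the resolved $\xi$ (on which $\mathcal{M}_W$ is trivial) forces $\xi$ to be generically finite; only then does your connectedness remark apply and make $\xi$ birational. Your concluding paragraph locates the difficulty in promoting $\xi_j$ to an isomorphism in the $\Bir=\Aut$ step; that step is comparatively routine, the hypothesis $\lct(X_i)\geqslant 1$ plays no role there, and incidentally $|I_j|=1$ does not follow from the dimension count alone when the $\dim X_i$ are not pairwise distinct.
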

This theorem may be more generalized so that we could obtain the following
\begin{example}[\cite{Ch07a}]
\label{example:CPR} Let $X_{i}$ be a threefold satisfying
hypotheses of Example~\ref{example:IHES} with $\lct(X_i)=1$ for
each $i=1,\ldots r$. Suppose, in addition, that each $X_{i}$ is
general in its deformation family. Then the variety
$X_{1}\times\ldots\times X_{r}$ is non-rational and
$$
\mathrm{Bir}\Big(X_{1}\times\ldots\times X_{r}\Big)=\Big<\prod_{i=1}^{r}\mathrm{Bir}(X_{i}),\ \mathrm{Aut}\Big(X_{1}\times\ldots\times X_{r}\Big)\Big>.%
$$%
For every dominant map $\rho\colon
    X_{1}\times\ldots\times X_{r}\dasharrow Y$ whose
    general fiber is rationally connected, there is a
    subset $\{i_{1},\ldots,i_{k}\}\subseteq\{1,\ldots,r\}$
    and a commutative diagram
$$
\xymatrix{ X_{1}\times\ldots\times
X_{r}\ar@{->}[d]_{\pi}\ar@{-->}[rr]^{\sigma}&&X_{1}\times\ldots\times
 X_{r}\ar@{-->}[rrd]^{\rho}\\
X_{i_{1}}\times\ldots\times X_{i_{k}}\ar@{-->}[rrrr]_{\xi}&&&&Y,}%
$$
where $\xi$ and $\sigma$  are birational maps, and $\pi$
is the  natural projection.%

\end{example}

The following result that gives strong connection between global
log canonical thresholds and K\"ahler-Einstein metrics was proved
in \cite{DeKo01}, \cite{Na90},\cite{Ti87} (see
\cite[Appendix~A]{ChSh08c}).

\begin{theorem}
\label{theorem:KE} Suppose that $X$ is a Fano variety with at most
quotient singularities. Then it admits an orbifold
K\"ahler--Einstein metric if
$$
\mathrm{lct}\big(X\big)>\frac{\mathrm{dim}\big(X\big)}{\mathrm{dim}\big(X\big)+1}.%
$$
\end{theorem}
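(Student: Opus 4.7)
The plan is to follow the Aubin--Yau continuity method, adapted to the Fano orbifold setting as in \cite{DeKo01}. The first step is to replace the algebraic invariant $\lct(X)$ by Tian's analytic $\alpha$-invariant $\alpha(X)$ of $-K_X$, defined as the supremum of $\alpha>0$ such that for every smooth Kähler form $\omega\in c_{1}(X)$ and every $\omega$-plurisubharmonic function $\varphi$ with $\sup_{X}\varphi=0$ one has a uniform bound $\int_{X}e^{-\alpha\varphi}\omega^{n}\leqslant C_{\alpha}$, where integrals are taken in the orbifold sense. The identification $\alpha(X)=\lct(X)$ is precisely the Demailly--Kollár dictionary between log canonical thresholds and integrability exponents of plurisubharmonic singularities, and is proved in the references cited in \cite[Appendix A]{ChSh08c}; it works in the orbifold category because quotient singularities are analytically covered by smooth charts on which the usual multiplier ideal theory applies equivariantly.

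Having reduced the statement to $\alpha(X)>\frac{\dim(X)}{\dim(X)+1}$, I would fix a smooth orbifold Kähler form $\omega_{0}\in c_{1}(X)$ with Ricci potential $h_{0}$ and consider the one-parameter family of Monge--Ampère equations
$$
\bigl(\omega_{0}+i\partial\overline{\partial}\varphi_{t}\bigr)^{n}=e^{h_{0}-t\varphi_{t}}\,\omega_{0}^{n},\qquad t\in[0,1],
$$
whose solution at $t=1$ is equivalent to the existence of an orbifold Kähler--Einstein metric. The set $T\subseteq[0,1]$ of parameters for which a smooth orbifold solution exists is nonempty ($t=0$ by the orbifold Calabi--Yau theorem) and open (by the implicit function theorem applied in suitable orbifold Hölder or Sobolev spaces). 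Closedness of $T$ reduces, via the standard higher-order estimates of Yau, to obtaining a uniform $C^{0}$ bound $\|\varphi_{t}\|_{C^{0}}\leqslant C$ independent of $t\in T$.

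This $C^{0}$ bound is exactly where the hypothesis $\alpha(X)>\tfrac{n}{n+1}$ is used. Tian's argument combines the Green function estimate with Hörmander's $L^{2}$-technique to show that if $\alpha(X)>\tfrac{n}{n+1}$, then one can choose an exponent $\alpha\in(\tfrac{n}{n+1},\alpha(X))$ and a constant $C$ such that the oscillation bound
$$
\mathrm{osc}_{X}\varphi_{t}\leqslant C\Bigl(1+\int_{X}e^{-\alpha(\varphi_{t}-\sup\varphi_{t})}\omega_{0}^{n}\Bigr)
$$
holds along the continuity path, after which the defining property of $\alpha(X)$ applied to the normalized potential $\varphi_{t}-\sup\varphi_{t}$ yields the required uniform $C^{0}$ estimate. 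The main obstacle, and the only serious work, is verifying that every analytic input (the implicit function theorem, Yau's $C^{2}$ and $C^{2,\alpha}$ estimates, the Green function and Hörmander $L^{2}$ estimates, and the integrability of $e^{-\alpha\varphi}$) extends to the orbifold category; this is carried out carefully in \cite{DeKo01} by working locally in finite Galois covers and averaging, using that $X$ has only quotient singularities.
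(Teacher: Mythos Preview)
The paper does not give its own proof of this theorem: it is stated with the citation ``proved in \cite{DeKo01}, \cite{Na90}, \cite{Ti87} (see \cite[Appendix~A]{ChSh08c})'' and used as a black box. Your outline is a faithful sketch of the argument in those references --- the continuity method, the identification $\alpha(X)=\lct(X)$ via the Demailly--Koll\'ar appendix, and Tian's $C^{0}$ estimate under the hypothesis $\alpha(X)>\frac{n}{n+1}$ --- so there is nothing to compare; you have simply expanded what the paper leaves to the citations.
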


Examples~\ref{example:Cheltsov-Park}, \ref{example:double-cover}
and \ref{example:IHES} are good examples to which we may apply
Theorem~\ref{theorem:KE}.

There are many known obstructions for the existence of orbifold
K\"ahler--Einstein metrics on Fano varieties with quotient
singularities (see \cite{Do02}, \cite{Fu83}, \cite{Lub83},
\cite{Mat57}, \cite{RoTh06}, \cite{Ti97}).
\begin{example}[\cite{GaMaSpaYau06}]
\label{example:Bishop-Lichnerowicz} Let $X$ be a quasismooth
hypersurface in $\mathbb{P}(a_{0},\ldots,a_{n})$ of degree
$d<\sum_{i=0}^{n}a_{i}$, where $a_{0}\leqslant\ldots\leqslant
a_{n}$. Suppose that $X$ is well-formed and has a
K\"ahler--Einstein metric. Then
$$
d\left(\sum_{i=0}^{n}a_i-d\right)^n\leqslant n^n\prod_{i=0}^{n}a_i,%
$$
and $\sum_{i=0}^{n}a_{i}\leqslant d+na_{0}$
(see \cite{Boy08}, \cite{Spa07}).
\end{example}
The problem of existence of K\"ahler--Einstein metrics on smooth
del Pezzo surfaces is completely solved by \cite{Ti90} as follows:
\begin{theorem}
\label{theorem:smooth-del-Pezzo} If $X$ is a smooth del Pezzo
surface, then the following conditions are equivalent:
\begin{itemize}
\item the automorphism group $\mathrm{Aut}(X)$ is reductive;%
\item the surface $X$ admits a K\"ahler--Einstein metric;%
\item the surface $X$ is not a blow up of $\mathbb{P}^{2}$ at one or two points.%
\end{itemize}
\end{theorem}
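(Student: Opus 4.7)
The plan is to establish the three equivalences by combining the classification of smooth del Pezzo surfaces, Matsushima's theorem on the reductivity of automorphism groups of K\"ahler--Einstein manifolds, and Tian's criterion via the $\alpha$-invariant ($=\lct$). Recall that any smooth del Pezzo surface is either $\mathbb{P}^{2}$, $\mathbb{P}^{1}\times\mathbb{P}^{1}$, or the blow-up $S_{r}$ of $\mathbb{P}^{2}$ at $r$ points in general position with $1\leqslant r\leqslant 8$. So the statement is a case-by-case verification across this list.

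First I would verify the easy implication that a K\"ahler--Einstein metric forces $\mathrm{Aut}(X)$ to be reductive, which is precisely Matsushima's theorem. Next I would analyze the automorphism groups and show that $\mathrm{Aut}(S_{1})$ and $\mathrm{Aut}(S_{2})$ contain a nontrivial unipotent radical (a copy of $\mathbb{G}_{a}$ coming from translations along the exceptional divisor in the first case, and from the two-dimensional family of conics through the two blown-up points in the second), while for $\mathbb{P}^{2}$, $\mathbb{P}^{1}\times\mathbb{P}^{1}$, and $S_{r}$ with $3\leqslant r\leqslant 8$ (in general position) the automorphism group is reductive. This settles the equivalence between reductivity and the third condition.

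The hard part is the remaining implication: if $X$ is not $S_{1}$ or $S_{2}$, then $X$ admits a K\"ahler--Einstein metric. For $\mathbb{P}^{2}$ and $\mathbb{P}^{1}\times\mathbb{P}^{1}$ the Fubini--Study metric (respectively its product) is K\"ahler--Einstein, so these cases are classical. For the remaining surfaces $S_{r}$ with $3\leqslant r\leqslant 8$ I would appeal to Theorem~\ref{theorem:KE} (the sufficient condition $\mathrm{lct}(X)>\dim(X)/(\dim(X)+1)=2/3$) and reduce the problem to showing $\mathrm{lct}(S_{r})>2/3$. This inequality can be established by a direct analysis of effective $\mathbb{Q}$-divisors numerically equivalent to $-K_{S_{r}}$: one bounds the possible non--log canonical centers using the intersection theory of $(-1)$-curves on $S_{r}$, exploiting the fact that for generic configurations no cuspidal or tacnodal anticanonical curves can concentrate enough multiplicity at a single point. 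The main obstacle is the surfaces of low anticanonical degree, in particular the cubic surface $S_{6}$ and the degree-$2$ del Pezzo $S_{7}$, where the $\alpha$-invariant is close to the critical value $2/3$; for these one needs the sharp bounds $\mathrm{lct}(S_{6})=\mathrm{lct}(S_{7})=3/4$ (under the generic, hence reductive, assumption) which require a careful study of the Eckardt points and bitangent configurations respectively.

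Finally, for the special non-generic reductive surfaces $S_{r}$ (those with enhanced automorphism groups that nevertheless remain reductive), the same $\alpha$-invariant estimate continues to hold, so Theorem~\ref{theorem:KE} again yields the existence of a K\"ahler--Einstein metric. Combining all three implications closes the cycle and completes the proof.
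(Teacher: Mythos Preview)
The paper does not prove this theorem at all: it is quoted as a result of Tian \cite{Ti90} and used only as background. So there is no ``paper's own proof'' to compare against, only Tian's original argument.

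Your proposed strategy has a genuine gap in the hard implication. You claim that for $S_{r}$ with $3\leqslant r\leqslant 8$ one has $\mathrm{lct}(S_{r})>2/3$, but this is false for $r=3,4,5$ (degrees $6,5,4$), where $\mathrm{lct}=\tfrac12,\tfrac12,\tfrac23$ respectively, and it also fails for cubic surfaces ($r=6$) containing an Eckardt point, where $\mathrm{lct}=\tfrac23$ exactly (see Theorem~\ref{theorem:del-Pezzos-smooth}). In each of these cases the automorphism group is reductive (indeed finite for $r\geqslant 5$), so these surfaces are not excluded by your hypothesis, yet Theorem~\ref{theorem:KE} with the ordinary $\mathrm{lct}$ does not apply. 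You have the difficulty backwards: the delicate cases are the surfaces of \emph{high} anticanonical degree, not low.

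Tian's actual proof does not rely on the plain inequality $\mathrm{lct}(X)>\tfrac23$. For the surfaces where this fails he uses the $G$-equivariant $\alpha$-invariant $\alpha_{G}(X)$ for a suitable compact subgroup $G\subset\mathrm{Aut}(X)$, which can be strictly larger than $\mathrm{lct}(X)$ (for instance, symmetry forces any $G$-invariant divisor to be less singular). For the remaining cases, notably generic cubic surfaces with trivial automorphism group, he combines partial $C^{0}$-estimates with a deformation/continuity argument rather than a pure threshold bound. Your outline would need to incorporate one of these mechanisms to close the gap.
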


\vspace{5mm} \textbf{Acknowledgments.} The first author is
grateful to the Max Plank Institute for Mathematics at Bonn for
the hospitality and excellent working conditions. The first author
was supported by the grants NSF DMS-0701465 and EPSRC
EP/E048412/1, the~third~author was supported by the grants RFFI
No.~08-01-00395-a, N.Sh.-1987.1628.1 and EPSRC EP/E048412/1. The
second author has been supported by the Korea Research Foundation
Grant funded by the Korean Government (KRF-2008-313-C00024).

The authors thank I.~Kim, B.~Sea, and J.~Won for their pointing
out numerous mistakes in the first version of this paper.

\section{Results} \label{subsection:results}

Let $X_d$ be a quasismooth and well-formed hypersurface in
$\mathbb{P}(a_{0},a_{1},a_{2},a_{3})$ of degree $d$, where
$a_0\leqslant a_1\leqslant a_2\leqslant a_4$. Then the
hypersurface $X_d$ is given by a quasihomogeneous polynomial
equation $f(x,y,z,t)=0$ of degree $d$. The quasihomogeneous
equation
$$
f\big(x,y,z,t\big)=0\subset\mathbb{C}^{4}\cong\mathrm{Spec}\Big(\mathbb{C}\big[x,y,z,t\big]\Big),
$$
defines an isolated quasihomogeneous singularity $(V,O)$ with the
Milnor number $\prod_{i=0}^{n}(\frac{d}{a_{i}}-\nolinebreak 1)$,
where $O$ is the origin of $\mathbb{C}^{4}$. It follows from the
adjunction formula that
$$
K_{X_d}\sim_{\mathbb{Q}}\mathcal{O}_{\mathbb{P}(a_{0},\,a_{1},\,a_{2},\,a_{3})}\Big(d-\sum_{i=0}^{3}a_{i}\Big),
$$
and it follows from
\cite{Elk81}, \cite[Proposition~8.14]{Ko97}, \cite{Rie80} that the following conditions are equivalent:
\begin{itemize}
\item the inequality $d\leqslant\sum_{i=0}^{3}a_{i}-1$ holds;%
\item the surface $X_d$ is a del Pezzo surface; %
\item the singularity $(V,O)$ is rational; %
\item the singularity $(V,O)$ is canonical. %
\end{itemize}
Blowing up $\mathbb{C}^{4}$ at the origin $O$ with weights
$(a_{0},a_{1},a_{2},a_{3})$, we get a purely log terminal blow up
of the singularity $(V,O)$ (see \cite{Kud01}, \cite{Pr98plt}). The
paper \cite{Pr98plt} shows that the following conditions are
equivalent:
\begin{itemize}
\item the surface $X_d$ is exceptional (weakly exceptional, respectively); %
\item the singularity $(V,O)$ is exceptional\footnote{For notions
of exceptional and weakly exceptional singularities see
\cite[Definition~4.1]{Pr98plt}, \cite{Sho00}, \cite{IshiiPr01}.}
(weakly exceptional,
respectively). %
\end{itemize}

From now on we suppose that $d\leqslant\sum_{i=0}^{3}a_{i}-1$.
Then $X_d$ is a del Pezzo surface. Put $I=\sum_{i=0}^{3}a_{i}-d$.
The list of possible values of $(a_{0},a_{1},a_{2},a_{3},d)$ with
$2I<3a_{0}$ can be found in \cite{BoGaNa03} and \cite{ChSh09c}.
For the case $I=1$, we can obtain the complete list of del Pezzo
surfaces $X_d\subset\mathbb{P}(a_0,a_1, a_2,a_3)$ from
\cite{JoKo01b} as follows:
\begin{itemize}
\item smooth del Pezzo surfaces\\
$X_3\subset \mathbb{P}(1,1,1,1), \ \  X_4\subset
\mathbb{P}(1,1,1,2), \ \ X_6\subset \mathbb{P}(1,1,2,3)$,
\item singular del Pezzo surfaces\\
$X_{8n+4}\subset \mathbb{P}(2,2n+1,2n+1,4n+1)$, where $n$ is a positive integer,
\\
$X_{10}\subset \mathbb{P}(1,2,3,5), \ \  X_{15}\subset
\mathbb{P}(1,3,5,7), \ \ X_{16}\subset \mathbb{P}(1,3,5,8),  \
\ X_{18}\subset \mathbb{P}(2,3,5,9)$,
\\
$X_{15}\subset \mathbb{P}(3,3,5,5), \ \  X_{25}\subset \mathbb{P}(3,5,7,11), \ \ X_{28}\subset \mathbb{P}(3,5,7,14),  $\\
$ X_{36}\subset \mathbb{P}(3,5,11,18), \ \ X_{56}\subset \mathbb{P}(5,14,17,21), \ \  X_{81}\subset \mathbb{P}(5,19,27,31), $\\
$ X_{100}\subset \mathbb{P}(5,19,27,50),  \ \ X_{81}\subset \mathbb{P}(7,11,27,37), \ \
X_{88}\subset \mathbb{P}(7,11,27,44)$, \\
$
X_{60}\subset \mathbb{P}(9,15,17,20), \ \ X_{69}\subset \mathbb{P}(9,15,23,23),
\ \ X_{127}\subset \mathbb{P}(11,29,39,49)$,\\
$X_{256}\subset \mathbb{P}(11,49,69,128), \ \  X_{127}\subset \mathbb{P}(13,23,35,57), \ \ X_{256}\subset \mathbb{P}(13,35,81,128). $
\end{itemize}

%If the equality $I=1$ holds, then it follows from
%\cite{JoKo01b} that
%\begin{itemize}
%\item either the surface $X$ is smooth and
%$$
%\big(a_{0},a_{1},a_{2},a_{3}\big)\in\Big\{\big(1,1,1,1\big),\ \big(1,1,1,2\big),\ \big(1,1,2,3\big)\Big\},%
%$$
%\item or the surface $X$ is singular and
%\begin{itemize}
%\item either $(a_{0},a_{1},a_{2},a_{3})=(2,2n+1,2n+1,4n+1)$, where $n\in\mathbb{Z}_{>0}$,%
%\item or the quadruple $(a_{0},a_{1},a_{2},a_{3})$ lies in the set
%$$
%\left\{\aligned
%&\big(1,2,3,5\big),\big(1,3,5,7\big),\big(1,3,5,8\big),\big(2,3,5,9\big)\\
%&\big(3,3,5,5\big),\big(3,5,7,11\big),\big(3,5,7,14\big),\big(3,5,11,18\big)\\
%&\big(5,14,17,21\big), \big(5,19,27,31\big),\big(5,19,27,50\big),\big(7,11,27,37\big)\\
%& \big(7,11,27,44\big),\big(9,15,17,20\big), \big(9,15,23,23\big),\big(11,29,39,49\big)\\
%& \big(11,49,69,128\big),\big(13,23,35,57\big),\big(13,35,81,128\big)%
%\endaligned\right\}.
%$$
%\end{itemize}
%\end{itemize}

The global log canonical thresholds of such del Pezzo surfaces
have been considered either implicitly or explicitly in
\cite{Ara02}, \cite{BoGaNa02}, \cite{Ch07b}, \cite{DeKo01},
\cite{JoKo01b}. For example, the papers \cite{Ara02},
\cite{BoGaNa02}, \cite{DeKo01} and  \cite{JoKo01b} gives us lower
bounds for global log canonical thresholds of singular del Pezzo
surfaces with $I=1$.
%\begin{theorem}
%\label{theorem:del-Pezzos-singular} Suppose that $I=1$ and $X$ is
%singular. Then
%\begin{itemize}
%\item $\lct(X)\geqslant 1$ if $\mathbb{P}(2,2n+1,2n+1,4n+1)$, where  $n\geqslant  2$,
%\item $\lct(X)\geqslant \frac{33}{38}$  if $\mathbb{P}(2,3,3,5\big)$,
%\item $\lct(X)\geqslant \frac{7}{10}$  if $\mathbb{P}(1,2,3,5\big)$,
%\item $\lct(X)\geqslant 1$  if $\mathbb{P}(1,3,5,7)$ and $X$ is general,
%\item $\lct(X)\geqslant \frac{11}{16}$  if $\mathbb{P}(1,3,5,8)$ and $X$ is general,
%\item $\lct(X)\geqslant 1$  if $\mathbb{P}(2,3,5,9)$,
%\item $\lct(X)\geqslant 1$  if $\mathbb{P}(3,3,5,5)$,
%\item $\lct(X)\geqslant \frac{21}{25}$  if $\mathbb{P}(3,5,7,11)$,
%\item $\lct(X)\geqslant \frac{3}{4}$  if $\mathbb{P}(3,5,7,14)$,
%\item $\lct(X)\geqslant \frac{11}{12}$  if $\mathbb{P}(3,5,11,18)$,
%\item $\lct(X)\geqslant \frac{5}{4}$  if $\mathbb{P}(5,14,17,21)$,
%\item $\lct(X)\geqslant \frac{5}{3}$  if $\mathbb{P}(5,19,27,31)$,
%\item $\lct(X)\geqslant \frac{27}{20}$  if $\mathbb{P}(5,19,27,50)$,
%\item $\lct(X)\geqslant \frac{7}{3}$  if $\mathbb{P}(7,11,27,37)$,
%\item $\lct(X)\geqslant \frac{189}{88}$  if $\mathbb{P}(7,11,27,44)$,
%\item $\lct(X)\geqslant \frac{51}{20}$  if $\mathbb{P}(9,15,17,20)$,
%\item $\lct(X)\geqslant 3$  if $\mathbb{P}(9,15,23,23)$,
%\item $\lct(X)\geqslant \frac{429}{127}$  if $\mathbb{P}(11,29,39,49)$,
%\item $\lct(X)\geqslant \frac{759}{256}$  if $\mathbb{P}(11,49,69,128)$,
%\item $\lct(X)\geqslant \frac{455}{127}$  if $\mathbb{P}(13,23,35,57)$,
%\item $\lct(X)\geqslant \frac{1053}{256}$  if $\mathbb{P}(13,35,81,128)$.
%\end{itemize}
%\end{theorem}
Meanwhile, the paper \cite{Ch07b} deals with the exact values of
the global log canonical thresholds of smooth del Pezzo surfaces
with $I=1$.
\begin{theorem}
\label{theorem:del-Pezzos-smooth} Suppose that $I=1$ and $X_d$ is
smooth. Then
$$
\mathrm{lct}\big(X_d\big)=\left\{%
\aligned
&1\ \ \ \text{if}\ \big(a_{0},a_{1},a_{2},a_{3}\big)=\big(1,1,2,3\big)\ \text{and}\ |-K_{X_6}|\ \text{contains no cuspidal curves},\\%
&\frac{5}{6}\ \ \ \text{if}\ \big(a_{0},a_{1},a_{2},a_{3}\big)=\big(1,1,2,3\big)\ \text{and}\ |-K_{X_6}|\ \text{contains a cuspidal curve},\\%
&\frac{5}{6}\ \ \ \text{if}\ \big(a_{0},a_{1},a_{2},a_{3}\big)=\big(1,1,1,2\big)\ \text{and}\ |-K_{X_4}|\ \text{contains no tacnodal curves},\\%
&\frac{3}{4}\ \ \ \text{if}\ \big(a_{0},a_{1},a_{2},a_{3}\big)=\big(1,1,1,2\big)\ \text{and}\ |-K_{X_4}|\ \text{contains a tacnodal curve},\\%
&\frac{3}{4}\ \ \ \text{if}\ X_3\ \text{is a cubic in}\ \mathbb{P}^{3}\ \text{with no Eckardt points},\\%
&\frac{2}{3}\ \ \ \text{if }\ X_3\ \text{is a cubic in}\ \mathbb{P}^{3}\ \text{with an Eckardt point}.\\%
\endaligned\right.%
$$
\end{theorem}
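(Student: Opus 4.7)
The plan is to handle the three families $X_3\subset\mathbb{P}^3$, $X_4\subset\mathbb{P}(1,1,1,2)$ and $X_6\subset\mathbb{P}(1,1,2,3)$ separately. In each case the proof splits into an upper bound on $\mathrm{lct}(X_d)$ obtained by exhibiting a distinguished effective anticanonical $\mathbb{Q}$-divisor whose log canonical threshold realises the claimed value, and a matching lower bound obtained by contradiction via a log-pair/intersection analysis against a chosen anticanonical curve through the bad point.

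For the upper bounds I would read off the singularity types from Example~\ref{example:cubics} and their obvious analogues. For the cubic $X_3$, the plane through the three lines at an Eckardt point cuts out three concurrent lines, giving $\mathrm{lct}(X_3)\leq\tfrac{2}{3}$, while on every smooth cubic one can find a plane through one of the $27$ lines that is tangent to the residual conic, yielding $\mathrm{lct}(X_3)\leq\tfrac{3}{4}$. For $X_4$, realised as the double cover of $\mathbb{P}^2$ branched over a smooth quartic $B$, a flex line of $B$ (which always exists, since a smooth plane quartic has $24$ flexes) pulls back to a cuspidal anticanonical curve with local equation $w^2=x^3$, giving $\mathrm{lct}(X_4)\leq\tfrac{5}{6}$, while a hyperflex line with contact order $4$, when present, pulls back to a tacnodal anticanonical curve and yields $\tfrac{3}{4}$. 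For $X_6$, the presence of a cuspidal member of the anticanonical pencil directly gives the bound $\tfrac{5}{6}$.

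For the lower bounds I fix an effective $\mathbb{Q}$-divisor $D\sim_{\mathbb{Q}}-K_{X_d}$ and assume for contradiction that $(X_d,\lambda D)$ fails to be log canonical at some point $P$, with $\lambda$ strictly smaller than the claimed threshold. The strategy is: choose an anticanonical curve $C$ through $P$ (for $X_6$ this is the unique member of the pencil through $P$, while for $X_3$ and $X_4$ there is a whole family to pick from), decompose $D=\mu C+\Omega$ with $C\not\subseteq\mathrm{Supp}(\Omega)$, and combine three standard ingredients: the bound $\mu\leq 1$ from $D\cdot C=(-K_{X_d})^2$ and positivity of $\Omega\cdot C$; the intersection estimate $\Omega\cdot C\geq\mathrm{mult}_P(C)\,\mathrm{mult}_P(\Omega)$; and the necessary condition $\mathrm{mult}_P(D)>1/\lambda$ coming from non-log-canonicity at a smooth point of $X_d$. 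These together force the curve $C$ to acquire a singularity at $P$ in a very short list (cusp, tacnode, three concurrent lines) matching precisely the exceptional cases in the statement.

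The main obstacle is the case analysis when the anticanonical curve $C$ through $P$ is reducible, or when the geometry of $P$ forces us into a degenerate member of the family. For the cubic, a hyperplane section through $P$ may split off a line or contain two coplanar lines, and one has to bound the contribution of $D$ along each component separately and verify that the only residual obstruction left after these bounds is an Eckardt configuration. For $X_4$ one must compare log canonicity on $X_4$ with that of the branch pair $\bigl(\mathbb{P}^2,\tfrac{1}{2}B\bigr)$ via the double cover formula, carefully handling reducible pullbacks of lines that split into a smooth rational curve plus its conjugate. For $X_6$, which has no freedom in the choice of $C$, one has to work with the possibly singular anticanonical member through $P$ and detect cusps via a discriminant analysis of the Weierstrass-type equation defining $X_6$. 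Matching the borderline thresholds $\tfrac{2}{3},\tfrac{3}{4},\tfrac{5}{6},1$ to precisely the geometric obstructions listed in the statement is the heart of the argument; away from these thresholds the chain of inequalities above has enough slack to yield log canonicity.
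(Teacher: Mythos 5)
First, a point of reference: the paper itself gives no proof of Theorem~\ref{theorem:del-Pezzos-smooth}; it is imported from \cite{Ch07b}, so your argument can only be measured against the general scheme the paper applies to the singular surfaces (Steps 1--4 and Methods 3.1--3.3) and against \cite{Ch07b}, which follows that same scheme. Your upper bounds are correct and are the standard constructions: three concurrent lines at an Eckardt point; a hyperplane section of $X_3$ splitting as a line plus a conic tangent to it (such a section exists for every line $L\subset X_3$, because $L$ is a $2$-section of the conic bundle of planes through $L$ and so the restriction $L\to\mathbb{P}^1$ has two branch points, which give either a tangent conic or an Eckardt configuration); the pullback to $X_4$ of a flex, resp.\ hyperflex, line of the branch quartic (a smooth quartic has $24$ flexes counted with multiplicity, so in the non-tacnodal case an ordinary flex exists); and a cuspidal member of $|-K_{X_6}|$.

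The genuine gap is in the lower bounds for degrees $2$ and $3$. The three ingredients you propose --- $\mu\leqslant 1$, the estimate $\Omega\cdot C\geqslant \mult_P(C)\mult_P(\Omega)$, and $\mult_P(D)>1/\lambda$ --- do not close the argument at the critical thresholds. Concretely, for the cubic at $\lambda=\frac{3}{4}$ take $P$ off the lines and $C=T_PX\cap X$ an irreducible nodal cubic; since $(X_3,\frac{3}{4}C)$ is log canonical, Lemma~\ref{lemma:convexity} lets you assume $C\not\subset\Supp(D)$, and then your estimate gives only $\mult_P(D)\leqslant \frac{1}{2}D\cdot C=\frac{3}{2}$, which does not contradict $\mult_P(D)>\frac{4}{3}$. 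Smooth-point inversion of adjunction (Lemma~\ref{lemma:adjunction}) is also unavailable here because the natural auxiliary curve is singular at $P$. What actually closes the window $\frac{4}{3}<\mult_P(D)\leqslant\frac{3}{2}$ --- and likewise the analogous window for $X_4$ at $\lambda=\frac{5}{6}$ when $\pi(P)$ lies off the branch curve, where every $\pi^{*}\ell$ is smooth at $P$ and adjunction only yields the local bound $(\Omega\cdot C)_P>\frac{6}{5}$ against the global bound $\Omega\cdot C\leqslant 2$ --- is the blow-up step of Method~3.3: one transfers the failure of log canonicity to a point of the exceptional curve and intersects the log pull-back with the strict transforms of $C$ and of the lines through $P$; it is only there that the cusp/tacnode/Eckardt alternatives separate and the exact thresholds $\frac{5}{6}$, $\frac{3}{4}$, $\frac{2}{3}$ emerge. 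A minor additional point: for $X_6$ the phrase ``the unique member of the pencil through $P$'' is undefined at the base point of $|-K_{X_6}|$; that point must be treated separately (easily: a general member is smooth there, and $D\cdot C=1$ with Lemma~\ref{lemma:multiplicity} gives log canonicity). Until the blow-up analysis is carried out, the lower bounds at $\frac{3}{4}$ and $\frac{5}{6}$ are not established.
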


However, for singular del Pezzo surfaces, the exact
values of global log canonical thresholds have not been considered
seriously.

A singular del Pezzo hypersurface $X_d\subset \mathbb{P}(a_0, a_1,
a_2, a_3)$ must satisfy exclusively one of the following
properties:
\begin{enumerate}
\item $2I\geqslant 3a_{0}$ ;%
\item $2I<3a_{0}$ and
$$
\big(a_{0},a_{1},a_{2},a_{3},d\big)=\big(I-k,I+k,a,a+k,2a+k+I\big)
$$
for some non-negative integer $k<I$  and some positive integer
$a\geqslant I+k$.
\item $2I<3a_{0}$ but
$$
    \big(a_{0},a_{1},a_{2},a_{3},d\big)\ne\big(I-k,I+k,a,a+k,2a+k+I\big)
$$
for any non-negative integer $k<I$  and any positive integer
$a\geqslant I+k$.
\end{enumerate}
For the first two cases one can check that $\lct (X_d)\leq
\frac{2}{3}$ (for instance, see \cite{BoGaNa03} and
\cite{ChSh09c}). All the quintuples $(a_{0},a_{1},a_{2},a_{3},d)$
such that the hypersurface $X_d$ is singular and satisfies the
last condition are listed in Section~\ref{section:table}. They are
taken from \cite{BoGaNa03}  and \cite{ChSh09c}. Note that we
rearranged a little the quintuples taken from \cite{BoGaNa03} by
putting some cases that were contained in the infinite series
of~\cite{BoGaNa03} into the sublist of sporadic cases; on the
other hand, we removed two sporadic cases, because they are
contained in the additional infinite series found in
\cite{ChSh09c}. The completeness of this list is proved in
\cite{ChSh09c} by using \cite{YauYu03}.

We already know the global log canonical thresholds of smooth del
Pezzo surfaces. For del Pezzo surfaces satisfying one of the first
two conditions, their global log canonical thresholds are
relatively too small to enjoy the condition of
Theorem~\ref{theorem:KE}. However, the global log canonical
thresholds of del Pezzo surfaces satisfying the last condition
have not been investigated sufficiently. In the present paper we
compute all of them and then we obtain the following result.

\begin{theorem}
\label{theorem:main} Let $X_d$ be a quasismooth well-formed
singular del Pezzo surface in the weighted projective space
$\mathrm{Proj}(\mathbb{C}\big[x,y,z,t\big])$ with weights
$\mathrm{wt}(x)=a_{0}\leqslant \mathrm{wt}(y)=a_{1}\leqslant
\mathrm{wt}(z)=a_{2}\leqslant\mathrm{wt}(t)=a_{3}$ such that
$2I<3a_{0}$ but
$(a_{0},a_{1},a_{2},a_{3},d)\ne(I-k,I+k,a,a+k,2a+k+I)$ for any
non-negative integer $k<I$  and any positive integer $a\geqslant
I+k$, where $I=\sum_{i=0}^{3}a_{i}-d$. Then if $a_0\ne a_1$, then
$$
\lct(X_d)=\mathrm{min}\left\{\lct\Big(X_d,
\frac{I}{a_{0}}C_{x}\Big),\ \lct\Big(X_d,
\frac{I}{a_{1}}C_{y}\Big),\ \lct\Big(X_d,
\frac{I}{a_{2}}C_{z}\Big)\right\},
$$
where $C_x$ (resp. $C_y$, $C_z$) is the divisor on $X_d$ defined
by $x=0$ (resp. $y=0$, $z=0$). If $a_0=a_1$, then
$$\lct(X_d)=\lct\Big(X_d, \frac{I}{a_0}C\Big),$$
where $C$ is a reducible divisor in $|\mathcal{O}_{X_d}(a_0)|$.
\end{theorem}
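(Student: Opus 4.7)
The plan is to split the statement into two matching inequalities. The ``$\le$'' direction is essentially a tautology: each of $C_x$, $C_y$, $C_z$ is an effective divisor lying in $|\O_{X_d}(a_i)|$ for $i=0,1,2$ respectively, so $(I/a_i)C$ is $\Q$-linearly equivalent to $-K_{X_d}$ and hence a valid test divisor in Definition~\ref{definition:threshold}. In the case $a_0=a_1$ one takes $C$ to be a reducible member of the pencil $|\O_{X_d}(a_0)|$ spanned by the sections $x$ and $y$, for instance $C=C_x+\{x-\alpha y=0\}|_{X_d}$ for a generic $\alpha\in\C$; this delivers the ``$\le$'' direction.

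For the reverse ``$\ge$'' inequality, fix an arbitrary effective $D\sim_{\Q}-K_{X_d}$ and assume, aiming at a contradiction, that $(X_d,\mu D)$ fails to be log canonical for some rational $\mu$ strictly smaller than the claimed minimum. By semicontinuity there is a point $P\in X_d$ at which this failure occurs. The idea is to exploit the coordinate pencils at $P$: one selects a coordinate function among $x$, $y$, $z$ vanishing at $P$ (or, more generally, a member $H$ of the appropriate pencil passing through $P$) and restricts $D$ to $H$. The intersection number $D\cdot H$ is controlled by $-K_{X_d}\cdot H$, and via the standard inequality $\mult_P(D)\cdot\mult_P(H)\le D\cdot H$ (with the orbifold index incorporated at a singular $P$), one forces $\mult_P(D)$ to be small. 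Combined with inversion of adjunction on a suitable weighted blow-up of $P$, this yields an explicit lower bound on $\lct_P(X_d,D)$ that contradicts the failure of log canonicity at $P$.

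Executing this strategy requires working through the complete list of quintuples $(a_0,a_1,a_2,a_3,d)$ compiled in Section~\ref{section:table}. The two hypotheses $2I<3a_0$ and the exclusion of the family $(I-k,I+k,a,a+k,2a+k+I)$ are precisely what guarantees that the coordinate pencils have enough freedom and that the intersection estimates above actually close. The analysis branches according to where the non-lc locus of $(X_d,\mu D)$ can sit: smooth points of $X_d$, cyclic quotient singularities at the coordinate vertices $P_x,P_y,P_z,P_t$, or special points lying on exactly one of the coordinate curves. At a singular point one uses the local toric model and inversion of adjunction on the exceptional curve of the corresponding weighted blow-up, reducing each subcase to a numerical computation on a $\P^1$.

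The principal obstacle is the length of the case analysis: each of the infinite series and sporadic cases listed in Section~\ref{section:table} must be treated on its own, since the pattern of singularities of $X_d$ and the configuration of the coordinate curves $C_x,C_y,C_z$ vary significantly. A secondary technical difficulty appears when $a_0=a_1$: the pencil $|\O_{X_d}(a_0)|$ supplies a whole one-parameter family of candidates, and pinpointing the reducible member $C$ that realises the minimum\t as well as proving that no irreducible member of the pencil does strictly better\t requires a separate argument exploiting the $x\leftrightarrow y$ symmetry and the structure of the base locus of the pencil.
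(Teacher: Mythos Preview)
Your outline matches the paper's approach: the proof of Theorem~\ref{theorem:main} is exactly the exhaustive case-by-case analysis you anticipate, carried out in Parts~\ref{section:series} and the sporadic sections following the four-step scheme (compute the candidate thresholds from the coordinate curves; suppose a worse $D$ exists; exclude singular points via multiplicity, adjunction, and when necessary weighted blow-ups; exclude smooth points via Lemma~\ref{lemma:Carolina} or ad hoc pencils). There is no shortcut---the content of the theorem \emph{is} the 80-odd individual computations.

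Two points in your sketch need fixing. First, in the $a_0=a_1$ case your proposed $C=C_x+\{x-\alpha y=0\}|_{X_d}$ lies in $|\O_{X_d}(2a_0)|$, not $|\O_{X_d}(a_0)|$. The correct $C$ is a \emph{single} member of the pencil $\langle x,y\rangle$ that happens to be reducible on $X_d$; such members occur only for finitely many special parameter values (in the two relevant cases $(3,3,4,4)$ and $(3,3,5,5)$ these are the hyperplanes through the index-$a_0$ singular points on $L_{zt}$), and a generic $\alpha$ gives an irreducible curve. Second, your multiplicity bound $\mult_P(D)\cdot\mult_P(H)\le D\cdot H$ presupposes $H\not\subset\Supp(D)$, which you have not arranged. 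The paper secures this systematically via Lemma~\ref{lemma:convexity}: because the reference divisor $(I/a_i)C$ is already log canonical at $P$, one may replace $D$ by a $\Q$-linearly equivalent effective divisor whose support omits at least one component of $C$. This convexity reduction is invoked in essentially every case and is not a technicality you can skip.
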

In particular, we obtain the value of $\mathrm{lct}(X_d)$ for
every del Pezzo surface $X_d$ listed in
Section~\ref{section:table}. As a result, we obtain the following
corollaries.

\begin{corollary}
\label{corollary:1} The following assertions are equivalent:
\begin{itemize}
\item the surface $X_d$ is exceptional;%
\item $\lct(X_d)>1$ ;%
\item the quintuple $(a_{0},a_{1},a_{2},a_{3},d)$ lies in the set
$$
\left\{\aligned
&(2,3,5,9,18), (3,3,5,5,15), (3,5,7,11,25), (3,5,7,14, 28),\\%
&(3,5,11,18, 36),(5,14,17,21,56),(5,19,27,31,81),(5,19,27,50,100),\\
&(7,11,27,37,81),(7,11,27,44,88), (9,15,17,20,60), (9,15,23,23,69),\\
&(11,29,39,49,127), (11,49,69,128,256), (13,23,35,57,127), \\
&(13,35,81,128,256), (3,4,5,10,20), (3,4,10,15,30), (5,13,19,22,57),\\
&(5,13,19,35,70), (6,9,10,13,36), (7,8,19,25,57), (7,8,19,32,64),\\
&(9,12,13,16,48), (9,12,19,19,57), (9,19,24,31,81), (10,19,35,43,105),\\
&(11,21,28,47,105), (11,25,32,41,107), (11,25,34,43,111), (11,43,61,113,226),\\
&(13,18,45,61,135), (13,20,29,47,107), (13,20,31,49,111), (13,31,71,113,226),\\
&(14,17,29,41,99), (5,7,11,13,33), (5,7,11,20,40), (11,21,29,37,95),\\
&(11,37,53,98,196), (13,17,27,41,95), (13,27,61,98,196), (15,19,43,74,148),\\
&(9,11,12,17,45), (10,13,25,31,75), (11,17,20,27,71), (11,17,24,31,79), \\
&(11,31,45,83,166), (13,14,19,29,71), (13,14,23,33,79), (13,23,51,83,166), \\
&(11,13,19,25,63),(11,25,37,68,136), (13,19,41,68,136),(11,19,29,53,106),\\
&(13,15,31,53,106), (11,13,21,38,76)\\
\endaligned\right\}.
$$
\end{itemize}
\end{corollary}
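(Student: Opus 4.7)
The plan is to split the corollary into two equivalences, (i)$\Leftrightarrow$(ii) and (ii)$\Leftrightarrow$(iii); the first is structural, while the second is a case-by-case verification along Section~\ref{section:table}.

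For (i)$\Leftrightarrow$(ii), Theorem~\ref{theorem:main} exhibits, for every such $X_d$, an explicit effective $\Q$-divisor $D_0 \sim_\Q -K_{X_d}$ \textemdash{} built from the coordinate hyperplane sections $C_x$, $C_y$, $C_z$ (or, when $a_0=a_1$, from the reducible divisor $C$) \textemdash{} for which $\lct(X_d)=\lct(X_d,D_0)$. Thus Conjecture~\ref{conjecture:stabilization} holds for $X_d$, and therefore, as noted immediately after Definition~\ref{definition:exceptional-del-Pezzo}, $X_d$ is exceptional if and only if it is strongly exceptional, that is, $\lct(X_d)>1$.

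For (ii)$\Leftrightarrow$(iii), I would run the following procedure on each quintuple $(a_0,a_1,a_2,a_3,d)$ listed in Section~\ref{section:table}. First, record $I=\sum_{i=0}^{3}a_i-d$ and identify the cyclic quotient singularities of $X_d$: by quasismoothness and well-formedness, each coordinate vertex of $\P(a_0,a_1,a_2,a_3)$ that lies on $X_d$ contributes a cyclic quotient singularity whose type is read off directly from the weights. Second, decompose each of $C_x$, $C_y$, $C_z$ (or $C$) into irreducible components using the monomials appearing in the defining equation $f(x,y,z,t)=0$; typically each such curve is a small union of rational curves joining coordinate vertices. Third, compute $\lct(X_d,\frac{I}{a_0}C_x)$, $\lct(X_d,\frac{I}{a_1}C_y)$, $\lct(X_d,\frac{I}{a_2}C_z)$ (or, in the case $a_0=a_1$, just $\lct(X_d,\frac{I}{a_0}C)$) by a local analysis at each point of the support, performing a weighted blow-up at each cyclic quotient singularity to read off the discrepancy of the exceptional divisor and the orders of vanishing of the relevant components. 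Finally, take the minimum to recover $\lct(X_d)$ via Theorem~\ref{theorem:main} and check whether the value exceeds $1$.

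The main obstacle is the sheer number of cases in Section~\ref{section:table}, together with the delicate arithmetic in the third step, where the answer depends finely on which monomials are forced to appear in $f$ by quasismoothness and on the precise weights $(a_0,a_1,a_2,a_3)$. Once this bookkeeping is carried out, the subset of Section~\ref{section:table} for which $\lct(X_d)>1$ coincides exactly with the explicit list in (iii); this is the only substantive verification required. As a by-product, the computation also shows that $\lct(X_d)$ is rational for every surface in this class, partially answering the rationality question raised before Conjecture~\ref{conjecture:stabilization}.
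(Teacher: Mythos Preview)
Your proposal is correct and matches the paper's approach: the equivalence (i)$\Leftrightarrow$(ii) follows exactly as you say from Theorem~\ref{theorem:main} via the remark after Definition~\ref{definition:exceptional-del-Pezzo}, and (ii)$\Leftrightarrow$(iii) is read off from the explicit values of $\lct(X_d)$ computed case by case and recorded in the Big Table of Section~\ref{section:table}. The paper does not give a separate argument for the corollary beyond this; your description of the Step~1 computation of $\lct(X_d,\frac{I}{a_i}C_i)$ is precisely what the individual lemmas do before the harder Steps~2--4 (the latter being needed for Theorem~\ref{theorem:main} itself, not for the corollary).
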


\begin{corollary}
\label{corollary:2} The following assertions are equivalent:
\begin{itemize}
\item the surface $X_d$ is weakly exceptional and not exceptional;%
\item $\lct(X_d)=1$;%
\item one of the following holds
\begin{itemize}
\item the quintuple $(a_{0},a_{1},a_{2},a_{3},d)$ lies in the set
$$
\left\{\aligned
&(2,2n+1,2n+1,4n+1,8n+4),\\
& (3,3n,3n+1,3n+1, 9n+3), (3, 3n+1, 3n+2, 3n+2, 9n+6),\\
& (3,3n+1,3n+2,6n+1, 12n+5),(3,3n+1, 6n+1, 9n, 18n+3), \\%
&(3, 3n+1, 6n+1, 9n+3, 18n+6), (4, 2n+1, 4n+2, 6n+1, 12n+6),\\%
& (4, 2n+3, 2n+3, 4n+4, 8n+12), (6, 6n+3, 6n+5, 6n+5, 18n+15),\\
&(6, 6n+5, 12n+8, 18n+9, 36n+24), (6, 6n+5, 12n+8, 18n+15, 36n+30),\\%
&(8, 4n+5, 4n+7, 4n+9, 12n+23), (9, 3n+8, 3n+11, 6n+13, 12n+35),\\
&(1,3,5,8,16), (2,3,4,7,14), (5,6,8,9,24), (5,6,8,15,30)%
\endaligned\right\},
$$
where $n$ is a positive integer,  \item
$(a_{0},a_{1},a_{2},a_{3},d)=(1,1,2,3,6)$ and the pencil
$|-K_{X}|$ does not have cuspidal curves, \item
$(a_{0},a_{1},a_{2},a_{3},d)=(1,2,3,5,10)$ and $C_x=\{x=0\}$ has
an ordinary double point, \item
$(a_{0},a_{1},a_{2},a_{3},d)=(1,3,5,7,15)$ and the defining
equation of $X$ contains $yzt$, \item
$(a_{0},a_{1},a_{2},a_{3},d)=(2,3,4,5,12)$ and  the defining
equation of $X$ contains $yzt$.
\end{itemize}
\end{itemize}
\end{corollary}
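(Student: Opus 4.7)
The plan is to reduce the proof to a direct application of Theorem~\ref{theorem:main} combined with case-by-case verification against the tables in Section~\ref{section:table} and against the data gathered for Corollary~\ref{corollary:1}.

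First, for the equivalence of (1) and (2): a surface $X_d$ is weakly exceptional exactly when $\lct(X_d)\geq 1$, by definition. Theorem~\ref{theorem:main} shows that $\lct(X_d)$ is always realized by an explicit effective $\mathbb{Q}$-divisor numerically equivalent to $-K_{X_d}$, so Conjecture~\ref{conjecture:stabilization} holds for every $X_d$ in its scope; by the remark following Definition~\ref{definition:exceptional-del-Pezzo}, ``exceptional'' and ``strongly exceptional'' then coincide for such surfaces, so ``weakly exceptional but not exceptional'' is equivalent to $\lct(X_d)=1$. For the families falling outside Theorem~\ref{theorem:main} (those satisfying $2I\geq 3a_0$ or lying in the excluded infinite series $(I-k,I+k,a,a+k,2a+k+I)$), the bound $\lct(X_d)\leq 2/3$ already rules them out of being weakly exceptional, and for smooth del Pezzo surfaces with $I=1$ the exact values listed in Theorem~\ref{theorem:del-Pezzos-smooth} are each realized by an explicit divisor, so the same equivalence goes through.

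For the equivalence of (2) and (3), the strategy is to traverse the tables in Section~\ref{section:table} and, for each quintuple in the scope of Theorem~\ref{theorem:main}, compute $\lct(X_d)$ as the minimum of $\lct(X_d,\frac{I}{a_i}C_i)$ for $i=0,1,2$ (or of the single $\lct$ involving the reducible $C$ when $a_0=a_1$). Corollary~\ref{corollary:1} already isolates the quintuples giving $\lct(X_d)>1$; after subtracting those, one must identify precisely which remaining quintuples yield $\lct(X_d)=1$ rather than $\lct(X_d)<1$. The smooth cases with $I=1$ are read off from Theorem~\ref{theorem:del-Pezzos-smooth}, which singles out $X_6\subset\mathbb{P}(1,1,2,3)$ with no cuspidal member in $|-K_{X_6}|$ as the unique smooth family with $\lct=1$. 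For the singular cases, the infinite series appearing in the statement, such as $X_{8n+4}\subset\mathbb{P}(2,2n+1,2n+1,4n+1)$ or $X_{12n+6}\subset\mathbb{P}(4,2n+1,4n+2,6n+1)$, demand a uniform parametric analysis: for each family one writes down the divisors $C_x$, $C_y$, $C_z$, analyzes their singularities on $X_d$, and checks that the minimum in Theorem~\ref{theorem:main} equals $1$ independently of the integer parameter $n$. The sporadic entries, like $(1,2,3,5,10)$ with an ordinary double point on $C_x$, $(1,3,5,7,15)$ with the monomial $yzt$ present in the defining equation, or $(2,3,4,5,12)$ with $yzt$ present, correspond to special configurations where the divisor realizing the minimum acquires a mildly worse singularity than the generic member of the family, dropping the lct from a value $>1$ down to exactly $1$; each of these is verified by a direct computation of the relevant one-dimensional lct.

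The main obstacle is neither the equivalence of (1) and (2) nor the reduction via Theorem~\ref{theorem:main}, but the sheer volume of case analysis needed to match the value $1$ against every entry in the tables. The trickiest part will be the infinite series: for each family one must verify that, uniformly in the integer parameter, the quantities $\lct(X_d,\frac{I}{a_i}C_i)$ behave as expected and the resulting minimum is attained at $1$. A secondary obstacle is correctly pinpointing the geometric conditions (such as the presence of a specific monomial in the defining equation, or the existence of an ordinary double point on a specific coordinate section) that separate the sporadic $\lct=1$ cases from the $\lct>1$ cases sharing the same quintuple.
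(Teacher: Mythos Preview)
Your approach matches the paper's: Corollary~\ref{corollary:2} is not given a separate proof in the text but is obtained by reading off the explicit $\lct$ values recorded in the Big Table (Section~\ref{section:table}), each of which is computed in an individual lemma in Parts~II and~III. Your argument for the equivalence of (1) and (2) via Conjecture~\ref{conjecture:stabilization} and the remark following Definition~\ref{definition:exceptional-del-Pezzo} is exactly what the paper uses implicitly.

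One factual correction to your description of the sporadic cases: for the quintuples $(1,2,3,5,10)$, $(1,3,5,7,15)$, $(2,3,4,5,12)$, and also $(1,1,2,3,6)$, you have the direction reversed. The geometric conditions appearing in the statement (ordinary double point on $C_x$, presence of the monomial $yzt$, absence of cuspidal curves in $|-K_X|$) are the \emph{favorable} ones yielding $\lct(X_d)=1$; when they fail, the relevant divisor acquires a \emph{worse} singularity and $\lct(X_d)$ drops strictly below~$1$ (to $\frac{7}{10}$, $\frac{8}{15}$, $\frac{7}{12}$, $\frac{5}{6}$ respectively---see Lemmas~\ref{lemma:123510}, \ref{lemma:1357}, \ref{lemma:I-2-W-2-3-4-5-D-12} and Theorem~\ref{theorem:del-Pezzos-smooth}). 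None of these quintuples ever gives $\lct>1$. This does not break your proof strategy, only your stated expectation of what the individual computations will show.
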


\begin{corollary}
\label{corollary:3} In the notation and assumptions of
Theorem~\ref{theorem:main}, the surface $X_d$ has an orbifold
K\"ahler--Einstein metric with the following possible exceptions:
$X_{45}\subset\nolinebreak\mathbb{P}(7,10,15,19)$,
$X_{81}\subset\mathbb{P}(7,18,27,37)$,
$X_{64}\subset\mathbb{P}(7,15,19,32)$,
$X_{82}\subset\mathbb{P}(7,19,25,41)$,
$X_{117}\subset\mathbb{P}(7,26,39,55)$,
$X_{15}\subset\mathbb{P}(1,3,5,7)$ whose  defining equation does
not contain $yzt$,  and $X_{12}\subset\mathbb{P}(2,3,4,5)$ whose
defining equation does not contain $yzt$.
\end{corollary}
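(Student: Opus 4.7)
The plan is to combine Theorem~\ref{theorem:main} with Theorem~\ref{theorem:KE} and finish by a finite case analysis. A quasismooth well-formed weighted hypersurface $X_d\subset\mathbb{P}(a_0,a_1,a_2,a_3)$ has only cyclic quotient singularities, so it satisfies the hypotheses of Theorem~\ref{theorem:KE}. Since $\dim X_d=2$, it suffices to establish the inequality
$$\lct(X_d)>\frac{2}{3}$$
for every quintuple from the table in Section~\ref{section:table} that does not appear on the exception list of the corollary.

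By Theorem~\ref{theorem:main}, $\lct(X_d)$ equals the minimum of at most three explicit numbers of the form $\lct(X_d,\frac{I}{a_i}C_{\cdot})$, where each $C_{\cdot}$ is the effective Weil divisor cut out on $X_d$ by the vanishing of one of the coordinates $x$, $y$, $z$ (or, when $a_0=a_1$, a suitable reducible member of $|\mathcal{O}_{X_d}(a_0)|$). I would march through the finite list of quintuples one entry at a time: decompose each $C_{\cdot}$ into irreducible components on $X_d$, locate the singular points of these components together with the singular points of $X_d$ lying on them, use quasismoothness to identify the local orbifold chart at each such point, and read off the log canonical threshold of the pair from standard formulas for the weighted tangent cone. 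Separating the table according to whether the resulting minimum exceeds $\frac{2}{3}$ or not, Theorem~\ref{theorem:KE} handles the former group, and the task reduces to checking that the latter is exactly the stated exception list, consisting of $X_{45}\subset\mathbb{P}(7,10,15,19)$, $X_{81}\subset\mathbb{P}(7,18,27,37)$, $X_{64}\subset\mathbb{P}(7,15,19,32)$, $X_{82}\subset\mathbb{P}(7,19,25,41)$, $X_{117}\subset\mathbb{P}(7,26,39,55)$, together with the two borderline families $X_{15}\subset\mathbb{P}(1,3,5,7)$ and $X_{12}\subset\mathbb{P}(2,3,4,5)$ for which the monomial $yzt$ is absent from the defining equation and the curve $C_x$ acquires extra singularities that push one of the three thresholds down to $\frac{2}{3}$ or below.

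The principal obstacle is logistical rather than conceptual: the table is long, and each entry demands its own inspection of the coordinate-hyperplane sections on $X_d$ and of the singularities they acquire at the quotient points of $X_d$. All of the genuine geometric input has already been packaged into Theorem~\ref{theorem:main}, so that Corollary~\ref{corollary:3} is, in the end, a tabulation of the numerical output of that theorem measured against the Kähler--Einstein cutoff $\lct(X_d)>\frac{\dim X_d}{\dim X_d+1}=\frac{2}{3}$ imposed by Theorem~\ref{theorem:KE}.
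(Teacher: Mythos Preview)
Your proposal is correct and matches the paper's approach: Corollary~\ref{corollary:3} is not given a separate proof in the paper but is obtained by reading off the values of $\lct(X_d)$ already computed (and recorded in the Big Table) and checking which ones exceed $\tfrac{2}{3}$, then invoking Theorem~\ref{theorem:KE}. The only redundancy in your write-up is the plan to recompute the thresholds of the coordinate-hyperplane sections yourself; in the paper this work has already been absorbed into the proof of Theorem~\ref{theorem:main} via the individual lemmas, so the corollary really is just the final tabulation you describe.
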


Corollary~\ref{corollary:1} illustrates the fact that exceptional
del Pezzo surfaces lie in finitely many families (see
\cite{Sho00}, \cite{Pr01}). On the other hand,
Corollary~\ref{corollary:1} shows that weakly-exceptional del
Pezzo surfaces do not enjoy this property. Note also that
Corollary~\ref{corollary:1} follows from \cite{Kud02}.

%%%%%%%%%%%%%%%%%%%%%%%%%%%%%%%%%%%%%%%%%%%%%%%%%%%%%%%%%%%%%%%%%%%%

\section{Preliminaries}
\label{section:preliminaries}

For the basic definitions and properties concerning singularities
of pairs we refer the reader to \cite{Ko97}. To prove
Theorem~\ref{theorem:main} we need to compute the log canonical
thresholds of individual effective divisors. The following two
lemmas are rather basic properties of log canonical thresholds but
will be useful to compute  them. For the proofs the reader is
referred to \cite{Ko97} and \cite{Ku99}.

\begin{lemma}\label{lemma:basic-property}
Let $f\in\mathbb{C}[x_1,\ldots, x_n]$ and $D=(f=0)$. Suppose that
the polynomial $f$ vanishes at the origin $O$ in $\mathbb{C}^n$.
Set $d=\mathrm{mult}_O(f)$ and let $f_d$ denote the degree $d$
homogeneous part of $f$. Let $T_0D=(f_d=0)\subset\mathbb{C}^n$ be
the tangent cone of $D$ and
$\mathbb{P}(T_0D)=(f_d=0)\subset\mathbb{P}^{n-1}$ be the
projectivised tangent cone of $D$. Then
\begin{enumerate}

\item $\frac{1}{d}\leq \lct_O(\mathbb{C}^n, D)\leq\frac{n}{d}$.

 \item The log
pair $(\mathbb{P}^{n-1},\frac{n}{d}\mathbb{P}(T_0D))$ is log canonical if and
only if $\lct_O(\mathbb{C}^n, D)=\frac{n}{d}$.

\item If $\mathbb{P}(T_0D)$ is smooth (or even log canonical) then
$\lct_O(\mathbb{C}^n, D)=\mathrm{min}\{1,\frac{n}{d}\}$.

\end{enumerate}
\end{lemma}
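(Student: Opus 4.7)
The plan is to derive all three assertions from the local log-pullback formula under the blow-up $\pi \colon Y \to \C^n$ at the origin $O$. Writing $\widetilde D$ for the strict transform of $D$ and $E \cong \P^{n-1}$ for the exceptional divisor, and using $\mult_O D = d$, the standard discrepancy computation gives
\[
\pi^*(K_{\C^n} + \lambda D) = K_Y + \lambda\widetilde D + (\lambda d - n + 1) E,
\]
with $\widetilde D \cap E = \P(T_0 D) \subset E$. Part (1) is then immediate: the upper bound $\lct_O \le n/d$ comes from demanding the $E$-coefficient $\lambda d - n + 1 \le 1$, and the lower bound $\lct_O \ge 1/d$ is the standard smooth-ambient inequality $\lct_P(X, D) \ge 1/\mult_P D$ (e.g.\ \cite[\S 8]{Ko97}).

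For part (2), plugging $\lambda = n/d$ makes the coefficient of $E$ equal to $1$, so $(\C^n, \tfrac{n}{d} D)$ is log canonical at $O$ if and only if $(Y, \tfrac{n}{d}\widetilde D + E)$ is log canonical in a neighborhood of $E$. Since $E$ appears with coefficient $1$, inversion of adjunction identifies this with log canonicity of the restriction $(E, \tfrac{n}{d}\widetilde D|_E) = (\P^{n-1}, \tfrac{n}{d}\P(T_0 D))$, and combining with the upper bound from (1) gives the required equivalence. For part (3), assume first that $\P(T_0 D)$ is smooth. Then the affine tangent cone is smooth off $O$, so $\widetilde D$ is smooth near $E$ and transverse to it along $\P(T_0 D)$, making $\widetilde D + E$ simple normal crossings. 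Thus $\pi$ is a log resolution of $(\C^n, D)$ near the exceptional fibre, and log canonicity of $(\C^n, \lambda D)$ at $O$ is equivalent to the two SNC bounds $\lambda \le 1$ and $\lambda d - n + 1 \le 1$, yielding $\lct_O = \min(1, n/d)$. When $\P(T_0 D)$ is only log canonical the SNC argument breaks, but the conclusion extends by invoking (2): inversion of adjunction along $E$ transfers lc-ness from the projectivised tangent cone back to the pair on $Y$.

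The one non-trivial input throughout is inversion of adjunction along $E$, which is the mechanism that turns information about $\P(T_0 D) \subset \P^{n-1}$ into control of the singularities of the pair on $\C^n$; once this is in place the proof reduces to coefficient bookkeeping in the pullback formula, and in the smooth case of (3) the SNC structure lets one bypass the general inversion-of-adjunction theorem altogether.
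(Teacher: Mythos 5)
Your argument is correct and is, in essence, the standard proof of this statement; the paper itself does not prove Lemma~\ref{lemma:basic-property} at all, but simply refers the reader to \cite{Ko97} and \cite{Ku99}, so there is no in-paper argument to compare against. Your pullback formula $\pi^*(K_{\C^n}+\lambda D)=K_Y+\lambda\widetilde D+(\lambda d-n+1)E$ is right, the identification $\widetilde D|_E=\P(T_0D)$ (as divisors, with multiplicities) is right, and parts (1) and (2) follow exactly as you say, granting the two standard inputs $\lct_O\geqslant 1/\mult_O$ and inversion of adjunction along $E$ (the paper's Theorem~\ref{theorem:adjunction}). The only place you are slightly loose is the ``or even log canonical'' case of (3) when $d<n$: there $n/d>1$, so you cannot literally invoke (2) --- the hypothesis is that $(\P^{n-1},\P(T_0D))$ is log canonical, not $(\P^{n-1},\tfrac{n}{d}\P(T_0D))$. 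What is needed, and what your final sentence implicitly describes, is inversion of adjunction at coefficient $1$, giving that $(Y,E+\widetilde D)$ is log canonical near $E$, followed by monotonicity in the coefficient of $E$ (replace $1$ by $d-n+1\leqslant 1$) to conclude that $(\C^n,D)$ is log canonical near $O$; together with the trivial bound $\lct_O\leqslant 1$ this yields $\lct_O=1=\min\{1,n/d\}$. When $d\geqslant n$, invoking (2) directly is fine, since lowering the coefficient from $1$ to $n/d\leqslant 1$ preserves log canonicity. With that one clarification the proof is complete.
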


\begin{lemma}
\label{lemma:Igusa} Let $f$ be a polynomial in
$\mathbb{C}[z_1,z_2]$. Suppose that the polynomial defines an
irreducible curve $C$ passing through the origin $O$ in
$\mathbb{C}^{2}$. We then have
$$
\lct_O(\mathbb{C}^2, C)=\mathrm{min}\left(1,\frac{1}{m}+\frac{1}{n}\right),%
$$
where $(m,n)$ is the first pair of Puiseux exponents of $f$. We also have
$$
\lct_O\left(\mathbb{C}^2, \left(z_{1}^{n_{1}}z_{2}^{n_{2}}\left(z_{1}^{m_{1}}+z_{2}^{m_{2}}\right)=0\right)\right)
=\mathrm{min}\left(\frac{1}{n_{1}},\frac{1}{n_{2}},\frac{m_1+m_2}{m_1m_2+m_1n_2+m_2n_1}\right),%
$$
where $n_{1}$, $n_{2}$, $m_{1}$, $m_{2}$ are non-negative
integers.
\end{lemma}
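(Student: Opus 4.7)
My plan is to derive both formulas from single well-chosen weighted blow-ups of $(\mathbb{C}^2,O)$, together with the standard transformation rules
\[K_Y=\pi^*K_{\mathbb{C}^2}+(\alpha+\beta-1)E,\qquad\mult_E(g=0)=\wt_{(\alpha,\beta)}(g),\]
valid for the weighted blow-up $\pi\colon Y\to\mathbb{C}^2$ at $O$ with coprime weights $(\alpha,\beta)$ and exceptional divisor $E$.

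For the first formula I may assume $m\le n$ and $\gcd(m,n)=1$. By the definition of the first Puiseux pair, a formal coordinate change brings the weighted initial form of $f$ (for the grading $\wt(z_1)=n$, $\wt(z_2)=m$) to a unit multiple of $z_1^m-z_2^n$. Performing the $(n,m)$-weighted blow-up yields $\mult_E(C)=mn$, so requiring that the pulled-back pair be log canonical along $E$ forces $\lambda\cdot mn-(n+m-1)\le 1$, equivalently $\lambda\le 1/m+1/n$; combined with the trivial bound $\lambda\le 1$ this gives the upper bound. For the matching lower bound I would verify that the strict transform $\widetilde C$ meets $E$ transversally at smooth points of $Y$ (the two toric fixed points of $E$ avoid $\widetilde C$ by virtue of the shape of the initial form), and handle the later Puiseux pairs by induction on their number.

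For the second formula, set $g=\gcd(m_1,m_2)$ and perform the weighted blow-up with weights $(m_2/g,m_1/g)$. A direct computation yields
\[\mult_E(D)=\frac{n_1m_2+n_2m_1+m_1m_2}{g}\quad\text{and}\quad\text{discrepancy}(E)=\frac{m_1+m_2}{g}-1,\]
giving the bound $\lambda\le(m_1+m_2)/(m_1m_2+m_1n_2+m_2n_1)$. The remaining bounds $\lambda\le 1/n_1$ and $\lambda\le 1/n_2$ are read off the coefficients of $(z_1=0)$ and $(z_2=0)$ in $D$. To see that the minimum of the three is attained, one checks log canonicity of the pulled-back pair on $Y$: the strict transform of $(z_1^{m_1}+z_2^{m_2}=0)$ is a disjoint union of smooth curves each meeting $E$ transversally at smooth points of $Y$, while at the (at most) two cyclic quotient singularities of $Y$ on $E$ the check reduces to Lemma~\ref{lemma:basic-property}.

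The main obstacle is the lower-bound argument for the first formula: one must rule out that some exceptional divisor over $O$ beyond $E$ computes a smaller threshold. After the $(n,m)$-blow-up the pair $(Y,\lambda\widetilde C+E)$ already carries coefficient $1$ along $E$, so any further tightening of the bound would have to come from a singularity of $\widetilde C$ on $E$; but such a singularity is governed by the subsequent Puiseux pairs of $C$, and an inductive application of the same computation shows that these contribute at worst the same constraint as the first one, so $E$ indeed computes the log canonical threshold.
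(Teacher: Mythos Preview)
The paper does not prove this lemma; it simply refers the reader to \cite{Ko97} and \cite{Ku99}. Your weighted-blow-up strategy is exactly the approach used in those references, so at the level of method there is nothing to compare.

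Your sketch for the first formula, however, contains one inaccuracy and one unnecessary complication. The assumption $\gcd(m,n)=1$ is not valid in general: for an irreducible germ the first pair of Puiseux exponents can have $d=\gcd(m,n)>1$ (for instance $z_1=t^4$, $z_2=t^6+t^7$ gives $(m,n)=(4,6)$, and one checks $\lct=5/12=1/4+1/6$, not $5/6$). One must blow up with coprime weights $(n/d,m/d)$; then $\mult_E(C)=mn/d$ and the discrepancy is $(m+n)/d-1$, so the bound $\lct\le 1/m+1/n$ survives, but now $\widetilde C$ meets $E$ at a single smooth point $P$ of $Y$ with $(\widetilde C\cdot E)_P=d$. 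More importantly, your induction on higher Puiseux pairs is both vague and unnecessary. Since $P$ is a smooth point of $Y$, Lemma~\ref{lemma:adjunction} (inversion of adjunction on a surface) shows that $(Y,\lambda\widetilde C+E)$ is log canonical at $P$ exactly when $\lambda\cdot(\widetilde C\cdot E)_P\le 1$, i.e.\ $\lambda\le 1/d$, \emph{regardless of whether $\widetilde C$ is singular there}. Since $m/d$ and $n/d$ are coprime integers $\ge 2$, one has $1/m+1/n<1/d$, and the lower bound follows immediately without any further blow-ups. Your plan for the second formula is fine; only note that the check at the toric fixed points really uses Proposition~\ref{proposition:lc-by-finite-morphism} (pass to the orbifold cover) rather than Lemma~\ref{lemma:basic-property}.
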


Throughout the proof of Theorem~\ref{theorem:main}, Inversion of
Adjunction that enables us to compute log canonical thresholds on
lower dimensional varieties will be frequently utilized. Let $X$
be a normal (but not necessarily projective) variety. Let $S$ be a
smooth Cartier divisor on $X$ and $B$ be an effective
$\mathbb{Q}$-Cartier $\mathbb{Q}$-divisor on $X$ such that
$K_X+S+B$ is $\mathbb{Q}$-Cartier and
$S\not\subseteq\mathrm{Supp}(B)$.

\begin{theorem}
\label{theorem:adjunction}
The log pair $(X, S+B)$ is log canonical along $S$ if and only if the log pair $(S, B|_S)$ is log canonical.
\end{theorem}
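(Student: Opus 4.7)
The plan is to establish both implications, with the converse being the substantive content (Inversion of Adjunction). The forward direction, that log canonicity of $(X,S+B)$ along $S$ implies log canonicity of $(S,B|_S)$, follows from the classical adjunction formula
$$
(K_X+S+B)|_S \sim_{\mathbb{Q}} K_S + B|_S,
$$
which is valid since $S$ is a smooth Cartier divisor and $S\not\subseteq\mathrm{Supp}(B)$. By tracking discrepancies through a common log resolution, any divisor over $S$ that violates log canonicity of $(S,B|_S)$ lifts to a divisor over $X$ violating log canonicity of $(X,S+B)$ at a point of $S$.

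For the converse I would begin by choosing a log resolution $\pi\colon Y\to X$ of $(X,S+B)$ such that the restriction $\pi|_{\tilde{S}}\colon \tilde{S}\to S$ to the strict transform of $S$ is itself a log resolution of $(S,B|_S)$; this is possible because $S$ is smooth and Cartier. Writing
$$
K_Y + \tilde{S} + \pi_{*}^{-1}B = \pi^{*}(K_X+S+B) + \sum_i a_i E_i,
$$
the pair $(X,S+B)$ fails to be log canonical along $S$ exactly when some $a_i<-1$ for a divisor $E_i$ with $\pi(E_i)\cap S\neq\emptyset$. Assume such an $E_i$ exists, aiming for a contradiction.

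The key tool at this point is the Shokurov--Koll\'ar connectedness theorem applied to $\pi$: the non-klt locus is connected in every fibre over a point where the image has non-klt centres. Since $\tilde{S}$ is itself a log canonical place of $(X,S+B)$ (it has coefficient $1$), this connectedness forces the offending divisor $E_i$ to meet $\tilde{S}$. Restricting along $\tilde{S}$ and using $(K_Y+\tilde{S})|_{\tilde{S}} = K_{\tilde{S}}$, one then obtains a divisor $E_i\cap \tilde{S}$ over $S$ whose discrepancy in the pair $(S,B|_S)$ is at most $a_i<-1$, contradicting the hypothesis that $(S,B|_S)$ is log canonical.

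The main obstacle is the rigorous invocation of the connectedness theorem, whose proof rests on Kawamata--Viehweg vanishing applied to a suitable round-up of the discrepancy divisor in a neighbourhood of $S$. A subsidiary technical point is arranging the log resolution so that $\pi|_{\tilde{S}}$ is itself a log resolution and no exceptional divisor meeting $\tilde{S}$ collapses under restriction; this is handled by a standard application of Hironaka's theorem and the fact that $\tilde{S}$ is a smooth Cartier divisor on $Y$. Complete details of this argument are given in \cite{Ko97}.
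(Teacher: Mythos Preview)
The paper does not supply its own proof of this theorem: it is stated in the Preliminaries section as a known result, with the reader referred to \cite{Ko97} for the argument. Your sketch via the adjunction formula for the easy direction and the Shokurov--Koll\'ar connectedness theorem for the converse is the standard route found in that reference, so your proposal is consistent with what the paper relies on.
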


In the case when $X$ is a surface,
Theorem~\ref{theorem:adjunction} can be stated in terms of local
intersection numbers.

\begin{lemma}
\label{lemma:adjunction} Suppose that $X$ is a surface.  Let $P$
be a smooth point of $X$ such that it is also a smooth point of
$S$. Then the log pair $(X, S+B)$ is log canonical at the point
$P$ if and only if the local intersection number of $B$ and $S$ at
the point $P$ is at most $1$. In particular, if the log pair $(X,
mS+B)$ is not log canonical at the point $P$ for $m\leqslant 1$,
then $B\cdot S>1$.
\end{lemma}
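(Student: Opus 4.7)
The plan is to deduce this directly from Theorem~\ref{theorem:adjunction} (Inversion of Adjunction), working in a Zariski neighborhood of $P$. Since $X$ is a surface that is smooth at $P$ and $S$ is a Cartier divisor smooth at $P$, I may shrink $X$ around $P$ so that $X$ is smooth, $S$ is smooth, and the hypothesis $S\not\subseteq\mathrm{Supp}(B)$ holds (otherwise the local intersection number $(B\cdot S)_P$ is ill-defined in the usual sense, so this is implicit in the statement). Under these conditions Theorem~\ref{theorem:adjunction} applies and tells me that $(X,S+B)$ is log canonical at $P$ if and only if $(S,B|_S)$ is log canonical at $P$.

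The next step is to unwind what log canonicity of $(S,B|_S)$ at $P$ means. Since $S$ is a smooth curve at $P$, the restriction $B|_S$ is an effective $\mathbb{Q}$-divisor on a smooth one-dimensional germ, and such a divisor is log canonical at $P$ if and only if its multiplicity at $P$ is at most $1$ (this is the one-dimensional special case of Lemma~\ref{lemma:basic-property}, or simply the observation that a single point with coefficient $c$ is log canonical iff $c\leqslant 1$). By the very definition of local intersection number on a surface, $\mathrm{mult}_P(B|_S)=(B\cdot S)_P$. Chaining the two equivalences yields the first assertion of the lemma.

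For the ``in particular'' part I would argue by contrapositive. Assume $B\cdot S\leqslant 1$; then by the first part $(X,S+B)$ is log canonical at $P$. If $m\leqslant 1$, then $mS+B\leqslant S+B$ as effective $\mathbb{Q}$-divisors, and log canonicity is preserved when the boundary is replaced by a smaller effective one (since the discrepancies $a(E,mS+B)$ along any exceptional divisor $E$ are at least $a(E,S+B)\geqslant -1$). Hence $(X,mS+B)$ is also log canonical at $P$, contradicting the hypothesis, so $B\cdot S>1$. The only subtle point in the whole argument is checking that the local hypotheses of Theorem~\ref{theorem:adjunction} can indeed be arranged near $P$; everything else is bookkeeping.
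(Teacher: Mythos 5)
Your argument is correct and is exactly the derivation the paper has in mind: the paper states this lemma without proof as an immediate reformulation of Theorem~\ref{theorem:adjunction} in the surface case, and your unwinding — restricting to the smooth curve $S$, where log canonicity of $(S,B|_S)$ at $P$ is equivalent to $\mathrm{mult}_P(B|_S)=(B\cdot S)_P\leqslant 1$, combined with monotonicity of log canonicity under decreasing the boundary — is the intended route. The only step worth making explicit is the passage in the second assertion from the global number $B\cdot S$ to the local one $(B\cdot S)_P$, which uses that all local intersection multiplicities of the effective divisor $B$ with $S$ are non-negative.
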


\begin{lemma}\label{lemma:multiplicity}
Let $D$ be an effective $\mathbb{Q}$-divisor such that $K_X+D$ is $\mathbb{Q}$-Cartier. For a smooth point $P$ of $X$, the log pair $(X, D)$ is log canonical at the point $P$ if $\mult_P(D)\leqslant 1$.
\end{lemma}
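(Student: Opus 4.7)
The plan is to derive the statement directly from the lower bound $\lct_P(\C^n, D) \geq 1/\mult_P(D)$ supplied by Lemma~\ref{lemma:basic-property}(1), combined with the standard scaling behaviour of the log canonical threshold. Since log canonicity is a local property at $P$ and $P$ is a smooth point of $X$, I would first pass to a Zariski neighbourhood of $P$ on which $X$ admits \'etale (or analytic) local coordinates identifying it with an open subset of $\C^n$, where $n=\dim X$, and sending $P$ to the origin; the invariants $\mult_P(\cdot)$, $\Q$-Cartier-ness and log canonicity are all preserved under this reduction.

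The second step is to represent $D$ by a single regular function after clearing denominators. Since $\O_{X,P}$ is a regular local ring, and hence a UFD, every effective integral Weil divisor is locally principal in a neighbourhood of $P$. I would pick a positive integer $N$ with $ND$ integral and, after possibly shrinking $X$ once more, write $ND=\mathrm{div}(f)$ for some regular $f\in\O_X$; by definition one then has $\mult_P(f)=N\cdot\mult_P(D)$.

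Applying Lemma~\ref{lemma:basic-property}(1) to $f$ yields
$$\lct_P\bigl(\C^n,\mathrm{div}(f)\bigr)\;\geq\;\frac{1}{\mult_P(f)}\;=\;\frac{1}{N\,\mult_P(D)},$$
and combining this with the scaling identity $\lct_P(X,D)=N\cdot\lct_P(X,ND)$ gives $\lct_P(X,D)\geq 1/\mult_P(D)\geq 1$ under the hypothesis $\mult_P(D)\leq 1$. This is exactly the assertion that the pair $(X,D)$ is log canonical at $P$.

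In truth there is no real obstacle here: the whole content of the lemma is packaged into Lemma~\ref{lemma:basic-property}(1), and the only step that requires a moment's thought is the reduction of a $\Q$-divisor to a single-equation divisor, which is immediate from smoothness of $X$ at $P$. For this reason I expect the authors' own proof to be at most a one-line reduction to Lemma~\ref{lemma:basic-property} together with the $\Q$-scaling identity.
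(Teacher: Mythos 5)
Your argument is correct, and it is the standard one: reduce to a principal divisor at the smooth point $P$ (using that $\mathcal{O}_{X,P}$ is factorial), invoke the lower bound $\lct_O(\mathbb{C}^n,\mathrm{div}(f))\geqslant 1/\mult_O(f)$ from Lemma~\ref{lemma:basic-property}(1), and rescale by $N$ to get $\lct_P(X,D)\geqslant 1/\mult_P(D)\geqslant 1$. Note that the paper offers no proof of this lemma at all --- it is stated as a basic fact alongside the adjunction statements, with the reader referred to \cite{Ko97} --- so there is nothing to compare against; the only point worth making explicit in your write-up is that $\lct_P(X,D)\geqslant 1$ yields log canonicity of $(X,D)$ itself because the set of $\lambda$ for which $(X,\lambda D)$ is log canonical at $P$ is closed (the discrepancies are affine in $\lambda$), so the supremum is attained.
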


Throughout the proof of Theorem~\ref{theorem:main}, we interrelate Lemma~\ref{lemma:multiplicity} with Lemma~\ref{lemma:adjunction} to get some contradictions. To do so, we need the following lemma that plays the role of a bridge between them.
\begin{lemma}\label{lemma:convexity}
Let $D_1$ and $D_2$ be effective $\mathbb{Q}$-divisors on $Y$ with
$D_1\sim_{\mathbb{Q}} D_2$. Suppose that the pair $(X,D_1)$ is not
log canonical at a point $P\in Y$ but the pair $(X,D_2)$ is  log
canonical at the point $P$. Then there is an effective
$\mathbb{Q}$-divisor  $D$ on $Y$ such that
\begin{itemize}
\item  $D\sim_{\mathbb{Q}} D_1$; \item at least one irreducible
component of $D_2$ is not contained in the support of $D$; \item
the pair $(X, D)$ is not log canonical at the point $P$.
\end{itemize}
\end{lemma}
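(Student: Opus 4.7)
The plan is to use the standard tie-breaking trick: deform $D_1$ towards $D_2$ as long as the resulting divisor remains effective, then stop at the critical moment when some component of $D_2$ first drops out. Convexity of the log canonical condition will force the resulting divisor to remain non-log-canonical at $P$.

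First I would collect all irreducible components appearing in either divisor and write
\[
D_1 = \sum_{i} a_i C_i, \qquad D_2 = \sum_{i} b_i C_i,
\]
with $a_i, b_i \ge 0$ and at least one $b_{i_0} > 0$. For $\lambda \in [0,1)$, set
\[
D_\lambda = \frac{1}{1-\lambda}\bigl(D_1 - \lambda D_2\bigr) = \sum_i \frac{a_i - \lambda b_i}{1-\lambda}\, C_i.
\]
Then $D_\lambda \sim_{\Q} D_1$ for every $\lambda$, since $D_2 \sim_{\Q} D_1$ implies $D_1 - \lambda D_2 \sim_{\Q} (1-\lambda) D_1$.

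Next I would locate the critical parameter
\[
\lambda^{*} = \min_{i\,:\,b_i > 0}\, \frac{a_i}{b_i} \in [0,1).
\]
(The minimum is in $[0,1)$: it is nonnegative since all $a_i, b_i \ge 0$, and since $D_1$ is effective and $D_2 \sim_{\Q} D_1$ with $D_2 \neq 0$ we may take $\lambda^{*} < 1$; if $D_2 = 0$ the statement is vacuous.) By construction $D_{\lambda^{*}}$ is an effective $\Q$-divisor, and there exists an index $i^{*}$ with $b_{i^{*}} > 0$ and $a_{i^{*}} - \lambda^{*} b_{i^{*}} = 0$. Thus the component $C_{i^{*}}$ of $D_2$ does not appear in the support of $D_{\lambda^{*}}$.

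It remains to verify that $(X, D_{\lambda^{*}})$ is not log canonical at $P$. Rewriting the definition,
\[
D_1 = (1-\lambda^{*})\, D_{\lambda^{*}} + \lambda^{*}\, D_2.
\]
Here I would invoke the convexity of the log canonical locus: if $\pi \colon Y \to X$ is a log resolution of $(X, D_{\lambda^*} + D_2)$, the vanishing order $\mathrm{ord}_E(D)$ along any prime divisor $E$ on $Y$ is linear in $D$, hence so are the log discrepancies $a_E(X,D) = a_E(X) - \mathrm{ord}_E(D)$. Consequently the set of effective $\Q$-divisors $E$ with $(X,E)$ log canonical at $P$ is convex. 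If $(X, D_{\lambda^{*}})$ were log canonical at $P$, then so would be the convex combination $(X, D_1)$, contradicting the hypothesis. Therefore $D := D_{\lambda^{*}}$ satisfies all three required properties, and the lemma is proved. No serious obstacle arises; the only point requiring mild care is the convexity of log canonicity, which is immediate from linearity of multiplicities along exceptional divisors.
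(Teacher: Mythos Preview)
Your proof is correct and is essentially identical to the paper's: both use the tie-breaking deformation $D_\lambda = \frac{1}{1-\lambda}(D_1 - \lambda D_2)$, choose the critical value $\lambda^* = \min_{b_i>0} a_i/b_i$, and invoke convexity of the log canonical condition via $D_1 = (1-\lambda^*)D_{\lambda^*} + \lambda^* D_2$. The only cosmetic difference is that the paper separately disposes of the case where some component of $D_2$ is already absent from $D_1$ by taking $D=D_1$, whereas you absorb this into the case $\lambda^*=0$.
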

\begin{proof}
Write $D_2=\sum_{i=1}^rb_iC_i$ where $b_i$'s are positive rational numbers and $C_i$'s are distinct irreducible and reduced divisors. Also, we write $D_1=\Delta+\sum_{i=1}^{r}e_iC_i$ where $e_i$'s are non-negative rational numbers and $\Delta$ is an effective $\mathbb{Q}$-divisor whose support contains none of $C_i$'s. Suppose that $e_i>0$ for each $i$. If not, then we put $D=D_1$.
Let
$$
\alpha=\mathrm{min}\left\{\frac{e_{i}}{b_{i}}\ \Big\vert\ i= 1,2, \ldots, r\right\}.%
$$
Then the positive rational number $\alpha$ is less than $1$ since
$D_1\sim_{\mathbb{Q}} D_2$. Put
\[\begin{split} D&=\frac{1}{1-\alpha}D_1-\frac{\alpha}{1-\alpha}D_2\\
&=\frac{1}{1-\alpha}\Delta+\sum_{i=1}^r\left(\frac{e_i-\alpha b_i}{1-\alpha}\right)C_i. \\ \end{split}\]
It is easy to see that the divisor $D$ satisfies the first two conditions. If the pair $(X, D)$ is log canonical at the point $P$, then the pair $(X, D_1)=(X, (1-\alpha)D+\alpha D_2)$ must be log canonical at the point $P$. Therefore, the divisor $D$ also satisfies the last condition.
\end{proof}

In the present paper, we deal with surfaces with at most quotient singularities. However, the statements mentioned so far require smoothness of the ambient space for us to utilize them to the fullest. Fortunately, the following proposition enables us to apply the statements with ease since we have a natural  finite morphism of a germ of the origin in $\mathbb{C}^2$ to a germ of a quotient singularity that is ramified only at a point.
\begin{proposition}[\cite{Ko97}]\label{proposition:lc-by-finite-morphism}
Let $f\colon Y\to X$ be a finite morphism between normal varieties
and assume that $f$ is unramified outside a set of codimension
two. Let $D$ be an effective $\mathbb{Q}$-Cartier
$\mathbb{Q}$-divisor. Then a log pair $(X, D)$ is log canonical
(resp. Kawamata log terminal) if and only if the log pair $(Y,
f^*D)$ is log canonical (resp. Kawamata log terminal).
\end{proposition}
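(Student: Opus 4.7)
The plan is to exploit the ramification identity $K_Y \qlineq f^*K_X$, which holds precisely because $f$ is unramified outside a codimension-two set, and then compare discrepancies on compatible log resolutions upstairs and downstairs. First I would record that, since $X$ and $Y$ are normal, the canonical class $K_X$ is a well-defined Weil divisor class; the difference $K_Y - f^*K_X$ is supported on the ramification locus of $f$, which by hypothesis has codimension at least two, so the difference vanishes as a $\mathbb{Q}$-Weil divisor. Consequently $K_Y + f^*D \qlineq f^*(K_X + D)$, and pullback of log discrepancies makes sense.

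Next I would construct compatible log resolutions. Take a log resolution $\pi\colon X'\to X$ of $(X,D)$, form the normalisation $Z$ of the fibre product $Y\times_X X'$, and let $\tau\colon Y'\to Z$ be a resolution chosen so that $\sigma := \mathrm{pr}_Y\circ\tau \colon Y'\to Y$ is a log resolution of $(Y, f^*D)$. Let $g\colon Y'\to X'$ be the induced finite morphism. Since $f$ is étale in codimension one, the same is true of $g$ away from the $\pi$-exceptional locus and the strict transform of $D$, so the ramification divisor of $g$ is supported on the divisorial components that we already track.

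The heart of the argument is the discrepancy comparison. Fix a prime divisor $E$ on $X'$ appearing in $K_{X'} - \pi^*(K_X+D)$ with coefficient $a(E;X,D)$, and let $F$ be any prime divisor on $Y'$ with $g(F)=E$, of ramification index $e=e(F/E)$. Pulling back via $g$ and combining with the ramification formula for $g$ gives
\[
a\big(F;Y,f^*D\big)+1 \;=\; e\cdot\bigl(a\big(E;X,D\big)+1\bigr).
\]
Since $e\geq 1$, this shows $a(F;Y,f^*D)\geq -1$ if and only if $a(E;X,D)\geq -1$, and strict inequality on one side is equivalent to strict inequality on the other. Because every divisorial valuation on $Y$ is realised by some such $F$ after refining the log resolution, and because every $E$ can be lifted to a corresponding $F$ by the fibre-product construction, the log canonical and Kawamata log terminal conditions on $(X,D)$ and $(Y,f^*D)$ are equivalent.

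The main obstacle I expect is the careful bookkeeping of ramification at the resolution level, namely verifying that $g\colon Y'\to X'$ really is étale outside the prepared divisors and that the ramification indices appearing in the displayed formula are those of $g$, not of $f$. This reduces, after localising at the generic point of each relevant prime divisor, to the standard Hurwitz formula for finite extensions of discrete valuation rings, so the difficulty is technical rather than conceptual; once it is in place, the equivalence of klt and log canonicity transfers cleanly in both directions.
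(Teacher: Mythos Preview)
The paper does not prove this proposition; it is quoted directly from \cite{Ko97} and used as a black box throughout. Your sketch is the standard argument (essentially what appears in Koll\'ar's survey): the key identity $a(F;Y,f^*D)+1 = e\cdot\bigl(a(E;X,D)+1\bigr)$ is exactly the Hurwitz-type discrepancy comparison, and since $e\geq 1$ it transfers both the $\geq -1$ and $>-1$ conditions in each direction, so the outline is correct.
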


The following two lemmas will be useful for this paper. The first
lemma is just a reformulation of Lemma~\ref{lemma:adjunction}
mixed with Proposition~\ref{proposition:lc-by-finite-morphism}
that we can apply to our cases immediately.

Suppose that $X$ is a quasismooth well-formed hypersurface in
$\mathbb{P}=\mathbb{P}(a_0,a_1,a_{2},a_{3})$ of degree~$d$.

\begin{lemma}
\label{lemma:handy-adjunction} Let $C$ be a reduced and
irreducible curve on $X$ and $D$ be an  effective $\Q$-divisor on
$X$. Suppose that for a given positive rational number $\lambda$
we have $\lambda \mult_C(D)\leqslant 1$. If $\lambda
(C\cdot\nolinebreak D-\nolinebreak(\mult_C(D)) C^2)\leqslant 1$,
then the pair $(X, \lambda D)$ is log canonical at each smooth
point $P$ of $C$ not in $\mathrm{Sing}(X)$. Furthermore, if the
point $P$ of $C$ is a singular point of $X$ of type
$\frac{1}{r}(a,b)$ and $r\lambda (C\cdot D-(\mult_C(D))
C^2)\leqslant 1$, then the pair $(X, \lambda D)$ is log canonical
at $P$.
\end{lemma}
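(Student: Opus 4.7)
I would split $D$ along $C$ and reduce to Lemma~\ref{lemma:adjunction} via an elementary bound of local by global intersection numbers; for the quotient-singular case, I would lift to a smooth orbifold chart using Proposition~\ref{proposition:lc-by-finite-morphism}. Write
\[
D \;=\; m\,C + \Omega, \qquad m \;=\; \mult_C(D),
\]
with $\Omega$ an effective $\Q$-divisor not containing $C$ in its support. The hypothesis gives $\lambda m \le 1$, which is precisely the input required by Lemma~\ref{lemma:adjunction}.

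At a point $P\notin\Sing(X)$ lying in the smooth locus of $C$, the argument is by contradiction. If $(X,\lambda D) = (X,\lambda m C + \lambda\Omega)$ were not log canonical at $P$, then Lemma~\ref{lemma:adjunction} applied with $S=C$ and $B=\lambda\Omega$ would force the local intersection number $(\lambda\Omega\cdot C)_P$ to exceed $1$. But $\Omega$ and $C$ share no irreducible components, so $\Omega\cdot C$ is an effective $0$-cycle, giving
\[
(\lambda\Omega\cdot C)_P \;\le\; \lambda\,(\Omega\cdot C) \;=\; \lambda\bigl(C\cdot D - m\,C^2\bigr) \;\le\; 1,
\]
contradicting the previous inequality.

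For a quotient singularity $P$ of type $\tfrac{1}{r}(a,b)$, I would choose a local cyclic cover $f\colon \tilde U \to U$ of degree $r$, ramified only at $\tilde P := f^{-1}(P)$, with $\tilde U$ smooth. By Proposition~\ref{proposition:lc-by-finite-morphism}, log canonicity of $(X,\lambda D)$ at $P$ is equivalent to that of $(\tilde U,\lambda f^*D)$ at $\tilde P$. Codimension-one étaleness of $f$ guarantees that $f^*C$ is reduced at $\tilde P$, and any component $\tilde C$ of $f^*C$ through $\tilde P$ satisfies $\mult_{\tilde C}(f^*D) = m$, so $\lambda\mult_{\tilde C}(f^*D)\le 1$. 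Running the same Lemma~\ref{lemma:adjunction} argument on $\tilde U$ with $\tilde C$ in place of $C$, the local intersection bound needed at $\tilde P$ follows from the standard scaling identity $(f^*A\cdot f^*B)_{\tilde P} = r\,(A\cdot B)_P$ combined with the hypothesis $r\lambda(C\cdot D - m\,C^2)\le 1$.

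The smooth case is essentially one line once Lemma~\ref{lemma:adjunction} is in place; the main obstacle lies in the quotient-singular case, where one must keep careful track of the pullback of $C$ (verifying its reducedness at $\tilde P$ and handling the possibility that $f^{-1}(C)$ has several branches meeting at $\tilde P$), and then check that the factor $r$ produced by the orbifold cover is captured exactly by the factor $r$ appearing in the hypothesis. Both points rest on the codimension-one étaleness of the cyclic cover associated to $\tfrac{1}{r}(a,b)$, but the clean implementation is the technical heart of the argument.
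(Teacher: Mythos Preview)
Your proposal is correct and follows essentially the same approach as the paper: decompose $D = mC + \Omega$, then argue by contradiction via Lemma~\ref{lemma:adjunction} in the smooth case, and via Proposition~\ref{proposition:lc-by-finite-morphism} combined with Lemma~\ref{lemma:adjunction} in the quotient-singular case. The paper's own proof is terser---it simply records the contradictory chain $\frac{1}{r} < \lambda\,\Omega\cdot C \le \frac{1}{r}$ in the singular case, bounding the local contribution at $P$ below by $\frac{1}{r}$ (via the cover) and above by the global intersection---whereas you spell out the cover explicitly and invoke the scaling identity for local intersection numbers; the content is the same.
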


\begin{proof}
We may write $D=mC+\Omega$, where $\Omega$ is an effective divisor
whose support does not contain the curve $C$. Suppose that the
pair  $(X, \lambda D)$ is not log canonical at a smooth point $P$
of $C$ not in $\mathrm{Sing}(X)$. Since $\lambda m\leqslant 1$,
the pair $(X, C+\lambda \Omega)$ is not log canonical at the point
$P$. Then by Lemma~\ref{lemma:adjunction} we obtain an absurd
inequality
\[1<\lambda \Omega\cdot C =\lambda C\cdot (D-mC)\leqslant 1.\]
Also, if the point $P$ is a singular point of $X$, then we obtain from Lemma~\ref{lemma:adjunction} and Proposition~\ref{proposition:lc-by-finite-morphism}
\[\frac{1}{r}<\lambda \Omega\cdot C =\lambda C\cdot (D-mC)\leqslant \frac{1}{r}.\]
This proves the second statement.
\end{proof}

Let $D$ be an effective $\mathbb{Q}$-divisor on $X$ such that
$$
D\sim_{\mathbb{Q}}
\mathcal{O}_{\mathbb{P}(a_0,\,a_{1},\,a_{2},\,a_{3})}\big(I\big)\Big\vert_{X}.
$$
The next lemma will be applied to show that the log pair $(X, D)$ is log canonical at some smooth points on $X$.

\begin{lemma}
\label{lemma:Carolina} Let $k$ be a positive integer. Suppose that $H^0(\mathbb{P}, \mathcal{O}_{\mathbb{P}}(k))$ contains \begin{itemize}
\item at least two different monomials of the form $x^{\alpha}y^{\beta}$,%
\item at least two different monomials of the form $x^{\gamma}z^{\delta}$.
\end{itemize}
For a smooth point $P$ of $X$ in the outside of $C_x$,
$$
\mult_{P}\big(D\big)\leqslant\frac{Ikd}{a_0a_1a_2a_{3}}
$$
if either
$H^0(\mathbb{P}, \mathcal{O}_{\mathbb{P}}(k))$ contains
at least two different monomials of the form $x^{\mu}t^{\nu}$ or the point $P$ is not contained in a curve contracted by the projection $\psi :X\dasharrow \mathbb{P}(a_0, a_1,a_2)$.
Here, $\alpha$, $\beta$, $\gamma$, $\delta$, $\mu$ and $\nu$ are non-negative
integers.
\end{lemma}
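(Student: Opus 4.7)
The plan is a Bezout-type bound on the surface $X$. Since $\mathcal{O}_X(a)\cdot\mathcal{O}_X(b) = abd/(a_0a_1a_2a_3)$, any effective $M\in|\mathcal{O}_X(k)|$ satisfies $M\cdot D = Ikd/(a_0a_1a_2a_3)$. Hence, if such an $M$ can be produced so that $P\in M$ and no irreducible component of $D$ passing through $P$ is a component of $M$, then at the smooth point $P$ the inequality
\[
\mult_P(D)\leq\mult_P(D)\cdot\mult_P(M)\leq(M\cdot D)_P\leq M\cdot D=\frac{Ikd}{a_0a_1a_2a_3}
\]
gives precisely the claim. The entire task is therefore the construction of such an $M$.

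Let $V_{xy},V_{xz}\subset H^0(\mathbb{P},\mathcal{O}_{\mathbb{P}}(k))$ be the subspaces spanned by the given $x^\alpha y^\beta$- and $x^\gamma z^\delta$-monomials; both have dimension $\geq 2$ by hypothesis. One can pick nonzero sections $s_{xy}\in V_{xy}$ and $s_{xz}\in V_{xz}$ vanishing at $P$ (the hyperplane of sections vanishing at $P$ meets any $V$ of dimension $\geq 2$ in a positive-dimensional subspace, and if evaluation at $P$ is already zero on the whole $V$ any nonzero element works). A general member of the pencil $\Pi=\langle s_{xy},s_{xz}\rangle$ restricts to a divisor $M\in|\mathcal{O}_X(k)|$ passing through $P$, and such a generic $M$ contains an irreducible curve $C\subset X$ as a component only if $C$ is a base component of $\Pi|_X$, i.e., both $s_{xy}$ and $s_{xz}$ vanish identically along $C$.

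Since $s_{xy}$ involves only $x,y$ and $s_{xz}$ only $x,z$, such a base component $C$ either sits inside a coordinate divisor $C_y$ or $C_z$ (the case $C\subseteq C_x$ is excluded by $P\notin C_x$), or else the ratios $(x:y)$ and $(x:z)$ are both constant along $C$, which is precisely the condition that $C$ is contracted by $\psi\colon X\dasharrow\mathbb{P}(a_0,a_1,a_2)$. The first possibility is handled by a generic choice of $s_{xy},s_{xz}$ within the two-dimensional freedom of the $V$'s, so the issue reduces to whether a component of $D$ through $P$ is $\psi$-contracted. This is exactly what the two alternative hypotheses handle: in the first alternative, $P$ is not contained in any $\psi$-contracted curve and we are done; in the second, we pick $s_{xt}\in V_{xt}$ vanishing at $P$ and enlarge to $\Pi'=\langle s_{xy},s_{xz},s_{xt}\rangle$, so that any base component of $\Pi'|_X$ would force all three ratios $(x:y),(x:z),(x:t)$ to be constant, pinning $C$ to a single point of $\mathbb{P}(a_0,a_1,a_2,a_3)$—a contradiction.

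The main obstacle I anticipate is the careful bookkeeping around the coordinate divisors $C_y$ and $C_z$ as potential spurious base components, and the verification that the generic-perturbation argument within the $V$'s truly removes them; the two-dimensionality of each $V_{xy},V_{xz}$ and the fact that an irreducible curve cannot lie in both $C_y$ and $C_z$ simultaneously should provide sufficient freedom to handle these cases.
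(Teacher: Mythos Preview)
Your approach is correct and is the one the paper invokes (it simply cites \cite[Lemma~3.3 and Corollary~3.4]{Ara02} rather than reproving anything). Two minor points to tighten:

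The $C_y$/$C_z$ dichotomy and the accompanying ``generic perturbation'' are spurious. If $C\not\subset C_x$ is irreducible and $s_{xy}|_C=0$, then in the chart $x=1$ the section $s_{xy}$ becomes a polynomial in $y$ alone, so $y$ is constant on $C$; the case $C\subset C_y$ is just that constant being $0$. Together with $s_{xz}|_C=0$ you get that $y$ and $z$ are both constant on $C$, hence $C$ is $\psi$-contracted, with no separate coordinate-divisor case. (The perturbation would not help anyway: if $P\in C_y$ then every $s_{xy}\in V_{xy}$ vanishing at $P$ is divisible by $y$.)

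You only arrange that $M$ and $D$ share no component \emph{through $P$}, and then assert $(M\cdot D)_P\le M\cdot D$. This is not automatic if they share a component $E$ away from $P$ with $E^2<0$, and nothing in your setup prevents both $M$ and $D$ from containing a component of $C_x$ or a stray $\psi$-fibre. The easy fix: split $M=M_1+M_2$ with $M_1$ the sum of components through $P$; then $(M\cdot D)_P=(M_1\cdot D)_P\le M_1\cdot D\le M\cdot D$, the last inequality because $M_2\cdot D=M_2\cdot(-K_X)\geqslant 0$ by ampleness of $-K_X$.
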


\begin{proof}
The first case follows from \cite[Lemma~3.3]{Ara02}.
Arguing as in the proof of \cite[Corollary~3.4]{Ara02}, we can also obtain
the second case.
\end{proof}

Let us conclude this section by mentioning two results that are
never used in this paper, but nevertheless can be used to give
shorter proofs of Corollaries~\ref{corollary:1} and
\ref{corollary:3}. Suppose that $X$ is given by a quasihomogeneous
equation
$$
f\big(x,y,z,t\big)=0\subset\mathbb{P}\big(a_0,a_1,a_{2},a_{3}\big)\cong\mathrm{Proj}\Big(\mathbb{C}\big[x,y,z,t\big]\Big),
$$
where $\mathrm{wt}(x)=a_{0}\leqslant \mathrm{wt}(y)=a_{1}\leqslant
\mathrm{wt}(z)=a_{2}\leqslant\mathrm{wt}(t)=a_{3}$.

\begin{lemma}
\label{lemma:Boyer} Suppose that $I=\sum_{i=0}^{3}a_{i}-d>0$. Then
$$
\mathrm{lct}\big(X\big)\geqslant\left\{%
\aligned
&\frac{a_{0}a_{1}}{dI},\\%
&\frac{a_{0}a_{2}}{dI}\ \text{if}\ f(0,0,z,t)\ne 0,\\%
&\frac{a_{0}a_{3}}{dI}\ \text{if}\ f(0,0,0,t)\ne 0.\\%
\endaligned\right.%
$$
\end{lemma}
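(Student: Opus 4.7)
I will prove the first bound $\lct(X)\ge a_0a_1/(dI)$; the other two follow by the same method with $y$ replaced by $z$ or $t$, the hypotheses $f(0,0,z,t)\ne 0$ and $f(0,0,0,t)\ne 0$ being precisely what guarantees that the base loci of the analogous pencils stay finite on $X$. Set $\lambda=a_0a_1/(dI)$, and suppose for contradiction that for some effective $\Q$-divisor $D\sim_{\Q}-K_X\sim_{\Q}\O_{\P}(I)|_X$ the pair $(X,\lambda D)$ fails to be log canonical at some point $P\in X$. The key numerical input is the intersection identity $\O_X(k)\cdot\O_X(k')=kk'd/(a_0a_1a_2a_3)$ on the surface $X$.

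The first step is to write $D=m_xC_x+D_0$ with $C_x\not\subseteq\Supp(D_0)$. Effectivity of $D_0\sim_{\Q}\O_X(I-m_xa_0)$ forces $m_x\le I/a_0$, so $\lambda m_x\le a_1/d\le 1$ (one has $d\ge a_3\ge a_1$ for a quasismooth well-formed hypersurface with $I>0$). The central numerical estimate is
$$
\lambda\bigl(D_0\cdot C_x\bigr)=\frac{a_0(I-m_xa_0)}{Ia_2a_3}\le\frac{a_0}{a_2a_3}\le 1.
$$
Thus at points $P\in C_x$ that are smooth on $X$, Lemma~\ref{lemma:handy-adjunction} applied with $C=C_x$ yields log canonicity immediately; at singular points $P\in C_x$ of index $r$, the same lemma requires $r\lambda(D_0\cdot C_x)\le 1$, equivalently $ra_0\le a_2a_3$. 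An enumeration of the coordinate strata lying on $C_x$ shows that $r\le a_3$ in every case, whence $ra_0\le a_0a_3\le a_2a_3$. The symmetric argument with $C_y$ in place of $C_x$ handles $P\in C_y\setminus C_x$: one checks $\lambda m_y\le a_0/d\le 1$, $\lambda(D'\cdot C_y)\le a_1/(a_2a_3)\le 1$, and $ra_1\le a_2a_3$ at the relevant strata.

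For $P\notin C_x\cup C_y$ I would use the pencil $L_{\alpha,\beta}=\{\alpha x^{a_1}+\beta y^{a_0}=0\}\cap X$ of curves in $|\O_X(a_0a_1)|$, taking the unique member $L_P$ through $P$ (well-defined since $x(P)y(P)\ne 0$). If no component of $L_P$ lies in $\Supp(D)$, then the inequality $\mult_P(D)\cdot\mult_P(L_P)\le L_P\cdot D=Id/(a_2a_3)$ together with Lemma~\ref{lemma:multiplicity} gives $\lambda\mult_P(D)\le a_0a_1/(a_2a_3)\le 1$, proving log canonicity at smooth $P$; at a singular $P$ of index $r$, the inequality $r\lambda\mult_P(D)\le 1$ required by Proposition~\ref{proposition:lc-by-finite-morphism} reduces to $ra_0a_1\le a_2a_3$, which one verifies case by case from the well-formedness constraints on the indices at strata off $C_x\cup C_y$ (e.g., at $[x_0{:}y_0{:}0{:}0]$ one has $r\mid\gcd(a_0,a_1)$ and well-formedness bounds this by a divisor of $a_2$ or $a_3$). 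If a component of $L_P$ lies in $\Supp(D)$, split it off using Lemma~\ref{lemma:convexity} and reapply Lemma~\ref{lemma:handy-adjunction} with that component in place of $C$. I expect the principal obstacle to be precisely this last case: the systematic tracking of singular points off $C_x\cup C_y$ combined with possibly reducible or non-reduced pencil members, which requires no new ideas but a nontrivial amount of case-by-case bookkeeping.
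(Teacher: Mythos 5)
The first thing to say is that the paper does not actually prove this lemma: its ``proof'' is the citation ``See \cite[Corollary~5.3]{BoGaNa03} (cf.\ \cite[Proposition~11]{JoKo01b})'', and the lemma is explicitly noted to be unused in the rest of the paper. So your proposal is being measured against a genuine direct argument rather than against the paper's text. Your overall strategy (inversion of adjunction on the coordinate curves plus a low-degree pencil through the remaining points) is the right kind of argument, but as written it has concrete gaps. The most serious is in Steps 1--2: Lemma~\ref{lemma:handy-adjunction} requires $C$ to be reduced and irreducible, whereas $C_x$ and $C_y$ are very often reducible (and can be non-reduced) for quasismooth hypersurfaces --- see almost every entry of the Big Table. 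For a point $P$ on a component $C_j$ of $C_x$, what inversion of adjunction controls is $(D-\mult_{C_j}(D)C_j)\cdot C_j=D\cdot C_j-\mult_{C_j}(D)\,C_j^2$, which involves the (typically negative) self-intersection $C_j^2$ together with the bound $\mult_{C_j}(D)\leqslant I\,\mathcal{O}_X(1)^2/(C_j\cdot\mathcal{O}_X(1))$, weaker than your $m_x\leqslant I/a_0$; your ``central numerical estimate'' $\lambda(D_0\cdot C_x)\leqslant a_0/(a_2a_3)$ computes a different quantity and does not bound it. Taming precisely this term is what forces the component-by-component computations of $C_j^2$ and $C_i\cdot C_j$ throughout Parts~2--3 of the paper.

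Second, your treatment of singular points off $C_x\cup C_y$ is numerically wrong as stated: the sufficient condition you invoke is $r\lambda(D\cdot L_P)\leqslant 1$, i.e.\ $ra_0a_1\leqslant a_2a_3$ for the degree-$a_0a_1$ pencil, and this fails, e.g., for $X_{60}\subset\mathbb{P}(9,15,17,20)$ at the index-$3$ point on $\{z=t=0\}$, where $3\cdot 9\cdot 15=405>340=17\cdot 20$; well-formedness does not bound $\gcd(a_0,a_1)$ by a divisor of $a_2$ or $a_3$ (here $\gcd(9,15)=3$ divides neither $17$ nor $20$). The repair is to use the pencil $\langle x^{l/a_0},y^{l/a_1}\rangle$ with $l=\mathrm{lcm}(a_0,a_1)$, so that the index $r\mid\gcd(a_0,a_1)$ cancels and one gets $rl/(a_2a_3)\leqslant a_0a_1/(a_2a_3)\leqslant 1$. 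Finally, the case in which every component of $L_P$ through $P$ lies in $\Supp(D)$ cannot be dismissed as case-by-case bookkeeping: the lemma is asserted for \emph{all} quasismooth well-formed hypersurfaces with $I>0$, so there is no finite list of cases, and after tie-breaking via Lemma~\ref{lemma:convexity} one must still show that $\bigl(X,\lambda\tfrac{I}{l}L_P\bigr)$ is log canonical at $P$ --- equivalently, bound $\mult_P(L_P)$ and the coefficients of its components uniformly. That uniform verification is the actual content of \cite{BoGaNa03} and is missing from your proposal.
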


\begin{proof}
See \cite[Corollary~5.3]{BoGaNa03} (cf.
\cite[Proposition~11]{JoKo01b}).
\end{proof}

\begin{lemma}
\label{lemma:Boyer-Kollar}Suppose that
$I=\sum_{i=0}^{3}a_{i}-d>0$, the curve $C_x=\{x=0\}$ is
irreducible and reduced. Then
$$
\mathrm{lct}\big(X\big)\geqslant\left\{%
\aligned
&\min\left(\frac{a_{1}a_{2}}{dI},\ \lct\left(X, \frac{I}{a_{0}}C_{x}\right)\right),\\%
&\min\left(\frac{a_{1}a_{3}}{dI},\ \lct\left(X, \frac{I}{a_{0}}C_{x}\right)\right)\ \text{if}\ f(0,0,0,t)\ne 0.\\%
\endaligned\right.%
$$
\end{lemma}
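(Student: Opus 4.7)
Let $D$ be an effective $\mathbb{Q}$-divisor on $X$ with $D\sim_{\mathbb{Q}}\mathcal{O}_{\mathbb{P}}(I)|_{X}$, and let $\lambda$ denote either of the two minima appearing on the right-hand side of the statement. The goal is to show that $(X,\lambda D)$ is log canonical. I would first apply Lemma~\ref{lemma:convexity} to the pair of divisors $\lambda D$ and $\lambda\cdot\tfrac{I}{a_0}C_x$ (the latter being log canonical by the choice $\lambda\leqslant\lct(X,\tfrac{I}{a_0}C_x)$), thus reducing at once to the case $C_x\not\subseteq\Supp(D)$: any putative point $P$ where $(X,\lambda D)$ fails to be log canonical would otherwise yield a $\mathbb{Q}$-linearly equivalent replacement divisor whose support omits $C_x$ and which still fails log canonicity at $P$.

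Suppose first that $P\in C_x$. I would then invoke Lemma~\ref{lemma:handy-adjunction} with $C=C_x$. Here $\mult_{C_x}(D)=0$, and the standard intersection formula on a weighted projective hypersurface gives
$$
D\cdot C_x=\frac{a_0\cdot I\cdot d}{a_0a_1a_2a_3}=\frac{Id}{a_1a_2a_3}.
$$
Singular points of $X$ on $C_x$ are cyclic quotient singularities whose indices divide one of $a_1,a_2,a_3$. For the first bound, $\lambda D\cdot C_x=1/a_3$, so the required inequality $r\lambda D\cdot C_x\leqslant 1$ is automatic. For the second bound we need $r\leqslant a_2$; the hypothesis $f(0,0,0,t)\ne 0$ excludes the $t$-vertex $[0{:}0{:}0{:}1]$ from $X$, removing the only potential cyclic index equal to $a_3$ on $C_x$, so only $r\in\{a_1,a_2\}$ can occur.

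Suppose next that $P\notin C_x$. I would apply Lemma~\ref{lemma:Carolina} with a suitable degree $k$. For the first bound, take $k=a_0a_3$: the monomials $x^{a_3}$, $y^{a_0}x^{a_3-a_1}$, $z^{a_0}x^{a_3-a_2}$, $t^{a_0}$ all lie in $H^0(\mathbb{P},\mathcal{O}_{\mathbb{P}}(k))$ (using $a_1,a_2\leqslant a_3$), supplying the three required pairs of monomials and yielding $\mult_P(D)\leqslant Id/(a_1a_2)$. For the second bound, take $k=a_0a_2$: the pairs $\{x^{a_2},y^{a_0}x^{a_2-a_1}\}$ and $\{x^{a_2},z^{a_0}\}$ are always present, and the hypothesis $f(0,0,0,t)\ne 0$ makes the projection $X\to\mathbb{P}(a_0,a_1,a_2)$ a finite morphism (no contracted curves), activating the second disjunct of Lemma~\ref{lemma:Carolina}; this gives $\mult_P(D)\leqslant Id/(a_1a_3)$. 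In both cases $\lambda\mult_P(D)\leqslant 1$, so Lemma~\ref{lemma:multiplicity} forces log canonicity at a smooth $P$.

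The main obstacle will be handling singular points of $X$ lying off $C_x$, since Lemmas~\ref{lemma:Carolina} and \ref{lemma:multiplicity} are stated for smooth points only. At such a $P$, which is a cyclic quotient singularity of some index $r$, I would pass to the local index-one cover via Proposition~\ref{proposition:lc-by-finite-morphism} and reproduce the multiplicity estimate on the smooth germ; verifying that the restricted list of possible $r$ on a well-formed quasismooth hypersurface in $\mathbb{P}(a_0,a_1,a_2,a_3)$ is compatible with the asserted $\lambda$ is the delicate technical point.
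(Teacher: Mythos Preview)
Your proposal is correct and spells out precisely what the paper's one-line proof (``arguing as in the proof of \cite[Proposition~11]{JoKo01b} and using Lemma~\ref{lemma:convexity}'') intends: Lemma~\ref{lemma:convexity} to strip $C_x$ from $\Supp(D)$, the intersection $D\cdot C_x$ for points on $C_x$, and a Carolina/Araujo--type multiplicity bound for points off $C_x$. The gap you flag at singular points of $X$ off $C_x$ is real for Lemma~\ref{lemma:Carolina} as stated in this paper (smooth points only), but such points have index dividing $a_0$, and the original Araujo/Johnson--Koll\'ar argument bounds the orbifold multiplicity there directly; your plan to pass to the index-one cover is exactly the standard fix.
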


\begin{proof}
Arguing as in the proof of \cite[Proposition~11]{JoKo01b} and
using Lemma~\ref{lemma:convexity}, we obtain the required
assertion.
\end{proof}

\section{Notation}

We reserve the following notation that
will be used throughout the paper:
\begin{itemize}
\item $\mathbb{P}(a_0, a_1, a_2, a_3)$ denotes the well-formed
weighted projective space
$\mathrm{Proj}(\mathbb{C}\big[x,y,z,t\big])$ with weights
$\mathrm{wt}(x)=a_{0}$, $\mathrm{wt}(y)=a_{1}$,
$\mathrm{wt}(z)=a_{2}$, $\mathrm{wt}(t)=a_{3}$, where we always
assume the inequalities $a_{0}\leqslant a_1\leqslant a_2\leqslant
a_{3}$.  We may use simply $\mathbb{P}$ instead of
$\mathbb{P}(a_0, a_1, a_2, a_3)$ when this does not lead to
confusion.

 \item $X$ denotes a quasismooth and well-formed hypersurface in $\mathbb{P}(a_0, a_1, a_2, a_3)$ (see Definitions~6.3 and
6.9 in \cite{IF00}, respectively).

    \item $O_x$ is the point in $\mathbb{P}(a_0, a_1, a_2, a_3)$ defined by $y=z=t=0$. The points $O_y$, $O_z$ and $O_t$ are defined in the similar way.

    \item  $C_x$ is the curve on $X$ cut out by the equation $x=0$. The curves $C_y$, $C_z$ and $C_t$ are defined in the similar way.
     \item $L_{xy}$ is the curve in $\mathbb{P}(a_0, a_1, a_2, a_3)$ defined by
     $x=y=0$. The curves $L_{xz}$, $L_{xt}$, $L_{yz}$, $L_{yt}$
     and $L_{zt}$ are defined in the similar way.

\item Let $D$ be a divisor on $X$ and $P\in X$. Choose an orbifold
chart $\pi\colon\tilde{U}\to U$ for some neighborhood $P\in
U\subset X$. We put $\mult_P(D)=\nolinebreak\mult_Q(\pi^*D)$,
where $Q$ is a point on $\tilde{U}$ with $\pi(Q)=P$,  and refer to
this quantity as the multiplicity of $D$ at $P$.
\end{itemize}

\section{The scheme of the proof}

We have 83 families\footnote{By family we mean either
one-parameter series (which actually gives rise to an infinite
number of deformation families) or a sporadic case. We hope that
this would not lead to a confusion.} of del Pezzo hypersurfaces
in The Big Table. In the present section we explain the methods to
compute the global log canonical thresholds of the del Pezzo
hypersurfaces  in The Big Table.

Let $X\subset \mathbb{P}(a_0, a_1, a_2, a_3)$ be a del Pezzo
surface of degree $d$  in one of the 83 families (actually, one
infinite series has been treated in \cite{ChSh09c}, so we will
omit the computations in this case). Set $I=a_0+a_1+a_2+a_3-d$.
There are two exceptional cases where $a_0=a_1$. The method for
these two cases is a bit different from the other cases. Both
cases will be individually dealt with
(Lemmas~\ref{lemma:I-2-infinite-series-1-n-1}
and~\ref{lemma:3355}).

If $a_0\ne a_1$, then we will take steps as follows:

\textbf{Step 1.} Using Lemmas~\ref{lemma:basic-property} and~\ref{lemma:Igusa} with Proposition~\ref{proposition:lc-by-finite-morphism}, we compute the log canonical thresholds $\lct(X,\frac{I}{a_0}C_x)$, $\lct(X,\frac{I}{a_0}C_y)$, $\lct(X,\frac{I}{a_0}C_z)$ and $\lct(X,\frac{I}{a_0}C_t)$.
Set $$\lambda =\min\left\{\lct(X,\frac{I}{a_0}C_x), \lct(X,\frac{I}{a_0}C_y), \lct(X,\frac{I}{a_0}C_z), \lct(X,\frac{I}{a_0}C_t)\right\}.$$
Then the global log canonical threshold $\lct(X)$ is at most $\lambda$.
\bigskip

\textbf{Step 2.}  We claim that the global log canonical threshold $\lct(X)$ is equal to $\lambda$. To prove this assertion, we suppose $\lct(X)<\lambda$. Then there is an effective $\mathbb{Q}$-divisor $D$ equivalent to the anticanonical divisor $-K_X$ of $X$ such that the log pair $(X, \lambda D)$ is not log canonical at some point $P\in X$. In particular,  we obtain
$$
\mult_P(\lambda D)>\left\{%
\aligned
&1\ \text{ if the point $P$ is a smooth point of $X$,}\\%
&\frac{1}{r}\ \text{ if the point $P$ is a singular point of $X$ of type $\frac{1}{r}(a,b)$.}\\%
\endaligned\right.%
$$
from Lemma~\ref{lemma:multiplicity} and Proposition~\ref{proposition:lc-by-finite-morphism}.

\bigskip
\textbf{Step 3.}  We show that the point $P$ cannot be a singular point of $X$ using the following methods.
\medskip

\textbf{Method 3.1. (Multiplicity)} We may assume that a suitable irreducible component $C$ of $C_x$, $C_y$, $C_z$, and $C_t$  is not contained in the support of the divisor $D$. We derive a possible contradiction from the inequality
\[ C\cdot D\geqslant \mult_P(C)\cdot \frac{\mult_P(D)}{r}>\frac{\mult_P(C)}{r\lambda},\]
where $r$ is the index of the quotient singular point $P$. The
last inequality follows from the assumption that $(X, \lambda D)$
is not log canonical at $P$. This method can be applied to exclude
a smooth point.
\medskip

  \textbf{Method 3.2. (Inversion of Adjunction)}
  We consider a suitable irreducible curve $C$ smooth at $P$. We then write $D=\mu C+\Omega$, where $\Omega$ is an effective $\mathbb{Q}$-divisor whose support does not contain $C$. We check $\lambda\mu \leqslant 1$. If so, then the log pair $(X, C+\lambda \Omega)$ is not log canonical at the point $P$ either. By Lemma~\ref{lemma:handy-adjunction} we have
  \[\lambda(D-\mu C)\cdot C =\lambda C\cdot \Omega >\frac{1}{r}.\]
  We try to derive a contradiction from this inequality. The  curve $C$ is taken usually from an irreducible component of $C_x$, $C_y$, $C_z$, or $C_t$. This method can be applied to exclude a smooth point.

  \textbf{Method 3.3. (Weighted Blow Up)}
Sometimes we cannot exclude a singular point $P$ only with the
previous two methods. In such a case, we take a suitable weighted
blow up $\pi:Y\to X$ at the point $P$. We can write
  \[K_Y+D^{Y}\sim_{\mathbb{Q}}\pi^*(K_X+\lambda D),\]
where $D^Y$ is the log pull-back of $\lambda D$ by $\pi$. Using
method 3.1 we obtain that $D^{Y}$ is effective. Then we apply the
previous two methods to the pair $(Y, D^Y)$, or repeat this method
until we get a contradictory inequality.
\bigskip

\textbf{Step 4.}  We show that the point $P$ cannot be a smooth
point of $X$. To do so, we first apply Lemma~\ref{lemma:Carolina}.
However, this method does not work always. If the method fails,
then we try to find a suitable pencil $\mathcal{L}$ on $X$. The
pencil has a member $F$ which passes through the point $P$. We
show that the pair $(X, \lambda F)$ is log canonical at the point
$P$. Then, we may assume that the support of $D$ does not contain
at least one irreducible component of $F$.
  If the divisor $D$ itself is irreducible, then we use Method~3.1 to exclude the point $P$. If $F$ is reducible, then we use Method~3.2.

\part{Infinite series}
\label{section:series}

\section{Infinite series with $I=1$}
\label{subsection:series-I-1}

\begin{lemma}\label{lemma:infinite-series}
Let $X$ be a quasismooth hypersurface of degree $8n+4$ in $\mathbb{P}(2, 2n+1, 2n+1, 4n+1)$ for a natural number $n$.
 Then $\lct(X)=1$.
\end{lemma}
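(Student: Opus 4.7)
Direct computation gives $I = 2+(2n+1)+(2n+1)+(4n+1) - (8n+4) = 1$. For $n \ge 1$, the weight constraints force the only monomials of degree $8n+4$ in $\{y, z, t\}$ (without $x$) to be the five forms $y^iz^{4-i}$, so the defining polynomial has shape $f(x,y,z,t) = f_4(y, z) + x\cdot g(x,y,z,t)$ for a binary quartic $f_4$. Quasismoothness of $X$ along $L_{yz}$ forces $f_4$ to have four distinct linear factors, hence $C_x = L_1 + L_2 + L_3 + L_4$ with each $L_i \cong \P(2n+1, 4n+1) \cong \P^1$. Distinct $L_i, L_j$ meet transversally only at the singular point $O_t \in X$ of type $\frac{1}{4n+1}(1,1)$, and each $L_i$ passes through a distinct $\frac{1}{2n+1}(1,1)$ singular point $P_i$ on $L_{yz}$; these five points constitute the entire singular locus of $X$. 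Standard intersection formulas yield $L_i\cdot(-K_X)=\frac{1}{(2n+1)(4n+1)}$, $L_i\cdot L_j=\frac{1}{4n+1}$ for $i\ne j$, and $L_i^2=-\frac{6n+1}{(2n+1)(4n+1)}$.

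\textbf{Upper bound.} Since $\frac{1}{2}C_x \sim_{\Q} -K_X$, it suffices to check $\lct(X, \frac{1}{2}C_x) = 1$. At smooth points of $X$ and at the $P_i$ (where only one $L_i$ passes) the coefficient $\frac{1}{2}<1$ gives Kawamata log terminal. At $O_t$ we pass via Proposition~\ref{proposition:lc-by-finite-morphism} to the smooth orbifold cover $\mathbb{C}^2\to X$ of degree $4n+1$, on which the four $L_i$ lift to four distinct concurrent lines through the origin. The standard one-blow-up places coefficient $4\cdot\frac{1}{2}-1 = 1$ on the exceptional $\P^1$; the pair is log canonical but not Kawamata log terminal at $O_t$. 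Therefore $\lct(X, \frac{1}{2}C_x) = 1$ and $\lct(X)\le 1$.

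\textbf{Lower bound --- strategy.} Suppose, for contradiction, that there exists an effective $\Q$-divisor $D \sim_\Q -K_X$ and a point $P\in X$ such that $(X, D)$ fails to be log canonical at $P$. By Lemma~\ref{lemma:convexity} applied to $D$ and $\frac{1}{2}C_x$, we may assume that $L_1 \not\subset \Supp(D)$, so $L_1\cdot D = \frac{1}{(2n+1)(4n+1)}$. If $P\notin C_x$, then $P$ is smooth (the singular locus lies on $C_x$); Lemma~\ref{lemma:Carolina} with $k = 4n+2$ applies because $H^0(\P,\O(4n+2))$ contains $x^{2n+1}, y^2, yz, z^2$, supplying two monomials of shape $x^\alpha y^\beta$ and two of shape $x^\gamma z^\delta$, and because the rational map $\psi\colon X\dashrightarrow \P(2,2n+1,2n+1)$ given by $[x:y:z]$ contracts precisely the $L_i$ (its $t$-fibers degenerate only where $x = 0$ and $f_4(y,z) = 0$). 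The lemma yields $\mult_P(D)\le \frac{4}{4n+1}<1$, contradicting non-log-canonicity at a smooth point. If $P$ lies on some $L_j$ but is different from $O_t$, split on whether $L_j\subset\Supp(D)$: if not, the intersection bound $L_j\cdot D = \frac{1}{(2n+1)(4n+1)}$ combined with the orbifold multiplicity inequality forces $\mult_P(D)$ below the threshold of Lemma~\ref{lemma:multiplicity}; if yes, write $D = \mu L_j + \Omega$, deduce $\mu\le \frac{1}{2n+1}<1$ from $L_1\cdot D$ together with $L_1\cdot L_j = \frac{1}{4n+1}$ and $L_1\not\subset\Supp(\Omega)$, and apply Lemma~\ref{lemma:handy-adjunction} with $\lambda = 1$ and $C = L_j$ (the numerical conditions reduce to $\frac{2}{(2n+1)^2}\le 1$ and $\frac{2}{2n+1}\le 1$, both holding for $n\ge 1$).

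\textbf{Main obstacle: excluding $P = O_t$.} The elementary bound from $L_1$ gives only $\mult_{O_t}(D)\le \frac{1}{2n+1}$, which merely exceeds the log-canonicity threshold $\frac{1}{4n+1}$. Perform the minimal (weighted) resolution $\pi\colon Y\to X$ of the $\frac{1}{4n+1}(1,1)$ point, with exceptional $E\cong\P^1$, $E^2 = -(4n+1)$, and discrepancy $a_E = \frac{2}{4n+1}-1$. A toric computation yields $\pi^*D = \tilde D + \frac{\mult_{O_t}(D)}{4n+1}\,E$, so the log pullback is $K_Y + \tilde D + (c-a_E)E$ with $c-a_E\le \frac{2n}{2n+1}<1$ and $\tilde D\cdot E = \mult_{O_t}(D)\le \frac{1}{2n+1}<1$. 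Since $\tilde D|_E$ is an effective $\Q$-divisor on $\P^1$ of total degree strictly less than $1$, each of its point-coefficients is $<1$, so $(E,\tilde D|_E)$ is log canonical; applying Theorem~\ref{theorem:adjunction} to $(Y,\tilde D + E)$ shows the latter is log canonical along $E$, and lowering the coefficient of $E$ from $1$ to $c-a_E$ preserves log canonicity, so the log pullback itself is log canonical along $E$. This contradicts the assumed non-log-canonicity of $(X,D)$ at $O_t$. Hence $\lct(X)\ge 1$, and combined with the upper bound we conclude $\lct(X) = 1$. The central difficulty is this last step: the naive intersection bound is not tight enough at the high-index singularity $O_t$, and inversion of adjunction on the resolution exceptional divisor is the key ingredient that closes the gap.
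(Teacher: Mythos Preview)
Your proof is correct and follows essentially the same route as the paper: upper bound via $\frac{1}{2}C_x$, then for the lower bound use Lemma~\ref{lemma:Carolina} off $C_x$ and Lemma~\ref{lemma:handy-adjunction} along the $L_j$. Two small slips: the four index-$(2n+1)$ points lie on $L_{xt}$, not $L_{yz}$, and their type is $\frac{1}{2n+1}(1,n)$, not $\frac{1}{2n+1}(1,1)$; neither affects your argument.

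The one place you diverge from the paper is the exclusion of $O_t$, and here you have overcomplicated matters due to a misconception. You write that the bound $\mult_{O_t}(D)\le\frac{1}{2n+1}$ ``merely exceeds the log-canonicity threshold $\frac{1}{4n+1}$'', but with the paper's convention $\mult_P(D)$ is the \emph{orbifold} multiplicity (the multiplicity of the pullback to the smooth chart), so Lemma~\ref{lemma:multiplicity} combined with Proposition~\ref{proposition:lc-by-finite-morphism} gives log canonicity as soon as $\mult_{O_t}(D)\le 1$, regardless of the index. The paper therefore disposes of $O_t$ in one line:
\[
\mult_{O_t}(D)\ \le\ (4n+1)\,D\cdot L_i\ =\ \frac{1}{2n+1}\ \le\ 1.
\]
Your resolution-plus-inversion-of-adjunction argument is valid (the discrepancy, pullback coefficient, and $\tilde D\cdot E$ computations are all correct), but it is unnecessary machinery for this case.
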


\begin{proof}
The surface $X$ is singular at the point $O_{t}$, which is of type $\frac{1}{4n+1}(1,1)$.
It has also four singular points $O_{1}$, $O_{2}$, $O_{3}$,
$O_{4}$, which are cut out on $X$ by $L_{xt}$. Each
$O_{i}$ is a singular point of type $\frac{1}{2n+1}(1,n)$ on the
surface $X$.

The curve $C_{x}$ is reducible. We see
$$
C_{x}=L_{1}+L_{2}+L_{3}+L_{4},
$$
where $L_{i}$ is a smooth rational curves
such that
$$
-K_{X}\cdot L_{i}=\frac{1}{(2n+1)(4n+1)},
$$
and $L_{1}\cap L_{2}\cap L_{3}\cap L_{4}=\{O_{t}\}$. Then

$$L_{i}\cdot
L_{j}=\frac{1}{4n+1}$$ for $i\ne j$. Also, we have
$$
L_{i}^2=C_x\cdot L_i-\frac{3}{4n+1}=\frac{2}{(2n+1)(4n+1)}-\frac{3}{4n+1}=-\frac{6n+1}{(2n+1)(4n+1)}.%
$$

It is easy to see $\lct(X, \frac{1}{2}C_x)=1$. Therefore, $\lct(X)\leqslant
1$. Suppose that $\lct(X)<1$. Then there is an effective
$\Q$-divisor $D\qlineq -K_X$ such that the log pair $(X,D)$ is not log
canonical at some point $P\in X$.

Since
$$
\frac{(4n+2)(8n+4)}{2(2n+1)^{2}(4n+1)}=\frac{4}{4n+1}<1
$$
and $H^0(\P,
\mathcal{O}_\P(4n+2))$ contains $x^{2n+1}$, $y^{2}$ and $z^{2}$, Lemma~\ref{lemma:Carolina} implies that  $P\in C_x$.

It follows from Lemma~\ref{lemma:convexity} that we may
assume that $L_{i}\not\subset\mathrm{Supp}(D)$ for some
$i$.
Also, $P\in L_{j}$ for some $j$. Put
$D=mL_{j}+\Omega$, where $\Omega$ is an effective
$\mathbb{Q}$-divisor such that
$L_{j}\not\subset\mathrm{Supp}(\Omega)$. Since
$$
\frac{1}{(2n+1)(4n+1)}=D\cdot L_{i}=\big(mL_{j}+\Omega\big)\cdot L_{i}\geqslant  mL_{i}\cdot L_{j}=\frac{m}{4n+1},%
$$
we have $0\leqslant m\leqslant \frac{1}{2n+1}$.
Since
\[(2n+1)\Omega\cdot L_j=(2n+1)(D-mL_j)\cdot L_j=(2n+1)\frac{1+m(6n+1)}{(2n+1)(4n+1)}\leqslant
\frac{2}{(2n+1)} <1\] Lemma~\ref{lemma:handy-adjunction} implies
the point $P$ must be $O_t$. Note that  the inequality
$$
\mult_{O_{t}}\big(D\big)\leqslant (4n+1)D\cdot L_{i}=\frac{1}{2n+1}\leqslant 1,
$$
shows that the point $P$ cannot be the point $O_t$. This is a contradiction.
\end{proof}

\section{Infinite series with $I=2$}
\label{subsection:series-I-2}

\begin{lemma}
\label{lemma:I-2-infinite-series-1}
Let $X$ be a quasismooth hypersurface of degree $8n+12$ in $\mathbb{P}(4, 2n+3, 2n+3, 4n+4)$ for a natural number $n$.
 Then $\lct(X)=1$.
\end{lemma}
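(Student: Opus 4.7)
The plan is to follow the template of Lemma~\ref{lemma:infinite-series} closely, with arithmetic changes for the new weights and a single genuinely new step for extra singular points on $L_{yz}$.

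First I would work out the local geometry. The ambient $\mathbb{P}(4,2n+3,2n+3,4n+4)$ has a $\frac{1}{2n+3}$-singular line $L_{xt}$ and a $\frac{1}{4}$-singular line $L_{yz}$; the latter is the genuinely new feature absent from Lemma~\ref{lemma:infinite-series}. The binary quartic $f(0,y,z,0)$ factors into four distinct linear forms, which produces four $\frac{1}{2n+3}$-singular points $O_{1},\ldots,O_{4}$ of $X$ on $L_{xt}$ together with the decomposition $C_{x}=L_{1}+L_{2}+L_{3}+L_{4}$ into smooth rational curves meeting at $O_{t}$, with $L_{j}$ passing through $O_{j}$. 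The polynomial $f(x,0,0,t)=x(\alpha x^{2n+2}+\beta x^{n+1}t+\gamma t^{2})$ yields two additional $\frac{1}{4}$-singular points $P_{1},P_{2}$ of $X$ on $L_{yz}\setminus\{O_{t}\}$. At $O_{t}$ the tangent direction of $X$ absorbs the $x$-factor (since $\gamma\ne 0$), and because $\gcd(2n+3,4n+4)=1$ the induced $\mu_{4n+4}$-action on the surface coordinates $(y,z)$ with weights $(2n+3,2n+3)$ is conjugate to the standard diagonal action; hence $O_{t}$ is a $\frac{1}{4n+4}(1,1)$-singularity of $X$. A routine computation then yields
$$
-K_{X}\cdot L_{i}=\frac{1}{(2n+3)(2n+2)},\qquad L_{i}\cdot L_{j}=\frac{1}{4n+4}\ \ (i\ne j),\qquad L_{i}^{2}=-\frac{6n+5}{4(2n+3)(n+1)}.
$$
Pulling back $\frac{1}{2}C_{x}$ to the smooth cover at $O_{t}$ produces $\frac{1}{2}$ times four distinct lines through the origin, so Lemma~\ref{lemma:basic-property}(3) gives $\lct(X,\frac{1}{2}C_{x})=1$ and hence $\lct(X)\leq 1$.

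For the reverse direction, suppose there is an effective $D\sim_{\mathbb{Q}}-K_{X}$ failing log canonicity at some $P\in X$. Lemma~\ref{lemma:Carolina} applied with $k=4(2n+3)$, for which the monomial pairs $\{x^{2n+3},y^{4}\}$, $\{x^{2n+3},z^{4}\}$, $\{x^{2n+3},x^{n+2}t\}$ all appear in $H^{0}(\mathbb{P},\mathcal{O}_{\mathbb{P}}(k))$, gives $\mult_{P}(D)\leq\frac{2}{n+1}\leq 1$ at every smooth point $P$ off $C_{x}$, so $P\in C_{x}\cup\{P_{1},P_{2}\}$. For $P\in C_{x}$ I would follow Lemma~\ref{lemma:infinite-series} verbatim: Lemma~\ref{lemma:convexity} allows me to assume $L_{i}\not\subset\mathrm{Supp}(D)$ for some $i$, and some $L_{j}$ passes through $P$; writing $D=mL_{j}+\Omega$, the inequality $D\cdot L_{i}\geq m L_{i}\cdot L_{j}$ gives $m\leq\frac{2}{2n+3}$, and a short calculation yields $(2n+3)\Omega\cdot L_{j}\leq\frac{4}{2n+3}<1$. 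Lemma~\ref{lemma:handy-adjunction} then rules out every point of $L_{j}$ other than possibly $O_{t}$, and the multiplicity bound $\mult_{O_{t}}(D)\leq(4n+4)\,D\cdot L_{i}=\frac{2}{2n+3}<1$ rules out $O_{t}$ as well.

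The only genuinely new case is $P=P_{i}$. Here I would use the pencil of degree-$(2n+3)$ curves $F_{\lambda,\mu}=X\cap\{\lambda y+\mu z=0\}$ on $X$. Every member contains $P_{i}$ because $y=z=0$ at $P_{i}$, and on the local $\frac{1}{4}(1,1)$-cover near $P_{i}$ the pullback of $\lambda y+\mu z$ is a line through the origin, so $F_{\lambda,\mu}$ is orbifold-smooth at $P_{i}$ with $\mult_{P_{i}}(F_{\lambda,\mu})=1$. Since the pencil is infinite and $\mathrm{Supp}(D)$ has only finitely many components, I may choose $(\lambda,\mu)$ with $F_{\lambda,\mu}\not\subset\mathrm{Supp}(D)$; the orbifold intersection inequality at the $\frac{1}{4}$-singularity $P_{i}$ then gives
$$
\mult_{P_{i}}(D)\leq 4\cdot D\cdot F_{\lambda,\mu}=\frac{2}{n+1}\leq 1,
$$
contradicting non-log-canonicity at $P_{i}$. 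The principal technical point is this last orbifold-smoothness verification at $P_{i}$; the rest is arithmetic bookkeeping exactly parallel to Lemma~\ref{lemma:infinite-series}.
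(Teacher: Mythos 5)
Your proof is correct, and its overall architecture coincides with the paper's: the bound $\lct(X)\leqslant 1$ from $\lct(X,\frac{1}{2}C_x)=1$, the exclusion of smooth points off $C_x$ by Lemma~\ref{lemma:Carolina} with $k=8n+12$, and the exclusion of $C_x$ itself by dropping one component $L_i$ from $\mathrm{Supp}(D)$, bounding $\mu\leqslant\frac{2}{2n+3}$, and applying Lemma~\ref{lemma:handy-adjunction} with the same intersection numbers $L_i\cdot L_j=\frac{1}{4n+4}$ and $L_j^2=-\frac{6n+5}{4(n+1)(2n+3)}$ are all exactly the paper's computations. The one step where you genuinely diverge is the treatment of the two index-$4$ points $P_1,P_2$ on $L_{yz}$, which is also the only ingredient not already present in Lemma~\ref{lemma:infinite-series}. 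The paper normalizes coordinates so that $P_1=O_x$, observes that the coefficient of $x^{2n+3}$ then vanishes so that $C_t$ splits into four components $L_1',\dots,L_4'$ through $O_x$, invokes Lemma~\ref{lemma:convexity} to remove one $L_i'$ from $\mathrm{Supp}(D)$, and concludes from $\mult_{O_x}(D)\leqslant 4L_i'\cdot D=\frac{2}{2n+3}<1$. You instead take the pencil cut out by $\lambda y+\mu z=0$: every member passes through $P_i$ and is orbifold-smooth there, the base locus is the finite set $L_{yz}\cap X$, so a general member shares no component with $\mathrm{Supp}(D)$ and gives $\mult_{P_i}(D)\leqslant 4D\cdot F=\frac{2}{n+1}\leqslant 1$ directly, with no coordinate change and no appeal to Lemma~\ref{lemma:convexity} at this step. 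Your bound $\frac{2}{n+1}$ is weaker than the paper's $\frac{2}{2n+3}$ but still suffices (with equality at $n=1$, which is enough since non-log-canonicity forces strict inequality $\mult_{P_i}(D)>1$). Two further small points in your favour: you supply the pair $\{x^{2n+3},x^{n+2}t\}$ of monomials of the form $x^{\mu}t^{\nu}$, so the hypotheses of Lemma~\ref{lemma:Carolina} are met verbatim, which the paper leaves implicit; and your identification of $O_t$ as a point of type $\frac{1}{4n+4}(1,1)$ (via $(2n+3)^2\equiv 1\pmod{4n+4}$) is the correct normal form, though only the index $4n+4$ is used in either argument.
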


\begin{proof}
The only singularities of $X$ are a singular point $O_t$ of index
$4n+4$, two singular points $P_1$, $P_2$ of index $4$ on $L_{yz}$,
 and four singular points $Q_1$, $Q_2$, $Q_3$, $Q_4$ of index $2n+3$ on $L_{xt}$.

The curve $C_x$ is reduced and splits into four irreducible
components $L_1, \ldots, L_4$. Each $L_i$ passes through $Q_i$.
They intersect each other at $O_t$. One can easily see that
$\lct(X, \frac{1}{2}C_x)=1$, and hence $\lct(X)\leqslant 1$.

Suppose that $\lct(X)<1$. Then there is an effective $\Q$-divisor
$D\qlineq -K_X$ such that the log pair $(X, D)$ is not log
canonical at some point $P\in X$.

By Lemma~\ref{lemma:convexity} we may
assume that  $L_i\not\subset\Supp(D)$ for some $i$. Since
$$
(4n+4)L_i\cdot D=\frac{4n+4}{(2n+2)(2n+3)}<1
$$
for all $n\geqslant 1$, the point $P$ cannot belong to the curve $L_i$.

For $j\ne i$, put $D=\mu L_j+\Omega$, where $\Omega$
is an effective $\Q$-divisor such that $L_j\not\subset\Supp(\Omega)$.
Since
$$
\frac{\mu}{4n+4}=\mu L_i\cdot L_j\leqslant D\cdot L_i=\frac{1}{2(n+1)(2n+3)},%
$$ we have
$$\mu\leqslant\frac{2}{2n+3}.$$
Note that
$$L_j^2=C_x\cdot L_j-3L_i\cdot L_j=\frac{2}{2(n+1)(2n+3)}-\frac{3}{4(n+1)}=-\frac{6n+5}{4(n+1)(2n+3)}.$$
By Lemma~\ref{lemma:adjunction} the inequality
$$(2n+3)\Omega\cdot L_j=(2n+3)(D-\mu L_j)\cdot L_j=\frac{2+(6n+5)\mu}{4(n+1)}\leqslant
\frac{4}{2n+3}<1$$ for all $n\geqslant 1$ shows that $P$ cannot be contained in $L_j$. Consequently, the point $P$ is located in the outside of $C_x$.

 By a suitable coordinate change we may assume
that $P_1=O_x$.  Then, the curve $C_t$ is reduced and splits into four
irreducible components $L_1', \ldots, L_4'$. Each $L_i'$ passes
through the point $Q_i$.  They intersect each other at $O_x$. We can easily see
that the log pair $(X, \frac{2}{4n+4}C_t)$ is log
canonical.  By
Lemma~\ref{lemma:convexity} we may assume that
$L_i'\not\subset\Supp(D)$. Since
$$
\mult_{O_x}(D)\leqslant 4L_i'\cdot
D=\frac{2}{2n+3}<1%
$$
for all $n\geqslant 1$, the point $P$ cannot be $O_x$. The point
$P_2$ can be  excluded in a similar way.

Therefore, $P$ is a smooth point of $X\setminus C_x$. Applying
Lemma~\ref{lemma:Carolina},
we see that
$$
1<\mult_P(D)\leqslant\frac{2(8n+12)^2}
{4(2n+3)^2(4n+4)}\leqslant 1%
$$
for $n\geqslant 1$ since $H^0(\P, \mathcal{O}_\P(8n+12))$
contains $x^{2n+3}$, $y^{4}$ and $z^{4}$. The obtained
contradiction completes the proof.
\end{proof}

\begin{lemma}
\label{lemma:I-2-infinite-series-2}
Let $X$ be a quasismooth hypersurface of degree $18n+6$ in $\mathbb{P}(3, 3n+1, 6n+1, 9n+3)$ for a natural number $n\geqslant 1$.
 Then $\lct(X)=1$.

\end{lemma}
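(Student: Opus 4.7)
The plan is to adapt the scheme of Lemma~\ref{lemma:I-2-infinite-series-1} to the present setting. A crucial preliminary observation is that, since $d=2a_{3}$, the defining equation of $X$ must contain the monomial $t^{2}$, so that $O_{t}\notin X$. I would first identify the singular locus of $X$: the point $O_{z}$ of type $\frac{1}{6n+1}(3n+1,3n+2)$, two points $P_{1},P_{2}\in L_{yz}$ each of type $\frac{1}{3}(1,1)$, and two points $Q_{1},Q_{2}\in L_{xz}$ each of type $\frac{1}{3n+1}(3,3n)$. Inspecting which monomials of weight $18n+6$ do not involve $x$ shows that $f|_{x=0}$ contains only the terms $t^{2}$, $ty^{3}$ and $y^{6}$, so $C_{x}=L_{1}+L_{2}$, where each $L_{i}=\{x=0,\ t=\sigma_{i}y^{3}\}$ is a smooth rational curve and $L_{1}\cap L_{2}=\{O_{z}\}$; moreover each $L_{i}$ contains exactly one of the points $Q_{j}$. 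I would record the intersection numbers
\[
L_{i}^{2}=-\frac{9n}{(3n+1)(6n+1)},\qquad L_{1}\cdot L_{2}=\frac{3}{6n+1},\qquad -K_{X}\cdot L_{i}=\frac{2}{(3n+1)(6n+1)}.
\]
Applying Lemma~\ref{lemma:Igusa} on the smooth cover at $O_{z}$ via Proposition~\ref{proposition:lc-by-finite-morphism} gives $\lct_{O_{z}}(X,C_{x})=\frac{2}{3}$, so that $\lct(X,\frac{2}{3}C_{x})=1$ and hence $\lct(X)\leq 1$.

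For the reverse inequality, suppose $\lct(X)<1$ and take $D\sim_{\Q}-K_{X}$ such that $(X,D)$ fails to be log canonical at some point $P$. Lemma~\ref{lemma:convexity} applied to the log canonical pair $(X,\frac{2}{3}C_{x})$ allows us to assume $L_{1}\not\subset\Supp(D)$. Lemma~\ref{lemma:Carolina} with $k=18n+3$ then yields $\mult_{P}(D)\leq\frac{4}{3n+1}\leq 1$ for every smooth point $P$ of $X$ outside $C_{x}$, excluding all such points. For $P\in L_{1}$, whether a smooth point of $X$ or one of the singular points $O_{z}$, $Q_{1}$, Lemma~\ref{lemma:handy-adjunction} with $C=L_{1}$ and $\mult_{L_{1}}(D)=0$ reduces non-log-canonicity to $r\cdot L_{1}\cdot D>1$, where $r\in\{1,3n+1,6n+1\}$ is the local index; but $r\cdot L_{1}\cdot D=\frac{2r}{(3n+1)(6n+1)}\leq 1$ for all $n\geq 1$, a contradiction. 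For $P\in L_{2}\setminus L_{1}$, writing $D=\mu L_{2}+\Omega$ with $L_{2}\not\subset\Supp(\Omega)$, the inequality $\mu L_{1}\cdot L_{2}\leq L_{1}\cdot D$ forces $\mu\leq\frac{2}{3(3n+1)}$; applying Lemma~\ref{lemma:handy-adjunction} to $L_{2}$ then disposes of both the smooth case and the case $P=Q_{2}$ by checking that $r(L_{2}\cdot D-\mu L_{2}^{2})=\frac{r(2+9n\mu)}{(3n+1)(6n+1)}\leq 1$.

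The remaining and potentially delicate case is when $P$ equals one of the $\frac{1}{3}(1,1)$ singular points $P_{1}$ or $P_{2}$, which lies outside $C_{x}$. I would handle this via the curve $C_{y}$. First, $C_{y}$ is irreducible: the polynomial $f(x,0,z,t)=t^{2}+\alpha x^{3n+1}t+\beta x^{6n+2}+\gamma xz^{3}$ has $\gamma\ne 0$ (otherwise $X$ would fail to be quasismooth at $O_{z}$), and this is enough to preclude any nontrivial factorization in $\C[x,z][t]$. Second, $C_{y}$ passes smoothly through $P_{i}$, as one verifies on the orbifold cover by solving the local equation of $X$ at $P_{i}$ for $t$. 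Hence Lemma~\ref{lemma:convexity} applied to the pair $(X,\frac{2}{3n+1}C_{y})$ at $P_{i}$ allows us to further assume $C_{y}\not\subset\Supp(D)$. Then Lemma~\ref{lemma:handy-adjunction} with $C=C_{y}$ and $r=3$ reduces non-log-canonicity of $(X,D)$ at $P_{i}$ to the inequality $3\cdot C_{y}\cdot D=\frac{4}{6n+1}>1$, which fails for every $n\geq 1$. This yields the final contradiction and completes the proof.
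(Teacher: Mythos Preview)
Your proof is correct and follows essentially the same route as the paper: split $C_{x}=L_{1}+L_{2}$, use Lemma~\ref{lemma:convexity} to drop one component from $\Supp(D)$, handle points on $C_{x}$ via Lemma~\ref{lemma:handy-adjunction}, dispose of the $\frac{1}{3}(1,1)$ points $P_{1},P_{2}$ via the irreducible curve $C_{y}$, and finish off smooth points outside $C_{x}$ with Lemma~\ref{lemma:Carolina} at $k=18n+3$; only the ordering of the last two steps differs. Your value $L_{i}^{2}=-\frac{9n}{(3n+1)(6n+1)}$ is in fact the correct one (it is consistent with $C_{x}\cdot L_{i}=\frac{3}{(3n+1)(6n+1)}$ and with orbifold adjunction), whereas the paper's $-\frac{9n-3}{(3n+1)(6n+1)}$ appears to be a typo; either value yields the needed bound.
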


\begin{proof}
The only singularities of $X$ are a singular point $O_z$ of index
$6n+1$, two singular points $P_1$, $P_2$ of index $3$ on $L_{yz}$, and two singular points $Q_1$, $Q_2$
of index $3n+1$ on $L_{xz}$.

The curve $C_x$ is reduced and splits into two components $L_1$
and $L_2$ that intersect at $O_z$. It is easy to see that $\lct(X,
\frac{2}{3}C_x)=1$. Therefore, $\lct(X)\leqslant 1$.
Note that
$$L_1\cdot L_2=\frac{3}{6n+1} \text{ and }
L_1^2=L_2^2=-\frac{9n-3}{(3n+1)(6n+1)}.$$

Suppose that $\lct(X)<1$. Then there is an effective $\Q$-divisor
$D\qlineq -K_X$ such that the log pair $(X, D)$ is not log
canonical at some point $P\in X$.

We may assume that $L_2$ is not contained the support of $D$.
The inequality
\[D\cdot L_2 =\frac{2}{(3n+1)(6n+1)}\leqslant \frac{1}{6n+1}\]
shows that the point $P$ cannot belong to the curve $L_2$.
Put $D=\mu L_1+\Omega$, where $\Omega$ is an effective $\Q$-divisor whose support does not contain the curve $L_1$. Since $$\frac{3\mu}{6n+1}=\mu L_1\cdot L_2\leqslant D\cdot L_2=\frac{2}{(3n+1)(6n+1)},$$
we have
$$0\leqslant \mu\leqslant \frac{2}{3(3n+1)}.$$

Lemma~\ref{lemma:handy-adjunction} and the inequality
$$\Omega\cdot L_1=(D-\mu L_1)\cdot L_1=\frac{2+\mu(9n-3)}{(3n+1)(6n+1)}<
\frac{4}{(3n+1)(6n+1)}$$ show that the point $P$ is located in the outside of $L_1$. Therefore, $P\not \in C_x$.

The curve $C_y$ is irreducible. It is easy to see that the log
pair $(X, \frac{2}{3n+1}C_y)$ is log canonical. Therefore, we may assume that the support of $D$ does not contain the curve $C_y$. Note that $P_1, P_2\in C_y$.
The inequality
\[3D\cdot C_y=\frac{4}{6n+1}\leqslant 1\]
shows that neither $P_1$ not $P_2$ can be the point $P$.

Hence $P$ is a smooth point of $X\setminus C_x$. Applying
Lemma~\ref{lemma:Carolina}, we get an absurd inequality
$$
1<\mult_P(D)\leqslant\frac{2 (18n+6)(18n+3)}
{3 (3n+1) (6n+1)(9n+3)}\leqslant 1%
$$ since $H^0(\P, \mathcal{O}_\P(18n+3))$
contains $x^{6n+1}$, $x^{3n}y^{3}$ and $z^{3}$. The obtained
contradiction completes the proof.
\end{proof}

\begin{lemma}
\label{lemma:I-2-infinite-series-3}
Let $X$ be a quasismooth hypersurface of degree $18n+3$ in $\mathbb{P}(3, 3n+1, 6n+1, 9n)$ for a natural number $n\geqslant 1$.
 Then $\lct(X)=1$.
\end{lemma}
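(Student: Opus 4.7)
The plan is to follow the general scheme of Section~\ref{section:scheme}. First I will determine the singularities of $X$ and the structure of $C_x$. Enumerating the degree-$(18n+3)$ monomials forces $x^{6n+1}$, $xt^{2}$, $y^{3}t$ and $z^{3}$ into the equation of $X$ with nonzero coefficients (by quasismoothness), and in particular shows $O_x,O_z\notin X$. Hence the singular points of $X$ are $O_t$ (of type $\frac{1}{9n}(3n+1,6n+1)$), $O_y$ (of type $\frac{1}{3n+1}(1,n)$), and two points $Q_1,Q_2\in L_{xt}\setminus\{O_t\}$ of type $\frac{1}{3}(1,1)$. The divisor $C_x$ is cut on $\mathbb{P}(3n+1,6n+1,9n)$ by the irreducible polynomial $\alpha y^{3}t+\beta z^{3}$; its sole singularity is $O_t$, where on the orbifold cover it lifts to three distinct concurrent lines $\tilde y^{3}+c\tilde z^{3}=0$. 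By Lemma~\ref{lemma:basic-property}(3) together with Proposition~\ref{proposition:lc-by-finite-morphism} this gives $\lct_{O_t}(X,C_x)=\frac{2}{3}$, while $C_x$ is orbifold-smooth elsewhere, so $\lct(X,\frac{2}{3}C_x)=1$ and the upper bound $\lct(X)\le 1$ follows.

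For the reverse inequality I suppose there exists an effective $\Q$-divisor $D\sim_{\Q}-K_X$ such that $(X,D)$ is not log canonical at some point $P\in X$, and invoke Lemma~\ref{lemma:convexity} with $D_{2}=\frac{2}{3}C_x$ to reduce to $C_x\not\subset\Supp(D)$. A direct computation gives $D\cdot C_x=\frac{2}{3n(3n+1)}$. At smooth points of $C_x$ lying in the smooth locus of $X$, Lemma~\ref{lemma:handy-adjunction} with $\mu=0$ excludes $P$ immediately; the same lemma at $O_y$ yields $(3n+1)\,D\cdot C_x=\frac{2}{3n}\le 1$; and at $O_t$ the local intersection estimate on the $9n$-sheeted cover, combined with $\mult_{\tilde O}(\tilde C_x)=3$, forces $\mult_{\tilde O}(\tilde D)\le\frac{2}{3n+1}<1$, so Lemma~\ref{lemma:multiplicity} rules out $P=O_t$. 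For $P=Q_i$, a second convexity step with $D_{2}=\frac{2}{3n+1}C_y$ (which is log canonical at $Q_i$ because $C_y$ lifts to a smooth arc on the $\frac{1}{3}(1,1)$-cover) removes $C_y$ from $\Supp(D)$, and Lemma~\ref{lemma:handy-adjunction} then gives $3\,D\cdot C_y=\frac{2}{3n}\le 1$.

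The remaining case is that $P$ is a smooth point of $X\setminus C_x$. For $n\ge 2$ I will apply Lemma~\ref{lemma:Carolina} with $k=18n+3$, whose monomial hypotheses are satisfied by $(x^{6n+1},x^{3n}y^{3})$, $(x^{6n+1},z^{3})$ and $(x^{6n+1},x^{3n+1}t,xt^{2})$, giving $\mult_P(D)\le\frac{2(6n+1)}{3n(3n+1)}<1$. The main obstacle will be the borderline case $n=1$, i.e.\ $X_{21}\subset\mathbb{P}(3,4,7,9)$, where the Carolina bound equals $\frac{7}{6}$. To handle it I will use the pencil $\mathcal{L}=|z,xy|\subset|\mathcal{O}_X(7)|$, whose base locus $\{O_t,O_y,Q_1,Q_2\}$ lies in $\Sing(X)$: for a general smooth point $P\notin C_x\cup C_y\cup C_z$, substituting $z=cxy$ into the defining equation shows that the unique member $F_P$ through $P$ is irreducible and smooth at $P$, so $(X,\frac{2}{7}F_P)$ is log canonical at $P$, and Lemma~\ref{lemma:convexity} together with the bound $\mult_P(D)\le D\cdot F_P=\frac{7}{18}<1$ excludes $P$; smooth points $P\in(C_y\cup C_z)\setminus C_x$ for $n=1$ are dispatched similarly via convexity with $C_y$ or $C_z$ and the intersections $D\cdot C_y=\frac{2}{9}$, $D\cdot C_z=\frac{7}{18}$. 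Combining all these contradictions forces $\lct(X)\ge 1$.
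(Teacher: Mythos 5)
Your overall strategy coincides with the paper's: bound $\lct(X)\le 1$ via $\lct(X,\frac{2}{3}C_x)=1$, remove $C_x$ and $C_y$ from $\Supp(D)$ by Lemma~\ref{lemma:convexity}, exclude the points of $C_x$ and the index-$3$ points using $(3n+1)\,C_x\cdot D=\frac{2}{3n}$, $\mult_{O_t}(C_x)=3$ and $3\,C_y\cdot D=\frac{2}{3n}$, and finish with Lemma~\ref{lemma:Carolina}. The one genuine gap is in your treatment of smooth points when $n=1$. Choosing $k=18n+3$ gives the bound $\frac{2(6n+1)}{3n(3n+1)}$, which equals $\frac{7}{6}>1$ at $n=1$, and your repair via the pencil $\lambda z+\mu xy$ rests on the assertion that the member $F_P$ through $P$ is irreducible. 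That assertion fails for finitely many members: writing the restriction of the defining equation to $z=cxy$ as $\beta x^3y^3+\alpha_4x^7+\alpha_5x^4t+\alpha_6xt^2+\alpha_7y^3t$ with $\beta=\beta(c)$ a cubic in $c$, the curve factors as $(\lambda t+\mu x^3)(ay^3+bx^4+ext)$ exactly when $\alpha_6\beta^2-\alpha_5\alpha_7\beta+\alpha_4\alpha_7^2=0$, and this quadratic does have nonzero roots since $\alpha_4,\alpha_6,\alpha_7\ne0$ by quasismoothness. A point $P$ lying on such a reducible member is not excluded by your argument; you would need to run the convexity-plus-adjunction step on its two components (the same caveat applies to your unproved irreducibility of $C_z$). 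The paper avoids all of this by applying Lemma~\ref{lemma:Carolina} with $k=9n+3$, using the monomials $x^{3n+1}$, $y^{3}$, $xt$ together with the finiteness of the projection from $O_z$, which yields $\mult_P(D)\le\frac{2(18n+3)(9n+3)}{3(3n+1)(6n+1)9n}=\frac{2}{3n}\le\frac{2}{3}<1$ uniformly for all $n\ge1$; with that choice of $k$ your proof closes with no separate $n=1$ case.

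Two smaller inaccuracies. Quasismoothness at $O_x$ forces only one of $x^{6n+1}$, $x^{3n+1}t$ to appear, so $O_x$ may lie on $X$; in that case it is simply one of the two $\frac{1}{3}(1,1)$ points, and since $O_x\in C_y$ your exclusion of the $Q_i$ still applies. Also, the line carrying $Q_1,Q_2$ is $\{y=z=0\}$, which in the paper's notation is $L_{yz}$, not $L_{xt}$ (the latter is $\{x=t=0\}$, whose generic point is a smooth point of $\mathbb{P}$ since $\gcd(3n+1,6n+1)=1$).
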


\begin{proof}
The singularities of $X$ are a singular point $O_y$ of index
$3n+1$, a singular point $O_{t}$ of index $9n$, and two singular
points $Q_1$, $Q_2$ of index $3$ on $L_{yz}$.

The curve $C_x$ is reduced and irreducible and has the only
singularity  at $O_t$. It is easy to see that
$\lct(X, \frac{2}{3}C_x)=1$, and hence $\lct(X)\leqslant 1$.
The curve $C_y$ is quasismooth. Therefore, the log
pair $(X, \frac{2}{3n+1}C_y)$ is log canonical.

Suppose that $\lct(X)<1$. Then there is an effective $\Q$-divisor
$D\qlineq -K_X$ such that the log pair $(X, D)$ is not log
canonical at some point $P\in X$. By Lemma~\ref{lemma:convexity}
we may assume that neither $C_x$ nor $C_y$ is contained in
$\Supp(D)$.

The inequalities
$$
C_x\cdot
D<(3n+1)C_x\cdot
D=\frac{2}{3n}<1,$$
$$\mult_{O_t}(D)=\frac{\mult_{O_t}(C_{x})
\mult_{O_t}(D)}{3}\leqslant \frac{9nC_x\cdot D}{3}=\frac{2}{3n+1}<1$$
show that the point $P$ must be located in the outside of $C_x$.

Also, the inequality
$$
3C_y\cdot D=\frac{2}{3n}<1
$$
implies that neither $Q_1$ not $Q_2$ can be the point $P$. Hence
$P$ is a smooth point of $X\setminus C_x$. We see that $H^0(\P,
\mathcal{O}_\P(9n+3))$ contains $x^{3n+1}$, $y^{3}$ and $xt$.
Also, the projection of $X$ from the point $O_z$ has only finite
fibers. Therefore, Lemma~\ref{lemma:Carolina} implies a
contradictory inequality
$$
1<\mult_P(D)\leqslant\frac{2 (18n+3)(9n+3)}
{3 (3n+1) (6n+1)\cdot 9n}=\frac{2}{3n}<1.%
$$
The obtained contradiction
completes the proof.
\end{proof}

\begin{lemma}
\label{lemma:I-2-infinite-series-1-n-1}
Let $X$ be a quasismooth hypersurface of degree $12$ in $\mathbb{P}(3, 3, 4, 4)$.
 Then $\lct(X)=1$.
\end{lemma}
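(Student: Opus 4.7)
The plan is to factor the defining polynomial as $f(x,y)+g(z,t)=0$, where the degree count forces $f$ to be homogeneous of degree $4$ in $x,y$ and $g$ to be of degree $3$ in $z,t$. Quasismoothness forces both $f$ and $g$ to have only simple roots, so we write $f=\prod_{i=1}^{4}L_{i}(x,y)$ and $g=\prod_{j=1}^{3}M_{j}(z,t)$. Then $X$ has precisely four singular points $P_{i}\in L_{zt}$ of type $\frac{1}{3}(1,1)$ (cut out by $L_{i}$) and three singular points $Q_{j}\in L_{xy}$ of type $\frac{1}{4}(1,1)$ (cut out by $M_{j}$). For each pair $(i,j)$ the line $R_{i,j}=\{L_{i}=0,\ M_{j}=0\}$ is contained in $X$, is isomorphic to $\P(3,4)$, and passes through $P_{i}$ and $Q_{j}$; these are the key curves of the proof. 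The pencil $|\O_{X}(3)|$ has four reducible members $D_{L_{i}}=R_{i,1}+R_{i,2}+R_{i,3}$, and the pencil $|\O_{X}(4)|$ has three reducible members $E_{j}=R_{1,j}+\ldots+R_{4,j}$. Standard intersection calculations yield $R_{i,j}\cdot R_{i,j'}=\frac{1}{3}$ (if $j\ne j'$), $R_{i,j}\cdot R_{i',j}=\frac{1}{4}$ (if $i\ne i'$), $-K_{X}\cdot R_{i,j}=\frac{1}{6}$, and $R_{i,j}^{2}=-\frac{5}{12}$ (via orbifold adjunction on $R_{i,j}\cong\P(3,4)$).

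For the upper bound $\lct(X)\le 1$, fix any factor $L_{i}$. In the orbifold cover $\mathbb{C}^{2}_{z,t}$ at $P_{i}$, the three components of $D_{L_{i}}$ lift to three distinct lines through the origin, so the pair $(X,\frac{2}{3}D_{L_{i}})$ is log canonical at $P_{i}$ with the threshold $\frac{2}{3}$ achieved there. Since $\frac{2}{3}D_{L_{i}}\qlineq -K_{X}$, this gives $\lct(X)\le 1$.

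For the lower bound, suppose for contradiction there is an effective $\Q$-divisor $D\qlineq -K_{X}$ such that $(X,D)$ is not log canonical at some $P\in X$. I split into four cases by location of $P$. First, if $P$ is smooth and not on any $R_{i,j}$, the unique member $F\in|\O_{X}(3)|$ through $P$ is irreducible. Its equation $Ay^{4}+g(z,t)=0$ in $\P(3,4,4)$ with $A\ne 0$ is quasismooth (the distinct roots of $g$ prevent singular points on $F$), so $F$ is smooth at $P$. Lemma~\ref{lemma:convexity} applied with $\frac{2}{3}F$ lets us assume $F\not\subset\Supp(D)$, giving the contradiction $\mult_{P}(D)\le D\cdot F=\frac{1}{2}<1$. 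Next, if $P=P_{i_{0}}$, Lemma~\ref{lemma:convexity} applied with $\frac{2}{3}D_{L_{i_{0}}}$ produces some $R_{i_{0},j}\not\subset\Supp(D)$; it passes through $P_{i_{0}}$, and $3(D\cdot R_{i_{0},j})=\frac{1}{2}\le 1$ combined with Lemma~\ref{lemma:handy-adjunction} gives log canonicity at the index-$3$ point $P_{i_{0}}$, contradiction. The case $P=Q_{j_{0}}$ is symmetric via $\frac{1}{2}E_{j_{0}}$ and the inequality $4(D\cdot R_{i,j_{0}})=\frac{2}{3}\le 1$.

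The most delicate case is when $P$ is smooth and lies on a (necessarily unique) $R_{i_{0},j_{0}}$. Lemma~\ref{lemma:convexity} applied with $\frac{1}{2}E_{j_{0}}$ produces $R_{i',j_{0}}\not\subset\Supp(D)$ for some $i'$. If $R_{i_{0},j_{0}}\not\subset\Supp(D)$, then $\mult_{P}(D)\le D\cdot R_{i_{0},j_{0}}=\frac{1}{6}<1$. Otherwise write $D=\mu R_{i_{0},j_{0}}+\Omega$ with $\mu>0$ and $R_{i_{0},j_{0}}\not\subset\Supp(\Omega)$; necessarily $i'\ne i_{0}$, and nonnegativity of $\Omega\cdot R_{i',j_{0}}=\frac{1}{6}-\frac{\mu}{4}$ forces $\mu\le\frac{2}{3}$. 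Consequently $\Omega\cdot R_{i_{0},j_{0}}=\frac{1}{6}+\frac{5\mu}{12}\le\frac{4}{9}<1$, and Lemma~\ref{lemma:handy-adjunction} gives log canonicity at the smooth point $P$, a contradiction. The main obstacle of the argument is this last case: bounding the multiplicity $\mu$ of the component of $D$ passing through $P$ requires both the reducible member $E_{j_{0}}$ of the other pencil and the self-intersection computation $R_{i_{0},j_{0}}^{2}=-\frac{5}{12}$.
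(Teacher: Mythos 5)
Your proof is correct and follows essentially the same route as the paper's: the same decomposition of $X$ into the twelve curves cut out by pairs of linear factors, the same intersection numbers ($-K_X\cdot R_{i,j}=\tfrac{1}{6}$, $R_{i,j}^2=-\tfrac{5}{12}$), the same use of the two reducible pencils together with Lemmas~\ref{lemma:convexity} and~\ref{lemma:handy-adjunction} to exclude the singular points and the points of the curves $R_{i,j}$, and the same final step via the irreducible member of $|\mathcal{O}_X(3)|$ through a remaining smooth point. The only (harmless) deviations are cosmetic: you bound the multiplicity $\mu$ using the degree-$4$ pencil (getting $\mu\leqslant\tfrac{2}{3}$ where the paper gets $\mu\leqslant\tfrac{1}{2}$ from the degree-$3$ pencil), and you order the cases slightly differently.
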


\begin{proof}
The surface $X$ can be defined by the quasihomogeneous equation
$$
\prod_{i=1}^{4}(\alpha_{i}x+\beta_{i}y)=\prod_{j=1}^{3}(\gamma_{j}z+\delta_{j}t),
$$
where
$[\alpha_{i}:\beta_{i}]$ define four distinct points  and $[\gamma_{j}:\delta_{j}]$ define three distinct points in $\mathbb{P}^1$.

Let $P_{i}$ be the point in $X$  given by
$z=t=\alpha_{i}x+\beta_{i}y=0$.
These are singular point of $X$ of type $\frac{1}{3}(1,1)$.
Let $Q_{j}$ be the point in $X$ that is given by
$x=y=\gamma_{j}z+\delta_{j}t=0$. Then
each of them is a singular point of $X$ of type $\frac{1}{4}(1,1)$.

Let $L_{ij}$ be the curve in $X$ defined by
$\alpha_{i}x+\beta_{i}y=\gamma_{j}z+\delta_{j}t=0$, where
$i=1,\ldots,4$ and $j=1,\ldots,3$.

The divisor $C_i$ cut out by the equation
$\alpha_{i}x+\beta_{i}y=0$ consists of three smooth curves
$L_{i1}$, $L_{i2}$, $L_{i3}$. These divisors $C_i$, $i=1,2,3,4$,
are the only reducible members in the linear system
$|\mathcal{O}_X(3)|$. Meanwhile, the divisor $B_j$ cut out by
$\gamma_{j}z+\delta_{j}t=0$ consists of four smooth curves
$L_{1j}$, $L_{2j}$, $L_{3j}$, $L_{4j}$. Note that $L_{i1}\cap
L_{i2}\cap L_{i3}=\{P_{i}\}$ and $L_{1j}\cap L_{2j}\cap L_{3j}\cap
L_{4j}=\{Q_{j}\}$.  We have $L_{ij}\cdot L_{ik}=\frac{1}{3}$ and
$L_{ji}\cdot L_{ki}=\frac{1}{4}$ if $k\ne j$. But
$L_{ij}^{2}=-\frac{5}{12}$.

Since
$
\lct\left(X, \frac{2}{3}C_i\right)=\lct\left(X, \frac{2}{4}B_j\right)=1,%
$
we have $\lct(X)\leqslant 1$.

Suppose that $\lct(X)<1$. Then there is an effective $\Q$-divisor
$D\qlineq -K_X$ such that the pair $(X,D)$ is not log canonical at
some point $P$. For every $i=1,\ldots,4$, we may assume that the
support of the divisor $D$ does not contain at least one curve
among $L_{i1},L_{i2},L_{i3}$.
Suppose $L_{ik}\not\subset \Supp(D)$. Then the inequality
\[\mult_{P_i}(D) \leqslant 3D\cdot L_{ik}=\frac{1}{2}\]
implies that none of the points $P_i$ can be the point $P$.
For every $j=1, 2, 3$, we may also
assume that the support of the divisor $D$ does not contain at
least one curve among $L_{1j},L_{2j},L_{3j},L_{4j}$. Suppose $L_{lj}\not\subset \Supp(D)$.  Then the inequality
\[\mult_{Q_j}(D) \leqslant 4D\cdot L_{lj}=\frac{2}{3}\]
implies that none of the points $Q_j$ can be the point $P$.
Therefore, the point must be a smooth point of $X$.

Write $D=\mu L_{ij}+\Omega$, where
$\Omega$ is an effective $\Q$-divisor whose support does not contain $L_{ij}$.
If $\mu>0$, then we have
$\mu L_{ij}\cdot L_{ik}\leqslant D\cdot L_{ik}$, and hence
$\mu\leqslant
\frac{1}{2}$.
Since $$
\Omega\cdot L_{ij}=\frac{2+5\mu}{12}<1,
$$
Lemma~\ref{lemma:adjunction} implies the point $P$ cannot be on the curve $L_{ij}$.
Consequently,
$$
P\not\in\bigcup_{i=1}^{4}\bigcup_{j=1}^{3}L_{ij}.
$$

There is a unique curve $C\subset X$
cut out by $\lambda x+\mu y=0$, where
$[\lambda:\mu]\in\mathbb{P}^{1}$, passing through the point $P$.  Then  the curve $C$ is irreducible and
quasismooth. Thus, we may assume that $C$ is not contained in the
support of $D$. Then
$$
1<\mult_{P}(D)\leqslant D\cdot C=\frac{1}{2}.
$$
This is a contradiction.
\end{proof}

\begin{lemma}
\label{lemma:I-2-infinite-series-4} Let $X$ be a quasismooth
hypersurface of degree $9n+3$ in $\mathbb{P}(3, 3n, 3n+1, 3n+1)$
for $n\geqslant 2$.
 Then $\lct(X)=1$.
\end{lemma}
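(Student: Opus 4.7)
The plan is to follow the four-step scheme of Section~4, exploiting that $I=2$ and $-K_X\qlineq\O_X(2)$, so that $\frac{2}{3}C_x\qlineq -K_X$. First I would determine the structure of $X$. For $n\ge 2$, a weight count in the monomials of degree $9n+3$ in $y,z,t$ only shows that the unique solutions of $3na+(3n+1)(b+c)=9n+3$ are $a=0$, $b+c=3$, so $f(0,y,z,t)$ is a cubic in $z,t$ with coefficients independent of $y$. Consequently $C_x$ splits as $L_1+L_2+L_3$ with $L_i=\{x=0,\ \alpha_iz+\beta_it=0\}\cap X$; all three components pass through $O_y$, and each $L_i$ meets $L_{xy}$ at a distinct point $Q_i$. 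Quasismoothness at $O_y$ forces the $xy^3$ coefficient of $f$ to be nonzero, so $O_y$ is a $\frac{1}{3n}(1,1)$ singularity, while the three roots of $f(0,0,z,t)$ give three singular points $Q_1,Q_2,Q_3$ of index $3n+1$ on $L_{xy}$; these exhaust $\Sing(X)$. Standard intersection calculations give $L_i\cdot(-K_X)=\tfrac{2}{3n(3n+1)}$, $L_i\cdot L_j=\tfrac{1}{3n}$ for $i\ne j$ (the entire intersection concentrating at $O_y$), and $L_i^2=-\tfrac{6n-1}{3n(3n+1)}$.

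For the upper bound $\lct(X)\le 1$ I would verify that $(X,\frac{2}{3}C_x)$ is log canonical: at each $Q_i$ only the single branch $L_i$ passes through, and at $O_y$ Proposition~\ref{proposition:lc-by-finite-morphism} reduces the question to the degree-$3n$ orbifold cover, where the three $L_i$ lift to three concurrent lines in $\C^2$, an ordinary triple point of log canonical threshold $\frac{2}{3}$. For the reverse inequality, suppose $\lct(X)<1$, so that some effective $D\qlineq-K_X$ makes $(X,D)$ fail to be log canonical at a point $P$. By Lemma~\ref{lemma:convexity} we may assume $L_{i_0}\not\subset\Supp(D)$ for some $i_0$. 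If $P\in L_{i_0}$, then the bound $\mult_P(D)\le r\cdot L_{i_0}\cdot D$ (with $r$ the index of $P$) gives $\tfrac{2}{3n(3n+1)}$, $\tfrac{2}{3n+1}$, and $\tfrac{2}{3n}$ at a smooth point, at $O_y$, and at $Q_{i_0}$ respectively, all $\le 1$ for $n\ge 2$. If $P\in L_j$ with $j\ne i_0$, write $D=\mu L_j+\Omega$ with $L_j\not\subset\Supp(\Omega)$; the inequality $\mu L_{i_0}\cdot L_j\le L_{i_0}\cdot D$ yields $\mu\le\tfrac{2}{3n+1}\le 1$, so Lemma~\ref{lemma:handy-adjunction} applies with $\lambda=1$, and $\Omega\cdot L_j=D\cdot L_j-\mu L_j^2\le\tfrac{6}{(3n+1)^2}$. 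The requirements $\Omega\cdot L_j\le 1$, $\tfrac{6}{(3n+1)^2}\le\tfrac{1}{3n}$, and $\tfrac{6}{(3n+1)^2}\le\tfrac{1}{3n+1}$ at a smooth point, at $O_y$, and at $Q_j$ respectively then all hold for $n\ge 2$.

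Since every singular point of $X$ lies on $C_x$, this leaves $P$ a smooth point outside $C_x$, and I would invoke Lemma~\ref{lemma:Carolina} with $k=d=9n+3$: the monomial pairs $\{x^{3n+1},xy^3\}$, $\{x^{3n+1},z^3\}$ and $\{x^{3n+1},t^3\}$ all appear in $H^0(\mathbb{P},\O_\mathbb{P}(k))$, giving $\mult_P(D)\le\tfrac{Ikd}{a_0a_1a_2a_3}=\tfrac{2(9n+3)^2}{9n(3n+1)^2}=\tfrac{2}{n}\le 1$, contradicting Lemma~\ref{lemma:multiplicity}. The main obstacle is the inequality $\tfrac{6}{(3n+1)^2}\le\tfrac{1}{3n}$ at $O_y$, equivalent to $9n^2-12n+1\ge 0$: this fails at $n=1$ (explaining why the case $(3,3,4,4,12)$ with $a_0=a_1$ must be handled separately in Lemma~\ref{lemma:I-2-infinite-series-1-n-1}) and holds for $n\ge 2$. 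All other estimates have comfortable margins, and no weighted blow-up (Method~3.3) should be required.
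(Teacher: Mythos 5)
Your structural analysis of $\Sing(X)$ is incomplete, and this opens a genuine gap. Since $\gcd(a_0,a_1)=\gcd(3,3n)=3$, the stratum $L_{zt}=\{z=t=0\}$ lies in the singular locus of $\mathbb{P}(3,3n,3n+1,3n+1)$ with stabilizer $\mu_3$. The only monomials of degree $9n+3$ in $x,y$ alone are $x^{3n+1}$, $x^{2n+1}y$, $x^{n+1}y^{2}$, $xy^{3}$, so $f(x,y,0,0)=x\cdot(\text{cubic in }x^{n},y)$ with nonzero $y^{3}$-coefficient (forced by quasismoothness at $O_y$, as you note). Hence $X\cap L_{zt}$ consists of $O_y$ together with three further points, each a quotient singularity of $X$ of index $3$; in the paper's normalization these are $O_x$, $[1:a:0:0]$ and $[1:b:0:0]$. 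They have $x\ne 0$, so they do not lie on $C_x$, and your assertion that ``every singular point of $X$ lies on $C_x$'' is false. Consequently your final dichotomy --- $P\in C_x$ (handled by adjunction) or $P$ a smooth point off $C_x$ (handled by Lemma~\ref{lemma:Carolina}) --- fails to cover the case where $P$ is one of these index-$3$ points, and Lemma~\ref{lemma:Carolina} gives no information there, since it only bounds multiplicities at smooth points.

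The paper closes exactly this case with one extra step: it takes $C=\{z-\alpha t=0\}\cap X$ for $\alpha$ nonzero and avoiding the reducible members of the pencil, which is quasismooth and irreducible, passes through all three index-$3$ points, and satisfies $\lct(X,\frac{2}{3n+1}C)\geqslant 1$; after arranging $C\not\subset\Supp(D)$ via Lemma~\ref{lemma:convexity}, the bound $\mult_P(D)\leqslant 3D\cdot C=\frac{2}{n}\leqslant 1$ (valid precisely for $n\geqslant 2$) excludes these points. Apart from this omission your argument tracks the paper's: the decomposition $C_x=L_1+L_2+L_3$, the intersection numbers, the bound $\mu\leqslant\frac{2}{3n+1}$, the adjunction estimate $\Omega\cdot L_j\leqslant\frac{6}{(3n+1)^2}$, and the Carolina computation $\frac{2(9n+3)^2}{9n(3n+1)^2}=\frac{2}{n}$ all agree with the paper, as does your diagnosis of why $n=1$ must be treated separately.
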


\begin{proof}We may assume that the surface $X$ is defined by the
equation
\[xy(y-ax^n)(y-bx^n)+zt(z-ct)=0,\]
where $a$, $b$, $c$ are non-zero constants and $b\ne c$.
 The point $O_y$ is a singular point of
of index $3n$ on $X$. The three points $O_x$, $P_a=[1:a:0:0]$,
$P_b=[1:b:0:0]$ are singular points of index $3$ on $X$. Also, $X$
has three singular points $O_{z}$, $O_t$, $P_c=[0:0:c:1]$ of index
$3n+1$ on $L_{xy}$.

The curve $C_x$ consists of three irreducible components $L_{xz}$,
$L_{xt}$ and $L_c=\{x=z-ct=0\}$. These three components intersect
each other at $O_y$. It is easy to check $\lct(X,
\frac{2}{3}C_x)=1$. Thus, $\lct(X)\leqslant 1$.

%The curve $C_z$ consists of four irreducible components $L_{xz}$,
%$L_{yz}$, $L_a=\{z=y-ax^n\}$ and $L_b=\{z=y-bx^n\}$. These four
%components intersect each other at the point $O_t$. We can see
%that $\lct (X, \frac{2}{3n+1}C_z)=\frac{n+1}{2}$.

Suppose that $\lct(X)<1$. Then there is an effective $\Q$-divisor
$D\qlineq -K_X$ such that the log pair $(X, D)$ is not log
canonical at some point $P\in X$.

By Lemma~\ref{lemma:convexity} we may assume that at least one of
the components of $C_x$ is not contained in $\Supp(D)$. Then, the
inequality
$$
3nL_{xz}\cdot D=3nL_{xt}\cdot D=3nL_c\cdot D=\frac{2}{3n+1}<1%
$$
implies that the point $P$ cannot be the point $O_y$.

Put $D=\mu L_{xz}+\Omega$, where $\Omega$ is an effective
$\Q$-divisor whose support does not contain the curve $L_{xz}$. We
claim that
$$\mu\leqslant\frac{2}{3n+1}.$$
Indeed, if the inequality fails, one of the curves $L_{xt}$ and
$L_c$ is not contained in $\Supp(D)$. Then either
$$
\frac{\mu}{3n}=\mu L_{xz}\cdot L_{xt}\leqslant D\cdot L_{xt}=\frac{2}{3n(3n+1)},
\mbox{ or }\ \
\frac{\mu}{3n}=\mu L_{xz}\cdot L_{c}\leqslant D\cdot L_{c}=\frac{2}{3n(3n+1)}
$$
holds. This is a contradiction. Note that
$$L_{xz}^2=-\frac{6n-1}{3n(3n+1)}.$$
The inequality
$$\Omega\cdot L_{xz}=\frac{2+(6n-1)\mu}{3n(3n+1)}<
\frac{1}{3n+1}$$ holds for all $n\geqslant 2$. Therefore,
Lemma~\ref{lemma:handy-adjunction} implies the point $P$ cannot
belong to $L_{xz}$. By the same way, we can show that $P\not\in
L_{xt}\cup L_c$.

Let $C$ be the curve on $X$ cut out by the equation $z-\alpha
t=0$, where $\alpha$ is non-zero constant different from $c$. Then
the curve $C$ is quasismooth and hence $\lct(X,
\frac{2}{3n+1}C)\geqslant 1$. Therefore, we may assume that the
support of $D$ does not contain the curve $C$. Then
\[\mult_{O_x}(D), \ \mult_{P_a}(D),\  \mult_{P_b}(D)\leqslant
3D\cdot C=\frac{2}{n}\leqslant 1 \] for $n\geqslant 2$. Therefore,
$P$ cannot be a singular point of $X$. Hence $P$ is a smooth point
of $X\setminus C_x$. Applying Lemma~\ref{lemma:Carolina}, we get
an absurd inequality
$$
1<\mult_P(D)\leqslant\frac{2 (9n+3)^2}
{3\cdot 3n (3n+1) (3n+1)}\leqslant 1%
$$
for $n\geqslant 2$ since $H^0(\P, \mathcal{O}_\P(9n+3))$ contains
$x^{3n+1}$, $xy^{3}$ and $z^{3}$. The obtained contradiction
completes the proof.
\end{proof}

\begin{lemma}
\label{lemma:I-2-infinite-series-5}
Let $X$ be a quasismooth
hypersurface of degree $9n+6$ in $\mathbb{P}(3, 3n+1, 3n+2, 3n+2)$
for $n\geqslant 1$.
 Then $\lct(X)=1$.
\end{lemma}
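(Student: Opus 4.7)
I would begin by unraveling the geometry of $X$. After a coordinate change the defining equation takes the form $f = \lambda x^{3n+2} + x^{n+1} y(\mu z + \nu t) + \sigma x y^{3} + q(z,t)$, where $q$ is a binary cubic with distinct roots $c_{1}, c_{2}, c_{3}$. The only singularities of $X$ are $O_{y}$ of type $\frac{1}{3n+1}(1,1)$ and three points $P_{i} = [0:0:c_{i}:1]$ of type $\frac{1}{3n+2}(3, 3n+1)$. The divisor $C_{x}$ splits as $L_{1} + L_{2} + L_{3}$, where $L_{i} = \{x = 0,\ z = c_{i} t\}$; the three lines meet only at $O_{y}$, and each $L_{i}$ passes through $P_{i}$. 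Direct computation gives $L_{i} \cdot L_{j} = 1/(3n+1)$ for $i \ne j$, $L_{i}^{2} = -(6n+1)/[(3n+1)(3n+2)]$, and $(-K_{X}) \cdot L_{i} = 2/[(3n+1)(3n+2)]$. Since $C_{x}$ has multiplicity three at $O_{y}$ with three distinct tangent directions, Lemma~\ref{lemma:basic-property} yields $\lct_{O_{y}}(X, C_{x}) = 2/3$, so $\lct(X, \tfrac{2}{3} C_{x}) = 1$ and hence $\lct(X) \le 1$.

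Assume $\lct(X) < 1$, so there exist $D \sim_{\Q} -K_{X}$ and $P \in X$ with $(X, D)$ not log canonical at $P$. By Lemma~\ref{lemma:convexity} applied with $D_{2} = \tfrac{2}{3} C_{x}$ we may take $L_{1} \not\subset \Supp(D)$, and then Lemma~\ref{lemma:handy-adjunction} combined with $(3n+1) D \cdot L_{1} = 2/(3n+2) \le 1$ and $(3n+2) D \cdot L_{1} = 2/(3n+1) \le 1$ excludes every point of $L_{1}$; in particular $P \ne O_{y}, P_{1}$. If $P \in L_{2}$ (the case $L_{3}$ is symmetric), write $D = \mu L_{2} + \Omega$ with $L_{2} \not\subset \Supp(\Omega)$; the inequality $\mu L_{2} \cdot L_{1} \le D \cdot L_{1}$ gives $\mu \le 2/(3n+2)$, whence
\[ \Omega \cdot L_{2} = \frac{2 + \mu(6n+1)}{(3n+1)(3n+2)} \le \frac{6}{(3n+2)^{2}}, \]
and Lemma~\ref{lemma:handy-adjunction} excludes smooth points of $L_{2}$, and also $P_{2}$ provided $n \ge 2$, since $(3n+2) \Omega \cdot L_{2} \le 6/(3n+2) \le 1$.

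For smooth $P$ outside $C_{x}$, I would use the pencil $|\mathcal{O}_{X}(3n+2)|$ spanned by $z, t$: its three reducible members decompose as $F_{c_{i}} = L_{i} + R_{i}$, while every other member $F_{\alpha} = \{z - \alpha t = 0\} \cap X$ is irreducible and quasismooth. If the unique pencil member through $P$ is irreducible, Lemma~\ref{lemma:convexity} applied with $D_{2} = \tfrac{2}{3n+2} F_{\alpha}$ gives $F_{\alpha} \not\subset \Supp(D)$, whence $\mult_{P}(D) \le F_{\alpha} \cdot D = 2/(3n+1) < 1$. If instead $P \in R_{i} \setminus L_{i}$, the same convexity extracts either $L_{i}$ or $R_{i}$; since $R_{i}$ is smooth away from $P_{i}$, a standard intersection bound with the extracted component yields $\mult_{P}(D) < 1$ in either sub-case. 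The base points $O_{y}$ and $B$ of the pencil are handled identically using any irreducible $F_{\alpha}$.

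The main obstacle is the case $n = 1$ at $P = P_{2}$, corresponding to $X_{15} \subset \P(3,4,5,5)$, where the earlier bound $(3n+2) \Omega \cdot L_{2} \le 6/5$ falls just short of $1$. A single blow-up in the orbifold cover at $P_{2}$ shows $\lct_{P_{2}}(X, F_{c_{2}}) = 3/5$, so Lemma~\ref{lemma:convexity} applies with $D_{2} = \tfrac{2}{5} F_{c_{2}}$. Either $L_{2} \not\subset \Supp(D)$, in which case $(3n+2) D \cdot L_{2} = 1/2 \le 1$ and Lemma~\ref{lemma:handy-adjunction} forces log canonicity at $P_{2}$, a contradiction; or $R_{2} \not\subset \Supp(D)$, and then the intersection inequality $\mult_{\mathrm{up}}(\Omega) \cdot \mult_{\mathrm{up}}(R_{2}) \le (\Omega \cdot R_{2})_{P_{2}, \mathrm{up}} \le 5 \cdot (\Omega \cdot R_{2}) = 2 - 3\mu$ combined with $\mult_{\mathrm{up}}(R_{2}) = 2$ yields $\mult_{\mathrm{up}}(\Omega) \le 1 - 3\mu/2$, so $\mult_{P_{2}, \mathrm{up}}(D) = \mu + \mult_{\mathrm{up}}(\Omega) \le 1 - \mu/2 \le 1$, and Lemma~\ref{lemma:multiplicity} again forces log canonicity at $P_{2}$, the desired contradiction.
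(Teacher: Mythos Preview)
Your proof is correct and follows the paper's scheme closely: the decomposition $C_x = L_1 + L_2 + L_3$, the convexity reduction extracting $L_1$, the bound $\mu \le 2/(3n+2)$ on the $L_2$-coefficient, and the treatment of $P_2$ for $n=1$ via the reducible member $L_2 + R_2$ of $|\mathcal{O}_X(3n+2)|$ all match. At $P_2$ the paper's version is slightly terser --- once $R_2 \not\subset \Supp(D)$ it directly bounds $2\,\mult_{P_2}(D) \le 5\,D\cdot R_2 = 2$ --- but this is equivalent to your $\mu$-refined estimate.

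The one genuine difference is how you handle smooth points $P \notin C_x$. The paper invokes Lemma~\ref{lemma:Carolina} with the monomials $x^{3n+2}, xy^3, z^3 \in H^0(\P,\mathcal{O}_\P(9n+6))$, which gives $\mult_P(D) \le 6/(3n+1) \le 1$ for $n \ge 2$, and only resorts to the pencil $|\mathcal{O}_X(3n+2)|$ for $n=1$. You use the pencil uniformly for all $n$. Your route is more self-contained but requires verifying that every member $F_\alpha$ with $\alpha \ne c_i$ is irreducible and that each residual curve $R_i$ is irreducible (both true, since the defining polynomial is linear in $t$ with coprime coefficients, respectively cubic in $t$ with no weighted-homogeneous linear factor); you gloss over these checks, and your phrase ``a standard intersection bound with the extracted component'' in the $L_i$-extracted sub-case is imprecise --- what is actually needed is to write $D = mR_i + \Delta$, bound $m$ via $D \cdot L_i$, and then apply Lemma~\ref{lemma:handy-adjunction} on $R_i$ using $R_i^2 = (3n-1)/(3n+2) > 0$. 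The paper's use of Lemma~\ref{lemma:Carolina} avoids this case analysis entirely for $n \ge 2$.
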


\begin{proof}
The only singularities of $X$ are a singular point $O_y$ of index
$3n+1$, and three singular points $P_i$, $i=1, 2, 3$, of
index $3n+2$ on $L_{xy}$.

The divisor $C_x$ consists of three distinct irreducible and reduced curves
$L_{1}$, $L_{2}$, $L_{3}$, where each $L_{i}$ contains the singular point $P_i$. Then $L_{1}\cap L_{2}\cap
L_{3}=\{O_y\}$. It is obvious that $\lct(X, \frac{2}{3}C_x)=1$, and hence $\lct(X)\leqslant 1$.

Suppose that $\lct(X)<1$. Then there is an effective $\Q$-divisor
$D\qlineq -K_X$ such that the log pair $(X, D)$ is not log
canonical at some point $P\in X$. By Lemma~\ref{lemma:convexity}
we may assume that  $L_1$ is not contained in $\Supp(D)$.

Since
$$
L_1\cdot D<(3n+1)L_1\cdot D=\frac{2}{3n+2}<1$$
for all $n\geqslant 1$,  we see
that $P\not\in L_{1}$. In particular, we see that $P\ne O_{y}$.

Put $D=\mu L_2+\Omega$, where $\Omega$
is an effective $\Q$-divisor such that $L_2\not\subset\Supp(\Omega)$.
Then the inequality
$$
\frac{\mu}{3n+1}=\mu L_1\cdot L_2\leqslant D\cdot L_1=\frac{2}{(3n+1)(3n+2)},%
$$
implies that $\mu\leqslant \frac{2}{3n+2}$. The intersection number
$$
L_1^2=-\frac{6n+1}{(3n+1)(3n+2)}
$$
shows
\[(3n+2)\Omega\cdot L_2=(3n+2)(D-\mu L_2)\cdot L_2=\frac{2+\mu(6n+1)}{(3n+1)}\leqslant \frac{6}{(3n+2)}\]
for all $n\geqslant 1$. Therefore, Lemma~\ref{lemma:handy-adjunction} excludes all the smooth point on $L_2$  in the case where $n\geqslant 1$
and the singular point $P_2$ in the case where $n\geqslant 2$.
For the case $n=1$, let $C_{2}$ be the unique curve in the pencil
$|\mathcal{O}_X(5)|$ that passes through the point
$P_2$. Then the divisor $C_2$ consists of two distinct irreducible and reduced curve $L_2$ and $R_2$. The curve $R_2$ is singular at the point $P_2$.  Moreover, the log pair $(X, \frac{2}{5}C_2)$
is log canonical at the point $P_2$. By
Lemma~\ref{lemma:convexity}, we may assume that
$R_{2}\not\subset\Supp(D)$. Then the inequality
$$
2\mult_{P_2}(D) \leqslant \mult_{P_2}(D)\mult_{P_2}(R_{2})\leqslant 5D\cdot
R_{2}=2%
$$
excludes the point $P_2$ in the case where $n=1$.
By the same method, we can show $P\not\in L_{3}$.

Hence the point $P$ must be a smooth point in $X\setminus C_x$. For the case $n\geqslant 2$, we can use
Lemma~\ref{lemma:Carolina} to get a contradiction
$$
1<\mult_P(D)\leqslant\frac{2(9n+6)^2}
{3(3n+1) (3n+2) (3n+2)}=\frac{6}{3n+1}<1,%
$$
since $H^0(\P, \mathcal{O}_\P(9n+6))$ contains $x^{3n+2}$,
$y^{3}x$ and $z^{3}$. For the case $n=1$, let $R_{P}$ be the
unique curve in the pencil $|\mathcal{O}_X(5)|$ that passes
through the point $P$. The log pair $(X, \frac{2}{5}R_{P})$ is log
canonical at the point $P$. By Lemma~\ref{lemma:convexity}, we may
assume that $\Supp(D)$ does not contain at least one irreducible
component of $R_{P}$. Note that either $R_{P}$ is irreducible or
$P_k\in R_{P}$ for some $k=1,2,3$. If $R_P$ is irreducible, then
we can obtain a contradiction
$$
1<\mult_P(D)\leqslant D\cdot R_{P}=\frac{1}{2}.%
$$
Thus, $P_k\in R_{P}$.
Then $R_P$ consists of two distinct irreducible curves $L_{k}$ and $Z$. Since we already showed that $P$ is located in the outside of $L_k$, the point $P$ must belong to the curve $Z$. We have
$$
L_{k}^2=-\frac{7}{20},\ \ L_{k}\cdot Z=\frac{3}{5},\ \  Z^2=\frac{2}{5}.%
$$
Put $D=m Z+\Delta$, where $\Delta$ is an effective $\Q$-divisor such
that $Z\not\subset\Supp(\Delta)$. If $m>0$, then
$$
\frac{3m}{5}=m Z\cdot L_k\leqslant D\cdot L_k=\frac{1}{10},%
$$
and hence $\mu\leqslant \frac{1}{6}$. Then Lemma~\ref{lemma:handy-adjunction} gives us a contradiction
$$
1<\Delta\cdot Z= \frac{2-2m}{5}<1.
$$
\end{proof}

\begin{lemma}
\label{lemma:I-2-infinite-series-6}
Let $X$ be a quasismooth
hypersurface of degree $12n+6$ in $\mathbb{P}(4, 2n+1, 4n+2, 6n+1)$
for $n\geqslant 1$.
 Then $\lct(X)=1$.
\end{lemma}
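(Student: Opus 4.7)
The plan is to follow the scheme of Section~4 with $\lambda = \frac{I}{a_0} = \frac{1}{2}$. I first analyze the geometry of $X$: the singularities consist of $O_x$ of index $4$, $O_t$ of index $6n+1$, three points $P_1, P_2, P_3$ of index $2n+1$ cut out on $X$ by $L_{yz}$, and two points of index $2$ cut out by $L_{xz}$. A monomial count shows that the only terms in the defining equation not involving $x$ or $t$ are the ones in $y^6, y^4z, y^2z^2, z^3$, hence $C_x = L_1 + L_2 + L_3$ where $L_i = \{x = y^2 - c_iz = 0\}$ for three distinct constants $c_i$. All $L_i$ meet only at $O_t$, and each $L_i$ passes through the corresponding $P_i$. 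A direct computation gives $-K_X\cdot L_i=\frac{2}{(2n+1)(6n+1)}$, $L_i\cdot L_j=\frac{2}{6n+1}$ for $i\ne j$, and $L_i^2=-\frac{8n}{(2n+1)(6n+1)}$; the log canonicity of $(X,\frac{1}{2}C_x)$ at the tangential triple point $O_t$ is checked via a weighted blow up with weights $(1,2)$ on the orbifold chart, yielding $\lct(X,\frac{1}{2}C_x)=1$ and the upper bound $\lct(X)\le 1$.

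For the lower bound, I assume $\lct(X)<1$ and take an effective divisor $D\sim_{\Q} -K_X$ with $(X,D)$ not log canonical at some $P\in X$. Lemma~\ref{lemma:convexity} applied to $\frac{1}{2}C_x$ allows me to assume $L_1\not\subset\Supp D$; then the inequality $(6n+1)L_1\cdot D=\frac{2}{2n+1}<1$, valid for $n\ge 1$, excludes every point of $L_1$, in particular $O_t$ and $P_1$. For $P\in L_2$, writing $D=\mu L_2 + \Omega$ and using $\mu L_1\cdot L_2\le L_1\cdot D$ yields $\mu\le\frac{1}{2n+1}$, which combined with the expression for $L_2^2$ gives $(2n+1)\Omega\cdot L_2\le\frac{2}{2n+1}<1$, so Lemma~\ref{lemma:handy-adjunction} excludes $P_2$ and every smooth point of $L_2$; the symmetric argument rules out $L_3$. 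Thus $P\not\in C_x$. To dispose of $O_x$ and the two index-$2$ points on $L_{xz}$, I turn to the irreducible quasismooth curve $C_y=\{y=0\}\cap X$, which passes through all four of them. Since $\frac{2}{2n+1}<1$, the pair $(X,\frac{2}{2n+1}C_y)$ is log canonical, so I may assume $C_y\not\subset\Supp D$. Then $C_y\cdot D=\frac{3}{2(6n+1)}$ gives $\mult_{O_x}(D)\le 4\,C_y\cdot D=\frac{6}{6n+1}<1$ and $\mult_Q(D)\le 2\,C_y\cdot D=\frac{3}{6n+1}<1$ at each of the two index-$2$ points $Q$, excluding them all.

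For a smooth $P\in X\setminus C_x$, I observe that the defining equation has the form $f=f_0(x,y,z)+\gamma xt^2$, with $\gamma xt^2$ the unique monomial containing $t$; this forces the curves contracted by the projection $X\dashrightarrow\P(4,2n+1,4n+2)$ from $O_t$ to be precisely $L_1, L_2, L_3$. Since $P\notin C_x$, Lemma~\ref{lemma:Carolina} applies with $k=4(2n+1)$, using the monomial pairs $(x^{2n+1},y^4)$ and $(x^{2n+1},z^2)$. The resulting bound $\mult_P(D)\le\frac{6}{6n+1}<1$ contradicts non-log-canonicity at $P$, completing the proof. The principal obstacle will be the log canonicity check at the triple point $O_t$: because the three branches of $C_x$ share a common tangent on the orbifold cover, the boundary value $\lambda=\frac{1}{2}$ is tight, and a weighted blow-up computation is needed to confirm that $\lct_{O_t}(X,\frac{1}{2}C_x)=1$.
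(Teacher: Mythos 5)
Your proposal follows essentially the same route as the paper's proof: the same splitting $C_x=L_1+L_2+L_3$ with $\lct(X,\tfrac{1}{2}C_x)=1$ giving the upper bound, the same convexity-plus-adjunction argument (with the identical bounds $\mu\leqslant\tfrac{1}{2n+1}$ and $(2n+1)\Omega\cdot L_2\leqslant\tfrac{2}{2n+1}$) to exclude all points of $C_x$, the same use of the irreducible quasismooth curve $C_y$ to rule out the remaining singular points, and Lemma~\ref{lemma:Carolina} for smooth points off $C_x$ (you take $k=8n+4$ where the paper takes $k=12n$, and you are in fact more explicit than the paper about why the curves contracted by the projection from $O_t$ are exactly the $L_i$, which is needed since only one monomial of the form $x^{\mu}t^{\nu}$ exists in that degree). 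The only blemishes are in your singularity bookkeeping: the three points $P_i$ of index $2n+1$ lie on $L_{xt}$, not $L_{yz}$, and there is a single point of index $2$, namely $Q=[1:0:1:0]$ on $L_{yt}$, rather than two points on $L_{xz}$ (a point of $L_{xz}=\{x=z=0\}$ would not even lie on $C_y=\{y=0\}$, so your phrasing is internally inconsistent there); but since the actual points $Q$ and $O_x$ do lie on $C_y$, the inequalities $4C_y\cdot D=\tfrac{6}{6n+1}<1$ and $2C_y\cdot D=\tfrac{3}{6n+1}<1$ still dispose of every genuine singular point and the proof goes through.
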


\begin{proof}
We may assume that the surface $X$ is defined by the equation
\[xt^2+x^{2n+1}z+ax^{2n+1}y^2-(z-a_1y^2)(z-a_2y^2)(z-a_3y^2)=0,\]
where $a_1$, $a_2$, $a_3$ are distinct constants and $a$ is a constant.

The only singularities of $X$ are a singular point $O_x$ of index
$4$, a singular point $O_t$ of index $6n+1$, a singular point
$Q=[1:0:1:0]$ of index $2$, and three singular
points $P_1=[0:1:a_1:0]$, $P_2=[0:1:a_2:0]$, $P_3=[0:1:a_3:0]$ of index $2n+1$.

The divisor $C_x$ consists of three distinct irreducible curves $L_i=\{x=z-a_iy^2=0\}$, $i=1,2,3$.
Note that each $L_i$ passes through the point $P_i$ and $L_1\cap L_2\cap L_3=\{O_t\}$.
We can easily check
$\lct(X, \frac{1}{2}C_x)=1$, and hence $\lct(X)\leqslant 1$.

Suppose that $\lct(X)<1$. Then there is an effective $\Q$-divisor
$D\qlineq -K_X$ such that the log pair $(X, D)$ is not log
canonical at some point $P\in X$.

By Lemma~\ref{lemma:convexity} we may
assume that  $L_1$ is not contained
in $\Supp(D)$. Since
$$
(6n+1)L_1\cdot
D=\frac{2}{2n+1}<1%
$$
the point $P$ is located in the outside of $L_1$.

Put $D=\mu L_2+\Omega$, where $\Omega$
is an effective $\Q$-divisor such that $L_2\not\subset\Supp(\Omega)$.
Then
$$
\frac{2\mu}{6n+1}=\mu L_1\cdot L_2\leqslant D\cdot L_2=\frac{2}{(2n+1)(6n+1)},%
$$
and hence
$\mu\leqslant\frac{1}{2n+1}$.
Since
$$L_2^2=-\frac{8n}{(2n+1)(6n+1)}$$
we have
$$(2n+1)\Omega\cdot L_2=(2n+1)(D-\mu L_2)\cdot L_2=\frac{2+8n\mu}{6n+1}\leqslant
\frac{2}{2n+1}<1$$ for all $n\geqslant 1$. Then Lemma~\ref{lemma:handy-adjunction} excludes all the points on $L_2$. Furthermore, the same method works for $L_3$.

The curve $C_y$ is quasismooth. Thus the log
pair $(X, \frac{2}{2n+1}C_y)$ is log canonical.
By Lemma~\ref{lemma:convexity} we may
assume that $C_y$ is not contained in $\Supp(D)$. Then the inequality
$$
4C_y\cdot D=\frac{6}{6n+1}<1%
$$
implies  that the point $P$ is neither $O_x$ nor $Q$.
Hence $P$ is a smooth point of $X\setminus C_x$.  However,
Lemma~\ref{lemma:Carolina} gives us
$$
\mult_P(D)\leqslant\frac{144 n(2n+1)}
{8(2n+1)^2 (6n+1)}<1%
$$
since  $H^0(\P, \mathcal{O}_\P(12n))$
contains $x^{3n}$, $y^{4}x^{n-1}$ and $z^{2}x^{n-1}$. This is a contradiction.
\end{proof}

\section{Infinite series with $I=4$}
\label{subsection:series-I-4}

\begin{lemma}
\label{lemma:I-4-infinite-series-1}
Let $X$ be a quasismooth
hypersurface of degree $18n+15$ in $\mathbb{P}(6, 6n+3, 6n+5, 6n+5)$
for $n\geqslant 1$.
 Then $\lct(X)=1$.
\end{lemma}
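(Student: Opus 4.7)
The strategy mirrors the scheme of Section~5 and closely parallels Lemma~\ref{lemma:I-2-infinite-series-5} on $\mathbb{P}(3,3n+1,3n+2,3n+2)$. A monomial count shows that the defining equation of $X$ must have the form $c_{1}xy^{3}+c_{2}x^{2n+2}y+g(z,t)$ with $g$ a generic cubic in $(z,t)$. The singular locus of $X$ then consists of: the vertices $O_x$ of type $\frac{1}{6}(1,1)$ and $O_y$ of type $\frac{1}{6n+3}(1,1)$; three points $P_1,P_2,P_3\in L_{xy}$ of type $\frac{1}{6n+5}$ coming from the linear factors $\ell_i(z,t)$ of $g$; and, because $\gcd(6,6n+3)=3$, one further point $Q\in L_{zt}$ of type $\frac{1}{3}(1,1)$ coming from the factor $c_{1}y^{2}+c_{2}x^{2n+1}$ of $f(x,y,0,0)$. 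Correspondingly $C_x=L_1+L_2+L_3$ with $L_i=\{x=\ell_i(z,t)=0\}$, the three $L_i$ meeting at $O_y$, and $P_i\in L_i$. A direct orbifold-chart calculation at $O_y$ (three smooth curves through the origin of $\mathbb{C}^{2}$) gives $\lct(X,\tfrac{2}{3}C_x)=1$, and hence $\lct(X)\le 1$.

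For the reverse inequality, I would assume $\lct(X)<1$ and produce $D\sim_{\mathbb{Q}}-K_X$ effective with $(X,D)$ not log canonical at some $P\in X$. Orbifold adjunction on $L_i\cong\mathbb{P}(6n+3,6n+5)$ together with $L_i\cdot L_j=\frac{1}{6n+3}$ at $O_y$ yields $-K_X\cdot L_i=\frac{4}{(6n+3)(6n+5)}$ and $L_i^{2}=-\frac{12n+4}{(6n+3)(6n+5)}$. By Lemma~\ref{lemma:convexity} applied with $D_{2}=\tfrac{2}{3}C_x$ I may assume $L_1\not\subset\Supp(D)$; the bound $(6n+5)\,D\cdot L_1=\tfrac{4}{6n+3}\le 1$ for $n\ge 1$ combined with Lemma~\ref{lemma:handy-adjunction} excludes every point of $L_1$ (smooth points as well as the singular points $O_y$ and $P_1$). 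For a candidate $P\in L_2$, writing $D=\mu L_2+\Omega$ with $L_2\not\subset\Supp(\Omega)$ and using $\mu L_1\cdot L_2\le D\cdot L_1$ forces $\mu\le\tfrac{4}{6n+5}$; then $\Omega\cdot L_2\le\tfrac{12}{(6n+5)^{2}}$, which via Lemma~\ref{lemma:handy-adjunction} rules out every point of $L_2$ provided $n\ge 2$, and symmetrically for $L_3$. To exclude $Q$, I would choose any quasismooth member $C\not\subset\Supp(D)$ of the pencil $\{\alpha z+\beta t=0\}$ (every member passes through $Q$): one gets $\mult_{Q}(D)\le 3\,D\cdot C=\tfrac{6}{6n+3}<1$ for $n\ge 1$. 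To exclude $O_x$, apply Lemma~\ref{lemma:convexity} with $D_2=\tfrac{4}{9}C_y$, where $C_y=\tilde L_1+\tilde L_2+\tilde L_3$ decomposes analogously with $\tilde L_i=\{y=\ell_i=0\}$ through $O_x$ (the pair $(X,\tfrac{4}{9}C_y)$ is log canonical at $O_x$ since the three orbifold branches lift to three lines through the origin of $\mathbb{C}^{2}$); this lets me assume some $\tilde L_i\not\subset\Supp(D)$, and the computation $-K_X\cdot\tilde L_i=\tfrac{2(6n-1)}{15(6n+5)}<\tfrac{1}{6}$ gives $\mult_{O_x}(D)\le 6\,D\cdot\tilde L_i<1$.

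The remaining smooth points outside $C_x$ are handled by Lemma~\ref{lemma:Carolina} applied with $k=2d=36n+30$ (for which $H^{0}(\mathbb{P},\mathcal{O}(k))$ contains two monomials of each shape $x^{\alpha}y^{\beta}$, $x^{\gamma}z^{\delta}$, $x^{\mu}t^{\nu}$), giving $\mult_P(D)\le\tfrac{12}{6n+3}\le 1$ for $n\ge 2$. The main obstacle will be the boundary case $n=1$ (weights $(6,9,11,11)$, degree $33$), where $\tfrac{12}{(6n+5)^{2}}>\tfrac{1}{6n+5}$ fails to exclude $P_2$ and $P_3$ and Lemma~\ref{lemma:Carolina} only yields $\mult_P(D)\le\tfrac{4}{3}$. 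As in the $n=1$ part of Lemma~\ref{lemma:I-2-infinite-series-5}, I expect to handle each $P_i$ by taking the unique reducible member of a suitable pencil $|\mathcal{O}_X(k)|$ through $P_i$, writing it as $L_i+R_i$ with $R_i$ singular at $P_i$, applying Lemma~\ref{lemma:convexity} to drop $R_i$ from $\Supp(D)$, and extracting a contradictory local intersection with $R_i$; the residual smooth points when $n=1$ are handled by intersecting $D$ with any quasismooth member of $\{\alpha z+\beta t=0\}$ through $P$, giving $\mult_P(D)\le D\cdot C=\tfrac{2}{9}<1$.
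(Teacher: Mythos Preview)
Your approach is essentially the paper's, and for $n\ge 2$ the argument is correct in structure. Two computational slips to fix: the coefficient of $C_y$ should be $\tfrac{4}{6n+3}$, not $\tfrac{4}{9}$; and $-K_X\cdot\tilde L_i=\tfrac{2}{3(6n+5)}$, not $\tfrac{2(6n-1)}{15(6n+5)}$ (your formula happens to agree only at $n=1$). Neither affects the conclusions you draw.

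For $n=1$ your sketch is slightly off and leaves a case uncovered. The unique member of $|\mathcal{O}_X(11)|$ through $P_i$ is not $L_i+R_i$ but rather $L_i+L_i'+R_i$, where $L_i'=\tilde L_i=\{y=\ell_i=0\}$ and $R_i$ is a residual curve; this is the member $\{z=a_it\}$, and on $X$ it factors via $xy(y^2-x^3)=0$ into $L_i$ (from $x=0$), $L_i'$ (from $y=0$), and $R_i$ (from $y^2-x^3=0$). The paper then uses Lemma~\ref{lemma:convexity} to drop one of $L_i',R_i$ from $\Supp(D)$ and computes $D\cdot L_i'=\tfrac{2}{33}$, $D\cdot R_i=\tfrac{1}{11}$ to exclude $P_i$.

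More importantly, for a smooth point $P\notin C_x$ with $n=1$, your plan ``intersect $D$ with any quasismooth member of $\{\alpha z+\beta t=0\}$ through $P$'' only works when the unique member $R_P$ through $P$ is irreducible. When $P$ lies on one of the three reducible members $\{z=a_kt\}=L_k+L_k'+Z$ (and $P\notin C_x$ forces $P\in L_k'\cup Z$), you must instead run adjunction separately on $L_k'$ and on $Z$: writing $D=m_1Z+m_2L_k'+\Delta$, the coefficients $m_1,m_2\le 1$ because $(X,D)$ is already log canonical at $P_k$, and then $(D-m_1Z)\cdot Z$ and $(D-m_2L_k')\cdot L_k'$ are each $<1$. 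This is exactly what the paper does.
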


\begin{proof}

We may assume that the surface $X$ is defined by the equation
\[(z-a_1t)(z-a_2t)(z-a_3t)+xy(y^2-x^{2n+1})=0,\]
where $a_1$, $a_2$, $a_3$ are distinct constants.
The only singularities of $X$ are a singular point $O_x$ of index
$6$, a singular point $O_{y}$ of index $6n+3$, a singular point
$Q=[1:1:0:0]$ of index $3$, and three singular
points $P_i=[0:0:a_i:1]$, $i=1, 2, 3$, of index $6n+5$.

The divisor $C_x$ consists of three distinct irreducible curves $L_i=\{x=z-a_it=0\}$, $i=1,2,3$.
Note that each $L_i$ passes through the point $P_i$ and $L_1\cap L_2\cap L_3=\{O_y\}$.
We can easily check
$\lct(X, \frac{2}{3}C_x)=1$, and hence $\lct(X)\leqslant 1$.

The divisor $C_y$ consists of three distinct irreducible curves $L_i'=\{y=z-a_it=0\}$, $i=1,2,3$.
Each $L_i'$ passes through the point $P_i$ and $L_1'\cap L_2'\cap L_3'=\{O_x\}$.
The
log pair $(X, \frac{4}{6n+3}C_y)$ is log canonical.

Suppose that $\lct(X)<1$. Then there is an effective $\Q$-divisor
$D\qlineq -K_X$ such that the log pair $(X, D)$ is not log
canonical at some point $P\in X$.

For a general member $C$ in $|\mathcal{O}_X(6n+5)|$, we have
\[\mult_Q(D) \leqslant 3D\cdot C=\frac{6}{6n+3}<1.\]
Therefore the point $P$ cannot be the point $Q$.

By Lemma~\ref{lemma:convexity}
we may assume that $L_1$ and $L_{1}^{\prime}$ are not contained in
$\Supp(D)$. The two inequalities
$(6n+5)D\cdot L_1= \frac{4}{6n+3}<1$ and $6D\cdot L_1'= \frac{4}{6n+5}<1$ show that
the point $P$ is located in the outside of $L_1\cup L_1'$.

Write $D=\mu L_2+\Omega$, where $\Omega$
is an effective $\Q$-divisor such that $L_2\not\subset\Supp(\Omega)$.
Then
$$
\frac{\mu}{6n+3}=\mu L_1\cdot L_2\leqslant D\cdot L_1=\frac{4}{(6n+3)(6n+5)},%
$$
and hence $\mu\leqslant \frac{4}{6n+5}$. Note that
$$
L_2^2=-\frac{12n+4}{(6n+3)(6n+5)}.
$$
Therefore, we have
$$
(6n+5)\Omega\cdot L_2=(6n+5)(D-\mu L_2)\cdot L_2=\frac{4+(12n+4)\mu}{6n+3}\leqslant \frac{12}{6n+5}.
$$
Therefore, Lemma~\ref{lemma:handy-adjunction} excludes all the smooth point on $L_2$ in the case where $n\geqslant 1$
and the singular point $P_2$ in the case where $n\geqslant 2$.
For the case $n=1$, let $C_{2}$ be the unique curve in the pencil
$|\mathcal{O}_X(11)|$ that passes through the point
$P_2$.
Then the divisor $C_2$ consists of three distinct irreducible and reduced curve $L_2$, $L_2'$ and $R_2$. The log pair $(X, \frac{4}{11}C_2)$
is log canonical at the point $P_2$.
If $\mu=0$, then the inequality above immediately excludes the point $P_2$ for the case $n=1$. Therefore
we may assume that either
$L_{2}'\not\subset\Supp(D)$ or $R_{2}\not\subset\Supp(D)$. In the former case, the intersection number
\[D\cdot L_2'=\frac{2}{33}\]
shows that the point $P$ cannot be $P_2$. In the latter case, the intersection number
$$D\cdot
R_{2}=\frac{1}{11}%
$$
excludes the point $P_2$.
By the same method, we can show $P\not\in L_{3}$.

Hence the point $P$ must be a smooth point in $X\setminus C_x$. For the case $n\geqslant 2$, we can use
Lemma~\ref{lemma:Carolina} to get a contradiction
$$
1<\mult_P(D)\leqslant\frac{4 (18n+15)\cdot 6(6n+5)}
{6 (6n+3) (6n+5) (6n+5)}=\frac{4}{2n+1}<1,%
$$
since $H^0(\P, \mathcal{O}_\P(6(6n+5))$
contains $x^{6n+5}$, $y^{6}x^2$ and $z^{6}$.
For the case $n=1$, let $R_{P}$ be the unique curve in the pencil
$|\mathcal{O}_X(11)|$ that passes through the point $P$.
The log pair $(X, \frac{4}{11}R_{P})$ is log canonical at the point
$P$. By Lemma~\ref{lemma:convexity}, we may assume that
$\Supp(D)$ does not contain at least one irreducible component of
$R_{P}$. Note that either $R_{P}$ is irreducible or $P_k\in
R_{P}$ for some $k=1,2,3$.
However, if $R_P$ is irreducible, then we can obtain a contradiction
$$
1<\mult_P(D)\leqslant D\cdot R_{P}=\frac{2}{9}.%
$$
Thus, $P_k\in R_{P}$.
Then $R_P$ consists of three distinct irreducible curves $L_{k}$, $L_k'$ and $Z$.
We have
$$
 D\cdot L_k'=\frac{2}{33}, \ \ D\cdot Z=\frac{4}{33},\ \
 L_{k}'^2=-\frac{13}{66},\ \ Z^2=-\frac{4}{33}. %
$$
Put $D=m_1 Z+m_2L_k'+ \Delta$, where $\Delta$ is an effective $\Q$-divisor  whose support contains neither $Z$ nor $L_k'$.  Since the pair $(X, D)$ is log canonical at the point $P_k$, we have $m_1, m_2 \leqslant 1$.
 Since we already showed that $P$ is located in the outside of $L_k$, the point $P$ must belong to either $L_k'$ or $Z$.
However, Lemma~\ref{lemma:handy-adjunction} shows that the pair
$(X, D)$ is log canonical at the point $P$ since $$
(D-m_{1}Z)\cdot Z= \frac{4+4m_1}{33}<1, \ \ (D-m_2L_k')\cdot L_k'=
\frac{4+13m_2}{66}<1.
$$
This is a contradiction.
\end{proof}

\begin{lemma}
\label{lemma:I-4-infinite-series-2}
Let $X$ be a quasismooth
hypersurface of degree $36n+24$ in $\mathbb{P}(6, 6n+5, 12n+8, 18n+9)$
for $n\geqslant 1$.
 Then $\lct(X)=1$.
\end{lemma}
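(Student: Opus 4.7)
The plan is to follow the four-step scheme of Section~5, imitating the strategy of Lemma~\ref{lemma:I-4-infinite-series-1}. The first step is to describe the geometry of $X$. Quasismoothness at the vertices forces the defining equation to contain the monomials $x^{6n+4}$, $y^{3}t$, $z^{3}$, and $xt^{2}$; consequently $O_{x},O_{z}\notin X$, while $O_{y}$ and $O_{t}$ lie on $X$ and are singular points of indices $6n+5$ and $18n+9$ respectively. Since $L_{xz}$ and $L_{xt}$ are the only non-vertex singular curves of the ambient space (with generic stabilizers $\mu_{2}$ and $\mu_{3}$), intersecting them with $X$ produces exactly one additional singular point $Q_{2}\in L_{xz}$ of index $2$ and one additional singular point $Q_{1}\in L_{xt}$ of index $3$. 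Neither $Q_{1}$ nor $Q_{2}$ lies on the curve $C_{x}=\{x=0\}\cap X$, which is cut out on $\{x=0\}\cong\mathbb{P}(6n+5,12n+8,18n+9)$ by the irreducible polynomial $\beta y^{3}t+\gamma z^{3}$. The curve $C_{x}$ is orbifold-smooth at $O_{y}$, while in the orbifold cover at $O_{t}$ it pulls back to three distinct lines through the origin, cyclically permuted by the $\mu_{3}$-subgroup of $\mu_{18n+9}$. Using Lemma~\ref{lemma:basic-property} together with Proposition~\ref{proposition:lc-by-finite-morphism}, this yields $\lct(X,\tfrac{2}{3}C_{x})=1$, hence $\lct(X)\leqslant 1$.

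For the lower bound, I will assume $\lct(X)<1$ and select an effective $\mathbb{Q}$-divisor $D\sim_{\mathbb{Q}}-K_{X}$ such that the pair $(X,D)$ is not log canonical at some point $P\in X$. By Lemma~\ref{lemma:convexity} I may assume $C_{x}\not\subset\mathrm{Supp}(D)$. A direct monomial check shows that $H^{0}(\mathbb{P},\mathcal{O}_{\mathbb{P}}(72n+60))$ contains two distinct monomials of each of the forms $x^{\alpha}y^{\beta}$, $x^{\gamma}z^{\delta}$, and $x^{\mu}t^{\nu}$, so Lemma~\ref{lemma:Carolina} supplies the bound $\mult_{P}(D)\leqslant\frac{4(72n+60)(36n+24)}{6(6n+5)(12n+8)(18n+9)}=\frac{8}{3(2n+1)}<1$ at every smooth point of $X$ lying outside $C_{x}$. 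Therefore $P$ must lie either on $C_{x}$ or at one of the singular points $Q_{1}$, $Q_{2}$.

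The case $P\in C_{x}$ is eliminated by intersection number estimates. From $D\cdot C_{x}=\frac{4}{3(6n+5)(2n+1)}$ and the inequality $\mult_{P}(D)\cdot\mult_{P}(C_{x})\leqslant r\cdot(D\cdot C_{x})$, with $r$ the orbifold index at $P$, I obtain: at a smooth point of $X$ on $C_{x}$, $\mult_{P}(D)\leqslant D\cdot C_{x}<1$; at $O_{y}$, where $\mult_{O_{y}}(C_{x})=1$ and $r=6n+5$, the bound $\mult_{O_{y}}(D)\leqslant\frac{4}{3(2n+1)}<1$; and at $O_{t}$, where the key observation is $\mult_{O_{t}}(C_{x})=3$ coming from the three-branch structure, $\mult_{O_{t}}(D)\leqslant\frac{18n+9}{3}\cdot\frac{4}{3(6n+5)(2n+1)}=\frac{4}{6n+5}<1$. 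In each case Lemma~\ref{lemma:multiplicity}, applied in the orbifold cover via Proposition~\ref{proposition:lc-by-finite-morphism}, yields the contradiction.

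To dispose of $P=Q_{j}$ for $j=1,2$, I will employ the irreducible and quasismooth curve $C_{y}=\{y=0\}\cap X$ cut out by $\delta x^{6n+4}+\alpha xt^{2}+\gamma z^{3}$ on $\{y=0\}$, which contains both $Q_{1}$ and $Q_{2}$. Since $\lct(X,\tfrac{4}{6n+5}C_{y})\geqslant 1$, Lemma~\ref{lemma:convexity} allows me to assume $C_{y}\not\subset\mathrm{Supp}(D)$; then $D\cdot C_{y}=\frac{2}{9(2n+1)}$ gives $\mult_{Q_{j}}(D)\leqslant r_{j}\cdot D\cdot C_{y}<1$ for $r_{j}\in\{2,3\}$, completing the contradiction. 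The main obstacle I expect will be the careful justification of $\mult_{O_{t}}(C_{x})=3$ via orbifold analysis: one must verify that the $\mu_{3}$-subgroup of $\mu_{18n+9}$ acts transitively on the three branches of $\beta y^{3}+\gamma z^{3}=0$ in the chart $t=1$, so that they contribute multiplicity exactly three in the cover; this is the crucial factor that makes the bound $\mult_{O_{t}}(D)<1$ come out sharp.
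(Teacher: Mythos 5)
Your proof is correct and follows essentially the same route as the paper's: the upper bound comes from $\lct(X,\tfrac{2}{3}C_x)=1$ via the three concurrent branches of $C_x$ at $O_t$; Lemma~\ref{lemma:Carolina} excludes smooth points off $C_x$; the intersection $D\cdot C_x=\tfrac{4}{3(6n+5)(2n+1)}$ together with $\mult_{O_t}(C_x)=3$ excludes every point of $C_x$ (including $O_y$ and $O_t$); and the quasismooth curve $C_y$ excludes the two remaining quotient singularities of indices $2$ and $3$. All of your intersection numbers agree with the paper's, and your choice $k=72n+60$ in Lemma~\ref{lemma:Carolina} is a harmless variant of the paper's $k=36n+30$ (yours even supplies two monomials of the form $x^{\mu}t^{\nu}$, so the projection hypothesis is not needed).

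One slip to correct: the singular strata of $\mathbb{P}(6,6n+5,12n+8,18n+9)$ away from the vertices are $L_{yz}$ (generic stabilizer $\mu_3$, since $\gcd(6,18n+9)=3$) and $L_{yt}$ (generic stabilizer $\mu_2$, since $\gcd(6,12n+8)=2$), not $L_{xz}$ and $L_{xt}$ as you wrote; the two extra singular points of $X$ are $[1:0:0:1]$ and $[1:0:1:0]$. Note that your own subsequent assertions — that these points avoid $C_x$ and lie on $C_y$ — are true only for the correct locations (they have $y=0$, $x\neq 0$) and would contradict placing them on $L_{xz}\cup L_{xt}$, where $x=0$. Since everything you actually use about these points is correct, the argument stands once the labels are fixed.
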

\begin{proof}
We may assume that the surface $X$ is defined by the equation
\[z^3+y^3t+xt^2-x^{6n+4}+ax^{2n+1}y^2z=0,\]
where $a$ is a constant.
The only singularities of $X$ are a singular point $O_y$ of index
$6n+5$, a singular point $O_{t}$ of index $18n+9$, a singular
point $Q=[1:0:0:1]$ of index $3$, and a singular point $Q'=[1:0:1:0]$ of index $2$.

The curve $C_x$ is reduced and irreducible with $\mult_{O_t}(C_x)=3$. Clearly,
$\lct(X, \frac{2}{3}C_x)=1$, and hence $\lct(X)\leqslant 1$.
The curve $C_y$ is quasismooth, and hence  the log
pair $(X, \frac{4}{6n+5}C_y)$ is log canonical.

Suppose that $\lct(X)<1$. Then there is an effective $\Q$-divisor
$D\qlineq -K_X$ such that the log pair $(X, D)$ is not log
canonical at some point $P\in X$.

Since $H^0(\P, \mathcal{O}_\P(36n+30))$ contains $x^{6n+5}$,
$y^{6}$ and $z^{3}x$, Lemma~\ref{lemma:Carolina} implies
$$
\mult_P(D)\leqslant\frac{4 (36n+24) (36n+30)}
{6 (6n+5) (12n+8) (18n+9)}<1.%
$$
Therefore, the point $P$ cannot be a smooth point in the outside of $C_x$.

By Lemma~\ref{lemma:convexity}
we may assume that neither $C_x$ nor $C_y$ is contained in
$\Supp(D)$.
Then the inequality \[3D\cdot C_y=\frac{2}{6n+3}\leqslant 1\]
implies that the point $P$ is neither $Q$ nor $Q'$. One the other hand, the inequality
\[(6n+5)D\cdot C_x=\frac{4}{6n+3}<1\]
shows that the point $P$  can be neither a smooth point on $C_x$ nor the point $O_y$. Therefore, it must be $O_t$.
However, this is a contradiction since
\[\mult_{O_t}(D)=\frac{\mult_{O_t}(D)\mult_{O_t}(C_x)}{3}\leqslant \frac{18n+9}{3}D\cdot C_x=\frac{4}{6n+5}<1.\]
 The obtained contradiction completes the
proof.
\end{proof}

\begin{lemma}
\label{lemma:I-4-infinite-series-3}
Let $X$ be a quasismooth
hypersurface of degree $36n+30$ in $\mathbb{P}(6, 6n+5, 12n+8, 18n+15)$
for $n\geqslant 1$.
 Then $\lct(X)=1$.
\end{lemma}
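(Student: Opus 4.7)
The plan is to follow the strategy used for Lemma~\ref{lemma:I-4-infinite-series-2}: first produce a boundary divisor realising $\lct(X)\le 1$, then assume the reverse inequality fails and exclude every possible location of a non-log-canonical point via Lemma~\ref{lemma:Carolina} together with intersection bounds coming from Lemma~\ref{lemma:handy-adjunction}. The weight-$(36n+30)$ monomials of $\P$ include $x^{6n+5}$, $y^6$, $t^2$, $y^3 t$ and $xz^3$, so after a coordinate change we may assume the defining equation contains all of these with the quadratic $\gamma t^2+\beta y^3 t+\alpha y^6$ obtained by setting $x=0$ having distinct roots $\rho_1\ne\rho_2$. The singularities of $X$ are then $O_z$ of type $\frac{1}{12n+8}(6n+5,6n+7)$, two points $P_1,P_2\in L_{yt}\cap X$ of index $6n+5$, a point $Q\in L_{xt}\cap X$ of index $3$, and a point $Q'\in L_{xz}\cap X$ of index $2$; the curve $C_x$ splits as $L_1+L_2$ with $L_i=\{x=0,\ t=\rho_i y^3\}$ and $L_1\cap L_2=\{O_z\}$. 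A check via Lemma~\ref{lemma:Igusa} at $O_z$ gives $\lct(X,\frac{2}{3}C_x)=1$, so $\lct(X)\le 1$.

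For the reverse inequality, suppose $\lct(X)<1$ and pick an effective $D\sim_{\Q}-K_X$ with $(X,D)$ not log canonical at some point $P$. Using the monomials $x^{6n+5}, y^6, xz^3, t^2$ in $H^0(\P,\O_\P(36n+30))$, Lemma~\ref{lemma:Carolina} with $k=36n+30$ gives
\[
\mult_P(D)\le \frac{4(36n+30)^2}{6(6n+5)(12n+8)(18n+15)}=\frac{2}{3n+2}<1
\]
for $n\ge 1$ at every smooth point of $X\setminus C_x$, which forces $P\in C_x$. By Lemma~\ref{lemma:convexity} we may assume $L_1\not\subset\Supp(D)$.

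The needed intersection numbers are $H\cdot L_i=1/((6n+5)(12n+8))$ (computed from $L_i\cong\P(6n+5,12n+8)$), the self-intersection $L_i^2=-9(2n+1)/(4(3n+2)(6n+5))$ (from adjunction on $L_i$), and $L_1\cdot L_2=3/(12n+8)$ (computed on the local orbifold cover at $O_z$, where the two branches $\{t=\rho_i y^3\}$ meet with intersection multiplicity $3$). Hence $D\cdot L_1=4H\cdot L_1=1/((3n+2)(6n+5))$, and multiplying by the relevant indices yields $\mult_{O_z}(D)\le 4/(6n+5)<1$, $\mult_{P_1}(D)\le 1/(3n+2)<1$, and $\mult_P(D)\le D\cdot L_1<1$ at each smooth point of $L_1$, excluding every point of $L_1$. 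Writing $D=\mu L_2+\Omega$ with $L_2\not\subset\Supp(\Omega)$, the inequality $\mu\, L_1\cdot L_2\le D\cdot L_1$ gives $\mu\le 4/(3(6n+5))$, and a direct calculation then yields
\[
\Omega\cdot L_2 \le \frac{1}{(3n+2)(6n+5)}+\frac{4}{3(6n+5)}\cdot\frac{9(2n+1)}{4(3n+2)(6n+5)}=\frac{4}{(6n+5)^2}.
\]
Multiplying by the index $6n+5$ of $P_2\in L_2$ still produces a number strictly less than $1$ for $n\ge 1$, so Lemma~\ref{lemma:handy-adjunction} rules out every point of $L_2$ as well. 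This contradicts $P\in C_x$ and finishes the proof.

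The main obstacle is the reducibility of $C_x$ together with the fact that both components pass through the high-index singularity $O_z$; concretely, $L_1$ and $L_2$ are \emph{not} numerically equivalent to $3H$ (the self-intersection $L_i^2$ comes out negative, whereas $(3H)^2$ is positive), so one must compute $L_1\cdot L_2=3/(12n+8)$ via the smooth orbifold cover (or via adjunction on $L_i$) rather than by a naive numerical-equivalence argument. Once these intersection numbers are in hand the remaining estimates are routine.
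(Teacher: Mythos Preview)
Your overall strategy matches the paper's proof, and your intersection computations on the components of $C_x$ are correct, but there is a genuine gap: your application of Lemma~\ref{lemma:Carolina} only bounds $\mult_P(D)$ at \emph{smooth} points of $X\setminus C_x$, so the sentence ``which forces $P\in C_x$'' is not justified.  The index-$3$ point and the index-$2$ point (which you call $Q$ and $Q'$) both have $x\neq 0$, hence lie off $C_x$, and nothing in your argument excludes them.  The paper handles this by observing that $C_y$ is irreducible and quasismooth, so $(X,\frac{4}{6n+5}C_y)$ is log canonical and one may assume $C_y\not\subset\Supp(D)$; since $C_y$ passes through both of these low-index points one gets
\[
3\,C_y\cdot D=\frac{1}{3n+2}\leqslant 1,
\]
which rules them out.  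Without this step your case analysis is incomplete.

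A secondary remark: your placement of the singular points is inverted.  The index-$(6n+5)$ points $P_1,P_2$ lie on $L_{xz}$ (not $L_{yt}$), the index-$3$ point lies on $L_{yz}$ (not $L_{xt}$), and the index-$2$ point lies on $L_{yt}$ (not $L_{xz}$).  This does not affect the part of the argument you actually carry out on $C_x$, but it would matter once you try to locate a curve through $Q$ and $Q'$ to finish the proof.
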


\begin{proof}
We may assume that the surface $X$ is defined by the equation
\[(t-a_1y^3)(t-a_2y^3)+xz^3-x^{6n+5}+ax^{2n+1}y^2z=0,\]
where $a_1\ne a_2$ and $a$ are constants.
The only singularities of $X$ are a singular point $O_z$ of index
$12n+8$, a singular point $Q=[1:0:1:0]$ of index $2$, a singular point $Q'=[1:0:0:1]$ of index $3$, and two singular points $P_1=[0:1:0:a_1]$, $P_2=[0:1:0:a_2]$ of index
$6n+5$.

The curve $C_x$ consists of two distinct irreducible curves $L_i=\{x=t-a_iy^3=0\}$, $i=1,2$. Each $L_i$ passes through the point $P_i$. These two curves meet each other at the point $O_z$.
It is easy to see $\lct(X, \frac{2}{3}C_x)=1$.

Suppose that $\lct(X)<1$. Then there is an effective $\Q$-divisor
$D\qlineq -K_X$ such that the log pair $(X, D)$ is not log
canonical at some point $P\in X$.

By Lemma~\ref{lemma:convexity} we may
assume that  $L_1$ is not
contained in $\Supp(D)$.
Then the inequality
\[ (12n+8)D\cdot L_1=\frac{4}{6n+5}<1\]
shows that the point $P$ must be located in the outside of $L_1$.

Write $D=\mu L_2+\Omega$, where $\Omega$
is an effective $\Q$-divisor such that
$L_2\not\subset\nlb\Supp(\Omega)$. Then, the inequality $$
\frac{3\mu}{12n+8}=\mu L_2\cdot L_1\leqslant D\cdot L_1=\frac{1}{(3n+2)(6n+5)},%
$$ implies
$$\mu\leqslant\frac{4}{3(6n+5)}.$$
 Note that
$$L_2^2=-\frac{18n+9}{(12n+8)(6n+5)}.$$
Since
$$(6n+5)\Omega\cdot L_2=\frac{4+(18n+9)\mu}{12n+8}<
\frac{4}{6n+5},$$ Lemma~\ref{lemma:handy-adjunction} excludes all the points of $L_2\setminus\{O_z\}$.
Consequently, the point $P$ is in the outside of $C_x$.

Meanwhile, the curve $C_y$ is quasismooth, and hence the log pair
$(X, \frac{4}{6n+5}C_y)$ is log canonical.
Lemma~\ref{lemma:convexity} enables us to assume that $C_y$ is not
contained in $\Supp(D)$. Then the inequality
$$
3C_y\cdot
D=\frac{1}{3n+2}\leqslant 1,%
$$
excludes the singular points $Q$ and $Q'$.

Hence $P$ is a smooth point of $X\setminus C_x$. Applying
Lemma~\ref{lemma:Carolina}, we see that
$$
1<\mult_P(D)\leqslant\frac{4 (36n+30) (3(12n+8)+6)}
{6 (6n+5) (12n+8) (18n+15)}<1,%
$$
because $H^0(\P, \mathcal{O}_\P(3(12n+8)+6))$ contains
$x^{6n+5}$, $y^{6}$ and $z^{3}x$. The obtained contradiction
completes the proof.
\end{proof}

\section{Infinite series with $I=6$}
\label{subsection:series-I-6}

\begin{lemma}
\label{lemma:I-6-infinite-series-1}Let $X$ be a quasismooth
hypersurface of degree $12n+23$ in $\mathbb{P}(8, 4n+5, 4n+7, 4n+9)$
for $n\geqslant 3$.
 Then $\lct(X)=1$.
\end{lemma}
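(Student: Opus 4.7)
The plan is to follow the scheme outlined in Section~1.5 for $I=6$ with $a_0\ne a_1$. With $I = 8 + (4n+5) + (4n+7) + (4n+9) - (12n+23) = 6$ and the weights pairwise coprime, the only singular points of $X$ are $O_x, O_y, O_z, O_t$ of indices $8, 4n+5, 4n+7, 4n+9$ respectively. Enumerating monomials of degree $d = 12n+23$ vanishing on each coordinate hyperplane yields the irreducible decompositions
\[
C_x = L_{xt} + R_x,\quad C_t = L_{xt} + R_t,\quad C_y = L_{yz} + R_y,\quad C_z = L_{yz} + R_z,
\]
with $R_x = \{x = \alpha_1 yt + \alpha_2 z^2 = 0\}$, where $\alpha_1, \alpha_2$ are the (nonzero, by quasismoothness at $O_t$ and $O_z$) coefficients of $yt^2$ and $tz^2$ in $f$, and $R_t,R_y,R_z$ defined analogously using the monomials $xy^3$ and $x^{n+2}z$. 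A direct check shows $O_y, O_z \in L_{xt}$, $O_y, O_t \in R_x$, $O_x, O_t \in L_{yz}$, and $O_y, O_t \in R_y$, so every singular point of $X$ lies in $C_x \cup L_{yz}$.

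For the bound $\lct(X)\le 1$ I would establish $\lct(X, \frac{3}{4} C_x) = 1$. The only delicate point is $O_y = L_{xt}\cap R_x$; in the orbifold chart at $O_y$ the surface is smooth, $L_{xt}$ becomes $\{t = 0\}$, and $R_x$ becomes $\{\alpha_1 t+\alpha_2 z^2 = 0\}$, tangent to $L_{xt}$ with contact order $2$. Applying the second formula of Lemma~\ref{lemma:Igusa} to $t(\alpha_1 t+\alpha_2 z^2)$ with $(n_1,n_2,m_1,m_2)=(1,0,1,2)$ gives $\lct_{O_y}(X, C_x) = 3/4$, and the pair is log canonical away from $O_y$ because each component has coefficient $3/4<1$. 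In the same local calculations I would record the intersection numbers
\[
L_{xt}\cdot R_x = \tfrac{2}{4n+5},\quad L_{xt}^2 = -\tfrac{2(4n+3)}{(4n+5)(4n+7)},\quad R_x^2 = -\tfrac{2(4n+1)}{(4n+5)(4n+9)},
\]
together with $L_{yz}\cdot R_y = \tfrac{n+2}{4n+9}$, $L_{yz}^2 = -\tfrac{4n+11}{8(4n+9)}$, and $L_{yz}\cdot R_x = \tfrac{1}{4n+9}$, obtained from a local chart calculation at $O_y$ or $O_t$.

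For the reverse inequality I would suppose $\lct(X)<1$ and pick $D\qlineq -K_X$ with $(X,D)$ not log canonical at some $P$. Both $(X, \tfrac{3}{4}C_x)$ and $(X, \tfrac{6}{4n+5}C_y)$ are log canonical (the latter because the threshold at $O_t$ equals $(n+3)/(2n+4)\ge 6/(4n+5)$ for $n\ge 1$), so two applications of Lemma~\ref{lemma:convexity} let me assume that one of $\{L_{xt},R_x\}$ and one of $\{L_{yz},R_y\}$ is not in $\Supp D$. A systematic use of Lemmas~\ref{lemma:multiplicity} and~\ref{lemma:handy-adjunction} then excludes every point of $C_x\cup L_{yz}$: for example if $L_{xt}\not\subseteq\Supp D$ the bound $(4n+5)D\cdot L_{xt} = \tfrac{6}{4n+7}$ excludes $O_y$, and writing $D = \mu R_x+\Omega$, the inequality $D\cdot L_{xt}\geq \mu L_{xt}\cdot R_x$ forces $\mu\le\tfrac{3}{4n+7}$, which then gives $\Omega\cdot R_x\le\tfrac{18}{(4n+7)(4n+9)}$, whence $(4n+5)\Omega\cdot R_x$ and $(4n+9)\Omega\cdot R_x$ are both at most $\tfrac{18}{4n+7}<1$ for $n\ge 3$. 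The one sub-case where the $C_y$-bookkeeping leaves $O_t$ uncovered (namely $L_{yz}\subseteq\Supp D$, $R_y\not\subseteq\Supp D$) is resolved by the same $R_x$-estimate, since $O_t\in R_x$.

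Hence $P$ is a smooth point of $X$ with $P\notin C_x\cup L_{yz}$. Because the only monomials of $f$ involving $t$ are $yt^2$ and $tz^2$, the unique curve on $X$ contracted by the projection $\psi\colon X\dashrightarrow\mathbb{P}(8, 4n+5, 4n+7)$ is $L_{yz}$, and $H^0(\mathbb{P},\mathcal{O}_{\mathbb{P}}(8(4n+7)))$ contains the pairs $\{x^{4n+7}, x^2 y^8\}$ and $\{x^{4n+7}, z^8\}$. Lemma~\ref{lemma:Carolina} then produces
\[
\mult_P(D)\le \frac{6\cdot 8(4n+7)(12n+23)}{8(4n+5)(4n+7)(4n+9)} = \frac{6(12n+23)}{(4n+5)(4n+9)},
\]
and this is strictly less than $1$ exactly when $16n^2-16n-93>0$, that is for $n\ge 3$; this gives the required contradiction. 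The main obstacle I expect is the tacnodal geometry at $O_y$ (where $L_{xt}$ and $R_x$ meet with contact order $2$) and at $O_t$ (where $L_{yz}$ and $R_y$ meet with contact order $n+2$): these are precisely the places where $\lct(X)=1$ is attained, so the coefficient bounds extracted from Lemma~\ref{lemma:convexity} must be sharp enough to offset the non-transverse intersections, and the threshold $n\ge 3$ is exactly the value at which the Carolina bound drops below $1$.
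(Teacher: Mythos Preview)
Your approach is correct and genuinely more economical than the paper's. The paper excludes every point of $C_x\cup C_y\cup C_z\cup C_t$ by a six-curve bookkeeping on $L_{xt},L_{yz},R_x,R_y,R_z,R_t$, and then disposes of the remaining smooth points by analysing the pencil $\lambda xy^2+\mu t^2=0$ (splitting into the generic member and the special member $\alpha=1$). You instead notice that Lemma~\ref{lemma:Carolina} applies in its second form: the only fibre contracted by $\psi\colon X\dashrightarrow\mathbb{P}(8,4n+5,4n+7)$ is $L_{yz}$, so once $P\notin C_x\cup L_{yz}$ the multiplicity bound $\mult_P(D)\le\frac{6(12n+23)}{(4n+5)(4n+9)}$ is available, and this drops below $1$ precisely at $n\ge 3$. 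This lets you bypass the pencil entirely and work with only the four curves $L_{xt},R_x,L_{yz},R_y$; your handling of the awkward sub-case ($L_{yz}\subset\Supp D$, $R_y\not\subset\Supp D$, $P=O_t$) by falling back on the $R_x$-estimate is exactly right, since $(4n+9)\Omega\cdot R_x\le\frac{18}{4n+7}<1$ for $n\ge 3$. The two successive applications of Lemma~\ref{lemma:convexity} are compatible because $C_x$ and $C_y$ share no component, so removing a component of $C_y$ in the second step cannot reintroduce a component of $C_x$. The paper's route has the virtue of treating the four coordinate curves symmetrically and producing the intersection data needed for the adjacent cases $n=1,2$, but for $n\ge 3$ your argument is shorter.
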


\begin{proof} The surface $X$ can be given by the equation
$$
z^{2}t+yt^{2}+xy^{3}+x^{n+2}z=0.
$$
The surface $X$ is singular only at $O_x$, $O_y$, $O_z$ and
$O_t$.

The curve $C_x$ (resp. $C_y$, $C_z$, $C_t$) consists of  the irreducible curve
$L_{xt}$ (resp. $L_{yz}$, $L_{yz}$, $L_{xt}$) and a residual curve  $R_x=\{x=z^2+yt=0\}$ (resp. $R_y=\{y=x^{n+2}+zt=0\}$, $R_z=\{z=t^2+xy^2=0\}$, $R_t=\{t=y^3+x^{n+1}z=0\}$) . These two curves intersect each other at $O_y$ (resp. $O_t$, $O_x$, $O_z$).

We can easily  see that
$$\lct(X, \frac{3}{4}C_x)=1, \ \ \ \ \lct(X, \frac{6}{4n+5}C_y)=\frac{(n+3)(4n+5)}{12(n+2)}, $$
$$\lct(X, \frac{6}{4n+7}C_z)=\frac{4n+7}{9}, \ \ \ \  \lct(X, \frac{6}{4n+9}C_t)=\frac{(4n+9)(n+4)}{6(3n+6)}
.$$
Therefore, $\lct(X)\leqslant 1$.

Suppose that $\lct(X)<1$. Then there is an effective $\Q$-divisor
$D\qlineq -K_X$ such that the log pair $(X, D)$ is not log
canonical at some point $P\in X$.

We have the following intersection numbers:
$$L_{xt}\cdot D=\frac{6}{(4n+5)(4n+7)},\ \
L_{yz}\cdot D=\frac{6}{8(4n+9)}, \ \ R_x\cdot D=\frac{12}{(4n+5)(4n+9)},
$$
$$R_y\cdot D=\frac{6(n+2)}{(4n+7)(4n+9)},\ \
R_z\cdot D=\frac{12}{8(4n+5)},\ \ R_t\cdot D=\frac{18}{8(4n+7)},$$

$$L_{xt}\cdot R_x=\frac{2}{4n+5},\ \ L_{xt}\cdot R_t=\frac{3}{4n+7},
\ \ L_{yz}\cdot R_y=\frac{n+2}{4n+9}, \ \ L_{yz}\cdot R_z=\frac{1}{4}, \ \ $$

$$L_{xt}^2=-\frac{8n+6}{(4n+5)(4n+7)},\ \ L_{yz}^2=-\frac{4n+11}{8(4n+9)},\ \ R_x^2=-\frac{8n+2}{(4n+5)(4n+9)},$$

$$R_y^2=-\frac{2n+4}{(4n+7)(4n+9)},\ \
R_z^2=\frac{1}{2(4n+5)}, \ \ R_t^2=\frac{12n+3}{8(4n+7)}.$$

By Lemma~\ref{lemma:convexity} we may
assume that either $L_{xt}\not\subset\Supp(D)$ or $R_x\not\subset\Supp(D)$. Then at least one of the inequalities
\[\mult_{O_y}(D)\leqslant (4n+5)L_{xt}\cdot D=\frac{6}{4n+7},  \  \ \mult_{O_y}(D)\leqslant (4n+5)R_x\cdot D=\frac{12}{4n+9}\]
holds. Therefore, the point $P$ cannot be the point $O_y$.
We also may
assume that either $L_{yz}\not\subset\Supp(D)$ or $R_z\not\subset\Supp(D)$. Then at least one of the inequalities
\[\mult_{O_x}(D)\leqslant 8L_{yz}\cdot D=\frac{6}{4n+9},  \  \ \mult_{O_x}(D)\leqslant \frac{8}{2}R_z\cdot D=\frac{6}{4n+5}\]
holds. Note that the curve $R_z$ is singular at the point $O_x$. Therefore, the point $P$ cannot be the point $O_x$.
We also may
assume that either $L_{xt}\not\subset\Supp(D)$ or $R_t\not\subset\Supp(D)$. Then at least one of the inequalities
\[\mult_{O_z}(D)\leqslant (4n+7)L_{xt}\cdot D=\frac{6}{4n+5},  \  \ \mult_{O_z}(D)\leqslant
\frac{4n+7}{3}R_t\cdot D=\frac{3}{4}\]
holds. Note that the curve $R_t$ has multiplicity $3$  at the point $O_z$ if $n\geqslant 2$. Therefore, the point $P$ cannot be the point $O_z$.

Write $D=m_1 L_{xt}+ m_2 L_{yz}+ m_3 R_x+m_4 R_y+m_5R_z+m_6R_t+\Omega$,
where $\Omega$ is an effective $\Q$-divisor whose support contains
none of $L_{xt}$, $L_{yz}$, $R_x$, $R_y$, $R_z$, $R_t$.

If $m_1>0$, then $m_3=0$. Therefore, the inequality
\[\frac{2m_1}{4n+5}=m_1L_{xt}\cdot R_x\leqslant D\cdot R_{x}=\frac{12}{(4n+5)(4n+9)}\]
shows $0\leqslant m_1\leqslant \frac{6}{4n+9}$. By Lemma~\ref{lemma:handy-adjunction} the inequality
\[(D-m_1L_{xt})\cdot L_{xt}=\frac{6+m_1(8n+6)}{(4n+5)(4n+7)}\leqslant \frac{18}{(4n+7)(4n+9)}<1\]
implies that the point $P$ cannot be a smooth point on $L_{xt}$.

If $m_2>0$, then $R_z\not\subset\Supp(D)$. Therefore, the inequality
\[\frac{m_2}{4}=m_2L_{yz}\cdot R_z\leqslant D\cdot R_{z}=\frac{3}{2(4n+5)}\]
shows $0\leqslant m_2\leqslant \frac{6}{4n+5}$. By Lemma~\ref{lemma:handy-adjunction} the inequality
\[(D-m_2L_{yz})\cdot L_{yz}=\frac{6+m_2(4n+11)}{8(4n+9)}\leqslant \frac{6(n+2)}{(4n+5)(4n+9)}<1\]
implies that the point $P$ cannot be a smooth point on $L_{yz}$.

If $m_3>0$, then $m_1=0$, and hence
\[\frac{2m_3}{4n+5}=m_3L_{xt}\cdot R_x\leqslant D\cdot L_{xt}=\frac{6}{(4n+5)(4n+7)}.\]
Therefore, $0\leqslant m_3\leqslant \frac{3}{4n+7}$.
The inequality
\[(D-m_3R_x)\cdot R_x=\frac{12+m_3(8n+2)}{(4n+5)(4n+9)}\leqslant \frac{18}{(4n+7)(4n+9)}<1\]
implies that the point $P$ cannot be a smooth point on $R_x$. Moreover, this inequality shows that the point $P$ cannot be the point $O_t$ since $n\geqslant  3$.

If $m_4>0$, then we may assume that $m_2=0$. We then obtain
\[\frac{m_4(n+2)}{4n+9}=m_4R_{y}\cdot L_{yz}\leqslant D\cdot L_{yz}=\frac{3}{4(4n+9)}.\]
Therefore, $0\leqslant m_4\leqslant \frac{3}{4(n+2)}$.
The inequality
\[(D-m_4R_y)\cdot R_y=\frac{6(n+2)+2m_4(n+2)}{(4n+7)(4n+9)}\leqslant \frac{3}{2(4n+7)}<1\]
implies that the point $P$ cannot be a smooth point on $R_y$.

Since the pair $(X, D)$ is log canonical at the point $O_x$ and the curve $R_z$ contains the point $O_x$,
we have $m_5\leqslant 1$.
By Lemma~\ref{lemma:handy-adjunction}, the inequality
\[(D-m_5R_z)\cdot R_z\leqslant D\cdot R_z=\frac{3}{2(4n+5)}<1\]
shows that the point $P$ cannot be a smooth point on $R_z$.

The pair $(X, D)$ is log canonical at the point $O_x$ and the curve $R_t$ contains the point $O_x$. Thus $m_6\leqslant 1$.
By Lemma~\ref{lemma:handy-adjunction}, the inequality
\[(D-m_6R_t)\cdot R_t\leqslant D\cdot R_t=\frac{9}{4(4n+7)}<1\]
implies that the point $P$ cannot be a smooth point of $R_t$.

Consider the pencil $\mathcal{L}$ defined by  the equations
$\lambda xy^2+\mu t^{2}=0$, $[\lambda :\mu]\in \mathbb{P}^1$. Note
that the curve $L_{xt}$ is the only base component of the pencil
$\mathcal{L}$. There is a unique divisor $C_\alpha$ in
$\mathcal{L}$ passing through the point $P$. This divisor must be
defined an equation $xy^2+\alpha t^2=0$, where $\alpha$ is a
non-zero constant, since the point $P$ is located in the outside
of $C_x\cup C_y\cup C_z\cup C_t$. Note that the curve $C_y$ does not
contain any component of $C_{\alpha}$. Therefore, to see all the
irreducible components of $C_\alpha$, it is enough to see the
affine curve
$$
\left\{\aligned &x+\alpha t^{2}=0\\ &z^{2}t+t^{2}+x+x^{n+2}z
=0\\ \endaligned\right\}
\subset\mathbb{C}^{3}\cong\mathrm{Spec}\Big(\mathbb{C}\big[x,z,t\big]\Big).
$$
This is isomorphic to the plane affine curve defined by the
equation
$$
t\{z^2+(1-\alpha)t^2+(-\alpha)^{n+2}t^{2n+1}z\}
=0\subset\mathbb{C}^{2}\cong\mathrm{Spec}\Big(\mathbb{C}\big[z,t\big]\Big).
$$
Thus, if $\alpha\ne 1$, then the divisor $C_{\alpha}$ consists of
two reduced and  irreducible curves $L_{xt}$ and $Z_{\alpha}$. If
$\alpha=1$, then it consists of three reduced and irreducible
curves $L_{xt}$, $R_z$,  $R$. Moreover, $Z_\alpha$ and $R$ contain
the point $P$ and they are smooth at the point $P$.

Suppose that $\alpha\ne 1$. It is easy to check
\[
D\cdot Z_{\alpha}=\frac{3(12n+19)}{2(4n+5)(4n+7)}.%
\]
We also see that
\[Z_\alpha^2=C_\alpha\cdot Z_\alpha-L_{xt}\cdot Z_\alpha
\geqslant C_\alpha\cdot Z_\alpha-(L_{xt}+R_x)\cdot Z_\alpha=\frac{4n+5}{3}D\cdot Z_\alpha>0\]
since $Z_\alpha$ is different from the curve $R_x$.
 Put $D=\epsilon Z_{\alpha}+\Xi$, where $\Xi$ is
an effective $\mathbb{Q}$-divisor such that
$Z_{\alpha}\not\subset\mathrm{Supp}(\Xi)$. Since the pair $(X, D)$
is log canonical at the point $O_y$ and the curve $Z_\alpha$
passes through the point $O_y$,  we have  $ \epsilon\leqslant 1$.
But
\[
(D-\epsilon Z_\alpha)\cdot Z_\alpha \leqslant D\cdot Z_\alpha
=\frac{3(12n+19)}{2(4n+5)(4n+7)}<1
\]
and hence Lemma~\ref{lemma:handy-adjunction} implies that the
point $P$ cannot belong to the curve $Z_\alpha$.

Suppose that $\alpha=1$. Then  we have
$$
 D\cdot R=\frac{6(2n+3)}{(4n+5)(4n+7)}.%
$$
Since $R$ is different from $L_{yz}$ and $R_x$,
\[R^2=C_\alpha\cdot R-L_{xt}\cdot R -R_z\cdot R
\geqslant C_\alpha\cdot R-(L_{xt}+R_x)\cdot R -(L_{yz}+R_z)\cdot R\geqslant \frac{4n+3}{6}D\cdot R>0 \]

 Put $D=\epsilon_{1} R+\Xi'$, where $\Xi'$ is an effective
$\mathbb{Q}$-divisor such that $R\not\subset\mathrm{Supp}(\Xi')$.
Since the curve $R$ passes through the point $O_y$ at which the
pair $(X, D)$ is log canonical, we have $\epsilon_1\leqslant 1$. Since
$$
(D-\epsilon_1 R)\cdot R\leqslant D\cdot R=\frac{6(2n+3)}{(4n+5)(4n+7)}<1.
$$
Lemma~\ref{lemma:handy-adjunction} implies that the point $P$
cannot belong to $R$.
\end{proof}

\begin{lemma}
Let $X$ be a quasismooth
hypersurface of degree $47$ in $\mathbb{P}(8,13,15,17)$.
Then $\lct(X)=1$.
\end{lemma}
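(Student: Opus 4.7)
The hypersurface $X\subset\mathbb{P}(8,13,15,17)$ of degree $47$ is exactly the $n=2$ case of Lemma~\ref{lemma:I-6-infinite-series-1}: with defining equation $z^{2}t+yt^{2}+xy^{3}+x^{4}z=0$, it has the same quotient singularities $O_x,O_y,O_z,O_t$ of indices $8,13,15,17$, the same splittings $C_x=L_{xt}+R_x$, $C_y=L_{yz}+R_y$, $C_z=L_{yz}+R_z$, $C_t=L_{xt}+R_t$, and the same intersection table obtained by substituting $n=2$ throughout. All log canonical threshold computations of the previous lemma for the boundary divisors $\tfrac{I}{a_i}C_i$ carry over; in particular $\lct(X,\tfrac{3}{4}C_x)=1$, so $\lct(X)\le 1$.

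Suppose for contradiction that $\lct(X)<1$ and choose an effective $D\sim_{\mathbb{Q}}-K_X$ for which $(X,D)$ fails to be log canonical at some point $P\in X$. Every step of the proof of Lemma~\ref{lemma:I-6-infinite-series-1} survives the substitution $n=2$ \emph{except} the exclusion of $P=O_t$: there the inversion-of-adjunction bound through $R_x$ reads $17(D-\mu R_x)\cdot R_x=(12+18\mu)/13$, which equals $1$ already at $\mu=1/18$ and grows to $6/5$ at the top of the admissible range $\mu\le 1/5$. All the other exclusions hold verbatim: Lemma~\ref{lemma:Carolina} together with the pencil $\lambda xy^{2}+\mu t^{2}=0$ kills every smooth point of $X$ off $C_x\cup C_y\cup C_z\cup C_t$; the standard intersection inequalities rule out $O_x,O_y,O_z$; and inversion of adjunction on $L_{xt},L_{yz},R_x,R_y,R_z,R_t$ rules out their smooth loci. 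So only $O_t$ remains.

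My plan for excluding $O_t$ is an iterated use of Lemma~\ref{lemma:convexity}. First apply it with the log canonical divisor $\tfrac{3}{4}C_x\sim_{\mathbb{Q}}-K_X$: either $R_x\not\subset\Supp(D)$, giving $\mult_{O_t}(D)\le 17R_x\cdot D=\tfrac{12}{13}<1$ and contradicting non-lc, or $L_{xt}\not\subset\Supp(D)$. In the latter case apply Lemma~\ref{lemma:convexity} a second time with $\tfrac{2}{5}C_z\sim_{\mathbb{Q}}-K_X$ (log canonical at $O_t$ since $\lct(X,\tfrac{2}{5}C_z)=5/3$); because $L_{xt}$ is not a component of $C_z$ the previous conclusion is preserved, and one may further assume $L_{yz}\not\subset\Supp(D)$ (whence $\mult_{O_t}(D)\le 17L_{yz}\cdot D=\tfrac{3}{4}<1$) or $R_z\not\subset\Supp(D)$. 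Apply Lemma~\ref{lemma:convexity} once more with $\tfrac{6}{13}C_y\sim_{\mathbb{Q}}-K_X$, which is log canonical at $O_t$ because Lemma~\ref{lemma:Igusa} yields $\lct_{O_t}(X,L_{yz}+R_y)=\tfrac{5}{8}>\tfrac{6}{13}$; since $R_y$ is a component only of $C_y$, we may additionally force $R_y\not\subset\Supp(D)$ (if instead $L_{yz}$ drops out, the previous intersection bound already gives a contradiction). The surviving residual case is thus $L_{xt},R_z,R_y\not\subset\Supp(D)$ and $R_x,L_{yz}\subset\Supp(D)$; write $D=\mu R_x+\nu L_{yz}+\Omega$ with $\mu,\nu>0$ and $\Omega$ containing none of these curves.

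The hard part will be closing this residual case. The orbifold intersection numbers at $O_t$ of type $\tfrac{1}{17}(8,15)$ are $R_x\cdot L_{yz}=1/17$, $R_x^{2}=-18/(13\cdot 17)$, $L_{yz}^{2}=-19/(8\cdot 17)$, $R_x\cdot R_y=1/17$, $L_{yz}\cdot R_y=4/17$. Applying the singular form of Lemma~\ref{lemma:handy-adjunction} to $R_x$ and to $L_{yz}$ forces $\mu>1/18$ and $\nu>2/19$, while the global inequalities $\mu R_x\cdot L_{xt}\le D\cdot L_{xt}$, $\nu L_{yz}\cdot R_z\le D\cdot R_z$ give $\mu\le 1/5$, $\nu\le 6/13$, and the non-negativity of $\Omega\cdot R_y$ combined with $\mult_{O_t}(D)=\mu+\nu+\mult_{O_t}(\Omega)>1$ tightens $\nu<1/5$. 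I expect to squeeze these constraints into a contradiction by introducing one further curve through $O_t$ (either a residual $Z_{\alpha}$ in the pencil $|\O_X(30)|=\langle z^{2},yt\rangle$, whose base locus contains $L_{yz}$ and which sweeps out a family of curves smooth at $O_t$, or via an additional application of Lemma~\ref{lemma:convexity}). Should that intersection analysis still fall short, the backup is the $(8,15)$-weighted blow-up $\pi\colon Y\to X$ at $O_t$ (Method~3.3), whose exceptional $E\cong\mathbb{P}^{1}$ has discrepancy $6/17$; writing $K_Y+D^{Y}=\pi^{*}(K_X+D)$ and applying Lemma~\ref{lemma:handy-adjunction} on $Y$ to the strict transforms $\widetilde R_x$ and $\widetilde L_{yz}$ (both smooth and meeting $E$ transversally at distinct smooth points of $E$) will complete the exclusion of $O_t$.
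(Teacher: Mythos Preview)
Your reduction to the single point $P=O_t$ is correct and matches the paper's opening remark that the $n\ge 3$ argument of Lemma~\ref{lemma:I-6-infinite-series-1} goes through verbatim for $n=2$ except at $O_t$. Your iterated use of Lemma~\ref{lemma:convexity} is also legitimate: since none of $L_{xt},R_z,R_y$ occurs as a component of the later comparison divisors, the earlier exclusions are preserved, and you land exactly where the paper does, namely $D=\nu L_{yz}+\mu R_x+\Omega$ with $R_y\not\subset\Supp(\Omega)$, $\tfrac{2}{19}<\nu<\tfrac{1}{5}$ and $0<\mu\le\tfrac{1}{5}$ (your $\mu,\nu$ are the paper's $c,m$).

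The genuine gap is that you stop precisely where the work begins. The numerical box $\mu\in(\tfrac{1}{18},\tfrac{1}{5}]$, $\nu\in(\tfrac{2}{19},\tfrac{1}{5})$ is nonempty and cannot be closed by further global intersection inequalities alone; the paper resolves it by a weighted blow-up of $O_t$ with weights $(6,7)$ --- \emph{not} $(8,15)$. Under this extraction one has $K_{\bar X}=\pi^*K_X-\tfrac{4}{17}E$, $\bar L_{yz}=\pi^*L_{yz}-\tfrac{7}{17}E$, $\bar R_x=\pi^*R_x-\tfrac{6}{17}E$, and the exceptional $E$ carries singular points $Q_6$ of type $\tfrac{1}{6}(1,1)$ (on $\bar L_{yz}$) and $Q_7$ of type $\tfrac{1}{7}(1,3)$ (on $\bar R_x$). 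A case analysis on $E$ then forces $\nu>\tfrac{2}{5}$ or $\mu>\tfrac{5}{12}$, contradicting the bounds above. Your proposed $(8,15)$ Kawamata blow-up would also leave two singular points on $E$ (of indices $8$ and $15$), and the strict transforms $\widetilde R_x=\{x=0\}$ and $\widetilde L_{yz}=\{z=0\}$ pass \emph{through those singular points}, not through smooth points of $E$ as you assert; so your ``backup'' would require the same kind of singular-point case analysis, and with larger residual indices the arithmetic is less favorable than with $(6,7)$. Likewise, the pencil $\langle z^2,yt\rangle$ has every member containing $L_{yz}$, and the residual curve is tangent to $L_{yz}$ at $O_t$, so it does not give an independent transversal test curve. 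In short, your outline is sound up to the setup, but the decisive blow-up computation is missing and the specific mechanism you sketch for it is not quite right.
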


\begin{proof}
If we exclude the point $O_t$, then the proof of Lemma~\ref{lemma:I-6-infinite-series-1} works for this case.
Thus we suppose that $P=O_{t}$. Then $L_{yz}\not\subset\mathrm{Supp}(D)$; otherwise we would have a contradictory inequality
$$
\frac{3}{4\cdot 17}=D\cdot L_{yz}\geqslant \mult_{P}(D)>\frac{1}{17}.%
$$
 By Lemma~\ref{lemma:convexity}, we may
assume that $R_{y}\not\subset\mathrm{Supp}(D)$. Put
$$
D=mL_{yz}+cR_{x}+\Omega,
$$
where $m>0$ and $c\geqslant 0$, and $\Omega$ is an effective
$\mathbb{Q}$-divisor  whose support contains neither $L_{yz}$ nor $R_x$.
Then
$$
\frac{24}{15\cdot 17}=D\cdot R_{y}=\big(mL_{yz}+cR_{x}+\Omega\big)\cdot R_{y}\geqslant \frac{4m}{17}+\frac{\mult_{O_{t}}(D)-m}{17}>\frac{3m+1}{17},%
$$
and hence $$m<\frac{1}{5}.$$ Then it follows from
Lemma~\ref{lemma:handy-adjunction} that
$$
\frac{6+19m}{8\cdot 17}=(D-mL_{yz})\cdot L_{yz}>\frac{1}{17},%
$$
and hence
$$\frac{2}{19}<m.$$ On the other hand, if $c>0$, then
$$
\frac{6}{13\cdot 15}=D\cdot L_{xt}\geqslant cR_x\cdot L_{xt}= \frac{2c}{13}.%
$$
Therefore, $0\leqslant c\leqslant \frac{1}{5}$.

Let $\pi\colon\bar{X}\to X$ be the weighted blow up at the point $O_{t}$ with
weights $(6,7)$. Let $E$ be the exceptional curve of $\pi$. Also we let
$\bar{\Omega}$, $\bar{L}_{yz}$ and $\bar{R}_{x}$ be the proper
transforms of $\Omega$, $L_{yz}$ and $R_{x}$, respectively. Then
$$
K_{\bar{X}}\qlineq \pi^{*}(K_{X})-\frac{4}{17}E,\ %
\bar{L}_{yz}\qlineq \pi^{*}(L_{yz})-\frac{7}{17}E,\ %
\bar{R}_{x}\qlineq \pi^{*}(R_{x})-\frac{6}{17}E,\ %
\bar{\Omega}\qlineq \pi^{*}(\Omega)-\frac{a}{17}E,\ %
$$
where $a$ is a non-negative rational number.

The curve $E$ contains two singular points $Q_{7}$ and $Q_{6}$ of
$\bar{X}$. The point $Q_{7}$ is a singular point of type
$\frac{1}{7}(1,3)$ and the point $Q_{6}$ is a singular point of type
$\frac{1}{6}(1,1)$. Then the point $Q_7$ is contained in $\bar{R}_x$ but not in $\bar{L}_{yz}$, on the other hand, $Q_6$ is contained in $\bar{L}_{yz}$  but not in $\bar{R}_x$.
We also see that $\bar{L}_{yz}\cap\bar{R}_{x}=\varnothing$. The log pull back
of the log pair $(X,D)$ is the log pair
$$
\left(\bar{X},\
\bar{\Omega}+m\bar{L}_{yz}+c\bar{R}_{x}+\frac{4+a+7m+6c}{17}E\right).
$$
This pair must have non-log canonical singularity at some point $Q\in
E$. Then
\begin{gather*}
0\leqslant \bar{R}_{x}\cdot\bar{\Omega}=R_x\cdot\Omega
+\frac{6a}{17^2}E^2=\frac{12-13m+18c}{13\cdot 17}-\frac{a}{7\cdot
17}
,\\
0\le \bar{L}_{yz}\cdot\bar{\Omega}=L_{yz}\cdot\Omega +\frac{7a}{17^2}E^2=\frac{6+19m-8c}{8\cdot 17}-\frac{a}{6\cdot 17},%
\end{gather*}
and hence $0\leqslant 84-13a+126c-91m$ and $0\leqslant
18-4a-24c+57m$. In particular, we see that $a\leqslant
\frac{259}{40}$. Then $4+a+7m+6c<17$ since $m\frac{1}{5}$ and
$c\leqslant\frac{1}{5}$.

Suppose that the point $Q$ is neither $Q_6$ nor $Q_7$.  Then the point $Q$ must be located in the outside of
$\bar{L}_{yz}$ and $\bar{R}_{x}$. By
Lemma~\ref{lemma:handy-adjunction}, we have
$$
\frac{a}{42}=-\frac{a}{17}E^{2}=\bar{\Omega}\cdot E>1,
$$
and hence $a>42$. This is a contradiction since $a<\frac{259}{40}$.
Therefore, either $Q=Q_{6}$ or $Q=Q_{7}$.

Suppose that $Q=Q_{7}$. Then $Q\not\in\bar{L}_{yz}$. Hence, it
follows from Lemma~\ref{lemma:handy-adjunction} that
$$
\frac{1}{7}\leqslant \left(\bar{\Omega}+m\bar{L}_{yx}+\frac{4+a+7m+6c}{17}E\right)\cdot \bar{R}_{x}
=\frac{136+204c}{7\cdot 13\cdot 17},%
$$
and hence $c> \frac{5}{12}$. But $c\leqslant \frac{1}{5}$. This is a contradiction.

Finally, we suppose that $Q=Q_{6}$. Then $Q\not\in\bar{R}_{x}$. It
follows from Lemma~\ref{lemma:handy-adjunction} that
$$
\frac{1}{6}\leqslant \left(\bar{\Omega}+c\bar{R}_{x}+\frac{4+a+7m+6c}{17}E\right)\cdot \bar{L}_{yz}
=\frac{34+85m}{3\cdot 8\cdot 17},%
$$
and hence $m>\frac{2}{5}$. This contradiction completes the proof.
\end{proof}

\begin{lemma}
\label{lemma:I-6-infinite-series-1-n-2}Let $X$ be a quasismooth
hypersurface of degree $35$ in $\mathbb{P}(8,9,11,13)$.
Then $\lct(X)=1$.
\end{lemma}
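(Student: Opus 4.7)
The plan is to imitate the preceding two lemmas: the present case is the $n=1$ instance of the family treated in Lemma \ref{lemma:I-6-infinite-series-1}, and the $n=2$ instance just handled provides the closest model. Writing $X$ in the normal form
\[
z^{2}t+yt^{2}+xy^{3}+x^{3}z=0\subset\mathbb{P}(8,9,11,13),
\]
the singular points of $X$ are $O_x,O_y,O_z,O_t$ of indices $8,9,11,13$, and each $C_i$ splits into the same two rational components as in Lemma \ref{lemma:I-6-infinite-series-1}, with the same pairwise incidences. An elementary computation gives $\lct(X,\tfrac{3}{4}C_x)=1$ and hence $\lct(X)\leqslant 1$.

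Assume for contradiction that $\lct(X)<1$ and pick an effective $\mathbb{Q}$-divisor $D\sim_{\mathbb{Q}}-K_X$ together with a point $P\in X$ at which $(X,D)$ is not log canonical. The first step is to rerun the calculation of Lemma \ref{lemma:I-6-infinite-series-1} with $(a_0,a_1,a_2,a_3,d)=(8,9,11,13,35)$: Lemma \ref{lemma:convexity} allows me to assume that an appropriate component of each $C_i$ is absent from $\Supp(D)$, and then Lemmas \ref{lemma:multiplicity} and \ref{lemma:handy-adjunction}, combined with the pencil $|\lambda xy^{2}+\mu t^{2}|$ for points off the coordinate divisors, exclude $P$ from being any of $O_x,O_y,O_z$, any smooth point of $L_{xt}\cup L_{yz}\cup R_x\cup R_y\cup R_z\cup R_t$, or any smooth point outside $C_x\cup C_y\cup C_z\cup C_t$. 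The numerical inequalities are tighter than in the $n\geqslant 3$ regime but, as the previous lemma demonstrates, still have enough slack to close up at $n=1$.

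The remaining case $P=O_t$ is handled by a weighted blow-up, following the preceding lemma. The point $O_t$ is a quotient singularity of type $\tfrac{1}{13}(8,11)$ on the local coordinates $(x,z)$ obtained by eliminating $y$ through the defining equation. I would take $\pi\colon\bar X\to X$ to be the Kawamata-style weighted blow-up at $O_t$ with weights $(w_x,w_z)=(6,5)$, dictated by the compatibility $8\cdot 5\equiv 11\cdot 6\equiv 1\pmod{13}$. With $E$ the exceptional curve, the basic identities are
\[
K_{\bar X}\sim_{\mathbb{Q}}\pi^{*}K_X-\tfrac{2}{13}E,\qquad \bar L_{yz}\sim_{\mathbb{Q}}\pi^{*}L_{yz}-\tfrac{5}{13}E,\qquad \bar R_x\sim_{\mathbb{Q}}\pi^{*}R_x-\tfrac{6}{13}E,
\]
and $E$ carries two quotient singularities of $\bar X$, one on $\bar L_{yz}$ and the other on $\bar R_x$. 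Writing $D=mL_{yz}+cR_x+\Omega$ with $L_{yz},R_x\not\subset\Supp(\Omega)$, the non-negativity of the intersections $\bar\Omega\cdot\bar L_{yz}$ and $\bar\Omega\cdot\bar R_x$ on $\bar X$ bounds from above the coefficient $a$ of $E$ in $\pi^{*}\Omega$. I would then pinpoint the unique non-log-canonical point $Q\in E$ of the log pullback, use Lemma \ref{lemma:handy-adjunction} on $E$ to rule out $Q$ being a smooth point of $\bar X$ on $E$, and handle the two remaining candidates by applying Lemma \ref{lemma:handy-adjunction} along $\bar L_{yz}$ and $\bar R_x$ respectively to obtain a contradiction with the upper bounds on $m$ and $c$ already forced in the previous step.

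The main obstacle is precisely the numerics of this last step: each key inequality (the upper bound on $m$ via $D\cdot R_y$, the upper bound on $c$ via $D\cdot L_{xt}$, the upper bound on $a$ via the pullback intersections, and the final contradictions at $Q$) has to be checked by hand, and there is little slack since $I=6$ and the indices $8,9,11,13$ lie close together. The analogous calculation with weights $(6,7)$ for $\mathbb{P}(8,13,15,17)$ succeeds, which makes me confident that the weights $(6,5)$ will do the job here as well; verifying this chain of inequalities in full is the technical heart of the proof.
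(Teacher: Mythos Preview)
Your proposal has two genuine gaps.

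First, the claim that the generic argument of Lemma~\ref{lemma:I-6-infinite-series-1} already excludes $O_z$ fails at $n=1$. In that argument $O_z$ is ruled out using either $L_{xt}$ or $R_t$; but $R_t=\{t=y^3+x^{n+1}z=0\}$ has $\mult_{O_z}(R_t)=3$ only for $n\geqslant 2$, whereas for $n=1$ the local equation at $O_z$ is $y^3+x^2=0$ and the multiplicity drops to $2$. The resulting bound is $\mult_{O_z}(D)\leqslant \tfrac{11}{2}R_t\cdot D=\tfrac{9}{8}$, which does not beat $1$. The paper therefore treats $O_z$ separately: after observing that necessarily $L_{xt}\subset\Supp(D)$, it performs a weighted blow-up at $O_z$ with weights $(3,2)$ and runs the adjunction argument on the exceptional curve. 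Your sketch omits this case entirely.

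Second, your plan for $O_t$ underestimates the difficulty. You propose a single weighted blow-up with weights $(6,5)$, by analogy with the single $(6,7)$ blow-up that suffices for the $n=2$ case. The paper instead blows up $O_t$ with weights $(5,2)$, and a single blow-up is \emph{not} enough: after locating the non--log-canonical point at the $\tfrac{1}{5}$-point $Q_5$ on the exceptional curve, two further blow-ups (a $(1,1)$ weighted blow-up at $Q_5$, then an ordinary blow-up) are required before the chain of inequalities closes. Since you have not carried out the numerics for $(6,5)$, there is no evidence that your choice avoids this cascade; the tight bounds $m<\tfrac{7}{22}$, $c\leqslant\tfrac{3}{11}$ leave very little slack, and a rough estimate of the $E$-coefficient $\tfrac{2+5m+6c+a}{13}$ in your pullback does not stay below $1$ with the obvious upper bounds on $a$ coming from $\bar\Omega\cdot\bar L_{yz}\geqslant 0$ and $\bar\Omega\cdot\bar R_x\geqslant 0$. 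You should expect to iterate.
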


\begin{proof}
If we exclude the points $O_z$ and $O_t$, then the proof of Lemma~\ref{lemma:I-6-infinite-series-1} works  also for this case.

Suppose that $P=O_{z}$. Then $L_{xt}\subset\mathrm{Supp}(D)$,
since otherwise we would have an absurd inequality
$$
\frac{6}{9\cdot 11}=D\cdot L_{xt}>\frac{1}{11}.%
$$
We may assume that
$M_{t}\not\subset\mathrm{Supp}(D)$ by
Lemma~\ref{lemma:convexity}. Put
$$
D=mL_{xt}+cM_{y}+\Omega,
$$
where $m>0$ and $c\geqslant 0$, and $\Omega$ is an effective
$\mathbb{Q}$-divisor whose support contains neither
$L_{xt}$ nor $R_{y}$. Then
$$
\frac{18}{8\cdot 11}=D\cdot R_{t}=\big(mL_{xt}+cR_{y}+\Omega\big)\cdot R_{t}\geqslant \frac{3m}{11}+\frac{2(\mult_{O_{z}}(D)-m)}{11}>\frac{m+2}{11},%
$$
and hence $m<\frac{1}{4}$. Note that $\mult_{O_z}(R_t)=2$. It follows from
Lemma~\ref{lemma:handy-adjunction} that
$$
\frac{6+14m}{9\cdot 11}=\big(D-mL_{xt}\big)\cdot L_{xt}>\frac{1}{11}.%
$$
Therefore, $\frac{3}{14}<m<\frac{1}{4}$. On the other hand, if $c>0$, then
$$
\frac{6}{8\cdot 13}=D\cdot L_{yz}\geqslant cR_{y}\cdot L_{yz}=\frac{3c}{13},%
$$
and hence $c\leqslant \frac{1}{4}$.

Let $\pi\colon\bar{X}\to X$ be the weighted blow up at the point $O_{z}$ with
weights $(3,2)$. Let $E$ be the exceptional curve of $\pi$ and let
$\bar{\Omega}$, $\bar{L}_{xt}$ and $\bar{R}_{y}$ be the proper
transforms of $\Omega$, $L_{xt}$ and $R_{y}$, respectively. Then
$$
K_{\bar{X}}\qlineq \pi^{*}(K_{X})-\frac{6}{11}E,\ %
\bar{L}_{xt}\qlineq \pi^{*}(L_{xt})-\frac{3}{11}E,\ %
\bar{R}_{y}\qlineq \pi^{*}(R_{y})-\frac{2}{11}E,\ %
\bar{\Omega}\qlineq \pi^{*}(\Omega)-\frac{a}{11}E.%
$$
where $a$ is a non-negative rational number.

The curve $E$ contains two singular points $Q_{2}$ and $Q_{3}$ of
$\bar{X}$. The point $Q_{2}$ is a singular point of type
$\frac{1}{2}(1,1)$. It is contained in $\bar{L}_{xt}$ but not in $\bar{R}_y$. On the other hand, the point $Q_{3}$ is a singular point of type
$\frac{1}{3}(2,1)$. It is contained in $\bar{R}_{y}$ but not in $\bar{L}_{xt}$. But $\bar{L}_{xt}\cap\bar{R}_{y}=\varnothing$.

The log pull back
of the log pair $(X,D)$ is the log pair
$$
\left(\bar{X},\
\bar{\Omega}+m\bar{L}_{xt}+c\bar{R}_{y}+\frac{6+a+3m+2c}{11}E\right),
$$
which must have non-log canonical singularity at some point $Q\in
E$. We have
\begin{gather*}
0\leqslant \bar{\Omega}\cdot
\bar{R}_{y}=\frac{18+6c}{11\cdot
13}-\frac{m}{11}-\frac{a}{33},\\
0\le\bar{\Omega}\cdot \bar{L}_{xt}=\frac{6+14m}{9\cdot 11}-\frac{c}{11}-\frac{a}{22}.%
\end{gather*}
Then, $a\leqslant \frac{12+28m}{9}< \frac{19}{9}$ since  $m<
\frac{1}{4}$. Also, we obtain $6+a+3m+2c<11$ since  $c\leqslant \frac{1}{4}$.

Suppose that the point $Q$ is neither $Q_2$ nor $Q_3$. Then
$Q\not\in\bar{L}_{xt}\cup\bar{R}_{y}$. By
Lemma~\ref{lemma:adjunction}, we have
$$
\frac{a}{2\cdot 3}=-\frac{a}{11}E^{2}=\bar{\Omega}\cdot E>1,
$$
and hence $a>6$. This contradicts to the inequality $a<\frac{19}{9}$.
Therefore, we see that either $Q=Q_{2}$ or $Q=Q_{3}$.

Suppose that $Q=Q_{2}$. Then $Q\not\in\bar{R}_{y}$. Lemma~\ref{lemma:handy-adjunction} shows that
$$
\frac{1}{2}<\left(\bar{\Omega}+c\bar{R}_y+\frac{6+a+3m+2c}{11}E\right)\cdot \bar{L}_{xt}=\frac{66+55m}{2\cdot 9\cdot 11}
$$
and hence $m>\frac{3}{5}$. But $m<\frac{1}{4}$. This is a
contradiction.

Thus, the point $Q$ must be $Q_3$. Then $Q\not\in\bar{L}_{xt}$. It
follows from Lemma~\ref{lemma:handy-adjunction} that
$$
\frac{1}{3}<
\left(\bar{\Omega}+m\bar{L}_{xt}+\frac{6+a+3m+2c}{11}E\right)\cdot \bar{R}_{y}=\frac{132+44c}{13\cdot 33}.%
$$
Therefore, $c>\frac{1}{4}$. But we have seen $c\leqslant \frac{1}{4}$. The obtained
contradiction shows that $P\ne O_{z}$.
The point $P$ must be the point $O_t$. Then $L_{yz}\not\subset\mathrm{Supp}(D)$
since otherwise we would have
$$
\frac{6}{8\cdot 13}=D\cdot L_{yz}>\frac{1}{13}.%
$$
By Lemma~\ref{lemma:convexity}, we may
assume that $R_{y}\not\subset\mathrm{Supp}(D)$. Put
$$
D=mL_{yz}+cR_{x}+\Omega,
$$
where $m>0$ and $c\geqslant 0$, and $\Omega$ is an effective
$\mathbb{Q}$-divisor whose support contains neither
$L_{yz}$ nor $R_{x}$. Then
$$
\frac{18}{11\cdot 13}=D\cdot R_{y}=\big(mL_{yz}+cR_{x}+\Omega\big)\cdot R_{y}\geqslant \frac{3m}{13}+\frac{\mult_{O_{t}}(D)-m}{13}>\frac{2m+1}{13},%
$$
and hence  $m<\frac{7}{22}$. On the other hand,
Lemma~\ref{lemma:handy-adjunction} implies
$$
\frac{6+15m}{8\cdot 13}=\big(D-mL_{yz}\big)\cdot L_{yz}>\frac{1}{13},%
$$
and hence $\frac{2}{15}<m<\frac{7}{22}$. If $c>0$, then
$$
\frac{6}{9\cdot 11}=D\cdot L_{xt}=cR_{x}\cdot L_{xt}\geqslant \frac{2c}{9}.%
$$
Therefore, $c\leqslant \frac{3}{11}$.

Let $\pi\colon\bar{X}\to X$ be the weighted blow up at the point $O_{t}$ with
weights $(5,2)$. Let $E$ be the exceptional curve of $\pi$. Let
$\bar{\Omega}$, $\bar{L}_{yz}$ and $\bar{R}_{x}$ be the proper
transforms of $\Omega$, $L_{yz}$ and $R_{x}$, respectively. Then
$$
K_{\bar{X}}\qlineq \pi^{*}(K_{X})-\frac{6}{13}E,\ %
\bar{L}_{yz}\qlineq \pi^{*}(L_{yz})-\frac{2}{13}E,\ %
\bar{R}_{x}\qlineq \pi^{*}(R_{x})-\frac{5}{13}E,\ %
\bar{\Omega}\qlineq \pi^{*}(\Omega)-\frac{a}{13}E,\ %
$$
where $a$ is a non-negative rational number.

The curve $E$ contains two singular points $Q_{5}$ and $Q_{2}$ of
$\bar{X}$.  The point $Q_{5}$ is a singular point of type
$\frac{1}{5}(1,1)$. It belongs to $\bar{L}_{yz}$ but not to $\bar{R}_{x}$. The point $Q_{2}$ is a singular point of type
$\frac{1}{2}(1,1)$.  It belongs to $\bar{R}_{x}$ but not to $\bar{L}_{yz}$. Note that $\bar{L}_{yz}\cap\bar{R}_{x}=\varnothing$.

The log pull back
of the log pair $(X,D)$ is the log pair
$$
\left(\bar{X},\
\bar{\Omega}+m\bar{L}_{yz}+c\bar{R}_{x}+\frac{6+a+2m+5c}{13}E\right).
$$
It must have non-log canonical singularity at some point $Q\in
E$. We have
\begin{gather*}
0\leqslant \bar{\Omega}\cdot
\bar{R}_{x}=\frac{12+10c}{9\cdot
13}-\frac{m}{13}-\frac{a}{26},\\
0\le\bar{\Omega}\cdot \bar{L}_{yz}=\frac{6+15m}{8\cdot 13}-\frac{c}{13}-\frac{a}{65}.%
\end{gather*}
Therefore, $30+75m\geqslant 40c+8a$ and $24+20c\geqslant
18m+9a$. In particular, we see that $a\leqslant \frac{240}{77}$. Then
$6+a+2m+5c<13$ since $c\leqslant \frac{3}{11}$ and $m\leqslant \frac{7}{22}$.

Suppose that $Q\ne Q_{2}$ and $Q\ne Q_{5}$. Then
$Q\not\in\bar{L}_{yz}\cup\bar{R}_{x}$. By
Lemma~\ref{lemma:handy-adjunction}, we have
$$
\frac{a}{10}=-\frac{a}{13}E^{2}=\bar{\Omega}\cdot E>1,
$$
and hence $a>10$. This is a contradiction since $a<\frac{240}{77}$.
Therefore, the point $Q$ is either the point $Q_{2}$ or the point $Q_{5}$.

Suppose that $Q=Q_{2}$. Then $Q\not\in\bar{L}_{yz}$. It
follows from Lemma~\ref{lemma:handy-adjunction} that
$$
\frac{1}{2}<\left(\bar{\Omega}+m\bar{L}_{yz}+\frac{6+a+2m+5c}{13}E\right)\cdot \bar{R}_{x}=\frac{78+65c}{9\cdot 26},%
$$
and hence $c>\frac{3}{5}$. However,  $c\leqslant \frac{3}{11}$.
Thus, the point $Q$ must be $Q_5$. Then $Q\not\in\bar{R}_{x}$. Again,
Lemma~\ref{lemma:handy-adjunction} shows that
\begin{gather*}
\frac{1}{5}<\left(\bar{\Omega}+c\bar{R}_x+\frac{6+a+2m+5c}{13}E\right)\cdot\bar{L}_{yz}=\frac{78+91m}{5\cdot 8\cdot
13},\\
\frac{1}{5}<\left(\bar{\Omega}+m\bar{L}_{yz}\right)\cdot E=\frac{a}{10}+\frac{m}{5}.%
\end{gather*}
Therefore, $m>\frac{2}{7}$ and $a+2m>2$. In particular, $\frac{2}{7}<m<\frac{7}{22}$.

Let $\psi\colon\tilde{X}\to \bar{X}$ be the weighted blow up at the point
$Q_{5}$ with weights $(1,1)$. Let $G$ be the exceptional curve of
$\psi$ and let $\tilde{\Omega}$, $\tilde{L}_{yz}$, $\tilde{R}_{x}$
and $\tilde{E}$ be the proper transforms of $\Omega$, $L_{yz}$,
$R_{x}$ and $E$, respectively. Then
$$
K_{\tilde{X}}\qlineq \psi^{*}(K_{\bar{X}})-\frac{3}{5}G,\ %
\tilde{L}_{yz}\qlineq \psi^{*}(\bar{L}_{yz})-\frac{1}{5}G,\ %
\tilde{E}\qlineq \psi^{*}(E)-\frac{1}{5}G,\ %
\tilde{\Omega}\qlineq\psi^{*}(\bar{\Omega})-\frac{b}{5}G,%
$$
where $b$ is a non-negative rational number.

The surface is smooth along $G$. The log pull back of $(X,D)$ is
the log pair
$$
\left(\tilde{X},\
\tilde{\Omega}+m\tilde{L}_{yz}+c\tilde{R}_{x}+\frac{6+a+2m+5c}{13}\tilde{E}+\theta G\right),%
$$
where
$$\theta =\frac{15m+45+a+13b+5c}{65}.$$
Then
 the log pair  is not log canonical at some point $O\in
G$. We have
\begin{gather*}
0\leqslant \tilde{E}\cdot\tilde{\Omega}=\frac{a}{10}-\frac{b}{5},\\
0\le\tilde{L}_{yz}\cdot\tilde{\Omega}=\frac{6+15m}{8\cdot 13}-\frac{c}{13}-\frac{a}{65}-\frac{b}{5},%
\end{gather*}
and hence $30+75m\geqslant 8(a+13b+5c)$ and $a\geqslant
2b$.
In particular,  we obtain
\[\theta =\frac{15m+45+a+13b+5c}{65}\leqslant \frac{195m+390}{8\cdot 65}\leqslant \frac{195\cdot 7+390\cdot 22}{8\cdot 22\cdot 65}<1\]
since $m\leqslant\frac{7}{22}$.

Suppose that $O\not\in\tilde{E}\cup\tilde{L}_{yz}$. Then it
follows from Lemma~\ref{lemma:handy-adjunction} that
$$
b=-\frac{b}{5}G^{2}=\tilde{\Omega}\cdot G>1.%
$$
However, this gives an absurd inequality $104<104b\leqslant 30+75m-8a-40c\leqslant 30+75m <104$ since $m\leqslant\frac{7}{22}$. Therefore,
$O\in\tilde{E}\cup\tilde{L}_{yz}$. Note that
$\tilde{E}\cap\tilde{L}_{yz}=\varnothing$.

Suppose that $O\in \tilde{L}_{yz}$. Then it follows from
Lemma~\ref{lemma:handy-adjunction} that
$$
1<\big(\tilde{\Omega}+c\tilde{R}_{x}+\frac{6+a+2m+5c}{13}\tilde{E}+\theta
G\big)\cdot \tilde{L}_{yz}=\big(\tilde{\Omega}+\theta G\big)\cdot
\tilde{L}_{yz}
=\frac{3m+6}{8},%
$$
and  hence $m>\frac{2}{3}$. But $m\leqslant\frac{7}{22}$.
Thus, we see that $O\in\tilde{E}$.
Lemma~\ref{lemma:handy-adjunction} implies that
\begin{gather*}
1<
\left(\tilde{\Omega}+\frac{6+a+2m+5c}{13}\tilde{E}\right)\cdot
G=b+\frac{6+a+2m+5c}{13},\\
1<\big(\tilde{\Omega}+\theta G\big)\cdot \tilde{E}=\frac{a}{10}-\frac{b}{5}+\theta.%
\end{gather*}
Therefore, we obtain $13b+a+2m+5c>7$ and $3a+2c+6m>8$.

Let $\phi\colon\hat{X}\to\tilde{X}$ be the blow up at the point $O$.
Let $F$ be the exceptional curve of $\phi$. Let $\hat{\Omega}$,
$\hat{L}_{yz}$, $\hat{R}_{x}$, $\hat{E}$ and $\hat{G}$ be the
proper transforms of $\Omega$, $L_{yz}$, $R_{x}$, $E$ and $G$,
respectively. Then
$$
K_{\hat{X}}\qlineq \phi^{*}(K_{\tilde{X}})+F,\ %
\hat{G}\qlineq \phi^{*}(G)-F,\ %
\hat{E}\qlineq \phi^{*}(\tilde{E})-F,\ %
\hat{\Omega}\qlineq\phi^{*}(\tilde{\Omega})-dF,%
$$
where $d$ is a non-negative rational number. The log pull back of
$(X,D)$ is the log pair
$$
\left(\hat{X},\
\hat{\Omega}+m\hat{L}_{yz}+c\hat{R}_{x}+\frac{6+a+2m+5c}{13}\hat{E}+\theta\hat{G}+\nu F\right),%
$$
where
$$\nu =\frac{65d+25m+6a+13b+30c+10}{65}.$$
It is not log canonical at some point $A\in
F$. We have
\begin{gather*}
0\leqslant\hat{E}\cdot\hat{\Omega}=\frac{a}{10}-\frac{b}{5}-d,\\
0\le\hat{G}\cdot\hat{\Omega}=b-d,%
\end{gather*}
 and hence $b\geqslant d$ and $a\geqslant 2b+10d$. In particular,
\[\begin{split}\nu &=\frac{65d+25m+6a+13b+30c+10}{65}=\\
&=\frac{13(5d+b)+25m+6a+30c+10}{65}\leqslant\\
&\leqslant \frac{5a+10m+12c+4}{26}\leqslant\\
&\leqslant \frac{6+8c}{9}<1\\
\end{split}\]
since we have $24+20c\geqslant
18m+9a$ and $c\leqslant \frac{3}{11}$.

Suppose that $A\not\in\hat{E}\cup\hat{G}$. Then
Lemma~\ref{lemma:handy-adjunction} shows that $d=\hat{\Omega}\cdot F>1$. This
is impossible since
\[10d\leqslant a-2b\leqslant a\leqslant \frac{240}{77}.\]
Thus, we see that $A\in\hat{E}\cup\hat{G}$. Note
that $\hat{E}\cap\hat{G}=\varnothing$.

Suppose that $A\in\hat{E}$. Then it follows from
Lemma~\ref{lemma:handy-adjunction} that
$$
\frac{a}{10}-\frac{b}{5}-d+\nu=\big(\hat{\Omega}+\nu F\big)\cdot \hat{E}>1,%
$$
which implies that $5a+10m+12c>22$. However,
this inequality with $24+20c\geqslant 18m+9a$ gives
\[\frac{9}{5}(22-12c)<\frac{9}{5}(5a+10m)\leqslant 24+20c,\]
and hence $\frac{3}{8}<c$.
But $c\leqslant \frac{3}{11}$.
Thus, the point $A$ cannot belong to  $\hat{E}$. Then
$A\in\hat{G}$. By Lemma~\ref{lemma:handy-adjunction}, we see that
$$
b-d+\nu=\big(\hat{\Omega}+\nu F\big)\cdot \hat{G}>1,%
$$
and hence $6a+25m+30c+78b>55$. But
\[55<25m+6a+78b+3c=25m+\frac{3}{4}(8a+104b+40c)\leqslant 25m+\frac{3}{4}(30+75m)<55\]
since $8a+104b+40c\leqslant 30+75m$ and $m\leqslant\frac{7}{22}$. The obtained contradiction completes the proof.
\end{proof}

\begin{lemma}
\label{lemma:I-6-infinite-series-2} Let $X$ be a quasismooth
hypersurface of degree $12n+35$ in $\mathbb{P}(9, 3n+8, 3n+11,
6n+13)$ for $n\geqslant 1$.
 Then $\lct(X)=1$.
\end{lemma}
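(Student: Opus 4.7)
The plan is to follow the scheme laid out earlier in the paper and mirror the arguments of Lemma~\ref{lemma:I-6-infinite-series-1}. Here $I = 9 + (3n+8) + (3n+11) + (6n+13) - (12n+35) = 6$ and $a_0 = 9$, so $2I<3a_0$ and the hypothesis of Theorem~\ref{theorem:main} holds; the only singularities of $X$ are the four vertices $O_x, O_y, O_z, O_t$, of indices $9$, $3n+8$, $3n+11$, $6n+13$ respectively. A direct enumeration of monomials of weighted degree $12n+35$ together with the quasismoothness condition at each vertex forces the defining polynomial to contain $xt^2$, $z^2t$, $y^3z$, and $x^{n+3}y$, and after rescaling the coordinates one may assume
\[
f = xt^2 + z^2t + y^3z + x^{n+3}y.
\]

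Reading off intersections with the coordinate hyperplanes gives the decompositions $C_x = L_{xz} + M_x$, $C_y = L_{yt} + M_y$, $C_z = L_{xz} + M_z$, $C_t = L_{yt} + M_t$, where $M_x, M_y, M_z, M_t$ are the residual irreducible curves cut out by $zt+y^3$, $xt+z^2$, $t^2+x^{n+2}y$, and $y^2z+x^{n+3}$ respectively. A local computation at $O_y$ and $O_t$ (the singular points of $X$ that lie on $C_x$), using Proposition~\ref{proposition:lc-by-finite-morphism} together with Lemma~\ref{lemma:basic-property}, establishes $\lct(X,\tfrac{2}{3}C_x)=1$, and hence $\lct(X)\leq 1$.

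For the matching lower bound, suppose that an effective $\mathbb{Q}$-divisor $D \qlineq -K_X$ makes $(X, D)$ not log canonical at some point $P$. Applying Lemma~\ref{lemma:Carolina} with a suitable choice of monomial system of small weighted degree rules out $P$ being a smooth point of $X$ lying off $C_x$. By Lemma~\ref{lemma:convexity} I may assume that at least one irreducible component of each of $C_x, C_y, C_z, C_t$ is not contained in $\Supp(D)$. Combined with the standard intersection numbers
\[
L_{xz}\cdot(-K_X)=\tfrac{6}{(3n+8)(6n+13)}, \qquad L_{yt}\cdot(-K_X)=\tfrac{6}{9(3n+11)},
\]
and the corresponding values for $M_x, M_y, M_z, M_t$ computed from $-K_X \qlineq \mathcal{O}_X(6)$ and the above decompositions, Lemma~\ref{lemma:handy-adjunction} together with Lemma~\ref{lemma:multiplicity} excludes $P$ from being any smooth point on a component of some $C_i$, and also from being any vertex whose index is small enough for the naive intersection bounds to close; in particular $O_x$, $O_y$, and $O_z$ are ruled out.

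The main obstacle will be ruling out $P = O_t$, which has the largest index $6n+13$ and lies simultaneously on $L_{xz}$, $M_x$ and $M_y$, with $L_{xz}$ and $M_x$ meeting tangentially (intersection multiplicity $3$ on the smooth orbifold cover) at that point. My plan there is to write $D = m L_{xz} + c M_x + \Omega$ with $\Omega$ effective and containing neither component in its support, bound $m$ and $c$ from above using intersections of $D$ with curves not in $\Supp(D)$ (for instance $M_y$ or components of $C_t$), and then close the argument via Lemma~\ref{lemma:handy-adjunction}. Should these bounds prove insufficient for small values of $n$, I would apply Method~3.3 from the scheme: perform a weighted blow-up $\pi\colon \bar{X} \to X$ at $O_t$ with weights chosen to match the contact orders of $L_{xz}$ and $M_x$ there, then iterate the analysis on the two new singular points of $\bar X$ lying on the exceptional curve, exactly as is done for the sporadic $n = 2$ case treated in Lemma~\ref{lemma:I-6-infinite-series-1-n-2}.
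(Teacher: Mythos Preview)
Your setup, the decomposition of the $C_i$, and the computation $\lct(X,\tfrac{2}{3}C_x)=1$ are correct and match the paper. Your plan for the singular points is also right in spirit, though $O_z$ does not fall to a purely naive multiplicity bound: when $L_{yt}\subset\Supp(D)$ one must bound the coefficient of $L_{yt}$ via $D\cdot R_t$ (using that $\mult_{O_z}(R_t)\geq 2$) and then close with Lemma~\ref{lemma:handy-adjunction}, exactly as you propose for $O_t$. In fact this two-step Method~3.2 argument succeeds at both $O_z$ and $O_t$ for every $n\geq 1$, so no weighted blow-up is ever needed in this series; your anticipated blow-up at $O_t$ is unnecessary.

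The genuine gap is your appeal to Lemma~\ref{lemma:Carolina} for smooth points off $C_x$. For that lemma to yield $\mult_P(D)\leq 1$ you need a degree $k$ with at least two monomials $x^\alpha y^\beta$ and at least two monomials $x^\gamma z^\delta$, subject to $\tfrac{6kd}{9(3n+8)(3n+11)(6n+13)}\leq 1$. Already for $n=1$ (weights $9,11,14,19$, $d=47$) the smallest $k$ admitting two monomials in $x,y$ alone is $k=99$, while the bound requires $k\leq 93$; the situation for $n=2$ is similar. So Lemma~\ref{lemma:Carolina} simply does not close here. The paper instead treats smooth points outside $C_x\cup C_y\cup C_z\cup C_t$ via the pencil $\mathcal{L}$ on $X$ cut by $\lambda xt+\mu z^2=0$: the unique member through $P$ decomposes as $L_{xz}+C_\alpha$ (with an extra $R_y$ splitting off when $\alpha=1$), one checks $C_\alpha$ is irreducible and smooth at $P$ with $C_\alpha^2>0$ and $D\cdot C_\alpha<1$, and Lemma~\ref{lemma:handy-adjunction} finishes. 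You will need this pencil argument (or an equivalent construction) to complete the proof.
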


\begin{proof} The surface $X$ can be defined by the equation
$$
z^{2}t+y^{3}z+xt^{2}+x^{n+3}y=0.
$$
It is singular only at the points $O_x$, $O_y$, $O_z$ and $O_t$.

The curve $C_x$ (resp. $C_y$, $C_z$, $C_t$) consists of two
irreducible and reduced curves $L_{xz}$ (resp. $L_{yt}$, $L_{xz}$,
$L_{yt}$) and $R_x=\{x=zt+y^3=0\}$ (resp. $R_y=\{y=z^2+xt=0\}$,
$R_z=\{z=t^2+x^{n+2}y=0\}$, $R_t=\{t=y^2z+x^{n+3}=0\}$). These two
curves intersect at the point $O_t$ (resp. $O_x$, $O_y$, $O_z$).

It is easy to see that $\lct(X, \frac{2}{3}C_x)=1$ is less than
each of the numbers
\[
\lct(X, \frac{6}{3n+8} C_y),\ \ \lct(X, \frac{6}{3n+11} C_z), \ \
\lct(X, \frac{6}{6n+13} C_t).\]

We have the following intersection numbers.
$$-L_{xz}\cdot K_X=\frac{6}{(3n+8)(6n+13)},\ \ \ -L_{yt}\cdot K_X=\frac{2}{3(3n+11)},\ \ \
-R_x\cdot K_X=\frac{18}{(3n+11)(6n+13)},$$
$$ -R_y\cdot K_X=\frac{4}{3(6n+13)}, \ \ \
-R_z\cdot K_X=\frac{4}{3(3n+8)}, \ \ \ -R_t\cdot K_X=\frac{6(n+3)}{(3n+8)(3n+11)},$$
$$ L_{xz}\cdot R_x=\frac{3}{6n+13}, \ \ \
L_{yt}\cdot R_y=\frac{2}{9}, \ \ \ L_{xz}\cdot R_z=\frac{2}{3n+8}, \ \ \ L_{yt}\cdot R_t=\frac{n+3}{3n+11}, $$
$$L_{xz}^2=-\frac{9n+15}{(3n+8)(6n+13)}, \ \ \  L_{yt}^2=-\frac{3n+14}{9(3n+11)}, \ \ \ R_x^2=-\frac{9n+6}{(3n+11)(6n+13)},$$
$$ R_y^2=-\frac{6n+10}{9(6n+13)}, \ \ \
R_z^2=\frac{6n+4}{9(3n+8)}, \ \ \ R_t^2=\frac{(n+3)(3n+5)}{(3n+8)(3n+11)}.$$

Now we suppose that $\lct(X)<1$. Then there is an effective
$\Q$-divisor $D\qlineq -K_X$ such that the log pair $(X, D)$ is
not log canonical at some point $P\in X$.

 By Lemma~\ref{lemma:convexity} we may
assume that $\Supp(D)$ does not contain either the curve $L_{yt}$
or the curve $R_y$. Since these two curves intersect at the point
$O_x$, the inequalities
$$L_{yt}\cdot D =\frac{2}{3(3n+11)}<\frac{1}{9},$$
$$R_y\cdot D=\frac{4}{3(6n+13)}<\frac{1}{9}$$
show that the point $P$ cannot be the point $O_x$.

By Lemma~\ref{lemma:convexity} we may assume that $\Supp(D)$ does
not contain either the curve $L_{xz}$ or the curve $R_z$.
Therefore, one of the following inequalities must hold:
$$\mult_{O_y}( D)\leqslant (3n+8)L_{xz}\cdot D =\frac{6}{6n+13}<1,$$
$$\mult_{O_y}( D)\leqslant \frac{3n+8}{2}R_z\cdot D =\frac{2}{3}.$$
Therefore, the point $P$ cannot be the point $O_y$.

Suppose that $P=O_{z}$. If $L_{yt}\not\subset\Supp(D)$, then we get an absurd inequality
$$\frac{6}{9(3n+11)}=L_{yt}\cdot D>\frac{1}{3n+11}.$$
Therefore
$\Supp(D)$ must contain the curve $L_{yt}$. By Lemma~\ref{lemma:convexity} we may
assume that $M_{t}\not\subset\Supp(D)$. Put $D=\mu L_{yt}+\Omega$,
where $\Omega$ is an effective $\mathbb{Q}$-divisor whose support does not contain the curve $L_{yt}$. Then
$$
\frac{6(n+3)}{(3n+8)(3n+11)}=D\cdot R_t\geqslant\mu L_{yt}\cdot
R_{t}+\frac{(\mult_P(D)-\mu)\mult_{P}(R_{t})}{3n+11}>
\frac{\mu(n+3)}{3n+11}+\frac{2(1-\mu)}{3n+11},
$$
and hence
$$
\mu<\frac{2}{(3n+8)(n+1)}.
$$
On the other hand, Theorem~\ref{theorem:adjunction} shows
$$\frac{1}{3n+11}<\Omega\cdot L_{yt}=D\cdot L_{yt}-\mu L_{yt}^2=\frac{6+\mu(3n+14)}{9(3n+11)}.$$
It implies $\frac{3}{3n+14}<\mu$. Consequently, the point $P$ cannot be the point $O_z$.

Suppose that $P=O_{t}$. Since $L_{xz}\cdot D< \frac{1}{6n+13}$,
the curve $L_{xz}$ must be contained in $\Supp(D)$. Then, we may
assume that $R_x\not\subset\Supp(D)$.
 Put $D=\mu L_{xz}+\Omega$,
where $\Omega$ is an effective $\mathbb{Q}$-divisor whose support
does not contain the curve $L_{xz}$. Then
$$
\frac{18}{(3n+11)(6n+13)}=D\cdot R_x \geqslant\mu L_{xz}\cdot
R_x+\frac{\mult_{P}(D)-\mu}{6n+13}> \frac{1+2\mu}{6n+13},
$$
and hence
$$
\mu<\frac{7-3n}{6n+22}.
$$

However,
Theorem~\ref{theorem:adjunction} implies
$$\frac{1}{6n+13}<\Omega\cdot L_{xz}=D\cdot L_{xz}-\mu L_{xz}^2=\frac{6+(9n+15)\mu}{(3n+8)(6n+13)},$$
and hence $\frac{3n+2}{9n+15}<\mu$. This is a contradiction. Therefore, the point $P$ cannot be the point $O_t$.

Write $D=aL_{xz}+b R_x+\Delta$, where $\Delta$ is an  effective
$\mathbb{Q}$-divisor whose support contains neither $L_{xz}$ nor
$R_x$. Since the log pair $(X, D)$ is log canonical at the point
$O_t$, we have $0\leqslant a, b \leqslant 1$. Then by
Theorem~\ref{theorem:adjunction} the following two inequalities
$$(bR_x+\Delta)\cdot L_{xz}=(D-aL_{xz})\cdot L_{xz}=\frac{6+a(9n+15)}{(3n+8)(6n+13)}<1,$$
$$(aL_{xz}+\Delta)\cdot R_x=(D-bR_{x})\cdot R_{x}=\frac{18+b(9n+6)}{(3n+11)(6n+13)}<1$$
show that the point $P$ cannot belong to the curve $C_x$. By the
same way, we can show $P\not\in C_{y}\cup C_{z}\cup C_{t}$.

Consider the pencil $\mathcal{L}$ defined by  the equations
$\lambda xt+\mu z^{2}=0$, $[\lambda :\mu]\in \mathbb{P}^1$. Note
that the curve $L_{xz}$ is the only base component of the pencil
$\mathcal{L}$. There is a unique divisor $C_\alpha$ in
$\mathcal{L}$ passing through the point $P$. This divisor must be
defined an equation $xt+\alpha z^2=0$, where $\alpha$ is a
non-zero constant, since the point $P$ is located in the outside
of $C_x\cup C_z\cup C_t$. Note that the curve $C_t$ does not
contain any component of $C_{\alpha}$. Therefore, to see all the
irreducible components of $C_\alpha$, it is enough to see the
affine curve
$$
\left\{\aligned &x+\alpha z^{2}=0\\ &z^{2}+y^{3}z+x+x^{n+3}y
=0\\ \endaligned\right\}
\subset\mathbb{C}^{3}\cong\mathrm{Spec}\Big(\mathbb{C}\big[x,y,z\big]\Big).
$$
This is isomorphic to the plane affine curve defined by the
equation
$$
z\{(1-\alpha)z+y^3+(-\alpha)^{n+3}yz^{2n+5}\}
=0\subset\mathbb{C}^{2}\cong\mathrm{Spec}\Big(\mathbb{C}\big[y,z\big]\Big).
$$
Thus, if $\alpha\ne 1$, then the divisor $C_{\alpha}$ consists of
two reduced and  irreducible curves $L_{xz}$ and $Z_{\alpha}$. If
$\alpha=1$, then it consists of three reduced and irreducible
curves $L_{xz}$, $R_y$,  $R$. Moreover, $Z_\alpha$ and $R$ are
smooth at the point $P$.

Suppose that $\alpha\ne 1$. Then we have
\[D\cdot Z_{\alpha}=\frac{2(24n+61)}{3(3n+8)(6n+13)}.%
\]
Since $Z_\alpha$ is different from $R_x$,
\[
Z_{\alpha}^2=C_\alpha\cdot Z_\alpha- L_{xz}\cdot Z_\alpha
\geqslant C_\alpha\cdot Z_\alpha- (L_{xz}+R_x)\cdot Z_\alpha=\frac{6n+13}{6}D\cdot Z_\alpha>0.\]

Put $D=\epsilon Z_{\alpha}+\Xi$, where $\Xi$ is an effective
$\mathbb{Q}$-divisor such that
$Z_{\alpha}\not\subset\mathrm{Supp}(\Xi)$. Since the pair $(X, D)$
is log canonical at the point $O_t$ and the curve $Z_\alpha$
passes through the point $O_t$,  we have  $ \epsilon\leqslant 1$.
But
\[
(D-\epsilon Z_\alpha)\cdot Z_\alpha \leqslant D\cdot Z_\alpha
=\frac{2(24n+61)}{3(3n+8)(6n+13)}<1
\]
and hence Lemma~\ref{lemma:handy-adjunction} implies that the
point $P$ cannot belong to the curve $Z_\alpha$.

Suppose that $\alpha=1$. We have
$$
 D\cdot R=\frac{6(2n+5)}{(3n+8)(6n+13)}.%
$$
Since $R$ is different from $R_x$ and $L_{yt}$,
\[ R^2=C_\alpha\cdot R-L_{xz}\cdot R- R_y\cdot R\geqslant C_\alpha\cdot R-(L_{xz}+R_x)\cdot R- (L_{yt}+R_y)\cdot R=\frac{3n+5}{6}D\cdot D>0.\]
Put $D=\epsilon_{1} R+\Xi'$, where $\Xi'$ is an effective
$\mathbb{Q}$-divisor such that $R\not\subset\mathrm{Supp}(\Xi')$.
Since the curve $R$ passes through the point $O_t$ at which the
pair $(X, D)$ is log canonical, $\epsilon_1\leqslant 1$. Since
$$
(D-\epsilon_1 R)\cdot R\leqslant D\cdot
R=\frac{6(2n+5)}{(3n+8)(6n+13)}<1,
$$
Lemma~\ref{lemma:handy-adjunction} implies that the point $P$
cannot belong to $R$.
\end{proof}

\part{Sporadic cases}

\section{Sporadic cases with $I=1$}
\label{section:index-1}

\begin{lemma}
\label{lemma:123510}
Let $X$ be a quasismooth
hypersurface of degree $10$ in $\mathbb{P}(1,2,3,5)$.
Then
$$
\mathrm{lct}\big(X\big)=\left\{%
\aligned
&1\ \text{if $C_x$ has an ordinary double point},\\%
&\frac{7}{10}\ \text{if $C_x$ has a non-ordinary double point}.\\%
\endaligned\right.%
$$
\end{lemma}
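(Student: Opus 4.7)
The plan is to invoke Theorem~\ref{theorem:main} and reduce the problem to three individual log canonical threshold computations. Since $I=1+2+3+5-10=1$ and $a_0=1\ne 2=a_1$, the theorem gives
$$
\lct(X)=\min\bigl\{\lct(X,C_x),\ \lct(X,\tfrac{1}{2}C_y),\ \lct(X,\tfrac{1}{3}C_z)\bigr\},
$$
so the strategy is to pin down $\lct(X,C_x)$, where all the interesting behaviour should occur, and to verify that the other two thresholds are at least $1$.

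First I would write down the equation of $C_x$ explicitly. The only degree-$10$ monomials in $y,z,t$ with weights $(2,3,5)$ are $t^2$, $yzt$, $y^2z^2$, and $y^5$, so $C_x\subset\mathbb{P}(2,3,5)$ has the form
$$
t^2+byzt+dy^2z^2+cy^5=0;
$$
the coefficient of $t^2$ is nonzero by quasismoothness of $X$ at $O_t$ (so I normalize it to $1$) and the coefficient $c$ of $y^5$ is nonzero by quasismoothness of $X$ at $O_y$ (since no other monomial contributes a nonvanishing partial derivative at $O_y$). A routine affine-chart inspection then shows $C_x$ misses $O_t$ and has $O_z$ as its unique singular point; here $O_z$ is a $\tfrac{1}{3}(1,1)$-quotient singularity of $X$.

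Next I would pass to the orbifold cover $\mathbb{C}^2_{y,t}\to X$ near $O_z$, on which $C_x$ pulls back to the plane curve $t^2+byt+dy^2+cy^5=0$. If $b^2\ne 4d$, the tangent cone $t^2+byt+dy^2$ splits into two distinct linear factors, so Lemma~\ref{lemma:basic-property}(3) together with Proposition~\ref{proposition:lc-by-finite-morphism} gives $\lct_{O_z}(X,C_x)=1$: this is the ordinary-double-point case. If $b^2=4d$, the substitution $s=t+by/2$ converts the equation to $s^2+cy^5=0$, an $A_4$-singularity with first Puiseux pair $(2,5)$, so Lemma~\ref{lemma:Igusa} yields $\lct_{O_z}(X,C_x)=\tfrac{1}{2}+\tfrac{1}{5}=\tfrac{7}{10}$: this is the non-ordinary case. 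In either case the value at $O_z$ is the global one, since $C_x$ is smooth elsewhere.

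Finally I would verify $\lct(X,\tfrac{1}{2}C_y)\ge 1$ and $\lct(X,\tfrac{1}{3}C_z)\ge 1$. Both $C_y$ and $C_z$ are smooth at each singular point of $X$ that they pass through, because the defining variable pulls back to part of a local coordinate system on the orbifold cover (again forced by the quasismoothness constraints on $f$); and at any isolated plane-curve singularity they might develop at a smooth point of $X$, the small coefficients $\tfrac{1}{2}$ and $\tfrac{1}{3}$ keep the local threshold above $1$ by Lemma~\ref{lemma:basic-property}. Taking the minimum of the three thresholds then delivers the dichotomy stated in the lemma. The main obstacle is confirming that $O_z$ is really the only singularity of $C_x$ and extracting the tangent-cone/Puiseux data on the orbifold cover in the two cases; once that is done, the computations for $C_y$ and $C_z$ are comparatively routine.
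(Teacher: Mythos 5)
Your local analysis of $C_x$ is sound: the equation $t^2+byzt+dy^2z^2+cy^5=0$, the identification of $O_z$ as the only singular point, and the dichotomy between the nodal tangent cone ($b^2\ne 4d$, threshold $1$) and the $A_4$-type singularity $s^2+cy^5=0$ with Puiseux pair $(2,5)$ giving $\tfrac12+\tfrac15=\tfrac{7}{10}$ all agree with what the paper asserts for $\lct(X,C_x)$. The problem is the first step. You cannot invoke Theorem~\ref{theorem:main} here: that theorem is not an independent input but is precisely what the lemmas of Parts~2 and~3 of the paper (this one included) are collectively proving. As the scheme of the proof makes explicit, each entry of The Big Table must be handled by a self-contained argument establishing the lower bound $\lct(X)\geqslant\lambda$; quoting the theorem to dispatch that bound for the case $(1,2,3,5)$ is circular, and the substantive content of the lemma --- that no effective $\mathbb{Q}$-divisor $D\sim_{\mathbb{Q}}-K_X$ is worse than $C_x$ --- is exactly what remains unproved in your write-up.

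The paper closes this gap with a short direct argument that your proposal is missing. Assuming $\lct(X)<\lct(X,C_x)$, one takes a divisor $D\sim_{\mathbb{Q}}-K_X$ with $(X,D)$ not log canonical at some point $P$, uses Lemma~\ref{lemma:convexity} to assume $C_x\not\subset\mathrm{Supp}(D)$, and applies Lemma~\ref{lemma:Carolina} to force $P\in C_x$. Then the intersection number $D\cdot C_x=\tfrac{10}{30}=\tfrac13$ contradicts $\mult_P(D)>1$ at a smooth point, and contradicts $D\cdot C_x\geqslant\tfrac{\mult_{O_z}(D)}{3}>\tfrac13$ at $O_z$ (via Proposition~\ref{proposition:lc-by-finite-morphism}). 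This also makes your computations for $C_y$ and $C_z$ unnecessary --- the paper never needs $\lct(X,\tfrac12 C_y)$ or $\lct(X,\tfrac13 C_z)$, and your justification that these exceed $1$ ("the small coefficients keep the local threshold above $1$") is in any case too vague to stand on its own. To repair the proposal, replace the appeal to Theorem~\ref{theorem:main} with the convexity--Carolina--intersection argument above.
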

\begin{proof} The surface $X$ is singular only at the point $O_{z}$.
The curve $C_{x}$ is reduced and irreducible. Moreover, we have
$$
\mathrm{lct}\big(X, C_{x}\big)=\left\{%
\aligned
&1\ \text{if the curve $C_x$ has an ordinary double point at the point $O_{z}$},\\%
&\frac{7}{10}\ \text{if the curve $C_x$ has a non-ordinary double point at the point $O_{z}$}.\\%
\endaligned\right.%
$$

Suppose that $\lct\big(X\big)<\mathrm{lct}\big(X, C_{x}\big)$. Then there is
 an  effective $\Q$-divisor $D\qlineq -K_X$
such that the log pair
$(X, D)$ is not log canonical at some point $P\in X$.  By Lemma~\ref{lemma:convexity} we may assume that the support of $D$ does not contain the curve $C_x$. Also Lemma~\ref{lemma:Carolina} shows that $P\in
C_{x}$.
However,  we obtain absurd inequalities
$$
\frac{1}{3}=D\cdot C_{x}\geqslant \left\{%
\aligned
&\mult_{P}\big(D\big)>1\ \text{ if $P\ne O_{z}$},\\%
&\frac{\mult_{P}\big(D\big)}{3}>\frac{1}{3}\ \text{ if $P=O_{z}$}.\\%
\endaligned\right.
$$
Therefore, $\lct\big(X\big)=\mathrm{lct}\big(X, C_{x}\big)$.
\end{proof}

\begin{lemma}
\label{lemma:1357}
Let $X$ be the quasismooth
hypersurface defined by a quasihomogeneous polynomial $f(x,y,z,t)$  of degree $15$ in $\mathbb{P}(1,3,5,7)$.
Then
$$
\mathrm{lct}\big(X\big)=\left\{%
\aligned
&1\ \text{ if $f(x,y,z,t)$ contains  $yzt$},\\%
&\frac{8}{15}\ \text{ if $f(x,y,z,t)$ does not contain  $yzt$}.\\%
\endaligned\right.%
$$
\end{lemma}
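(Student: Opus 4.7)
The plan is to follow the general scheme of Section~5 with $I = 1$: I will compute $\lambda := \lct(X, C_x)$ and then show $\lct(X) = \lambda$ by a contradiction argument. First I set up the structure of $X$. The only monomials in $y, z, t$ of weighted degree $15$ are $y^5$, $z^3$, and $yzt$, so quasismoothness of $X$ at $O_y$ and $O_z$ forces both $y^5$ and $z^3$ to appear in $f$ with nonzero coefficients (in particular $O_y, O_z \notin X$), and quasismoothness at $O_t$ forces $xt^2$ into $f$. Hence $X$ has a single singular point $O_t$, of type $\frac{1}{7}(3,5)$, and $f|_{x=0} = \alpha y^5 + \beta z^3 + \gamma yzt$ with $\alpha\beta \neq 0$, where $\gamma \neq 0$ exactly when $yzt \in f$.

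To compute $\lambda$ I work in the orbifold chart $\pi\colon \tilde U \to U$ at $O_t$, in which $\mathbb{Z}_7$ acts on $(y,z)$ by weights $(3,5)$ and $\pi^* C_x$ is cut out to leading order by $\alpha y^5 + \beta z^3 + \gamma yz$. Lemma~\ref{lemma:basic-property}(3) applied when $\gamma \neq 0$ (tangent cone $yz=0$, two distinct lines) gives $\lct_Q(\tilde U, \pi^* C_x) = 1$; Lemma~\ref{lemma:Igusa} applied when $\gamma = 0$ (a cusp with first Puiseux pair $(3,5)$) gives $\lct_Q(\tilde U, \pi^* C_x) = \tfrac{1}{3} + \tfrac{1}{5} = \tfrac{8}{15}$. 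A Jacobian check on $g = \alpha y^5 + \beta z^3 + \gamma yzt$ in $\mathbb{P}(3,5,7)$ shows that the only singularity of $C_x$ is at $O_t$, so by Proposition~\ref{proposition:lc-by-finite-morphism} the value $\lambda$ equals $1$ or $\tfrac{8}{15}$ respectively, and hence $\lct(X) \leq \lambda$.

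For the reverse inequality, suppose for contradiction that there is an effective $\mathbb{Q}$-divisor $D \sim_\mathbb{Q} -K_X$ such that $(X, \lambda D)$ is not log canonical at some $P$. By Lemma~\ref{lemma:convexity} I may assume $C_x \not\subset \mathrm{Supp}(D)$, so $D \cdot C_x = \tfrac{15}{1 \cdot 3 \cdot 5 \cdot 7} = \tfrac{1}{7}$. Lemma~\ref{lemma:Carolina} with $k = 7$ applies (the pairs $(x^7, x^4y)$, $(x^7, x^2z)$, $(x^7, t)$ witness all three monomial hypotheses) and gives $\mathrm{mult}_P(D) \leq \tfrac{1 \cdot 7 \cdot 15}{105} = 1$ at every smooth $P \in X \setminus C_x$, which by Lemma~\ref{lemma:multiplicity} forces log canonicity there; and at a smooth point of $X$ on $C_x$ the B\'{e}zout bound $\mathrm{mult}_P(D) \leq D \cdot C_x = \tfrac{1}{7}$ does the same. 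So $P = O_t$.

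The decisive step is to rule out $P = O_t$. At the quotient singularity of index $7$ the intersection inequality reads $\tfrac{1}{7} = D \cdot C_x \geq \tfrac{1}{7}\,\mathrm{mult}_{O_t}(D)\,\mathrm{mult}_{O_t}(C_x)$, and from the local computation above $\mathrm{mult}_{O_t}(C_x)$ equals $2$ when $\gamma \neq 0$ and $3$ when $\gamma = 0$. Hence $\mathrm{mult}_{O_t}(D) \leq \tfrac{1}{2}$ in the first case and $\leq \tfrac{1}{3}$ in the second. By Proposition~\ref{proposition:lc-by-finite-morphism} combined with Lemma~\ref{lemma:multiplicity} on the smooth cover, log canonicity of $(X, \lambda D)$ at $O_t$ follows from $\lambda\,\mathrm{mult}_{O_t}(D) \leq 1$; both cases satisfy this ($1 \cdot \tfrac{1}{2} < 1$ and $\tfrac{8}{15} \cdot \tfrac{1}{3} = \tfrac{8}{45} < 1$), producing the required contradiction. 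The only real subtlety is the local case split at $O_t$, determined by whether $\gamma$ vanishes; once $\lambda$ is read off from this local geometry, a single intersection with $C_x$ is already sharp enough and no weighted blow-up is needed.
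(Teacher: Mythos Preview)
Your proof is correct and follows essentially the same scheme as the paper: compute $\lambda=\lct(X,C_x)$, apply Lemma~\ref{lemma:Carolina} to force $P\in C_x$, and then bound $\mult_P(D)$ via the intersection $D\cdot C_x=\tfrac{1}{7}$. The only difference is that at $O_t$ you use the refined values $\mult_{O_t}(C_x)\in\{2,3\}$ to get $\lambda\,\mult_{O_t}(D)\leq 1$, whereas the paper works with $(X,D)$ (rather than $(X,\lambda D)$) not log canonical, uses only the trivial bound $\mult_{O_t}(C_x)\geq 1$, and obtains the contradiction $\tfrac{1}{7}=D\cdot C_x\geq \tfrac{\mult_{O_t}(D)}{7}>\tfrac{1}{7}$ directly---so your case split on $\gamma$ at the final step is unnecessary, but harmless.
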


\begin{proof} The surface $X$ is singular only at the point $O_t$.
The curve $C_{x}$ is reduced and irreducible. It is easy to check
$$
\mathrm{lct}\big(X, C_{x}\big)=\left\{%
\aligned
&1\ \text{ if $f(x,y,z,t)$ contains  $yzt$},\\%
&\frac{8}{15}\ \text{ if $f(x,y,z,t)$ does not contain  $yzt$}.\\%
\endaligned\right.%
$$
The proof is exactly the same as  the proof of Lemma~\ref{lemma:123510}. The contradictory inequalities

$$
\frac{1}{7}=D\cdot C_{x}\geqslant \left\{%
\aligned
&\mult_{P}\big(D\big)>1 \ \text{ if $P\ne O_{t}$},\\%
&\frac{\mult_{P}\big(D\big)}{7}> \frac{1}{7}\ \text{ if $P=O_{t}$}.\\%
\endaligned\right.%
$$
complete the proof.
\end{proof}

\begin{lemma}
\label{lemma:1358} Let $X$ be a quasismooth
hypersurface of degree $16$ in $\mathbb{P}(1,3,5,8)$. Then
$\mathrm{lct}(X)=1$.
\end{lemma}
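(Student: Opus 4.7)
This surface has index $I = 1$, so $-K_X \sim_{\Q} \mathcal{O}_X(1)$ and $(-K_X)^2 = \frac{2}{15}$. A direct look at the defining polynomial shows that $X$ is singular only at $O_y$ (of type $\frac{1}{3}(1,1)$) and $O_z$ (of type $\frac{1}{5}(1,1)$); the point $O_t$ does not lie on $X$ because quasismoothness at $O_t$ forces the coefficient $c$ of $t^2$ in $f$ to be nonzero.

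The plan is to follow the standard scheme with $C_x$ as the auxiliary divisor. First I would analyse $C_x$. The only monomials of weight $16$ in $y$, $z$, $t$ alone are $t^2$, $yzt$ and $y^2z^2$, so
$$
f(0,y,z,t) \;=\; c\,t^2 \;+\; b\,yzt \;+\; \alpha\,y^2z^2
$$
with $c \ne 0$. A short local computation shows that the discriminant $b^2-4c\alpha$ cannot vanish: quasismoothness at $O_y$ and $O_z$ forces the coefficients $\lambda$ of $xy^5$ and $\mu$ of $xz^3$ to be nonzero, and if $b^2 = 4c\alpha$ then $\nabla f = (\lambda y^5 + \mu z^3, 0, 0, 0)$ along the entire curve $\{x=0,\, t = -\frac{b}{2c}yz\}\subset X$, which vanishes at some point with $yz \ne 0$ and contradicts quasismoothness. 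Hence $C_x = L_1 + L_2$, where $L_1$ and $L_2$ are distinct smooth rational curves, each passing through both $O_y$ and $O_z$ and meeting transversally in each orbifold chart. The standard formulas then yield
$$
-K_X \cdot L_i = \frac{1}{15}, \quad L_1 \cdot L_2 = \frac{1}{3}+\frac{1}{5} = \frac{8}{15}, \quad L_i^2 = -\frac{7}{15}.
$$
Consequently $\lct(X, C_x) = 1$, and hence $\lct(X) \le 1$.

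For the reverse inequality I would suppose $\lct(X) < 1$ and pick an effective $\Q$-divisor $D \sim_{\Q} -K_X$ and a point $P \in X$ at which $(X,D)$ is not log canonical. By Lemma~\ref{lemma:convexity} I may assume $L_1 \not\subset \Supp(D)$, so that $D \cdot L_1 = \frac{1}{15}$; since $5 \cdot \frac{1}{15} \le 1$, Lemma~\ref{lemma:handy-adjunction} shows that $(X,D)$ is log canonical along $L_1$. Writing $D = \nu L_2 + \Omega$ with $L_2 \not\subset \Supp(\Omega)$, the inequality $\nu\,L_1 \cdot L_2 \le D \cdot L_1$ forces $\nu \le \frac{1}{8}$, and then $\Omega \cdot L_2 = \frac{1+7\nu}{15} \le \frac{1}{8}$; since $5 \cdot \frac{1}{8} \le 1$, Lemma~\ref{lemma:handy-adjunction} excludes every point of $L_2$ as well. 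Hence $P \notin C_x$, and because $\{O_y, O_z\} \subset C_x$, the point $P$ must be smooth.

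Finally I would apply Lemma~\ref{lemma:Carolina} with $k = 7$: the space $H^0(\P, \mathcal{O}_{\P}(7))$ contains the pairs $\{x^7, x^4y\}$ and $\{x^7, x^2z\}$, but no two monomials of the form $x^{\mu}t^{\nu}$; however, since $c \ne 0$ the projection $\psi \colon X \dashrightarrow \P(1,3,5)$ has no contracted curves (each fibre is cut out by a quadratic in $t$ with nonzero leading coefficient), so the lemma still applies and yields
$$
\mult_P(D) \;\le\; \frac{7 \cdot 16}{1 \cdot 3 \cdot 5 \cdot 8} \;=\; \frac{14}{15} \;<\; 1,
$$
contradicting the failure of log canonicity at the smooth point $P$. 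The main obstacle is the structural step: pinning down that $C_x$ is always reducible with two transverse components via the quasismoothness-based discriminant argument. Once this is in hand, every subsequent step is a routine application of Lemmas~\ref{lemma:convexity}, \ref{lemma:handy-adjunction} and~\ref{lemma:Carolina}, following the pattern of Lemmas~\ref{lemma:I-2-infinite-series-3} and~\ref{lemma:I-2-infinite-series-4}.
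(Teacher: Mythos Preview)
Your proof is correct and follows essentially the same approach as the paper: split $C_x=L_1+L_2$, compute the same intersection numbers, use Lemma~\ref{lemma:convexity} to drop $L_1$ from $\Supp(D)$, exclude points of $C_x$ via Lemma~\ref{lemma:handy-adjunction}, and handle smooth points off $C_x$ via Lemma~\ref{lemma:Carolina}. The only cosmetic differences are the order (the paper invokes Lemma~\ref{lemma:Carolina} first and then works on $L_2$, while you do the reverse) and the bound on the $L_2$-coefficient (the paper uses $m\le 1$ from log canonicity at $O_y$, whereas you obtain the sharper $\nu\le \tfrac{1}{8}$ from $\nu\,L_1\cdot L_2\le D\cdot L_1$); both routes close immediately. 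Your discriminant argument showing $b^2-4c\alpha\ne 0$ from quasismoothness is a welcome addition, since the paper simply asserts that $C_x$ has two distinct components.
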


\begin{proof}
The surface $X$ is singular only at the points $O_{y}$ and $O_z$.
The former is  a singular point of type $\frac{1}{3}(1,1)$ and the
latter is of type $\frac{1}{5}(1,1)$.

The curve $C_{x}$ consists of two distinct irreducible curves $L_1$ and $L_2$.  It is easy to see that $\lct(X, C_x)=1$.

Suppose that $\lct\big(X\big)<1$. Then there is
 an  effective $\Q$-divisor $D\qlineq -K_X$
such that the log pair
$(X, D)$ is not log canonical at some point $P\in X$.  By Lemma~\ref{lemma:convexity} we may assume that the support of $D$ does not contain the curve $L_1$ without loss of generality. Moreover,  Lemma~\ref{lemma:Carolina} implies $P\in C_{x}$.

We have
$$
D\cdot L_{1}=D\cdot L_{2}=\frac{1}{15},
$$
and $L_{1}\cap L_{2}=\{O_{y}, O_{z}\}$. We also have
$$
L_{1}^2=L_{2}^2=-\frac{7}{15}, \ \ L_{1}\cdot L_{2}=\frac{8}{15}.
$$

Since $5D\cdot L_1=\frac{1}{3}$, the point $P$ cannot belong to $L_1$.
Therefore, the point $P$ is a smooth point on $L_2$. Put
$$
D=mL_{2}+\Omega,
$$
where $\Omega$ is an effective $\mathbb{Q}$-divisor such that
$L_{2}\not\subset\mathrm{Supp}(\Omega)$. Since the log pair $(X,D)$ is log canonical at $O_y$, we must have $m\leqslant 1$. Then it follows from Lemma~\ref{lemma:handy-adjunction} that
$$
1<\Omega\cdot
L_{2}=\big(D-mL_{2}\big)\cdot L_{2}=\frac{1+7m}{15}.
$$
This gives us $m>2$. This is a contradiction. Consequently, $\lct(X)=1$.
\end{proof}

\begin{lemma}
\label{lemma:2359} Let $X$ be a quasismooth hypersurface of degree
$18$ in $\mathbb{P}(2,3,5,9)$.  Then
$$
\mathrm{lct}\big(X\big)=\left\{%
\aligned
&2\ \text{if $C_y$ has a tacnodal point},\\%
&\frac{11}{6}\ \text{if $C_y$ has no tacnodal points}.\\%
\endaligned\right.%
$$
\end{lemma}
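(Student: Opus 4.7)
The plan is to apply Theorem~\ref{theorem:main}. Since $I = 1$ and $a_0 = 2 \ne 3 = a_1$, we have
\[
\lct(X) \;=\; \min\Bigl\{\lct\bigl(X,\tfrac{1}{2}C_x\bigr),\ \lct\bigl(X,\tfrac{1}{3}C_y\bigr),\ \lct\bigl(X,\tfrac{1}{5}C_z\bigr)\Bigr\},
\]
so it suffices to compute the three quantities on the right. Listing weight-$18$ monomials in $x,y,z,t$ and imposing quasismoothness, the coefficients of $x^9$ and of $yz^3$ in the defining polynomial $f$ must be nonzero, and the coefficient of $t^2$ is forced to be nonzero as well. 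Consequently $O_x,O_t \notin X$, while $O_z \in X$ is a singular point of type $\tfrac{1}{5}(1,2)$. The intersection $L_{xz} \cap X$ cuts out (generically) two $\tfrac{1}{3}(1,1)$ points $P_1,P_2$, and if $O_y \in X$ the quasismoothness condition forces it to be a $\tfrac{1}{3}(2,2)$ point; none of these extra singular points will affect the three computations below.

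I first dispose of $C_x$ and $C_z$. The curve $C_x \subset \P(3,5,9)$ is given by $a_3 y^6 + a_8 y z^3 + a_{11} y^3 t + a_9 t^2$; the nonvanishing of $a_8$ makes $\partial/\partial y$ of this polynomial nonzero at $O_z$, so $C_x$ is smooth as an orbifold curve at $O_z$. At $P_1,P_2$ the curve $C_x$ is locally defined by $x = 0$ in the $\Z/3$-orbifold chart, hence is smooth, and on the smooth locus of $X$ one verifies directly from the Jacobian that $C_x$ has no further singularities. So $\lct(X,C_x) = 1$ by Lemma~\ref{lemma:basic-property}(3) combined with Proposition~\ref{proposition:lc-by-finite-morphism}, giving $\lct(X,\tfrac{1}{2}C_x) = 2$. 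An analogous argument applied to $C_z \subset \P(2,3,9)$, which avoids $O_z$ entirely and is smooth through $P_1,P_2$, yields $\lct(X,C_z) = 1$ and hence $\lct(X,\tfrac{1}{5}C_z) = 5$.

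The central step is $\lct(X,\tfrac{1}{3}C_y)$. The curve $C_y$ is cut in $\P(2,5,9)$ by
\[
a_0 x^9 + a_4 x^4 z^2 + a_{12} x^2 z t + a_9 t^2 = 0,
\]
and a Jacobian calculation shows that the only singular point of $C_y$ lies at $O_z$. In the $\Z/5$-orbifold chart at $O_z$, using $\partial f/\partial y|_{O_z} = a_8 \ne 0$ to solve $y = g(x,t)$ implicitly from $f = 0$, the pullback of $C_y$ has local equation
\[
F(x,t) \;=\; a_4 x^4 + a_{12} x^2 t + a_9 t^2 + a_0 x^9 + (\text{higher order}).
\]
Completing the square in $t$ with $s = t + \tfrac{a_{12}}{2 a_9}x^2$ turns the leading part into $a_9 s^2 - \tfrac{\Delta}{4 a_9}x^4$, where $\Delta := a_{12}^2 - 4 a_4 a_9$. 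If $\Delta \ne 0$ then the singularity is analytically the tacnode $s^2 = c x^4$ with $c\ne 0$; a single blow-up of the origin shows the proper transform meets the exceptional curve at one point where three smooth lines cross, yielding $\lct_O(\C^2,\{F=0\}) = 3/4$. If $\Delta = 0$ then $F = a_9 s^2 + a_0 x^9 + \dots$ is an $A_8$ cusp, and Lemma~\ref{lemma:Igusa} with first Puiseux pair $(m,n) = (2,9)$ gives $\lct_O = \tfrac{1}{2} + \tfrac{1}{9} = 11/18$. By Proposition~\ref{proposition:lc-by-finite-morphism} these equal $\lct_{O_z}(X,C_y)$, so $\lct(X,\tfrac{1}{3}C_y) = 9/4$ in the tacnodal case and $11/6$ in the non-tacnodal case.

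Combining gives $\lct(X) = \min\{2,\,9/4,\,5\} = 2$ when $C_y$ has a tacnodal point, and $\lct(X) = \min\{2,\,11/6,\,5\} = 11/6$ otherwise, as asserted. The main technical obstacles I anticipate are (i) showing cleanly that $O_z$ is the only singular point of $C_y$ on $X$, so the tacnode/cusp dichotomy is governed entirely by the single scalar $\Delta$, and (ii) computing the lct of the tacnode directly via the blow-up, since the second formula of Lemma~\ref{lemma:Igusa} does not apply in the form needed to two smooth branches sharing a common tangent line.
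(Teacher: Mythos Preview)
Your argument is circular. Theorem~\ref{theorem:main} is not an independent result that one may invoke on individual families: it is the \emph{summary} of the case-by-case computations carried out in Parts~II and~III, of which Lemma~\ref{lemma:2359} is one instance. Citing Theorem~\ref{theorem:main} to prove Lemma~\ref{lemma:2359} therefore assumes what is to be shown.

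Concretely, your Step~1 computations of $\lct(X,\tfrac{1}{2}C_x)$, $\lct(X,\tfrac{1}{3}C_y)$ and $\lct(X,\tfrac{1}{5}C_z)$ are correct and match the paper; they establish the upper bound $\lct(X)\le\epsilon:=\min\{\lct(X,\tfrac{1}{2}C_x),\lct(X,\tfrac{1}{3}C_y)\}$. What is entirely missing is the lower bound. The paper supposes $\lct(X)<\epsilon$, takes an effective $D\sim_{\Q}-K_X$ with $(X,\epsilon D)$ not log canonical at some $P$, and then rules out every possible location of $P$: using Lemma~\ref{lemma:convexity} to assume $C_x,C_y\not\subset\Supp(D)$, intersection-number estimates to exclude $O_z$ and all points of $C_x\cup C_y$, and finally, for $P$ outside $C_x\cup C_y$, a pencil argument in $|-5K_X|$ (the member through $P$ is either irreducible or splits as $C_1+C_2$ with known intersection numbers, and Lemma~\ref{lemma:handy-adjunction} gives a contradiction in each case). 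None of this appears in your proposal. This is the substance of the lemma, not a formality, and you need to supply it.
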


\begin{proof}
The surface $X$ is singular at the point $O_{z}$. This is a
singular point of type $\frac{1}{5}(1,2)$. The
surface $X$ also has two singular points $O_{1}$ and $O_{2}$ that
are cut out by the equations $x=z=0$. These are of type $\frac{1}{3}(1,1)$
on the surface $X$.

The curves $C_{x}$ and $C_{y}$ are reduced and irreducible. The
curve $C_y$ is always singular at the point $O_z$. We can see
$\lct(X, C_{x})=1$ and
$$
\mathrm{lct}\big(X, C_{y}\big)=\left\{%
\aligned
&\frac{3}{4}\ \text{if $C_y$ has a tacnodal singularity at the point $O_{z}$},\\%
&\frac{11}{18}\ \text{if $C_y$ has a non-tacnodal singularity at the point $O_{z}$}.\\%
\endaligned\right.%
$$
Therefore, if $C_y$ has a tacnodal singularity at the point
$O_{z}$, then
$$
2=\lct\left(X, \frac{1}{2}C_{x}\right)<\lct\left(X,
\frac{1}{3}C_{y}\right)=\frac{9}{4}.
$$
If $C_y$ has a non-tacnodal singularity at the point
$O_{z}$, then
$$
2=\lct\left(X, \frac{1}{2}C_{x}\right)>\lct\left(X,
\frac{1}{3}C_{y}\right)=\frac{11}{6}.
$$

Let $\epsilon=\min\left\{\lct\left(X, \frac{1}{2}C_{x}\right),  \lct\left(X,
\frac{1}{3}C_{y}\right)\right\}$.
Then $\lct(X)\leqslant\epsilon$.

Suppose that
$\lct(X)<\epsilon$. Then there is an effective $\Q$-divisor
$D\qlineq -K_X$ such that the log pair $(X,\epsilon D)$ is not log
canonical at some point $P\in X$. By
Lemma~\ref{lemma:convexity}, we may assume that the support
of the divisor $D$  contains neither the curve $C_{x}$ nor the curve
$C_{y}$.

The inequalities
\[\mult_{O_z}(\epsilon D)\leqslant
\frac{\epsilon}{2}\mult_{O_z}(D)\mult_{O_z}(C_y)\leqslant 5D\cdot
C_y=1\] imply that the point $P$ cannot be the point $O_z$. If the
point $P$ is a smooth point on $C_y$, then we have obtain a
contradictory inequalities
$$
\frac{1}{5}=D\cdot C_{y}\geqslant
\mult_{P}\left(D\right)>\frac{1}{\epsilon}\geqslant
\frac{1}{2}.
$$
Therefore, the point $P$ is located in the outside of the curve
$C_y$.

Suppose that $P\in C_{x}$. Then we obtain the following
contradictory inequalities
$$
\frac{2}{15}=D\cdot C_{x}\geqslant \left\{%
\aligned
&\mult_{P}\big(D\big)>\frac{1}{2}\ \text{ if}\ P\in X\setminus\mathrm{Sing}(X),\\%
&\frac{\mult_{P}\big(D\big)}{3}>\frac{1}{6}\ \text{ if}\ P=O_{1}\ \text{or}\ P=O_{2}.%
\endaligned\right.
$$
Therefore, $P\not\in C_{x}\cup C_{y}$. Then $P$ is a smooth
point. There is a unique curve $C$ in the pencil $|-5K_{X}|$
passing through the point $P$. The curve $C$ is a hypersurface in
$\mathbb{P}(1,2,3)$ of degree $6$ such that the natural projection
$$
C\longrightarrow\mathbb{P}(1,2)\cong\mathbb{P}^{1}
$$
is a double cover. Thus, we have $\mult_{P}(C)\leqslant
2$. In particular, the log pair $(X, \frac{\epsilon}{5}C)$ is log
canonical. Thus, it follows from Lemma~\ref{lemma:convexity}
that we may assume that the support of the divisor $D$ does not
contain one of the irreducible components of the curve $C$. Then
$$
\frac{1}{3}=D\cdot C\geqslant
\mult_{P}\big(D\big)>\frac{1}{2}
$$
in the case when $C$ is irreducible (but possibly non-reduced).
Therefore, the curve $C$ must be reducible and reduced. Then
$$
C=C_{1}+C_{2},
$$
where $C_{1}$ and $C_{2}$ are irreducible and reduced smooth
rational curves such that
$$
C_{1}^2=C_{2}^2=-\frac{2}{3}, \ \ C_{1}\cdot C_{2}=\frac{3}{2}.
$$
 Without loss of generality we may
assume that $P\in C_{1}$. Put
$$
D=mC_{1}+\Omega,
$$
where $\Omega$ is an effective $\mathbb{Q}$-divisor such that
$C_{1}\not\subset\mathrm{Supp}(\Omega)$. If $m\ne 0$, then
$C_{2}\not\subset\mathrm{Supp}(\Omega)$ and
$$
\frac{1}{6}=D\cdot C_{2}=\big(mC_{1}+\Omega\big)\cdot C_{2}\geqslant  mC_{1}\cdot C_{2}=\frac{3m}{2},%
$$
and hence $m\leqslant \frac{1}{9}$. Thus, it follows from
Lemma~\ref{lemma:handy-adjunction} that
$$
\frac{1+4m}{6}=\big(D-mC_{1}\big)\cdot C_{1}=\Omega\cdot
C_{1}>\frac{1}{\epsilon}\geqslant \frac{1}{2}.
$$
Therefore, $m>\frac{1}{2}$. But $m\leqslant \frac{1}{9}$.
Consequently, $\lct(X)=\epsilon$.

\end{proof}

\begin{lemma}
\label{lemma:3355} Let $X$ be a quasismooth hypersurface of degree
$15$ in $\mathbb{P}(3,3,5,5)$.  Then $\lct(X)=2$.
\end{lemma}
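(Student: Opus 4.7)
The plan is to make the geometry of $X$ completely explicit, and then run the same kind of reducible-pencil argument that appeared for $\P(3,3,4,4)$ in Lemma~\ref{lemma:I-2-infinite-series-1-n-1}. Counting weighted monomials of degree $15$ in $\P(3,3,5,5)$ shows that only monomials of the form $x^{a}y^{b}$ with $a+b=5$ and $z^{c}t^{d}$ with $c+d=3$ occur, so after a linear change of $(x,y)$ and $(z,t)$ one may write
\[
X:\ \prod_{i=1}^{5}\big(\alpha_{i}x+\beta_{i}y\big)=\prod_{j=1}^{3}\big(\gamma_{j}z+\delta_{j}t\big),
\]
with five distinct $[\alpha_{i}:\beta_{i}]\in\P^{1}$ and three distinct $[\gamma_{j}:\delta_{j}]\in\P^{1}$. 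The only singularities of $X$ are five points $P_{i}=\{\alpha_{i}x+\beta_{i}y=z=t=0\}$ of type $\tfrac{1}{3}(1,1)$ and three points $Q_{j}=\{x=y=\gamma_{j}z+\delta_{j}t=0\}$ of type $\tfrac{1}{5}(1,1)$. The only reducible members of $|\O_{X}(3)|$ and $|\O_{X}(5)|$ are
\[
C_{i}=L_{i1}+L_{i2}+L_{i3}\quad\text{and}\quad B_{j}=L_{1j}+L_{2j}+L_{3j}+L_{4j}+L_{5j},
\]
where $L_{ij}=\{\alpha_{i}x+\beta_{i}y=\gamma_{j}z+\delta_{j}t=0\}\cap X$; two distinct $L_{ij}$ meet only at $P_{i}$ (if they share the first index) or at $Q_{j}$ (if they share the second). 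A routine calculation gives $-K_{X}\cdot L_{ij}=\tfrac{1}{15}$, $L_{ij}^{2}=-\tfrac{7}{15}$, $L_{ij}\cdot L_{ik}=\tfrac{1}{3}$ for $k\ne j$, and $L_{ij}\cdot L_{kj}=\tfrac{1}{5}$ for $k\ne i$.

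For the upper bound I would show $\lct(X,\tfrac{1}{3}C_{i})=2$, which forces $\lct(X)\le 2$. Lifting to the $\mu_{3}$-orbifold cover at $P_{i}$, the divisor $\tfrac{1}{3}C_{i}$ pulls back to $\tfrac{1}{3}(\ell_{1}+\ell_{2}+\ell_{3})$ for three distinct lines through the origin in $\C^{2}$, and Lemma~\ref{lemma:basic-property}(3) together with Proposition~\ref{proposition:lc-by-finite-morphism} identifies $\lct_{P_{i}}(X,\tfrac{1}{3}C_{i})=3\cdot\tfrac{2}{3}=2$; elsewhere the coefficient $\tfrac{1}{3}<1$ causes no trouble. (The same calculation with $\tfrac{1}{5}B_{j}$ also gives $\lct(X,\tfrac{1}{5}B_{j})=2$ and will be used below.)

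For the reverse inequality I would assume $\lct(X)<2$ and choose an effective $\Q$-divisor $D\qlineq -K_{X}$ so that $(X,2D)$ fails to be log canonical at some point $P$. To rule out $P=P_{i}$, note that $(X,\tfrac{2}{3}C_{i})$ is log canonical and $\tfrac{2}{3}C_{i}\qlineq 2D$, so Lemma~\ref{lemma:convexity} supplies $D^{\star}\qlineq 2D$ with $L_{ij}\not\subset\Supp(D^{\star})$ for some $j$ and $(X,D^{\star})$ still non-log-canonical at $P_{i}$. Applying Lemma~\ref{lemma:handy-adjunction} with $\lambda=1$, $r=3$ and $C=L_{ij}$ gives $r\lambda(D^{\star}\cdot L_{ij})=3\cdot\tfrac{2}{15}=\tfrac{2}{5}\le 1$, which forces log canonicity at $P_{i}$ \t a contradiction. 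The point $Q_{j}$ is treated in the same way via $\tfrac{2}{5}B_{j}$ and the check $5\cdot\tfrac{2}{15}=\tfrac{2}{3}\le 1$.

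It then remains to exclude smooth points, and here the geometry splits. If $P$ lies on none of the curves $L_{ij}$, then the unique $C_{P}\in|\O_{X}(3)|$ through $P$ is irreducible and smooth at $P$ (checked from the affine model $cy^{5}=g_{3}(z,t)$ in the chart $\alpha x+\beta y=0$ with $c\ne 0$), so Lemma~\ref{lemma:convexity} applied with $\tfrac{2}{3}C_{P}$ produces $D^{\star}\qlineq 2D$ with $C_{P}\not\subset\Supp(D^{\star})$, and then $\mult_{P}(D^{\star})\le D^{\star}\cdot C_{P}=\tfrac{2}{5}$ contradicts $\mult_{P}(D^{\star})>1$. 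If instead $P$ lies on a (necessarily unique) $L_{ij}$, I would apply Lemma~\ref{lemma:convexity} with $\tfrac{2}{3}C_{i}$ to obtain $D^{\star}\qlineq 2D$ avoiding some $L_{ij'}$ with $j'\in\{1,2,3\}$. The case $j'=j$ contradicts $\mult_{P}(D^{\star})\le D^{\star}\cdot L_{ij}=\tfrac{2}{15}<1$ immediately. The delicate case is $j'\ne j$: writing $D^{\star}=mL_{ij}+\Omega$ with $L_{ij}\not\subset\Supp(\Omega)$, the inequality $\Omega\cdot L_{ij'}\ge 0$ combined with $L_{ij}\cdot L_{ij'}=\tfrac{1}{3}$ forces $m\le\tfrac{2}{5}$, and then Lemma~\ref{lemma:handy-adjunction} with $\lambda=1$ and $C=L_{ij}$ yields $D^{\star}\cdot L_{ij}-mL_{ij}^{2}=\tfrac{2+7m}{15}\le\tfrac{24}{75}<1$, contradicting the failure of log canonicity at $P$. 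This last subcase, where one has to absorb the $L_{ij}$-component of $D^{\star}$ using the self-intersection $L_{ij}^{2}=-\tfrac{7}{15}$, is the main technical point of the argument.
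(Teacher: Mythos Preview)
Your proof is correct and follows essentially the same approach as the paper's own argument: the explicit factored equation, the curves $L_{ij}$, the upper bound via $\lct(X,\tfrac{1}{3}C_i)=2$, the pencil $|\O_X(3)|$ to handle smooth points off the $L_{ij}$, and the adjunction bound $\tfrac{2+7m}{15}$ on a component $L_{ij}$ with $m\le\tfrac{2}{5}$ for the remaining case. The only cosmetic difference is that you carry around $D^\star\qlineq 2D$ (so $\lambda=1$ in Lemma~\ref{lemma:handy-adjunction}) while the paper keeps $D\qlineq -K_X$ and uses $\lambda=2$, which shifts all the displayed constants by a factor of $2$ but is otherwise the identical computation.
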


\begin{proof}
The surface $X$ has five singular points $O_{1},\ldots,O_{5}$ of
type $\frac{1}{3}(1,1)$. They are cut out  by the equations
$z=t=0$. The surface also has three singular points
$Q_{1},Q_{2},Q_{3}$ of type $\frac{1}{5}(1,1)$. These three points
are cut out by the equations $x=y=0$.

%The surface $X$ is exceptional by \cite{IshiiPr01}.

Let $C_{i}$ be the curve in the pencil $|-3K_{X}|$ passing through
the point $O_i$, where $i=1,\ldots,5$. The curve $C_i$ consists of
three reduced and irreducible smooth rational curves
$$
C_{i}=L^{i}_{1}+L^{i}_{2}+L^{i}_{3}.
$$
The curve $L^i_j$ contains the point $Q_j$. Furthermore,
$L^{i}_{1}\cap L^{i}_{2}\cap L^{i}_{3}=\{O_{i}\}$. We see that
$$
-K_{X}\cdot L^{i}_{j}=\frac{1}{15}, \ \ \left(L^{i}_j\right)^2 =-\frac{7}{15},
\ \ L^{i}_{j}\cdot L^{i}_{k}=\frac{1}{3}
$$
where $j\ne k$.

Note that $\lct(X, C_i)=\frac{2}{3}$. Thus $\lct(X)\leqslant 2$.

Suppose that $\lct(X)<2$. Then there is an effective $\Q$-divisor
$D\qlineq -K_X$ such that the log pair $(X,2D)$ is not log
canonical at some point $P\in X$. Then,
$\mult_{P}(D)>\frac{1}{2}$.

Suppose that $P\not\in C_{1}\cup C_{2}\cup C_{3}\cup C_{4}\cup
C_{5}$. Then $P$ is a smooth point of $X$. There is a unique curve
$C\in |-3K_{X}|$ passing through point $P$. Then $C$ is different
from the curves $C_{1},\ldots,C_{5}$ and hence $C$ is irreducible.
Furthermore, the log pair $(X,C)$ is log canonical. Thus, it
follows from Lemma~\ref{lemma:convexity} that we may assume that
$C\not\subset\mathrm{Supp}(D)$. Then we obtain an absurd
inequality
$$
\frac{1}{5}=D\cdot C\geqslant
\mult_{P}\big(D\big)>\frac{1}{2},
$$
since the log pair $(X,2D)$ is not log canonical at the point $P$.
Therefore, $P\in C_{1}\cup C_{2}\cup C_{3}\cup C_{4}\cup C_{5}$.
However, we may assume that $P\in C_{1}$ without loss of
generality. Furthermore, by Lemma~\ref{lemma:convexity}, we may
assume that $L^{1}_{i}\not\subset\mathrm{Supp}(D)$ for some
$i=1,2,3$.

Since
$$
\frac{1}{5}=3D\cdot L^{1}_{i}\geqslant
\mult_{O_{1}}\big(D\big),
$$
the point $P$ cannot be the point $O_1$.

Without loss of generality, we may assume that $P\in L^{1}_{1}$.

Let $Z$ be the curve in the pencil $|-5K_{X}|$ passing through the
point $Q_{1}$. Then
$$
Z=Z_{1}+Z_{2}+Z_{3}+Z_{4}+Z_{5},
$$
where $Z_{i}$ is a reduced and  irreducible smooth rational curve.
The curve $Z_i$ contains the point $O_i$. Moreover, $Z_{1}\cap
Z_{2}\cap Z_{3}\cap Z_{4}\cap Z_{5}=\{Q_{1}\}$. It is easy to
check $\lct(X,Z)=\frac{2}{5}$. By Lemma~\ref{lemma:convexity}, we
may assume that $Z_{k}\not\subset\mathrm{Supp}(D)$ for some
$k=1,\ldots,5$. Then
$$
\frac{1}{3}=5D\cdot Z_{k}\geqslant
\mult_{Q_{1}}\big(D\big),
$$
and hence the point $P$ cannot be the point $Q_1$.

Thus, the point $P$ is a smooth point on $L^1_1$. Put
$$
D=mL^{1}_{1}+\Omega,
$$
where $\Omega$ is an effective
$\mathbb{Q}$-divisor such that
$L^{1}_{1}\not\subset\mathrm{Supp}(\Omega)$. If $m\ne 0$, then
$$
\frac{1}{15}=D\cdot L^{1}_{i}=\big(mL^{1}_{1}+\Omega\big)\cdot L^{1}_{i}\geqslant  mL^{1}_{1}\cdot L^{1}_{i}=\frac{m}{3},%
$$
and hence $m\leqslant \frac{1}{5}$. Then it follows from
Lemma~\ref{lemma:handy-adjunction} that
$$
\frac{1+7m}{15}=\big(D-mL^{1}_{1}\big)\cdot
L^{1}_{1}=\Omega\cdot L^{1}_{1}>\frac{1}{2}.%
$$
This implies that $m>\frac{13}{14}$. But $m\leqslant \frac{1}{5}$.
The obtained contradiction completes the proof.
\end{proof}

\begin{lemma}
\label{lemma:3571125}  Let $X$ be a quasismooth hypersurface of
degree $25$ in $\mathbb{P}(3,5,7, 11)$.  Then
$\lct(X)=\nlb\frac{21}{10}$.
\end{lemma}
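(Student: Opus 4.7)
The plan is to apply the general scheme of Section~5 with $\lambda = \tfrac{21}{10}$. Up to nonzero scalars the defining equation of $X$ has the form
\[f = \alpha y^5 + \beta x^5 y^2 + \gamma x y^3 z + \delta x^6 z + \epsilon x^2 y z^2 + \zeta x^3 y t + \eta z^2 t + \theta x t^2,\]
with $\alpha,\eta,\theta\neq 0$ forced by quasismoothness; the singular points of $X$ are $O_x$, $O_z$, $O_t$, of indices $3$, $7$, $11$ respectively. Here $I = 1$, so $-K_X \sim_{\Q}\mathcal{O}_X(1)$ and $(-K_X)^2 = \tfrac{5}{231}$.

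The upper bound comes from $C_x$. This curve is cut out in $\mathbb{P}(5,7,11)$ by $\alpha y^5 + \eta z^2 t = 0$; it is irreducible, smooth at $O_z$, and has a cusp at $O_t$ whose lift to the orbifold cover $\mathbb{C}^2_{y,z}/\mu_{11}$ is $\alpha y^5 + \eta z^2 + \text{higher order} = 0$, of Puiseux type $(2,5)$. By Lemma~\ref{lemma:Igusa} and Proposition~\ref{proposition:lc-by-finite-morphism}, $\lct_{O_t}(X,C_x) = \tfrac{1}{2}+\tfrac{1}{5}=\tfrac{7}{10}$, and at all other points $C_x$ is smooth. Therefore $\lct(X,\tfrac{1}{3}C_x)=\tfrac{21}{10}$, giving $\lct(X) \leq \tfrac{21}{10}$.

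For the matching lower bound, I argue by contradiction. Assume an effective $\Q$-divisor $D \sim_{\Q} -K_X$ makes $(X,\tfrac{21}{10}D)$ fail to be log canonical at a point $P$. The key intersection numbers are $D\cdot C_x = \tfrac{5}{77}$ and $D\cdot C_y = \tfrac{25}{231}$; using Lemma~\ref{lemma:convexity}, one may assume that whichever of $C_x$, $C_y$ is convenient is not contained in $\Supp(D)$. For $P=O_x$ one checks $3\cdot\tfrac{21}{10}\cdot D\cdot C_y = \tfrac{15}{22}<1$, and for $P=O_z$ one has $7\cdot\tfrac{21}{10}\cdot D\cdot C_x = \tfrac{21}{22}<1$; both contradict Lemma~\ref{lemma:handy-adjunction}. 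For $P$ a smooth point, I would combine Lemma~\ref{lemma:Carolina} with a pencil argument (for instance the pencil generated by $x^4$ and $yz$ in $|\mathcal{O}_X(12)|$, whose base locus is contained in $\{O_z,O_t\}$) in the style of Step~4 of the scheme.

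The main obstacle is $P=O_t$, where $11\cdot\tfrac{21}{10}\cdot D\cdot C_x = \tfrac{3}{2}>1$ — unsurprisingly, since the upper bound $\tfrac{21}{10}$ is saturated precisely by the cusp of $C_x$ at $O_t$. I would write $D = m C_x + \Omega$ with $C_x\not\subset\Supp(\Omega)$, use $\Omega\cdot C_y\geq 0$ to constrain $m \leq \tfrac{1}{3}$, and then apply Method~3.3: a weighted blow-up $\pi\colon\tilde X\to X$ at $O_t$ with weights matched to the Puiseux exponents $(2,5)$ of the cusp of $C_x$. On $\tilde X$ the log pull-back of $(X,\tfrac{21}{10}D)$ must remain non-log-canonical along the exceptional curve $E$, and a careful bookkeeping of discrepancies and of the intersection numbers of the proper transforms $\tilde C_x$, $\tilde C_y$ against $E$ — analogous to the analysis in Lemmas~\ref{lemma:I-6-infinite-series-1-n-2} and \ref{lemma:I-6-infinite-series-2} — yields the required contradiction. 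This weighted blow-up step is the crux, because the arithmetic of $\tfrac{21}{10}$ against the cusp exponents $(2,5)$ is very tight.
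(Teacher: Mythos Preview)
Your upper bound and the cases $P=O_x$, $P=O_z$ are fine. The smooth-point case is also simpler than you suggest: since $H^0(\P,\mathcal{O}_\P(21))$ contains $x^7$, $x^2y^3$, $z^3$, Lemma~\ref{lemma:Carolina} gives $\mult_P(D)\leqslant \tfrac{21\cdot 25}{3\cdot 5\cdot 7\cdot 11}=\tfrac{5}{11}<\tfrac{10}{21}$ for every smooth $P\notin C_x$, and for smooth $P\in C_x$ one has $\mult_P(D)\leqslant D\cdot C_x=\tfrac{5}{77}$ directly. No pencil is needed.

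The real gap is at $P=O_t$. You observe that $11\cdot D\cdot C_x=\tfrac{5}{7}>\tfrac{10}{21}$ and conclude that Method~3.1 fails, forcing a weighted blow-up. But you are forgetting the very feature of $C_x$ at $O_t$ that you computed for the upper bound: the cusp $\alpha y^5+\eta z^2=0$ has $\mult_{O_t}(C_x)=2$ in the orbifold chart. Hence, with $C_x\not\subset\Supp(D)$,
\[
\frac{5}{77}=D\cdot C_x\;\geqslant\;\frac{\mult_{O_t}(D)\cdot\mult_{O_t}(C_x)}{11}\;=\;\frac{2\,\mult_{O_t}(D)}{11},
\]
so $\mult_{O_t}(D)\leqslant\tfrac{5}{14}<\tfrac{10}{21}$, and $(X,\tfrac{21}{10}D)$ is log canonical at $O_t$ after all. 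This is exactly the paper's argument; no blow-up is required. Your proposed Method~3.3 computation might well succeed, but it is substantially harder and you have not carried it out, so the proof as written is incomplete precisely at its declared crux.
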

\begin{proof}
The curve $C_x$ is irreducible and reduced. It is easy to see that
$\lct(X, \frac{1}{3}C_x)=\frac{21}{10}$. Therefore,
$\lct(X)\leqslant \frac{21}{10}$.

Suppose that $\lct(X)<\frac{21}{10}$. Then there is an effective
$\Q$-divisor $D\qlineq -K_X$ such that the log pair
$(X,\frac{21}{10}D)$ is not log canonical at some point $P\in X$.
We may assume that the support of $D$ does not contain the curve
$C_x$ by Lemma~\ref{lemma:convexity}.

Since $H^0(\P, \mathcal{O}_\P(21))$ contains $x^{7}, x^2y^3, z^3$,
by Lemma~\ref{lemma:Carolina} we have
$$
\mult_{P}(D)\leqslant \frac{21\cdot 25}{3\cdot 5\cdot 7\cdot 11}<\frac{10}{21}%
$$
if $P$ is a smooth point in the outside of the curve $C_x$. Thus,
either $P=O_x$ or $P\in C_x$.

If $P\in C_x$, then we obtain a contradictory inequalities
$$
\frac{5}{77}=D\cdot C_{x}\geqslant \left\{%
\aligned
&\mult_{P}(D)\mult_{P}(C_{x})=\mult_{P}(D)>\frac{10}{21} \ \text{ if}\ P\in X\setminus\mathrm{Sing}(X),\\%
&\frac{\mult_{P}(D)\mult_{P}(C_{x})}{7}=\frac{\mult_{P}(D)}{7}>\frac{10}{147}\ \text{ if}\ P=O_{z},\\%
&\frac{\mult_{P}(D)\mult_{P}(C_{x})}{11}=\frac{2\mult_{P}(D)}{11}>\frac{20}{231}\ \text{ if}\ P=O_{t}.\\%
\endaligned\right.
$$
Therefore, we see that $P=O_{x}$.

Since the curve $C_y$ is irreducible and the log pair $(X,
\frac{1}{5}C_y)$ is log canonical at the point $O_x$, we may
assume that the support of $D$ does not contain the curve $C_y$.
Then
$$
\frac{10}{63}<\frac{\mult_{O_x}(D)}{3}\leqslant D\cdot C_y=\frac{25}{231}<\frac{10}{63}.%
$$
This is a contradiction.
\end{proof}

\begin{lemma}
\label{lemma:35714} Let $X$ be a quasismooth hypersurface of
degree $28$ in $\mathbb{P}(3,5,7,14)$. Then
$\lct(X)=\nlb\frac{9}{4}$.
\end{lemma}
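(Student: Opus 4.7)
I follow the four-step scheme of Section~1.4, closely modelled on the argument for Lemma~\ref{lemma:3571125}. Since $I=1$ we have $-K_X\sim_{\mathbb{Q}}\mathcal{O}_X(1)$, and the expected equality is $\lct(X)=\lct(X,\tfrac{1}{3}C_x)$. The first step is to compute this explicitly.

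The only degree-$28$ monomials in $y,z,t$ are $t^2$, $z^2t$, $z^4$, so the equation of $C_x\subset\mathbb{P}(5,7,14)$ involves only $z$ and $t$ and factors as $(t-\beta_1z^2)(t-\beta_2z^2)$; quasismoothness of $X$ forces $\beta_1\ne\beta_2$, so $C_x=L_1+L_2$ with $L_i=\{x=t-\beta_iz^2=0\}$. Each $L_i\cong\mathbb{P}^1$ passes through a $\tfrac{1}{7}$-singularity $P_i=[0{:}0{:}1{:}\beta_i]$; the two components meet only at $O_y$ (a $\tfrac{1}{5}$-point), and tangentially in the orbifold cover. The intersection-theoretic data are $L_i\cdot(-K_X)=\tfrac{1}{35}$, $L_1\cdot L_2=\tfrac{2}{5}$, and $L_i^2=-\tfrac{11}{35}$. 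A local Igusa computation at $O_y$ (Lemma~\ref{lemma:Igusa}), after completing the square in $t$ to reduce to the form $u^2-cz^4$, yields $\lct_{O_y}(X,C_x)=\tfrac{3}{4}$; at every other point $C_x$ is (orbifold-)smooth, giving larger local thresholds. Hence $\lct(X,\tfrac{1}{3}C_x)=\tfrac{9}{4}$ and $\lct(X)\le\tfrac{9}{4}$.

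For the reverse inequality, suppose for contradiction that there is an effective $D\sim_{\mathbb{Q}}-K_X$ such that $(X,\tfrac{9}{4}D)$ fails to be log canonical at some point $P$. Lemma~\ref{lemma:convexity} applied to $C_x$ allows us to assume $L_1\not\subset\Supp(D)$. The bound $D\cdot L_1=\tfrac{1}{35}$ immediately rules out smooth $P\in L_1$, and also $P=P_1$ via the singular version of Lemma~\ref{lemma:handy-adjunction}. Writing $D=\mu L_2+\Omega$ with $L_2\not\subset\Supp(\Omega)$, the inequality $\mu L_1\cdot L_2\le D\cdot L_1$ forces $\mu\le\tfrac{1}{14}$; hence $\Omega\cdot L_2=\tfrac{1+11\mu}{35}\le\tfrac{5}{98}$, and Lemma~\ref{lemma:handy-adjunction} then excludes every point of $L_2$, the worst case $P=O_y$ requiring $5\cdot\tfrac{9}{4}\cdot\Omega\cdot L_2\le\tfrac{225}{392}<1$. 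Thus $P\notin C_x$.

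For the remaining cases, a smooth $P$ outside $C_x$ is excluded by Lemma~\ref{lemma:Carolina} with $k=21$: the monomials $x^7,x^2y^3$ and $x^7,z^3$ supply the two required pairs, and since the coefficient of $t^2$ in $f$ is forced nonzero by quasismoothness (so $O_t\notin X$), the projection $\psi\colon X\to\mathbb{P}(3,5,7)$ is a finite morphism and contracts no curves; the lemma then gives $\mult_P(D)\le\tfrac{21\cdot 28}{1470}=\tfrac{2}{5}<\tfrac{4}{9}$, a contradiction. Finally, $P=O_x$ (a $\tfrac{1}{3}(1,1)$-point) is handled via the irreducible curve $C_y=\{y=0\}$, which is smooth at $O_x$ thanks to the monomial $x^7z$ in $f$: Lemma~\ref{lemma:convexity} lets us assume $C_y\not\subset\Supp(D)$, whereupon $3\cdot\tfrac{9}{4}\cdot D\cdot C_y=\tfrac{27}{4}\cdot\tfrac{2}{21}=\tfrac{9}{14}<1$ and Lemma~\ref{lemma:handy-adjunction} yields a contradiction. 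The one genuinely delicate point of the proof is the tangential meeting $L_1\cap L_2=\{O_y\}$, because without the tight bound $\mu\le\tfrac{1}{14}$ the numerical margin at $O_y$ would not close; all other exclusions are comfortable.
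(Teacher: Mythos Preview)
Your proof is correct and follows essentially the same route as the paper's: the same decomposition $C_x=L_1+L_2$ with the same intersection data, the same use of Lemma~\ref{lemma:convexity} to drop one $L_i$ from $\Supp(D)$, the same bound $\mu\le\tfrac{1}{14}$ on the other coefficient via $L_1\cdot L_2=\tfrac{2}{5}$, the same appeal to Lemma~\ref{lemma:Carolina} with $k=21$ for smooth points off $C_x$, and the same use of $C_y$ at $O_x$. You are slightly more explicit than the paper in justifying the Carolina hypothesis (noting that the forced $t^2$ term makes the projection to $\mathbb{P}(3,5,7)$ finite, so no curve is contracted), and you handle $O_y$ via handy-adjunction on $L_2$ rather than the direct multiplicity bound $5D\cdot L_1=\tfrac{1}{7}$ the paper uses, but these are cosmetic differences.
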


\begin{proof}
The surface $X$ is singular at the point $O_{x}$ and the point
$O_y$. The former is a singular point of type $\frac{1}{3}(1,1)$
and the latter is  of type $\frac{1}{5}(1,2)$.  Let $O_{1}$ and
$O_{2}$ be the two points cut out on $X$ by the equations $x=y=0$.
The points $O_{1}$ and $O_{2}$ are singular points of type
$\frac{1}{7}(3,5)$ on the surface $X$.

The curve $C_x$ consists of two reduced and irreducible smooth
rational curves $L_1$ and $L_2$. These two curves intersect each
other only at the point $O_y$. Each curve $L_i$ contains the
singular point $O_i$. We have
$$
-K_{X}\cdot L_{i}=\frac{1}{35},\ \  L_{1}\cdot L_{2}=\frac{2}{5},
\ \ L_{1}^2=L_{2}^2=-\frac{11}{35}.
$$
Since $\lct(X, C_x)=\frac{3}{4}$, $\lct(X)\leqslant \frac{9}{4}$.

Suppose that $\lct(X)<\frac{9}{4}$. Then there is an effective
$\Q$-divisor $D\qlineq -K_X$ such that the log pair
$(X,\frac{9}{4}D)$ is not log canonical at some point $P\in X$.

If $P$ is a smooth point in the outside of $C_x$, then
$$
\mult_{P}\big(D\big)\leqslant\frac{588}{1470}<\frac{4}{9}
$$
by Lemma~\ref{lemma:Carolina} since $H^0(\P,
\mathcal{O}_\P(21))$ contains $x^{7}, z^3, x^2y^3$. Therefore,
either $P$ belongs to the curve $C_x$ or $P=O_x$.

By Lemma~\ref{lemma:convexity}, we may assume that
$L_{i}\not\subset\mathrm{Supp}(D)$ for some $i=1,2$. Similarly, we
may assume that $C_{y}\not\subset\mathrm{Supp}(D)$ since $(X,
C_{y})$ is log canonical and the curve $C_{y}$ is
irreducible.

The inequalities
$$\mult_{O_x}(D)\leqslant 3D\cdot C_y=\frac{2}{7}<\frac{4}{9}$$
show that the point $P$ cannot be the point $O_x$. Therefore, the
point $P$ belongs to the curve $C_x$.

The inequalities
$$\mult_{O_y}(D)\leqslant 5D\cdot L_i=\frac{1}{7}<\frac{4}{9}$$
show that the point $P$ cannot be the point $O_y$.

Without loss of generality, we may assume that $P\in L_{1}$. Put
$D=mL_{1}+\Omega$, where $\Omega$ is an effective
$\mathbb{Q}$-divisor such that
$L_{1}\not\subset\mathrm{Supp}(\Omega)$. If $m\ne 0$, then
$$
\frac{1}{35}=D\cdot L_{2}=\big(mL_{1}+\Omega\big)\cdot L_{2}\geqslant  mL_{1}\cdot L_{2}=\frac{2m}{5},%
$$
and hence  $m\leqslant \frac{1}{14}$. Then
Lemma~\ref{lemma:handy-adjunction} implies an absurd inequality
$$
\frac{5}{98}\geqslant\frac{1+11m}{35}=\big(D-mL_{1}\big)\cdot L_{1}=\Omega\cdot
L_{1}>\left\{%
\aligned
&\frac{4}{9}\ \text{if}\ P\ne O_{1},\\%
&\frac{4}{63}\ \text{if}\ P=O_{1}.\\%
\endaligned\right.%
$$
The obtained contradiction completes the proof.
\end{proof}

\begin{lemma}
\label{lemma:35111836}Let $X$ be a quasismooth hypersurface of
degree $36$ in $\mathbb{P}(3,5,11, 18)$. Then
 $\lct(X)=\nlb\frac{21}{10}$.
\end{lemma}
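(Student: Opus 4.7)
The plan is to parallel the argument of Lemma~\ref{lemma:3571125} very closely: first compute $\lct(X,\tfrac{1}{3}C_x)=\tfrac{21}{10}$ to get the upper bound, then rule out every possible non-log-canonical point for an arbitrary effective $\Q$-divisor $D\qlineq-K_X$. The quasi-smoothness of $X$ at $O_y$ forces the coefficient of $y^5z$ in the defining equation of $X$ to be non-zero, so $C_x\subset\mathbb{P}(5,11,18)$ is the irreducible curve $t^2+\lambda y^5 z=0$ with $\lambda\ne 0$. On the orbifold cover at $O_z$, eliminating $x$ from the equation of $X$ makes $C_x$ the plane curve singularity $t^2+\lambda y^5+\mathrm{h.o.t.}=0$, whose first Puiseux pair is $(2,5)$, so Lemma~\ref{lemma:Igusa} gives $\lct_{O_z}(X,\tfrac{1}{3}C_x)=3\cdot\tfrac{7}{10}=\tfrac{21}{10}$. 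At $O_y$ the curve $C_x$ pulls back to the smooth divisor $\{x=0\}$ on the orbifold cover, and at every other point of $C_x$ both $X$ and $C_x$ are smooth; those local lct's equal $3$. The minimum $\tfrac{21}{10}$ is therefore attained at $O_z$.

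For the lower bound, suppose some $D\qlineq-K_X$ makes $(X,\tfrac{21}{10}D)$ not log canonical at a point $P$. Lemma~\ref{lemma:convexity} lets me assume $C_x\not\subset\Supp(D)$ since $(X,\tfrac{7}{10}C_x)$ is log canonical. Next, because $H^0(\mathbb{P},\mathcal{O}_{\mathbb{P}}(33))$ contains two distinct monomials of each of the shapes $x^{\alpha}y^{\beta}$ (e.g.\ $x^{11}$ and $x^6 y^3$), $x^{\gamma}z^{\delta}$ ($x^{11}$ and $z^3$) and $x^{\mu}t^{\nu}$ ($x^{11}$ and $x^5 t$), Lemma~\ref{lemma:Carolina} gives $\mult_P(D)\leqslant 1\cdot 33\cdot 36/(3\cdot 5\cdot 11\cdot 18)=2/5<10/21$ for any smooth $P$ outside $C_x$, excluding such points. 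With $D\cdot C_x=2/55$ and $\mult_{O_z}(C_x)=2$, Method~3.1 then yields $\mult_P(D)\leqslant 2/55$ at smooth points of $X$ on $C_x$, $\mult_{O_y}(D)\leqslant 5\cdot 2/55=2/11$ at $O_y$, and $\mult_{O_z}(D)\leqslant(11/2)\cdot 2/55=1/5$ at $O_z$; all three bounds are strictly less than $10/21$, contradicting the non-log-canonicity of $(X,\tfrac{21}{10}D)$ at each of those points.

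The main and most novel obstacle (compared with Lemma~\ref{lemma:3571125}) is that the line $L_{xt}=\{y=z=0\}$ carries a $\tfrac{1}{3}$-curve of singularities in $\mathbb{P}$, and $X$ meets $L_{xt}$ in two $\tfrac{1}{3}(1,1)$-points $Q_1,Q_2$ that lie off $C_x$; in the generic case they are the two roots of $\mu^2+\lambda_1\mu+\lambda_6=0$ with $\mu=t/x^6$, and quasi-smoothness of $X$ at each $Q_i$ forces the discriminant $\lambda_1^2-4\lambda_6$ to be non-zero. To exclude $P\in\{Q_1,Q_2\}$ I will invoke the auxiliary curve $C_y$: it is irreducible, smooth on each orbifold cover at the singular points it meets (on the $(y,z)$-chart at $Q_i$ it is simply $\{y=0\}$), so $\lct(X,\tfrac{1}{5}C_y)=5\geqslant\tfrac{21}{10}$ and Lemma~\ref{lemma:convexity} again allows me to assume $C_y\not\subset\Supp(D)$. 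Then $D\cdot C_y=2/33$ and $\mult_{Q_i}(C_y)=1$ give $\mult_{Q_i}(D)\leqslant 3\cdot 2/33=2/11<10/21$, completing the exclusion. The conceptual difficulty is precisely this identification of the two $\tfrac{1}{3}$-points off $C_x$ and the realisation that the second coordinate hyperplane $C_y$ provides exactly the right auxiliary control at them; everything else is a direct adaptation of the earlier pattern.
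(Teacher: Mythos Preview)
Your proof is correct and follows essentially the same approach as the paper: both compute $\lct(X,\tfrac{1}{3}C_x)=\tfrac{21}{10}$, exclude points on $C_x$ via $D\cdot C_x$, exclude the two $\tfrac{1}{3}$-points via $D\cdot C_y$, and handle smooth points off $C_x$ with Lemma~\ref{lemma:Carolina}. Two small remarks: your notation $L_{xt}=\{y=z=0\}$ conflicts with the paper's convention (this line is $L_{yz}$); and you apply Lemma~\ref{lemma:Carolina} with $k=33$ (using $x^{11},x^6y^3,z^3,x^5t$), whereas the paper uses $k=39$ without a $t$-monomial --- your choice is actually cleaner since it avoids needing to check the projection condition.
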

\begin{proof}
The surface $X$ is singular at the points $O_y$ and $O_z$. It is
also singular at two points $P_1$ and $P_2$ on the curve $L_{yz}$.
These two points $P_1$ and $P_2$ are contained in $C_y$.

The curve $C_x$ is irreducible and reduced. It is easy to see that
$\lct(X, \frac{1}{3}C_x)=\frac{21}{10}$. Also, the curve $C_y$ is
always irreducible and the pair $(X, \frac{21}{5\cdot 10}C_y)$ is
log canonical. We see that $\lct(X)\leqslant \frac{21}{10}$.

Suppose that $\lct(X)<\frac{21}{10}$. Then there is an effective
$\Q$-divisor $D\qlineq -K_X$ such that the pair
$(X,\frac{21}{10}D)$ is not log canonical at some point $P\in X$.
By Lemma~\ref{lemma:convexity}, we may assume that the support of
$D$ contains neither the curve $C_x$ nor $C_y$.

 Then the following inequalities
$$
\mult_{O_y}(D)\leqslant 5D\cdot C_x=\frac{5\cdot 3\cdot 36}{ 3\cdot 5\cdot 11\cdot 18}
<\frac{10}{21},%
$$
$$
\mult_{O_z}(D)\leqslant 11D\cdot C_x=\frac{11\cdot 3\cdot 36}{ 3\cdot 5\cdot 11\cdot 18}
<\frac{10}{21},%
$$
$$
\mult_{P_i}(D)\leqslant 3D\cdot C_y=\frac{3\cdot 5\cdot 36}{3\cdot 5\cdot 11\cdot 18}
<\frac{10}{21},%
$$
show that the point $P$ is a smooth point $P$ on $X$. Furthermore,
the first two inequalities also show that the point $P$ cannot
belong to the curve $C_x$. Therefore, the point $P$ is a smooth
point in the outside of the curve $C_x$.

 However, since  $H^0(\P, \mathcal{O}_\P(39))$
contains $x^{13}, x^3y^6, x^2z^3$, by
Lemma~\ref{lemma:Carolina} we have
$$
\frac{10}{21}<\mult_{P}(D)\leqslant \frac{36\cdot 39}{3\cdot 5\cdot 11\cdot 18}
<\frac{10}{21}.%
$$
  The obtained contradiction completes the
proof.
\end{proof}

\begin{lemma}
\label{lemma:5141721} Let $X$ be a quasismooth hypersurface of
degree $56$ in $\mathbb{P}(5,14,17, 21)$. Then
$\mathrm{lct}(X)=\frac{25}{8}$.
\end{lemma}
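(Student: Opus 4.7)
First, I would decipher the geometry. There are only five weighted-degree-$56$ monomials---$y^{4}, x^{5}yz, xz^{3}, x^{7}t, yt^{2}$---and quasismoothness forces all but $x^{5}yz$ to appear with nonzero coefficient; in particular, the monomial $y^{4}$ shows that $O_{y}\notin X$. The singular points of $X$ are $O_{x}, O_{z}, O_{t}$ of indices $5,17,21$, together with one $\tfrac{1}{7}$-point $P^{\ast}\in L_{yt}$ cut out by $\alpha y^{3}+\epsilon t^{2}=0$. Restricting the defining polynomial to $x=0$ gives $y(\alpha y^{3}+\epsilon t^{2})$, so $C_{x}=L+C'$ is reducible, where $L:=L_{xy}\subset X$ and $C':=\{x=\alpha y^{3}+\epsilon t^{2}=0\}$; these two components meet only at $O_{z}$, where on the $\mu_{17}$-cover the curve $C_{x}$ is locally $y(y^{3}+t^{2})=0$. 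By Lemma~\ref{lemma:Igusa} this yields $\lct_{O_{z}}(X,C_{x})=\tfrac{5}{8}$, and since the lct of $C_{x}$ is $1$ at every other point, $\lct(X,\tfrac{1}{5}C_{x})=\tfrac{25}{8}$; this gives the upper bound $\lct(X)\leqslant \tfrac{25}{8}$.

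To prove equality, I will suppose that some effective $D\qlineq -K_{X}$ makes $(X,\tfrac{25}{8}D)$ not log canonical at some point $P\in X$, and rule out every possibility for $P$. The key intersection numbers on $X$ are $L\cdot(-K_{X})=\tfrac{1}{357}$, $C'\cdot(-K_{X})=\tfrac{1}{119}$, $C_{z}\cdot(-K_{X})=\tfrac{4}{105}$, $L\cdot C'=\tfrac{2}{17}$ (from the cusp computation on the $\mu_{17}$-cover at $O_{z}$), $L^{2}=-\tfrac{37}{357}$, and $C'^{2}=-\tfrac{9}{119}$. For a smooth point $P$ of $X$ lying off $C_{x}$---equivalently, off $L$, the unique curve contracted by the projection $\psi\colon X\dashrightarrow\mathbb{P}(5,14,17)$---Lemma~\ref{lemma:Carolina} applied with $k=85$, using the monomials $x^{17}, x^{3}y^{5}$ and $x^{17}, z^{5}$ in $H^{0}(\mathbb{P},\mathcal{O}_{\mathbb{P}}(85))$, gives $\mult_{P}(D)\leqslant \tfrac{4}{21}<\tfrac{8}{25}$, the required contradiction. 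For the singular points $O_{x}$ and $P^{\ast}$, both of which lie on the irreducible curve $C_{z}$ and are smooth points of it, I will use that $(X,\tfrac{25}{136}C_{z})$ is log canonical: Lemma~\ref{lemma:convexity} then allows me to assume $C_{z}\not\subset\Supp(D)$, and Lemma~\ref{lemma:handy-adjunction} with $C=C_{z}$ only demands $r\lambda\,C_{z}\cdot D\leqslant 1$, which evaluates to $\tfrac{25}{42}$ at $O_{x}$ and $\tfrac{5}{6}$ at $P^{\ast}$.

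The remaining candidates all lie on $C_{x}$: namely $O_{z}, O_{t}$, and the smooth points of $X$ on $L\cup C'$. For each such $P$ I will invoke Lemma~\ref{lemma:convexity} applied to $C_{x}=L+C'$ so that at least one of $L, C'$ is not in $\Supp(D)$. If $L\not\subset\Supp(D)$, Lemma~\ref{lemma:handy-adjunction} with $C=L$ and $\mult_{L}(D)=0$ reduces to $r\lambda L\cdot D=\tfrac{25r}{2856}\leqslant 1$, which holds for each relevant $r\in\{1,17,21\}$. Otherwise $L\subset\Supp(D)$ but $C'\not\subset\Supp(D)$; from $a\cdot L\cdot C'\leqslant D\cdot C'=\tfrac{1}{119}$ I obtain $a:=\mult_{L}(D)\leqslant \tfrac{1}{14}$, so $\lambda a\leqslant \tfrac{25}{112}<1$, and the remaining inequality of Lemma~\ref{lemma:handy-adjunction} becomes $r\lambda(L\cdot D-aL^{2})=\tfrac{r\lambda(1+37a)}{357}\leqslant 1$, which a direct numerical check confirms for every $r\in\{1,17,21\}$ and every $a\leqslant \tfrac{1}{14}$. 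A symmetric argument with $C=C'$ (using $b=\mult_{C'}(D)\leqslant \tfrac{1}{42}$ from $b\cdot L\cdot C'\leqslant D\cdot L$) handles smooth points of $X$ on $C'\setminus\{O_{z},P^{\ast}\}$. The main obstacle will be this combinatorial bookkeeping forced by the reducibility $C_{x}=L+C'$: at every candidate point one must choose which component of $C_{x}$ to remove via Lemma~\ref{lemma:convexity}, and the tight inequality $r\lambda(1+37a)/357\leqslant 1$ only just survives at $r=21$, relying crucially on the comparatively large cross-intersection $L\cdot C'=\tfrac{2}{17}$ which forces $a\leqslant \tfrac{1}{14}$.
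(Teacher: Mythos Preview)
Your proof is correct and follows essentially the same approach as the paper: compute $\lct(X,\tfrac{1}{5}C_x)=\tfrac{25}{8}$ from the decomposition $C_x=L_{xy}+R_x$ and the cusp at $O_z$, then rule out a hypothetical bad $D$ using Lemma~\ref{lemma:Carolina} for generic smooth points, $C_z$ for $O_x$, and Lemma~\ref{lemma:handy-adjunction} along the components $L_{xy}$ and $R_x$ for the remaining points. There are two small notational slips---the $\tfrac{1}{7}$-point $P^\ast$ lies on $L_{xz}$ (not $L_{yt}$), and the coefficient in $\alpha y^3+\epsilon t^2$ should be the one attached to $yt^2$, not to $x^5yz$---but they do not affect the argument.

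The one genuine organizational difference is that you observe $L_{xy}$ is the \emph{only} curve contracted by $\psi\colon X\dasharrow\mathbb{P}(5,14,17)$, so Lemma~\ref{lemma:Carolina} with $k=85$ already disposes of every smooth point off $C_x$, including those on $R_y$. The paper instead states only that $\psi$ is finite outside $C_y$, applies Carolina off $C_x\cup C_y$, and then runs a separate handy-adjunction argument for smooth points on $R_y$ (bounding $\mult_{R_y}(D)\leqslant\tfrac{1}{51}$ via $L_{xy}\cdot R_y=\tfrac{1}{7}$). Your version is slightly more economical. Conversely, the paper handles the $\tfrac{1}{7}$-point as part of the $R_x$ case (since $P^\ast\in R_x$ and the handy-adjunction bound there already gives $\tfrac{1+9a}{119}<\tfrac{8}{25\cdot7}$), whereas you route it through $C_z$ together with $O_x$; both work.
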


\begin{proof}
The surface $X$ is singular at the points $O_{x}$, $O_z$ and
$O_t$. The first point is a singular point of type
$\frac{1}{5}(2,1)$, the second of type $\frac{1}{17}(7,2)$, the
last of type $\frac{1}{21}(5,17)$. There is one more singular
point $O$ of type $\frac{1}{7}(5,3)$ on $L_{xz}$ that is different
from the singular point $O_t$.

The curve $C_x$ (resp. $C_y$) consists of two reduced and
irreducible curves $L_{xy}$ and $R_{x}$ (resp. $R_y$). The curve
$L_{xy}$ intersects the curve $R_x$ at the point $O_z$. The curve
$R_x$ is singular at the point $O_z$. On the other hand, it
intersects the curve $R_y$ at the point $O_t$. The curve $R_y$ is
singular at $O_t$. We have
$$
L_{xy}^2=-\frac{37}{357},\ \ L_{xy}\cdot R_{x}=\frac{2}{17},\ \ R_x^2=-\frac{9}{119},
\ \ L_{xy}\cdot R_{y}=\frac{1}{7},\  \ R_y^2=\frac{9}{35}.%
$$

It is easy to check $\lct(X,C_{x})=\frac{5}{8}$ and
$\lct(X,C_{y})=\frac{3}{7}$, and hence $\mathrm{lct}(X)\leqslant
\frac{25}{8}$.

Suppose that $\lct(X)<\frac{25}{8}$. Then there is an effective
$\Q$-divisor $D\qlineq -K_X$ such that the log pair
$(X,\frac{25}{8} D)$ is not log canonical at some point $P\in X$.
By Lemma~\ref{lemma:convexity} we may assume that either the
support of the divisor $D$ does not contain the curve $L_{xy}$ or
it contains neither  $R_{x}$ nor $R_{y}$.

Suppose that $P\not\in C_{x}\cup C_{y}$. Then $P$ is a smooth
point and
$$
\mult_{P}\big(D\big)\leqslant\frac{4}{21}<\frac{8}{25}
$$
by Lemma~\ref{lemma:Carolina} since  the natural
projection $X\dasharrow\mathbb{P}(5,14,17)$ is a finite morphism
outside of the curve $C_{y}$ and $H^0(\P, \mathcal{O}_\P(85))$
contains monomials $x^{17}, z^5, x^3y^5$. This is a contradiction.
Thus, the point $P$ must belong to $C_{x}\cup C_{y}$.

The curve $C_{z}$ is irreducible and the log pair $(X,
\frac{25}{8\cdot 17}C_{z})$ is log canonical. By
Lemma~\ref{lemma:convexity} we may assume that
$C_{z}\not\subset\mathrm{Supp}(D)$. Then
$$
\frac{8}{25}>\frac{4}{21}=5D\cdot C_{z}\geqslant
\mult_{O_{x}}\big(D\big),
$$
and hence the point $P$ cannot be $O_x$.

Suppose that $P\in L_{xy}$. Put $D=mL_{xy}+\Omega$, where $\Omega$
is an effective $\mathbb{Q}$-divisor such that
$L_{xy}\not\subset\mathrm{Supp}(\Omega)$. If $m\ne 0$, then
$$
\frac{1}{119}=D\cdot R_{x}=\big(mL_{xy}+\Omega\big)\cdot R_{x}\geqslant
mL_{xy}\cdot R_{x}=\frac{2m}{17},%
$$
and hence  $m\leqslant \frac{1}{14}$. Then it follows from
Lemma~\ref{lemma:handy-adjunction} that
$$
\frac{1+37m}{357}=\big(D-mL_{xy}\big)\cdot L_{xy}=\Omega\cdot
L_{xy}>\left\{%
\aligned
&\frac{8}{525}\ \text{if}\ P=O_{t},\\%
&\frac{8}{425}\ \text{if}\ P=O_{z},\\%
&\frac{8}{25}\ \text{if}\ P\ne O_{z}\ \text{and}\ P\ne O_{t}.\\%
\endaligned\right.%
$$
This implies $m>\frac{3}{25}$. But $m\leqslant \frac{1}{14}$. The
obtained contradiction implies that $P\not\in L_{xy}$.

Suppose that $P\in R_{x}$. Put $D=aR_{x}+\Upsilon$, where
$\Upsilon$ is an effective $\mathbb{Q}$-divisor such that
$R_{x}\not\subset\mathrm{Supp}(\Upsilon)$. If $a\ne 0$, then
$$
\frac{1}{357}=D\cdot L_{xy}=\big(aR_{x}+\Upsilon\big)\cdot L_{xy}\geqslant
 aL_{xy}\cdot R_{x}=\frac{2a}{17},%
$$
and hence $a\leqslant \frac{1}{42}$. Then it follows from
Lemma~\ref{lemma:handy-adjunction} that
$$
\frac{1+9a}{119}=\big(D-aR_x\big)\cdot R_{x}=\Upsilon\cdot
R_{x}>\left\{%
\aligned
&\frac{8}{175}\ \text{if}\ P=O,\\%
&\frac{8}{25}\ \text{if}\ P\ne O.\\%
\endaligned\right.%
$$
This is impossible because $a\leqslant \frac{1}{42}$. Thus, we see
that $P\not\in C_{x}$.

We see that $P\in R_{y}$ and $P\in X\setminus\mathrm{Sing}(X)$.
Put $D=bR_{y}+\Delta$, where $\Delta$ is an effective
$\mathbb{Q}$-divisor such that
$R_{y}\not\subset\mathrm{Supp}(\Delta)$. If $b\ne 0$, then
$$
\frac{1}{357}=D\cdot L_{xy}=\big(bR_{y}+\Delta\big)\cdot L_{xy}\geqslant  bL_{xy}\cdot
R_{y}=\frac{b}{7},%
$$
and hence $b\leqslant \frac{1}{51}$. Then it follows from
Lemma~\ref{lemma:handy-adjunction} that
$$
\frac{1+9b}{35}=\big(D-bR_{y}\big)\cdot R_{y}=\Delta\cdot
R_{y}>\frac{8}{25}.%
$$
This is impossible  because $b\leqslant \frac{1}{51}$. The
obtained contradiction completes the proof.
\end{proof}

\begin{lemma}
\label{lemma:519273181} Let $X$ be a quasismooth hypersurface of
degree $81$ in $\mathbb{P}(5,19,27, 31)$. Then
$\lct(X)=\frac{25}{6}$.
\end{lemma}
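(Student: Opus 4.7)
The plan is as follows. I would first enumerate the weighted monomials of degree $81$ in the variables $x,y,z,t$ of weights $(5,19,27,31)$: they are precisely $z^3, x^7yz, xy^4, x^{10}t, yt^2$. Quasismoothness forces $xy^4, x^{10}t, yt^2, z^3$ to occur with nonzero coefficients in the defining equation $f$, so $O_z\notin X$, and the singularities of $X$ are $O_x, O_y, O_t$ of types $\tfrac{1}{5}(2,4)$, $\tfrac{1}{19}(8,12)$, $\tfrac{1}{31}(5,27)$. The three coordinate curves then read
\[C_x\colon c_1z^3+c_2yt^2=0,\quad C_y\colon c_1z^3+c_2x^{10}t=0,\quad C_z\colon c_1xy^4+c_2x^{10}t+c_3yt^2=0;\]
each is irreducible. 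Using the implicit-function reduction to orbifold charts, $C_x$ has a cuspidal singularity at $O_y$ with local equation $z^3+t^2=0$ on the $\mathbb{Z}/19$-cover, $C_y$ has a singularity at $O_t$ with local equation $z^3+x^{10}=0$ on the $\mathbb{Z}/31$-cover, and $C_z$ is smooth everywhere on $X$. By Lemma~\ref{lemma:Igusa} and Proposition~\ref{proposition:lc-by-finite-morphism}, $\lct(X,C_x)=\tfrac{5}{6}$, $\lct(X,C_y)=\tfrac{13}{30}$, and $\lct(X,C_z)=1$, so $\lct(X,\tfrac{1}{5}C_x)=\tfrac{25}{6}$ is the minimum of $\tfrac{25}{6},\tfrac{247}{30},27$, giving $\lct(X)\le\tfrac{25}{6}$.

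For the reverse inequality, I would assume $(X,\tfrac{25}{6}D)$ fails to be log canonical at some point $P\in X$ for an effective $D\sim_{\mathbb{Q}}-K_X$. Since both $(X,\tfrac{5}{6}C_x)$ and $(X,\tfrac{25}{114}C_y)$ are log canonical, Lemma~\ref{lemma:convexity}, applied with $D_2=\tfrac{1}{5}C_x$ or with $D_2=\tfrac{1}{19}C_y$ as needed, allows one to assume $C_x\not\subset\Supp(D)$, or $C_y\not\subset\Supp(D)$ when $P=O_x$. The intersection numbers $D\cdot C_x=\tfrac{3}{589}$ and $D\cdot C_y=\tfrac{3}{155}$, combined with the cover inequality $\mult_P(D)\mult_P(C)\le r\cdot D\cdot C$ at a point of index $r$, produce
\[\mult_P(D)\le \tfrac{3}{589}\ \text{at smooth}\ P\in C_x,\quad \mult_{O_y}(D)\le \tfrac{3}{62},\quad \mult_{O_t}(D)\le \tfrac{3}{19},\quad \mult_{O_x}(D)\le \tfrac{3}{31},\]
where $\mult_{O_y}(C_x)=2$ is used at $O_y$. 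Each of these is strictly below $\tfrac{6}{25}$, so Lemma~\ref{lemma:multiplicity} on the smooth orbifold cover immediately forces $(X,\tfrac{25}{6}D)$ to be log canonical at $P$, a contradiction.

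The only remaining case is a smooth point $P\in X\setminus C_x$. Here I would apply Lemma~\ref{lemma:Carolina} with $k=135$: $H^0(\mathbb{P},\mathcal{O}_{\mathbb{P}}(135))$ contains $x^{27}$ and $x^8y^5$ of type $x^\alpha y^\beta$ as well as $x^{27}$ and $z^5$ of type $x^\gamma z^\delta$, and the projection $X\dashrightarrow\mathbb{P}(5,19,27)$ contracts no curves because the $t$-degree of $f$ is two and $yt^2$ has nonzero coefficient, so every fibre is finite. The lemma yields $\mult_P(D)\le \tfrac{135\cdot 81}{5\cdot 19\cdot 27\cdot 31}=\tfrac{81}{589}<\tfrac{6}{25}$, excluding $P$. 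The most delicate step is the computation at $O_y$, where the cusp of $C_x$ forces the critical value $\tfrac{25}{6}$ and one must correctly use the multiplicity $2$ of the cusp on the $\mathbb{Z}/19$-cover when bounding $\mult_{O_y}(D)$ from $D\cdot C_x$; since all other estimates beat $\tfrac{6}{25}$ comfortably, no weighted blow-ups are required in this case.
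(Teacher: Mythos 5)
Your argument is correct and follows essentially the same route as the paper: the upper bound comes from the cusp of $C_x$ at $O_y$ giving $\lct(X,\tfrac{1}{5}C_x)=\tfrac{25}{6}$, the singular points and the points of $C_x\cup C_y$ are excluded via the intersection numbers $D\cdot C_x=\tfrac{3}{589}$ and $D\cdot C_y=\tfrac{3}{155}$ after removing $C_x$ and $C_y$ from $\mathrm{Supp}(D)$ by Lemma~\ref{lemma:convexity}, and the remaining smooth points are excluded by Lemma~\ref{lemma:Carolina}. The only (immaterial) deviation is that you invoke Lemma~\ref{lemma:Carolina} with $k=135$ via the finite-fibres alternative for the projection from $O_t$, whereas the paper takes $k=190$ so that two monomials of the form $x^{\mu}t^{\nu}$ (namely $x^{38}$ and $x^{7}t^{5}$) are available; both choices yield a bound on $\mult_P(D)$ strictly below $\tfrac{6}{25}$.
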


\begin{proof}
The curve $C_x$ is irreducible and reduced. Moreover, the curve
$C_x$ is smooth outside of the singular locus of the surface $X$.
It is easy to see that $\lct(X, \frac{1}{5}C_x)=\frac{25}{6}$.
Hence, we have $\lct(X)\leqslant \frac{25}{6}$. The curve $C_y$ is
irreducible  and reduced.  The log pair $(X, \frac{1}{19}C_y)$ is
log canonical.

Suppose that $\lct(X)<\frac{25}{6}$. Then there is an effective
$\Q$-divisor $D\qlineq -K_X$ such that the pair
$(X,\frac{25}{6}D)$ is not log canonical at some point $P\in X$.
We may assume that the support of $D$ contains neither $C_x$ nor
$C_y$ by Lemma~\ref{lemma:convexity}.

The inequality
\[31D\cdot C_x=\frac{3}{19}<\frac{6}{25}\]
shows that the point $P$ cannot be on the curve $C_x$. On the
other hand, the inequality
\[5D\cdot C_y=\frac{3}{31}<\frac{6}{25}\] shows that the point $P$
 cannot be on the curve $C_y$. In particular, the point $P$ cannot
 be the point $O_x$.

 Therefore, the point $P$ must be a smooth point in the outside of
 $C_x$. However, Lemma~\ref{lemma:Carolina} implies
$$
\mult_{P}(D)\leqslant \frac{190\cdot 81}{5\cdot 19\cdot 27\cdot 31}<\frac{6}{25}%
$$
since  $H^0(\P, \mathcal{O}_\P(190))$ contains $x^{38}, x^{11}z,
y^{10}$. This is a contradiction.
\end{proof}

\begin{lemma}
\label{lemma:5192750100} Let $X$ be a quasismooth hypersurface of
degree $100$ in $\mathbb{P}(5,19,27, 50)$. Then
$\lct(X)=\frac{25}{6}$.
\end{lemma}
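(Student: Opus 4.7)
My approach mirrors the proof of Lemma~\ref{lemma:519273181} almost line-by-line, since the setup is structurally identical: we have $I=1$, $a_0=5$, and the curve $C_x$ plays the same role. First, I would observe that the curve $C_x$ is irreducible and reduced: restricting the defining polynomial to $\{x=0\}$ yields a hypersurface of degree $100$ in $\mathbb{P}(19,27,50)$ whose monomials include $t^2$ and $yz^3$, forcing irreducibility and confining its singular locus to $\Sing(X)\cap C_x$. A direct application of Lemmas~\ref{lemma:basic-property} and~\ref{lemma:Igusa} (together with Proposition~\ref{proposition:lc-by-finite-morphism}) gives $\lct(X,\tfrac{1}{5}C_x)=\tfrac{25}{6}$, hence the upper bound $\lct(X)\leq\tfrac{25}{6}$. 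Similarly the curve $C_y$ is irreducible and reduced, and $(X,\tfrac{1}{19}C_y)$ is log canonical.

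Suppose $\lct(X)<\tfrac{25}{6}$ and let $D\sim_{\mathbb{Q}}-K_X$ be an effective divisor such that $(X,\tfrac{25}{6}D)$ is not log canonical at some point $P\in X$. By Lemma~\ref{lemma:convexity} I may assume that $\Supp(D)$ contains neither $C_x$ nor $C_y$. The intersection numbers are $D\cdot C_x=\tfrac{2}{513}$ and $D\cdot C_y=\tfrac{2}{135}$, giving
\[
50\,D\cdot C_x=\frac{100}{513}<\frac{6}{25}\qquad\text{and}\qquad 5\,D\cdot C_y=\frac{2}{27}<\frac{6}{25}.
\]
Via Lemma~\ref{lemma:handy-adjunction} and standard multiplicity estimates at the singular points (the maximum indices along $C_x$ and $C_y$ being $50$ and $5$ respectively), these inequalities rule out $P\in C_x\cup C_y$; in particular $P\ne O_x$.

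It remains to exclude the possibility that $P$ is a smooth point of $X\setminus C_x$. For this I would apply Lemma~\ref{lemma:Carolina} with $k=190$: the space $H^0(\mathbb{P},\mathcal{O}_\mathbb{P}(190))$ contains the required pairs of monomials (for instance $x^{38}$ and $x^{19}y^5$; $x^{38}$ and $x^{11}z^5$; $x^{38}$ and $x^{28}t$), which yields
\[
\mult_P(D)\leq \frac{190\cdot 100}{5\cdot 19\cdot 27\cdot 50}=\frac{4}{27}<\frac{6}{25},
\]
contradicting the failure of log canonicity at $P$. The only step requiring genuine care is the initial computation of $\lct(X,\tfrac{1}{5}C_x)$, which depends on the precise form of the defining polynomial restricted to $\{x=0\}$ and on analyzing the singularity of $C_x$ at its intersection with $\Sing(X)$; once this value is established, the remainder of the argument reduces to the routine intersection-number and Carolina-type bounds shown above.
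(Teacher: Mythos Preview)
Your overall strategy matches the paper's, but there is a genuine gap: the curve $C_y$ is \emph{not} irreducible. From the Big Table the only monomials of degree $100$ not involving $y$ are $t^2$, $x^{10}t$, and $x^{20}$, so restricting the defining equation to $\{y=0\}$ gives
\[
t^2+\alpha x^{10}t+\beta x^{20}=0\quad\text{in }\mathbb{P}(5,27,50),
\]
which factors as $(t-c_1x^{10})(t-c_2x^{10})=0$ with $c_1\ne c_2$ (the two roots are distinct precisely because the two index-$5$ points $P_1,P_2$ on $L_{yz}$ are distinct). Hence $C_y=L_1+L_2$ with $P_i\in L_i$. Lemma~\ref{lemma:convexity} then only lets you remove \emph{one} component from $\Supp(D)$, so your inequality $5D\cdot C_y=\tfrac{2}{27}$ cannot be applied as written to exclude both $P_1$ and $P_2$.

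This is exactly why the paper uses $C_z$ rather than $C_y$: the monomial $xy^5$ survives on $\{z=0\}$, forcing $C_z$ to be irreducible, and $C_z$ passes through both $P_1$ and $P_2$. The relevant bound becomes $5D\cdot C_z=\tfrac{2}{19}<\tfrac{6}{25}$, which handles both index-$5$ points at once. Your argument is easily repaired by making this substitution.

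Two minor remarks. First, your claim that the maximal index along $C_x$ is $50$ is incorrect: the point $O_t$ does not lie on $X$ (the equation contains $t^2$), so the maximal index along $C_x$ is $27$ at $O_z$; fortunately your stronger inequality $50D\cdot C_x<\tfrac{6}{25}$ still holds, so no harm is done. Second, your Carolina bound with $k=190$ is perfectly valid and in fact slightly sharper than the paper's choice $k=270$.
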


\begin{proof}
The surface $X$ is singular at the points $O_y$ and $O_z$. Also,
it is singular at two points $P_1$ and $P_2$ on $L_{yz}$. The
point $O_y$ is a singular point of type $\frac{1}{19}(2,3)$ on
$X$. The point $O_z$ is of type $\frac{1}{27}(5,23)$. The last two
points are of type $\frac{1}{5}(2,1)$.

The curve $C_x$ is irreducible and reduced. It is easy to see that
$\lct(X, \frac{1}{5}C_x)=\frac{25}{6}$. Therefore,
$\lct(X)\leqslant \frac{25}{6}$. The curve $C_z$ is irreducible
and reduced.  The log pair $(X, \frac{25}{6\cdot 27}C_z)$ is log
canonical.

Suppose that $\lct(X)<\frac{25}{6}$. Then it follows from
Lemma~\ref{lemma:convexity} that there is an effective
$\Q$-divisor $D\qlineq -K_X$ such that $C_{x},
C_z\not\subset\mathrm{Supp}(D)$ and the pair $(X,\frac{25}{6}D)$
is not log canonical at some point $P\in X$.

The inequality
\[27D\cdot C_x=\frac{2}{19}<\frac{6}{25}\]
shows that the point $P$ cannot be on the curve $C_x$. On the
other hand, the inequality
\[5D\cdot C_z=\frac{2}{19}<\frac{6}{25}\] shows that the point $P$
 cannot be on the curve $C_z$. In particular, the point $P$ can be
 neither the point $P_1$ nor the point $P_2$.

Consequently, the point $P$ must be a smooth point in the outside
of $C_x$. However,  $H^0(\P, \mathcal{O}_\P(270))$ contains
$x^{54}, x^{16}y^{10}, z^{10}$. Then,
Lemma~\ref{lemma:Carolina}  implies a contradictory
inequality
$$
\frac{6}{25}<
\mult_{P}(D)\leqslant \frac{270\cdot 100}{5\cdot 19\cdot 27\cdot 50}<\frac{6}{25}.%
$$
\end{proof}

\begin{lemma}
\label{lemma:711273781} Let $X$ be a quasismooth hypersurface of
degree $81$ in $\mathbb{P}(7,11,27, 37)$. Then
$\lct(X)=\frac{49}{12}$.
\end{lemma}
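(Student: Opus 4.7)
The strategy mirrors the one used in Lemma~\ref{lemma:519273181}. First I would make the defining equation of $X$ explicit: the only monomials of degree $81$ in the weights $(7,11,27,37)$ are $z^{3}$, $xt^{2}$, $y^{4}t$, $x^{10}y$ and $x^{3}y^{3}z$, and quasismoothness forces the coefficients of the first four to be nonzero. One then reads off that $O_{z}\notin X$ and that the singular locus of $X$ consists of exactly three points $O_{x}$, $O_{y}$, $O_{t}$, of indices $7$, $11$, $37$ respectively.

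The curve $C_{x}$ is cut out by $\alpha_{1}z^{3}+\alpha_{3}y^{4}t=0$ in $\mathbb{P}(11,27,37)$, is irreducible, and its only singular point is $O_{t}$, where in the smooth local cover of $X$ it is analytically the cusp $z^{3}+y^{4}=0$ (of local multiplicity $3$). By Lemma~\ref{lemma:Igusa} together with Proposition~\ref{proposition:lc-by-finite-morphism}, $\lct_{O_{t}}(X,C_{x})=\tfrac{1}{3}+\tfrac{1}{4}=\tfrac{7}{12}$, and this is the minimum of $\lct_{P}(X,C_{x})$ along $C_{x}$; hence $\lct(X,\tfrac{1}{7}C_{x})=\tfrac{49}{12}$ and $\lct(X)\leqslant\tfrac{49}{12}$. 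A parallel analysis for $C_{y}=\{y=0\}\cap X$, cut out by $\alpha_{1}z^{3}+\alpha_{2}xt^{2}=0$ in $\mathbb{P}(7,27,37)$, exhibits a cusp $z^{3}+t^{2}=0$ of local multiplicity $2$ at $O_{x}$ and yields $\lct(X,C_{y})=\tfrac{5}{6}$. Since $\tfrac{49}{132}<\tfrac{5}{6}$, the pair $(X,\tfrac{49}{132}C_{y})$ is log canonical.

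For the reverse inequality I would assume $\lct(X)<\tfrac{49}{12}$ and choose an effective $\mathbb{Q}$-divisor $D\sim_{\mathbb{Q}}-K_{X}$ such that $(X,\tfrac{49}{12}D)$ fails to be log canonical at some point $P\in X$. Applying Lemma~\ref{lemma:convexity} to each of $C_{x}$ and $C_{y}$, I may assume $C_{x},C_{y}\not\subset\Supp D$. Then $D\cdot C_{x}=\tfrac{3}{407}$ and $D\cdot C_{y}=\tfrac{3}{259}$, and I would rule out every candidate for $P$ by means of the estimate $\mult_{P}(D)\leqslant r(D\cdot C)/\mult_{P}(C)$. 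At a smooth point of $X$ on $C_{x}$ this gives $\mult_{P}(D)\leqslant\tfrac{3}{407}$; at $P=O_{y}$ (where $\mult_{O_{y}}(C_{x})=1$) it gives $\mult_{P}(D)\leqslant 11\,D\cdot C_{x}=\tfrac{3}{37}$; at $P=O_{t}$ (where $\mult_{O_{t}}(C_{x})=3$) it gives $\mult_{P}(D)\leqslant\tfrac{37}{3}\,D\cdot C_{x}=\tfrac{1}{11}$; at $P=O_{x}$, which lies off $C_{x}$, switching to $C_{y}$ gives $\mult_{P}(D)\leqslant\tfrac{7}{2}\,D\cdot C_{y}=\tfrac{3}{74}$; and at a smooth point of $X$ outside $C_{x}$, Lemma~\ref{lemma:Carolina} in degree $k=189$ gives $\mult_{P}(D)\leqslant\tfrac{189\cdot 81}{7\cdot 11\cdot 27\cdot 37}=\tfrac{81}{407}$. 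The hypotheses of Lemma~\ref{lemma:Carolina} are satisfied because $H^{0}(\mathbb{P},\mathcal{O}_{\mathbb{P}}(189))$ contains the pairs $\{x^{27},x^{16}y^{7}\}$ and $\{x^{27},z^{7}\}$, and because the projection $X\dashrightarrow\mathbb{P}(7,11,27)$ has finite fibres (the defining equation being quadratic in $t$ with leading term $xt^{2}$, and having no pure monomial in $(y,z,t)$ beyond $z^{3}$ and $y^{4}t$). Each of these five bounds is strictly smaller than $\tfrac{12}{49}$, contradicting the inequality $\mult_{P}(D)>\tfrac{12}{49}$ that non-log-canonicity of $(X,\tfrac{49}{12}D)$ would force.

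The delicate part of the whole argument is the local computation at $O_{t}\in C_{x}$ and at $O_{x}\in C_{y}$, where one must verify that after eliminating the regular coordinate ($x$ near $O_{t}$, respectively $y$ near $O_{x}$) the remaining two-variable equation reduces in the smooth local cover to the cuspidal normal form $z^{3}+y^{4}$, respectively $z^{3}+t^{2}$. These normal forms simultaneously give the lct values $\tfrac{7}{12}$ and $\tfrac{5}{6}$ and the cover-multiplicities $3$ and $2$ that drive the estimates above. Once these local computations are in hand, every remaining step is a direct arithmetic comparison, and no weighted blowup or auxiliary pencil is needed.
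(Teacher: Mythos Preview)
Your proposal is correct and follows essentially the same route as the paper's proof: compute $\lct(X,\tfrac{1}{7}C_x)=\tfrac{49}{12}$, verify that $(X,\tfrac{49}{12\cdot 11}C_y)$ is log canonical, then assume a bad $D$ exists and rule out every location for $P$ via intersection bounds against $C_x$ and $C_y$ together with Lemma~\ref{lemma:Carolina} in degree $189$. You are more explicit than the paper in two places --- the local cusp computations at $O_t$ and $O_x$, and the verification that the projection $X\dasharrow\mathbb{P}(7,11,27)$ has finite fibres so that the second alternative in Lemma~\ref{lemma:Carolina} applies --- and you sharpen the bound at $O_x$ by using $\mult_{O_x}(C_y)=2$ (getting $\tfrac{3}{74}$ rather than the paper's $\tfrac{3}{37}$), but these are refinements of the same argument rather than a different approach.
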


\begin{proof}
The surface $X$ is singular only at the points $O_{x}$, $O_{y}$
and $O_{t}$.

The curve $C_x$ is irreducible and reduced. It is easy to see that
$\lct(X, \frac{1}{7}C_x)=\frac{49}{12}$, and hence
$\lct(X)\leqslant \frac{49}{12}$. The curve $C_y$ is irreducible
and reduced.  Moreover, the log pair $(X, \frac{49}{11\cdot
12}C_y)$ is log canonical.

Suppose that $\lct(X)<\frac{49}{12}$. By
Lemma~\ref{lemma:convexity}, there is an effective $\Q$-divisor
$D\qlineq -K_X$ such that the support of $D$ contains neither  the
curve $C_x$ nor the curve $C_y$, and the log pair
$(X,\frac{49}{12}D)$ is not log canonical at some point $P\in X$.

The three inequalities
\[11D\cdot C_x=\frac{3}{37}<\frac{12}{49},\]
\[7D\cdot C_y=\frac{3}{37}<\frac{12}{49},\]
\[\mult_{O_t}(D)=\frac{\mult_{O_t}(D)\mult_{O_t}(C_{x})}{3}\leqslant
\frac{37}{3}D\cdot C_x=\frac{1}{ 11}<\frac{12}{49}\] show that the
point $P$ is a smooth point in the outside of $C_x$.

However, since $H^0(\P, \mathcal{O}_\P(189))$ contains $x^{27},
x^{16}y^7, z^{7}$, Lemma~\ref{lemma:Carolina} implies
an absurd inequalities
$$
\frac{12}{49}<\mult_{P}(D)\leqslant \frac{189\cdot 81}{7\cdot 11\cdot 27\cdot 37}
<\frac{12}{49}.%
$$
Therefore, $\lct(X)=\frac{49}{12}$.
\end{proof}

\begin{lemma}
\label{lemma:7112744}
Let $X$ be a quasismooth hypersurface of
degree $88$ in $\mathbb{P}(7,11,27, 44)$. Then
$\lct(X)=\frac{35}{8}$.
\end{lemma}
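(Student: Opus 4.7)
The plan is to mirror the strategy of Lemma~\ref{lemma:711273781} (the analogous $X_{81}\subset\mathbb{P}(7,11,27,37)$ case) while incorporating the splitting argument of Lemma~\ref{lemma:I-2-infinite-series-1}, since here $C_x$ splits into two components that meet with fourth-order tangency at $O_z$. First I would analyze the geometry: the monomials of $f$ of degree $88$ without $x$ are exactly $t^2$, $y^4t$, and $y^8$, so $f|_{x=0}$ is a quadratic in $t$ that factors as $(t-\alpha_1 y^4)(t-\alpha_2 y^4)$ for distinct $\alpha_1,\alpha_2$. Hence $C_x=L_1+L_2$ with $L_i=\{x=0,\,t=\alpha_i y^4\}$ and $L_1\cap L_2=\{O_z\}$. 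The singular locus of $X$ consists of $O_x$ of type $\frac{1}{7}(3,1)$, $O_z$ of type $\frac{1}{27}(11,17)$, and two points $Q_1,Q_2$ of type $\frac{1}{11}(7,5)$ lying on $L_1$ and $L_2$ respectively (these arise from $\gcd(11,44)=11$ along $L_{yt}$). The relevant intersection numbers are $L_i\cdot(-K_X)=\frac{1}{297}$, $L_1\cdot L_2=\frac{4}{27}$ (computed in the orbifold chart at $O_z$, where the branches have fourth-order tangency), and $L_i^2=-\frac{37}{297}$.

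The upper bound follows from the computation $\lct(X,\frac{1}{7}C_x)=\frac{35}{8}$, equivalently $\lct(X,C_x)=\frac{5}{8}$. Away from $O_z$ the two components are disjoint and smooth, so the local threshold is $1$. At $O_z$, I pass to the $(y,t)$-orbifold chart obtained by eliminating $x$ via the $xz^3$ term; there $C_x$ has local equation $(t-\alpha_1 y^4)(t-\alpha_2 y^4)=0$. A linear substitution on $t$ reduces this to $u(u-v^4)=0$, and the $(4,1)$-weighted blow-up achieves simple normal crossings, yielding the local value $5/8$. Thus $\lct(X)\leqslant\frac{35}{8}$.

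For the lower bound, suppose $\lct(X)<\frac{35}{8}$ and take an effective $\mathbb{Q}$-divisor $D\qlineq -K_X$ with $(X,\frac{35}{8}D)$ not log canonical at some $P\in X$. By Lemma~\ref{lemma:convexity} (comparing with the log-canonical divisor $\frac{5}{8}C_x$) I may assume $L_1\not\subset\Supp(D)$, and similarly $C_y\not\subset\Supp(D)$ since $\lct(X,C_y)=5/6$ at its cusp $O_x$. From $L_1\cdot D=\frac{1}{297}$ and Lemma~\ref{lemma:handy-adjunction}, the pair is log canonical at every point of $L_1$: the strongest constraint is $27\cdot\frac{35}{8}\cdot\frac{1}{297}<1$ at $O_z$. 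Hence $P\notin L_1$. Writing $D=\mu L_2+\Omega$ with $L_2\not\subset\Supp(\Omega)$, the inequality $L_1\cdot D\geqslant\mu L_1\cdot L_2=\frac{4\mu}{27}$ forces $\mu\leqslant\frac{1}{44}$, and then $\Omega\cdot L_2=\frac{1+37\mu}{297}\leqslant\frac{81}{44\cdot 297}$; another application of Lemma~\ref{lemma:handy-adjunction} excludes every point of $L_2$, including $Q_2$ (with $r=11$, the condition $11\cdot\frac{35}{8}\cdot\frac{81}{44\cdot 297}<1$ holds). Therefore $P\notin C_x$.

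Two cases remain. For $P=O_x$, the intersection $C_y\cdot D=\frac{2}{189}$ together with the cuspidal orbifold multiplicity $\mult_{O_x}(C_y)=2$ give $\mult_{O_x}(D)\leqslant\frac{1}{27}<\frac{8}{35}$, below the non-log-canonical threshold at the $\frac{1}{7}(3,1)$ point. For a smooth $P\in X\setminus C_x$, the projection $\psi\colon X\dashrightarrow\mathbb{P}(7,11,27)$ is a finite morphism (because $t^2$ appears in $f$ and $O_t\notin X$), and $H^0(\mathbb{P},\mathcal{O}_{\mathbb{P}}(189))$ contains both pairs $\{x^{27},x^{16}y^7\}$ and $\{x^{27},z^7\}$, so Lemma~\ref{lemma:Carolina} yields
$$\mult_P(D)\leqslant\frac{189\cdot 88}{7\cdot 11\cdot 27\cdot 44}=\frac{2}{11}<\frac{8}{35},$$
the final contradiction. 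The main technical obstacle is the local $\lct$ computation at $O_z$: resolving the fourth-order tangency of the two branches of $C_x$ on the $\frac{1}{27}(11,17)$ quotient singularity to obtain the precise value $5/8$.
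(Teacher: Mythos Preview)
Your proof is correct and follows essentially the same approach as the paper's: split $C_x$ into $L_1+L_2$, use Lemma~\ref{lemma:convexity} to drop one component and $C_y$ from $\Supp(D)$, bound the coefficient of the other component via $L_1\cdot L_2=\frac{4}{27}$, and then apply Lemmas~\ref{lemma:handy-adjunction} and~\ref{lemma:Carolina} exactly as you do. One tiny notational slip: the index-$11$ points lie on $L_{xz}$ (the line $x=z=0$), not $L_{yt}$; otherwise your argument matches the paper and even supplies more detail on the local threshold computation at $O_z$.
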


\begin{proof}
The surface $X$ is singular at the points $O_{x}$ and $O_z$. The former is a
singular point of type $\frac{1}{7}(3,1)$ and the latter is of type $\frac{1}{27}(11,17)$. The surface is also singular  at the
points $O_{1}$ and $O_{2}$ on $L_{xz}$. They are of type
$\frac{1}{11}(7,5)$.

The curve $C_{x}$ consists of two smooth rational curves $L_1$ and $L_2$. Each curve $L_i$ contains the singular point $O_i$. The curves $L_1$ and $L_2$ intersects each other only at the point $O_z$.  We have
$$
L_{1}^2=L_{2}^2=-\frac{37}{297},\ \  L_{1}\cdot L_{2}=\frac{4}{27}.%
$$
It is easy to check $\lct(X,\frac{1}{7}C_{x})=\frac{35}{8}$. Meanwhile, the curve $C_y$ is irreducible and reduced. Also, the log pair $(X, \frac{35}{88}C_y)$ is log canonical.

Suppose that $\lct(X)<\frac{35}{8}$. Then there is an effective
$\Q$-divisor $D\qlineq -K_X$ such that the log pair
$(X,\frac{35}{8} D)$ is not log canonical at some point $P\in X$.
By Lemma~\ref{lemma:convexity}  we may assume that the support of
$D$ contains neither $C_y$ nor $L_{2}$ without loss of generality.

The inequality
\[27D\cdot L_2=\frac{1}{11}<\frac{8}{35}\]
shows that the point $P$ is located in the outside of $L_2$. The inequality
\[ 7D\cdot C_y=\frac{2}{27}<\frac{8}{35}\]
implies that the point $P$ cannot be $O_x$.
Write
$$
D=mL_{1}+\Omega,
$$
where $\Omega$ is an effective $\mathbb{Q}$-divisor such that
$L_{1}\not\subset\mathrm{Supp}(\Omega)$. If $m\ne 0$, then
$$
\frac{1}{297}=D\cdot L_{2}=\big(mL_{1}+\Omega\big)\cdot L_{2}\geqslant  mL_{1}\cdot L_{2}=\frac{4m}{27},%
$$
and hence $m\leqslant \frac{1}{44}$. Then
$$
(D-mL_1)\cdot L_1=\frac{1+37m}{297}\leqslant \frac{3}{484}<\frac{8}{35\cdot 11},
$$
and hence Lemma~\ref{lemma:handy-adjunction} implies that the point $P$ cannot be on the curve $L_1$.
Therefore, the point $P$ is a smooth point in the outside of $C_x$.
However, Lemma~\ref{lemma:Carolina} shows
$$
\mult_{P}\big(D\big)\leqslant\frac{2}{11}<\frac{8}{35}%
$$
  since  $H^0(\P,
\mathcal{O}_\P(189))$ contains monomials $x^{27}, z^7, x^{16}y^7$.
This is a contradiction.
\end{proof}

\begin{lemma}\label{lemma:915172060}
Let $X$ be a quasismooth hypersurface of
degree $60$ in $\mathbb{P}(9,15,17, 20)$. Then
$\lct(X)=\frac{21}{4}$.
\end{lemma}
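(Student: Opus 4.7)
The plan is to verify $\lct(X,\tfrac{1}{9}C_{x})=\tfrac{21}{4}$ and then prove the matching lower bound by the standard inversion-of-adjunction strategy. Enumerating weighted-degree-$60$ monomials in $\mathbb{C}[x,y,z,t]$ shows that, after rescaling, $X$ is defined by $x^{5}y+y^{4}+t^{3}+xz^{3}=0$ (all four coefficients must be non-zero by quasismoothness). Hence $O_{y},O_{t}\notin X$, and the singular locus of $X$ is $\{O_{x},O_{z},P_{y},P_{t}\}$, of types $\tfrac{1}{9}(1,7)$, $\tfrac{1}{17}(1,7)$, $\tfrac{1}{3}(1,1)$ and $\tfrac{1}{5}(1,2)$ respectively, with $P_{y}\in L_{xy}$ and $P_{t}\in L_{yt}$. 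The curve $C_{x}=\{y^{4}+t^{3}=0\}\subset\mathbb{P}(15,17,20)$ is irreducible and smooth off $O_{z}$; in the smooth cover of the $\tfrac{1}{17}(1,7)$ quotient the germ of $C_{x}$ at $O_{z}$ is the $E_{6}$-cusp $y^{4}+t^{3}=0\subset\mathbb{C}^{2}$, whose log canonical threshold is $\tfrac{1}{3}+\tfrac{1}{4}=\tfrac{7}{12}$ by Lemma~\ref{lemma:Igusa}. Proposition~\ref{proposition:lc-by-finite-morphism} then gives $\lct_{O_{z}}(X,C_{x})=\tfrac{7}{12}$ and $\lct(X,\tfrac{1}{9}C_{x})=\tfrac{21}{4}$, so $\lct(X)\leqslant\tfrac{21}{4}$.

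For the lower bound, suppose for contradiction that some effective $D\sim_{\Q}-K_{X}$ makes $(X,\tfrac{21}{4}D)$ non-log-canonical at a point $P$. Lemma~\ref{lemma:convexity} lets me assume $C_{x},C_{y},C_{z},C_{t}\not\subset\Supp(D)$, and I compute
\[
D\cdot C_{x}=\tfrac{1}{85},\quad D\cdot C_{y}=\tfrac{1}{51},\quad D\cdot C_{z}=\tfrac{1}{45},\quad D\cdot C_{t}=\tfrac{4}{153}.
\]
Each of $C_{y},C_{z},C_{t}$ is smooth away from $O_{x}$, so any smooth point of $X$ on $C_{y}\cup C_{z}\cup C_{t}$ is excluded because the corresponding intersection number is $<\tfrac{4}{21}$. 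Lemma~\ref{lemma:handy-adjunction} applied with $C_{x}$ then eliminates the smooth locus of $C_{x}$ and the point $P_{t}$ (local index $r\leqslant 5$, $\tfrac{21 r}{4\cdot 85}\leqslant\tfrac{21}{68}<1$), while the same lemma applied with $C_{z}$ eliminates $P_{y}$ (index $3$, $\tfrac{21\cdot 3}{4\cdot 45}=\tfrac{7}{20}<1$). A smooth point $P$ of $X$ with $xyzt\neq 0$ is handled by the pencil $\{\lambda x^{2}z+\mu yt=0\}\subset|\mathcal{O}_{X}(35)|$: its base locus lies on $\{xyzt=0\}$ and the unique member $M$ through $P$ has $D\cdot M=\tfrac{7}{153}<\tfrac{4}{21}$, so Lemma~\ref{lemma:convexity} produces an irreducible component $M_{0}$ of $M$ with $M_{0}\not\subset\Supp(D)$ and $P\in M_{0}$, yielding $\mult_{P}(D)\leqslant D\cdot M_{0}\leqslant\tfrac{7}{153}$; the complementary pencils $\{\lambda x^{5}+\mu y^{3}=0\}\subset|\mathcal{O}_{X}(45)|$ and $\{\lambda x^{4}y+\mu z^{3}=0\}\subset|\mathcal{O}_{X}(51)|$ dispose of the residual cases where the member through $P$ in $|\mathcal{O}_{X}(35)|$ is reducible and $P$ lies only on a component contained in $\Supp(D)$.

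The genuine obstacles are $P=O_{x}$ and $P=O_{z}$: intersection numbers alone are too weak, since at $O_{x}$ one gets only $\mult_{O_{x}}(D)\leqslant\tfrac{1}{17}$ (from $D\cdot C_{y}=\tfrac{1}{51}$ and $\mult_{O_{x}}(C_{y})=3$), and at $O_{z}$ the cuspidal multiplicity $\mult_{O_{z}}(C_{x})=3$ gives only $\mult_{O_{z}}(D)\leqslant\tfrac{1}{15}$; these are far from the required $\tfrac{4}{189}$ and $\tfrac{4}{357}$ respectively. To finish the argument I would perform the Kawamata weighted blow-up $\pi\colon\bar X\to X$ extracting the divisor $E$ over the singular point in question (weights $(1,7)$ in either case, with discrepancy $-\tfrac{1}{9}$ or $-\tfrac{1}{17}$ respectively), compute the log pull-back of $(X,\tfrac{21}{4}D)$, and use the non-negativity of the intersection numbers $\bar\Omega\cdot\bar C_{x}$, $\bar\Omega\cdot\bar C_{y}$ and $\bar\Omega\cdot E$ to bound the coefficients with which the proper transforms of $C_{x},C_{y}$ and the exceptional divisor $E$ enter $\Supp(\pi^{*}D)$. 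As in Lemmas~\ref{lemma:I-6-infinite-series-1-n-2} and~\ref{lemma:I-6-infinite-series-1}, one or two further weighted blow-ups at the residual quotient singularities of $\bar X$ lying on $E$ should then produce the desired numerical contradictions; the chain of estimates will be tight precisely because $\tfrac{7}{12}$ is the $E_{6}$-threshold, which is in perfect agreement with the claimed equality $\lct(X)=\tfrac{21}{4}$.
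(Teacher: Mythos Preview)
Your proposal contains a fundamental miscalculation of what needs to be shown at the singular points, and as a consequence you have invented obstacles that do not exist.

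Recall the paper's convention: $\mult_{P}(D)$ denotes the multiplicity of the pull-back of $D$ to the orbifold chart. With this convention, Lemma~\ref{lemma:multiplicity} combined with Proposition~\ref{proposition:lc-by-finite-morphism} says that if $(X,\tfrac{21}{4}D)$ is not log canonical at $P$, then $\mult_{P}(D)>\tfrac{4}{21}$, \emph{regardless of the index $r$ of the singularity}. There is no extra factor of $r$; the index already enters through the intersection-number inequality $D\cdot C\geqslant\tfrac{1}{r}\mult_{P}(D)\mult_{P}(C)$. Your ``required'' bounds $\tfrac{4}{189}=\tfrac{4}{21\cdot 9}$ and $\tfrac{4}{357}=\tfrac{4}{21\cdot 17}$ double-count the index and are far too stringent. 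In fact your own computations already finish the job: you obtained $\mult_{O_{x}}(D)\leqslant\tfrac{1}{17}<\tfrac{4}{21}$ and $\mult_{O_{z}}(D)\leqslant\tfrac{1}{15}<\tfrac{4}{21}$, so both points are excluded immediately by Method~3.1. No weighted blow-ups are needed here at all; this is exactly how the paper disposes of $O_{x}$ and $O_{z}$ in two lines.

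Your treatment of smooth points outside $C_{x}\cup C_{y}\cup C_{z}$ is also incomplete. When you apply Lemma~\ref{lemma:convexity} to the member $M$ of the pencil through $P$, the lemma only guarantees that \emph{some} irreducible component of $M$ is absent from $\Supp(D)$; it does not guarantee that this component passes through $P$. Your appeal to ``complementary pencils'' to patch this is not worked out and would require exactly the kind of irreducibility analysis you omit. The paper instead uses the single pencil $\lambda z^{3}+\mu x^{4}y=0$ in $|\mathcal{O}_{X}(51)|$, whose base locus is $\{O_{x},P_{2}\}$, and proves directly (by an explicit affine chart computation) that every member through a smooth point $P\notin C_{x}\cup C_{y}\cup C_{z}$ is irreducible; then $D\cdot C=\tfrac{1}{9}<\tfrac{4}{21}$ gives the contradiction in one stroke.
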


\begin{proof}
We may assume that the surface $X$ is defined by the quasihomogeneous equation
$$xz^3+x^5y-y^4+t^3=0.$$
Note that the surface $X$ is singular at $O_x$ and $O_z$. It is also singular at the point $P_1=[1:1:0:0]$ and  the point $P_2=[0:1:0:1]$.

The curves $C_x$, $C_y$, and $C_z$  are  irreducible and reduced. We have
$$
\lct(X, \frac{1}{9}C_x)=\frac{21}{4},\ \ \lct(X,
\frac{1}{15}C_y)=10,\ \ \lct(X,
\frac{1}{17}C_z)=17.
$$
The curve $C_x$ is singular at the point $O_z$ with multiplicity $3$.

Suppose that $\lct(X)<\frac{21}{4}$. Then there is an effective
$\Q$-divisor $D\qlineq -K_X$ such that the pair $(X,\frac{21}{4}D)$ is
not log canonical at some point $P$. By
Lemma~\ref{lemma:convexity}, we may assume that the support of
$D$ contains none of the curves $C_x$, $C_y$, $C_z$.

The three inequalities
$$
\frac{17}{3}D\cdot C_x=\frac{17\cdot 9\cdot 60}{3\cdot9\cdot 15\cdot 17\cdot 20}<\frac{4}{21},%
$$
$$
9D\cdot C_y=\frac{9\cdot 15\cdot 60}{9\cdot 15\cdot 17\cdot 20 }<\frac{4}{21},%
$$
$$
3D\cdot C_z=\frac{3\cdot 17\cdot 60}{9\cdot 15\cdot 17\cdot 20}<\frac{4}{21}%
$$
imply that the point $P$ is located in the outside of $C_x\cup C_y\cup C_z$.

Let $\mathcal{L}$ be the pencil on $X$ that is cut out by the equations
$$
\lambda z^3+\mu x^4y=0,
$$
where $[\lambda :\mu]\in\P^1$. Then the base locus of the pencil
$\mathcal{L}$ consists of the  points $P_{2}$ and $O_{x}$.
Let $C$ be the unique curve in $\mathcal{L}$ that passes through
the point $P$. Then $C$ is cut out on $X$ by an equation
$$
x^4y=\alpha z^3,
$$
where $\alpha$ is a non-zero constant, since the point $P$
is located in the outside of $C_x\cup C_y\cup C_z$. The curve $C$ is smooth
outside of the points $P_{2}$ and $O_{x}$ by the Bertini theorem
because $C$ is isomorphic to a general curve in the pencil
$\mathcal{L}$ unless $\alpha=-1$. In the case when $\alpha=-1$,
the curve $C$ is smooth outside the points $P_{2}$ and $O_{x}$ as
well.

We claim that the curve $C$ is irreducible. If so, then we may
assume that the support of $D$ does not contain the curve $C$ and
hence we obtain a contradiction
$$
\frac{4}{21}<\mult_{Q}(D)\leqslant D\cdot C=\frac{ 51\cdot
60}{9\cdot 15\cdot 17\cdot 20}<\frac{4}{21}.
$$

For the irreducibility of the curve $C$, we may consider the curve
$C$ as a surface in $\mathbb{C}^4$ defined by the equations
$t^3+y^4+(1+\alpha)xz^3=0$ and  $x^4y=\alpha z^3$. This
surface is isomorphic to the surface in $\mathbb{C}^4$ defined by
the equations $t^3+y^4+\beta xz^3=0$ and  $x^4y=z^3$, where
$\beta=1$ or $0$. Then, we consider the surface in $\P^4$ defined
by the equations $t^3w+y^4+\beta xz^3=0$ and $x^4y=z^3w^2$. We
take the affine piece defined by $t\ne 1$. This affine
piece is isomorphic to the surface defined by the equation
$x^4y+z^3(y^4+\beta xz^3)^2=0$ in $\mathbb{C}^3$. If $\beta=1$,
the surface is irreducible. If $\beta=0$, then it has an extra
component defined by $y=0$. However, this component originates
from the hyperplane $w=0$ in $\P^4$. Therefore, the surface  in
$\mathbb{C}^4$ defined by the equations $t^3+y^4=0$ and $x^4y=z^3$
is also irreducible.
\end{proof}

\begin{lemma}
\label{lemma:915232369} Let $X$ be a quasismooth hypersurface of
degree $69$ in $\mathbb{P}(9,15,23, 23)$. Then
 $\lct(X)=6$.
\end{lemma}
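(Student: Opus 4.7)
The plan is to mimic the strategy of Lemma~\ref{lemma:I-4-infinite-series-1}, adapted to the index $I=1$ case. A computation of degree-$69$ monomials in $\mathbb{P}(9,15,23,23)$ shows that we may write $X$ as $g(z,t)+xy(\alpha y^3+\beta x^5)=0$ with $g(z,t)=\prod_{i=1}^{3}(z-c_{i}t)$ having three distinct roots and $\alpha,\beta\ne 0$; the singular points are $O_x$ of type $\frac{1}{9}(1,1)$, $O_y$ of type $\frac{1}{15}(1,1)$, three points $P_1,P_2,P_3\in L_{xy}$ of type $\frac{1}{23}(9,15)$, and one additional point $Q\in L_{zt}$ of index~$3$ cut out by $\alpha y^3+\beta x^5=0$. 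The curve $C_x$ splits as $L_1+L_2+L_3$ with $L_i=\{x=z-c_it=0\}$ all meeting at $O_y$; similarly $C_y=M_1+M_2+M_3$ meets at $O_x$, and each hyperplane section $X\cap\{z-c_it=0\}$ contains a third component $R_i$, with all three $R_i$'s passing through $Q$ and each $R_i$ meeting $L_i,M_i$ at $P_i$, where $R_i$ is locally cuspidal of multiplicity~$3$. A direct computation gives the intersection numbers $-K_X\cdot L_i=\frac{1}{345}$, $-K_X\cdot M_i=\frac{1}{207}$, $-K_X\cdot R_i=\frac{1}{69}$, $L_i\cdot L_j=\frac{1}{15}$, $M_i\cdot M_j=\frac{1}{9}$, $R_i\cdot R_j=\frac{1}{3}$ (for $i\ne j$), $L_i\cdot M_i=\frac{1}{23}$, $L_i\cdot R_i=\frac{3}{23}$, $M_i\cdot R_i=\frac{5}{23}$, $L_i^2=-\frac{37}{345}$, $M_i^2=-\frac{31}{207}$, $R_i^2=-\frac{1}{69}$.

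For the upper bound, the three curves $L_1,L_2,L_3$ lift in the $\mu_{15}$-cover at $O_y$ to three distinct smooth lines through the origin, so Lemma~\ref{lemma:basic-property}(3) yields $\lct_{O_y}(X,\tfrac{1}{9}C_x)=9\cdot\tfrac{2}{3}=6$, while the same local quantity at every other point of $X$ is at least $9$; hence $\lct(X)\le \lct(X,\tfrac{1}{9}C_x)=6$.

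For the lower bound, assume there is an effective $D\sim_{\mathbb{Q}}-K_X$ such that $(X,6D)$ is not log canonical at some $P$, so that $\mult_P(D)>\tfrac{1}{6}$ in the orbifold sense. Each possible location of $P$ is excluded by a separate intersection argument. The points $O_x$ and $O_y$ are ruled out by Lemma~\ref{lemma:convexity} applied to the lc divisors $\tfrac{1}{15}C_y$ and $\tfrac{1}{9}C_x$, giving $\mult_P(D)\le\tfrac{1}{23}$. For $P=P_i$, one applies Lemma~\ref{lemma:convexity} to $\tfrac{1}{23}(L_i+M_i+R_i)\sim_{\mathbb{Q}}-K_X$; using Lemma~\ref{lemma:Igusa} applied to $xy(y^3+x^5)=z_1^1 z_2^1(z_1^5+z_2^3)$ one gets $\lct_{P_i}(X,L_i+M_i+R_i)=\tfrac{8}{23}$, so $(X,\tfrac{6}{23}(L_i+M_i+R_i))$ is log canonical, whence $\Supp(D)$ omits some irreducible component of that divisor; each of the three cases yields $\mult_{P_i}(D)\le\tfrac{1}{9}$. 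For $P=Q$, one checks that $(X,\tfrac{2}{15}(R_1+R_2+R_3))$ is log canonical (its only dangerous points are $Q$, where three transverse smooth curves meet with total coefficient $\tfrac{2}{5}<2$, and the $P_i$'s, where Lemma~\ref{lemma:Igusa} gives $\lct(X,R_i)=\tfrac{8}{15}\ge\tfrac{2}{15}$) and applies Lemma~\ref{lemma:convexity} to obtain $\mult_Q(D)\le 3\,R_{i_0}\cdot D=\tfrac{1}{23}$ for some $i_0$.

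It remains to handle smooth points. For a smooth $P$ lying on exactly one of the special curves $L_i$, $M_i$, or $R_i$, the same convexity arguments let us write $D=m\,C+\Omega$ with $C$ the relevant curve and $m\le 1/23$; Lemma~\ref{lemma:handy-adjunction} then gives log canonicity at $P$ since the resulting estimate $6\,\Omega\cdot C\le 1$ is easily satisfied. Finally, for a smooth $P$ lying outside $C_x\cup C_y\cup R_1\cup R_2\cup R_3$, the unique member $B$ through $P$ of the pencil $|\mathcal{O}_X(23)|=\langle z,t\rangle$ is cut by $\mu z-\lambda t=0$ with $\lambda/\mu\ne c_i$; this $B$ is defined on $\mathbb{P}(9,15,23)$ by $\gamma t^3+xy(\alpha y^3+\beta x^5)=0$ with $\gamma\ne 0$, and it is irreducible and smooth (by Bertini outside the base points, and by inspection in the orbifold covers at $O_x,O_y,Q$). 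Hence $(X,\tfrac{6}{23}B)$ is log canonical, and Lemma~\ref{lemma:convexity} yields $B\not\subset\Supp(D)$, giving $\mult_P(D)\le D\cdot B=\tfrac{1}{45}<\tfrac{1}{6}$. The hardest step is the exclusion of the $P_i$'s, which hinges on the interaction of three curves through $P_i$ together with the cuspidal structure of $R_i$ there; the Igusa computation for $xy(y^3+x^5)$ is what makes the convexity divisor $\tfrac{1}{23}(L_i+M_i+R_i)$ log canonical with the margin $8>6$ needed to proceed.
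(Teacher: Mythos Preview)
Your proof is correct and takes essentially the same approach as the paper: the upper bound via $\lct(X,\tfrac{1}{9}C_x)=6$, then exclusion of every possible non-lc point using convexity with $C_x$, $C_y$, the hyperplane sections $\{z-c_it=0\}$, the sum $\sum R_i$, and finally the irreducible members of the pencil $|\mathcal{O}_X(23)|$ for generic smooth points. Your $L_i,M_i,R_i$ are exactly the paper's components of $C_x,C_y,C_z,C_t,C_{z-t}$, and your explicit Igusa calculation $\lct\bigl(xy(y^3+x^5)\bigr)=\tfrac{8}{23}$ is precisely what underlies the paper's assertion that $(X,\tfrac{6}{23}C_z)$ is log canonical.
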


\begin{proof}
We may assume that the surface $X$ is defined by the
quasihomogeneous equation
$$
zt(z-t)+xy(y^3-x^5)=0.
$$
The surface $X$ is singular at three distinct points $O_x$,
$O_y$, $P_1=[1:1:0:0]$. Also, it is singular at three distinct points $O_z$, $O_t$, $Q_1=[0:0:1:1]$.

The curve $C_x$ consists of three distinct curves $L_{xz}$,
$L_{xt}$ and $R_x=\{x=z-t=0\}$ that intersect altogether at the
point $O_y$. Similarly, the curve $C_y$ consists of three curves
$L_{yz}$, $L_{yt}$ and $R_y=\{y=z-t=0\}$ that intersect altogether
at the point $O_x$. The curve $C_z$ consists of $L_{xz}$,
$L_{yz}$, and $R_z=\{z=y^3-x^5=0\}$. The curve $R_z$ is singular
at the point $O_t$ with multiplicity $3$.  The curve $C_t$
consists of $L_{xt}$, $L_{yt}$ and $R_t=\{t=y^3-x^5=0\}$. The
curve $R_t$ is singular at the point $O_z$ with multiplicity $3$.

Note that $\lct(X, \frac{1}{9}C_x)=6$. The log pairs $(X,
\frac{6}{15}C_y)$,  $(X, \frac{6}{23}C_z)$ and  $(X,
\frac{6}{23}C_t)$ are log canonical.

Suppose that $\lct(X)<6$. Then there is an effective $\Q$-divisor
$D\qlineq -K_X$ such that the pair $(X,6D)$ is not log canonical
at some point $P\in X$. Lemma~\ref{lemma:convexity} implies that
we may assume that the support of $D$ contains neither $R_x$ nor
$R_y$ by a linear coordinate change. Furthermore, we may assume
that the support of $D$ does not  contain at least one component
of $C_z$. Also, it may be assumed not to contain at least one
component of $C_t$.

The inequalities
 \[15D\cdot R_x=\frac{15\cdot 23\cdot 9}{9\cdot 15\cdot 23\cdot 23}
 =\frac{1}{23}<\frac{1}{6},\]
\[23D\cdot R_y
=\frac{23\cdot 23\cdot 15}{9\cdot 15\cdot 23\cdot 23
}=\frac{1}{9}<\frac{1}{6}\] show that the point $P$ is located in
the outside of $R_x\cup R_y$.

 Then the inequalities
$$
23D\cdot L_{xz}=\frac{1}{15}<\frac{1}{6},\ \ 23D\cdot
L_{yz}=\frac{1}{9}<\frac{1}{6},\ \ \frac{23}{3}D\cdot
R_z=\frac{1}{9}<\frac{1}{6}
$$
show that $\mult_{O_t}(D)<\frac{1}{6}$, and hence the point $P$
cannot be the point $O_t$. By the same way, we can show that $P\ne
O_z$.

Write $D=mR_z+\Omega$, where $\Omega$ is an effective
$\mathbb{Q}$-divisor such that
$R_z\not\subset\mathrm{Supp}(\Omega)$. Then $m\leqslant
\frac{1}{6}$ since $(X,6D)$ is log canonical at $O_{t}$. We have
$$
R_{z}\cdot(L_{xz}+L_{yz})=\frac{8}{23},\ \ R_z\cdot D=\frac{1}{69},%
$$
and hence $R_z^2=-\frac{1}{69}$. Then
$$
\Omega\cdot R_z=D\cdot R_z-mR_z^2=\frac{1+m}{3\cdot 23}
\leqslant \frac{7}{6\cdot 3\cdot 23}<\frac{1}{3\cdot 6}.%
$$
Lemma~\ref{lemma:handy-adjunction} implies that the point $P$
cannot belong to $R_z$. In particular, the point $P$ cannot be the
point $P_1$.

 Write $D=a L_{xz}+\Delta$, where $\Delta$ is an
effective $\mathbb{Q}$-divisor whose support does not contain the
curve $L_{xz}$. Then $a\leqslant \frac{1}{6}$. Then
$$
\Omega\cdot L_{xz}=6(D\cdot L_{xz}-aL_{xz}^2)=\frac{6\cdot(1+37a)}{345}\leqslant \frac{6+37}{345}=\frac{43}{345}<1,%
$$
because $L_{xz}^2=-\frac{37}{345}$. Thus, we see that $P\not\in
L_{xz}$. Similarly, we can show that $P\not\in L_{yz}$. Thus, we
see that $P\not\in C_z$. In the same way, we can see that $P$ is
not contained in the curves $C_t$ and $\{z-t=0\}$.

Therefore, the point $P$ is a smooth point in the outside of
$C_z\cup C_t\cup\{z-t=0\}$. Let $E$ be the unique curve on $X$
such that $E$ is given by the equation $z=\lambda t$ and $P\in E$,
where $\lambda$ is a non-zero constant different from $1$. Then
$E$ is quasismooth and hence irreducible. Therefore, we may assume
that the support of $D$ does not contain the curve $E$. Then
$$
\mult_{P}(D)\leqslant D\cdot E=\frac{23\cdot 69}{9\cdot 15\cdot 23\cdot 23}<\frac{1}{6}.%
$$
This is a contradiction.
\end{proof}

\begin{lemma}
\label{lemma:11293949127} Let $X$ be a quasismooth hypersurface of
degree $127$ in $\mathbb{P}(11,29,39, 49)$. Then
$\lct(X)=\frac{33}{4}$.
\end{lemma}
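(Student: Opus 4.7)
The plan is to compute $\lct(X,\frac{1}{11}C_x)=\frac{33}{4}$ for the upper bound, then to assume $\lct(X)<\frac{33}{4}$ and derive a contradiction for every possible non-lc center $P\in X$, following the scheme of Lemmas~\ref{lemma:711273781} and~\ref{lemma:7112744}.

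\emph{Upper bound.} Since $11+29+39+49=128$, we have $I=1$ and $-K_X\sim\mathcal{O}_X(1)$. A monomial count shows the only weighted degree $127$ monomials are $xy^4$, $x^8z$, $yt^2$, $z^2t$, and quasismoothness forces all four coefficients to be nonzero; after rescaling we take $f=xy^4+x^8z+yt^2+z^2t$. The surface $X$ has four singular points $O_x,O_y,O_z,O_t$ of indices $11,29,39,49$, with $O_y$ of type $\frac{1}{29}(1,2)$. Setting $x=0$ factors as $t(yt+z^2)$, so $C_x=L_{xt}+R_x$ with $R_x=\{x=yt+z^2=0\}$; the two components meet only at $O_y$, where on the smooth cover of $X$ they look like $\{v=0\}$ and $\{v+u^2=0\}$, giving $L_{xt}\cdot R_x=\frac{2}{29}$. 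The pair $(\mathbb{C}^2,v(v+u^2))$ has log canonical threshold $\frac{3}{4}$ (via a two-step resolution of the tacnode at the origin: blow up the origin, then blow up the resulting triple point), so by Proposition~\ref{proposition:lc-by-finite-morphism}, $\lct_{O_y}(X,\frac{1}{11}C_x)=\frac{33}{4}$; local lct's elsewhere exceed $11$. Hence $\lct(X,\frac{1}{11}C_x)=\frac{33}{4}$, so $\lct(X)\leq\frac{33}{4}$.

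\emph{Lower bound, routine centers.} Suppose there is an effective $\Q$-divisor $D\qlineq -K_X$ and a point $P\in X$ with $(X,\frac{33}{4}D)$ not log canonical at $P$. By Lemma~\ref{lemma:convexity} we may assume at least one of $L_{xt}$, $R_x$ is not in $\Supp(D)$. Lemma~\ref{lemma:Carolina} will exclude smooth points $P\in X\setminus C_x$, once the requisite monomials in $H^0(\mathbb{P},\mathcal{O}_\mathbb{P}(k))$ are checked for a suitable $k$. For $P$ one of $O_x,O_z,O_t$, or a smooth point on $C_x$, I would intersect $D$ with a component of one of the coordinate curves $C_y,C_z,C_t$ (chosen outside $\Supp(D)$ via Lemma~\ref{lemma:convexity}) and apply Lemma~\ref{lemma:handy-adjunction} to bound $\mult_P(D)$ strictly below $\frac{4}{33r_P}$.

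\emph{Main obstacle.} The hard case is $P=O_y$, where the two components of $C_x$ meet tangentially and the bound $\frac{33}{4}$ is exactly realized. Assuming $R_x\not\subset\Supp(D)$ (the symmetric case is analogous), write $D=mL_{xt}+\Omega$ with $L_{xt}\not\subset\Supp(\Omega)$. From $\Omega\cdot R_x\geq 0$, using $L_{xt}\cdot R_x=\frac{2}{29}$ and $D\cdot R_x=\frac{2}{1421}$, we obtain $m\leq\frac{1}{49}$. Then Lemma~\ref{lemma:handy-adjunction} at $O_y$ with $\lambda=\frac{33}{4}$ and $r=29$, combined with $D\cdot L_{xt}=\frac{1}{1131}$ and $L_{xt}^2=-\frac{67}{1131}$ (from orbifold adjunction on $L_{xt}\cong\mathbb{P}^1$ through $O_y$ and $O_z$), forces $m>\frac{41}{737}$, contradicting $m\leq\frac{1}{49}$. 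This sharp numerical estimate at the tangent point $O_y$ is the delicate step of the argument.
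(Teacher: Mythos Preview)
Your approach matches the paper's and your upper-bound computation is correct, as are the numerics in your $O_y$ argument. However, you have misidentified the delicate point: $O_y$ is actually the \emph{easiest} singular center to exclude. Since one of $L_{xt}$, $R_x$ lies outside $\Supp(D)$, one has immediately
\[
\mult_{O_y}(D)\leq 29\,D\cdot L_{xt}=\tfrac{1}{39}<\tfrac{4}{33}
\qquad\text{or}\qquad
\mult_{O_y}(D)\leq 29\,D\cdot R_x=\tfrac{2}{49}<\tfrac{4}{33},
\]
and Lemma~\ref{lemma:multiplicity} finishes. Your handy-adjunction computation at $O_y$ is valid but unnecessary; the tangency of $L_{xt}$ and $R_x$ at $O_y$ is what makes $\lct(X,\frac{1}{11}C_x)=\frac{33}{4}$, but it plays no role in excluding $O_y$ as a non-lc center of an arbitrary $D$.

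The places where the paper actually invokes Lemma~\ref{lemma:handy-adjunction} are $O_t$ and the smooth points of $C_x$. For $O_t$ one writes $D=\mu R_x+\Omega$, bounds $\mu\leq\frac{1}{78}$ from $D\cdot L_{xt}$, and checks $49\,\Omega\cdot R_x=\frac{2+76\mu}{29}<\frac{4}{33}$; for smooth points on $L_{xt}$ one uses the coefficient bound $\leq\frac{4}{33}$ coming from the (already established) log canonicity at $O_y$. Your sketch for $O_x$, $O_z$ and the smooth locus is on the right track; the paper handles the first two via components of $C_z$ and $C_t$ (using $\mult_{O_z}(R_t)=4$), and applies Lemma~\ref{lemma:Carolina} with $k=539$ and the monomials $x^{49}$, $x^{20}y^{11}$, $x^{10}z^{11}$, $t^{11}$ for the last.
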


\begin{proof}
We may assume that the hypersurface $X$  is defined by the
equation
$$
z^2t+yt^2+xy^4+x^8z=0.
$$
The singularities of $X$ consist of a singular point of type
$\frac{1}{11}(7,5)$ at $O_x$, a singular point of type
$\frac{1}{29}(1, 2)$ at $O_y$, a singular point of type
$\frac{1}{39}(11, 29)$ at $O_z$, and a singular point of type
$\frac{1}{49}(11, 39)$ at $O_t$.

The curve $C_x$ (resp. $C_y$, $C_z$, $C_t$) consists of two
irreducible curves $L_{xt}$ (resp. $L_{yz}$, $L_{yz}$, $L_{xt}$)
and $R_x=\{x=z^2+yt=0\}$ (resp. $R_y=\{y=x^8+zt=0\}$,
$R_z=\{z=t^2+xy^3=0\}$, $R_t=\{t=y^4+x^7z=0\}$). We can see that
\[L_{xt}\cap R_x=\{O_y\}, \ L_{yz}\cap R_y=\{O_t\}, \ \
L_{yz}\cap R_z=\{O_x\}, \ L_{xt}\cap R_t=\{O_z\}. \]

It is easy to check $\lct(X, \frac{1}{11} C_x)=\frac{33}{4}$. The
log pairs $(X, \frac{33}{4\cdot 29}C_y)$, $(X, \frac{33}{4\cdot
39}C_z)$ and $(X, \frac{33}{4\cdot 49}C_t)$ are log canonical.

Suppose that $\lct(X)<\frac{33}{4}$. Then there is an effective
$\Q$-divisor $D\qlineq -K_X$ such that the log pair $(X,
\frac{33}{4} D)$ is not log canonical at some point $P\in X$.

By Lemma~\ref{lemma:convexity}, we may assume that the support of
$D$ does not contain $L_{xt}$ or $R_x$. Then one of the following
two inequalities must hold:
$$
\frac{4}{33}>\frac{1}{39}=29L_{xt}\cdot D\geqslant
\mult_{O_{y}}(D),
$$
$$
\frac{4}{33}>\frac{2}{49}=29 R_x\cdot
D\geqslant \mult_{O_{y}}(D).
$$
Therefore, the point $P$ cannot be the point $O_y$. For the same
reason, one of  two inequalities
$$
\frac{4}{33}>\frac{1}{49}=11L_{yz}\cdot
D\geqslant \mult_{O_{x}}(D),
$$
$$
\frac{4}{33}>\frac{2}{29}=11 R_z\cdot
D\geqslant \mult_{O_{x}}(D)
$$
must hold, and hence the point $P$ cannot be the point $O_x$.
Since $R_t$ is singular at the point $O_z$ with multiplicity $4$,
we can apply the same method to $C_t$, i.e., one of the following
inequalities must be satisfied:
\[\frac{4}{33}>\frac{1}{29}=39L_{xt}\cdot
D\geqslant \mult_{O_{z}}(D),\]
\[\frac{4}{33}>\frac{1}{11}=\frac{39}{4}R_t\cdot D
\geqslant\frac{1}{4}\mult_{O_{z}}(D)\mult_{O_{z}}(R_{t})=\mult_{O_{z}}(D).\]
Thus, the point $P$ cannot be $O_z$.

Write $D=\mu R_x+\Omega$, where $\Omega$ is an effective
$\Q$-divisor such that $R_x\not\subset\Supp(\Omega)$. If $\mu>0$,
then $L_{xt}$ is not contained in the support of $D$. Thus,
$$
\frac{2}{29}\mu=\mu R_x\cdot L_{xt}\leqslant D\cdot
L_{xt}=\frac{1}{29\cdot 39},
$$
and hence $\mu\leqslant \frac{1}{78}$. We have
$$
49\Omega\cdot R_x=49(D\cdot R_x-\mu R_x^2)=\frac{2+76\mu}{29}<\frac{4}{33}.%
$$
Then Lemma~\ref{lemma:handy-adjunction} shows that the point $P$
cannot belong to $R_x$. In particular, the point $P$ cannot be
$O_t$.

Put $D=\epsilon L_{xt}+\Delta$, where $\Delta$ is an effective
$\Q$-divisor such that $L_{xt}\not\subset\Supp(\Delta)$.  Since
$(X, \frac{33}{4}D)$ is log canonical at the point $O_y$,
$\epsilon\leqslant \frac{4}{33}$ and hence
$$
\Delta\cdot L_{xt}=D\cdot L_{xt}-\epsilon L_{xt}^2
=\frac{1+67\epsilon}{29\cdot 39}<\frac{4}{33}.%
$$
Then Lemma~\ref{lemma:handy-adjunction} implies that the point $P$
cannot belong to $L_{xt}$.

Consequently, the point $P$ must be a smooth point in the outside
of $C_x$. Then an absurd inequality
$$
\frac{4}{33}<\mult_P(D)\leqslant\frac{539\cdot 127}{11\cdot
29\cdot 39\cdot 49}<\frac{4}{33}
$$
follows from Lemma~\ref{lemma:Carolina} since $H^0(\P,
\mathcal{O}_\P(539))$ contains $x^{20}y^{11}$, $x^{49}$,
$x^{10}z^{11}$ and $t^{11}$. The obtained contradiction completes
the proof.
\end{proof}

\begin{lemma}
\label{lemma:114969128256} Let $X$ be a quasismooth hypersurface
of degree $256$ in $\mathbb{P}(11,49,69, 128)$. Then
$\lct(X)=\frac{55}{6}$.
\end{lemma}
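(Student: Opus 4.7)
The plan is to compute $\lct(X,\tfrac{1}{11}C_{x})=\tfrac{55}{6}$ and then show that this value realises the global log canonical threshold, closely following the scheme of Lemma~\ref{lemma:11293949127}. Since $I=11+49+69+128-256=1$, the only monomials of weighted degree $256$ in $x,y,z,t$ are $t^{2}$, $yz^{3}$, $x^{17}z$, and $xy^{5}$; after a coordinate change the defining equation may therefore be taken as $f=t^{2}+yz^{3}+x^{17}z+xy^{5}$. A direct check shows that the singular locus of $X$ consists of the three coordinate points $O_{x}$, $O_{y}$, $O_{z}$, of types $\tfrac{1}{11}(5,7)$, $\tfrac{1}{49}(20,30)$, $\tfrac{1}{69}(11,59)$ respectively, while $O_{t}\notin X$.

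The restriction $f|_{x=0}=t^{2}+yz^{3}$ defines an irreducible curve $C_{x}$ on $X$ which is smooth away from $O_{y}$; in the orbifold chart at $O_{y}$ it is locally given by $\tilde{t}^{2}+\tilde{z}^{3}=0$, a standard cusp. Lemma~\ref{lemma:Igusa} together with Proposition~\ref{proposition:lc-by-finite-morphism} then yields $\lct_{O_{y}}(X,C_{x})=\tfrac{5}{6}$, while $\lct_{P}(X,\tfrac{1}{11}C_{x})\ge 11$ at every other point of $X$; hence $\lct(X,\tfrac{1}{11}C_{x})=\tfrac{55}{6}$ and $\lct(X)\le\tfrac{55}{6}$. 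To prove the reverse inequality I will suppose otherwise and produce an effective $\Q$-divisor $D\sim_{\Q}-K_{X}$ with $(X,\tfrac{55}{6}D)$ not log canonical at some $P\in X$. By Lemma~\ref{lemma:convexity} I may assume $C_{x}\not\subset\Supp(D)$, and since $C_{y}=\{t^{2}+x^{17}z=0\}$ is irreducible with $\lct_{O_{z}}(X,C_{y})=\tfrac{1}{2}+\tfrac{1}{17}=\tfrac{19}{34}>\tfrac{55}{294}$, I may also assume $C_{y}\not\subset\Supp(D)$.

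With the intersection numbers $D\cdot C_{x}=\tfrac{2}{3381}$ and $D\cdot C_{y}=\tfrac{2}{759}$ in hand, I rule out every candidate point $P$ by establishing $\mult_{P}(D)<\tfrac{6}{55}$. For $P$ a smooth point of $X$ outside $C_{x}$, Lemma~\ref{lemma:Carolina} with $k=11\cdot 128=1408$ applies: its monomial basis contains each of the pairs $\{x^{128},x^{79}y^{11}\}$, $\{x^{128},x^{59}z^{11}\}$ and $\{x^{128},t^{11}\}$, giving $\mult_{P}(D)\le\tfrac{256}{3381}$. For $P=O_{x}$ the curve $C_{y}$ is smooth at $O_{x}$, so $\mult_{P}(D)\le 11\,D\cdot C_{y}=\tfrac{2}{69}$. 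For $P=O_{y}$ the multiplicity $\mult_{O_{y}}(C_{x})=2$ of the cusp produces $\mult_{P}(D)\le\tfrac{49}{2}\,D\cdot C_{x}=\tfrac{1}{69}$. For $P=O_{z}$ one obtains $\mult_{P}(D)\le 69\,D\cdot C_{x}=\tfrac{2}{49}$, and for a smooth point $P$ of $X$ on $C_{x}$ one has $\mult_{P}(D)\le D\cdot C_{x}=\tfrac{2}{3381}$. Each of these bounds is strictly less than $\tfrac{6}{55}$, which contradicts the failure of log canonicity, and so $\lct(X)=\tfrac{55}{6}$.

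The only real obstacle is locating a single $k$ making Lemma~\ref{lemma:Carolina} effective: because the second smallest weight is $49$, the natural choice $k=a_{0}a_{1}=539$ fails to produce two monomials in both $\{x^{c}z^{d}\}$ and $\{x^{e}t^{f}\}$, which forces the slightly larger choice $k=a_{0}a_{3}=1408$. The resulting numerical bound $\tfrac{256}{3381}$ must then be checked to lie below $\tfrac{6}{55}$, which it does by a narrow margin ($256\cdot 55=14080<3381\cdot 6=20286$).
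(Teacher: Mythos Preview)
Your proof is correct and follows essentially the same approach as the paper: compute $\lct(X,\tfrac{1}{11}C_x)=\tfrac{55}{6}$ via the cusp at $O_y$, then use $C_x$ and $C_y$ (both irreducible, neither in $\Supp(D)$) to bound $\mult_P(D)$ along $C_x$ and at $O_x$, and finish smooth points off $C_x$ with Lemma~\ref{lemma:Carolina}. The only difference is in that last step: the paper takes $k=759=11\cdot 69$ with monomials $x^{69},\,x^{20}y^{11},\,z^{11}$ (there is no second $x^\mu t^\nu$ monomial, but the projection from $O_t$ is finite since $t^2$ appears in the equation, so the alternative hypothesis of Lemma~\ref{lemma:Carolina} applies and yields the sharper bound $\tfrac{2}{49}$), whereas you take $k=1408=11\cdot 128$ to get the full set of monomial pairs, obtaining the weaker but still sufficient bound $\tfrac{256}{3381}$.
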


\begin{proof}
The curve $C_x$ is irreducible and
reduced. Moreover, it is easy to see $\lct(X,
\frac{1}{11}C_x)=\frac{55}{6}$. The curve $C_y$ is also irreducible and reduced and the log pair $(X,
\frac{1}{49}C_y)$ is log canonical.

Suppose that $\lct(X)<\frac{55}{6}$. By Lemma~\ref{lemma:convexity},
there is an effective $\Q$-divisor $D\qlineq -K_X$ such that
$C_{x}, C_y\not\subset\mathrm{Supp}(D)$ and the log pair
$(X,\frac{55}{6}D)$ is not log canonical at some point $P\in X$.

The inequalities
$$
69D\cdot C_x=\frac{69\cdot 11\cdot 256}{ 11\cdot 49\cdot 69\cdot 128}<\frac{6}{55},%
$$
$$11 D\cdot C_y=\frac{11\cdot 49\cdot 256}{11\cdot 49\cdot 69\cdot 128}<\frac{6}{55},%
$$
imply that the point $P$ is a smooth point in the outside of $C_x$.
However, since  $H^0(\P,
\mathcal{O}_\P(759))$ contains $x^{69}, x^{20}y^{11}, z^{11}$, we obtain
$$
\mult_{P}(D)\leqslant \frac{759\cdot 256}{11\cdot 49\cdot 69\cdot 128}<\frac{6}{55}%
$$
from Lemma~\ref{lemma:Carolina}. This is a contradiction.
\end{proof}

\begin{lemma}
\label{lemma:13233557127} Let $X$ be a quasismooth hypersurface of
degree $127$ in $\mathbb{P}(13,23,35, 57)$. Then
$\lct(X)=\frac{65}{8}$.
\end{lemma}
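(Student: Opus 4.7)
The plan is to mimic the argument of Lemma~\ref{lemma:11293949127} almost verbatim, since $(13,23,35,57,127)$ has the same arithmetic shape $I=1$ and the same four-monomial structure as $(11,29,39,49,127)$. The only weighted-degree-$127$ monomials in coordinates of weights $(13,23,35,57)$ are $x^{8}y$, $y^{4}z$, $z^{2}t$, $xt^{2}$, so after rescaling we may assume $X$ is cut out by
\[
f(x,y,z,t)=z^{2}t+xt^{2}+y^{4}z+x^{8}y.
\]
The singular locus of $X$ is $\{O_{x},O_{y},O_{z},O_{t}\}$ of indices $13,23,35,57$, and each coordinate curve splits as $C_{x}=L_{xz}+R_{x}$, $C_{y}=L_{yt}+R_{y}$, $C_{z}=L_{xz}+R_{z}$, $C_{t}=L_{yt}+R_{t}$, where $R_{x}=\{x=zt+y^{4}=0\}$, $R_{y}=\{y=z^{2}+xt=0\}$, $R_{z}=\{z=t^{2}+x^{7}y=0\}$, $R_{t}=\{t=y^{3}z+x^{8}=0\}$, and the two components of each $C_{a}$ meet in a single point ($O_{t},O_{x},O_{y},O_{z}$ respectively). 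The upper bound $\lct(X)\le\tfrac{65}{8}$ follows from computing $\lct_{O_{t}}(X,C_{x})$: in the orbifold cover with local coordinates $(y,z)$ the curves $L_{xz}$, $R_{x}$ become $z=0$ and $z+y^{4}=0$, so $C_{x}$ is locally $z(z+y^{4})$, and Lemma~\ref{lemma:Igusa} with $(n_{1},n_{2},m_{1},m_{2})=(1,0,1,4)$, combined with Proposition~\ref{proposition:lc-by-finite-morphism}, gives $\lct_{O_{t}}(X,C_{x})=\tfrac{5}{8}$; since the local lct of $C_{x}$ at every other point is $1$, we conclude $\lct(X,\tfrac{1}{13}C_{x})=\tfrac{65}{8}$.

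For the lower bound I would suppose, to the contrary, that $(X,\tfrac{65}{8}D)$ fails to be log canonical at some $P\in X$ for an effective $D\sim_{\mathbb{Q}}-K_{X}$. Lemma~\ref{lemma:convexity} applied to each of $C_{x},C_{y},C_{z},C_{t}$ lets me assume that at least one component of each is absent from $\Supp(D)$. The global intersection numbers
\[
L_{xz}\cdot D=\tfrac{1}{23\cdot 57},\ L_{yt}\cdot D=\tfrac{1}{13\cdot 35},\ R_{x}\cdot D=\tfrac{4}{35\cdot 57},\ R_{y}\cdot D=\tfrac{2}{13\cdot 57},\ R_{z}\cdot D=\tfrac{2}{13\cdot 23},\ R_{t}\cdot D=\tfrac{8}{23\cdot 35},
\]
together with the local multiplicities at the four cross-points $L_{xz}\cdot R_{x}=\tfrac{4}{57}$, $L_{yt}\cdot R_{y}=\tfrac{2}{13}$, $L_{xz}\cdot R_{z}=\tfrac{2}{23}$, $L_{yt}\cdot R_{t}=\tfrac{8}{35}$, let me apply Method~3.1 through the available component to exclude each singular point of $X$ (both components of the relevant $C_{a}$ pass through $P$, and the resulting bound on $\mult_{P}(D)$ is strictly less than $\tfrac{8}{65}$ in each case). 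For a smooth $P$ on one of the six components $C\in\{L_{xz},L_{yt},R_{x},R_{y},R_{z},R_{t}\}$, I would write $D=\mu C+\Omega$ with $C\not\subset\Supp(\Omega)$, bound $\mu$ by intersecting with the companion component of $C_{a}$ (for instance $\mu\le\tfrac{1}{35}$ on $L_{xz}$ from $L_{xz}\cdot R_{x}=\tfrac{4}{57}$), and apply Lemma~\ref{lemma:handy-adjunction} with the orbifold self-intersection ($L_{xz}^{2}=-\tfrac{79}{23\cdot 57}$, etc.\ via adjunction on $L_{xz}\cong\mathbb{P}(23,57)$) to conclude $\Omega\cdot C<\tfrac{8}{65}$. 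Finally, for a smooth $P$ outside $C_{x}$, Lemma~\ref{lemma:Carolina} with $k=455=13\cdot 35$ (the monomial pairs $\{x^{35},x^{12}y^{13}\}$ and $\{x^{35},z^{13}\}$ lie in $H^{0}(\mathbb{P},\mathcal{O}(455))$, and the projection $X\dashrightarrow\mathbb{P}(13,23,35)$ is finite off $C_{x}$) produces
\[
\mult_{P}(D)\le\frac{455\cdot 127}{13\cdot 23\cdot 35\cdot 57}=\frac{127}{1311}<\frac{8}{65},
\]
completing the contradiction.

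The principal obstacle I expect is the case-by-case bookkeeping: six irreducible components and four singular points have to be tracked in parallel, with several of the derived bounds (for instance $\mult_{O_{z}}(D)\le\tfrac{8}{69}$ coming from the cuspidal curve $R_{t}$ at $O_{z}$, obtained via $\mult_{O_{z}}(R_{t})=3$) only narrowly beating $\tfrac{8}{65}$. Running the argument in strict parallel with Lemma~\ref{lemma:11293949127} and keeping each intersection number and self-intersection in a single table should, however, reduce the verification to a mechanical check.
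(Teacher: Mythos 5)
Your proposal is correct and follows essentially the same route as the paper's proof: the same equation and curve decompositions, the same exclusion of the four singular points via the residual curves (with the multiplicity corrections at $O_y$ and $O_z$), Lemma~\ref{lemma:handy-adjunction} on the components of $C_x$, and Lemma~\ref{lemma:Carolina} with $k=455$ for smooth points off $C_x$. The only (harmless) deviations are that you also exclude smooth points on $C_y$, $C_z$, $C_t$, which is redundant once Carolina handles all smooth points outside $C_x$, and that you bound the multiplicity of a component in $\Supp(D)$ by intersecting with its companion rather than by log canonicity at the common singular point; both variants are valid.
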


\begin{proof}
We may assume that the hypersurface $X$ is  given by the equation
$$
z^2t+y^4z+xt^2+x^8y=0.
$$
The only singularities of $X$ are a singular point of type
$\frac{1}{13}(9,5)$ at $O_x$, a singular point of type
$\frac{1}{23}(13,11)$ at $O_y$, a singular point of type
$\frac{1}{35}(13,23)$ at $O_z$, and a singular point of type
$\frac{1}{57}(23, 35)$ at $O_t$.

The curve $C_x$ (resp. $C_y$, $C_z$, $C_t$) consists of two
irreducible curves $L_{xz}$ (resp. $L_{yt}$, $L_{xz}$, $L_{yt}$)
and $R_x=\{x=y^4+zt=0\}$ (resp. $R_y=\{y=z^2+xt=0\}$,
$R_z=\{z=t^2+x^7y=0\}$, $R_t=\{t=y^3z+x^8=0\}$). We can see that
\[L_{xt}\cap R_x=\{O_t\}, \ L_{yz}\cap R_y=\{O_x\}, \ \
L_{yz}\cap R_z=\{O_y\}, \ L_{xt}\cap R_t=\{O_z\}. \]

It is easy to check $\lct(X, \frac{1}{13} C_x)=\frac{65}{8}$. The
log pairs $(X, \frac{65}{8\cdot 23}C_y)$, $(X, \frac{65}{8\cdot
35}C_z)$ and $(X, \frac{65}{8\cdot 57}C_t)$ are log canonical.

Suppose that $\lct(X)<\frac{65}{8}$. Then there is an effective
$\Q$-divisor $D\qlineq -K_X$ such that the log pair $(X,
\frac{65}{8} D)$ is not log canonical at some point $P\in X$.

By Lemma~\ref{lemma:convexity}, we may assume that the support of
$D$ does not contain $L_{xz}$ or $R_x$. Then one of the following
two inequalities must hold:
$$
\frac{8}{65}>\frac{1}{23}=57L_{xz}\cdot D\geqslant
\mult_{O_{t}}(D),
$$
$$
\frac{8}{65}>\frac{4}{35}=57R_x\cdot
D\geqslant \mult_{O_{t}}(D).
$$
Therefore, the point $P$ cannot be the point $O_t$. For the same
reason, one of  two inequalities
$$
\frac{8}{65}>\frac{1}{35}=13L_{yt}\cdot
D\geqslant \mult_{O_{x}}(D),
$$
$$
\frac{8}{65}>\frac{2}{57}=13 R_y\cdot
D\geqslant \mult_{O_{x}}(D)
$$
must hold, and hence the point $P$ cannot be the point $O_x$.

To apply the same method to $C_z$ and $C_t$, we note that $R_z$ is
singular at $O_y$ with multiplicity $2$ and $R_t$ is singular at
$O_z$ with multiplicity $3$. Then we can see that one inequality
from each of the pairs
$$
\frac{8}{65}>\frac{1}{13}=35L_{yt}\cdot D\geqslant \mult_{O_{z}}(D),
$$
$$
\frac{8}{65}>\frac{8}{23\cdot 3}=\frac{35}{3}R_{t}\cdot D
\geqslant\frac{1}{3}\mult_{O_{z}}(D)\mult_{O_{z}}(R_{t})=\mult_{O_{z}}(D);
$$

$$
\frac{8}{65}>\frac{1}{57}=23L_{xz}\cdot D\geqslant \mult_{O_{y}}(D),
$$
$$
\frac{8}{65}>\frac{1}{13}=\frac{23}{2}R_{z}\cdot D\geqslant
\frac{1}{2}\mult_{O_{y}}(D)\mult_{O_{y}}(R_{z})=\mult_{O_{y}}(D)
$$
must be satisfied. Therefore, the point $P$ can be neither $O_y$
nor $O_z$.

To apply Lemma~\ref{lemma:handy-adjunction} to $L_{xz}$ and $R_x$,
we compute
$$L_{xz}^2=-\frac{79}{23\cdot 57},\ \ R_x^2=-\frac{88}{35\cdot 57}.
$$
Put $D=aL_{xz}+bR_x+\Omega$, where $\Omega$ is an effective
$\Q$-divisor such that $L_{xz}, R_x\not\subset\Supp(\Omega)$. Then
$a, b\leqslant \frac{8}{65}$ since the log pair $(X,
\frac{65}{8}D)$ is log canonical at the point $O_t$. Therefore,
$$
D\cdot L_{xz}-a L_{xz}^2=\frac{1+79a}{23\cdot 57}<\frac{8}{65},%
$$
$$
D\cdot R_x-b R_x^2=\frac{4+88b}{35\cdot 57}<\frac{8}{65}.%
$$
Then, Lemma~\ref{lemma:handy-adjunction} implies that the point
$P$ is a smooth point in the outside of $C_x$.

Applying Lemma~\ref{lemma:Carolina}, we see that
$$\frac{8}{65}<\mult_P(D)\leqslant\frac{741\cdot 127}{13\cdot 23\cdot 35\cdot 57}<\frac{8}{65},%
$$
since $H^0(\P, \mathcal{O}_\P(455))$ contains $x^{35}, x^{12}y^{13}, z^{13}$ and the point $P$ is in the outside of $L_{xz}$. The obtained
contradiction completes the proof.
\end{proof}

\begin{lemma}
\label{lemma:133581128256}Let $X$ be a quasismooth hypersurface of
degree $256$ in $\mathbb{P}(13,35,81, 128)$. Then
$\lct(X)=\frac{91}{10}$.
\end{lemma}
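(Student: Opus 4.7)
The plan is to mimic Lemmas~\ref{lemma:114969128256} and~\ref{lemma:133581128256}, in which the global threshold is realized by $\lct(X,\tfrac{1}{a_0}C_x)$ and is accounted for by a single cusp-like singularity of $C_x$ at one orbifold point. A short enumeration shows that every degree-$256$ monomial in $\mathbb{P}(13,35,81,128)$ involves at most two of the coordinates, so the only degree-$256$ monomials are $t^2$, $xz^3$, $y^5z$ and $x^{17}y$. Quasismoothness forces each of the four to appear with a nonzero coefficient, and after rescaling I may assume $f=t^2+xz^3+y^5z+x^{17}y$. In particular $O_t\notin X$, and the only singular points of $X$ are $O_x$, $O_y$, $O_z$ of indices $13$, $35$, $81$ respectively. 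The curve $C_x=\{x=0\}$ is cut out on $X$ by $t^2+y^5z=0$, which is irreducible (as $y^5z$ is not a square) and smooth away from $O_z$; in the orbifold chart at $O_z$ the local equation reads $t^2+y^5=0$, giving log canonical threshold $\tfrac{1}{2}+\tfrac{1}{5}=\tfrac{7}{10}$ by Lemma~\ref{lemma:Igusa}. Proposition~\ref{proposition:lc-by-finite-morphism} then yields $\lct(X,C_x)=\tfrac{7}{10}$, so $\lct(X)\leq\lct(X,\tfrac{1}{13}C_x)=\tfrac{91}{10}$. A parallel calculation identifies $C_y=\{y=0\}$ as an irreducible curve whose only singularity is an $\mathbb{A}_2$ cusp at $O_x$, so $\lct(X,C_y)=\tfrac{5}{6}$ and the pair $(X,\tfrac{91}{350}C_y)$ is log canonical.

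Assuming toward a contradiction that $\lct(X)<\tfrac{91}{10}$, I pick an effective $D\sim_{\Q}-K_X$ for which $(X,\tfrac{91}{10}D)$ fails to be log canonical at some point $P\in X$. Two applications of Lemma~\ref{lemma:convexity}, with $\tfrac{1}{13}C_x$ and $\tfrac{1}{35}C_y$ in the role of the log canonical boundary, allow me to assume $C_x,C_y\not\subset\Supp(D)$. Using $D\cdot C_x=\tfrac{2}{2835}$, $D\cdot C_y=\tfrac{2}{1053}$, together with $\mult_{O_z}(C_x)=\mult_{O_x}(C_y)=2$ and $\mult_{O_y}(C_x)=1$, I obtain
\[
\mult_{O_x}(D)\leq\tfrac{13}{2}D\cdot C_y=\tfrac{1}{81},\quad
\mult_{O_y}(D)\leq 35\,D\cdot C_x=\tfrac{2}{81},\quad
\mult_{O_z}(D)\leq\tfrac{81}{2}D\cdot C_x=\tfrac{1}{35},
\]
and $\mult_Q(D)\leq D\cdot C_x=\tfrac{2}{2835}$ for every smooth $Q\in C_x$. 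Each of these is strictly less than $\tfrac{10}{91}$, so by Lemma~\ref{lemma:multiplicity} combined with Proposition~\ref{proposition:lc-by-finite-morphism} the point $P$ cannot lie on $C_x\cup\Sing(X)$; equivalently, $P$ is a smooth point of $X$ off $C_x$.

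To finish I apply Lemma~\ref{lemma:Carolina}. Because $O_t\notin X$, the projection $\psi\colon X\to\mathbb{P}(13,35,81)$ sending $[x:y:z:t]\mapsto[x:y:z]$ is a finite morphism and contracts no curve, so the ``projection'' alternative in Lemma~\ref{lemma:Carolina} is automatic. Taking $k=1053=13\cdot 81$, the monomials $x^{81}$, $x^{46}y^{13}$, $x^{11}y^{26}$ furnish two of type $x^\alpha y^\beta$ and $x^{81}$, $z^{13}$ furnish two of type $x^\gamma z^\delta$ in $H^0(\mathbb{P},\mathcal{O}_\mathbb{P}(1053))$, so Lemma~\ref{lemma:Carolina} yields
\[
\mult_P(D)\leq\frac{1\cdot 1053\cdot 256}{13\cdot 35\cdot 81\cdot 128}=\frac{2}{35}<\frac{10}{91},
\]
contradicting non-log-canonicity at $P$ and completing the proof. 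The main obstacle I foresee is not a new geometric idea but the bookkeeping: pinning down the normal form of $f$, verifying irreducibility of $C_x$ and $C_y$, and correctly tracking the factors of $\mult_P(C)$ in the intersection bounds at the two orbifold points where $C_x$ or $C_y$ is singular.
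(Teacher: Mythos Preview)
Your proof is correct and follows essentially the same approach as the paper: you normalize the equation, compute $\lct(X,\tfrac{1}{13}C_x)=\tfrac{91}{10}$ from the $(2,5)$-cusp of $C_x$ at $O_z$, use Lemma~\ref{lemma:convexity} to strip $C_x$ and $C_y$ from $\Supp(D)$, exclude all of $C_x\cup C_y\cup\Sing(X)$ via the intersection bounds, and finish with Lemma~\ref{lemma:Carolina} at $k=1053$. Your write-up is in fact slightly more explicit than the paper's in verifying $\mult_{O_x}(C_y)=\mult_{O_z}(C_x)=2$ and in noting that the projection to $\mathbb{P}(13,35,81)$ is finite because $O_t\notin X$, which is exactly the hypothesis needed for the second alternative in Lemma~\ref{lemma:Carolina}.
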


\begin{proof}
We may assume that the surface $X$ is given by the equation
$$
t^2+y^5z+xz^3+x^{17}y=0.
$$
It has  a singular point of type $\frac{1}{13}(3, 11)$
at $O_x$, a singular point of type $\frac{1}{35}(13, 23)$ at $O_y$, and a
singular point of type $\frac{1}{81}(35, 47)$ at $O_z$.

The curve $C_x$ is reduced and irreducible. The curve is singular at the point $O_z$. It is easy to check
that $\lct(X, C_x)=\frac{7}{10}$. Therefore, $\lct(X)\leqslant \frac{91}{10}$.
The curve $C_y$ is also reduced and irreducible. The
curve $C_y$ is singular only at $O_x$. Moreover, the log pair $(X, \frac{91}{10\cdot 35}C_y)$ is log canonical.

Suppose that $\lct(X)<\frac{91}{10}$. Then there is an effective
$\Q$-divisor $D\qlineq -K_X$ such that the log pair $(X,
\frac{91}{10} D)$ is not log canonical at some point $P\in X$. By
Lemma~\ref{lemma:convexity} we may assume neither $C_x$ nor
$C_y$ is contained in $\Supp(D)$.

The following two inequalities show that the point $P$ is located in the outside of $C_x\cup C_y$:
\[\frac{81}{2}C_x\cdot D=\frac{1}{35}<\frac{10}{91},\]
\[\frac{13}{2}C_x\cdot D=\frac{1}{81}<\frac{10}{91}.\]
However, applying Lemma~\ref{lemma:Carolina}, we can obtain
$$
\mult_P(D)\leqslant\frac{1053\cdot 256}{13\cdot 35\cdot 81\cdot 128}<\frac{10}{91},%
$$
since $H^0(\P, \mathcal{O}_\P(1053))$ contains $x^{81}$,
$x^{11}y^{26}$ and $z^{13}$. This is a contradiction.
\end{proof}

\section{Sporadic cases with $I=2$}
\label{section:index-2}

\begin{lemma}
\label{lemma:I-2-W-2-3-4-5-D-12}
Let $X$ be the quasismooth
hypersurface defined by a quasihomogeneous polynomial $f(x,y,z,t)$  of degree $12$ in $\mathbb{P}(2,3,4,5)$.
Then
$$
\mathrm{lct}\big(X\big)=\left\{%
\aligned
&1\ \text{ if $f(x,y,z,t)$  contains the term $yzt$},\\%
&\frac{7}{12}\ \text{ if $f(x,y,z,t)$  does not contain the term $yzt$}.\\%
\endaligned\right.%
$$
\end{lemma}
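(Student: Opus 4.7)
The plan is to follow the scheme of Section~\ref{section:preliminaries} and Section~5. First I compute $\lct(X, C_x)$. Since the only weight-$12$ monomials in $y, z, t$ (of weights $3, 4, 5$) are $y^4, z^3$ and $yzt$, and quasismoothness of $X$ forces the coefficients of $y^4$ and $z^3$ in $f$ to be nonzero, the curve $C_x \subset \P(3,4,5)$ is, after rescaling, defined by $y^4 + z^3 + \alpha yzt = 0$, where $\alpha \neq 0$ precisely when $yzt$ appears in $f$. A direct partial-derivative check shows $C_x$ is smooth away from $O_t$. In the smooth cover at $O_t$ afforded by Proposition~\ref{proposition:lc-by-finite-morphism} (setting $t=1$), the local equation becomes $y^4 + z^3 + \alpha yz$. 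When $\alpha \neq 0$ this is an ordinary node, so Lemma~\ref{lemma:basic-property}(3) gives $\lct_{O_t}(X, C_x) = 1$; when $\alpha = 0$, Lemma~\ref{lemma:Igusa} yields $\lct_{O_t}(X, C_x) = \frac{1}{4} + \frac{1}{3} = \frac{7}{12}$. Thus $\lct(X) \leq \lct(X, C_x) =: \lambda$ equals the claimed value.

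Assume for contradiction that $\lct(X) < \lambda$, so there exist an effective $D \qlineq -K_X$ and a point $P \in X$ with $(X, \lambda D)$ not log canonical at $P$. The curves $C_y$ and $C_z$ are irreducible (as they are quadratic in $t$ with non-square discriminant over $\mathbb{C}(x,z)$ and $\mathbb{C}(x,y)$ respectively) and quasismooth, hence $\lct(X, C_y) = \lct(X, C_z) = 1$. By Lemma~\ref{lemma:convexity} I may assume $C_x, C_y, C_z \not\subset \Supp(D)$. The singularities of $X$ consist of the point $O_t$ of type $\frac{1}{5}(3,4)$ together with at most three points of type $\frac{1}{2}(1,1)$ on $L_{xz} \cap X$.

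Using the intersection numbers $D \cdot C_x = \frac{2}{5}$, $D \cdot C_y = \frac{3}{5}$, $D \cdot C_z = \frac{4}{5}$, Method~3.1 combined with Lemma~\ref{lemma:handy-adjunction} excludes $P$ from being a $\frac{1}{2}(1,1)$-point (via $C_y$, which passes through each such point smoothly in the cover) and from being a smooth point lying on $C_x \cup C_y \cup C_z$. For a smooth point $P$ outside $C_x \cup C_y \cup C_z \cup \{O_t\}$, I use the pencil $\mathcal{L} \subset |\O_X(4)|$ spanned by $C_z$ and $2C_x$, whose base locus on $X$ is $\{O_t\}$. The unique member $Q_\gamma = \{z + \gamma x^2 = 0\} \cap X$ through $P$ is irreducible and quasismooth for generic $\gamma$, so by Lemma~\ref{lemma:convexity} a component of $Q_\gamma$ may be assumed not in $\Supp(D)$, giving $\mult_P(D) \leq D \cdot Q_\gamma = \frac{4}{5} < 1/\lambda$, a contradiction.

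The main obstacle is excluding $P = O_t$. The naive bound from $D \cdot C_x = \frac{2}{5}$ gives $\mult_{O_t}(D) \leq 2/\mult_{O_t}(C_x)$, i.e.\ $\mult_{O_t}(D) \leq 1$ when $\alpha \neq 0$ (since $\mult_{O_t}(C_x) = 2$) and $\mult_{O_t}(D) \leq \frac{2}{3}$ when $\alpha = 0$ (since $\mult_{O_t}(C_x) = 3$); both fall short of the non-log-canonicity requirement $\mult_{O_t}(D) > 1/(5\lambda)$. The strategy is to perform a weighted blow-up $\pi \colon \tilde{X} \to X$ of $O_t$ with weights adapted to the $\frac{1}{5}(3,4)$-singularity and to the local structure of the tangent cone of $C_x$, write $\tilde{D} = \pi^* D - aE$ for the exceptional curve $E$, and bound the nonnegative coefficient $a$ using $\tilde{C}_x \cdot \tilde{D} \geq 0$ and $\tilde{C}_y \cdot \tilde{D} \geq 0$. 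A case analysis on the location of the residual non-log-canonical singularity among the (at most two) quotient points of $\tilde{X}$ lying on $E$, combined with repeated applications of Lemma~\ref{lemma:handy-adjunction} and, if necessary, a second weighted blow-up as in Method~3.3, yields the contradiction. The intricate bookkeeping of discrepancies and intersection numbers on $\tilde{X}$ in this final step is where most of the work lies.
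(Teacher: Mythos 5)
The computation of $\lct(X,C_x)$ in your first paragraph is correct and agrees with the paper. The rest of the argument, however, has a genuine gap at exactly the point you identify as "the main obstacle", and the obstacle itself is an artifact of a misapplied criterion. In the paper's conventions (see the Notation section), $\mult_P(D)$ at a quotient singularity means the multiplicity of $\pi^*D$ in the orbifold chart $\pi\colon\tilde U\to U$; by Proposition~\ref{proposition:lc-by-finite-morphism} together with Lemma~\ref{lemma:multiplicity}, non-log-canonicity of $(X,\lambda D)$ at the $\frac15(3,4)$-point $O_t$ forces $\mult_{O_t}(D)>1/\lambda$, \emph{not} $\mult_{O_t}(D)>1/(5\lambda)$ as you assert. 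With the correct threshold your own "naive bound" $\mult_{O_t}(D)\leqslant 5D\cdot C_x/\mult_{O_t}(C_x)$, i.e.\ $\leqslant 1$ when $\alpha\ne 0$ (threshold $1$) and $\leqslant \frac23$ when $\alpha=0$ (threshold $\frac{12}{7}$), already excludes $O_t$, which is precisely what the paper does. The weighted blow-up you propose instead is therefore unnecessary — and, more importantly, you never actually carry it out: the discrepancy and intersection bookkeeping that would constitute the proof of the hardest case is replaced by the sentence that it "is where most of the work lies". As written, the case $P=O_t$ is simply not proved.

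There is a second, independent gap at the three $\frac12(1,1)$-points $Q_i$ (which, incidentally, lie on $\{y=t=0\}$, not on $L_{xz}$). Since $C_y$ passes through each $Q_i$ with orbifold multiplicity $1$, the bound it yields is $\mult_{Q_i}(D)\leqslant 2D\cdot C_y=\frac{6}{5}$, which exceeds the threshold $1/\lambda=1$ in the case $\alpha\ne 0$, so $C_y$ does not exclude these points when $\lct(X)=1$ is being proved. The paper instead uses the curves $C_z$, $C_{z-x^2}$, $C_{z-\epsilon x^2}$, each irreducible and \emph{singular} at the corresponding $Q_i$, giving $\mult_{Q_i}(D)\leqslant D\cdot C_z=\frac45<1$. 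Your treatment of smooth points via the pencil $\{z+\gamma x^2=0\}$ is essentially the paper's argument and is fine once one checks irreducibility and $\mult_P\leqslant 2$ of the particular member through $P$ (not merely of a generic member), but the two exclusions above must be repaired before the proof is complete.
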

\begin{proof}
We may assume
\[f(x,y,z,t)=z(z-x^2)(z-\epsilon x^2)+y^4 +xt^2+ayzt+bxy^2z+cx^2yt+dx^3y^2,\]
where $\epsilon$ $(\ne 0, 1)$, $a$, $b$, $c$, $d$ are constants.
Note that $X$ is singular at the point $O_t$ and three points
$Q_1=[1:0:0:0]$, $Q_2=[1:0:1:0]$, $Q_3=[1:0:\epsilon:0]$. The curve $C_x$ always is irreducible and reduced. We can easily check that
$$
\mathrm{lct}\big(X, C_x\big)=\left\{%
\aligned
&1\ \text{ if $a\ne0$},\\%
&\frac{7}{12}\ \text{ if $a=0$}.\\%
\endaligned\right.%
$$

 Suppose that $\lct(X)<\lambda:=\lct (X, C_x)$. Then
there is an effective $\Q$-divisor $D\qlineq -K_X$ such that the
log pair $(X, \lambda D)$ is not log canonical at some point
$P\in X$. We may assume that the curve $C_x$ is not contained in the support of $D$.

First, we consider the case where $a=0$. Since $H^0(\P, \mathcal{O}_{\P}(6))$ contains $x^3$,
$y^2$, and $xz$, Lemma~\ref{lemma:Carolina} implies
that for a smooth point $O\in X\setminus C_x$
\[\mult_O(D)<\frac{2\cdot 12\cdot 6}{2\cdot 3\cdot 4\cdot 5}<\frac{12}{7}.\]
Therefore, the point $P$ cannot be a smooth point in $X\setminus
C_x$. Since the curve $C_x$ is not contained in the support of $D$ and it is singular at $O_t$ with multiplicity $3$,  the inequality
\[\frac{5}{3}D\cdot C_x=\frac{5\cdot 2\cdot 2\cdot 12}{3\cdot 2\cdot 3\cdot 4\cdot 5}<\frac{12}{7}\]
implies that the point $P$ is located in the outside of $C_x$.
Thus, the point $P$ must be one of the point $Q_1$, $Q_2$, $Q_3$.
The curve $C_y$ is quasismooth. Therefore, we may assume that the
support of $D$ does not contain the curve $C_y$. Then the
inequality
\[\mult_{Q_i}(D)\leqslant 2D\cdot C_y=\frac{2\cdot 2\cdot 3\cdot 12}{2\cdot 3\cdot 4\cdot 5}<\frac{12}{7}\]
gives us a contradiction.

From now we consider the case where $a\ne 0$. Note that the curve
$C_x$ is not contained in the support of $D$ and it is singular at
$O_t$ with multiplicity $2$. Since
\[\frac{5}{2}D\cdot C_x=\frac{5\cdot 2\cdot 2\cdot 12}{2\cdot 2\cdot 3\cdot 4\cdot 5}=1\]
the point $P$ is located in the outside of $C_x$.

The curve $C_z$ is irreducible and the log pair $(X,
\frac{1}{2}C_z)$ is log canonical. Therefore, we may assume that
the support of $D$ does not contain the curve $C_z$. The curve
$C_z$ is singular at the point $Q_1$. The inequality
\[\mult_{Q_1}(D)\leqslant D\cdot C_z = \frac{2\cdot 4\cdot 12}{2\cdot 3\cdot 4\cdot 5}<1\]
implies that $P$ cannot be the point $Q_1$. We consider the curves
$C_{z-x^2}$ defined by $z=x^2$ and $C_{z-\epsilon x^2}$ defined by
$z=\epsilon x^2$. Then by coordinate changes we can see that they
have the same properties as that of $C_z$. Moreover, we can see
that the point $P$ can be neither $Q_2$ nor $Q_3$. Therefore, the
point $P$ must be located in the outside of $C_x\cup C_z\cup
C_{z-x^2}\cup C_{z-\epsilon x^2}$.

Let $\mathcal{L}$ be the pencil on $X$ defined by $\lambda x^2+\mu
z=0$, where $[\lambda:\mu]\in \P^1$. Let $C$ the curve in
$\mathcal{L}$ that passes through the point $P$. Then it is cut
out by $z=\alpha x^2$, where $\alpha\ne  0, 1, \epsilon$. The
curve $C$ is isomorphic to the curve in $\P(2,3,5)$ defined by
\[x^6+y^4+xt^2+\beta x^2yt+\gamma x^3y^2=0,\]
where $\beta$ and $\gamma$ are constants. We can easily see that the curve $C$
is irreducible. Moreover, we can check $\mult_P(C)\leqslant 2$ and hence the log pair $(X, \frac{1}{2}C)$ is log canonical. Therefore, we may assume that the support of $D$ does not contain the curve $C$. Then, the inequality
\[\mult_P(D)\leqslant D\cdot C=\frac{2\cdot 4\cdot 12}{2\cdot 3\cdot 4\cdot 5}<1\]
gives us a contradiction.
\end{proof}

\begin{lemma}
\label{lemma:I-2-W-2-3-4-7-D-14}
Let $X$ be a quasismooth hypersurface of
degree $14$ in $\mathbb{P}(2,3,4,7)$. Then
$\lct(X)=1$.
\end{lemma}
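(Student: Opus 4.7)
The plan is to show $\lct(X, C_x) = 1$ where $C_x$ is the hyperplane section $\{x=0\}$, giving $\lct(X) \le 1$, and then to rule out $\lct(X) < 1$ by excluding each possible location of a non-log-canonical point.

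I begin by analyzing $C_x$. Monomials of degree $14$ restricted to $\{x=0\}$ are $t^2$, $yzt$ and $y^2 z^2$, and quasismoothness at $O_t$ forces the coefficient of $t^2$ to be nonzero, so $f(0,y,z,t)$ is a nondegenerate quadratic in $t$. I first need to exclude the possibility that $C_x$ is a nonreduced double component: if $f(0,y,z,t)=\gamma(t-\mu yz)^2$, then writing $f = \gamma(t-\mu yz)^2 + x h(x,y,z,t)$, the restriction $h(0,y,z,\mu yz)$ on $L := \{x=0,\, t=\mu yz\}\cong \mathbb{P}(3,4)$ has the form $\eta_1 z^3 + \eta_2 y^4$ with both $\eta_i$ nonzero by quasismoothness at $O_z$ and $O_y$, forcing a zero on $L\setminus\{O_y,O_z\}$ at which $X$ would fail to be quasismooth. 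Hence $C_x = L_1+L_2$ with smooth rational components $L_i = \{x=0,\, t=\mu_i yz\}$ meeting transversely in the local orbifold covers at $O_y$ (a $\tfrac{1}{3}(1,1)$-point) and $O_z$ (a $\tfrac{1}{4}(1,1)$-point). Standard orbifold intersection calculations yield $-K_X\cdot L_i = \tfrac{1}{6}$, $L_1\cdot L_2 = \tfrac{1}{3}+\tfrac{1}{4}=\tfrac{7}{12}$, and $L_i^2 = -\tfrac{5}{12}$; transversality gives $\lct(X,C_x)=1$ and hence $\lct(X)\le 1$.

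Suppose $\lct(X)<1$ and pick $D\sim_{\mathbb{Q}} -K_X$ with $(X,D)$ not log canonical at some $P\in X$. By Lemma~\ref{lemma:convexity} I may assume $L_1\not\subset\Supp(D)$, so $D\cdot L_1 = \tfrac{1}{6}$. Lemma~\ref{lemma:handy-adjunction} applied to $L_1$ with $\mult_{L_1}(D)=0$ gives $D\cdot L_1 < 1$ at smooth points and $r(D\cdot L_1)\le\tfrac{2}{3}<1$ at the singular indices $r\in\{3,4\}$, so $(X,D)$ is log canonical along $L_1$; in particular $P\notin L_1$, $P\ne O_y$, $P\ne O_z$. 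Now write $D = mL_2+\Omega$ with $L_2\not\subset\Supp(\Omega)$. The bound $m(L_1\cdot L_2) \le D\cdot L_1 = \tfrac{1}{6}$ gives $m\le\tfrac{2}{7}$, hence $\Omega\cdot L_2 = D\cdot L_2 - m L_2^2 = \tfrac{1}{6}+\tfrac{5m}{12}\le\tfrac{2}{7}<1$; since $L_2\cap \Sing(X)\subset L_1$, every point of $L_2\setminus L_1$ is a smooth point of $X$, and Lemma~\ref{lemma:handy-adjunction} excludes it. Thus $P\notin C_x$.

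The remaining possibilities are that $P$ is one of the three $\tfrac{1}{2}(1,1)$-singular points $P_1,P_2,P_3$ of $X$ lying on $L_{xz}\setminus\{O_z\}$, or a smooth point of $X$ off $C_x$. For the singular case, each $P_i$ lies on the curve $C_y$, which I claim is irreducible: $f(x,0,z,t) = \gamma t^2 + x g(x^2,z)$ with $g\ne 0$ (by quasismoothness at $O_z$), and a hypothetical factorization $(At+B)(Ct+D)$ in $\mathbb{C}[x,z,t]$ would force $B^2$ to be proportional to $x g(x^2,z)$, impossible since $B^2$ has only even powers of $x$. Moreover $C_y$ lifts to a single smooth line in the $\mu_2$-orbifold cover at each $P_i$, so $(X,\tfrac{2}{3}C_y)$ is log canonical at $P_i$, and by Lemma~\ref{lemma:convexity} I may assume $C_y\not\subset\Supp(D)$. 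Then $r(D\cdot C_y) = 2\cdot\tfrac{1}{2}=1$ and Lemma~\ref{lemma:handy-adjunction} excludes $P_i$. For the smooth case, I apply Lemma~\ref{lemma:Carolina} with $k=6$: the space $H^0(\mathbb{P},\mathcal{O}_{\mathbb{P}}(6))=\langle x^3,y^2,xz\rangle$ provides two monomials of the form $x^\alpha y^\beta$ (namely $x^3,y^2$) and two of the form $x^\gamma z^\delta$ (namely $x^3,xz$), while the projection $\psi: X\dashrightarrow\mathbb{P}(2,3,4)$ is finite (generically $2$-to-$1$ in $t$, defined everywhere on $X$ since $O_t\notin X$) and contracts no curves. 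The bound $\mult_P(D)\le\frac{Ikd}{a_0a_1a_2a_3} = \frac{2\cdot 6\cdot 14}{168}=1$ contradicts $\mult_P(D)>1$, completing the proof. The main obstacle will be the orbifold intersection bookkeeping at $O_y,O_z$ and the $P_i$, together with the verification that $C_x$ always splits into two distinct reduced components for every quasismooth $X$.
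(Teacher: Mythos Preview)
Your proof is correct and follows essentially the same approach as the paper: compute $\lct(X,C_x)=1$ via the two-component decomposition of $C_x$, then exclude points on $C_x$ via Lemma~\ref{lemma:handy-adjunction}, the $\tfrac{1}{2}$-points via the irreducible curve $C_y$, and smooth points off $C_x$ via Lemma~\ref{lemma:Carolina} with $k=6$. Your direct quasismoothness arguments (that $C_x$ is reduced and $C_y$ is irreducible) are a nice alternative to the paper's use of an explicit normal form; note only the harmless slip that the three $\tfrac{1}{2}(1,1)$-points lie on $L_{yt}=\{y=t=0\}$, not on $L_{xz}$.
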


\begin{proof}
We may assume that $X$ is defined by the quasihomogeneous equation
\[t^2-y^2z^2+x(z-\beta_1x^2)(z-\beta_2x^2)(z-\beta_3x^2)+\epsilon xy^2(y^2-\gamma x^3)\]
where $\epsilon\ne 0$, $\beta_1$, $\beta_2$, $\beta_3$, $\gamma$
are constants. Note that $X$ is singular at the points $O_y$,
$O_z$ and three points $Q_1=[1:0:\beta_1:0]$,
$Q_2=[1:0:\beta_2:0]$, $Q_3=[1:0:\beta_3:0]$. The constants
$\beta_1$, $\beta_2$ and $\beta_3$ are distinct since $X$ is
quasismooth. The curve $C_x$ consists of two irreducible reduced
curves $C_{-}$ and $C_{+}$. However, the curves $C_y$ and $C_z$
are irreducible. We can easily see that $\lct(X, C_x)=1$, $\lct
(X, \frac{2}{3}C_y)=\frac{3}{2}$ and $\lct(X, \frac{1}{2}C_z)>1$.

Suppose that $\lct(X)<1$. Then there is an effective $\Q$-divisor
$D\qlineq -K_X$ such that the log pair $(X, D)$ is not log
canonical at some point $P\in X$. Since $H^0(\P,
\mathcal{O}_{\P}(6))$ contains $x^3$, $y^2$ and $xz$,
Lemma~\ref{lemma:Carolina} implies that the point $P$
is either a singular point of $X$ or a point of $C_x$.
Furthermore, $C_y$ is irreducible and hence we may assume that the
support of $D$ does not contain the curve $C_y$. Hence the
equality
\[2C_y\cdot D=\frac{2\cdot 3\cdot 2\cdot 14}{ 2\cdot 3\cdot  4\cdot 5}=1\]
implies that $P\ne Q_i$ for each $i=1,2,3$. In particular, the
point $P$ must belong to $C_x$.

We have the following intersection numbers:
\[C_x\cdot C_-=C_x\cdot C_+=\frac{1}{6}, \ \  C_-\cdot C_+=\frac{7}{12}, \ \ C_-^2=C_+^2=-\frac{5}{12}.\]
We may assume that the support of $D$ cannot contain either $C_-$
or $C_+$. If $D$ does not contain the curve $C_+$, then we obtain
\begin{gather*}
\mult_{O_y}(D) \leqslant 4D\cdot C_+=\frac{2}{3}<1,\\
\mult_{O_z}(D) \leqslant 4D\cdot C_+=\frac{2}{3}<1.
\end{gather*}
On the other hand, if $D$ does not contain the curve $C_-$, then
we obtain
\begin{gather*}
\mult_{O_y}(D)\leqslant 4D\cdot C_-=\frac{2}{3}<1,\\
\mult_{O_z}(D) \leqslant 4D\cdot C_-=\frac{2}{3}<1.
\end{gather*}
Therefore, the point $P$ must be in
$C_x\setminus\mathrm{Sing}(X)$.

We write $D=mC_++\Omega$, where the support of $\Omega$ does not
contain the curve $C_+$.  Then $m\geqslant \frac{2}{7}$ since
$D\cdot C_-\geq mC_+\cdot C_-$. Then we see $C_+\cdot D-mC_+^2<1$.
By the same method, we also obtain $C_-\cdot D-mC_-^2<1$. Then
Lemma~\ref{lemma:handy-adjunction} completes the proof.
\end{proof}

\begin{lemma}
\label{lemma:I-2-W-3-4-5-10-D-20}
Let $X$ be a quasismooth hypersurface of
degree $20$ in $\mathbb{P}(3,4,5, 10)$. Then
$\lct(X)=\frac{3}{2}$.
\end{lemma}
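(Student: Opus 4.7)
The plan is to prove $\lct(X)=\frac{3}{2}$ by establishing both bounds, following the scheme of Section~5. For the upper bound I would first normalize the defining equation: as a degree~$20$ polynomial in $\mathbb{P}(3,4,5,10)$ it contains $t^{2}$, $z^{4}$, $y^{5}$ together with terms such as $z^{2}t$, $x^{2}yt$, $x^{2}yz^{2}$, $xy^{3}z$, $x^{5}z$ and $x^{4}y^{2}$. From quasismoothness the singular points of $X$ are exactly $O_{x}$ of type $\frac{1}{3}(1,1)$, a unique point $P_{1}\in L_{yt}\cap X$ of type $\frac{1}{2}(1,1)$, and two points $P_{2},P_{2}'\in L_{zt}\cap X$ of type $\frac{1}{5}(1,3)$. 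A direct check shows that $C_{x}$ and $C_{y}$ are quasismooth (hence irreducible and reduced) in $\mathbb{P}(4,5,10)$ and $\mathbb{P}(3,5,10)$ respectively, while $C_{z}$ is irreducible with a double point at $O_{x}$ whose link on the orbifold cover is generically a node. Consequently
\[\lct\bigl(X,\tfrac{2}{3}C_{x}\bigr)=\tfrac{3}{2},\qquad \lct\bigl(X,\tfrac{1}{2}C_{y}\bigr)=2,\qquad \lct\bigl(X,\tfrac{2}{5}C_{z}\bigr)\geq \tfrac{3}{2},\]
so $\lct(X)\leq \tfrac{3}{2}$.

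For the lower bound, suppose $\lct(X)<\tfrac{3}{2}$. Then there exists an effective $\mathbb{Q}$-divisor $D\sim_{\mathbb{Q}}-K_{X}=\mathcal{O}_{X}(2)$ such that $(X,\tfrac{3}{2}D)$ is not log canonical at some point $P\in X$. Applying Lemma~\ref{lemma:convexity} three times I may assume $C_{x},C_{y},C_{z}\not\subset\Supp(D)$, which yields the intersection numbers $D\cdot C_{x}=\tfrac{1}{5}$, $D\cdot C_{y}=\tfrac{4}{15}$ and $D\cdot C_{z}=\tfrac{1}{3}$. For $P=P_{1}$, Lemma~\ref{lemma:handy-adjunction} with $C=C_{x}$ gives $r\lambda\,C_{x}\cdot D=2\cdot\tfrac{3}{2}\cdot\tfrac{1}{5}=\tfrac{3}{5}\leq 1$, forcing $(X,\tfrac{3}{2}D)$ to be log canonical at $P_{1}$---a contradiction. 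For a smooth $P\notin\{O_{x},P_{1},P_{2},P_{2}'\}$ the bound from Lemma~\ref{lemma:Carolina} is too weak, so I would use the pencil $\mathcal{L}\subset|\mathcal{O}_{X}(8)|$ generated by $y^{2}$ and $xz$, whose base locus is $\{O_{x},P_{2},P_{2}'\}$: the unique member $F_{\alpha}=X\cap\{y^{2}=\alpha xz\}$ through $P$ is irreducible for generic~$\alpha$ and the pair $(X,\tfrac{3}{8}F_{\alpha})$ is log canonical at $P$, so after arranging $F_{\alpha}\not\subset\Supp(D)$ via Lemma~\ref{lemma:convexity} the inequality $\mult_{P}(D)\leq F_{\alpha}\cdot D=\tfrac{8}{15}<\tfrac{2}{3}$ gives the desired contradiction.

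The main obstacle is excluding the remaining singular points $O_{x}$, $P_{2}$ and $P_{2}'$, where the one-step adjunction bounds fail: at $O_{x}$ the estimates $\mult_{O_{x}}(D)\leq \tfrac{4}{5}$ and $\mult_{O_{x}}(D)\leq \tfrac{1}{2}$ from $C_{y}$ and $C_{z}$ do not contradict the required $\mult_{O_{x}}(D)>\tfrac{2}{9}$, and similarly the inequalities through $C_{x}$ or $C_{y}$ at $P_{2}$, $P_{2}'$ produce only $r\lambda\,C\cdot D\in\{\tfrac{3}{2},2\}>1$. For each of these points I would perform a weighted blow up $\pi\colon\bar X\to X$ (weights $(1,1)$ at $O_{x}$ and weights $(1,3)$ at $P_{2},P_{2}'$), write the log pull-back $K_{\bar X}+\tfrac{3}{2}\bar D+aE\sim_{\mathbb{Q}}\pi^{*}(K_{X}+\tfrac{3}{2}D)$ with $E$ the exceptional curve, bound the coefficient $a$ and the multiplicities of the strict transforms of $C_{x}$, $C_{y}$, $C_{z}$ along $E$ using the intersection numbers above, and apply Lemma~\ref{lemma:handy-adjunction} on $\bar X$---iterating the blow up once more if necessary---to force a contradiction in the spirit of Lemmas~\ref{lemma:I-6-infinite-series-1-n-2} and~\ref{lemma:I-6-infinite-series-2}.
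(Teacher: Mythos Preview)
Your multiplicity threshold at the singular points is wrong, and this distorts which points you think are hard. Since $\mult_P(D)$ is by definition measured on the orbifold cover, Proposition~\ref{proposition:lc-by-finite-morphism} together with Lemma~\ref{lemma:multiplicity} gives $\tfrac{3}{2}\mult_P(D)>1$ whenever $(X,\tfrac{3}{2}D)$ fails to be log canonical at $P$, regardless of whether $P$ is smooth or singular; the correct threshold is therefore $\mult_P(D)>\tfrac{2}{3}$, not $\tfrac{2}{9}$. You already obtained $\mult_{O_x}(D)\leq\tfrac{1}{2}$ from $C_z$ (which is double at $O_x$), and $\tfrac{1}{2}<\tfrac{2}{3}$ excludes $O_x$ immediately---no blow-up is needed there. (The $\tfrac{1}{r}$ in the Step~2 template is a much weaker consequence, not the bound actually used in the proofs.) As a minor aside, your $\tfrac{1}{2}$ point lies on $L_{xz}$, not $L_{yt}$, and the two $\tfrac{1}{5}$ points lie on $L_{xy}$, not $L_{zt}$.

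Only the two $\tfrac{1}{5}(3,4)$ points genuinely require a blow-up. The paper takes weights $(3,4)$---matching the orbifold weights of the local coordinates $x,y$---so that the exceptional curve $E\cong\mathbb{P}(3,4)$ carries singular points $Q_3$, $Q_4$ through which $\bar C_y$, $\bar C_x$ respectively pass. After bounding the pull-back coefficient $a$ via $\bar C_x\cdot\bar D\geq 0$ (yielding $a\leq 4$), one tests the generic point of $E$ and the points $Q_3$, $Q_4$ separately against $\bar C_y$ and $\bar C_x$; a single blow-up suffices and no iteration is needed.

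Your pencil argument for smooth points is the right idea but incomplete: the member $Z$ through $P$ need not be irreducible. When it splits as $Z=Z_1+Z_2$ (two smooth rational curves with $Z_i^2=-\tfrac{4}{15}$ and $Z_1\cdot Z_2=\tfrac{4}{3}$), Lemma~\ref{lemma:convexity} only removes one component from $\Supp(D)$; you must then bound the coefficient $m$ of the other via $D\cdot Z_2\geq m\,Z_1\cdot Z_2$ and finish with Lemma~\ref{lemma:handy-adjunction} applied to $Z_1$.
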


\begin{proof}
The surface $X$ can be defined by the quasihomogeneous equation
$$
t^2=y^{5}+z^4+x^{5}z+\epsilon_{1}xy^{3}z+\epsilon_{2}x^{2}yz^{2}+\epsilon_{3}x^{4}y^{2},
$$
where $\epsilon_{i}\in\mathbb{C}$. Note that the surface $X$ is singular only at
the point $O_x$, $O=[0:1:0:1]$, $P_1=[0:0:1:1]$ and $P_2=[0:0:1:-1]$.

The curves $C_x$, $C_y$ and $C_{z}$ are irreducible. Moreover, we
have
$$
\frac{3}{2}=\lct(X, \frac{2}{3}C_x)<\lct(X, \frac{2}{4}C_y)=2,%
$$
and hence  $\lct(X)\leqslant \frac{3}{2}$. We also see that $\lct(X, \frac{2}{5}C_z)>\frac{3}{2}$.

Suppose that $\lct(X)<\frac{3}{2}$. Then there is an effective $\Q$-divisor
$D\qlineq -K_X$ such that the pair $(X,\frac{3}{2}D)$ is not log
canonical at some point $P$. By Lemma~\ref{lemma:convexity}, we
may assume that the support of the divisor $D$ does not contain
the curves $C_x$, $C_y$ and $C_{z}$.

Suppose that $P\not\in C_{x}\cup C_{y}\cup C_{z}$. Then we
consider the pencil $\mathcal{L}$ on $X$ cut out by the equations
$\lambda y^2+\mu xz=0$, $[\lambda:\mu]\in \mathbb{P}^1$. There is
a unique member $Z$ in the pencil $\mathcal{L}$ with  $P\in Z$.
The curve $Z$ is cut out by an equation of the form $\alpha
y^2+xz$, where $\alpha$ is a non-zero constant. There is a natural
double cover $\omega\colon Z\to C$, where $C$ is the curve in
$\mathbb{P}(3,4,5)$ given by the equation $\alpha y^2+xz$. The
curve $C$ is quasismooth and $\omega(P)$ is a smooth point of
$\mathbb{P}(3,4,5)$. Thus, we see that $\mult_{P}(Z)\leqslant 2$,
the curve $Z$ consists of at most $2$ components, each component
of $Z$ is a smooth rational curve. In particular, $(X,
\frac{3}{8}Z)$ is log canonical. Therefore, we may assume that
$\Supp(D)$ does not contain at least one irreducible component of
$Z$. Thus, if $Z$ is irreducible, then we obtain an absurd
inequality
$$
\frac{8}{15}=D\cdot
Z\geqslant\mult_{P}(D)\geqslant\frac{2}{3}.
$$
So, we see that $Z=Z_{1}+Z_{2}$, where $Z_{1}$ and $Z_{2}$ are
smooth irreducible rational curves. Then
$$
Z_1^2=Z_2^2=-\frac{4}{15},\ \ \ Z_{1}\cdot Z_{2}=\frac{4}{3}.%
$$
Without loss of generality we may assume that $P\in Z_{1}$. Put
$D=m Z_{1}+\Omega$, where $\Omega$ is an effective
$\mathbb{Q}$-divisor such that
$Z_{1}\not\subset\mathrm{Supp}(\Omega)$. If $m\ne 0$, then
$$
\frac{4}{15}=D\cdot Z_{2}\geqslant  m Z_{1}\cdot Z_{2}=\frac{4m}{3},%
$$
and hence $m\leqslant \frac{1}{5}$. On the other hand,
Lemma~\ref{lemma:handy-adjunction} shows that
$$
\frac{4+4m}{15}=\big(D-mZ_{1}\big)\cdot
Z_{1}=\Omega\cdot Z_{1}>\frac{2}{3},
$$
and hence $m>\frac{3}{2}$. This is a contradiction. Therefore,
$P\in C_{x}\cup C_{y}\cup C_{z}$.

 The inequalities
$$
D\cdot C_{x}=\frac{1}{5}<\frac{2}{3}, \ \
D\cdot C_{y}=\frac{4}{15}<\frac{2}{3}, \ \
D\cdot C_{z}=\frac{1}{3}<\frac{2}{3}
$$
imply that the point $P$ must be a singular point of $X$.

The curve $C_{z}$ is singular at the point $O_{x}$. Thus, we have
$$
\frac{1}{2}=\frac{3}{2}D\cdot C_{z}\geqslant\frac{\mult_{O_x}(D)\mult_{O_x}(C_{z})}{2}
=\mult_{O_x}(D).
$$
Therefore, the point $P$ cannot be $O_x$.

Also, we have
$$
\frac{2}{5}=2D\cdot C_{x}\geqslant \mult_{O}(D).
$$
This inequality shows that the point $P$ cannot be the point $O$.
Consequently, the point $P$ must be either $P_1$ or $P_2$.

Without loss of generality we may assume that $P=P_{1}$. Note that
$C_{x}\cap C_{y}=\{P_{1},P_{2}\}$.

Let $\pi\colon\bar{X}\to X$ be the weighted blow up at the point
$P_{1}$ with weights $(3,4)$. Let $E$ be the exceptional curve of
$\pi$ and  let $\bar{D}$, $\bar{C}_{x}$ and $\bar{C}_{y}$ be the
proper transforms of $D$, $C_{x}$ and $C_{y}$, respectively. Then
$$
K_{\bar{X}}\qlineq \pi^{*}(K_{X})+\frac{2}{5}E,\ %
\bar{C}_{x}\qlineq \pi^{*}(C_{x})-\frac{3}{5}E,\ %
\bar{C}_{y}\qlineq \pi^{*}(C_{y})-\frac{4}{5}E,\ %
\bar{D}\qlineq \pi^{*}(D)-\frac{a}{5}E,%
$$
where $a$ is a non-negative  rational number. The curve $E$
contains one singular point $Q_{3}$ of type $\frac{1}{3}(1,1)$ and
one singular point of $Q_{4}$ of type $\frac{1}{4}(1,1)$ on the
surface $\bar{X}$. The point $Q_3$ is contained in $\bar{C}_y$ but
not in $\bar{C}_x$. On the other hand, the point $Q_4$ is
contained in $\bar{C}_x$ but not in $\bar{C}_y$. The intersection
$\bar{C}_{x}\cap\bar{C}_{y}$ consists of a single point that
dominates the point $P_{2}$.

The log pull back of the log pair $(X,\frac{3}{2}D)$ is the log
pair
$$
\left(\bar{X},
\frac{3}{2}\bar{D}+\frac{3a-4}{10}E\right).
$$
This is not log canonical at some point $Q\in E$. We see that
$$
0\leqslant \bar{C}_{x}\cdot\bar{D}=C_{x}\cdot D+\frac{3a}{25}E^{2}=\frac{1}{5}-\frac{a}{20},%
$$
and hence $a\leqslant 4$. In particular,
$$
\frac{3a-4}{10}<1.
$$
This implies that the log pull back of the log pair
$(X,\frac{3}{2}D)$ is log canonical in a punctured neighborhood of
the point $Q$.

If $a\leqslant \frac{4}{3}$, then the log pair $(\bar{X},
\frac{3}{2}\bar{D})$ is not log canonical at $Q$ as well. We then
obtain
$$
\frac{a}{12}=\bar{D}\cdot E>
\left\{%
\aligned
&\frac{2}{3}\ \text{if}\ Q\ne Q_{3}\ \text{and}\ Q\ne Q_{4},\\%
&\frac{2}{3}\cdot\frac{1}{3}\ \text{if}\ Q=Q_{3},\\%
&\frac{2}{3}\cdot\frac{1}{4}\ \text{if}\ Q=Q_{4}.\\%
\endaligned\right.%
$$
In particular, we have $a>2$. This contradicts the assumption
$a\leqslant \frac{4}{3}$. Therefore, $a>\frac{4}{3}$ and the log
pull back of the log pair $(X,\frac{3}{2}D)$ is effective.
Then
$$
\mult_{Q}(\bar{D})>\frac{2}{3}\left(1-\frac{3a-4}{10}\right)=\frac{14-3a}{15}.%
$$
Since
$
\bar{D}\cdot E =\frac{a}{12}\leqslant \frac{2}{3},
$
Lemma~\ref{lemma:handy-adjunction} implies  that the point $Q$
cannot be a smooth point. Therefore, the point $Q$ is either
$Q_{3}$ or $Q_{4}$. However, two inequalities
$$
\frac{4}{5}-\frac{a}{5}=4\bar{D}\cdot\bar{C}_{x}
\geqslant\mult_{Q_{4}}(\bar{D})>\frac{14-3a}{15},
$$
$$
\frac{4}{5}-\frac{a}{5}=3\bar{D}\cdot\bar{C}_{y}\geqslant\mult_{Q_{3}}(\bar{D})>\frac{14-3a}{15}
$$
give us a contradiction.
\end{proof}

\begin{lemma}
\label{lemma:I-2-W-3-4-10-15-D-30}
Let $X$ be a quasismooth hypersurface of
degree $30$ in $\mathbb{P}(3,4,10, 15)$. Then
$\lct(X)=\frac{3}{2}$.
\end{lemma}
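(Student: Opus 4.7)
The plan follows the general scheme outlined in the section \textbf{The scheme of the proof} and closely mirrors the previous Lemma~\ref{lemma:I-2-W-3-4-5-10-D-20}. First I would pin down an explicit form of the defining equation of $X_{30}\subset\mathbb{P}(3,4,10,15)$. The monomials of degree $30$ include $t^{2}$, $z^{3}$, $x^{10}$, $x^{2}y^{6}$, $x^{6}y^{3}$, and mixed terms such as $xy^{4}z$, $x^{4}y^{2}z$, $x^{2}y^{2}z^{2}$; quasismoothness forces $t^{2}$ and $z^{3}$ to appear with non-zero coefficients, while the remaining coefficients can be analyzed case by case. I would then locate $\Sing(X)$: one expects a point $O_{z}$ of type $\frac{1}{10}(3,4)$, a point $O_{t}$ of type $\frac{1}{15}(3,4)$, together with singularities on the intersections of $X$ with the coordinate lines $L_{xt}$ (of index $2$), $L_{yz}$ (of index $3$), and possibly $L_{xy}$ (of index $5$). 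The curves $C_{x}$, $C_{y}$, $C_{z}$ are irreducible; using Lemmas~\ref{lemma:basic-property} and~\ref{lemma:Igusa} at the singular points of $C_{x}$, I would compute $\lct(X,\frac{2}{3}C_{x})=\frac{3}{2}$, together with $\lct(X,\frac{1}{2}C_{y})>\frac{3}{2}$ and $\lct(X,\frac{1}{5}C_{z})>\frac{3}{2}$, which gives $\lct(X)\leq\frac{3}{2}$.

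For the reverse inequality I would argue by contradiction. Suppose $\lct(X)<\frac{3}{2}$; then there is an effective $\Q$-divisor $D\sim_{\Q}-K_{X}$ with $(X,\frac{3}{2}D)$ not log canonical at some point $P\in X$. By Lemma~\ref{lemma:convexity} we may assume $C_{x}$, $C_{y}$, $C_{z}\not\subset\Supp(D)$. For a smooth point $P\in X\setminus C_{x}$, I would apply Lemma~\ref{lemma:Carolina} with $k$ chosen so that $H^{0}(\mathbb{P},\mathcal{O}_{\mathbb{P}}(k))$ contains monomials $x^{\alpha}y^{\beta}$, $x^{\gamma}z^{\delta}$, $x^{\mu}t^{\nu}$ (for instance $k=12$ gives $x^{4}$, $y^{3}$, $xz$, $t$-free options; a larger $k$ may be needed to include a $t$-monomial) to obtain $\mult_{P}(D)<\frac{2}{3}$, excluding such $P$.

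Next, direct intersection numbers $D\cdot C_{x}$, $D\cdot C_{y}$, $D\cdot C_{z}$ (each of the order of $\frac{1}{r}$ for the appropriate index $r$) combined with the multiplicity bound $\mult_{P}(D)>\frac{2}{3r}$ at a singular point of index $r$ should eliminate $O_{z}$, $P\in L_{xt}$, $P\in L_{yz}$ and the smooth points of $C_{x}\cup C_{y}\cup C_{z}$. For any smooth point $P\in X$ outside $C_{x}\cup C_{y}\cup C_{z}$ that survived the $\mult$-bound, I would introduce the pencil $\mathcal{L}$ on $X$ cut out by $\lambda z+\mu xy^{??}=0$ or $\lambda x^{10}+\mu z^{3}=0$ (the actual one is chosen so that its base locus is contained in $C_{x}\cup C_{z}$ and a general member is irreducible and quasismooth away from the base locus), take the unique member through $P$, show it has at most two irreducible components via an explicit affine computation, and then apply Lemma~\ref{lemma:handy-adjunction} to the component containing $P$ to force a contradiction.

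The main obstacle I anticipate, in analogy with the treatment of $X_{20}\subset\mathbb{P}(3,4,5,10)$, will be the point $O_{t}$ (or possibly one of the index-$2$ points on $L_{xt}$), where the coarse multiplicity estimates are insufficient. The remedy is Method~3.3: perform the weighted blow up $\pi\colon\bar{X}\to X$ at $O_{t}$ with weights $(3,4)$, write
\[
K_{\bar{X}}+\bar{D}+\frac{3a-4}{10}\,E\sim_{\Q}\pi^{*}\!\left(K_{X}+\tfrac{3}{2}D\right),
\]
derive $a\leq\text{const}$ from $\bar{C}_{x}\cdot\bar{D}\geq 0$, and then analyze the two induced cyclic quotient points $Q_{3}$, $Q_{4}$ on $E$ by intersecting $\bar{D}$ with $E$, $\bar{C}_{x}$, $\bar{C}_{y}$ via Lemma~\ref{lemma:handy-adjunction}, exactly as in the proof of Lemma~\ref{lemma:I-2-W-3-4-5-10-D-20}. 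The contradiction from these intersections should close the argument and conclude $\lct(X)=\frac{3}{2}$.
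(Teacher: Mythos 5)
Your skeleton (upper bound from $\lct(X,\tfrac{2}{3}C_x)=\tfrac{3}{2}$, then Lemma~\ref{lemma:Carolina} plus intersection numbers against coordinate curves) is the right one and is what the paper does, but your description of the geometry contains errors that would derail the execution. Since $t^{2}$ and $z^{3}$ must both appear in the defining equation --- as you yourself note when invoking quasismoothness --- the points $O_{t}$ and $O_{z}$ do not lie on $X$ at all, so there are no singular points of type $\tfrac{1}{15}(3,4)$ or $\tfrac{1}{10}(3,4)$, and the ``main obstacle'' you anticipate at $O_{t}$, to be resolved by a weighted blow up as in Lemma~\ref{lemma:I-2-W-3-4-5-10-D-20}, is vacuous: you cannot blow up a point that is not on the surface. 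You also omit the singular point $O_{y}$ of type $\tfrac{1}{4}(1,1)$, which is one of the two points the final step actually has to dispose of. The correct singular locus is $O_{y}=\tfrac{1}{4}(1,1)$, one point of index $2$ on $\{x=t=0\}$, one point of index $5$ on $\{x=y=0\}$, and two points of index $3$ on $\{y=z=0\}$.

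Once this is corrected the proof is much shorter than you expect and needs neither a pencil nor Method~3.3. Because $O_{t}\notin X$, the projection $X\to\mathbb{P}(3,4,10)$ is a finite morphism, so Lemma~\ref{lemma:Carolina} applies with $k=20$ (monomials $y^{5}$, $x^{4}y^{2}$, $z^{2}$) and gives
$$
\mult_{P}(D)\leqslant\frac{2\cdot 20\cdot 30}{3\cdot 4\cdot 10\cdot 15}=\frac{2}{3}
$$
for every smooth point $P\notin C_{x}$, excluding all such points at once; no choice of $k$ containing a $t$-monomial is needed. The curve $C_{y}$ is irreducible and contains the index-$5$ point and both index-$3$ points, and $5D\cdot C_{y}=\tfrac{2}{3}$ excludes every point of $C_{y}$; the curve $C_{x}$ is irreducible and contains $O_{y}$ and the index-$2$ point, and $4D\cdot C_{x}=\tfrac{2}{5}<\tfrac{2}{3}$ excludes every point of $C_{x}$. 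That covers all remaining candidates for $P$, so the blow-up and pencil machinery you imported from the $\mathbb{P}(3,4,5,10)$ case is superfluous here.
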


\begin{proof}
The surface $X$ can be defined by the quasihomogeneous equation
$$
t^{2}=z^{3}-y^{5}z-x^{10}+\epsilon_{1}x^{2}yz^{2}+\epsilon_{2}x^{2}y^{6}+\epsilon_{3}x^{4}y^{2}z+\epsilon_{4}x^{6}y^{3},
$$
where $\epsilon_{i}\in\mathbb{C}$. The surface $X$ is singular at
the points $O_y$, $O_{2}=[0:1:1:0]$, $O_{5}=[0:0:1:1]$, $P_1=[1:0:0:1]$ and
$P_{2}=[1:0:0:-1]$.

The curves $C_x$ and $C_y$ are irreducible. Moreover, we have
$$
\frac{3}{2}=\lct\left(X, \frac{2}{3}C_x\right)<\lct\left(X, \frac{2}{4}C_y\right)=2.%
$$

Suppose that $\lct(X)<\frac{3}{2}$. Then there is an effective $\Q$-divisor
$D\qlineq -K_X$ such that the pair $(X,\frac{3}{2}D)$ is not log
canonical at some point $P$. By Lemma~\ref{lemma:convexity}, we
may assume that the support of the divisor $D$ contains
neither $C_x$ nor $C_y$.

Since $H^0(\P, \mathcal{O}_{\P}(20))$ contains the monomials
$y^5$, $y^2x^{4}$, $z^{2}$, it follows from
Lemma~\ref{lemma:Carolina} that the point $P$ is either a singular point of $X$ or a smooth point in $C_y$. However,
the point $P$ cannot belong to  $C_y$ since
$
\frac{2}{3}=5D\cdot C_{y}%
$.
Therefore, the point $P$ must be either the point $O_y$ or $O_2$.
On the other hand, we have $4D\cdot C_x=\frac{2}{5}$.
This means that the pair $(X, \frac{3}{2}D)$ is log canonical at the points $O_y$ and $O_2$.
Consequently, $\lct(X)=\frac{3}{2}$.
\end{proof}

\begin{lemma}
\label{lemma:I-2-W-5-13-19-22-D-57}
Let $X$ be a quasismooth hypersurface of
degree $57$ in $\mathbb{P}(5,13,19, 22)$. Then
$\lct(X)=\frac{25}{12}$.
\end{lemma}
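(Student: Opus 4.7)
The plan is to follow the scheme of Section~\ref{section:preliminaries} with $I=5+13+19+22-57=2$ and $\tfrac{I}{a_{0}}=\tfrac{2}{5}$, and show that $\lct(X)=\lct(X,\tfrac{2}{5}C_{x})=\tfrac{25}{12}$. First I would identify the defining equation. The monomials of degree $57$ in weights $(5,13,19,22)$ are exactly $z^{3}$, $yt^{2}$, $x^{7}t$, $x^{5}yz$ and $xy^{4}$, so after rescaling
\[f=z^{3}+yt^{2}+x^{7}t+\delta x^{5}yz+\epsilon xy^{4},\]
where the coefficients displayed explicitly are nonzero by quasismoothness. The surface has exactly three singular points: $O_{x}$ of type $\tfrac{1}{5}(3,4)$, $O_{y}$ of type $\tfrac{1}{13}(6,9)$ and $O_{t}$ of type $\tfrac{1}{22}(5,19)$; the point $O_{z}$ does not lie on $X$. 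The curve $C_{x}=\{z^{3}+yt^{2}=0\}\cap X$ is irreducible, smooth away from $O_{y}$, and in the orbifold chart at $O_{y}\cong\mathbb{C}^{2}/\tfrac{1}{13}(6,9)$ it becomes the cusp $z^{3}+t^{2}=0$. By Lemma~\ref{lemma:Igusa} this cusp has local log canonical threshold $\tfrac{5}{6}$ in $\mathbb{C}^{2}$, and Proposition~\ref{proposition:lc-by-finite-morphism} transfers the computation to the orbifold, giving $\lct(X,\tfrac{2}{5}C_{x})=\tfrac{5}{2}\cdot\tfrac{5}{6}=\tfrac{25}{12}$. Hence $\lct(X)\le\tfrac{25}{12}$.

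For the reverse inequality I would suppose there is an effective $\mathbb{Q}$-divisor $D\sim_{\mathbb{Q}}-K_{X}$ making $(X,\tfrac{25}{12}D)$ fail to be log canonical at some $P\in X$, and exclude every candidate for $P$. One first checks that $\lct(X,\tfrac{2}{13}C_{y})=\tfrac{65}{21}>\tfrac{25}{12}$ (where $C_{y}=\{z^{3}+x^{7}t=0\}\cap X$ is irreducible with only singularity at $O_{t}$ of Puiseux pair $(3,7)$, contributing $\tfrac{10}{21}$) and $\lct(X,\tfrac{2}{19}C_{z})=\tfrac{19}{2}>\tfrac{25}{12}$ (with $C_{z}$ smooth at every singular point of $X$). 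By Lemma~\ref{lemma:convexity} I may assume that none of $C_{x}$, $C_{y}$, $C_{z}$ is contained in $\mathrm{Supp}(D)$. The intersection numbers $C_{x}\cdot D=\tfrac{3}{143}$, $C_{y}\cdot D=\tfrac{3}{55}$, $C_{z}\cdot D=\tfrac{57}{715}$ and $C_{t}\cdot D=\tfrac{6}{65}$ then follow from $D\sim_{\mathbb{Q}}\mathcal{O}_{X}(2)$.

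Candidates for $P$ are excluded in turn. At $O_{x}$, Lemma~\ref{lemma:handy-adjunction} applied to $C_{y}$ (smooth there) gives $5\cdot\tfrac{25}{12}\cdot C_{y}\cdot D=\tfrac{25}{44}<1$, proving log canonicity at $O_{x}$; similarly at $O_{t}$, Lemma~\ref{lemma:handy-adjunction} applied to $C_{x}$ (smooth there) yields $22\cdot\tfrac{25}{12}\cdot C_{x}\cdot D=\tfrac{25}{26}<1$. Smooth points of $X$ on $C_{x}\cup C_{y}\cup C_{z}$ are excluded by a direct application of Lemma~\ref{lemma:handy-adjunction} to the relevant component. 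For smooth points $P$ in the complement of $C_{x}\cup C_{y}\cup C_{z}$, I invoke Lemma~\ref{lemma:Carolina} with $k=95$: the space $H^{0}(\mathbb{P},\mathcal{O}_{\mathbb{P}}(95))$ contains the monomials $x^{19}, x^{6}y^{5}$ and $x^{19}, z^{5}$, while the projection $X\dashrightarrow\mathbb{P}(5,13,19)$ is a double cover with no contracted curve (the discriminant in $t$ being nonvanishing except at $O_{t}$), so $\mathrm{mult}_{P}(D)\le\tfrac{2\cdot 95\cdot 57}{5\cdot 13\cdot 19\cdot 22}=\tfrac{57}{143}<\tfrac{12}{25}$, contradicting non-log-canonicity.

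The hard remaining case, and the main obstacle, is the singular point $O_{y}$. There $C_{x}$ has a cusp of multiplicity $2$, so Lemma~\ref{lemma:handy-adjunction} does not apply to $C_{x}$; and the smooth curves $C_{z}$ and $C_{t}$ through $O_{y}$ give intersection numbers too large to conclude directly, since $13\cdot\tfrac{25}{12}\cdot C_{z}\cdot D=\tfrac{95}{44}>1$ and $13\cdot\tfrac{25}{12}\cdot C_{t}\cdot D=\tfrac{5}{2}>1$. To complete the exclusion of $O_{y}$ I plan to perform a weighted blow-up $\pi\colon\bar{X}\to X$ at $O_{y}$ with weights $(2,3)$ adapted to the cuspidal tangent directions of $C_{x}$, following the pattern of Lemma~\ref{lemma:I-6-infinite-series-1-n-2}. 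On $\bar{X}$ the proper transform $\bar{C}_{x}$ is smooth and meets the exceptional curve $E$ at a single point, and $E$ carries two cyclic quotient singularities of indices $2$ and $3$. Writing the log pull-back of $(X,\tfrac{25}{12}D)$ on $\bar{X}$, introducing an unknown for the multiplicity of $\bar{D}$ along $E$, and applying Lemma~\ref{lemma:handy-adjunction} to $\bar{C}_{x}$ or to $E$ at the relevant point (according to where the non-log-canonical singularity sits) should give the required contradiction.
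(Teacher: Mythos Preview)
Your outline is correct and matches the paper through the computation of $\lct(X,\tfrac{2}{5}C_{x})=\tfrac{25}{12}$, the exclusion of $O_{x}$ and $O_{t}$, and the treatment of smooth points. (The paper uses $k=110$ in Lemma~\ref{lemma:Carolina}, where $x^{22}$, $x^{9}y^{5}$, $x^{3}z^{5}$ and $t^{5}$ all live, so the $t$-monomial condition is satisfied directly and no projection argument is needed; your $k=95$ works too but is slightly more laborious.)

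Where you diverge from the paper is at $O_{y}$, and there you are making life much harder than necessary. You correctly note that Lemma~\ref{lemma:handy-adjunction} does not apply to $C_{x}$ at $O_{y}$ because $C_{x}$ is singular there. But that very singularity is what makes Method~3.1 (the plain multiplicity estimate) succeed: since $C_{x}\not\subset\mathrm{Supp}(D)$ and $\mathrm{mult}_{O_{y}}(C_{x})=2$, the local intersection inequality gives
\[
\mathrm{mult}_{O_{y}}(D)\;\leqslant\;\frac{13}{2}\,D\cdot C_{x}\;=\;\frac{13}{2}\cdot\frac{3}{143}\;=\;\frac{3}{22}\;<\;\frac{12}{25},
\]
so $(X,\tfrac{25}{12}D)$ is log canonical at $O_{y}$ immediately. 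This is exactly how the paper disposes of the point (its single line ``$22D\cdot C_{x}=\tfrac{6}{13}<\tfrac{12}{25}$'' is the worst of the three bounds along $C_{x}$, covering $O_{y}$, $O_{t}$ and smooth points simultaneously). Your proposed weighted blow-up at $O_{y}$ is not wrong in spirit, but it is unnecessary, and as written it is only a sketch (``should give the required contradiction'') rather than a completed argument.
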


\begin{proof}
 The surface $X$ can be defined by the
quasihomogeneous equation
$$
z^{3}+yt^{2}+xy^{4}+x^{7}t+\epsilon x^{5}yz=0,
$$
where $\epsilon\in\mathbb{C}$. The surface $X$ is singular only at the
points $O_x$, $O_y$ and $O_t$.

The curves $C_x$ and $C_y$ are irreducible. Moreover, we have
$$
\frac{25}{12}=\lct\left(X, \frac{2}{5}C_x\right)<\lct\left(X, \frac{2}{13}C_y\right)=\frac{65}{21}.%
$$

Suppose that $\lct(X)<\frac{25}{12}$. Then there is an effective
$\Q$-divisor $D\qlineq -K_X$ such that the pair $(X,\frac{25}{12}D)$ is
not log canonical at some point $P$. By
Lemma~\ref{lemma:convexity}, we may assume that the support of
the divisor $D$ contains neither $C_x$ nor $C_y$.

Since $H^0(\P, \mathcal{O}_{\P}(110))$ contains the monomials
$x^{9}y^{5}$, $x^{22}$ and $t^{5}$, it follows from
Lemma~\ref{lemma:Carolina} that the point $P$ is either a singular point of $X$ or a smooth point on $C_x$.
However, this is impossible since
$22D\cdot C_x=\frac{6}{13}<\frac{12}{25}$ and $5D\cdot C_y=\frac{3}{11}<\frac{12}{25}$.
\end{proof}

\begin{lemma}
\label{lemma:I-2-W-5-13-19-35-D-70}
Let $X$ be a quasismooth hypersurface of
degree $70$ in $\mathbb{P}(5,13,19, 35)$. Then
$\lct(X)=\frac{25}{12}$.
\end{lemma}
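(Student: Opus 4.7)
The argument mirrors that of Lemma~\ref{lemma:I-2-W-5-13-19-22-D-57}, again using $C_{x}$ as the witness for the upper bound. I first fix a defining equation of the form
\[
t^{2}+yz^{3}+xy^{5}+x^{14}+x^{7}t+\cdots=0,
\]
check that the singularities of $X$ are $O_{y}$ (index $13$), $O_{z}$ (index $19$), and two index-$5$ points $P_{\pm}=X\cap L_{xt}$, and note that $C_{x}$ is irreducible with a cusp $t^{2}+z^{3}=0$ in the orbifold chart at $O_{y}$ and smooth elsewhere in the orbifold sense. Lemma~\ref{lemma:Igusa} and Proposition~\ref{proposition:lc-by-finite-morphism} then yield $\lct(X,\tfrac{2}{5}C_{x})=\tfrac{25}{12}$, giving the upper bound $\lct(X)\le\tfrac{25}{12}$. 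In contrast to the preceding lemma, here $C_{y}$ splits generically as $L_{+}+L_{-}$, two smooth rational curves meeting only at $O_{z}$, with $P_{\pm}\in L_{\pm}$ and $L_{\pm}\qlineq\tfrac{13}{4}(-K_{X})$.

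Assume $\lct(X)<\tfrac{25}{12}$ and pick $D\qlineq-K_{X}$ with $(X,\tfrac{25}{12}D)$ not log canonical at some point $P$. Since $\tfrac{25}{12}\cdot\tfrac{2}{5}<1$ and $\tfrac{25}{12}\cdot\tfrac{4}{13}<1$, applying Lemma~\ref{lemma:convexity} with $D_{2}=\tfrac{2}{5}C_{x}$, and with $D_{2}=\tfrac{4}{13}L_{+}$ or $\tfrac{4}{13}L_{-}$ according to whether $P\in L_{+}$ or $L_{-}$, I may assume that $C_{x}$ and the relevant component of $C_{y}$ lie outside $\Supp(D)$. The basic intersection numbers are $D\cdot C_{x}=\tfrac{4}{247}$ and $D\cdot L_{\pm}=\tfrac{2}{95}$, and using $\mult_{O_{y}}(C_{x})=2$ with Proposition~\ref{proposition:lc-by-finite-morphism} one obtains
\[
\mult_{O_{y}}(D)\le\tfrac{2}{19},\quad\mult_{O_{z}}(D)\le\tfrac{4}{13},\quad\mult_{P_{\pm}}(D)\le\tfrac{2}{19},\quad\mult_{P}(D)\le\tfrac{4}{247}\text{ for }P\in C_{x}\setminus\Sing(X),
\]
each strictly less than $\tfrac{12}{25}=1/\lambda$. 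Lemma~\ref{lemma:multiplicity} then excludes $P$ from $C_{x}\cup\{O_{y},O_{z},P_{+},P_{-}\}$.

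For the remaining case of a smooth $P\in X\setminus C_{x}$, Lemma~\ref{lemma:Carolina} with $k=95$ closes the argument: $H^{0}(\mathbb{P},\mathcal{O}_{\mathbb{P}}(95))$ contains the monomials $x^{19}$, $x^{6}y^{5}$, $z^{5}$ and $x^{12}t$, whence
\[
\mult_{P}(D)\le\frac{2\cdot 95\cdot 70}{5\cdot 13\cdot 19\cdot 35}=\frac{4}{13}<\frac{12}{25},
\]
the desired contradiction. The main subtlety (rather than a real obstacle) is the reducibility of $C_{y}$, which forces the case split and two parallel applications of Lemma~\ref{lemma:convexity} to rule out $P_{+}$ and $P_{-}$; everything else is numerical verification in the exact pattern of Lemma~\ref{lemma:I-2-W-5-13-19-22-D-57}.
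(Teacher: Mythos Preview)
Your overall strategy matches the paper's, but the treatment of the two index-$5$ points $P_{\pm}$ has a genuine gap: the assertion $L_{\pm}\sim_{\mathbb{Q}}\tfrac{13}{4}(-K_{X})$ is false. In the orbifold chart at $O_{z}$ the two branches of $C_{y}$ are $t=\pm x^{7}$, so their local intersection number is $7$ and hence $L_{+}\cdot L_{-}=\tfrac{7}{19}$; combined with $L_{+}\cdot C_{y}=\tfrac{13}{95}$ this gives $L_{+}^{2}=-\tfrac{22}{95}$, whereas $\bigl(\tfrac{13}{4}\bigr)^{2}K_{X}^{2}=\tfrac{13}{190}$. Thus $L_{+}$ is not even numerically proportional to $-K_{X}$ (indeed $\rho(X)=3$), and Lemma~\ref{lemma:convexity} cannot be invoked with $D_{2}=\tfrac{4}{13}L_{\pm}$. (Minor aside: the points $P_{\pm}$ lie on $L_{yz}$, not $L_{xt}$.)

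The repair is to apply convexity once with $D_{2}=\tfrac{2}{13}C_{y}$, which removes \emph{one} of $L_{+},L_{-}$ from $\Supp(D)$, say $L_{+}$. Then $\mult_{P_{+}}(D)\le 5\,D\cdot L_{+}=\tfrac{2}{19}<\tfrac{12}{25}$ disposes of $P_{+}$. For $P_{-}$ write $D=m L_{-}+\Omega$ with $L_{-}\not\subset\Supp(\Omega)$; the inequality $m\,L_{+}\cdot L_{-}\le D\cdot L_{+}$ gives $m\le\tfrac{2}{35}$, whence $5(D-mL_{-})\cdot L_{-}=\tfrac{2+22m}{19}\le\tfrac{6}{35}<\tfrac{12}{25}$, and Lemma~\ref{lemma:handy-adjunction} finishes. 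This is essentially the paper's route, though the paper's displayed value $\tfrac{2-m_{i}}{19}$ appears to contain a slip; with the correct $L_{-}^{2}=-\tfrac{22}{95}$ it is the sharper bound on $m$ coming from $L_{+}\not\subset\Supp(D)$, rather than the crude $m_{i}\le\tfrac{12}{25}$, that makes the adjunction estimate go through.
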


\begin{proof}
 The surface $X$ can be defined by the
quasihomogeneous equation
$$
t^{2}+yz^{3}+xy^{5}-x^{14}+\epsilon x^{5}y^{2}z=0,
$$
where $\epsilon\in\mathbb{C}$. The surface $X$ is singular at the
points $O_y$ and $O_z$. It is also singular at two points $P_1=[1:0:0:1]$
and $P_2=[1:0:0:-1]$.

The curves $C_x$ is irreducible. On the other hand, the curve
$C_y$ consists of two smooth curves $C_1=\{y=x^7-t=0\}$ and
$C_2=\{y=x^7+t=0\}$. Moreover, we have
$$
\frac{25}{12}=\lct\left(X, \frac{2}{5}C_x\right)<\lct\left(X, \frac{2}{13}C_y\right)=\frac{26}{7}.%
$$

Suppose that $\lct(X)<\frac{25}{12}$. Then there is an effective
$\Q$-divisor $D\qlineq -K_X$ such that the pair
$(X,\frac{25}{12}D)$ is not log canonical at some point $P$. By
Lemma~\ref{lemma:convexity}, we may assume that the support of the
divisor $D$  dose not contains $C_x$. Also, we may assume that the
support of $D$ does not contain either $C_1$ or $C_2$.

Since $19D\cdot C_x=\frac{4}{13}<\frac{12}{25}$, the point $P$ cannot belong to $C_x$.

We put $m_1C_1+m_2C_2+\Omega$, where $\Omega$ is an effective $\mathbb{Q}$-divisor whose support contains neither $C_1$ nor $C_2$. Since the pair $(X, \frac{25}{12}D)$ is log canonical at the point $O_z$, we see that
$m_i\leqslant \frac{12}{25}$.
Since
\[5(D-m_iC_i)\cdot C_i=\frac{2-m_i}{19}<\frac{12}{25}\]
 for each $i$, Lemma~\ref{lemma:handy-adjunction} implies that the point $P$ can be neither $P_1$ nor $P_2$.
 Therefore, the point $P$ is a smooth point of $X$ in the outside of $C_x$.
 However,
since $H^0(\P, \mathcal{O}_{\P}(95))$ contains the monomials
$x^{6}y^{5}$, $x^{19}$ and $z^{5}$, it follows from
Lemma~\ref{lemma:Carolina} that the point $P$ is either a singular point of $X$ or a smooth point on $C_x$. This is a contradiction.
\end{proof}

\begin{lemma}
\label{lemma:I-2-W-6-9-10-13-D-36} Let $X$ be a quasismooth
hypersurface of degree $36$ in $\mathbb{P}(6,9,10, 13)$. Then
$\lct(X)=\frac{25}{12}$.
\end{lemma}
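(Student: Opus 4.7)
The plan follows the scheme laid out in Section~\ref{section:preliminaries}. Since $a_{0}=6\ne 9=a_{1}$, I first normalize the defining equation of $X$ to
$$f=x^{6}+y^{4}+xz^{3}+zt^{2}+\gamma\,x^{3}y^{2},\qquad \gamma^{2}\ne 4,$$
and identify the singular locus: $O_{z}$ of type $\frac{1}{10}(1,7)$, $O_{t}$ of type $\frac{1}{13}(1,8)$, a point $P_{0}\in L_{yt}$ cut out by $x^{5}+z^{3}=0$ of type $\frac{1}{2}(1,1)$, and two points $Q_{1},Q_{2}\in L_{zt}$ cut out by $y^{2}=\alpha_{i}x^{3}$ (where $\alpha_{1},\alpha_{2}$ are the distinct roots of $u^{2}+\gamma u+1$) of type $\frac{1}{3}(1,1)$. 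The curve $C_{x}$ is irreducible, with a singularity at $O_{z}$ whose pullback to the orbifold cover is $V(y^{4}+t^{2})$, so Lemma~\ref{lemma:Igusa} and Proposition~\ref{proposition:lc-by-finite-morphism} give $\lct(X,C_{x})=\frac{3}{4}$, whence $\lct(X,\frac{1}{3}C_{x})=\frac{9}{4}$. The curve $C_{y}$ is irreducible and smooth on $X$, yielding $\lct(X,\frac{2}{9}C_{y})=\frac{9}{2}$. The curve $C_{z}$ splits as $C_{z}=D_{1}+D_{2}$ with $D_{i}=\{z=0,\;y^{2}=\alpha_{i}x^{3}\}$, and its pullback to the orbifold cover at $O_{t}$ is $V((y^{2}-\alpha_{1}x^{3})(y^{2}-\alpha_{2}x^{3}))$; a $(2,3)$-weighted blowup of the origin (exceptional discrepancy $4$, total vanishing order $12$) shows this LCT equals $\frac{5}{12}$. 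Hence $\lct(X,\frac{1}{5}C_{z})=\frac{25}{12}$ and we get the upper bound $\lct(X)\le \frac{25}{12}$.

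For the reverse, I assume there exists an effective $\Q$-divisor $D\sim_{\Q}-K_{X}$ with $(X,\frac{25}{12}D)$ non-log canonical at some $P\in X$, and rule out every candidate. Applying Lemma~\ref{lemma:convexity} to each of $C_{x},C_{y},C_{t}$ and to $C_{z}$, I may assume $\Supp(D)$ contains none of $C_{x},C_{y},C_{t}$ and misses at least one of $D_{1},D_{2}$, say $D_{1}$. Since the only curves contracted by the projection $\psi\colon X\dashrightarrow\mathbb{P}(6,9,10)$ are $D_{1}\cup D_{2}$, and $H^{0}(\mathbb{P},\mathcal{O}_{\mathbb{P}}(42))$ contains $x^{7},x^{4}y^{2},xy^{4}$ and $x^{7},x^{2}z^{3}$, Lemma~\ref{lemma:Carolina} gives $\mult_{P}(D)\le \frac{28}{65}<\frac{12}{25}$ at every smooth point of $X$ outside $C_{x}\cup C_{z}$, forcing $P\in C_{x}\cup C_{z}\cup\{P_{0}\}$. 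The intersection $C_{y}\cdot D=\frac{6}{65}$ with the orbifold non-lc bound $\mult_{P_{0}}(D)>\frac{6}{25}$ rules out $P=P_{0}$, and Lemma~\ref{lemma:handy-adjunction} applied to $C_{t}$ at the type-$\frac{1}{3}$ points (using $C_{t}\cdot D=\frac{2}{15}$, $C_{t}^{2}=\frac{13}{15}$) rules out $P=Q_{1},Q_{2}$. Smooth points of $C_{x}$ and of $D_{2}$ are excluded by Lemma~\ref{lemma:handy-adjunction} with $C_{x}\cdot D=\frac{4}{65}$, $D_{1}\cdot D_{2}=\frac{6}{13}$, $C_{x}^{2}=\frac{12}{65}$, $D_{i}^{2}=-\frac{8}{39}$, while $P=O_{z}$ (where $C_{x}$ has multiplicity $2$) is handled by the same lemma applied to the smooth curve $C_{y}$ at $O_{z}$, combined with the ceiling $\mult_{C_{y}}(D)\le \frac{2}{9}$ coming from $D\sim_{\Q}-K_{X}$.

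The main obstacle is the case $P=O_{t}$, where $\lct_{O_{t}}(X,C_{z})=\frac{5}{12}$ is attained exactly and the routine intersection bounds leave a gap. Here the plan is to imitate the $(2,3)$-weighted blowup that produced the LCT above: let $\pi\colon Y\to X$ be this blowup on the orbifold chart at $O_{t}$, with exceptional curve $E$. Writing $D=m_{2}D_{2}+n\,C_{x}+p\,C_{y}+\Omega$ with $\Omega$ effective and supported away from $D_{1},D_{2},C_{x},C_{y}$, I first bound $m_{2},n,p$ using $D_{1}\cdot D_{2}=\frac{6}{13}$, $D_{1}\cdot C_{x}=\frac{2}{13}$, $D_{1}\cdot C_{y}=\frac{3}{13}$ and the fact that $D_{1}\not\subset\Supp(D)$. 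Combined with the discrepancy and order-of-vanishing of $E$, these bounds should make the coefficient of $E$ in the log pullback of $(X,\frac{25}{12}D)$ strictly less than $1$, so that Lemma~\ref{lemma:handy-adjunction} applied on $Y$ to the proper transforms of $D_{2}$, $C_{x}$, $C_{y}$ at their intersection points with $E$, together with the intersection $\widetilde{\Omega}\cdot E$, forces log canonicity at every point of $E$, producing the desired contradiction. This final weighted-blowup step at $O_{t}$, modeled on the arguments in Lemmas~\ref{lemma:I-6-infinite-series-1-n-2} and~\ref{lemma:I-4-infinite-series-1}, is the technically hardest part of the proof.
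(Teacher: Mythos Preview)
Your overall architecture matches the paper's proof: compute $\lct(X,\frac{1}{5}C_z)=\frac{25}{12}$ for the upper bound, use Lemma~\ref{lemma:convexity} to strip $C_x,C_y$ and one component of $C_z$ from $\Supp(D)$, use Lemma~\ref{lemma:Carolina} to force $P$ onto $C_x\cup C_z$ or a singular point, and resolve the hard case $P=O_t$ by a $(2,3)$-weighted blow-up. Two points need fixing.

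\textbf{The argument at $O_z$ fails as written.} You propose to use the smooth curve $C_y$ at $O_z$, but with $C_y\not\subset\Supp(D)$ the bound from Lemma~\ref{lemma:handy-adjunction} reads $10\cdot\frac{25}{12}\cdot C_y\cdot D=10\cdot\frac{25}{12}\cdot\frac{6}{65}=\frac{25}{13}>1$, so no conclusion follows; the remark about $\mult_{C_y}(D)\le\frac{2}{9}$ is moot since that multiplicity is already $0$. The fix is the observation you yourself record in parentheses: $C_x$ has multiplicity $2$ at $O_z$, so $\mult_{O_z}(D)\le\frac{10}{2}\,C_x\cdot D=5\cdot\frac{4}{65}=\frac{4}{13}<\frac{12}{25}$. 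This is exactly how the paper handles $O_z$.

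\textbf{The blow-up step at $O_t$ is correct in outline but the details are tighter than you suggest.} Writing $D=m_2D_2+nC_x+pC_y+\Omega$ is redundant since you have already arranged $n=p=0$. More importantly, the coefficient of $E$ in the log pullback is not strictly less than $1$: the paper obtains $a\le\frac{12}{5}$ from $\bar C_y\cdot\bar D\ge 0$, which gives the $E$-coefficient $\frac{25a+96}{156}\le 1$ with equality possible. The contradiction at the two singular points $Q_2,Q_3$ of $E$ then comes from the \emph{exact} identities $3\bar D\cdot\bar C_x=2\bar D\cdot\bar C_y=\frac{12}{5\cdot13}-\frac{a}{13}$, which coincide with the required lower bound on $\mult_{Q_i}(\bar D)$; it is $\bar C_x$ and $\bar C_y$ (passing through $Q_3$ and $Q_2$ respectively), not $\bar D_2$, that carry the argument. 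Bounding $m_2$ plays no role here.
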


\begin{proof}
 The surface $X$ can be defined by the
quasihomogeneous equation
$$
zt^{2}+y^{4}+xz^{3}+x^{6}+\epsilon x^{3}y^{2}=0,
$$
where $\epsilon$ is a constant different from $\pm 2$. The surface $X$ is singular at the
points $O_z$ and $O_t$. It is also singular at two points $P_1$
and $P_2$ on $L_{zt}$. The
surface $X$ is also singular at one point $Q$ on $L_{yt}$.

The curves $C_x$ and $C_y$ are irreducible and reduced. However, the curve $C_z$ consists of two irreducible and reduced curves $C_1$ and $C_2$. The curve $C_1$ contains the point $P_1$ but not $P_2$. On the other hand, $C_2$ contains the point $P_2$ but not $P_1$. We also see
$$
C_{1}^2=C_{2}^2=-\frac{8}{39},\ \ C_{1}\cdot C_{2}=\frac{6}{13}.%
$$
It is easy to check
$$
\frac{25}{12}=\lct\left(X, \frac{2}{10}C_z\right)<\frac{9}{4}=\lct\left(X, \frac{2}{6}C_x\right)<\frac{9}{2}=\lct\left(X, \frac{2}{9}C_y\right).%
$$

Suppose that $\lct(X)<\frac{25}{12}$. Then there is an effective
$\Q$-divisor $D\qlineq -K_X$ such that the pair
$(X,\frac{25}{12}D)$ is not log canonical at some point $P$. By
Lemma~\ref{lemma:convexity}, we may assume that the support of the
divisor $D$  contains neither $C_x$ nor $C_y$. In addition,  we
may assume that it cannot contain either $C_{1}$ or $C_{2}$.

Since $H^0(\P, \mathcal{O}_{\P}(30))$ contains the monomials
$x^{2}y^{2}$, $x^{5}$ and $z^{3}$, it follows from
Lemma~\ref{lemma:Carolina} that
$P\in\mathrm{Sing}(X)\cup C_x\cup C_{z}$. However, $2D\cdot
C_y=\frac{12}{65}<\frac{12}{25}$ and hence the point $P$ cannot be
the point $Q$. Note that the curve $C_x$ passes through the point
$O_z$ with multiplicity $2$. Then the inequality $5D\cdot
C_x=\frac{4}{13}<\frac{12}{25}$ shows that the point $P$ cannot be
a point on $C_x\setminus \{O_t\}$.

Put $D=mC_{1}+\Omega$, where $\Omega$ is an effective
$\mathbb{Q}$-divisor such that
$C_{1}\not\subset\mathrm{Supp}(\Omega)$. If $m\ne 0$, then
$$
\frac{2}{39}=D\cdot C_{2}=\big(mC_{1}+\Omega\big)\cdot C_{2}\geqslant  mC_{1}\cdot C_{2}=\frac{6m}{13},%
$$
and hence $m\leqslant \frac{1}{9}$. Then
$$
3\big(D-mC_{1}\big)\cdot C_{1}=\frac{2+8m}{13}\leqslant \frac{12}{25}.
$$
Therefore, it follows from Lemma~\ref{lemma:handy-adjunction} that
the point $P$  cannot be a point on $C_1\setminus \{O_t\}$. By the
same method, we can show that the point $P$  cannot be a point on
$C_2\setminus \{O_t\}$. Therefore, the point $P$ must be the point
$O_t$.

Let $\pi\colon\bar{X}\to X$ be the weighted blow up at the point
$O_t$ with weights $(2,3)$. Let $E$ be the exceptional curve of
$\pi$ and  let $\bar{D}$, $\bar{C}_{x}$ and $\bar{C}_{y}$ be the
proper transforms of $D$, $C_{x}$ and $C_{y}$, respectively. Then
$$
K_{\bar{X}}\qlineq \pi^{*}(K_{X})-\frac{8}{13}E,\ %
\bar{C}_{x}\qlineq \pi^{*}(C_{x})-\frac{2}{13}E,\ %
\bar{C}_{y}\qlineq \pi^{*}(C_{y})-\frac{3}{13}E,\ %
\bar{D}\qlineq \pi^{*}(D)-\frac{a}{13}E,%
$$
where $a$ is a non-negative rational number. The curve $E$
contains one singular point $Q_{3}$ of type $\frac{1}{3}(1,1)$ and
one singular point of $Q_{2}$ of type $\frac{1}{2}(1,1)$ on the
surface $\bar{X}$. The point $Q_2$ is contained in $\bar{C}_y$ but
not in $\bar{C}_x$. On the other hand, the point $Q_3$ is
contained in $\bar{C}_x$ but not in $\bar{C}_y$.

The log pull back of the log pair $(X,\frac{25}{12}D)$ is the log
pair
$$
\left(\bar{X},
\frac{25}{12}\bar{D}+\frac{25a+96}{12\cdot 13}E\right).
$$
This is not log canonical at some point $Q\in E$. We see that
$$
0\leqslant \bar{C}_{y}\cdot\bar{D}=C_{y}\cdot
D+\frac{3a}{169}E^{2}
=\frac{6}{5\cdot 13}-\frac{a}{2\cdot 13},%
$$
and hence $a\leqslant \frac{12}{5}$. In particular,
$$
\frac{25a+96}{12\cdot 13}\leqslant 1.
$$
This implies that the log pull back of the log pair
$(X,\frac{25}{12}D)$ is log canonical in a punctured neighborhood
of the point $Q$. Then
$$
\mult_{Q}(\bar{D})>\frac{12}{25}\left(1-\frac{25a+96}{12\cdot 13}\right)
=\frac{12}{5\cdot 13}-\frac{a}{13}.%
$$
Since $ \bar{D}\cdot E =\frac{a}{6}\leqslant \frac{12}{25}, $
Lemma~\ref{lemma:handy-adjunction} implies  that the point $Q$
cannot be a smooth point. Therefore, the point $Q$ is either
$Q_{2}$ or $Q_{3}$. However, two inequalities
$$
\frac{12}{5\cdot 13}-\frac{a}{13}=3\bar{D}\cdot\bar{C}_{x}
\geqslant\mult_{Q_{3}}(\bar{D})>\frac{12}{5\cdot 13}-\frac{a}{13},
$$
$$
\frac{12}{5\cdot 13}-\frac{a}{13}=2\bar{D}\cdot\bar{C}_{y}
\geqslant\mult_{Q_{2}}(\bar{D})>\frac{12}{5\cdot 13}-\frac{a}{13}
$$
give us a contradiction.
\end{proof}

\begin{lemma}
\label{lemma:I-2-W-7-8-19-25-D-57}
Let $X$ be a quasismooth
hypersurface of degree $57$ in $\mathbb{P}(7,8,19, 25)$. Then
$\lct(X)=\frac{49}{24}$.
\end{lemma}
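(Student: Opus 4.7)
My plan is to first prove the upper bound $\lct(X)\leqslant \frac{49}{24}$ by computing $\lct(X,\frac{2}{7}C_x)$, and then to argue by contradiction in the other direction. We may write $X$ as
$$
z^{3}+xt^{2}+y^{4}t+x^{7}y+\varepsilon x^{5}yz=0,
$$
which is quasismooth with singularities exactly at $O_x$, $O_y$, $O_t$ of types $\frac{1}{7}(5,4)$, $\frac{1}{8}(7,3)$, $\frac{1}{25}(8,19)$. All four coordinate curves $C_x,C_y,C_z,C_t$ are irreducible, and the key feature is that $C_x=\{z^{3}+y^{4}t=0\}$ acquires the $E_{6}$-cusp $z^{3}+y^{4}=0$ at $O_t$ in the local coordinates $(y,z)$ on the orbifold cover. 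Hence $\lct_{O_t}(X,C_x)=\frac{1}{3}+\frac{1}{4}=\frac{7}{12}$, this is the smallest local threshold along $C_x$, and consequently $\lct(X,\frac{2}{7}C_x)=\frac{49}{24}$.

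For the reverse inequality, suppose $D\qlineq -K_X$ is an effective divisor for which $(X,\frac{49}{24}D)$ is not log canonical at some $P\in X$. Applying Lemma~\ref{lemma:convexity} to each coordinate curve in turn, we may assume $C_x,C_y,C_z,C_t\not\subset\Supp(D)$. Using $H^{2}=\frac{3}{1400}$, we compute $D\cdot C_x=\frac{3}{100}$, $D\cdot C_y=\frac{6}{175}$, $D\cdot C_z=\frac{57}{700}$, $D\cdot C_t=\frac{3}{28}$, all strictly below $\frac{24}{49}$; by Lemma~\ref{lemma:multiplicity} this rules out smooth points of $X$ on any of these curves. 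To exclude the three singular points I pass to the orbifold cover $\pi\colon \tilde U\to U$ at each in turn and invoke Lemma~\ref{lemma:multiplicity} on the smooth cover together with Proposition~\ref{proposition:lc-by-finite-morphism}: at $O_x$ the curve $C_y$ has cover multiplicity $2$ (an $A_{2}$-cusp $z^{3}+t^{2}=0$), giving $\mult_Q(\pi^{*}D)\leqslant \frac{7\,D\cdot C_y}{2}=\frac{3}{25}<\frac{24}{49}$; at $O_y$ the curve $C_x$ is locally smooth, giving $\mult_Q(\pi^{*}D)\leqslant 8\,D\cdot C_x=\frac{6}{25}<\frac{24}{49}$; and at $O_t$ the $E_{6}$-cusp of $C_x$ has cover multiplicity $3$, giving $\mult_Q(\pi^{*}D)\leqslant \frac{25\,D\cdot C_x}{3}=\frac{1}{4}<\frac{24}{49}$. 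Each singular point is thus excluded.

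The remaining and decisive case is when $P$ is a smooth point of $X$ lying off $C_x\cup C_y\cup C_z\cup C_t$. Here Lemma~\ref{lemma:Carolina} is insufficient: the smallest $k$ with two $x^{\alpha}y^{\beta}$-monomials and two $x^{\gamma}z^{\delta}$-monomials in $H^{0}(\P,\mathcal{O}_\P(k))$ is $k=133$, yielding only $\mult_P(D)\leqslant \frac{57}{100}>\frac{24}{49}$. Instead I would use the pencil $\mathcal{L}$ on $X$ defined by $\lambda z^{3}+\mu xt^{2}=0$, whose base locus on $X$ is $\{O_x,O_y,O_t\}$. The unique member through $P$ is $C_\alpha=\{z^{3}+\alpha xt^{2}=0\}\cap X$ for some nonzero $\alpha$, and is irreducible for $\alpha\ne 1$. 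A local Newton-polygon analysis shows that $C_\alpha$ has an $E_{6}$-cusp at $O_t$, an $A_{2}$-cusp at $O_x$, and a cusp of type $z^{3}+x^{15}$ at $O_y$, with local log canonical thresholds $\frac{7}{12}$, $\frac{5}{6}$, and $\frac{2}{5}$ respectively (the last by Lemma~\ref{lemma:Igusa}). Since $\frac{49}{684}=\frac{49/24}{57/2}<\frac{2}{5}$, the pair $(X,\frac{49}{684}C_\alpha)$ is log canonical everywhere, so Lemma~\ref{lemma:convexity} lets me assume $C_\alpha\not\subset\Supp(D)$, whence $\mult_P(D)\leqslant D\cdot C_\alpha=\frac{171}{700}<\frac{24}{49}$ delivers the contradiction. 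The degenerate value $\alpha=1$ gives the reducible member $C_y+C'$ with $C'=\{y^{3}t+x^{7}=0\}$, which must be handled separately by writing $D=mC'+\Omega$, bounding $m$ via $D\cdot C_y$, and applying Lemma~\ref{lemma:handy-adjunction} to $C'$. The main technical hurdle is precisely that $C_\alpha$ inherits the $E_{6}$-cusp of $C_x$ at $O_t$; the argument works only because the coefficient $\frac{49}{684}$ is comfortably below the threshold $\frac{7}{12}$.
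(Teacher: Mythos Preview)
Your overall architecture matches the paper's: the upper bound comes from the $E_6$-cusp of $C_x$ at $O_t$, and the lower bound is obtained by excluding $C_x\cup C_y\cup C_z\cup C_t$ and the singular points via intersection numbers, then handling a smooth point $P$ off these curves with a pencil argument. Your computations of $D\cdot C_i$ and the multiplicity bounds at $O_x$, $O_y$, $O_t$ are correct. One small slip: the monomial $x^{5}yz$ has degree $62$, not $57$; the correct extra term is $\epsilon\, x^{2}y^{3}z$.

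The substantive difference is the pencil you choose for the final step. The paper uses the degree-$35$ pencil $\mathcal{L}=\langle y^{2}z,\,x^{5}\rangle$: on the chart $y=1$ the member $Z$ becomes the affine plane curve
\[
\alpha^{3}x^{15}+t+xt^{2}+(1+\epsilon\alpha)x^{7}=0,
\]
which is quadratic in $t$, so irreducibility is a one-line discriminant check, the multiplicity at $P$ is automatically small, and $D\cdot Z=\tfrac{3}{20}<\tfrac{24}{49}$ finishes the argument immediately. Your degree-$57$ pencil $\langle z^{3},\,xt^{2}\rangle$ also gives $D\cdot C_\alpha=\tfrac{171}{700}<\tfrac{24}{49}$, but the member is now (on the chart $t=1$) a degree-$22$ plane curve in $(y,z)$; you assert it is irreducible for $\alpha\neq 1$ and log canonical at $P$ with coefficient $\tfrac{49}{684}$, but you prove neither. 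Your lct analysis covers only the three base points $O_x,O_y,O_t$; to invoke Lemma~\ref{lemma:convexity} you must also know $\lct_P(X,C_\alpha)\geqslant\tfrac{49}{684}$, which needs a bound on $\mult_P(C_\alpha)$ that you have not supplied. Likewise the $\alpha=1$ case (where $C_1=C_y+C'$ with $C'$ cut by $y^{3}t+x^{7}+\epsilon x^{2}y^{2}z=0$, not just $y^{3}t+x^{7}=0$) is only sketched. None of this is fatal --- the bounds are loose enough that the gaps can be closed --- but the paper's lower-degree pencil sidesteps all of it, since the affine model is so simple that irreducibility and mildness at $P$ are essentially immediate.
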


\begin{proof}
 The surface $X$ can be defined by the
quasihomogeneous equation
$$
z^{3}+y^{4}t+xt^{2}+x^{7}y+\epsilon x^{2}y^{3}z=0,
$$
where $\epsilon\in\mathbb{C}$. The surface $X$ is singular at the
points $O_x$, $O_{y}$ and $O_t$. The curves $C_x$, $C_y$ and
$C_{z}$ are irreducible. We have
$$
\frac{49}{24}=\lct\left(X, \frac{2}{7}C_x\right)<\lct\left(X, \frac{2}{8}C_y\right)
=\frac{10}{3}<\lct\left(X, \frac{2}{19}C_z\right)=\frac{19}{2}.%
$$
Thus, $\lct(X)\leqslant \frac{49}{24}$.

Suppose that $\lct(X)<\frac{49}{24}$. Then there is an effective
$\Q$-divisor $D\qlineq -K_X$ such that the pair
$(X,\frac{49}{24}D)$ is not log canonical at some point $P$. By
Lemma~\ref{lemma:convexity}, we may assume that the support of the
divisor $D$  contains none of the curves $C_x$, $C_y$ and $C_{z}$. The curve $C_x$ is singular at the point $O_t$.
Since $\frac{25}{2}D\cdot C_x=\frac{3}{8}<\frac{24}{49}$, $7D\cdot
C_y=\frac{6}{25}<\frac{24}{49}$ and  $D\cdot
C_z=\frac{57}{700}<\frac{24}{49}$, the point $P$ cannot belong to
the set $C_x\cup C_y\cup C_z$.

Consider the pencil $\mathcal{L}$ on $X$ defined by the equations
$\lambda y^2z+\mu x^5=0$, $[\mu, \lambda]\in \mathbb{P}^1$. Then
there is a unique curve $Z$ in the pencil $\mathcal{L}$ passing
through the point $P$. Then the curve $Z$ is defined by an
equation of the form $y^2z-\alpha x^5=0$, where $\alpha$ is a
non-zero constant.

We see that $C_{y}\not\subset\mathrm{Supp}(Z)$. But the open
subset $Z\setminus C_{y}$ of the curve $Z$ is a
$\mathbb{Z}_{8}$-quotient  of the affine curve
$$
z-\alpha x^5=z^{3}+t+xt^{2}+x^{7}+\epsilon
x^{2}z=0\subset\mathbb{C}^{3}\cong\mathrm{Spec}\Big(\mathbb{C}\big[x,z,t\big]\Big)
$$
that is isomorphic to the plane affine curve defined by the
equation
$$
\alpha^{3}x^{15}+t+xt^2+x^7+\epsilon\alpha
x^7=0\subset\mathbb{C}^{2}\cong\mathrm{Spec}\Big(\mathbb{C}\big[x,t\big]\Big).
$$
This curve is irreducible and hence the curve $Z$ is also irreducible. Thus
$\mult_{P}(Z)\leqslant 14$. We may assume that
$\mathrm{Supp}(D)$ does not contain the curve $Z$ by
Lemma~\ref{lemma:convexity}. Then we obtain an absurd inequality
$$
\frac{3}{20}=D\cdot Z\geqslant\mult_{P}\big(D\big)>\frac{24}{49}.%
$$
\end{proof}

\begin{lemma}
\label{lemma:I-2-W-7-8-19-32-D-64}
Let $X$ be a quasismooth
hypersurface of degree $64$ in $\mathbb{P}(7,8,19, 32)$. Then
$\lct(X)=\frac{35}{16}$.
\end{lemma}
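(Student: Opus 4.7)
First I would identify the equation and singularity structure. The surface $X$ can be defined by
\[
f = t^{2} + c_{1}y^{4}t + y^{8} + x^{8}y + c_{2}x^{3}y^{3}z + xz^{3} = 0
\]
with $c_{1}^{2} \neq 4$ forced by quasismoothness. Its singular points are $O_{x}$ of type $\tfrac{1}{7}(1,5)$, $O_{z}$ of type $\tfrac{1}{19}(1,4)$, and $P_{\alpha} = [0{:}1{:}0{:}\alpha]$, $P_{\beta} = [0{:}1{:}0{:}\beta]$ of type $\tfrac{1}{8}(1,5)$, where $\alpha, \beta$ are the two roots of $t^{2} + c_{1}t + 1$. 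The curve $C_{x}$ splits into smooth rational components $L_{\gamma} = \{x = t - \gamma y^{4} = 0\}$ ($\gamma = \alpha, \beta$) meeting only at $O_{z}$ with local intersection $L_{\alpha} \cdot L_{\beta} = \tfrac{4}{19}$ and $L_{\gamma}^{2} = -\tfrac{25}{152}$. In the orbifold cover at $O_{z}$ the pullback of $C_{x}$ is $(\tilde t - \alpha\tilde y^{4})(\tilde t - \beta\tilde y^{4}) = 0$, i.e.\ an $A_{7}$-type singularity of lct $\tfrac{5}{8}$, so $\lct(X, \tfrac{2}{7}C_{x}) = \tfrac{35}{16}$. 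Parallel computations using that $C_{y}$ has an $A_{2}$-cusp at $O_{x}$ and that $C_{z}$ is quasismooth give $\lct(X, \tfrac{2}{8}C_{y}) = \tfrac{10}{3}$ and $\lct(X, \tfrac{2}{19}C_{z}) = \tfrac{19}{2}$, establishing $\lct(X) \leq \tfrac{35}{16}$.

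Next I would argue by contradiction: suppose $\lct(X) < \tfrac{35}{16}$ and choose $D \sim_{\Q} -K_{X}$ so that $(X, \tfrac{35}{16}D)$ is not log canonical at some $P \in X$. Applying Lemma~\ref{lemma:convexity} with $D_{2} = \tfrac{2}{7}C_{x}, \tfrac{2}{8}C_{y}, \tfrac{2}{19}C_{z}$ in turn, I may assume that $C_{y}, C_{z} \not\subset \Supp(D)$ and that $L_{\beta} \not\subset \Supp(D)$. Using $D \cdot L_{\beta} = \tfrac{1}{76}$ and $D \cdot C_{y} = \tfrac{4}{133}$, the standard multiplicity bounds give $\mult_{O_{x}}D \leq \tfrac{7}{2}\, D \cdot C_{y} = \tfrac{2}{19}$ (exploiting $\mult_{O_{x}}C_{y} = 2$ from the cusp), $\mult_{P_{\beta}}D \leq 8\, D \cdot L_{\beta} = \tfrac{2}{19}$, and $\mult_{O_{z}}D \leq 19\, D \cdot L_{\beta} = \tfrac{1}{4}$, each strictly below $\tfrac{16}{35}$; by Lemma~\ref{lemma:multiplicity} and Proposition~\ref{proposition:lc-by-finite-morphism}, these rule out $P$ being $O_{x}$, $P_{\beta}$, $O_{z}$, or a smooth point of $X$ on $L_{\beta}$. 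For $P \in L_{\alpha}$ I would then write $D = m L_{\alpha} + \Omega$ with $L_{\alpha} \not\subset \Supp(\Omega)$, deduce $m \leq \tfrac{1}{16}$ from $m L_{\alpha} \cdot L_{\beta} \leq D \cdot L_{\beta}$, and compute
\[
\Omega \cdot L_{\alpha} = D \cdot L_{\alpha} - m\, L_{\alpha}^{2} = \tfrac{1}{76} + \tfrac{25m}{152} \leq \tfrac{57}{2432},
\]
so Lemma~\ref{lemma:handy-adjunction} applied with $C = L_{\alpha}$ yields $r \cdot \tfrac{35}{16} \cdot \Omega \cdot L_{\alpha} < 1$ for each admissible $r \in \{1, 8\}$, contradicting non-lc at any smooth point of $X$ on $L_{\alpha}$ or at $P_{\alpha}$.

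Finally, for $P$ a smooth point of $X$ outside $C_{x}$, I would use the pencil $\mathcal{L}$ cut by $\lambda y^{4} + \mu t = 0$, whose members lie in $|\mathcal{O}_{X}(32)|$ with base locus $\{O_{x}, O_{z}\}$. Substituting $t = -y^{4}/c$ into $f$ identifies the member $F_{c}$ through $P$ (for $c \neq 0, -\alpha^{-1}, -\beta^{-1}$) with the plane affine curve
\[
A(c) + x^{8} + c_{2}x^{3}z + xz^{3} = 0, \qquad A(c) = c^{-2} - c_{1}c^{-1} + 1,
\]
which is irreducible for generic $c$ (as a cubic in $z$ over $\mathbb{C}[x]$ whose discriminant $-x^{2}(4 c_{2}^{3} x^{8} + 27(A + x^{8})^{2})$ is a non-trivial polynomial) and smooth at $P$ by Bertini. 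By Lemma~\ref{lemma:convexity} I may arrange $F_{c} \not\subset \Supp(D)$, and then
\[
\mult_{P}D \leq D \cdot F_{c} = 16(-K_{X})^{2} = \tfrac{16}{133} < \tfrac{16}{35},
\]
a contradiction. The two reducible members at $c = -\alpha^{-1}, -\beta^{-1}$ split off $L_{\alpha}$ or $L_{\beta}$ plus an irreducible residual component, to which the same intersection bound applies. The main technical obstacle will be the rigorous verification of irreducibility of a generic pencil member, which I would carry out by a direct factorization analysis of the cubic $xz^{3} + c_{2}x^{3}z + (A + x^{8})$ over $\mathbb{C}[x]$.
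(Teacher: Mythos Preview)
Your overall plan is correct and closely mirrors the paper's proof for the singular points and for points on $C_x$: the paper also writes $C_x=C_1+C_2$ with $C_i^2=-\tfrac{25}{152}$ and $C_1\cdot C_2=\tfrac{4}{19}$, excludes $O_z$ via $19\,D\cdot C_i=\tfrac14$, excludes $O_x$ via $7\,D\cdot C_y=\tfrac{4}{19}$, and then runs the same $m\le\tfrac1{16}$ adjunction bound on the remaining branch. The only substantive difference is the endgame for a smooth point $P\notin C_x\cup C_y$: the paper invokes Lemma~\ref{lemma:Carolina} with the degree-$64$ monomials $y^8,\,x^8y,\,y^4t,\,t^2$ to get $\mult_P D\le 16(-K_X)^2=\tfrac{16}{133}<\tfrac{16}{35}$ in one line, whereas you run an explicit pencil argument with $\mathcal L=\langle y^4,t\rangle$ that lands on exactly the same numerical bound $D\cdot F_c=\tfrac{16}{133}$.

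Two of your stated justifications, however, do not do what you need. First, the discriminant of the cubic $xz^3+c_2x^3z+(A+x^8)$ being a nonzero polynomial in $x$ only shows the cubic is \emph{separable} over $\mathbb C(x)$, not irreducible; a separable cubic can still split. You should instead argue directly that for $A(c)\neq0$ this cubic has no root in $\mathbb C(x)$: writing a hypothetical root $z=r/s$ with $\gcd(r,s)=1$ and clearing denominators gives $xr^3+c_2x^3rs^2+(A+x^8)s^3=0$; evaluating at $x=0$ forces $x\mid s$, and then the reduced equation forces $x\mid r$, a contradiction. Second, Bertini only gives smoothness of a \emph{general} member of $\mathcal L$, not of the specific $F_c$ through $P$. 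Fortunately you do not need smoothness: since $x(P)\neq0$ the third $z$-derivative of the defining polynomial is $6x\neq0$, so $\mult_P F_c\le3$; this makes $(X,\tfrac1{16}F_c)$ log canonical at $P$, so Lemma~\ref{lemma:convexity} lets you assume $F_c\not\subset\Supp D$, and then $\mult_P D\le D\cdot F_c$ follows from $\mult_P F_c\ge1$ alone. With these two fixes your pencil argument is complete and gives a perfectly good alternative to the paper's appeal to Lemma~\ref{lemma:Carolina}.
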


\begin{proof}
 The surface $X$ can be defined by the
quasihomogeneous equation
$$
t^{2}-y^{8}+xz^{3}+x^{8}y+\epsilon x^{3}y^{3}z,
$$
where $\epsilon\in\mathbb{C}$. Note that $X$ is singular at the
points $O_x$ and $O_{z}$. The surface $X$ also has two singular
points $P_{1}=[0:1:0:1]$ and $P_{2}=[0:1:0:-1]$ of type $\frac{1}{8}(7,3)$.

The curve $C_{x}$ is reducible. We have $C_{x}=C_{1}+C_{2}$, where
$C_{1}$ and $C_{2}$ are irreducible and  reduced curves. The curve $C_1$ contains the point $P_1$ but not the point $P_2$. On the other hand, the curve $C_2$ contains the point $P_2$ but not the point $P_1$. However, these two curves meet each other only at the point $O_z$. We also have
$$
C_{1}^2=C_{2}^2=-\frac{25}{8\cdot 19},\ \ \ C_{1}\cdot C_{2}=\frac{4}{19}.%
$$
 The curve $C_y$ is irreducible. It is easy to check
$$
\lct\left(X, \frac{2}{7}C_x\right)=\frac{35}{16}<\lct\left(X, \frac{2}{8}C_y\right)=\frac{10}{3}.%
$$
Therefore, $\lct(X)\leqslant \frac{35}{16}$.

Suppose that $\lct(X)<\frac{35}{16}$. Then there is an effective
$\Q$-divisor $D\qlineq -K_X$ such that the pair $(X,\frac{35}{16}D)$ is
not log canonical at some point $P$. By
Lemma~\ref{lemma:convexity}, we may assume that the support of
$D$ does not contain the curve $C_y$. Moreover, we may assume that
the support of $D$ does not contain either the curve $C_{1}$ or
the curve $C_{2}$.

Since $C_{i}\not\subset\mathrm{Supp}(D)$ for either $i=1$ or $2$,
we have
$$
\mult_{O_z}(D)\leqslant 19D\cdot C_i=\frac{1}{4}<\frac{16}{35},%
$$
and hence $P\ne O_{z}$. Meanwhile, the inequality $7D\cdot C_y=\frac{4}{19}<\frac{16}{25}$ implies that the point $P$ cannot belong to $C_y$.

Suppose that $P\in C_{1}$. Then we write $D=mC_{1}+\Omega$, where $\Omega$
is an effective $\mathbb{Q}$-divisor such that
$C_{1}\not\subset\mathrm{Supp}(\Omega)$. If $m\ne 0$, then
$$
\frac{1}{4\cdot 19}=D\cdot C_{2}=\big(mC_{1}+\Omega\big)\cdot C_{2}\geqslant  mC_{1}\cdot C_{2}=\frac{4m}{19},%
$$
and hence $m\leqslant \frac{1}{16}$. Then it follows from
Lemma~\ref{lemma:handy-adjunction} that
$$
\frac{2+25m}{8\cdot 19}=\big(D-mC_{1}\big)\cdot
C_{1}=\Omega\cdot
C_{1}>\left\{%
\aligned
&\frac{16}{35}\ \text{if}\ P\ne P_{1},\\%
&\frac{16}{35}\cdot\frac{1}{8}\ \text{if}\ P=P_{1}.\\%
\endaligned\right.%
$$
This is impossible since  $m\leqslant \frac{1}{16}$. Thus,
$P\not\in C_{1}$. Similarly, we can show that $P\not\in C_{2}$.

Consequently, the point $P$ is located in the outside of $
C_x\cup C_y$. In particular, it is a smooth point of $X$. But $H^0(\P, \mathcal{O}_\P(64))$ contains monomials
$y^{8}$, $x^{8}y$, $y^4t$ and $t^{2}$. This is impossible by
Lemma~\ref{lemma:Carolina}. The obtained
contradiction completes the proof.
\end{proof}

\begin{lemma}
\label{lemma:I-2-W-9-12-13-16-D-48}
Let $X$ be a quasismooth
hypersurface of degree $48$ in $\mathbb{P}(9,12,13, 16)$. Then
$\lct(X)=\frac{63}{24}$.
\end{lemma}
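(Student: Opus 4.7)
The plan is to follow the four-step strategy outlined earlier in the paper. The only monomials of degree $48$ in $\P(9,12,13,16)$ are $t^3$, $y^4$, $xz^3$ and $x^4y$, so after a coordinate change the surface $X$ may be cut out by
\[ t^3+y^4+xz^3+\alpha x^4y=0\]
for some non-zero constant $\alpha$. Its singularities are $O_x$ (of type $\frac{1}{9}(4,7)$), $O_z$ (of type $\frac{1}{13}(3,12)$), a point $Q_1\in L_{xy}$ cut out by $y^3+\alpha x^4=0$ (of type $\frac{1}{3}(1,1)$), and a point $Q_2\in L_{yt}$ cut out by $y^4+t^3=0$ (of type $\frac{1}{4}(1,1)$). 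The curve $C_x$ is irreducible, with a single singularity at $O_z$ which on the local smooth cover is the $(3,4)$-cusp $y^4+t^3=0$; by Lemma~\ref{lemma:Igusa} this gives $\lct_{O_z}(X,C_x)=\tfrac{7}{12}$, hence $\lct(X,C_x)=\tfrac{7}{12}$ and
\[ \lct(X,\tfrac{2}{9}C_x)=\tfrac{9}{2}\cdot\tfrac{7}{12}=\tfrac{63}{24}.\]
The analogous computations give $\lct(X,\tfrac{2}{12}C_y)=4$ and $\lct(X,\tfrac{2}{13}C_z)=\tfrac{13}{2}$, so $\lct(X)\le\tfrac{63}{24}$.

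For the reverse inequality, I would suppose $\lct(X)<\tfrac{63}{24}$ and fix an effective $\Q$-divisor $D\qlineq-K_X$ such that $(X,\tfrac{63}{24}D)$ is not log canonical at some $P\in X$; by Lemma~\ref{lemma:convexity} I may assume that none of $C_x$, $C_y$, $C_z$, $C_t$ lies in $\Supp(D)$. A routine intersection computation gives
\[ D\cdot C_x=\tfrac{1}{26},\quad D\cdot C_y=\tfrac{2}{39},\quad D\cdot C_z=\tfrac{1}{18},\quad D\cdot C_t=\tfrac{8}{117}.\]
Combined with the multiplicity data $\mult_{O_z}(C_x)=3$ (the cusp), $\mult_{O_x}(C_y)=3$ (three concurrent lines $t^3+z^3=0$ in the local chart), and $\mult=1$ for every other coordinate curve at each singularity of $X$ it contains, Proposition~\ref{proposition:lc-by-finite-morphism} and Lemma~\ref{lemma:handy-adjunction} together force $\mult_P(D)<\tfrac{8}{21}$ whenever $P$ lies on a coordinate curve or is a singular point of $X$. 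Hence $P$ must be a smooth point of $X$ lying outside $C_x\cup C_y\cup C_z\cup C_t$.

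The main remaining step uses the pencil $\mathcal{L}=|\O_X(36)|$ generated by $x^4$ and $y^3$, whose base locus in $X$ is the single point $O_z$. There is a unique $Z_\lambda\in\mathcal{L}$ through $P$, cut out by $y^3=\lambda x^4$ with $\lambda\ne 0$. Parametrizing $\{y^3=\lambda x^4\}$ by $(x,y)=(u^3,\mu u^4)$ with $\mu^3=\lambda$ (so $u$ has weight $3$), the defining equation of $X$ restricted to $Z_\lambda$ becomes
\[ t^3+\mu(\mu^3+\alpha)u^{16}+u^3z^3=0\]
in the quotient of $\A^3_{u,z,t}$ by the $\C^*$-weights $(3,13,16)$. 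A direct check shows that this polynomial is irreducible whenever $\mu(\mu^3+\alpha)\ne 0$, and that for $\lambda=-\alpha$ it splits as $\prod_{i=0}^{2}(t+\omega^iuz)$ into three irreducible components meeting one another only at $O_z$ and at $Q_1$. When $Z_\lambda$ is irreducible, applying Lemma~\ref{lemma:convexity} to $D$ and $\tfrac{1}{18}Z_\lambda$ (note that $Z_\lambda$ is smooth at $P$, so $(X,\tfrac{7}{48}Z_\lambda)$ is log canonical at $P$) lets me assume $Z_\lambda\not\subset\Supp(D)$, whence $\mult_P(D)\le D\cdot Z_\lambda=\tfrac{2}{13}<\tfrac{8}{21}$, a contradiction. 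In the degenerate case $\lambda=-\alpha$, the point $P$ lies on exactly one component $Z_i\sim\O_X(12)$ (since $P\notin\{O_z,Q_1\}$), and the same convexity argument with $\tfrac{1}{6}Z_i$ in place of $\tfrac{1}{18}Z_\lambda$ yields $\mult_P(D)\le D\cdot Z_i=\tfrac{2}{39}<\tfrac{8}{21}$, completing the proof.

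The hardest part is this pencil step: producing the right pencil, verifying that its general member is irreducible, and describing the unique reducible member together with the loci where its three components intersect. Once this is in place, the rest of the argument reduces to a catalogue of intersection bounds.
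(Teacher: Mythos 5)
Your Steps 1--3 (the upper bound via $\lct(X,\tfrac{2}{9}C_x)=\tfrac{63}{24}$ and the exclusion of all singular points and all points of $C_x\cup C_y\cup C_z\cup C_t$ via the inequalities $\tfrac{r\,D\cdot C}{\mult_P(C)}<\tfrac{8}{21}$) coincide with the paper's argument and are correct. Where you diverge is the final pencil: the paper uses the pencil spanned by $xt$ and $yz$ in $|\mathcal{O}_X(25)|$, whose member through a point $P$ outside the coordinate curves is always irreducible with $\mult_P\leqslant 4$, so a single intersection number $D\cdot Z=\tfrac{25}{234}<\tfrac{8}{21}$ finishes the proof with no case analysis. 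Your pencil $\langle x^4,y^3\rangle\subset|\mathcal{O}_X(36)|$ also works in principle, and your identification of the unique reducible member $Z_{-\alpha}=Z_1+Z_2+Z_3$ (with the $Z_i$ meeting only at $O_z$ and the $\tfrac13$-point) is right, but it forces you to handle that member separately -- and that is where your argument breaks.

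The step that fails is the claim $Z_i\sim_{\mathbb{Q}}\mathcal{O}_X(12)$, which you need in order to apply Lemma~\ref{lemma:convexity} with $D_2=\tfrac16Z_i$. The three components are permuted by $t\mapsto\omega t$, so each has $-K_X\cdot Z_i=\tfrac{2}{39}$, but they are \emph{not} numerically equivalent to $\mathcal{O}_X(12)$: computing the local intersections at $O_z$ (where, in the orbifold chart with coordinates $(y,t)$, the components are the graphs $y=c_it^4$, giving local multiplicity $4$) and at the $\tfrac13$-point (local multiplicity $1$), one finds $Z_1\cdot Z_2=\tfrac{4}{13}+\tfrac13=\tfrac{25}{39}$, hence $Z_1^2=\tfrac{12}{13}-2\cdot\tfrac{25}{39}=-\tfrac{14}{39}$, whereas $\mathcal{O}_X(12)^2=\tfrac{12}{39}$. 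So $\tfrac16Z_i\not\sim_{\mathbb{Q}}-K_X$ and the inequality $\mult_P(D)\leqslant D\cdot Z_i$ does not follow from convexity as you invoke it. The case is still salvageable: apply Lemma~\ref{lemma:convexity} to $\tfrac{1}{18}(Z_1+Z_2+Z_3)$ to remove some $Z_j$ from $\Supp(D)$; if $j$ is the index of the component through $P$ you are done, and otherwise write $D=mZ_i+\Omega$, deduce $m\leqslant\tfrac{2}{25}$ from $\tfrac{2}{39}=D\cdot Z_j\geqslant m\,Z_i\cdot Z_j$, check $\tfrac{63}{24}\bigl(D\cdot Z_i-mZ_i^2\bigr)\leqslant\tfrac{63}{24}\cdot\tfrac{2}{25}<1$, and conclude by Lemma~\ref{lemma:handy-adjunction}. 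Separately, a cosmetic point: what you call $L_{xy}$ and $L_{yt}$ are the loci $z=t=0$ and $x=z=0$, which in the paper's notation are $L_{zt}$ and $L_{xz}$.
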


\begin{proof}
 The surface $X$ can be defined by the
quasihomogeneous equation
$$
t^{3}-y^{4}+xz^{3}+x^{4}y=0.
$$
The surface $X$ is singular at the points $O_x$, $O_z$, $Q_{4}=[0:1:0:1]$ and
$Q_{3}=[1:1:0:0]$.

The curves $C_x$, $C_y$, $C_{z}$ and $C_{t}$ are irreducible and reduced. We
have
$$
\frac{63}{24}=\lct\left(X, \frac{2}{9}C_x\right)<\lct\left(X, \frac{2}{12}C_y\right)=4<\lct\left(X, \frac{2}{13}C_z\right)=\frac{13}{2}<\lct\left(X, \frac{2}{16}C_t\right)=\frac{16}{2}.%
$$
Therefore, $\lct(X)\leqslant \frac{63}{24}$.

Suppose that $\lct(X)<\frac{63}{24}$. Then there is an effective
$\Q$-divisor $D\qlineq -K_X$ such that the pair $(X,\frac{63}{24}D)$ is
not log canonical at some point $P$. By
Lemma~\ref{lemma:convexity}, we may assume that the support of
the divisor $D$ contains none of the curves $C_x$, $C_y$, $C_{z}$
and $C_{t}$.

Note that the curve $C_x$ is singular at $O_z$ with multiplicity $3$ and the curve $C_y$ is singular at $O_x$ with multiplicity $3$. Then the inequalities
\[\frac{13}{3}D\cdot C_x=\frac{1}{6}<\frac{24}{63},\ \ \frac{9}{3}D\cdot C_y=\frac{2}{13}<\frac{24}{63},\ \
3D\cdot C_z=\frac{1}{6}<\frac{24}{63},\ \ D\cdot C_t=\frac{8}{9\cdot 13}<\frac{24}{63}\]
show that  the point $P$ must be located in the outside of $C_x\cup C_y\cup C_z\cup C_t$.

Consider the pencil $\mathcal{L}$ on $X$ defined by the equations
$\lambda xt+\mu yz=0$, $[\mu, \lambda]\in \mathbb{P}^1$. Then
there is a unique curve $Z$ in the pencil $\mathcal{L}$ passing
through the point $P$. Then the curve $Z$ is defined by an
equation of the form $xt-\alpha yz=0$, where $\alpha$ is a
non-zero constant.
We see that
$C_{x}\not\subset\mathrm{Supp}(Z)$. But the open subset
$Z\setminus C_{x}$ of the curve $Z$ is a
$\mathbb{Z}_{9}$-quotient of the affine curve
$$
t-\alpha
yz=t^{3}+y^{4}+z^{3}+y=0\subset\mathbb{C}^{3}\cong\mathrm{Spec}\Big(\mathbb{C}\big[y,z,t\big]\Big),
$$
which is isomorphic to the plane affine curve  given
by the equation
$$
\alpha^{3}y^{3}z^{3}+y^{4}+z^{3}+y=0\subset\mathbb{C}^{2}\cong\mathrm{Spec}\Big(\mathbb{C}\big[y,z\big]\Big).
$$
Then, it is easy to see that the curve
$Z$ is irreducible and $\mult_{P}(Z)\leqslant 4$.
Thus, we may assume that $\mathrm{Supp}(D)$ does not contain the
curve $Z$ by Lemma~\ref{lemma:convexity}. However,
$$
\frac{25}{18\cdot 13}=D\cdot Z\geqslant\mult_{P}\big(D\big)>\frac{24}{63}.%
$$
Consequently, $\lct(X)=\frac{63}{24}$.
\end{proof}

\begin{lemma}
\label{lemma:I-2-W-9-12-19-19-D-57}
Let $X$ be a quasismooth
hypersurface of degree $57$ in $\mathbb{P}(9,12,19, 19)$. Then
$\lct(X)=3$.
\end{lemma}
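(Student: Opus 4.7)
The degree-$57$ quasihomogeneous equation of $X$ in $\P(9,12,19,19)$ involves only the monomials $t^3,\,t^2z,\,tz^2,\,z^3,\,xy^4,\,x^5y$, so after a linear change of coordinates in $(z,t)$ one may assume $X$ is defined by $\prod_{i=1}^{3}(z-a_it)+xy(x^4+y^3)=0$ with distinct $a_1,a_2,a_3\in\mathbb{C}^*$. I would begin by listing the singular locus: $O_x$ of type $\frac{1}{9}(1,1)$, $O_y$ of type $\frac{1}{12}(1,1)$, and three points $P_i=[0:0:a_i:1]$ of type $\frac{1}{19}(9,12)$ on $L_{xy}$. The curve $C_x$ splits as $L_1+L_2+L_3$ with $L_i=\{x=z-a_it=0\}$, the three components meeting at $O_y$ and each $L_i$ passing through $P_i$; symmetrically $C_y=L_1'+L_2'+L_3'$ has its components meeting at $O_x$, and $L_i\cap L_i'=\{P_i\}$. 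In the orbifold chart at $O_y$ the curve $C_x$ pulls back to three distinct lines through the origin, whose log canonical threshold is $\frac{2}{3}$ by Lemma~\ref{lemma:Igusa}, so $\lct\big(X,\frac{2}{9}C_x\big)=3$ and therefore $\lct(X)\leqslant 3$.

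For the reverse inequality I would assume for contradiction that some effective $\Q$-divisor $D\qlineq -K_X$ makes $(X,3D)$ not log canonical at a point $P$, and record the intersection numbers $-K_X\cdot L_i=\frac{1}{114}$, $L_i\cdot L_j=\frac{1}{12}$ for $i\ne j$, $L_i^2=-\frac{29}{228}$, and their $C_y$-analogues, together with $L_i\cdot L_i'=\frac{1}{19}$. At $P=O_y$, Lemma~\ref{lemma:convexity} applied with $D_2=\frac{2}{3}C_x$ lets me assume $L_j\not\subset\Supp(D)$ for some $j$, giving $\mult_{O_y}(D)\leqslant 12\,D\cdot L_j=\frac{2}{19}<\frac{1}{3}$, a contradiction; the point $O_x$ is excluded symmetrically via $C_y$. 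For $P=P_i$ I plan to use the pencil member $C_{a_i}=L_i+L_i'+R_i$ with $R_i=\{z=a_it,\,x^4+y^3=0\}$: in the orbifold chart at $P_i$ the curve $C_{a_i}$ is locally defined by $xy(x^4+y^3)=0$, and Lemma~\ref{lemma:Igusa} gives $\lct_{P_i}(X,C_{a_i})=\frac{7}{19}$, so $(X,\frac{6}{19}C_{a_i})$ is log canonical at $P_i$. Applying Lemma~\ref{lemma:convexity} then produces a $D$ missing one of the three components $L_i$, $L_i'$, $R_i$, and each case bounds $\mult_{P_i}(D)$ below $\frac{1}{3}$ using $\mult_{P_i}(R_i)=3$ together with $D\cdot L_i=\frac{1}{114}$, $D\cdot L_i'=\frac{2}{171}$, and $D\cdot R_i=\frac{2}{57}$.

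For smooth points $P\in X$ the main tool will be the pencil $\{z=\alpha t\}_{\alpha\in\mathbb{P}^1}$, whose general member $C_\alpha$ satisfies $-K_X\cdot C_\alpha=\frac{1}{18}$. If $P$ lies outside $C_x\cup C_y\cup R_1\cup R_2\cup R_3$, then the member through $P$ has $\alpha\ne a_i$ and reduces in the chart $t\ne 0$ of $\P(9,12,19)$ to $xy(x^4+y^3)=-c$ with $c\ne 0$, hence is irreducible; convexity with $\frac{6}{19}C_\alpha$ then yields $\mult_P(D)\leqslant\frac{1}{18}<\frac{1}{3}$. For $P$ a smooth point of $X$ on $L_i$ I would decompose $D=\mu L_i+\Omega$ with $L_i\not\subset\Supp(\Omega)$, and use Lemma~\ref{lemma:convexity} with $\frac{2}{3}C_x$: either $L_i\not\subset\Supp(D)$ (forcing $\mu=0$ and $\mult_P(D)\leqslant D\cdot L_i=\frac{1}{114}$) or some $L_j\not\subset\Supp(D)$ with $j\ne i$, in which case $D\cdot L_j=\frac{1}{114}$ forces $\mu\leqslant\frac{2}{19}$, and Lemma~\ref{lemma:handy-adjunction} on $L_i$ yields $3(D-\mu L_i)\cdot L_i\leqslant\frac{24}{361}<1$, contradicting the failure of log canonicity at $P$. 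The parallel treatments for $L_i'$ and for smooth points on $R_i$ use $C_y$ and $C_{a_i}$ in the analogous roles. The main obstacle will be the bookkeeping in the $R_i$ case: here $R_i^2=-\frac{2}{57}$ is negative and one must combine bounds derived from two of the three components of $C_{a_i}$, but the small values $D\cdot R_i=\frac{2}{57}$, $R_i\cdot L_i=\frac{3}{19}$ and $R_i\cdot L_i'=\frac{4}{19}$ leave comfortable room to close every subcase.
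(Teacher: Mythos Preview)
Your singular locus is incomplete. Since $\gcd(9,12)=3$, the line $L_{zt}=\{z=t=0\}$ carries, in addition to $O_x$ and $O_y$, a further singular point of $X$ of type $\frac{1}{3}(1,1)$: it is the point $Q_3$ where $x^4+y^3=0$ (in the paper's normalisation $Q_3=[1:1:0:0]$). This point is listed in the Big Table entry for $(9,12,19,19)$ as $O_xO_y=1\times\frac{1}{3}(1,1)$. Your proof never treats $Q_3$: it lies on every $R_i$ (indeed $z=t=0$ satisfies $z=a_it$ for all $i$), so it is not in the ``outside $C_x\cup C_y\cup R_1\cup R_2\cup R_3$'' region handled by the generic pencil member, and your final paragraph explicitly restricts to \emph{smooth} points on $R_i$.

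The fix is easy and is exactly what the paper does: the three curves $R_1,R_2,R_3$ all pass through $Q_3$, and $R_1+R_2+R_3\sim -18K_X$ with $(X,\frac{1}{6}(R_1+R_2+R_3))$ log canonical at $Q_3$; convexity then allows one to assume some $R_j\not\subset\Supp(D)$, whence $\mult_{Q_3}(D)\leqslant 3\,D\cdot R_j=\frac{2}{19}<\frac{1}{3}$. Alternatively, a general irreducible member $C_\alpha$ of your pencil passes through $Q_3$ and gives $\mult_{Q_3}(D)\leqslant 3\cdot\frac{1}{18}=\frac{1}{6}$. Apart from this omission, your plan follows the paper's argument with only cosmetic differences in the choice of coordinates.
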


\begin{proof}
 We may assume that the surface $X$ is defined by
the quasihomogeneous equation
$$
zt(z-t)-xy^{4}+x^{5}y=0.
$$
The surface $X$ is singular at three distinct points $O_x$,
$O_y$, $Q_3=[1:1:0:0]$ . Also, it is singular at three distinct points $O_z$, $O_t$, $Q_{19}=[0:0:1:1]$.

The curve $C_x$ consists of three distinct curves $L_{xz}$,
$L_{xt}$ and $R_x=\{x=z-t=0\}$ that intersect altogether at
the point $O_y$. Similarly, the
curve $C_y$ consists of three curves $L_{yz}$,
$L_{yt}$ and $R_y=\{y=z-t=0\}$ that intersect altogether
at the point $O_x$.
The curve $C_z$ consists of three distinct curves $L_{xz}$, $L_{yz}$
and $R_{z}=\{z=x^4-y^3=0\}$ that intersect altogether at
the point $O_t$. The curve $C_t$ consists
of three distinct curves $L_{xt}$, $L_{yt}$ and $R_{t}=\{t=x^4-y^3=0\}$ that intersect altogether at
the point $O_z$.
Let $C_{z-t}$ be the curve cut out on $X$ by the equation
$z=t$. Then $C_{z-t}$ consists of three distinct curves $R_x$,
$R_y$ and $R_{z-t}=\{z-t=x^4-y^3=0\}$ that intersect altogether at
the point $Q_{19}$.

We have the following intersection numbers:
$$
L_{xz}^{2}=L_{xt}^{2}=R_{x}^{2}=-\frac{29}{19\cdot 12},\ \ \
L_{yz}^{2}=L_{yt}^{2}=R_{y}^{2}=-\frac{26}{19\cdot 9}, \ \ \ R_z^2=R_t^2=R_{z-t}^2=-\frac{2}{19\cdot 3}$$
%$$ L_{xz}\cdot
%L_{xt}=L_{xz}\cdot R_{x}=L_{xt}\cdot R_{x}=\frac{1}{12},\ \ \
%L_{yz}\cdot L_{yt}=L_{yz}\cdot R_{y}=L_{yt}\cdot
%R_{y}=\frac{1}{9},
%$$
$$-K_X\cdot L_{xz}=-K_X\cdot L_{xt}=-K_X\cdot R_{x}=\frac{1}{19\cdot 6}, \ \ \ -K_X\cdot L_{yz}=-K_X\cdot L_{yt}=-K_X\cdot R_{y}=\frac{2}{19\cdot 9},$$
$$-K_X\cdot R_{z}=-K_X\cdot R_{t}=-K_X\cdot R_{z-t}=\frac{2}{19\cdot 3}.$$

Since $\lct(X, \frac{2}{9}C_x)=3$, we have
$\lct(X)\leqslant 3$. Suppose that $\lct(X)<3$. Then there is an
effective $\Q$-divisor $D\qlineq -K_X$ such that the pair $(X,3D)$
is not log canonical at some point $P\in X$.

The pairs $(X, \frac{6}{9}C_x)$ and  $(X, \frac{6}{12}C_y)$ are
log canonical. By Lemma~\ref{lemma:convexity}, we may assume
that the support of $D$ does not contain at least one component of
$C_x$. Then one of the inequalities
\[\mult_{O_y}(D)\leqslant 12D\cdot L_{xz}=\frac{6}{57}<\frac{1}{3}, \]
 \[\mult_{O_y}(D)\leqslant 12D\cdot L_{xt}=\frac{6}{57}<\frac{1}{3},\]
  \[\mult_{O_y}(D)\leqslant 12D\cdot R_{x}=\frac{6}{57}<\frac{1}{3}\]
  must hold, and hence the point $P$ cannot be the point $O_y$.
Also, we may assume that the support of $D$ does not
contain at least one component of $C_y$. By the same reason, the point $P$ cannot be the point $O_x$.

We have
$$
\lct\left(X,\frac{2}{19}C_z\right)=\lct\left(X,\frac{2}{19}C_t\right)=\lct\left(X,\frac{2}{19}C_1\right)=\frac{7}{2}.
$$
 By
Lemma~\ref{lemma:convexity}, we may assume that the support of
$D$ does not contain at least one component of each curve $C_z$,
$C_t$ and $C_{z-t}$. Since the curve $R_z$  is singular at the point $O_t$ with multiplicity $3$, Then one of the inequalities
\[\mult_{O_t}(D)\leqslant 19D\cdot L_{xz}=\frac{1}{6}<\frac{1}{3}, \]
 \[\mult_{O_t}(D)\leqslant 19D\cdot L_{yz}=\frac{2}{9}<\frac{1}{3},\]
  \[\mult_{O_t}(D)\leqslant \frac{19}{3}D\cdot R_{z}=\frac{2}{9}<\frac{1}{3}\]
  must hold, and hence the point $P$ cannot be the point $O_t$. By applying the same method to $C_t$ and $C_{z-t}$, we see that the point $P$  can neither $O_z$ not $Q_{19}$.

The three curves $R_z$, $R_t$, and $R_{z-t}$ intersects only at the point $Q_3$.
The log pair
$$
\left(X,\ \frac{3}{18}\Big(R_{z}+R_{t}+R_{z-t}\Big)\right)
$$
is log canonical at $Q_3$, and $R_{z}+R_{t}+R_{z-t}\sim -18K_{X}$.
By Lemma~\ref{lemma:convexity}, we may assume that the support
of $D$ does not contain at least one curve among $R_{z}$, $R_{t}$
and $R_{z-t}$. Without loss of generality, we may assume that the
support of $D$ does not contain the curve $R_{z}$. Then
$$
\mult_{Q_3}(D)\leqslant 3D\cdot R_z=\frac{2}{19}<\frac{1}{3},
$$
and hence the point $P$ cannot be $Q_{3}$.

Write $D=m_1L_{xz}+m_2L_{yz}+m_3R_z+\Delta$, where $\Delta$ is an effective $\mathbb{Q}$-divisor whose support contains none of the curves $L_{xz}$, $L_{yz}$, $R_z$. Since the pair $(X, 3D)$ is log canonical at the point $O_t$, we have $m_i\leqslant \frac{1}{3}$ for each $i=1$, $2$, $3$. By Lemma~\ref{lemma:handy-adjunction}, the inequalities
\[(D-m_1L_{xz})\cdot L_{xz}=\frac{2+29m_1}{12\cdot 19}<\frac{1}{3},\]
\[(D-m_2L_{yz})\cdot L_{yz}=\frac{2+26m_2}{9\cdot 19}<\frac{1}{3},\]
\[(D-m_3R_{z})\cdot R_{z}=\frac{2+2m_3}{3\cdot 19}<\frac{1}{3}\]
show that the point $P$ cannot belong to $C_z$. By the same way,
we can show that the point $P$ is not contained in $C_t\cup
C_{z-t}$. Therefore, the point $P$ is a smooth point of $X$ in the
outside of the set $C_z\cup C_{t}\cup C_{z-t}$. Then there is a
unique quasismooth irreducible curve $E\subset X$ passing through
the point $P$ and  defined by the equation $z=\lambda t$, where
$\lambda$ is a non-zero constant different from~$1$. By
Lemma~\ref{lemma:convexity}, we may assume that the support of $D$
does not contain the curve $E$. Then
$$
\frac{1}{3}<\mult_{P}(D)\leqslant D\cdot E=\frac{1}{18}.%
$$
This is a contradiction.
\end{proof}

\begin{lemma}
\label{lemma:I-2-W-9-19-24-31-D-81}Let $X$ be a quasismooth
hypersurface of degree $81$ in $\mathbb{P}(9,19,24, 31)$. Then
$\lct(X)=3$.
\end{lemma}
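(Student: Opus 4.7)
First, I would determine the structure of $X$. Enumerating monomials of weighted degree $81$ for the weights $(9,19,24,31)$ yields exactly four: $x^9$, $yt^2$, $y^3z$, $xz^3$, so, after rescaling the variables, we may assume
\[
X=\bigl\{x^9+yt^2+y^3z+xz^3=0\bigr\}.
\]
A Jacobian computation then shows that $O_x\notin X$, that the only singular points of $X$ are $O_y$, $O_z$, $O_t$, of cyclic quotient types $\frac{1}{19}(9,12)$, $\frac{1}{24}(19,7)$, $\frac{1}{31}(9,24)$ respectively, and that $X$ contains the line $L_{xy}\cong\P^{1}$. One checks further that $C_x=L_{xy}+R_x$ with $R_x=\{x=t^2+y^2z=0\}$ meeting $L_{xy}$ only at $O_z$, that $C_y=L_{xy}+R_y$ with $R_y=\{y=x^8+z^3=0\}$ meeting $L_{xy}$ only at $O_t$, and that $C_z$, $C_t$ are irreducible, with $C_z$ having a $(2,9)$-cusp at $O_y$.

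Using Proposition~\ref{proposition:lc-by-finite-morphism} together with Lemmas~\ref{lemma:basic-property} and~\ref{lemma:Igusa}, I would compute
\[
\lct\!\Bigl(X,\tfrac{2}{9}C_x\Bigr)=3,\qquad \lct\!\Bigl(X,\tfrac{2}{19}C_y\Bigr)=\tfrac{209}{54},\qquad \lct\!\Bigl(X,\tfrac{2}{24}C_z\Bigr)=\tfrac{22}{3}.
\]
The minimum is attained by the first, at the point $O_z$, where the pull-back of $C_x$ to the orbifold cover is the union of three distinct smooth branches through the origin. As all three divisors are $\Q$-linearly equivalent to $-K_X$, this establishes $\lct(X)\leqslant 3$.

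For the reverse inequality, suppose $\lct(X)<3$ and pick an effective $D\qlineq -K_X$ for which $(X,3D)$ is not log canonical at some point $P\in X$. By Lemma~\ref{lemma:convexity} we may arrange that $\Supp(D)$ contains neither $C_z$ nor $C_t$ and omits at least one component of each of $C_x$ and $C_y$. After computing the key intersection numbers $-K_X\cdot L_{xy}=\tfrac{1}{372}$, $L_{xy}^{2}=-\tfrac{53}{744}$, $L_{xy}\cdot R_x=\tfrac{1}{12}$, $L_{xy}\cdot R_y=\tfrac{3}{31}$, and the analogous quantities involving $R_x$, $R_y$, $C_z$, $C_t$, Method~3.1 (multiplicity bounds via Lemma~\ref{lemma:multiplicity}) rules out the singular points $O_y$ and $O_z$, while Method~3.2 (Lemma~\ref{lemma:handy-adjunction}) rules out all smooth points on $L_{xy}\cup R_x\cup R_y\cup C_z\cup C_t$. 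For a smooth point $P\in X$ outside $C_x$ and outside the only other contracted curve $R_y$ of the projection to $\mathbb{P}(9,19,24)$, Lemma~\ref{lemma:Carolina} applied with $k=\mathrm{lcm}(9,19)=171$ (using the monomials $x^{19}$, $y^{9}$, $x^{11}z^{3}$) forces $\mult_P(D)<\tfrac{1}{3}$, contradicting $\mult_P(3D)>1$.

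The main obstacle, as in Lemma~\ref{lemma:I-6-infinite-series-1-n-2}, is expected to be the index-$31$ point $O_t$, which lies on both $L_{xy}$ and $R_y$ and where direct multiplicity estimates may be too weak to exclude a divisor with sizeable coefficients along $L_{xy}$ and $R_y$. The intended remedy is a weighted blow-up $\pi\colon\bar X\to X$ at $O_t$ with weights $(9,24)$: write $D=mL_{xy}+cR_y+\Omega$ with $\Omega$ effective and containing neither $L_{xy}$ nor $R_y$, bound $m$ and $c$ via the remaining intersection numbers, and analyse the log pull-back of $(X,3D)$ on $\bar X$. The exceptional curve $E$ carries two residual cyclic quotient singularities, meeting the strict transforms $\bar L_{xy}$ and $\bar R_y$ separately; non-negativity of $\bar L_{xy}\cdot\bar\Omega$ and $\bar R_y\cdot\bar\Omega$ bounds the coefficient of $E$ in the log pull-back from above, and applying Lemma~\ref{lemma:handy-adjunction} at each of the two new singularities and at a generic point of $E$ should force the required contradiction. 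An analogous weighted blow-up at $O_z$ can be used to handle any remaining possibility at that point.
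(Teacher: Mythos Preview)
Your overall structure is right, but there are two issues, one an omission and one an unnecessary complication.

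First, you have missed a singular point. Since $\gcd(9,24)=3$, the stratum $L_{yt}=\{y=t=0\}$ carries a $\frac{1}{3}$-stabiliser, and intersecting with $X$ gives, besides $O_z$, the point $Q_3=[1:0:1:0]$, a $\frac{1}{3}(1,1)$ singularity. It lies on $R_y$ and on $C_t$, so your Method~3.2 analysis of $R_y$ must be run there with the index factor $3$; the paper does exactly this, obtaining $3\bigl(D-m\,R_y\bigr)\cdot R_y=\frac{2-10m}{31}<\tfrac{1}{3}$, which suffices. Without mentioning $Q_3$ your case analysis is incomplete.

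Second, the weighted blow-up at $O_t$ is not needed, and the paper avoids it entirely. The point you are missing is that $R_y=\{y=0,\ z^3-x^8=0\}$ is \emph{singular} at $O_t$: in the orbifold chart with coordinates $(x,z)$ it is the cusp $z^3=x^8$, of multiplicity $3$. Hence, when $R_y\not\subset\Supp(D)$ (which you may assume whenever $L_{xy}\subset\Supp(D)$), Method~3.1 gives
\[
\mult_{O_t}(D)\leqslant\frac{31}{3}\,D\cdot R_y=\frac{31}{3}\cdot\frac{2}{3\cdot 31}=\frac{2}{9}<\frac{1}{3},
\]
while if $L_{xy}\not\subset\Supp(D)$ one has $\mult_{O_t}(D)\leqslant 31\,D\cdot L_{xy}=\tfrac{1}{12}$. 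Either way $O_t$ is excluded directly; the same pattern (with $R_x$ smooth at $O_z$) already handles $O_z$, so no blow-up is required there either. Your proposed blow-up argument could in principle be pushed through, but it is substantially more work than the one-line multiplicity estimate the paper uses.
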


\begin{proof}
 The surface $X$ can be defined by the
quasihomogeneous equation
$$
yt^{2}+y^{3}z+xz^{3}-x^{9}=0.
$$
It is singular at the point $O_y$, $O_z$ and $O_t$. The
surface $X$ is also singular at the point $Q_3=[1:0:1:0]$.

The curve $C_x$ (resp. $C_y$) consists of two irreducible curves $L_{xy}$ and $R_x=\{x=t^2+y^2z=0\}$
(resp. $R_y=\{y=z^3-x^8=0\}$). The curve $L_{xy}$ intersects $R_x$ (resp. $R_y$) only at the point $O_z$ (resp. $O_t$). We have the following intersection numbers:
$$
-K_X\cdot L_{xy}=\frac{1}{12\cdot 31}, \ \ -K_X\cdot R_{x}=\frac{1}{6\cdot 19},  \ \ -K_X\cdot R_{y}=\frac{2}{3\cdot 31},\ \  L_{xy}\cdot R_{x}=\frac{1}{12},%
$$
$$
L_{xy}\cdot R_{y}=\frac{3}{31},\ \  L_{xy}^2=-\frac{53}{24\cdot 31},\ \ R_x^2=-\frac{5}{6\cdot 19},\ \ R_y^2=\frac{10}{3\cdot 31}.%
$$
Meanwhile, the curve $C_z$ is irreducible. We
see that $\lct(X)\leqslant 3$ since
$$
3=\lct\left(X, \frac{2}{9}C_x\right)<\lct\left(X, \frac{2}{19}C_y\right)=\frac{209}{54}<\lct\left(X, \frac{2}{24}C_z\right)=\frac{22}{3}.%
$$

Suppose that $\lct(X)<3$. Then there is an effective $\Q$-divisor
$D\qlineq -K_X$ such that the pair $(X,3D)$ is not log canonical at
some point $P$.
We may assume that the support of $D$ does not contain at least one component of each of $C_x$ and $C_y$ by Lemma~\ref{lemma:convexity}.  One of the inequalities
\[\mult_{O_z}D\leqslant 24D\cdot L_{xy}=\frac{2}{31}<\frac{1}{3}, \ \ \mult_{O_z}D\leqslant 24D\cdot R_{x}=\frac{4}{19}<\frac{1}{3}\] must hold, and hence the point $P$ cannot be the point $O_z$. Since the curve $R_y$ is singular at the point $O_t$ with multiplicity $3$, one of the inequalities
\[\mult_{O_t}D\leqslant 31D\cdot L_{xy}=\frac{1}{12}<\frac{1}{3}, \ \ \mult_{O_t}D\leqslant \frac{31}{3}D\cdot R_{y}=\frac{2}{9}<\frac{1}{3}\] must hold, and hence the point $P$ cannot be the point $O_t$.

By Lemma~\ref{lemma:convexity}, we may also assume
that the curve $C_z$ is not contained in the support of $D$. The curve $C_z$ is singular at the point $O_y$.
Then the inequality $$\frac{19}{2}D\cdot C_z=\frac{9}{31}<\frac{1}{3}$$ shows that the point $P$ cannot be the point $O_y$.

Write $D=m_0L_{xy}+m_1R_x+m_2R_y+\Omega$, where $\Omega$ is an effective $\mathbb{Q}$-divisor whose support contains none of $L_{xy}$, $R_x$, $R_y$.
If $m_0\ne 0$, then we obtain
\[\frac{1}{6\cdot 19}=D\cdot R_x\geqslant m_0L_{xy}\cdot R_x=\frac{m_0}{12},\]
and hence $m_0\leqslant \frac{2}{19}$. Similarly, we see that $m_1\leqslant\frac{1}{31} $ and $m_2\leqslant\frac{1}{36}$.
Since we have
\[(D-m_0L_{xy})\cdot L_{xy}=\frac{2+53m_0}{24\cdot 31}<\frac{1}{3},\]
\[(D-m_1R_{x})\cdot R_{x}=\frac{1+5m_1}{6\cdot 19}<\frac{1}{3},\]
\[3(D-m_2R_{y})\cdot R_{y}=\frac{2-10m_2}{31}<\frac{1}{3},\]
it follows from Lemma~\ref{lemma:handy-adjunction} that the point $P$ is located in the outside of $C_x$ and $C_y$. Therefore, the point $P$ is a smooth point in the outside of $C_x$ and $C_y$.
However, since $H^0(\P, \mathcal{O}_{\P}(171))$ contains the monomials
$y^{9}$, $x^{19}$, $x^{3}z^{6}$ and $x^{11}z^{3}$, it follows from
Lemma~\ref{lemma:Carolina} that
the point $P$ must be either a singular point of $X$ or a point in $C_x\cup C_y$. This is a contradiction.
\end{proof}

\begin{lemma}
\label{lemma:I-2-W-10-19-35-43-D-105}Let $X$ be a quasismooth
hypersurface of degree $105$ in $\mathbb{P}(10,19,35, 43)$. Then
$\lct(X)=\frac{57}{14}$.
\end{lemma}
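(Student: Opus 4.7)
The plan is to follow the general scheme of Section~5. After a suitable change of coordinates the equation of $X$ can be taken to contain the monomials $z^{3}$, $xy^{5}$, $x^{7}z$ and $yt^{2}$ (these exhaust the degree-$105$ monomials up to perturbations), so I would write
\[
X=\big\{z^{3}+xy^{5}+x^{7}z+yt^{2}+(\text{perturbations})=0\big\}\subset\P(10,19,35,43).
\]
Its singular locus consists of the coordinate points $O_{x}$, $O_{y}$, $O_{t}$ together with one further $\tfrac{1}{5}$-point $Q\in L_{xz}$ given by $z^{2}+x^{7}=0$; moreover $L_{xt}\subset X$. The curves $C_{x}$ and $C_{t}$ are irreducible, while $C_{y}=L_{xt}+R_{y}$ with $R_{y}=\{y=0,\;z^{2}+x^{7}=0\}$ and $C_{z}=L_{xt}+R_{z}$ with $R_{z}=\{z=0,\;xy^{4}+t^{2}=0\}$.

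For the upper bound I would apply Lemmas~\ref{lemma:basic-property} and~\ref{lemma:Igusa} together with Proposition~\ref{proposition:lc-by-finite-morphism} to compute
\[
\lct\!\Big(X,\tfrac{1}{5}C_{x}\Big)=\tfrac{25}{6},\qquad \lct\!\Big(X,\tfrac{2}{19}C_{y}\Big)=\tfrac{57}{14},\qquad \lct\!\Big(X,\tfrac{2}{35}C_{z}\Big)=\tfrac{21}{2}.
\]
The key computation is the second one: at $O_{t}$ the curve $C_{y}$ has local equation $z(z^{2}+x^{7})=0$ in the orbifold cover, and the second formula in Lemma~\ref{lemma:Igusa} applied with $n_{1}=1,\;n_{2}=0,\;m_{1}=2,\;m_{2}=7$ yields $\lct_{O_{t}}(X,C_{y})=\tfrac{3}{7}$. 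Taking the minimum gives $\lct(X)\leq\tfrac{57}{14}$.

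For the reverse inequality suppose there exists an effective $\Q$-divisor $D\sim_{\Q}-K_{X}$ such that $(X,\tfrac{57}{14}D)$ fails to be log canonical at some point $P\in X$. By Lemma~\ref{lemma:convexity} I may assume $C_{x}\not\subset\Supp(D)$ and that at least one irreducible component of $C_{y}$ and of $C_{z}$ lies outside $\Supp(D)$. The projection $\psi\colon X\dasharrow\P(10,19,35)$ contracts precisely the two components of $C_{y}$, so Lemma~\ref{lemma:Carolina} applied with $k=190$ (whose degree-$190$ space contains the monomials $y^{10}$, $x^{19}$ and $x^{12}z^{2}$) gives $\mult_{P}(D)\leq\tfrac{6}{43}<\tfrac{14}{57}$ for every smooth $P$ outside $C_{x}\cup C_{y}$. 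Hence $P\in C_{x}\cup C_{y}\cup\Sing(X)$. Smooth points of $C_{x}$ are eliminated by $D\cdot C_{x}=\tfrac{6}{19\cdot 43}$; smooth points of $L_{xt}$ and $R_{y}$ by Lemma~\ref{lemma:handy-adjunction} after writing $D=\mu L_{xt}+\Omega$ (respectively $D=\mu R_{y}+\Omega$), bounding $\mu$ via intersection with the complementary component and using the self-intersections $L_{xt}^{2}$ and $R_{y}^{2}$; and the singular points $O_{x}$, $O_{y}$, $Q$ by analogous intersection estimates with $R_{z}$, $C_{t}$, and suitable members of pencils that avoid the contracted locus.

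The principal obstacle is the case $P=O_{t}$, where $L_{xt}$ and $R_{y}$ meet with the very cuspidal intersection that produced $\lct=\tfrac{57}{14}$ in the first place, and no single-curve estimate contradicts non-log canonicity. Here I would imitate the weighted-blow-up strategy of Method~3.3 as carried out in Lemmas~\ref{lemma:I-6-infinite-series-1-n-2} and~\ref{lemma:11293949127}: write $D=mL_{xt}+cR_{y}+\Omega$, bound $m$ and $c$ from $D\cdot L_{xt}=\tfrac{1}{5\cdot 43}$ and $D\cdot R_{y}=\tfrac{2}{5\cdot 43}$, then perform the weighted blow-up $\pi\colon\bar X\to X$ at $O_{t}$ with weights suited to the Puiseux pair $(2,7)$ of the local cusp $z^{2}+x^{7}$. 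Applying Lemma~\ref{lemma:handy-adjunction} at each singular point of $\bar X$ on the exceptional curve $E$, against the proper transforms $\bar L_{xt}$, $\bar R_{y}$ and $E$ itself, will yield numerical inequalities on $m$, $c$ and the multiplicity of $\Omega$ along $E$ that are incompatible with the previously established bounds; if a single blow-up does not suffice, a second weighted blow-up at a residual point on $E$ should close the argument, exactly as in Lemma~\ref{lemma:I-6-infinite-series-1-n-2}.
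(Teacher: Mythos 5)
Your overall framework (the upper bound $\lct(X,\tfrac{2}{19}C_y)=\tfrac{57}{14}$ via the Puiseux computation $\tfrac{3}{7}$ at $O_t$, then Lemma~\ref{lemma:convexity}, Lemma~\ref{lemma:Carolina} with $k=190$, and Lemma~\ref{lemma:handy-adjunction} for the lower bound) matches the paper's, but there is a genuine gap at the one point where you admit you have no argument. Before that, note a persistent labelling error: the line contained in $X$ and common to $C_y$ and $C_z$ is $L_{yz}=\{y=z=0\}$, not $L_{xt}$; indeed $f|_{x=t=0}=z^3\not\equiv 0$, so $L_{xt}\not\subset X$, and $L_{xt}$ does not even pass through $O_t$, where you need the two components of $C_y$ to meet. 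Likewise the fifth singular point $Q=[1:0:1:0]$ lies on $L_{yt}$, not on $L_{xz}$. These slips are repairable, but as written your geometric setup is internally inconsistent.

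The real problem is your treatment of $P=O_t$. You assert that ``no single-curve estimate contradicts non-log canonicity'' there and defer to an unexecuted cascade of weighted blow-ups (``should close the argument''), so the proof is incomplete exactly where you locate the difficulty. In fact $O_t$ is the easiest point to exclude: by Lemma~\ref{lemma:convexity} at least one of $L_{yz}$, $R_y$ is not contained in $\Supp(D)$, and since $O_t$ is a $\tfrac{1}{43}$-point at which $R_y$ has multiplicity $2$, either
$$
\mult_{O_t}(D)\leqslant 43\,D\cdot L_{yz}=\frac{43}{5\cdot 43}=\frac{1}{5}<\frac{14}{57}
\qquad\text{or}\qquad
\mult_{O_t}(D)\leqslant \frac{43}{2}\,D\cdot R_y=\frac{1}{5}<\frac{14}{57},
$$
and either inequality contradicts $\mult_{O_t}\big(\tfrac{57}{14}D\big)>1$. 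This is precisely how the paper disposes of $O_t$ before running the adjunction estimates along $L_{yz}$ and $R_y$ (which then also handle $O_x$ and $Q$, both of which lie on $C_y$). So the weighted-blow-up machinery you invoke is unnecessary here, and since you never carry it out, the proposal as it stands does not prove the lemma.
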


\begin{proof}
  The surface $X$ can be defined by the
quasihomogeneous equation
$$
z^{3}+yt^{2}+xy^{5}-x^{7}z=0.
$$
The surface $X$ is singular at the points $O_x$, $O_y$, $O_t$ and
$Q_5=[1:0:1:0]$.

The curve $C_x$ is irreducible. However, the curve $C_y$ consists of two irreducible curves $L_{yz}$ and $R_y=\{y=z^2-x^7=0\}$. The curve $L_{yz}$ intersects $R_y$ at the point $O_t$. We have
$$
L_{yz}^2=-\frac{51}{10\cdot 43},\ R_{y}^2=-\frac{16}{5\cdot 43},\ L_{yz}\cdot R_{y}=\frac{7}{43}.%
$$
We also have $\lct(X)\leqslant \frac{57}{14}$ since
$$
\frac{57}{14}=\lct\left(X, \frac{2}{19}C_y\right)<\lct\left(X, \frac{2}{10}C_x\right)=\frac{25}{6}.%
$$

Suppose that $\lct(X)<\frac{57}{14}$. Then there is an effective
$\Q$-divisor $D\qlineq -K_X$ such that the pair
$(X,\frac{57}{14}D)$ is not log canonical at some point $P$. By
Lemma~\ref{lemma:convexity}, we may assume that the support of the
divisor $D$ does not contain the curve $C_x$. Similarly, we may
assume that the support of the divisor $D$ does not contain either
$L_{yz}$ or $R_y$.

Since the support of the divisor $D$ does not contain either
$L_{yz}$ or $R_y$ and the curve $R_y$ is singular at the point
$O_t$, one of the inequalities
\[ \mult_{O_t}(D)\leqslant 43D\cdot L_{yz}=\frac{1}{5}<\frac{14}{57},  \ \
\mult_{O_t}(D)\leqslant \frac{43}{2}D\cdot R_{y}=\frac{1}{5}<\frac{14}{57}\]
must hold, and hence the point $P$ cannot be $O_t$.

We write $D=m_0L_{yz}+m_1R_y+\Omega$, where $\Omega$ is an effective $\mathbb{Q}$-divisor whose support contains neither $L_{yz}$ nor $R_y$. If $m_0\ne 0$, then $m_1=0$ and hence
\[\frac{2}{5\cdot 43}=D\cdot R_y\geqslant m_0L_{yz}\cdot R_y=\frac{7m_0}{43}.\]
Therefore, $m_0\leqslant \frac{2}{35}$. Similarly, we have $m_1\leqslant \frac{1}{35}$.
Since
\[10(D-m_0L_{yz})\cdot L_{yz}=\frac{2+51m_0}{43}<\frac{14}{57},\]
\[5(D-m_1R_{y})\cdot R_{y}=\frac{2+16m_1}{43}<\frac{14}{57},\]
it follows from Lemma~\ref{lemma:handy-adjunction} that the point $P$ is located in the outside of $C_y$.

Since the divisor $D$ does not contain the curve $C_x$, $\mult_{O_y}(D)\leqslant 19D\cdot C_x=\frac{6}{43}<\frac{14}{57}$, and hence the point $P$ cannot belong to the curve $C_x$. Therefore, the point $P$ is a smooth point in the outside of $C_x\cup C_y$.
However,
since $H^0(\P, \mathcal{O}_{\P}(190))$ contains $x^{19}$,
$y^{10}$, $x^{5}z^{4}$ and $x^{12}z^{2}$, it follows from
Lemma~\ref{lemma:Carolina} that the point $P$ must be either a singular point of $X$ or a point in
$C_x\cup C_{y}$. This is a contradiction.
\end{proof}

\begin{lemma}
\label{lemma:I-2-W-11-21-28-47-D-105}Let $X$ be a quasismooth
hypersurface of degree $105$ in $\mathbb{P}(11,21,28, 47)$. Then
$\lct(X)=\frac{77}{30}$.
\end{lemma}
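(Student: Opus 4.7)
The plan is to follow the scheme in Section~3 and show $\lct(X)=\frac{77}{30}$ by establishing both inequalities. Enumeration shows that the only monomials of weighted degree $105$ in $x,y,z,t$ are $y^{5}$, $yz^{3}$, $x^{7}z$, $xt^{2}$, so after rescaling we may take $X=\{y^{5}+yz^{3}+x^{7}z+xt^{2}=0\}$. A direct partial-derivative check yields that $X$ is quasismooth with singularities at the three vertices $O_{x}$, $O_{z}$, $O_{t}$ (of types $\frac{1}{11}(3,10)$, $\frac{1}{28}(11,19)$, $\frac{1}{47}(21,28)$) together with a single point $P_{0}$ of type $\frac{1}{7}(4,5)$ cut out on $L_{xt}$ by $y^{4}+z^{3}=0$. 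The curve $C_{x}$ decomposes as $L_{xy}+R_{x}$ where $R_{x}=\{x=y^{4}+z^{3}=0\}$, and these components meet only at $O_{t}$. In the orbifold chart at $O_{t}$, $C_{x}$ is locally cut out by $y(y^{4}+z^{3})$, and the second formula of Lemma~\ref{lemma:Igusa} with $(n_{1},n_{2},m_{1},m_{2})=(1,0,4,3)$ gives $\lct_{O_{t}}(X,C_{x})=\frac{7}{15}$. Since the local threshold of $C_{x}$ is $\geq 1$ at every other point of $X$, we obtain $\lct(X,\frac{2}{11}C_{x})=\frac{11}{2}\cdot\frac{7}{15}=\frac{77}{30}$, hence $\lct(X)\leq\frac{77}{30}$.

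For the reverse inequality, suppose for contradiction that $(X,\frac{77}{30}D)$ fails to be log canonical at some point $P$ for an effective $\Q$-divisor $D\sim_{\Q}-K_{X}$. Using Lemma~\ref{lemma:convexity} with the log canonical boundaries $\frac{7}{15}C_{x}$, $\frac{11}{45}C_{y}$, and $\frac{11}{60}C_{z}$ in distinct sub-arguments, I may assume that $\Supp(D)$ avoids either one component of $C_{x}=L_{xy}+R_{x}$, or one component of $C_{y}=L_{xy}+R_{y}$, or the irreducible curve $C_{z}$. I first apply Lemma~\ref{lemma:Carolina} with $k=308$: the needed monomial pairs $\{x^{28},x^{7}y^{11}\}$ and $\{x^{28},z^{11}\}$ are present, and the projection $X\dasharrow\P(11,21,28)$ from $O_{t}$ contracts exactly the components $L_{xy}$ and $R_{x}$ of $C_{x}$ (its remaining fibres being finite through the $xt^{2}$ term), so $\mult_{P}(D)\leq\frac{10}{47}<\frac{30}{77}$ for every smooth $P\notin C_{x}$. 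The vertex $O_{x}$ is excluded via $C_{z}\cdot D=\frac{10}{517}$ together with $\mult_{O_{x}}(C_{z})=2$, which forces $\mult_{O_{x}}(D)\leq\frac{5}{47}$, and $O_{z}$ is excluded using the component of $C_{y}$ not in $\Supp(D)$ to bound $\mult_{O_{z}}(D)$ by $\frac{2}{47}$ or $\frac{2}{11}$.

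The crux is to exclude $P\in C_{x}$, in particular $P=O_{t}$. The key intersection numbers are $L_{xy}\cdot R_{x}=\frac{3}{47}$ (from the local multiplicity of the tangent cone $z^{3}$ at $O_{t}$), $L_{xy}^{2}=-\frac{73}{1316}$ (from $C_{x}\cdot L_{xy}=\frac{11}{1316}$ and $C_{x}=L_{xy}+R_{x}$), $R_{x}^{2}=-\frac{10}{329}$ (from $C_{x}^{2}=\frac{55}{1316}$), $-K_{X}\cdot L_{xy}=\frac{1}{658}$, and $-K_{X}\cdot R_{x}=\frac{2}{329}$. From the $C_{x}$-arrangement, assume $R_{x}\not\subset\Supp(D)$; writing $D=mL_{xy}+\Upsilon$ with $L_{xy}\not\subset\Supp(\Upsilon)$, the relation $\frac{3m}{47}\leq R_{x}\cdot D=\frac{2}{329}$ yields $m\leq\frac{2}{21}$, hence $\frac{77}{30}m\leq\frac{11}{45}\leq 1$. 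Lemma~\ref{lemma:handy-adjunction} at the singular point $O_{t}$ of index $47$ then gives
\[
47\cdot\frac{77}{30}\cdot\big(L_{xy}\cdot D-mL_{xy}^{2}\big)\leq 47\cdot\frac{77}{30}\cdot\frac{1}{147}=\frac{517}{630}\leq 1,
\]
so $(X,\frac{77}{30}D)$ is log canonical at $O_{t}$. Symmetrically, if $L_{xy}\not\subset\Supp(D)$ I apply Lemma~\ref{lemma:handy-adjunction} with $C=R_{x}$ to obtain the bound $47\cdot\frac{77}{30}\cdot\frac{2}{329}=\frac{11}{15}\leq 1$ at $O_{t}$ and $7\cdot\frac{77}{30}\cdot\frac{2}{329}=\frac{77}{705}\leq 1$ at the $\frac{1}{7}$-singularity $P_{0}$ on $R_{x}$. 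Smooth points on $L_{xy}\cup R_{x}$ yield strictly smaller values by the same calculation, giving the desired contradiction.

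The main obstacle is the analysis at $O_{t}$, where the global log canonical threshold is attained by the boundary $\frac{2}{11}C_{x}$. The intersection $L_{xy}\cdot R_{x}=\frac{3}{47}$ must be read off from the local multiplicity of the tangent cone in the orbifold chart rather than computed by a direct degree formula, and the self-intersection $L_{xy}^{2}=-\frac{73}{1316}$ is most reliably obtained via the decomposition $C_{x}=L_{xy}+R_{x}$ (rather than from an orbifold adjunction that must account for both $O_{z}$ and $O_{t}$ on $L_{xy}$). The resulting numerical inequality $\frac{517}{630}\leq 1$ is tight, which is exactly the indication that $\frac{77}{30}$ is the optimal global bound.
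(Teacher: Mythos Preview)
Your overall strategy and intersection numbers are correct and follow the paper's scheme. There is, however, a genuine gap in the ``symmetric'' case $L_{xy}\not\subset\Supp(D)$ at the point $O_t$: you invoke Lemma~\ref{lemma:handy-adjunction} with $C=R_x$, but $R_x$ is \emph{singular} at $O_t$ --- in the orbifold chart it is the cusp $y^4+z^3=0$, of multiplicity $3$ --- and the lemma (whose proof is adjunction along a smooth branch) requires $C$ to be smooth at $P$. In addition, your displayed bound $47\cdot\frac{77}{30}\cdot\frac{2}{329}$ tacitly assumes $\mult_{R_x}(D)=0$; since $R_x^2<0$, the quantity $(D-nR_x)\cdot R_x$ \emph{increases} with $n$, so in this case you would first need the bound $n\le\frac{1}{42}$ coming from $D\cdot L_{xy}\ge nR_x\cdot L_{xy}$. (At the $\tfrac{1}{7}$-point $P_0$ the curve $R_x$ is smooth, so the lemma does apply there, though again only after inserting this bound on $n$.)

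The paper's argument at $O_t$ is simpler than either of your adjunction computations: in the case $L_{xy}\not\subset\Supp(D)$ one has directly
\[
\mult_{O_t}(D)\le 47\,D\cdot L_{xy}=\tfrac{1}{14}<\tfrac{30}{77},
\]
and in the case $R_x\not\subset\Supp(D)$ one uses the multiplicity of $R_x$ to get
\[
\mult_{O_t}(D)\le \tfrac{47}{3}\,D\cdot R_x=\tfrac{2}{21}<\tfrac{30}{77}.
\]
This Method~3.1 argument handles $O_t$ in both cases without adjunction; your correct but more elaborate computation via $C=L_{xy}$ in the first case is therefore unnecessary, and the resulting value $\frac{517}{630}$ is not especially tight and does not witness optimality of $\frac{77}{30}$ (that comes solely from the threshold calculation for $C_x$).
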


\begin{proof}
 The surface $X$ can be defined by the
quasihomogeneous equation
$$
yz^{3}-y^{5}+xt^{2}+x^{7}z=0.
$$
The surface $X$ is singular at the point $O_x$, $O_z$, $O_t$ and $Q_7=[0:1:1:0]$.

The curve $C_x$ (resp. $C_y$) consists of two irreducible curves $L_{xy}$ and $R_x=\{x=z^3-y^4=0\}$
(resp. $R_y=\{y=t^2+x^6z=0\}$. The curve $L_{xy}$ intersects $R_x$ (resp. $R_y$) only at the point $O_t$ (resp. $O_z$).
We have the following intersection numbers:
$$
-K_X\cdot L_{xy}=\frac{1}{14\cdot 47}, \ \ -K_X\cdot R_{x}=\frac{2}{7\cdot 47},\ \  -K_X\cdot R_{y}=\frac{1}{7\cdot 11}, \ \ L_{xy}\cdot R_{x}=\frac{3}{47},%
$$
$$L_{xy}\cdot R_{y}=\frac{1}{14},\ \ %
L_{xy}^2=-\frac{73}{28\cdot 47},\ \ R_{x}^2=-\frac{10}{7\cdot 47},\ \ R_y^2=\frac{5}{7\cdot 11}.
$$

We see that $\lct(X)\leqslant
\frac{77}{30}$ since
$$
\frac{77}{30}=\lct\left(X, \frac{2}{11}C_x\right)<\lct\left(X, \frac{2}{21}C_y\right)=6.%
$$

Suppose that $\lct(X)<\frac{77}{30}$. Then there is an effective
$\Q$-divisor $D\qlineq -K_X$ such that the pair $(X,\frac{77}{30}D)$ is
not log canonical at some point $P$. By
Lemma~\ref{lemma:convexity}, we may assume that the support of $D$ does not contain at least one component of each of $C_x$ and $C_y$. Note that the curve $R_x$ is singular at the point $O_t$ with multiplicity $3$ and the curve $R_y$ is singular at the point $O_z$.
Then one of two inequalities
\[ \mult_{O_t}(D)\leqslant 47D\cdot L_{xy}=\frac{1}{14}<\frac{30}{77},  \ \
\mult_{O_t}(D)\leqslant \frac{47}{3}D\cdot R_{x}=\frac{2}{21}<\frac{30}{77}\]
must hold, and hence the point $P$ cannot be $O_t$. Applying the same method to $C_y$, we show that the point $P$ cannot be the point $O_z$.

Write $D=m_0L_{xy}+m_1R_x+m_2R_y+\Omega$, where $\Omega$ is an effective $\mathbb{Q}$-divisor whose support contains none of $L_{xy}$, $R_x$, $R_y$.
If $m_0\ne 0$, then we obtain
\[\frac{2}{7\cdot 47}=D\cdot R_x\geqslant m_0L_{xy}\cdot R_x=\frac{3m_0}{47},\]
and hence $m_0\leqslant \frac{2}{21}$. Similarly, we see that $m_1\leqslant\frac{1}{42} $ and $m_2\leqslant\frac{1}{47}$.
Since we have
\[(D-m_0L_{xy})\cdot L_{xy}=\frac{2+73m_0}{28\cdot 47}<\frac{30}{77},\]
\[7(D-m_1R_{x})\cdot R_{x}=\frac{2+10m_1}{47}<\frac{30}{77},\]
\[11(D-m_2R_{y})\cdot R_{y}=\frac{1-5m_2}{7}<\frac{30}{77},\]
it follows from Lemma~\ref{lemma:handy-adjunction} that the point $P$ is located in the outside of $C_x$ and $C_y$. Therefore, the point $P$ is a smooth point in the outside of $C_x$.
However, since $H^0(\P, \mathcal{O}_{\P}(517))$ contains $x^{5}y^{22}$,
$x^{26}y^{11}$, $x^{47}$, $x^{19}z^{11}$, $x^{47}$, $t^{11}$, it follows from
Lemma~\ref{lemma:Carolina} that
the point $P$ must be either a singular point of $X$ or a point in $C_x$. This is a contradiction.
\end{proof}

\begin{lemma}
\label{lemma:I-2-W-11-25-32-41-D-107}Let $X$ be a quasismooth
hypersurface of degree $107$ in $\mathbb{P}(11,25,32, 41)$. Then
$\lct(X)=\frac{11}{3}$.
\end{lemma}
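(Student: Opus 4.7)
There are exactly four monomials of weighted degree $107$ in $\P(11,25,32,41)$, namely $yt^{2}$, $xz^{3}$, $y^{3}z$ and $x^{6}t$, and quasismoothness forces each coefficient to be nonzero; so after rescaling the coordinates one may take
$$
f=yt^{2}+xz^{3}+y^{3}z+x^{6}t=0.
$$
Reading off from this equation gives $C_{x}=L_{xy}+R_{x}$, $C_{y}=L_{xy}+R_{y}$, $C_{z}=L_{zt}+R_{z}$, $C_{t}=L_{zt}+R_{t}$, with $R_{x}=\{x=y^{2}z+t^{2}=0\}$, $R_{y}=\{y=z^{3}+x^{5}t=0\}$, $R_{z}=\{z=x^{6}+yt=0\}$, $R_{t}=\{t=y^{3}+xz^{2}=0\}$, and shows that $X$ is singular only at the four coordinate points, of indices $11,25,32,41$. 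In the orbifold chart at $O_{z}$ the local equation of $C_{x}$ is $y(y^{2}+t^{2})$, that is, three smooth branches through the origin, with log canonical threshold $\tfrac{2}{3}$ by Lemma~\ref{lemma:Igusa}; at all other points of $X$ the pair $(X,C_{x})$ is log canonical. Hence $\lct(X,\tfrac{2}{11}C_{x})=\tfrac{11}{3}$, yielding the upper bound $\lct(X)\le\tfrac{11}{3}$; the analogous thresholds for $C_{y}$, $C_{z}$, $C_{t}$ come out to $\tfrac{50}{9}$, $\tfrac{28}{3}$, $\tfrac{205}{18}$, all strictly larger.

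For the reverse inequality I assume the existence of an effective $\Q$-divisor $D\sim_{\Q}-K_{X}$ such that $(X,\tfrac{11}{3}D)$ is not log canonical at some point $P\in X$, and aim for a contradiction. Iterating Lemma~\ref{lemma:convexity} against each of the log canonical pairs $(X,\tfrac{2}{a_{i}}C_{i})$, I may assume that $\Supp(D)$ omits at least one component of each of $C_{x},C_{y},C_{z},C_{t}$. All the estimates below rest on standard intersection numbers obtained from $C_{i}=\O_{X}(a_{i})$ together with local analysis at the singular points; the key ones are
$$
L_{xy}\cdot D=\tfrac{1}{656},\ R_{x}\cdot D=\tfrac{1}{200},\ R_{y}\cdot D=\tfrac{6}{451},\ L_{zt}\cdot D=\tfrac{2}{275},\ R_{z}\cdot D=\tfrac{12}{1025},\ R_{t}\cdot D=\tfrac{3}{176},
$$
together with $L_{xy}^{2}=-\tfrac{71}{1312}$, $R_{x}^{2}=-\tfrac{7}{200}$, $L_{zt}^{2}=-\tfrac{34}{275}$ and the mixed numbers $L_{xy}\cdot R_{x}=\tfrac{1}{16}$, $L_{zt}\cdot R_{z}=\tfrac{6}{25}$, $R_{x}\cdot R_{z}=\tfrac{1}{25}$, $L_{xy}\cdot R_{z}=\tfrac{1}{41}$.

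The routine cases fall quickly. For smooth $P\not\in C_{x}$ I apply Lemma~\ref{lemma:Carolina} with $k=352$, using the monomial pairs $\{x^{32},x^{7}y^{11}\}$ and $\{x^{32},z^{11}\}$ and the observation that the only curve contracted by the projection $X\dashrightarrow\P(11,25,32)$ is $L_{xy}\subset C_{x}$; this delivers $\mult_{P}(D)\le\tfrac{214}{1025}<\tfrac{3}{11}$. Smooth points on $C_{x}$ are handled directly by Lemma~\ref{lemma:handy-adjunction}. The singular points $O_{x}$, $O_{z}$, $O_{t}$ are excluded by the multiplicity trick, using whichever of $L_{xy},R_{x},R_{y},L_{zt},R_{t}$ is forced out of $\Supp(D)$ by the convexity dichotomies; at $O_{t}$ one should keep in mind that $R_{y}$ has multiplicity $3$ there, giving $\mult_{O_{t}}(D)\le\tfrac{41}{3}R_{y}\cdot D=\tfrac{2}{11}$ in the case $L_{xy}\subset\Supp(D)$, and at $O_{x}$ the curve $R_{t}$ has multiplicity $2$, giving $\mult_{O_{x}}(D)\le\tfrac{11}{2}R_{t}\cdot D=\tfrac{3}{32}$ in the case $L_{zt}\subset\Supp(D)$.

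The hard part is ruling out $P=O_{y}$. The simple multiplicity argument works as long as either $R_{x}\not\subset\Supp(D)$ (giving $\mult_{O_{y}}(D)\le 25\,R_{x}\cdot D=\tfrac{1}{8}$) or $L_{zt}\not\subset\Supp(D)$ (giving $\mult_{O_{y}}(D)\le 25\,L_{zt}\cdot D=\tfrac{2}{11}$); but in the remaining branch, where $R_{x},L_{zt}\subset\Supp(D)$ and hence $R_{y},R_{z},R_{t}\not\subset\Supp(D)$, the only curve through $O_{y}$ available for the multiplicity method is $R_{z}$, which gives only $\mult_{O_{y}}(D)\le 25\,R_{z}\cdot D=\tfrac{12}{41}>\tfrac{3}{11}$, overshooting by a thin margin. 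My strategy for this branch is to write $D=m_{2}R_{x}+m_{3}L_{zt}+\Omega$ and apply Lemma~\ref{lemma:handy-adjunction} twice. Along $R_{x}$, the inequality $25\cdot\tfrac{11}{3}(R_{x}\cdot D-m_{2}R_{x}^{2})\le 1$ reduces to $m_{2}\le\tfrac{13}{77}$, which handles the case $m_{2}\le\tfrac{13}{77}$. In the remaining case $m_{2}>\tfrac{13}{77}$, the identity $R_{z}\cdot D=\tfrac{m_{2}}{25}+\tfrac{6m_{3}}{25}+R_{z}\cdot\Omega$ together with the effectivity of $R_{z}\cdot\Omega$ gives $m_{2}+6m_{3}\le\tfrac{12}{41}$, whence
$$
m_{3}<\tfrac{1}{6}\bigl(\tfrac{12}{41}-\tfrac{13}{77}\bigr)=\tfrac{391}{18942}<\tfrac{1}{34};
$$
a second invocation of Lemma~\ref{lemma:handy-adjunction}, this time along $L_{zt}$ (where the threshold is exactly $m_{3}\le\tfrac{1}{34}$ by direct computation from $L_{zt}^{2}=-\tfrac{34}{275}$), then yields log canonicity of $(X,\tfrac{11}{3}D)$ at $O_{y}$, contradicting the assumption and completing the proof. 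The main obstacle throughout is precisely this last case: no single curve through $O_{y}$ is small enough to close the gap on its own, and the argument must chain the global effectivity of $R_{z}\cdot\Omega$ with two sharp instances of inversion of adjunction.
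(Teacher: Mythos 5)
Your proof is correct and takes essentially the same route as the paper's: the same normal form and curve decompositions, Lemma~\ref{lemma:Carolina} together with Lemma~\ref{lemma:handy-adjunction} for the smooth points, the same multiplicity bounds at $O_x$, $O_z$, $O_t$, and, at the critical point $O_y$, the same key idea that once $L_{zt}\subset\Supp(D)$ the convexity assumption expels $R_z$ from $\Supp(D)$, so that the local intersection $L_{zt}\cdot R_z=\tfrac{6}{25}$ at $O_y$ forces the coefficient of $L_{zt}$ below the adjunction threshold $\tfrac{1}{34}$. The only difference is cosmetic: the paper gets that bound in one step by inserting $\mult_{O_y}(D)>\tfrac{3}{11}$ into $D\cdot R_z\geqslant m\,L_{zt}\cdot R_z+\tfrac{\mult_{O_y}(D)-m}{25}$, whereas you first dispose of small $R_x$-coefficients by adjunction along $R_x$ and then feed the $R_x$- and $L_{zt}$-coefficients into $D\cdot R_z$; both versions close with Lemma~\ref{lemma:handy-adjunction} along $L_{zt}$.
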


\begin{proof}
  The surface $X$ can be defined by the
quasihomogeneous equation
$$
yt^{2}+y^{3}z+xz^{3}+x^{6}t=0.
$$
The surface $X$ is singular at the points $O_x$, $O_y$, $O_z$, $O_t$. Each of the divisors $C_x$, $C_y$, $C_z$, and $C_t$ consists of two irreducible and reduced components. The divisor $C_x$ (resp. $C_y$, $C_z$, $C_t$) consists of $L_{xy}$ (resp. $L_{xy}$, $L_{zt}$, $L_{zt}$)
and $R_x=\{x=t^2+y^2z=0\}$ (resp. $R_y=\{y=z^3+x^5t=0\}$, $R_z=\{z=x^6+yt=0\}$, $R_t=\{t=y^3+xz^2=0\}$).
Also, we see that
\[L_{xy}\cap R_x=\{O_z\}, \ L_{xy}\cap R_y=\{O_t\}, \ L_{zt}\cap R_z=\{O_y\}, \ L_{zt}\cap R_t=\{O_x\}.\]
We have the following intersection numbers:
$$
-K_X\cdot L_{xy}=\frac{1}{16\cdot 41},\ \ -K_X\cdot L_{zt}=\frac{2}{11\cdot 25},\ \ -K_X\cdot R_{x}=\frac{1}{8\cdot 25},\ \ -K_X\cdot R_{y}=\frac{6}{11\cdot 41},%
$$
$$
-K_X\cdot R_{z}=\frac{12}{25\cdot 41}, \ \ -K_X\cdot R_{t}=\frac{3}{11\cdot 16},\ \ L_{xy}\cdot R_{x}=\frac{1}{16},\ L_{xy}\cdot R_{y}=\frac{3}{41},\ \ L_{zt}\cdot R_{z}=\frac{6}{25}%
$$
$$
L_{xy}^2=-\frac{71}{32\cdot 41},\ \ L_{zt}^2=-\frac{34}{11\cdot 25}, \ \ R_x^2=-\frac{7}{8\cdot 25},\ \ R_y^2=\frac{42}{11\cdot 41}%
$$

We see $\lct(X)\leqslant \frac{11}{3}$ since
$$
\frac{11}{3}=\lct\left(X, \frac{2}{11}C_x\right)<\frac{50}{9}=\lct\left(X, \frac{2}{25}C_y\right)<\frac{28}{3}=\lct\left(X, \frac{2}{32}C_z\right)<\frac{205}{18}=\lct\left(X, \frac{2}{41}C_t\right).%
$$

Suppose that $\lct(X)<\frac{11}{3}$. Then there is an effective
$\Q$-divisor $D\qlineq -K_X$ such that the pair $(X,\frac{11}{3}D)$ is
not log canonical at some point $P$. By
Lemma~\ref{lemma:convexity}, we may assume that either
$\mathrm{Supp}(D)$ does not contain at least one irreducible
component of each of $C_{x}$, $C_{y}$, $C_{z}$ and $C_{t}$.
Since the curve $R_y$ is singular at the point $O_t$ with multiplicity $3$, one of the inequalities
\[\mult_{O_t}(D)\leqslant 41D\cdot L_{xy}=\frac{1}{16}<\frac{3}{11}, \ \
\mult_{O_t}(D)\leqslant \frac{41}{3}D\cdot R_{y}=\frac{2}{11}<\frac{3}{11}\]
must hold, and hence the point $P$ cannot be the point $O_t$.
Applying the same method to each of $C_x$ and $C_t$, we can show that the point $P$ can be neither $O_z$ nor $O_x$.

Since $H^0(\P, \mathcal{O}_{\P}(352))$ contains the monomials
$x^{7}y^{11}$, $x^{32}$ and $z^{11}$, it follows from
Lemma~\ref{lemma:Carolina} that the point $P$ is either the point $O_t$ or a smooth point on $C_x$.

Write $D=m_0L_{xy}+m_1R_x+m_2L_{zt}+\Omega$, where $\Omega$ is an effective $\mathbb{Q}$-divisor
whose support contains none of $L_{xy}$, $R_x$, $R_z$.
If $m_0\ne 0$, then $m_1=0$ and hence we obtain
\[\frac{1}{8\cdot 25}=D\cdot R_x\geqslant m_0L_{xy}\cdot R_x=\frac{m_0}{16}.\]
Therefore, $m_0\leqslant \frac{2}{25}$. Similarly, we get $m_1\leqslant \frac{1}{41}$.
Since we have
\[(D-m_0L_{xy})\cdot L_{xy}=\frac{2+71m_0}{32\cdot 41}<\frac{3}{11},\]
\[(D-m_1R_{x})\cdot R_{x}=\frac{1+7m_1}{8\cdot 25}<\frac{3}{11}\]
it follows from Lemma~\ref{lemma:handy-adjunction} that the point $P$ must be the point $O_y$.

Suppose that $m_2=0$. Then the inequality
\[\mult_{O_y}(D)\leqslant25D\cdot L_{zt}=\frac{2}{11}<\frac{3}{11}\]
gives us a contradiction. Therefore, $m_2\ne 0$ and hence the curve $R_z$ is not contained in the support of $D$. Then
\[ \frac{12}{25\cdot 41}=D\cdot R_z\geqslant m_2L_{zt}\cdot R_z+\frac{\mult_{O_y}(D)-m_2}{25}>\frac{5m_2}{25}+\frac{3}{11\cdot 25},\]
and hence $m_2<\frac{9}{5\cdot 11\cdot 41}$. Since
\[25(D-m_2L_{zt})\cdot L_{zt}=\frac{2+34m_2}{11}<\frac{3}{11}\]
the pair $(X, \frac{11}{3}D)$ is log canonical at the point $O_y$ by Lemma~\ref{lemma:handy-adjunction}. This is a contradiction.
\end{proof}

\begin{lemma}
\label{lemma:I-2-W-11-25-34-43-D-111}Let $X$ be a quasismooth
hypersurface of degree $111$ in $\mathbb{P}(11,25,34,43)$. Then
$\lct(X)=\frac{33}{8}$.
\end{lemma}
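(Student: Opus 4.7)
The surface $X$ can be defined by an equation of the form
$$yt^{2}+z^{2}t+xy^{4}+x^{7}z=0,$$
as these are the only monomials of degree $111$ with weights $(11,25,34,43)$. The only singular points are $O_x, O_y, O_z, O_t$, of indices $11, 25, 34, 43$ respectively. Each of the four coordinate divisors splits: $C_x=L_{xt}+R_x$ with $R_x=\{x=yt+z^{2}=0\}$; $C_y=L_{yz}+R_y$ with $R_y=\{y=zt+x^{7}=0\}$; $C_z=L_{yz}+R_z$ with $R_z=\{z=t^{2}+xy^{3}=0\}$; $C_t=L_{xt}+R_t$ with $R_t=\{t=y^{4}+x^{6}z=0\}$. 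One checks $L_{xt}\cap R_x=\{O_y\}$, $L_{yz}\cap R_y=\{O_t\}$, $L_{yz}\cap R_z=\{O_x\}$, $L_{xt}\cap R_t=\{O_z\}$, with $R_t$ having multiplicity $4$ at $O_z$ and $R_y$, $R_z$ having multiplicity $2$ at $O_t$, $O_x$. Crucially, in the orbifold chart at $O_y$ the curves $L_{xt}$ and $R_x$ are tangent to order $2$ (since $R_x$ is locally $t=-z^{2}$), so by Lemma~\ref{lemma:Igusa} applied to $t(t+z^{2})=0$ and Proposition~\ref{proposition:lc-by-finite-morphism} we get $\lct_{O_y}(X,C_x)=3/4$, giving $\lct(X,\tfrac{2}{11}C_x)=\tfrac{33}{8}$. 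Similar computations show $\lct(X,\tfrac{2}{a_i}C_i)>\tfrac{33}{8}$ for $i\ne 0$, hence $\lct(X)\le\tfrac{33}{8}$.

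Assuming $\lct(X)<\tfrac{33}{8}$, choose $D\qlineq -K_X$ effective with $(X,\tfrac{33}{8}D)$ not log canonical at some $P\in X$. By Lemma~\ref{lemma:convexity}, I may assume at least one irreducible component of each of $C_x,C_y,C_z,C_t$ is not contained in $\Supp(D)$. I would then exclude each vertex singular point separately: for instance, at $O_z$ one of $L_{xt}\cdot D$ or $\tfrac{1}{4}R_t\cdot D$ bounds $\tfrac{1}{34}\mult_{O_z}(D)$ below $\tfrac{8}{33\cdot 34}$, using the multiplicity $4$ of $R_t$; and analogously for $O_t$ (using $R_y$) and $O_x$ (using $R_z$). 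Next, by Lemma~\ref{lemma:Carolina} applied with a suitable $k$ (chosen so that $H^{0}(\mathbb{P},\mathcal{O}_{\mathbb{P}}(k))$ contains monomials in $x,y$ and in $x,z$, e.g.\ $k=374$ gives $x^{34}$, $y^{?}$, $z^{11}$), one obtains $\mult_P(D)\le\tfrac{2\cdot k\cdot 111}{11\cdot 25\cdot 34\cdot 43}<\tfrac{8}{33}$ for any smooth $P\notin C_x$, ruling out smooth points away from $C_x$.

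It remains to exclude the points of $C_x$. Writing $D=m_0 L_{xt}+m_1 R_x+\Omega$ with $L_{xt},R_x\not\subset\Supp(\Omega)$, the bound $\tfrac{33}{8}m_i\leq 1$ holds at any point not yet excluded (in particular at points where $(X,\tfrac{33}{8}D)$ is presumed log canonical nearby), and intersecting with the complementary component of $C_x$ gives a further bound on the nonzero $m_i$. After computing $L_{xt}^{2}$ and $R_x^{2}$, Lemma~\ref{lemma:handy-adjunction} applied to $(D-m_iL)\cdot L<\tfrac{8}{33 r}$ eliminates every point of $L_{xt}\cup R_x$ except possibly $O_y$.

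The main obstacle is the point $P=O_y$, the tacnode of $C_x$, where the simple intersection estimates are insufficient. Here one additionally invokes Lemma~\ref{lemma:convexity} for $C_y$ to assume $L_{yz}\not\subset\Supp(D)$ (or $R_y\not\subset\Supp(D)$), writes $D=m_0 L_{xt}+m_1 R_x+m_2 L_{yz}+\Omega$, and follows the scheme of Lemma~\ref{lemma:I-6-infinite-series-1-n-2}: perform a weighted blow-up $\pi\colon\bar{X}\to X$ at $O_y$ with weights reflecting the tangent structure of $L_{xt}$ and $R_x$, check that the log pull-back of $(X,\tfrac{33}{8}D)$ remains effective, and then apply Lemma~\ref{lemma:handy-adjunction} on the exceptional curve $E$ using the constraints $\bar{L}_{xt}\cdot\bar{\Omega}\ge 0$ and $\bar{R}_x\cdot\bar{\Omega}\ge 0$ to bound the discrepancy coefficient $a$. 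Possibly one further blow-up at the resulting non-lc point on $E$ will be needed; the hard part is organizing the inequalities so the bounds on $m_0,m_1,m_2,a$ conflict with the requirement that the exceptional curve contains a non-log-canonical point, thereby yielding the desired contradiction.
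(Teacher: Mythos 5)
Your setup — the splitting of the four coordinate divisors, the tacnodal computation giving $\lct(X,\tfrac{2}{11}C_x)=\tfrac{33}{8}$ at $O_y$, and the use of Lemmas~\ref{lemma:convexity}, \ref{lemma:handy-adjunction} and \ref{lemma:Carolina} — matches the paper, but you have located the difficulty at the wrong point, and the point where the real difficulty sits is not excluded by your argument. The point $O_y$ is in fact trivial: since $L_{xt}\cap R_x=\{O_y\}$ and at least one of $L_{xt},R_x$ is omitted from $\Supp(D)$, one of $\mult_{O_y}(D)\leqslant 25D\cdot L_{xt}=\tfrac{1}{17}$ or $\mult_{O_y}(D)\leqslant 25D\cdot R_x=\tfrac{4}{43}$ holds, and both are below $\tfrac{8}{33}$; no blow-up is needed there. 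The genuinely hard point is $O_t$. Your exclusion of $O_t$ rests on the claim that $R_y$ has multiplicity $2$ there, but $R_y=\{y=zt+x^7=0\}$ is smooth at $O_t$ (in the chart $t=1$ it is $z=-x^7$), and the resulting bound $\mult_{O_t}(D)\leqslant 43D\cdot R_y=\tfrac{7}{17}$ exceeds $\tfrac{8}{33}$, so it proves nothing; the alternative bound $43D\cdot L_{yz}=\tfrac{2}{11}$ is only available when $L_{yz}\not\subset\Supp(D)$, which Lemma~\ref{lemma:convexity} does not guarantee. Note also that $O_t\in R_x$, and Lemma~\ref{lemma:handy-adjunction} applied to $R_x$ with only $m_1\leqslant\tfrac{8}{33}$ gives $\tfrac{33}{8}D\cdot R_x-R_x^2=\tfrac{161}{2\cdot 25\cdot 43}$, which is far above the threshold $\tfrac{1}{43}$ required at an index-$43$ point, so your adjunction step on $C_x$ does not eliminate $O_t$ either.

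The paper closes this case with a short argument you are missing: write $D=bR_x+\Delta$; if $b=0$ then $\mult_{O_t}(D)\leqslant 43D\cdot R_x=\tfrac{4}{25}<\tfrac{8}{33}$, while if $b>0$ then $L_{xt}\not\subset\Supp(D)$ forces $\tfrac{2b}{25}=bR_x\cdot L_{xt}\leqslant D\cdot L_{xt}=\tfrac{2}{25\cdot 34}$, i.e.\ $b\leqslant\tfrac{1}{34}$, whereas Lemma~\ref{lemma:handy-adjunction} at $O_t\in R_x$ gives $\tfrac{4+64b}{25\cdot 43}=\Delta\cdot R_x>\tfrac{8}{33\cdot 43}$, i.e.\ $b>\tfrac{17}{528}>\tfrac{1}{34}$, a contradiction. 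Two smaller gaps: your Lemma~\ref{lemma:Carolina} step does not cover smooth points of $L_{yz}$, the one curve contracted by $X\dasharrow\mathbb{P}(11,25,34)$, so those must be handled by adjunction on $L_{yz}$; and for smooth points off all coordinate curves the paper instead uses the pencil $z^2=\alpha yt$, though your Carolina bound with $k=374$ does give $\mult_P(D)\leqslant\tfrac{2\cdot 374\cdot 111}{11\cdot 25\cdot 34\cdot 43}<\tfrac{8}{33}$ away from $C_x\cup L_{yz}$, so that part of your route is viable.
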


\begin{proof}
We may assume that the surface $X$ is defined by the quasihomogeneous equation
\[t^2y+tz^2+xy^4+x^7z=0.\]
The surface $X$ is singular at the points $O_x$, $O_y$, $O_z$, $O_t$. Each of the divisors $C_x$, $C_y$, $C_z$, and $C_t$ consists of two irreducible and reduced components. The divisor $C_x$ (resp. $C_y$, $C_z$, $C_t$) consists of $L_{xt}$ (resp. $L_{yz}$, $L_{yz}$, $L_{xt}$)
and $R_x=\{x=yt+z^2=0\}$ (resp. $R_y=\{y=zt+x^7=0\}$, $R_z=\{z=xy^3+t^2=0\}$, $R_t=\{t=y^4+x^6z=0\}$).
Also, we see that
\[L_{xt}\cap R_x=\{O_y\}, \ L_{yz}\cap R_y=\{O_t\}, \ L_{yz}\cap R_z=\{O_x\}, \ L_{xt}\cap R_t=\{O_z\}.\]
The intersection numbers among the divisors $D$, $L_{xt}$, $L_{yz}$, $R_x$, $R_y$, $R_z$, $R_t$ are as follows:

\[-K_X\cdot L_{xt}=\frac{1}{17\cdot 25}, \ \ -K_X\cdot R_x=\frac{4}{25\cdot 43}, \ \ -K_X\cdot R_y=\frac{7}{17\cdot 43}, \]
\[-K_X\cdot L_{yz}=\frac{2}{11\cdot 43}, \ \ -K_X\cdot R_z=\frac{4}{11\cdot 25}, \ \ -K_X\cdot R_t=\frac{4}{11\cdot 17}, \]
\[L_{xt}\cdot R_x=\frac{2}{25}, \ \ L_{yz}\cdot R_y=\frac{7}{43}, \ \ L_{yz}\cdot R_z=\frac{2}{11}, \ \ L_{xt}\cdot R_t=\frac{2}{17},\]

\[L_{xt}^2=-\frac{57}{34\cdot 25}, \ \ R_x^2=-\frac{64}{25\cdot 43}, \ \ R_y^2=-\frac{63}{34\cdot 43},\]

\[L_{yz}^2=-\frac{52}{11\cdot 43}, \ \ R_z^2=\frac{18}{11\cdot 25}, \ \ R_t^2=\frac{64}{11\cdot 17}.\]

We can easily see that $\lct(X, \frac{2}{11}C_x)=\frac{33}{8}$
is less than each of the numbers $\lct(X, \frac{2}{25}C_y)$,
$\lct(X, \frac{2}{34}C_z)$ and $\lct(X, \frac{2}{43}C_t)$.
Therefore, $\lct(X)\leq\frac{33}{8}$.

Suppose that
$\lct(X)<\frac{33}{8}$. Then there is an effective $\Q$-divisor
$D\qlineq -K_X$ such that the log pair $(X, \frac{33}{8}D)$ is not
log canonical at some point $P\in X$.

By Lemma~\ref{lemma:convexity} we may assume that the support of $D$ does not contain at least one component of each divisor $C_x$, $C_y$, $C_z$, $C_t$.
The inequalities
\[25D\cdot L_{xt}=\frac{1}{17}<\frac{8}{33}, \ \ \  25D\cdot R_x=\frac{4}{43}<\frac{8}{33}\]
imply that $P\ne O_y$.
The inequalities
\[11D\cdot L_{yz}=\frac{2}{43}<\frac{8}{33}, \ \ \  11D\cdot R_z=\frac{4}{25}<\frac{8}{33}\]
imply that $P\ne O_x$.
Since the curve $R_t$ is singular at the point $O_z$, the inequalities
\[34D\cdot L_{xt}=\frac{34}{17\cdot 25}<\frac{8}{33}, \ \ \  \frac{34}{4}D\cdot R_t=\frac{2}{11}<\frac{8}{33}\]
imply that $P\ne O_z$.

We write $D=a_1L_{xt}+a_2L_{yz}+a_3R_x+a_4R_y+a_5R_z+a_6R_t+\Omega$, where  $\Omega$ is an effective divisor whose support contains none of the curves $L_{xt}$, $L_{yz}$, $R_x$, $R_y$, $R_z$, $R_t$. Since the pair $(X, \frac{33}{8}D)$ is log canonical at the points $O_x$, $O_y$, $O_z$, the numbers $a_i$ are at most $\frac{8}{33}$. Then by Lemma~\ref{lemma:handy-adjunction} the following inequalities enable us to conclude that either the point $P$ is in the outside of $C_x\cup C_y\cup C_z\cup C_t$ or $P=O_t$:
\[\frac{33}{8}D\cdot L_{xt}-L_{xt}^2=\frac{261}{8\cdot 17\cdot 25}<1, \ \ \ \frac{33}{8}D\cdot L_{yz}-L_{xt}^2=\frac{241}{4\cdot 11\cdot 43}<1, \]

\[\frac{33}{8}D\cdot R_x-R_x^2=\frac{161}{2\cdot 25\cdot 43}<1, \ \ \ \frac{33}{8}D\cdot R_y-R_y^2=\frac{483}{4\cdot 34\cdot 43}<1,\]

\[ \frac{33}{8}D\cdot R_z-R_z^2\leq \frac{33}{8}D\cdot R_z=\frac{3}{2\cdot 25}<1, \ \ \ \frac{33}{8}D\cdot R_t-R_t^2\leq \frac{33}{8}D\cdot R_t=\frac{3}{34}<1.\]
Suppose that $P\ne O_t$. Then we consider the pencil $\mathcal{L}$ defined by $\lambda yt+\mu z^2=0$, $[\lambda:\mu]\in\P^1$. The base locus of the pencil consists of the curve $L_{yz}$ and the point $O_y$.
Let $E$ be the unique divisor in $\mathcal{L}$ that passes through the point $P$. Since $P\not\in C_x\cup C_y\cup C_z\cup C_t$, the divisor $E$ is defined by the equation $z^2=\alpha yt$, where $\alpha\ne 0$.

Suppose that $\alpha\ne -1$. Then the curve $E$ is isomorphic to the curve defined by the equations $yt=z^2$ and $t^2y+xy^4+x^7z=0$. Since the curve $E$ is isomorphic to a general curve in $\mathcal{L}$, it is smooth at the point $P$. The affine piece of $E$ defined by $t\ne 0$ is the curve given by $z(z^3+xz^7+x^7)=0$. Therefore, the divisor $E$ consists of two irreducible and reduced curves $L_{yz}$ and $C$. We have
\[D\cdot C=D\cdot E-D\cdot L_{yz}=\frac{394}{11\cdot 25\cdot 43}.\]
Also, we see
\[C^2=E\cdot C- C\cdot L_{yz}\geqslant E\cdot C- (L_{yz}+R_y)\cdot C=\frac{43}{2}D\cdot C>0.\]
By Lemma~\ref{lemma:handy-adjunction} the inequality $D\cdot C<\frac{8}{33}$ gives us a contradiction.

Suppose that $\alpha=-1$. Then divisor $E$ consists of three irreducible and reduced curves $L_{yz}$, $R_x$, and $M$.  Note that the curve $M$ is different from the curves $R_y$ and $L_{xt}$. Also, it is smooth at the point $P$. We have
\[D\cdot M=D\cdot E- D\cdot L_{yz}-D\cdot R_x=\frac{14}{11\cdot 43},\]
\[M^2=E\cdot M-L_{yz}\cdot M- R_x\cdot M \geq E\cdot M-C_y\cdot M- C_x\cdot M>0.\]
By Lemma~\ref{lemma:handy-adjunction} the inequality $D\cdot M<\frac{8}{33}$ gives us a contradiction.
Therefore, $P=O_t$.

Put $D=bR_{x}+\Delta$, where $\Delta$ is an effective divisor
whose support does not contain $R_x$. By
Lemma~\ref{lemma:convexity}, we may assume that
$R_{x}\not\subseteq\mathrm{Supp}(\Delta)$ if $b>0$. Thus, if
$b>0$, then
$$
\frac{2}{25\cdot 34}=D\cdot L_{xt}\geqslant b R_x \cdot L_{xt}=\frac{2b}{25},%
$$
and hence $b\leqslant \frac{1}{34}$. On the other hand, it
follows from Lemma~\ref{lemma:handy-adjunction} that
$$
\frac{4+64b}{25\cdot 43}=\Delta\cdot R_{x}>\frac{8}{33\cdot 43}.
$$
Therefore, $b>\frac{17}{528}$. Since $\frac{17}{528}>\frac{1}{34}$, this is a
contradiction.
\end{proof}

\begin{lemma}
\label{lemma:I-2-W-11-43-61-113-D-226}Let $X$ be a quasismooth
hypersurface of degree $226$ in $\mathbb{P}(11,43,61,113)$. Then
$\lct(X)=\frac{55}{12}$.
\end{lemma}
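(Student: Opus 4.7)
The plan is to identify $X$ explicitly, compute $\lct(X,\tfrac{2}{11}C_x)=\tfrac{55}{12}$ for the upper bound, and then mimic the general scheme of Section~5 to establish the lower bound. The only monomials of degree $226$ in $\mathbb{C}[x,y,z,t]$ with the given weights are $t^{2}$, $yz^{3}$, $xy^{5}$, $x^{15}z$, so after normalization
\[
X\;=\;\bigl\{\,t^{2}+yz^{3}+xy^{5}+x^{15}z\,=\,0\,\bigr\}.
\]
In particular $O_t\notin X$, while $X$ has three cyclic quotient singularities: $O_x$ of type $\tfrac{1}{11}(10,3)$, $O_y$ of type $\tfrac{1}{43}(18,27)$, and $O_z$ of type $\tfrac{1}{61}(11,52)$. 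The divisors $C_x$, $C_y$, $C_z$ are irreducible. The curve $C_x$ is smooth on $X$ away from $O_y$, where its local equation in the orbifold chart is analytically equivalent to the cusp $t^{2}+z^{3}=0$. Hence by Lemma~\ref{lemma:Igusa} combined with Proposition~\ref{proposition:lc-by-finite-morphism}, $\lct_{O_y}(X,C_x)=\tfrac{5}{6}$, yielding $\lct(X,\tfrac{2}{11}C_x)=\tfrac{55}{12}$. Parallel Puiseux analyses (pair $(2,15)$ for $C_y$ at $O_z$, pair $(2,5)$ for $C_z$ at $O_x$) give $\lct(X,\tfrac{2}{43}C_y)=\tfrac{731}{60}$ and $\lct(X,\tfrac{2}{61}C_z)=\tfrac{427}{20}$, both strictly larger than $\tfrac{55}{12}$; therefore $\lct(X)\leqslant\tfrac{55}{12}$.

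For the reverse inequality, suppose some effective $D\qlineq -K_X$ makes $(X,\tfrac{55}{12}D)$ fail to be log canonical at $P$, and use Lemma~\ref{lemma:convexity} to assume $C_x,C_y,C_z\not\subset\Supp(D)$. Since $O_t\notin X$, the projection $X\to\mathbb{P}(11,43,61)$ is finite, so Lemma~\ref{lemma:Carolina} applied with $k=671$ (for which $H^{0}(\mathbb{P},\mathcal{O}_{\mathbb{P}}(671))$ contains the distinct monomials $x^{61},x^{18}y^{11}$ as well as $x^{61},z^{11}$) gives $\mult_P(D)\leqslant\tfrac{4}{43}<\tfrac{12}{55}$ at any smooth $P\in X\setminus C_x$. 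Lemma~\ref{lemma:handy-adjunction} applied via the smooth curves $C_y$ at $O_x$ and $C_x$ at $O_z$, together with
\[
11\cdot\tfrac{55}{12}\bigl(D\cdot C_y\bigr)=\tfrac{55}{183}<1,\qquad 61\cdot\tfrac{55}{12}\bigl(D\cdot C_x\bigr)=\tfrac{55}{129}<1,
\]
excludes the singular points $O_x$ and $O_z$. For a smooth point of $C_x$ outside $O_y$, write $D=\mu C_x+\Omega$ with $\Omega$ effective and $C_x\not\subset\Supp(\Omega)$; the inequality $C_y\cdot D\geqslant\mu(C_y\cdot C_x)$ yields $\mu\leqslant\tfrac{2}{11}$, after which Lemma~\ref{lemma:handy-adjunction} applies because $C_x$ is smooth at the point in question. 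Hence $P=O_y$.

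The treatment of this remaining point is the heart of the argument and the main obstacle, because $C_x$ is cuspidal at $O_y$ and Lemma~\ref{lemma:handy-adjunction} does not apply directly. Following the strategy of Lemma~\ref{lemma:I-2-W-6-9-10-13-D-36} and especially Lemma~\ref{lemma:I-6-infinite-series-1-n-2}, I would keep the decomposition $D=\mu C_x+\Omega$ with $\mu\leqslant\tfrac{2}{11}$ and perform the weighted blow up $\pi\colon\bar X\to X$ at $O_y$ with weights $(2,3)$ on the local orbifold coordinates $(z,t)$; this resolves the cusp $t^{2}+z^{3}=0$ compatibly with the $\tfrac{1}{43}(18,27)$-quotient. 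The exceptional curve $E$ carries two cyclic quotient singular points of $\bar X$, and $\bar C_x$ meets $E$ transversely at a single smooth point. Writing the log pull back
\[
\pi^{*}\Bigl(K_{X}+\tfrac{55}{12}D\Bigr)\qlineq K_{\bar X}+\tfrac{55}{12}\bar D+\alpha E
\]
with $\alpha=\alpha(\mu)<1$ (as follows from the bound on $\mu$), one applies Lemma~\ref{lemma:handy-adjunction} at each of the two exceptional quotient points of $E$ and at a generic point of $E$. The three inequalities so produced, combined with the bound $\mu\leqslant\tfrac{2}{11}$ and the intersection numbers $\bar D\cdot E$ and $\bar D\cdot\bar C_x$ computed from the log pull back, will give a numerical contradiction. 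If a single blow up is insufficient, an additional weighted blow up at the surviving quotient point on $E$, in the style of Lemma~\ref{lemma:I-6-infinite-series-1-n-2}, completes the proof.
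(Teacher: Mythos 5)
Your identification of $X$, the singularity types, the cusp of $C_x$ at $O_y$, the value $\lct(X,\tfrac{2}{11}C_x)=\tfrac{55}{12}$, and the exclusion of smooth points off $C_x$ via Lemma~\ref{lemma:Carolina} with $k=671$ all agree with the paper, and your intersection numbers $D\cdot C_x=\tfrac{4}{2623}$, $D\cdot C_y=\tfrac{4}{671}$ are correct. Up to the last case your route is essentially the paper's.

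The problem is the case $P=O_y$, which you single out as ``the heart of the argument'' and then do not actually prove: the weighted blow up with weights $(2,3)$, the log pull back, and the ``three inequalities'' that ``will give a numerical contradiction'' are announced but never computed, and the fallback ``if a single blow up is insufficient, an additional weighted blow up completes the proof'' is a hope, not an argument. As written this is a genuine gap at the one point you identify as decisive. Moreover, the machinery is unnecessary. You have already assumed $C_x\not\subset\Supp(D)$, and the cuspidal nature of $C_x$ at $O_y$ is irrelevant to the multiplicity bound: since $\mult_{O_y}(C_x)\geqslant 1$ one has
$$
\mult_{O_y}\big(D\big)\leqslant 43\, D\cdot C_x=\frac{4}{61}<\frac{12}{55},
$$
which contradicts the failure of log canonicity of $(X,\tfrac{55}{12}D)$ at the quotient point $O_y$ (via Lemma~\ref{lemma:multiplicity} and Proposition~\ref{proposition:lc-by-finite-morphism}). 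This is exactly the paper's one-line treatment: the single inequality $61\,D\cdot C_x=\tfrac{4}{43}<\tfrac{12}{55}$ excludes \emph{every} point of $C_x$ at once, smooth or singular, cusp or no cusp, because the worst index occurring on $C_x$ is $61$. The distinction you draw between ``smooth points of $C_x$'' (where you apply Lemma~\ref{lemma:handy-adjunction}, redundantly, since $\mu=0$ once $C_x\not\subset\Supp(D)$) and the cusp point $O_y$ (where you reach for blow ups) is a false one here: Lemma~\ref{lemma:handy-adjunction} is only needed when $D$ is allowed to contain the curve, which is not the situation you are in. Replace the entire last paragraph with the displayed inequality above and the proof closes.
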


\begin{proof}
 The surface $X$ can be defined by the
quasihomogeneous equation
$$
t^{2}+yz^{3}+xy^{5}+x^{15}z=0.
$$
The surface $X$ is singular at the points $O_x$, $O_{y}$ and
$O_z$. The curves $C_x$ and $C_y$ are irreducible. We have
$$
\frac{55}{12}=\lct\left(X, \frac{2}{11}C_x\right)<\lct\left(X, \frac{2}{43}C_y\right)=\frac{17\cdot 43}{60}.%
$$
Therefore, $\lct(X)\leqslant \frac{55}{12}$.

Suppose that $\lct(X)<\frac{55}{12}$. Then there is an effective
$\Q$-divisor $D\qlineq -K_X$ such that the pair $(X,\frac{55}{12}D)$ is
not log canonical at some point $P$. By
Lemma~\ref{lemma:convexity}, we may assume that the support of
the divisor $D$ contains neither $C_x$ nor $C_y$. Then the inequalities
\[61D\cdot C_x=\frac{4}{43}<\frac{12}{55}, \ \ 11D\cdot C_y=\frac{4}{61}< \frac{12}{55}\]
show that the point $P$ must be a smooth point of $X$ in the outside of $C_x$. However,
since $H^0(\P, \mathcal{O}_{\P}(671))$ contains the monomials
$x^{18}y^{11}$, $x^{61}$ and $z^{11}$, it follows from
Lemma~\ref{lemma:Carolina} that
the point $P$ is either a singular point of $X$ or a point on $C_x$. This is a contradiction.
\end{proof}

\begin{lemma}
\label{lemma:I-2-W-13-18-45-61-D-135}Let $X$ be a quasismooth
hypersurface of degree $135$ in $\mathbb{P}(13,18,45,61)$. Then
$\lct(X)=\frac{91}{30}$.
\end{lemma}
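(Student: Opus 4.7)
The proof follows the template established in the preceding sporadic $I=2$ cases (most closely Lemma~\ref{lemma:I-2-W-11-25-34-43-D-111}). After a coordinate change one may take the defining equation of $X$ to contain the monomials $xt^{2}$, $z^{3}$, $y^{5}z$, and $x^{9}y$. The singular locus of $X$ consists of the three coordinate points $O_{x}$, $O_{y}$, $O_{t}$ (of indices $13$, $18$, $61$) together with one additional cyclic singularity $Q$ of index $9$ coming from the $\gcd(18,45)=9$ stabilizer along $L_{xt}$; here $Q$ is the unique point of $L_{xt}\cap X$ on which $z^{2}+y^{5}=0$. The divisor $C_{x}=\{x=0\}\cap X$ splits as $L_{xz}+R_{x}$, where $R_{x}=\{x=z^{2}+y^{5}=0\}$ passes through $O_{t}$ with orbifold multiplicity $2$ (and through $Q$ smoothly), and meets $L_{xz}$ only at $O_{t}$ with local intersection $5/61$. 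The curves $C_{y}$ and $C_{t}$ are irreducible, while $C_{z}$ splits as $L_{xz}+R_{z}$ with $R_{z}=\{z=t^{2}+x^{8}y=0\}$.

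The first step is to establish $\lct(X)\le\tfrac{91}{30}$ by computing $\lct(X,\tfrac{2}{13}C_{x})$. The decisive local calculation takes place at $O_{t}$: by Proposition~\ref{proposition:lc-by-finite-morphism} the divisor $C_{x}$ pulls back to $\hat z(\hat z^{2}+\hat y^{5})=0$ on the orbifold cover, and Lemma~\ref{lemma:Igusa} (with $n_{1}=1$, $n_{2}=0$, $m_{1}=2$, $m_{2}=5$) yields
\[
\lct_{O_{t}}(X,C_{x})=\min\!\left(1,\frac{2+5}{1\cdot 2+1\cdot 5+2\cdot 0}\right)=\frac{7}{15},
\]
whence $\lct(X,\tfrac{2}{13}C_{x})=\tfrac{13}{2}\cdot\tfrac{7}{15}=\tfrac{91}{30}$. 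Parallel Igusa-type computations (for example at $O_{y}$ the pull-back of $C_{z}$ is $\hat x(\hat t^{2}+\hat x^{8})$, giving local lct $5/9$) will show that $\lct(X,\tfrac{2}{18}C_{y})$, $\lct(X,\tfrac{2}{45}C_{z})$ and $\lct(X,\tfrac{2}{61}C_{t})$ are all strictly larger than $\tfrac{91}{30}$.

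For the lower bound I argue by contradiction: assume that $(X,\tfrac{91}{30}D)$ is not log canonical at some $P\in X$ for an effective $D\qlineq -K_{X}$. For a smooth point $P$ outside $C_{x}$, Lemma~\ref{lemma:Carolina} applied with $k=585=\mathrm{lcm}(13,45)$ (the space $H^{0}(\P,\O(585))$ contains the monomials $x^{45}$, $z^{13}$, $x^{27}y^{13}$ and $y^{26}x^{9}$, and the projection $X\dasharrow\P(13,18,45)$ has finite fibers away from $C_{x}$) gives
\[
\mult_{P}(D)\le\frac{2\cdot 585\cdot 135}{13\cdot 18\cdot 45\cdot 61}=\frac{15}{61}<\frac{30}{91},
\]
a contradiction. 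Hence $P\in C_{x}\cup\Sing(X)$. Using Lemma~\ref{lemma:convexity} with $D_{2}=\tfrac{91}{195}C_{x}$ (log canonical by the previous step), and with the analogous choices $\tfrac{91}{30}\cdot\tfrac{2}{18}C_{y}$, $\tfrac{91}{30}\cdot\tfrac{2}{45}C_{z}$, $\tfrac{91}{30}\cdot\tfrac{2}{61}C_{t}$, I may assume that at least one of $L_{xz},R_{x}$ and of $L_{xz},R_{z}$ is absent from $\Supp(D)$, and that $C_{y}$ and $C_{t}$ (irreducible) are not contained in $\Supp(D)$. Each singular point is then excluded by an intersection bound: $\mult_{O_{x}}(D)\le 13(D\cdot C_{y})/\mult_{O_{x}}(C_{y})=3/61$; at $O_{y}$ use either $L_{xz}$ to get $2/61$ or $R_{z}$ to get $122/793$; $\mult_{Q}(D)\le 9(D\cdot C_{t})=3/13$; and at $O_{t}$ either of $L_{xz}$ or $R_{x}$ yields $\mult_{O_{t}}(D)\le 1/9$. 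Each bound is strictly less than $\tfrac{30}{91}$. Smooth points on $L_{xz}$ or $R_{x}$ are handled by Lemma~\ref{lemma:handy-adjunction}: writing $D=mL_{xz}+\Omega$ with $R_{x}\not\subset\Supp(\Omega)$, the relation $L_{xz}\cdot R_{x}=5/61$ forces $m\le 2/45$, and combined with $L_{xz}^{2}=-77/(18\cdot 61)$ this makes $\tfrac{91}{30}(D\cdot L_{xz}-mL_{xz}^{2})<1$; the argument on $R_{x}$ is symmetric with $R_{x}^{2}=-32/(9\cdot 61)$.

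The main obstacle will be the careful local analysis at $O_{t}$, where $C_{x}$ attains its minimal log canonical threshold (producing the exact value $91/30$) and where the two components of $C_{x}$ meet with a $(2,5)$-Puiseux type tangency; getting the Igusa computation exactly right and verifying that the standard intersection-number bounds above still close the estimate (so that no weighted blow-up is needed here, in contrast to Lemma~\ref{lemma:I-6-infinite-series-1-n-2}) is the delicate point. A secondary bookkeeping challenge is the extra singular point $Q\in L_{xt}\cap R_{x}$, which sits off all the coordinate axes and is most efficiently excluded using the irreducible curve $C_{t}$ through it.
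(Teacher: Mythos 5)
Your proposal is correct and follows essentially the same route as the paper's proof: the upper bound comes from the Igusa computation of $\lct(X,\tfrac{2}{13}C_x)$ at $O_t$, the singular points are excluded by intersection bounds against components of the coordinate curves, smooth points of $C_x$ are excluded by Lemma~\ref{lemma:handy-adjunction} with the same bound $m\leqslant\tfrac{2}{45}$ and the same values $L_{xz}^2=-\tfrac{77}{18\cdot 61}$, $R_x^2=-\tfrac{32}{9\cdot 61}$, and the remaining smooth points by Lemma~\ref{lemma:Carolina} with $k=585$ (the paper disposes of $Q$ via the index-$9$ adjunction inequality on $R_x$ rather than via $C_t$, which is immaterial). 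The only blemish is a transcription slip in the Igusa denominator, which should read $m_1m_2+m_1n_2+m_2n_1=2\cdot 5+2\cdot 0+5\cdot 1=15$, consistent with the (correct) value $\tfrac{7}{15}$ you state.
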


\begin{proof}
 The surface $X$ can be defined by the
quasihomogeneous equation
$$
z^{3}-y^{5}z+xt^{2}+x^{9}y=0.
$$
The surface $X$ is singular at the points $O_x$, $O_y$, $O_t$, $Q_9=[0:1:1:0]$.

The curve $C_x$  consists of two irreducible and reduced curves $L_{xz}$ and $R_x=\{x=z^2-y^5=0\}$. The curve $L_{xz}$ intersects $R_x$ at the point $O_t$. It is easy to check
$$
L_{xz}^2=-\frac{77}{18\cdot 61},\ R_x^2=-\frac{32}{9\cdot 61},\ L_{xz}\cdot R_{x}=\frac{5}{61}.%
$$
 Meanwhile, the curve $C_y$ is irreducible.  We have
$$
\frac{91}{30}=\lct\left(X, \frac{2}{13}C_x\right)<\lct\left(X, \frac{2}{18}C_y\right)=\frac{15}{2}.%
$$
Therefore, $\lct(X)\leqslant \frac{91}{30}$.

Suppose that $\lct(X)<\frac{91}{30}$. Then there is an effective
$\Q$-divisor $D\qlineq -K_X$ such that the pair $(X,\frac{91}{30}D)$ is
not log canonical at some point $P$. By
Lemma~\ref{lemma:convexity}, we may assume that the support of
the divisor $D$ does not contain the curve $C_y$. Similarly, we
may assume that either $L_{xz}\not\subset\mathrm{Supp}(D)$ or
$R_{x}\not\subset\mathrm{Supp}(D)$.

Since  the support of $D$ cannot contain either $L_{xz}$ or $R_x$
one of the inequalities
\[\mult_{O_t}(D)\leqslant 61D\cdot L_{xz}=\frac{1}{9}<\frac{30}{91}, \ \
\mult_{O_t}(D)\leqslant 61D\cdot R_{x}=\frac{2}{9}<\frac{30}{91}\]
must hold, and hence the point $P$ cannot be $O_t$. Also, the inequality
\[13D\cdot C_y=\frac{6}{61}<\frac{30}{91}\]
implies that the point $P$ cannot be $O_x$.

We write $D=m_0L_{xz}+m_1R_x+\Omega$, where $\Omega$ is an effective $\mathbb{Q}$-divisor whose support contains neither $L_{xz}$ nor $R_x$.
If $m_0\ne 0$, then we obtain
\[\frac{2}{9\cdot 61}=D\cdot R_x\geqslant m_0L_{xz}\cdot R_x=\frac{5m_0}{61}\]
and hence $m_0\leqslant \frac{2}{45}$. By the same way, we get $m_1\leqslant \frac{1}{45}$.
Since
\[18(D-m_0L_{xz})\cdot L_{xz} =\frac{2+77m_0}{61}<\frac{30}{91}, \ \ 9(D-m_1R_{x})\cdot R_{x} =\frac{2+32m_0}{61}<\frac{30}{91}\]
it follows from
Lemma~\ref{lemma:handy-adjunction} that the point $P$ is a smooth point in the outside of $C_x$.
However, since $H^0(\P, \mathcal{O}_{\P}(585))$ contains $x^{45}$,
$x^{27}y^{13}$, $z^{13}$,
 this is impossible by Lemma~\ref{lemma:Carolina}.
\end{proof}

\begin{lemma}
\label{lemma:I-2-W-13-20-29-47-D-107}Let $X$ be a quasismooth
hypersurface of degree $107$ in $\mathbb{P}(13,20,29,47)$. Then
$\lct(X)=\frac{65}{18}$.
\end{lemma}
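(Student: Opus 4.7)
The only monomials of weighted degree $107$ in variables $(x,y,z,t)$ with weights $(13,20,29,47)$ are $xt^{2}$, $y^{3}t$, $yz^{3}$, $x^{6}z$, so after a coordinate rescaling we may take $X$ to be
\[
xt^{2}+y^{3}t+yz^{3}+x^{6}z=0.
\]
All coordinate lines $L_{xy},L_{xz},L_{xt},L_{yz},L_{yt},L_{zt}$ of $\mathbb{P}$ are nonsingular since any two of the weights are coprime, so the singular locus of $X$ consists exactly of the four coordinate points $O_{x},O_{y},O_{z},O_{t}$ of indices $13,20,29,47$. Each anticanonical coordinate divisor splits into two irreducible components:
\[
C_{x}=L_{zt}+R_{x},\ C_{y}=L_{zt}+R_{y},\ C_{z}=L_{xy}+R_{z},\ C_{t}=L_{xy}+R_{t},
\]
where $R_{x}=\{x=y^{2}t+z^{3}=0\}$, $R_{y}=\{y=t^{2}+x^{5}z=0\}$, $R_{z}=\{z=xt+y^{3}=0\}$, $R_{t}=\{t=yz^{2}+x^{6}=0\}$, and the two components of $C_{x},C_{y},C_{z},C_{t}$ meet only at $O_{t},O_{z},O_{x},O_{y}$, respectively. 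Passing to the orbifold chart at each of these four points and applying Lemma~\ref{lemma:Igusa} yields $\lct(X,C_{x})=\tfrac{5}{9}$, $\lct(X,C_{y})=\tfrac{7}{12}$, $\lct(X,C_{z})=\tfrac{2}{3}$, $\lct(X,C_{t})=\tfrac{4}{9}$, whence
\[
\tfrac{65}{18}=\lct\!\Big(X,\tfrac{2}{13}C_{x}\Big)<\tfrac{35}{6}=\lct\!\Big(X,\tfrac{2}{20}C_{y}\Big)<\tfrac{29}{3}=\lct\!\Big(X,\tfrac{2}{29}C_{z}\Big)<\tfrac{94}{9}=\lct\!\Big(X,\tfrac{2}{47}C_{t}\Big),
\]
so $\lct(X)\leqslant\tfrac{65}{18}$.

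Suppose for contradiction that $\lct(X)<\tfrac{65}{18}$, so there exists an effective $\Q$-divisor $D\sim_{\Q}-K_{X}$ such that the pair $(X,\tfrac{65}{18}D)$ fails to be log canonical at some point $P\in X$. By Lemma~\ref{lemma:convexity}, applied to each of $C_{x},C_{y},C_{z},C_{t}$ separately, we may assume $\Supp(D)$ omits at least one component of every $C_{\bullet}$. Computing the relevant intersection numbers (using $(-K_{X})\cdot C_{x}=\tfrac{107}{13630}$, etc., and the formulas $L_{xy}\cdot L_{xy}$, $L_{zt}\cdot L_{zt}$ obtained via $C_{z}=L_{xy}+R_{z}$ and $C_{x}=L_{zt}+R_{x}$), one gets
\(
\mult_{O_{x}}(D),\mult_{O_{y}}(D),\mult_{O_{z}}(D)<\tfrac{18}{65}
\)
from the pair of inequalities coming from the two components passing through each singular point (using multiplicity $2$ of $R_{t}$ at $O_{y}$ and multiplicity $2$ of $R_{y}$ at $O_{z}$). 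Next, writing $D=aL_{zt}+bR_{x}+\Omega$ (or $D=cL_{xy}+dR_{z}+\Omega$, and analogous decompositions for $R_{y},R_{t}$) with $a,b,c,d\leqslant\tfrac{18}{65}$ (from local log canonicity at the shared singular points), the inequality of Lemma~\ref{lemma:handy-adjunction} on each component rules out smooth points of $C_{x}\cup C_{y}\cup C_{z}\cup C_{t}$. For smooth points of $X$ off $C_{x}$, I apply Lemma~\ref{lemma:Carolina} with $k=377$: the space $H^{0}(\mathbb{P},\mathcal{O}_{\mathbb{P}}(377))$ contains two $x,y$ monomials $x^{29},x^{9}y^{13}$, two $x,z$ monomials $x^{29},z^{13}$, and the projection $X\dashrightarrow\mathbb{P}(13,20,29)$ contracts only $L_{zt}\subset C_{x}$, giving $\mult_{P}(D)\leqslant\tfrac{2\cdot 377\cdot 107}{13\cdot 20\cdot 29\cdot 47}<\tfrac{18}{65}$.

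The only case not handled by the above is $P=O_{t}$, and this will be the main obstacle. I would follow the weighted blow-up template of Lemma~\ref{lemma:I-2-W-11-25-34-43-D-111} (and of Lemmas~\ref{lemma:I-6-infinite-series-1-n-2}, \ref{lemma:I-6-infinite-series-1}). Since $O_{t}$ is of type $\tfrac{1}{47}(20,29)$ on $X$, take the weighted blow up $\pi\colon\bar{X}\to X$ at $O_{t}$ with weights $(20,29)$; the exceptional curve $E$ contains two cyclic quotient singularities $Q_{20}$ and $Q_{29}$ of indices $20$ and $29$, with $\bar{L}_{zt}$ passing through (only) $Q_{20}$ and $\bar{R}_{x}$ through (only) $Q_{29}$. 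Writing $D=aL_{zt}+bR_{x}+\Omega$, the upper bounds $a\leqslant\tfrac{18}{65}$, $b\leqslant\tfrac{18}{65}$ together with $\bar{L}_{zt}\cdot\bar{\Omega}\geqslant 0$ and $\bar{R}_{x}\cdot\bar{\Omega}\geqslant 0$ bound the discrepancy coefficient of $E$; one verifies that the log pull back of $(X,\tfrac{65}{18}D)$ remains log canonical in a punctured neighborhood of the new non-log-canonical point $Q\in E$, and then Lemma~\ref{lemma:handy-adjunction} applied to $\bar{L}_{zt}$ at $Q_{20}$, to $\bar{R}_{x}$ at $Q_{29}$, and to $E$ at a general point gives numerical inequalities contradicting the earlier bounds on $a$ and $b$. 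If one of these direct applications is not sharp enough (typically at $Q_{20}$ or $Q_{29}$), the argument will require a second weighted blow up at that point, exactly as in Lemma~\ref{lemma:I-6-infinite-series-1-n-2}. Threading the bookkeeping of discrepancies, proper transforms, and self-intersections through the two successive blow ups at $O_{t}$ is the main technical difficulty of this lemma.
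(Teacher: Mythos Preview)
You have swapped the lines in the decompositions. Setting $x=0$ in the defining equation gives $y^{3}t+yz^{3}=y(y^{2}t+z^{3})=0$, so $C_{x}=L_{xy}+R_{x}$, not $L_{zt}+R_{x}$; similarly $C_{y}=L_{xy}+R_{y}$, $C_{z}=L_{zt}+R_{z}$, $C_{t}=L_{zt}+R_{t}$. Your own statement that the two components of $C_{x}$ meet only at $O_{t}$ already contradicts $C_{x}=L_{zt}+R_{x}$, since $L_{zt}=\{z=t=0\}$ does not pass through $O_{t}=[0{:}0{:}0{:}1]$. Likewise, the projection $X\dashrightarrow\mathbb{P}(13,20,29)$ contracts $L_{xy}$, not $L_{zt}$.

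Once the decomposition is corrected, the point $O_{t}$ is \emph{not} an obstacle at all. Both components of $C_{x}$ pass through $O_{t}$, and $R_{x}=\{x=z^{3}+y^{2}t=0\}$ has multiplicity $2$ there; so by the same Method~3.1 argument you use at $O_{x},O_{y},O_{z}$ one gets either
\[
\mult_{O_{t}}(D)\leqslant 47\,D\cdot L_{xy}=\tfrac{2}{29}<\tfrac{18}{65}
\quad\text{or}\quad
\mult_{O_{t}}(D)\leqslant \tfrac{47}{2}\,D\cdot R_{x}=\tfrac{3}{20}<\tfrac{18}{65}.
\]
This is exactly how the paper handles $O_{t}$: no weighted blow up is needed, and the remainder of the proof (handy-adjunction on $L_{xy},R_{x}$ and Lemma~\ref{lemma:Carolina} with $k=377$) finishes it. So your proposed blow-up argument is both based on a mislabeling and unnecessary.
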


\begin{proof}
  The surface $X$ can be defined by the
quasihomogeneous equation
$$
yz^{3}+y^{3}t+xt^{2}+x^{6}z=0.
$$
The surface $X$ is singular at the points $O_x$, $O_{y}$, $O_z$ and $O_t$.
 Each of the divisors $C_x$, $C_y$, $C_z$, and $C_t$ consists of two irreducible and reduced components. The divisor $C_x$ (resp. $C_y$, $C_z$, $C_t$) consists of $L_{xy}$ (resp. $L_{xy}$, $L_{zt}$, $L_{zt}$)
and $R_x=\{x=z^3+y^2t=0\}$ (resp. $R_y=\{y=t^2+x^5z=0\}$, $R_z=\{z=y^3+xt=0\}$, $R_t=\{t=x^6+yz^2=0\}$). The curve $L_{xy}$ intersects $R_x$ (resp. $R_y$) only at the point $O_t$ (resp. $O_z$). Also, the curve $L_{zt}$ intersects $R_z$ (resp. $R_t$) only at the point $O_x$ (resp. $O_y$).
It is easy to check

$$-K_X\cdot L_{xy}=\frac{2}{29\cdot 47},\ \ -K_X\cdot L_{zt}=\frac{1}{13\cdot 10},\ \ -K_X\cdot R_{x}=\frac{3}{10\cdot 47},$$
$$-K_X\cdot R_{y}=\frac{4}{13\cdot 29},\ \ -K_X\cdot R_{z}=\frac{6}{13\cdot 47},\ \ -K_X\cdot R_{t}=\frac{3}{5\cdot 29},$$
$$
L_{xy}^2=-\frac{74}{29\cdot 47},\ \ R_{x}^2=-\frac{21}{20\cdot 47},\ \ L_{xy}\cdot R_{x}=\frac{3}{47}.%
$$
We see $\lct(X)\leqslant
\frac{65}{18}$ since
$$
\frac{65}{18}=\lct\left(X, \frac{2}{13}C_x\right)<\frac{70}{12}=\lct\left(X, \frac{2}{20}C_y\right)<\frac{29}{3}=\lct\left(X, \frac{2}{29}C_z\right)<\frac{94}{9}=\lct\left(X, \frac{2}{47}C_t\right).%
$$

Suppose that $\lct(X)<\frac{65}{18}$. Then there is an effective
$\Q$-divisor $D\qlineq -K_X$ such that the pair
$(X,\frac{65}{18}D)$ is not log canonical at some point $P$. By
Lemma~\ref{lemma:convexity}, we may assume that the support of $D$
 does not contain at least one irreducible
component of each of the curves $C_{x}$, $C_{y}$, $C_{z}$ and
$C_{t}$. The curve $R_x$ (resp. $R_y$, $R_t$) is singular at the point $O_t$ (resp. $O_z$, $O_y$).
Then in each of the following pairs of inequalities, at least one of two must hold:
\[\mult_{O_t}(D)\leqslant 47D\cdot L_{xy}=\frac{2}{29}<\frac{18}{65}, \ \
\mult_{O_t}(D)\leqslant \frac{47}{2}D\cdot R_x=\frac{3}{20}<\frac{18}{65};\]
\[\mult_{O_z}(D)\leqslant 29D\cdot L_{xy}=\frac{2}{47}<\frac{18}{65}, \ \
\mult_{O_z}(D)\leqslant \frac{29}{2}D\cdot R_y=\frac{2}{13}<\frac{18}{65};\]
\[\mult_{O_x}(D)\leqslant 13D\cdot L_{zt}=\frac{1}{10}<\frac{18}{65}, \ \
\mult_{O_x}(D)\leqslant 13D\cdot R_z=\frac{6}{47}<\frac{18}{65};\]
\[\mult_{O_y}(D)\leqslant 20D\cdot L_{zt}=\frac{2}{13}<\frac{18}{65}, \ \
\mult_{O_y}(D)\leqslant \frac{20}{2}D\cdot R_t=\frac{6}{29}<\frac{18}{65}.\]
Therefore, the point $P$ must be a smooth point of $X$.

We write $D=m_0L_{xy}+m_1R_x+\Omega$, where $\Omega$ is an effective $\mathbb{Q}$-divisor
whose support contains none of $L_{xy}$, $R_x$.
If $m_0\ne 0$, then $m_1=0$ and hence we obtain
\[\frac{3}{10\cdot 47}=D\cdot R_x\geqslant m_0L_{xy}\cdot R_x=\frac{3m_0}{47}.\]
Therefore, $m_0\leqslant \frac{1}{10}$. Similarly, we get $m_1\leqslant \frac{2}{87}$.
Since
\[(D-m_0L_{xy})\cdot L_{xy}=\frac{2+74m_0}{29\cdot 47}<\frac{18}{65},\]
\[(D-m_1R_{x})\cdot R_{x}=\frac{6+21m_1}{20\cdot 47}<\frac{18}{65}\]
it follows from Lemma~\ref{lemma:handy-adjunction} that the point $P$ is a smooth point in the outside of $C_x$. However,
since $H^0(\P, \mathcal{O}_{\P}(377))$ contains the monomials
$x^{9}y^{13}$, $x^{29}$ and $z^{13}$, this is impossible by
Lemma~\ref{lemma:Carolina}.
\end{proof}

\begin{lemma}
\label{lemma:I-2-W-13-20-31-49-D-111}Let $X$ be a quasismooth
hypersurface of degree $111$ in $\mathbb{P}(13,20,31,49)$. Then
$\lct(X)=\frac{65}{16}$.
\end{lemma}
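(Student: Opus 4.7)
The surface $X$ can be taken to be defined by
\[xt^{2}+z^{2}t+y^{4}z+x^{7}y+\text{(lower order terms)}=0,\]
these four monomials being forced by quasismoothness at the vertices. The singularities of $X$ are then $O_x$ of type $\frac{1}{13}(1,2)$, $O_y$ of type $\frac{1}{20}(13,9)$, $O_z$ of type $\frac{1}{31}(13,20)$, and $O_t$ of type $\frac{1}{49}(20,31)$. We have $C_x=L_{xz}+R_x$ with $R_x=\{x=zt+y^{4}=0\}$, and the two components meet only at $O_t$; in the orbifold chart there $L_{xz}$ is $z=0$ while $R_x$ is $z+y^{4}=0$. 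Applying Lemma~\ref{lemma:Igusa} to $z(z+y^{4})$ together with Proposition~\ref{proposition:lc-by-finite-morphism} gives $\lct_{O_t}(X,C_{x})=\tfrac{5}{8}$, and hence $\lct(X,\tfrac{2}{13}C_{x})=\tfrac{65}{16}$. An analogous (easier) computation shows that $\lct(X,\tfrac{2}{a_{i}}C_{i})$ for $i=1,2,3$ is strictly larger, so $\lct(X)\leqslant\tfrac{65}{16}$.

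For the reverse inequality I will follow the pattern of Lemma~\ref{lemma:I-2-W-11-25-34-43-D-111}. Suppose $\lct(X)<\tfrac{65}{16}$, pick a divisor $D\sim_{\mathbb{Q}}-K_{X}$ with a non-log-canonical point $P$ for $(X,\tfrac{65}{16}D)$, and apply Lemma~\ref{lemma:convexity} with auxiliary divisors $\tfrac{2}{a_i}C_i$ to arrange that $\Supp(D)$ avoids at least one irreducible component of each of $C_x,C_y,C_z,C_t$. Using the intersection data (for instance $-K_X\cdot L_{yt}=\tfrac{2}{13\cdot 31}$, $-K_X\cdot R_y=\tfrac{4}{13\cdot 49}$, similar formulas for the rest, with $R_z$ and $R_t$ singular of multiplicity $2$ and $3$ at $O_y$ and $O_z$ respectively), the inequalities $r\,D\cdot(\text{component})\ge\mult_P(D)>\tfrac{16}{65r}$ are all violated at $O_x,O_y,O_z$; so $P$ is either a smooth point or $O_t$. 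Writing $D=m_{0}L_{xz}+m_{1}R_{x}+\Omega$ and noting that $m_{i}\le\tfrac{16}{65}$ at $O_{t}$ by log canonicity on the boundary, Lemma~\ref{lemma:handy-adjunction} together with the intersection numbers $L_{xz}^{2}=-\tfrac{67}{20\cdot 49}$ and $R_{x}^{2}=-\tfrac{72}{31\cdot 49}$ then eliminates every smooth point of $C_{x}$.

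For a smooth point $P$ outside $C_{x}$, Lemma~\ref{lemma:Carolina} applied with $k=13\cdot 31=403$ (using the monomials $x^{31},\,x^{11}y^{13},\,z^{13}\in H^{0}(\mathbb{P},\mathcal{O}_{\mathbb{P}}(403))$ and the observation that the projection $X\dashrightarrow\mathbb{P}(13,20,31)$ contracts only $L_{xz}\subseteq C_x$) gives $\mult_{P}(D)\le\tfrac{111}{490}<\tfrac{16}{65}$, a contradiction. This leaves only $P=O_{t}$. Here the first inequality $49\,D\cdot L_{xz}=\tfrac{1}{10}<\tfrac{16}{65}$ forces $L_{xz}\subseteq\Supp(D)$, after which $R_{x}\not\subseteq\Supp(D)$ by convexity. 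Writing $D=mL_{xz}+\Omega$, the bound $\Omega\cdot R_{x}\geqslant 0$ yields $m\le\tfrac{2}{31}$ while Lemma~\ref{lemma:handy-adjunction} produces $m>\tfrac{38}{871}$ — an interval that is unfortunately non-empty. The main obstacle of the proof is therefore to close this gap. I will do so by performing the weighted blow-up $\pi\colon\bar{X}\to X$ at $O_{t}$ with weights $(20,31)$, which by the formula used in the $(8,13,15,17)$ and $(8,9,11,13)$ lemmas above yields $K_{\bar X}\sim_{\mathbb{Q}}\pi^{*}K_{X}+\tfrac{2}{49}E$, with $E$ carrying two further quotient singularities of indices $20$ and $31$, exactly one of which lies on each of $\bar L_{xz},\bar R_{x}$. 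One then pushes the non-log-canonical point up to $E$, uses the two non-negativity constraints $\bar\Omega\cdot\bar L_{xz}\geqslant 0$ and $\bar\Omega\cdot\bar R_{x}\geqslant 0$ to bound the discrepancy coefficient $a$ of $\bar\Omega$ along $E$, and applies Lemma~\ref{lemma:handy-adjunction} at each of the two new singularities of $E$ to derive a contradiction — exactly as was carried out at $O_{t}$ in Lemmas~\ref{lemma:I-6-infinite-series-1-n-2} for $(8,9,11,13)$ and for $(8,13,15,17)$. The delicate point will be the book-keeping ensuring that the coefficients at each successive blow-up stay inside the narrow arithmetic window forced by the weights $(20,31,49)$.
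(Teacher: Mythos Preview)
Your argument is correct and follows the paper's approach through the elimination of $O_x$, $O_y$, $O_z$ and all smooth points, including the application of Lemma~\ref{lemma:Carolina} with $k=403$. The divergence is entirely at $O_t$, and there you make the problem harder than it is.

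The bound $m\leqslant\tfrac{2}{31}$ comes from the crude inequality $\Omega\cdot R_x\geqslant 0$. But you know more: since $(X,\tfrac{65}{16}D)$ is not log canonical at $O_t$, you have $\mult_{O_t}(D)>\tfrac{16}{65}$, and hence $\mult_{O_t}(\Omega)=\mult_{O_t}(D)-m>\tfrac{16}{65}-m$. Feeding this into the local intersection estimate at the $\tfrac{1}{49}$-point gives
\[
\frac{8}{31\cdot 49}=D\cdot R_x\;\geqslant\; m\,L_{xz}\cdot R_x+\frac{\mult_{O_t}(\Omega)}{49}
\;>\;\frac{4m}{49}+\frac{16/65-m}{49}=\frac{3m}{49}+\frac{16}{65\cdot 49},
\]
so $m<\tfrac{8}{31\cdot 65}$. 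This is far below your threshold $\tfrac{38}{871}$, and the contradiction closes immediately via Lemma~\ref{lemma:handy-adjunction} on $L_{xz}$:
\[
49\,(D-mL_{xz})\cdot L_{xz}=\frac{2+67m}{20}<\frac{16}{65}.
\]
No blow-up is required. This is precisely the refinement used in the paper here and in the parallel case of Lemma~\ref{lemma:I-2-W-11-25-32-41-D-107}; it is ``Method~3.1'' applied with the extra information $\mult_{O_t}(D)>\tfrac{1}{\lambda}$ rather than just nonnegativity. Your proposed weighted blow-up would presumably also succeed, but it is substantially more work and you have left it as a sketch; the direct route is both simpler and complete.
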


\begin{proof}
  The surface $X$ can be defined by the
quasihomogeneous equation
$$
z^{2}t+y^{4}z+xt^{2}+x^{7}y=0.
$$
It is singular at the point $O_x$, $O_{y}$, $O_z$ and $O_t$. Each of the divisors $C_x$, $C_y$, $C_z$, and $C_t$ consists of two irreducible and reduced components. The divisor $C_x$ (resp. $C_y$, $C_z$, $C_t$) consists of $L_{xz}$ (resp. $L_{yt}$, $L_{xz}$, $L_{yt}$)
and $R_x=\{x=y^4+zt=0\}$ (resp. $R_y=\{y=z^2+xt=0\}$, $R_z=\{z=t^2+x^6y=0\}$, $R_t=\{t=x^7+y^3z=0\}$). The curve $L_{xz}$ intersects $R_x$ (resp. $R_z$) only at the point $O_t$ (resp. $O_y$). Also, the curve $L_{yt}$ intersects $R_y$ (resp. $R_t$) only at the point $O_x$ (resp. $O_z$).
It is easy to check
$$-K_X\cdot L_{xz}=\frac{1}{10\cdot 49},\ -K_X\cdot L_{yt}=\frac{2}{13\cdot 31},\ -K_X\cdot R_{x}=\frac{8}{31\cdot 49},%
$$
$$
 -K_X\cdot R_{y}=\frac{4}{13\cdot 49},\ -K_X\cdot R_{z}=\frac{1}{5\cdot 13},\
 -K_X\cdot R_{t}=\frac{7}{10\cdot 31},%
$$
$$
L_{xz}^2=-\frac{67}{20\cdot 49},\  R_x^2=-\frac{72}{31\cdot 49},\ L_{xz}\cdot R_{x}=\frac{4}{49}.
$$

We have $\lct(X)\leqslant \frac{65}{16}$ since
$$
\frac{65}{16}=\lct\left(X, \frac{2}{13}C_x\right)<\frac{30}{4}=\lct\left(X, \frac{2}{20}C_y\right)<\frac{245}{28}=\lct\left(X, \frac{2}{49}C_t\right)<\frac{62}{7}=\lct\left(X, \frac{2}{31}C_z\right).%
$$

Suppose that $\lct(X)<\frac{65}{16}$. Then there is an effective
$\Q$-divisor $D\qlineq -K_X$ such that the pair $(X,\frac{65}{16}D)$ is
not log canonical at some point $P$. By
Lemma~\ref{lemma:convexity}, we may assume that the support of $D$
 does not contain at least one irreducible
component of each of the curves $C_{x}$, $C_{y}$, $C_{z}$ and
$C_{t}$. The curve $R_z$ is singular at the point $O_y$. The curve
$R_t$ is singular at $O_z$ with multiplicity~$3$. Then in each of
the following pairs of inequalities, at least one of two must
hold:
\[\mult_{O_x}(D)\leqslant 13D\cdot L_{yt}=\frac{2}{31}<\frac{16}{65}, \ \
\mult_{O_x}(D)\leqslant 13D\cdot R_y=\frac{4}{49}<\frac{16}{65};\]
\[\mult_{O_y}(D)\leqslant 20D\cdot L_{xz}=\frac{2}{49}<\frac{16}{65}, \ \
\mult_{O_y}(D)\leqslant \frac{20}{2}D\cdot R_z=\frac{2}{13}<\frac{16}{65};\]
\[\mult_{O_z}(D)\leqslant 31D\cdot L_{yt}=\frac{2}{13}<\frac{16}{65}, \ \
\mult_{O_z}(D)\leqslant \frac{31}{3}D\cdot R_t=\frac{7}{30}<\frac{16}{65}.\]
Therefore, the point $P$ can be none of $O_x$, $O_y$, $O_z$.

Since $H^0(\P, \mathcal{O}_{\P}(403))$ contains the monomials
$x^{11}y^{13}$, $x^{31}$ and $z^{13}$, it follows from
Lemma~\ref{lemma:Carolina} that the point $P$ is either the point $O_t$ or a smooth point of $X$ in $C_x$.

Write $D=m_0L_{xz}+m_1R_x+\Omega$, where $\Omega$ is an effective $\mathbb{Q}$-divisor
whose support contains none of $L_{xz}$, $R_x$.
If $m_0\ne 0$, then $m_1=0$ and hence we obtain
\[\frac{8}{31\cdot 49}=D\cdot R_x\geqslant m_0L_{xz}\cdot R_x=\frac{4m_0}{49}.\]
Therefore, $m_0\leqslant \frac{2}{31}$. Similarly, we get $m_1\leqslant \frac{1}{40}$.
Since we have
\[(D-m_0L_{xz})\cdot L_{xz}=\frac{2+67m_0}{20\cdot 49}<\frac{16}{65},\]
\[(D-m_1R_{x})\cdot R_{x}=\frac{8+72m_1}{31\cdot 49}<\frac{16}{65}\]
it follows from Lemma~\ref{lemma:handy-adjunction} that the point $P$ must be the point $O_t$.

Suppose that $m_0=0$. Then the inequality
\[\mult_{O_t}(D)\leqslant49D\cdot L_{xz}=\frac{1}{10}<\frac{16}{65}\]
gives us a contradiction. Therefore, $m_0\ne 0$ and hence the curve $R_x$ is not contained in the support of $D$. Then
\[ \frac{8}{31\cdot 49}=D\cdot R_x\geqslant m_0L_{xz}\cdot R_x+\frac{\mult_{O_t}(D)-m_0}{49}>\frac{3m_0}{49}+\frac{16}{65\cdot 49},\]
and hence $m_0<\frac{8}{31\cdot 65}$. Since
\[49(D-m_0L_{xz})\cdot L_{xz}=\frac{2+67m_0}{20}<\frac{16}{65}\]
the pair $(X, \frac{65}{16}D)$ is log canonical at the point $O_t$ by Lemma~\ref{lemma:handy-adjunction}. This is a contradiction.
\end{proof}

\begin{lemma}
\label{lemma:I-2-W-13-31-71-113-D-226}
Let $X$ be a quasismooth
hypersurface of degree $226$ in $\mathbb{P}(13,31,71,113)$. Then
$\lct(X)=\frac{91}{20}$.
\end{lemma}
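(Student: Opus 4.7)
The plan is to mirror the proof of Lemma~\ref{lemma:I-2-W-11-43-61-113-D-226} and its $I=1$ analogue Lemma~\ref{lemma:133581128256}. First I would observe that the only monomials of weighted degree $226$ in $\mathbb{C}[x,y,z,t]$ with weights $(13,31,71,113)$ are $t^2$, $xz^3$, $y^5z$ and $x^{15}y$; all four must appear with nonzero coefficient for $X$ to be quasismooth, so after rescaling coordinates I may assume that $X$ is defined by
\[
f(x,y,z,t) = t^2 + xz^3 + y^5z + x^{15}y.
\]
In particular $O_t\notin X$, and the only singularities of $X$ are $O_x$, $O_y$ and $O_z$, of respective types $\frac{1}{13}(6,9)$, $\frac{1}{31}(13,20)$ and $\frac{1}{71}(31,42)$.

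Next I would study the curves $C_x$ and $C_y$. The curve $C_x$ is cut out by $t^2+y^5z=0$ in $\P(31,71,113)$; on the chart $y\neq 0$ it takes the form $z=-t^2$, so $C_x$ is rational and irreducible. It meets $\Sing(X)$ at $O_y$ (smoothly) and at $O_z$ (where, in the local orbifold cover, the equation becomes $t^2+y^5=0$, a Puiseux cusp of type $(2,5)$ with multiplicity $2$). Lemma~\ref{lemma:Igusa} together with Proposition~\ref{proposition:lc-by-finite-morphism} then yield $\lct(X,C_x)=\frac{7}{10}$, so
\[
\lct(X)\leq \lct\!\left(X,\frac{2}{13}C_x\right)=\frac{13}{2}\cdot\frac{7}{10}=\frac{91}{20}.
\]
An analogous local computation shows that $C_y$ is irreducible with a $(2,3)$-cusp at $O_x$ (multiplicity $2$) and smooth elsewhere.

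For the reverse inequality I would suppose $\lct(X)<\frac{91}{20}$ and pick an effective $\Q$-divisor $D\qlineq -K_X$ such that $(X,\frac{91}{20}D)$ fails to be log canonical at some $P\in X$. By Lemma~\ref{lemma:convexity} I may assume that neither $C_x$ nor $C_y$ is contained in $\Supp(D)$, so $C_x\cdot D=\frac{4}{2201}$ and $C_y\cdot D=\frac{4}{923}$. The three numerical checks
\[
\frac{71}{2}\,C_x\cdot D=\frac{2}{31}<\frac{20}{91},\qquad
31\,C_x\cdot D=\frac{4}{71}<\frac{20}{91},\qquad
\frac{13}{2}\,C_y\cdot D=\frac{2}{71}<\frac{20}{91},
\]
combined with Lemma~\ref{lemma:handy-adjunction} at the smooth points of $C_x$ and with the multiplicity bound of Method~3.1 at the singular points of $X$, will rule out $P\in C_x\cup\{O_x\}$.

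It remains to exclude a smooth point $P\in X\setminus C_x$. Since $f$ is monic of degree $2$ in $t$, the point $O_t$ lies off $X$ and the projection $X\dashrightarrow\P(13,31,71)$ is finite, so Lemma~\ref{lemma:Carolina} applies. Taking $k=923=13\cdot 71$, the space $H^0(\P,\O_\P(923))$ contains the pairs $\{x^{71},\,x^{40}y^{13}\}$ and $\{x^{71},\,z^{13}\}$ of the required forms, yielding
\[
\mult_P(D)\leq \frac{2\cdot 923\cdot 226}{13\cdot 31\cdot 71\cdot 113}=\frac{4}{31}<\frac{20}{91},
\]
a contradiction. The main obstacle will be the bookkeeping at $O_z$, where $C_x$ is singular and one must record both $\mult_{O_z}(C_x)=2$ and the orbifold intersection $C_x\cdot D$ carefully enough to see that the numerics are just sharp; once these pieces are in hand, the rest of the argument is mechanical.
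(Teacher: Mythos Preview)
Your proposal is correct and follows essentially the same route as the paper's proof: both reduce to the normal form $t^{2}+y^{5}z+xz^{3}+x^{15}y=0$, compute $\lct(X,\frac{2}{13}C_x)=\frac{91}{20}$, use $C_x$ and $C_y$ (assumed out of $\Supp(D)$ via Lemma~\ref{lemma:convexity}) to exclude singular points and points of $C_x$, and finish with Lemma~\ref{lemma:Carolina} at $k=923$ using $x^{71}$, $x^{40}y^{13}$, $z^{13}$. The only cosmetic difference is that you track the orbifold multiplicities $\mult_{O_z}(C_x)=2$ and $\mult_{O_x}(C_y)=2$ explicitly, whereas the paper uses the looser bounds $71\,D\cdot C_x=\frac{4}{31}$ and $13\,D\cdot C_y=\frac{4}{71}$; both versions clear the threshold $\frac{20}{91}$ comfortably.
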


\begin{proof}
The surface $X$ can be defined by the quasihomogeneous equation
$$
t^{2}+y^{5}z+xz^{3}+x^{15}y=0.
$$
It is singular at the points $O_x$, $O_{y}$ and
$O_z$. The curves $C_x$ and $C_y$ are irreducible. We have
$$
\frac{91}{20}=\lct\left(X, \frac{2}{13}C_x\right)
<\lct\left(X, \frac{2}{31}C_y\right)=\frac{155}{12}.%
$$
Therefore,  $\lct(X)\leqslant \frac{91}{20}$.

Suppose that $\lct(X)<\frac{91}{20}$. Then there is an effective
$\Q$-divisor $D\qlineq -K_X$ such that the pair $(X,\frac{91}{20}D)$ is
not log canonical at some point $P$. By
Lemma~\ref{lemma:convexity}, we may assume that the support of
the divisor $D$  contains neither  $C_x$ nor $C_y$. Then the inequalities
\[71D\cdot C_x=\frac{4}{31}<\frac{20}{91}, \ \ 13D\cdot C_y=\frac{4}{71}<\frac{20}{91}\]
show that the point $P$ is a smooth point in the outside of $C_x$.
However,
since $H^0(\P, \mathcal{O}_{\P}(923))$ contains $x^{71}$,
$y^{26}x^{9}$, $y^{13}x^{40}$ and $z^{13}$, it follows from
Lemma~\ref{lemma:Carolina} that the point $P$ is either a singular point of $X$ or a point on $C_x$. This is a contradiction.
\end{proof}

\begin{lemma}
\label{lemma:I-2-W-14-17-29-41-D-99}
Let $X$ be a quasismooth
hypersurface of degree $99$ in $\mathbb{P}(14,17,29,41)$. Then
$\lct(X)=\frac{51}{10}$.
\end{lemma}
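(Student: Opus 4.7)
The argument follows the template of Section~\ref{section:index-2}: produce the upper bound from one coordinate curve, then rule out every candidate point for a bad anticanonical divisor via intersection theory, reserving a weighted blow-up for the extremal point.

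\textbf{Upper bound.} We may take $X$ to be cut out by a quasihomogeneous polynomial of the form
$$
f = z^{2}t + yt^{2} + xy^{5} + x^{5}z + \cdots = 0,
$$
so that the singular locus of $X$ is $\{O_{x}, O_{y}, O_{z}, O_{t}\}$, with $O_{t}$ of orbifold type $\frac{1}{41}(14, 29) \cong \frac{1}{41}(1, 5)$. The curve $C_{y}$ decomposes as $L_{yz} + R_{y}$, where $R_{y} = \{y = 0,\; x^{5} + zt = 0\}$; the two components meet only at $O_{t}$, and in the orbifold chart there they become $\{z = 0\}$ and $\{z = -x^{5}\}$, so the germ of $C_{y}$ at $O_{t}$ is $z(z+x^{5}) = 0$. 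Lemma~\ref{lemma:Igusa} (applied with $n_{1}=0, n_{2}=1, m_{1}=5, m_{2}=1$) gives $\lct_{O_{t}}(X, C_{y}) = \frac{3}{5}$, and hence $\lct(X, \frac{2}{17}C_{y}) = \frac{51}{10}$. Short local analyses at the tacnode of $C_{x}$ at $O_{y}$, the three-branch point of $C_{z}$ at $O_{x}$, and the $(4,5)$-cusp of $C_{t}$ at $O_{z}$ show that the other rescaled thresholds $\lct(X, \frac{2}{14}C_{x}), \lct(X, \frac{2}{29}C_{z}), \lct(X, \frac{2}{41}C_{t})$ all strictly exceed $\frac{51}{10}$, whence $\lct(X) \leq \frac{51}{10}$.

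\textbf{Reducing to the case $P = O_{t}$.} Assume $\lct(X) < \frac{51}{10}$ and pick an effective $D \sim_{\Q} -K_{X}$ so that $(X, \frac{51}{10}D)$ is not log canonical at some $P \in X$. By Lemma~\ref{lemma:convexity} we may arrange that $\Supp(D)$ omits at least one irreducible component of each of $C_{x}, C_{y}, C_{z}, C_{t}$. The intersection numbers
$$
-K_{X}\!\cdot\! L_{yz} = \tfrac{1}{287},\ \ -K_{X}\!\cdot\! R_{y} = \tfrac{10}{1189},\ \ L_{yz}\!\cdot\! R_{y} = \tfrac{5}{41},\ \ L_{yz}^{2} = -\tfrac{53}{574},\ \ R_{y}^{2} = -\tfrac{60}{1189},
$$
together with their counterparts for the other coordinate curves, feed into Lemma~\ref{lemma:handy-adjunction} after the decomposition $D = mL + \Omega$, and exclude every singular point other than $O_{t}$ as well as every smooth point lying on $C_{x}\cup C_{y}\cup C_{z}\cup C_{t}$. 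For a smooth $P$ off all four curves, a direct application of Lemma~\ref{lemma:Carolina} fails: the ratio $\frac{Ikd}{a_{0}a_{1}a_{2}a_{3}}$ cannot be forced below $\frac{10}{51}$ for any $k$ simultaneously admitting two monomials $x^{\alpha}y^{\beta}$ and two monomials $x^{\gamma}z^{\delta}$. Instead one uses the pencil $\mathcal{L}: \lambda z^{2} + \mu yt = 0$ of weight-$58$ curves, whose base locus is $L_{yz}\cup L_{zt}$; the unique member $Z \in \mathcal{L}$ through $P$ is irreducible and quasismooth off the base locus, and $D \cdot Z$ is computed to be strictly less than $\frac{10}{51}$, contradicting $\mult_{P}(D) > \frac{10}{51}$.

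\textbf{The principal obstacle: $P = O_{t}$.} This is precisely the point where the upper bound is attained, so no direct intersection bound can exclude it. Write $D = mL_{yz} + cR_{y} + \Omega$ with $mc = 0$ by the convexity choice, and bound $m$ or $c$ by $\frac{2}{17}$ from the intersection with the omitted component. We then perform the weighted blow-up $\pi\colon \bar{X} \to X$ at $O_{t}$ with weights $(1, 5)$, chosen to match both the $\frac{1}{41}(1, 5)$-structure and the fifth-order tangency of $L_{yz}$ with $R_{y}$. The exceptional curve $E$ carries two cyclic-quotient singular points of $\bar{X}$, one lying on $\bar{L}_{yz}$ and the other on $\bar{R}_{y}$. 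Writing the log pull-back as $K_{\bar{X}} + \frac{51}{10}\bar{D} + \theta E \sim_{\Q} \pi^{*}(K_{X} + \frac{51}{10}D)$ with $\theta$ assembled from $m, c$ and the $E$-multiplicity $a$ of $\pi^{*}\Omega$, the non-negativity of $\bar{\Omega}\!\cdot\!\bar{L}_{yz}$ and $\bar{\Omega}\!\cdot\!\bar{R}_{y}$ bounds $a$ from above, while the hypothesis that $(X, \frac{51}{10}D)$ is not log canonical at $O_{t}$ forces $\theta$ large at some $Q \in E$; applying Lemma~\ref{lemma:handy-adjunction} at each of the two orbifold points of $E$, and at a smooth point of $E$ if necessary (in which case a second $(1,k)$-type blow-up is performed, exactly as in the proofs of Lemmas~\ref{lemma:I-6-infinite-series-1-n-2} and~\ref{lemma:I-6-infinite-series-2}), delivers the final numerical contradiction. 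Tracking discrepancies and intersection numbers through these two orbifold points on $E$ is the main technical difficulty of the proof.
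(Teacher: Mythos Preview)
Your overall architecture matches the paper's, and the intersection numbers you quote for $L_{yz}$ and $R_{y}$ are correct. But the two places where real work happens both contain errors.

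\textbf{The pencil step.} The members of $\mathcal{L}\colon \lambda z^{2}+\mu yt=0$ on $X$ are \emph{never} irreducible: every member contains $L_{yz}$ as a component (set $y=z=0$ in the defining equation). The paper writes the member through $P$ as $L_{yz}+C$ (or $L_{yz}+R_{x}+M$ when $\alpha=-1$), bounds $D\cdot C$ (resp.\ $D\cdot M$), checks $C^{2}>0$, and applies Lemma~\ref{lemma:handy-adjunction} to $C$. Your claim that the member is irreducible and quasismooth is false, and the bound ``$D\cdot Z<\tfrac{10}{51}$'' is meaningless for a reducible $Z$; you would need $D\cdot E=\tfrac{2\cdot 58}{14\cdot 17\cdot 29}\approx 0.017$ for the full member, but the argument requires the bound on the \emph{residual} component together with a bound on its self-intersection. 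Also, the base locus on $X$ is $L_{yz}\cup\{O_{y}\}$, not $L_{yz}\cup L_{zt}$ (the latter is not contained in $X$).

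\textbf{The $O_{t}$ step.} Here the proposal diverges substantially from what is actually needed. First, once $m>0$ is forced (it is, since $41\,D\cdot L_{yz}=\tfrac{1}{7}<\tfrac{10}{51}$), convexity on $C_{y}$ gives $R_{y}\not\subset\Supp(D)$, so your $c=0$ and the decomposition collapses to $D=mL_{yz}+\Omega$. The paper instead writes $D=aL_{yz}+bR_{x}+\Delta$, tracking $R_{x}$ (not $R_{y}$): this is essential, because after the blow-up $\bar{R}_{x}$ passes through one of the two singular points of the exceptional curve and is the tool used to exclude that point. Second, the paper's weighted blow-up has weights $(9,4)$ in the natural $(x,z)$-chart, producing singular points $Q_{9}$ of type $\tfrac{1}{9}(1,1)$ and $Q_{4}$ of type $\tfrac{1}{4}(3,1)$ on $F$; your $(1,5)$ extraction is a different toric divisor (check: $F^{2}=-\tfrac{41}{36}$ for the paper's choice versus $-\tfrac{41}{5}$ for yours), and the heuristic ``match the fifth-order tangency of $L_{yz}$ with $R_{y}$'' does not select the correct ray. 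Third, one further blow-up is not enough: the paper needs a chain of three --- weighted $(9,4)$, then an ordinary blow-up at $Q_{9}$, then another ordinary blow-up at the point where the two exceptional curves meet --- with the sharp bounds $a<\tfrac{47}{2\cdot 21\cdot 29}$ and $b\le\tfrac{1}{29}$ (not $\tfrac{2}{17}$) propagated through all three. The numerical contradiction only closes at the third level.
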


\begin{proof}
We may assume that the surface $X$ is defined by the quasihomogeneous equation
\[t^2y+tz^2+xy^5+x^5z=0.\]
The surface $X$ is singular at the points $O_x$, $O_y$, $O_z$, $O_t$. Each of the divisors $C_x$, $C_y$, $C_z$, and $C_t$ consists of two irreducible and reduced components. The divisor $C_x$ (resp. $C_y$, $C_z$, $C_t$) consists of $L_{xt}$ (resp. $L_{yz}$, $L_{yz}$, $L_{xt}$)
and $R_x=\{x=yt+z^2=0\}$ (resp. $R_y=\{y=zt+x^5=0\}$ , $R_z=\{z=xy^4+t^2=0\}$ , $R_t=\{t=y^5+x^4z=0\}$ ).
Also, we see that
\[L_{xt}\cap R_x=\{O_y\}, \ L_{yz}\cap R_y=\{O_t\}, \ L_{yz}\cap R_z=\{O_x\}, \ L_{xt}\cap R_t=\{O_z\}.\]
We can easily check that $\lct(X, \frac{2}{17}C_y)=\frac{51}{10}$ is less than
each of the numbers $\lct(X, \frac{2}{14}C_y)$, $\lct(X, \frac{2}{29}C_z)$
and $\lct(X, \frac{2}{41}C_t)$.
Therefore, $\lct(X)\leq\frac{51}{10}$. Suppose
$\lct(X)<\frac{51}{10}$. Then, there is an effective $\Q$-divisor
$D\qlineq -K_X$ such that the log pair $(X, \frac{51}{10}D)$ is not
log canonical at some point $P\in X$.

The intersection numbers among the divisors $D$, $L_{xt}$, $L_{yz}$, $R_x$, $R_y$, $R_z$, $R_t$ are as follows:

\[D\cdot L_{xt}=\frac{2}{17\cdot 29}, \ \ D\cdot R_x=\frac{4}{17\cdot 41}, \ \ D\cdot R_y=\frac{10}{29\cdot 41}, \]
\[D\cdot L_{yz}=\frac{1}{7\cdot 41}, \ \ D\cdot R_z=\frac{2}{7\cdot 17}, \ \ D\cdot R_t=\frac{5}{7\cdot 29}, \]
\[L_{xt}\cdot R_x=\frac{2}{17}, \ \ L_{yz}\cdot R_y=\frac{5}{41}, \ \ L_{yz}\cdot R_z=\frac{1}{7}, \ \ L_{xt}\cdot R_t=\frac{5}{29},\]

\[L_{xt}^2=-\frac{44}{17\cdot 29}, \ \ R_x^2=-\frac{54}{17\cdot 41}, \ \ R_y^2=-\frac{60}{29\cdot 41},\]

\[L_{yz}^2=-\frac{53}{14\cdot 41}, \ \ R_z^2=\frac{12}{7\cdot 17}, \ \ R_t^2=\frac{135}{14\cdot 29}.\]
By Lemma~\ref{lemma:convexity} we may assume that the support of $D$ does not contain at least one component of each divisor $C_x$, $C_y$, $C_z$, $C_t$.
The inequalities
\[17D\cdot L_{xt}=\frac{2}{29}<\frac{10}{51}, \ \ \  17D\cdot R_x=\frac{4}{41}<\frac{10}{51}\]
imply that $P\ne O_y$.
The inequalities
\[14D\cdot L_{yz}=\frac{2}{41}<\frac{10}{51}, \ \ \  7D\cdot R_z=\frac{2}{17}<\frac{10}{51}\]
imply that $P\ne O_x$. The curve $R_z$ is singular at the point $O_x$.
The inequalities
\[29D\cdot L_{xt}=\frac{2}{17}<\frac{10}{51}, \ \ \  \frac{29}{4}D\cdot R_t=\frac{5}{28}<\frac{10}{51}\]
imply that $P\ne O_z$. The curve $R_t$ is singular at the point $O_z$.

We write $D=m_0L_{xt}+m_1L_{yz}+m_2R_x+m_3R_y+m_4R_z+m_5R_t+\Omega$, where  $\Omega$ is an effective divisor whose support contains none of the curves $L_{xt}$, $L_{yz}$, $R_x$, $R_y$, $R_z$, $R_t$. Since the pair $(X, \frac{51}{10}D)$ is log canonical at the points $O_x$, $O_y$, $O_z$, the numbers $m_i$ are at most $\frac{10}{51}$. Then by Lemma~\ref{lemma:handy-adjunction} the following inequalities enable us to conclude that either the point $P$ is in the outside of $C_x\cup C_y\cup C_z\cup C_t$ or $P=O_t$:
\[(D-m_0L_{xt})\cdot L_{xz}=\frac{2+44m_0}{17\cdot 29}\leqslant \frac{10}{51},\ \
(D-m_1L_{yz})\cdot L_{yt}=\frac{2+53m_1}{14\cdot 41}\leqslant \frac{10}{51},\]
\[(D-m_2R_{x})\cdot R_{x}=\frac{4+54m_2}{17\cdot 41}\leqslant \frac{10}{51},\ \
(D-m_3R_{y})\cdot R_{y}=\frac{10+60m_3}{29\cdot 41}\leqslant \frac{10}{51},\]
\[(D-m_4R_{z})\cdot R_{z}=\frac{2-12m_4}{7\cdot 17}\leqslant \frac{10}{51},\ \
(D-m_5R_{t})\cdot R_{t}=\frac{10-135m_5}{14\cdot 29}\leqslant \frac{10}{51}.\]

Suppose that $P\ne O_t$. Then we consider the pencil $\mathcal{L}$ on $X$
defined by $\lambda yt+\mu z^2=0$, $[\lambda:\mu]\in\P^1$. The
base locus of the pencil $\mathcal{L}$ consists of the curve
$L_{yz}$ and the point $O_y$. Let $E$ be the unique divisor in
$\mathcal{L}$ that passes through the point $P$. Since $P\not\in
C_x\cup C_y\cup C_z\cup C_t$, the divisor $E$ is defined by the
equation $z^2=\alpha yt$, where $\alpha\ne 0$.

Suppose that $\alpha\ne -1$. Then the curve $E$ is isomorphic to the curve defined by the equations $yt=z^2$ and $t^2y+xy^5+x^5z=0$. Since the curve $E$ is isomorphic to a general curve in $\mathcal{L}$, it is smooth at the point $P$. The affine piece of $E$ defined by $t\ne 0$ is the curve given by $z(z+xz^9+x^5)=0$. Therefore, the divisor $E$ consists of two irreducible and reduced curves $L_{yz}$ and $C$. We have the intersection number
\[D\cdot C=D\cdot E-D\cdot L_{yz}=\frac{181}{7\cdot 17\cdot 41}.\]
Also, we see
\[C^2=E\cdot C- C\cdot L_{yz}\geq E\cdot C-C_y\cdot C>0\]
since $C$ is different from $R_y$.
By Lemma~\ref{lemma:handy-adjunction} the inequality $D\cdot C<\frac{10}{51}$ gives us a contradiction.

Suppose that $\alpha=-1$. Then divisor $E$ consists of three irreducible and reduced curves $L_{yz}$, $R_x$, and $M$.  Note that the curve $M$ is different from the curves $R_y$ and $L_{xt}$. Also, it is smooth at the point $P$. We have
\[D\cdot M=D\cdot E- D\cdot L_{yz}-D\cdot R_x=\frac{153}{7\cdot 17\cdot 41},\]
\[M^2=E\cdot M-L_{yz}\cdot M- R_x\cdot M \geq E\cdot M-C_y\cdot M- C_x\cdot M>0.\]
By Lemma~\ref{lemma:handy-adjunction} the inequality $D\cdot M<\frac{10}{51}$ gives us a contradiction.
Therefore, the log pair $(X,\frac{51}{10}D)$ is not log canonical at the point $O_t$.

Put $D=aL_{yz}+bR_{x}+\Delta$, where $\Delta$ is an effective
$\Q$-divisor whose support contains neither $L_{yz}$ nor $R_x$.
Then $a>0$ since otherwise we would have a contradictory inequality
$$
\frac{1}{7}=41D\cdot
L_{yz}\geqslant \mult_{O_t}(D)>\frac{10}{51}.
$$
Therefore, we may assume that
$R_{y}\not\subset\mathrm{Supp}(\Delta)$ by
Lemma~\ref{lemma:convexity}. Similarly, we may assume that
$L_{xt}\not\subset\mathrm{Supp}(\Delta)$ if $b>0$.

If $b>0$, then
$$
\frac{2}{17\cdot 29}=D\cdot L_{xt}\geqslant b R_x \cdot L_{xt}=\frac{2b}{17},%
$$
and hence $b\leqslant \frac{1}{29}$. Similarly, we have
$$
\frac{10}{29\cdot 41}=D\cdot R_y\geqslant \frac{5a}{41}+\frac{b}{41}+\frac{\mult_{O_t}(D)-a-b}{41}>\frac{4a}{41}+\frac{4}{21\cdot 41}.%
$$
Therefore, $a<\frac{47}{2\cdot 21\cdot 29}$.
%On the other hand, it follows from
%Lemma~\ref{lemma:handy-adjunction} that
%$$
%\frac{2+53a}{14}=41\Delta\cdot L_{yz}>\frac{10}{51}-b.
%$$
%Therefore, $a>\frac{2}{159}$.

Let $\pi\colon\bar{X}\to X$ be the weighted blow up at the point
$O_t$ with weights $(9,4)$ and let $F$ be the exceptional curve
of the morphism $\pi$. Then $F$ contains two singular points
$Q_{9}$ and $Q_{4}$ of $\bar{X}$ such that $Q_{9}$ is a singular point of type
$\frac{1}{9}(1,1)$, and $Q_{4}$ is a singular point of type
$\frac{1}{4}(3,1)$. Then
\[
K_{\bar{X}}\qlineq\pi^*(K_X)-\frac{28}{41}F,\ \
\bar{L}_{yz}\qlineq\pi^*(L_{yz})-\frac{4}{41}F, \ \
\bar{R}_x\qlineq\pi^*(R_x)-\frac{9}{41}F,\ \
\bar{\Delta}\qlineq\pi^*(\Delta)-\frac{c}{41}F,
\]
where $\bar{L}_{yz}$, $\bar{R}_x$ and $\bar{\Delta}$
are the proper transforms of $L_{yz}$, $R_x$ and $\Delta$
by $\pi$, respectively, and $c$ is a non-negative rational number.
Note that $F\cap\bar{R}_x=\{Q_{4}\}$ and $F\cap\bar{L}_{yz}=\{Q_{9}\}$.

The log pull-back of the log pair $(X,\frac{51}{10}D)$ by $\pi$ is
the log pair
$$
\left(\bar{X},\ \frac{51a}{10}\bar{L}_{yz}+ \frac{51b}{10}\bar{R}_x+ \frac{51}{10}\bar{\Delta}+\theta_1 F\right),%
$$
where
$$\theta_1=\frac{280+51(4a+9b+c)}{10\cdot 41}.$$
This  is not log canonical at some point $Q\in F$.

We have
\[0\leqslant\bar{\Delta}\cdot\bar{R}_x=\frac{4+54b}{17\cdot 41}-\frac{a}{41}-\frac{c}{4\cdot 41}.\]%

This inequality shows $4a+c\leqslant \frac{4}{17}(4+54b)$. Since $b\leqslant \frac{1}{29}$, we obtain $$\theta_{1}=\frac{280+51(4a+9b+c)}{10\cdot 41}\leqslant \frac{4760+51(16+369b)}{10\cdot 17\cdot 41}<1.$$

Suppose that  $Q\not\in \bar{R}_x\cup \bar{L}_{yz}$. Then the log pair $(F, \frac{51}{10}\bar{\Delta}|_F)$ is not log canonical at the point $Q$, and hence
$$
\frac{17c}{120}=\frac{51}{10}\bar{\Delta}\cdot F>1
$$
by Lemma~\ref{lemma:handy-adjunction}. Thus, we see that $c>\frac{120}{17}$.
However, since $b\leqslant \frac{1}{29}$, we obtain
\[c\leqslant 4a+c\leqslant \frac{4}{17}(4+54b)<\frac{120}{17}.\]
Therefore, the point $Q$ must be either $Q_4$ or $Q_9$.

Suppose that $Q=Q_{4}$. The pair $(\bar{R}_x, (\frac{51}{10}\bar{\Delta}+\theta_1 F)|_{\bar{R}_x})$ is not log canonical at $Q$. It then follows
from Lemma~\ref{lemma:handy-adjunction} that
\[1<4\left(\frac{51}{10}\bar{\Delta}+\theta_{1}F\right)\cdot\bar{R}_{x}
=\frac{4\cdot 51}{10}\left(\frac{4+54b}{17\cdot
41}-\frac{a}{41}-\frac{c}{4\cdot
41}\right)+\theta_{1}.\]
However,
\[\frac{4\cdot 51}{10}\left(\frac{4+54b}{17\cdot
41}-\frac{a}{41}-\frac{c}{4\cdot
41}\right)+\theta_{1}= \frac{4760+51(16+369b)}{10\cdot 17\cdot 41}<1.
\]
 This is a contradiction.
Consequently, the point $Q$ must be $Q_9$.

Let $\psi\colon\tilde{X}\to \bar{X}$ be the blow up at the point
$Q_9$ and let $E$ be the exceptional curve
of the morphism $\psi$. The surface $\tilde{X}$ is smooth along the exceptional divisor $E$. Then
\[
K_{\tilde{X}}\qlineq\psi^*(K_{\bar{X}})-\frac{7}{9}E,\ \
\tilde{L}_{yz}\qlineq\psi^*(\bar{L}_{yz})-\frac{1}{9}E, \ \
\tilde{F}\qlineq\psi^*(F)-\frac{1}{9}E,
\ \
\tilde{\Delta}\qlineq\psi^*(\bar{\Delta})-\frac{d}{9}E,
\]
where $\tilde{L}_{yz}$, $\tilde{F}$ and $\tilde{\Delta}$
are the proper transforms of $\bar{L}_{yz}$, $F$ and $\bar{\Delta}$
by $\psi$, respectively, and $d$ is a non-negative rational number.

The log pull-back of the log pair $(X,\frac{51}{10}D)$ by $\pi\circ\psi$ is
the log pair
$$
\left(\tilde{X},\ \frac{51a}{10}\tilde{L}_{yz}+ \frac{51b}{10}\tilde{R}_x+ \frac{51}{10}\tilde{\Delta}+\theta_1 \tilde{F}+\theta_2E\right),%
$$
where $\tilde{R}_x$ is the proper transform of $\bar{R}_x$ by $\psi$ and
$$\theta_2=\frac{70+51(a+d)+10\theta_1}{90}=\frac{3150+51(45a+9b+c+41d)}{90\cdot 41}.$$
This  is not log canonical at some point $O\in E$.

We have
\[0\leqslant\tilde{\Delta}\cdot\tilde{L}_{yx}= \bar{\Delta}\cdot\bar{L}_{yz}-\frac{d}{9}
=\frac{2+53a}{14\cdot 41}-\frac{b}{41}-\frac{c}{9\cdot
41}-\frac{d}{9},\]
and hence $9b+c+41d\leqslant \frac{9}{14}(2+53a)$.
Therefore, this inequality together with $a<\frac{47}{2\cdot 21\cdot 29}$ gives us
\[\begin{split}\theta_2 &=\frac{3150+51(45a+9b+c+41d)}{90\cdot 41}=\\
&=\frac{3150+2295a}{90\cdot 41}+\frac{51(9b+c+41d)}{90\cdot 41}\leqslant \\
&\leqslant \frac{5002+6273a}{10\cdot 14\cdot 41}<1.\\
\end{split}
\]

Suppose that the point $O$ is in the outside of $\tilde{L}_{yz}$ and $\tilde{F}$.
Then the log pair $(E,  \frac{51}{10}\tilde{\Delta}|_{E})$ is not log canonical at the point $O$ and hence
\[1<\frac{51}{10}\tilde{\Delta}\cdot E=\frac{51d}{10}.\]
However,
\[41d\leqslant 9b+c+41d\leqslant \frac{9}{14}(2+53a)<\frac{10\cdot 41}{51}\]
since $a<\frac{47}{2\cdot 21\cdot 29}$.
This is a contradiction.

Suppose that the point $O$ belongs to $\tilde{L}_{yz}$
Then the log pair $(E,  (\frac{51a}{10}\tilde{L}_{yz}+ \frac{51}{10}\tilde{\Delta})|_{E})$ is not log canonical at the point $O$ and hence
\[1<(\frac{51a}{10}\tilde{L}_{yz}+ \frac{51}{10}\tilde{\Delta})\cdot E=\frac{51}{10}(a+d).\]
However,
\[ \frac{51}{10}(a+d)\leqslant \frac{51}{10}\left(a+\frac{9}{14\cdot 41}\left(2+53a\right)\right)<1\]
since $a<\frac{47}{2\cdot 21\cdot 29}$.
This is a contradiction. Therefore, the point $O$ is the intersection point of $\tilde{F}$ and $E$.

Let $\xi\colon\hat{X}\to \tilde{X}$ be the blow up at the
point $O$ and let $H$ be the exceptional
divisor of $\xi$. We also let $\hat{L}_{yz}$, $\hat{R}_x$, $\hat{\Delta}$,
$\hat{E}$, and $\hat{F}$ be the proper transforms of $\tilde{L}_{yz}$,
$\tilde{R}_x$, $\tilde{\Delta}$,  $E$ and $\tilde{F}$ by $\xi$, respectively. We have
$$
K_{\hat{X}}\qlineq\xi^*(K_{\tilde{X}})+H,\
\hat{E}\qlineq\xi^*(E)-H,\
\hat{F}\qlineq\xi^*(\tilde{F})-H,\
\hat{\Delta}\qlineq\xi^*(\tilde{\Delta})-eH,%
$$
where $e$ is a non-negative rational number. The log pull-back of the
log pair $(X, \frac{51}{10}D)$ via $\pi\circ \phi\circ \xi$ is
$$
\left(\hat{X},\frac{51a}{10}\hat{L}_{yz}+\frac{51b}{10}\hat{R}_x+\frac{51}{10}\hat{\Delta}+\theta_1\hat{F}+\theta_2 \hat{E}+\theta_{3}H\right),%
$$
where
$$
\theta_3=\theta_{1}+\theta_{2}+\frac{51e}{10}-1
=\frac{1980+51(81a+90b+10c+41d+369e)}{90\cdot 41}.$$
This log pair is not log canonical at some point $A\in H$. We have
\[
\frac{c}{9\cdot 4}-\frac{d}{9}-e=\hat{\Delta}\cdot\hat{F}\geqslant
0.\] Therefore, $4d+36e\leqslant c$. Then
\[\begin{split}\theta_3&=\frac{1980+51(81a+90b+10c)}{90\cdot 41}+\frac{51(d+9e)}{90}\leqslant \\
&\leqslant \frac{7920+51(324a+360b+81c)}{4\cdot 90\cdot 41}=\\
&=\frac{22+51b}{41}+\frac{51\cdot 81(4a+c)}{4\cdot 90\cdot 41}\leqslant \\
&\leqslant \frac{22+51b}{41}+\frac{9\cdot51(2+27b)}{5\cdot17\cdot 41}<1\\
\end{split}
\]
since $b\leqslant \frac{1}{29}$ and $4a+c\leqslant \frac{4}{17}(4+54b)$.

Suppose that $A\not\in\hat{F}\cup\hat{E}$. Then the log pair $
\left(\hat{X},\frac{51}{10}\hat{\Delta}+\theta_{3}H\right)
$  is not log canonical at the point $A$. Applying
Lemma~\ref{lemma:adjunction}, we get
$$
1<\frac{51}{10}\hat{\Delta}\cdot H=\frac{51e}{10}.
$$
However, $$e\leqslant \frac{1}{36}(4d+36e)\leqslant \frac{c}{36}\leqslant \frac{1}{36}(4a+c)\leqslant
\frac{4+54b}{17\cdot 9}< \frac{10}{51}.$$
Therefore, the point $A$ must be either in $\hat{F}$ or in $\hat{E}$.

Suppose that $A\in\hat{F}$. Then the log pair  $
\left(\hat{X},\frac{51}{10}\hat{\Delta}+\theta_1\hat{F}+\theta_{3}H\right)%
$ is not log canonical at the point $A$. Applying Lemma~\ref{lemma:adjunction},
we get
\[
1<\left(\frac{51}{10}\hat{\Delta}+\theta_{3}
H\right)\cdot\hat{F}=\frac{51}{10}\left(\frac{c}{9\cdot 4}-\frac{d}{9}-e\right)+
\theta_{3}=\frac{7920+51(324a+360b+81c)}{4\cdot 90\cdot 41}.
\]
However,
\[\frac{7920+51(324a+360b+81c)}{4\cdot 90\cdot 41}<1
\]
as seen in the previous. Therefore, the point $A$ is the
intersection point of $H$ and $\hat{E}$. Then the log pair $
\left(\hat{X},\frac{51}{10}\hat{\Delta}+\theta_2
\hat{E}+\theta_{3}H\right) $ is not log canonical at the point
$A$. From Lemma~\ref{lemma:adjunction}, we obtain
\[
1<\left(\frac{51}{10}\hat{\Delta}+\theta_{3}H\right)\cdot\hat{E}=\frac{51}{10}\left(d-e\right)+\theta_{3}=\frac{1980+51(81a+90b+10c+410d)}{90\cdot
41}.
\]
However,
\[\begin{split}\frac{1980+51(81a+90b+10c+410d)}{90\cdot 41}&=\frac{220+459a}{10\cdot 41}+\frac{51(9b+c+41d)}{9\cdot 41}\leqslant\\
&\leqslant \frac{220+459a}{10\cdot 41}+\frac{51(2+53a)}{14\cdot 41}<1\\ \end{split}
\]
since $9b+c+41d\leqslant \frac{9}{14}(2+53a)$ and $a<\frac{47}{2\cdot 21\cdot 29}$.
The obtained contradiction completes the
proof.
\end{proof}

\section{Sporadic cases with $I=3$}
\label{section:index-3}

\begin{lemma}
\label{lemma:I-3-W-5-7-11-13-D-33}Let $X$ be a quasismooth
hypersurface of degree $33$ in $\mathbb{P}(5,7,11,13)$. Then
$\lct(X)=\nlb\frac{49}{36}$.
\end{lemma}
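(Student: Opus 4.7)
The plan is to apply the four-step scheme laid out at the start of Part~\ref{section:series}. Since $I=5+7+11+13-33=3$, the candidate thresholds are $\lct(X,\tfrac{3}{5}C_x)$, $\lct(X,\tfrac{3}{7}C_y)$, $\lct(X,\tfrac{3}{11}C_z)$, $\lct(X,\tfrac{3}{13}C_t)$. Quasismoothness forces the defining equation of $X$ to contain the monomials $z^3$, $yt^2$, $xy^4$, $x^4t$ (with possibly an $x^3yz$-term), so after rescaling I may assume
\[
f(x,y,z,t)=z^3+yt^2+xy^4+x^4t+\epsilon x^3yz.
\]
The only singular points of $X$ are then $O_x$, $O_y$, $O_t$ of indices $5$, $7$, $13$; the point $O_z$ does not lie on $X$.

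First I compute the four candidate thresholds. The curves $C_x=\{z^3+yt^2=0\}\subset\mathbb{P}(7,11,13)$ and $C_y=\{z^3+x^4t=0\}\subset\mathbb{P}(5,11,13)$ are both irreducible. In an orbifold chart at $O_y$, eliminating $x$ from the local equation of $X$ shows that $C_x$ is given by $z^3+t^2+\text{h.o.t.}=0$, an ordinary $(2,3)$-cusp of local log canonical threshold $\tfrac{5}{6}$ by Lemma~\ref{lemma:Igusa}; elsewhere $C_x$ is smooth. Similarly, in an orbifold chart at $O_t$, the curve $C_y$ is given by $z^3+x^4+\text{h.o.t.}=0$, a $(3,4)$-cusp of local threshold $\tfrac{7}{12}$; elsewhere $C_y$ is smooth. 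A direct inspection shows $C_z$ is smooth and $C_t$ has only a node at $O_x$. Hence
\[
\lct(X,\tfrac{3}{5}C_x)=\tfrac{25}{18},\qquad\lct(X,\tfrac{3}{7}C_y)=\tfrac{49}{36},\qquad\lct(X,\tfrac{3}{11}C_z)=\tfrac{11}{3},\qquad\lct(X,\tfrac{3}{13}C_t)=\tfrac{13}{3},
\]
and the minimum is $\tfrac{49}{36}$, giving $\lct(X)\leqslant\tfrac{49}{36}$.

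For the reverse inequality I argue by contradiction: assume there is an effective $\mathbb{Q}$-divisor $D\qlineq -K_X$ such that $(X,\tfrac{49}{36}D)$ is not log canonical at some point $P\in X$. Since $C_x$, $C_y$, $C_z$, $C_t$ are all irreducible, Lemma~\ref{lemma:convexity} allows me to assume none of them is contained in $\Supp(D)$. For $P=O_x$, writing $D=\mu C_y+\Omega$ with $\mu\leqslant\tfrac{36}{49}$ and applying Lemma~\ref{lemma:handy-adjunction} to the curve $C_y$ (smooth at $O_x$), non-log canonicity would force $\Omega\cdot C_y>\tfrac{36}{5\cdot 49}=\tfrac{36}{245}$; but $\Omega\cdot C_y\leqslant D\cdot C_y=\tfrac{9}{65}<\tfrac{36}{245}$, a contradiction. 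To exclude $P$ being a smooth point outside $C_x\cup C_y\cup C_z\cup C_t$ I use the pencil $\mathcal{L}=\{\lambda y^5+\mu x^7=0\}$ on $X$, whose members lie in $|\mathcal{O}_X(35)|$ and whose base locus on $X$ is $\{O_t\}$. The unique member $C_\alpha$ through $P$ is generically irreducible, in which case
\[
\mult_P(D)\leqslant D\cdot C_\alpha=\frac{3\cdot 35\cdot 33}{5\cdot 7\cdot 11\cdot 13}=\frac{9}{13}<\frac{36}{49},
\]
contradicting non-log canonicity; the reducible members either split off coordinate curves already handled or split into components smooth at $P$ to which Lemma~\ref{lemma:handy-adjunction} can again be applied.

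The main obstacle lies in excluding $P=O_y$ and, most of all, $P=O_t$. Straightforward inversion of adjunction along a smooth curve through $O_y$ (resp. $O_t$) is not sharp enough, since the bounds $D\cdot C_z$, $D\cdot C_t$ at $O_y$ and $D\cdot C_x$, $D\cdot C_z$ at $O_t$ all exceed the threshold $\tfrac{36}{49r}$ required for contradiction. My plan is to perform weighted blow-ups at these points with weights dictated by the cusp exponents of the relevant coordinate curves --- namely $(2,3)$ at $O_y$ coming from the cusp of $C_x$ and $(3,4)$ at $O_t$ coming from the cusp of $C_y$ --- and then iterate the inversion-of-adjunction argument on the log pull-back, estimating the coefficients of the exceptional curves using auxiliary intersections with $C_z$ and the pencil $\mathcal{L}$. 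This mirrors the techniques already developed in Lemma~\ref{lemma:I-6-infinite-series-1-n-2} and Lemma~\ref{lemma:I-2-W-14-17-29-41-D-99}, where analogous cuspidal obstructions were handled; the effectivity of the log pull-back is secured by Method~3.1, and the final numerical estimates at the exceptional curve close the argument.
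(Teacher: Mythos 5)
Your upper bound, the exclusion of $O_x$, and the pencil argument for points off the coordinate curves all agree with the paper (which uses exactly the pencil $\lambda x^{7}+\mu y^{5}=0$ and the bound $D\cdot C=\frac{9}{13}<\frac{36}{49}$). But the proof has a genuine gap: you identify the exclusion of $O_y$ and $O_t$ as "the main obstacle" and then do not carry it out, offering only a plan to perform weighted blow-ups with weights $(2,3)$ and $(3,4)$ and to iterate inversion of adjunction. An unexecuted plan for the hardest step is not a proof, and in this case the plan rests on a miscalculation: you only test the curves that are \emph{smooth} at these points ($C_z$, $C_t$ at $O_y$; $C_x$, $C_z$ at $O_t$) and conclude that the multiplicity bound fails. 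You overlook that Method~3.1 becomes \emph{stronger}, not weaker, when the auxiliary curve is singular at the point, because $D\cdot C\geqslant \mult_P(D)\,\mult_P(C)/r$. Since $O_y\in C_x$ and $C_x\not\subset\Supp(D)$, one gets
$$
\mult_{O_y}(D)\leqslant 7\,D\cdot C_x=\frac{9}{13}<\frac{36}{49},
$$
and since $\mult_{O_t}(C_y)=3$ (locally $z^{3}+x^{4}=0$) and $C_y\not\subset\Supp(D)$,
$$
\mult_{O_t}(D)\leqslant \frac{13}{3}\,D\cdot C_y=\frac{3}{5}<\frac{36}{49}.
$$
This is exactly how the paper disposes of both points in two lines; no weighted blow-up is needed anywhere in this lemma.

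Two smaller points. First, your treatment of the pencil members is too loose: you say the member through $P$ is "generically irreducible" and wave at the reducible case. The paper proves that \emph{every} member $y^{5}=\alpha x^{7}$ with $\alpha\neq 0$ is irreducible (by projectivising the affine curve and discarding the component at infinity), and also checks $\mult_P(C)\leqslant 3$ via the triple cover of $\mathbb{P}(5,7,13)$ so that Lemma~\ref{lemma:convexity} applies; both facts are needed before you may use $\mult_P(D)\leqslant D\cdot C$. Second, after excluding $C_x\cup C_y$ you restrict the pencil argument to points outside $C_x\cup C_y\cup C_z\cup C_t$, leaving smooth points of $C_z$ and $C_t$ unaddressed; either include them in the pencil argument (as the paper does, since the pencil covers all of $X\setminus(C_x\cup C_y)$) or note that $D\cdot C_z$ and $D\cdot C_t$ are already below $\frac{36}{49}$.
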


\begin{proof}
 The surface $X$ can be defined by the
quasihomogeneous equation
$$
z^{3}+yt^{2}+xy^{4}+x^{4}t+\epsilon x^{3}yz=0,
$$
where $\epsilon\in\mathbb{C}$. Note that the surface $X$ is singular at $O_x$,
$O_{y}$ and $O_t$.

The curves $C_x$, $C_y$ are  irreducible. Moreover, we have
$$
\frac{25}{18}=\lct(X, \frac{3}{5}C_x)>\lct(X, \frac{3}{7}C_y)=\frac{49}{36}.%
$$
Therefore, $\lct(X)\leqslant \frac{49}{36}$.

Suppose that $\lct(X)<\frac{49}{36}$. Then there is an effective
$\Q$-divisor $D\qlineq -K_X$ such that the pair $(X,\frac{49}{36}D)$ is
not log canonical at some point $P$. By
Lemma~\ref{lemma:convexity}, we may assume that the support of
$D$ contains neither $C_x$ nor $C_y$. Since the curve $C_y$ is singular at the point $O_t$, the three inequalities
\[5D\cdot C_y=\frac{9}{13}<\frac{36}{49}, \ \ 7D\cdot C_x=\frac{63}{91}<\frac{36}{49}\]
show that the point $P$ is located in the outside of the set $C_x\cup C_y$.

Let $\mathcal{L}$ be the pencil on $X$ that is cut out by the
equations
$$
\lambda x^{7}+\mu y^{5}=0,
$$
where $[\lambda :\mu]\in\P^1$. Then the base locus of the pencil
$\mathcal{L}$ consists of the  point $O_{t}$.
Let $C$ be the unique curve in $\mathcal{L}$ that passes through
the point $P$. Since the point $P$ is in the outside of the set $C_x\cup C_y$, the curve $C$ is defined by an equation of the form $y^5-\alpha x^7=0$, where $\alpha$ is a non-zero constant.
Suppose that $C$ is irreducible and reduced. Then
$\mult_{P}(C)\leqslant 3$ since the curve $C$ is a triple cover
of the curve
$$
y^5-\alpha x^7=0\subset\mathbb{P}\big(5,7,13\big)\cong\mathrm{Proj}\Big(\mathbb{C}\big[x,y,t\big]\Big).
$$
 In particular,  $\lct(X,\frac{3}{35}C)>\frac{49}{36}$. Thus, we may assume
that the support of $D$ does not contain the curve $C$ and hence
we obtain
$$
\frac{36}{49}<\mult_{P}(D)\leqslant D\cdot
C=\frac{9}{13}<\frac{36}{49}.
$$
This is a contradiction. Thus, to conclude the proof it suffices to prove that the curve $C$ is irreducible and reduced.

Let $S\subset\mathbb{C}^{4}$ be the affine variety  defined
by the equations
$$
y^{5}-\alpha x^{7}=z^{3}+yt^{2}+xy^{4}+x^{4}t+\epsilon x^{3}yz=0\subset\mathbb{C}^{4}\cong\mathrm{Spec}\Big(\mathbb{C}\big[x,y,z,t\big]\Big).%
$$
 To conclude the proof, it is enough to prove
that the variety $S$ is irreducible.

Consider the projectivised surface $\bar{S}$ of $S$ defined by
the homogeneous equations
$$
y^{5}w^{2}-\alpha x^{7}
=z^{3}w^{2}+yt^{2}w^{2}
+xy^{4}+x^{4}t+\epsilon x^{3}yz=0
\subset\mathbb{P}^{4}\cong\mathrm{Proj}\Big(\mathbb{C}\big[x,y,z,t,w\big]\Big).%
$$
Then we consider the affine piece $S'$ of $\bar{S}$ defined by $y\ne0$. The affine surface $S'$  is defined
by the equations
$$
w^{2}-\alpha x^{7}
=z^{3}w^{2}+t^{2}w^{2}+x+x^{4}t
+\epsilon x^{3}z=0
\subset\mathbb{C}^{4}\cong\mathrm{Spec}\Big(\mathbb{C}\big[x,z,t,w\big]\Big).%
$$
This is isomorphic to the affine hypersurface defined by
$$
x(\alpha x^{6}z^{3}+\alpha x^{6}t^{2}+1
+x^{3}t+\epsilon x^{2}z)=0
\subset\mathbb{C}^{3}\cong\mathrm{Spec}\Big(\mathbb{C}\big[x,z,t\big]\Big).%
$$
This affine hypersurface has two irreducible components. However, the component defined by $x=0$
originates from the hyperplane section of $\bar{S}$ by $w=0$. Therefore, the original affine surface $S$ must be irreducible and reduce.
\end{proof}

\begin{lemma}
\label{lemma:I-3-W-5-7-11-20-D-40}Let $X$ be a quasismooth
hypersurface of degree $40$ in $\mathbb{P}(5,7,11,20)$. Then
$\lct(X)=\frac{25}{18}$.
\end{lemma}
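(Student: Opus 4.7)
The defining polynomial of $X$ must involve the monomials of degree $40$ in $\mathbb{P}(5,7,11,20)$, and after the change of variables $t\mapsto t+cx^4$ to remove the $x^4t$ term we may assume
\[
X=\bigl\{t^2+x^8+xy^5+yz^3+\epsilon\, x^3y^2z=0\bigr\}
\]
for some constant $\epsilon\in\C$; quasismoothness forces the coefficients of $t^2$, $x^8$, $xy^5$ and $yz^3$ to be nonzero, and the only singularities of $X$ are $O_y$ of type $\frac{1}{7}(5,6)$, $O_z$ of type $\frac{1}{11}(5,9)$, and the two points $P_1=[1:0:0:i]$, $P_2=[1:0:0:-i]$ of $L_{xt}\cap X$, each of type $\frac{1}{5}(1,2)$. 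The curve $C_x$ is irreducible, smooth at $O_z$, and has a cusp with local equation $t^2+z^3=0$ at $O_y$, so by Lemma~\ref{lemma:Igusa} and Proposition~\ref{proposition:lc-by-finite-morphism} we have $\lct_{O_y}(X,C_x)=\frac{5}{6}$, hence $\lct(X,\frac{3}{5}C_x)=\frac{25}{18}$, which gives $\lct(X)\leq \frac{25}{18}$. The curve $C_y$ splits as $L_1+L_2$ with $L_i=\{y=0,\,t=\pm i x^4\}$; these smooth rational curves meet only at $O_z$ with local tangency of order $4$, so that $L_1\cdot L_2=4/11$, $L_i^2=-13/55$, and a weighted blow-up computation gives $\lct(X,\frac{3}{7}C_y)=\frac{35}{24}>\frac{25}{18}$.

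To prove $\lct(X)\geq \frac{25}{18}$, suppose there exists an effective $\Q$-divisor $D\qlineq -K_X$ such that $(X,\frac{25}{18}D)$ is not log canonical at some point $P\in X$. Lemma~\ref{lemma:convexity} applied successively with $\frac{3}{5}C_x$ and with $\frac{3}{7}C_y$ allows us to assume that $C_x\not\subset\Supp(D)$ and, after possibly swapping the two components, that $L_2\not\subset\Supp(D)$. The key numerical data are $D\cdot C_x=6/77$ and $D\cdot L_i=3/55$. At a smooth point $P\in C_x$ we have $\mult_P(D)\leq 6/77<18/25$; at $O_y$, since $\mult_{O_y}(C_x)=2$, we get $\mult_{O_y}(D)\leq \frac{7}{2}(D\cdot C_x)=3/11<18/25$; and at $O_z$ we use $L_2$ to get $\mult_{O_z}(D)\leq 11(D\cdot L_2)=3/5<18/25$. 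Hence $P\notin C_x\cup\{O_z\}$. To exclude $P$ from $L_1\setminus\{O_z\}$, which includes the singular point $P_1$, write $D=m_1L_1+\Omega$ with $L_1\not\subset\Supp(\Omega)$; then $m_1 L_1\cdot L_2\leq D\cdot L_2$ forces $m_1\leq 3/20$, and one checks that both the conditions $\frac{25}{18}\,\Omega\cdot L_1\leq 1$ and $5\cdot\frac{25}{18}\,\Omega\cdot L_1\leq 1$ of Lemma~\ref{lemma:handy-adjunction} hold, giving log canonicity along $L_1\setminus\{O_z\}$. Points of $L_2\setminus\{O_z\}$ are handled directly since $L_2\not\subset\Supp(D)$: one has $\mult_P(D)\leq 3/55<18/25$ at smooth points and $\mult_{P_2}(D)\leq 5(D\cdot L_2)=3/11<18/25$. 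Thus $P$ must be a smooth point of $X$ lying off $C_x\cup C_y$.

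For such a $P$, consider the pencil $\mathcal{L}$ on $X$ generated by $x^7$ and $y^5$; its base locus is $\{x=y=0\}\cap X=\{O_z\}$, so there passes a unique member $C\in\mathcal{L}$ through $P$, defined by $y^5=\beta x^7$ for some $\beta\neq 0$ because $P\notin C_x\cup C_y$. We claim that $C$ is irreducible: in the affine chart $y=1$, the equation $x^7=\beta^{-1}$ has seven roots, yielding seven smooth affine curves on $X$ that are permuted and identified by the $\Z_7$-action at $O_y$ into a single irreducible affine curve, and a projectivization-and-elimination argument analogous to the one at the end of the proof of Lemma~\ref{lemma:I-3-W-5-7-11-13-D-33} shows that no extra irreducible component appears in the closure except the boundary point $O_z$. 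Since $C$ is locally cut out by $x=\mathrm{const}$ on the smooth part of $X$, it is smooth at $P$, and therefore $\lct_P(X,\frac{3}{35}C)=\frac{35}{3}>\frac{25}{18}$, so Lemma~\ref{lemma:convexity} lets us assume $C\not\subset\Supp(D)$. Then
\[
\mult_P(D)\leq D\cdot C=\frac{3\cdot 35\cdot 40}{5\cdot 7\cdot 11\cdot 20}=\frac{6}{11}<\frac{18}{25},
\]
which contradicts the failure of log canonicity at $P$ by Lemma~\ref{lemma:multiplicity}. The main obstacle is the irreducibility of $C$, handled exactly as in the preceding lemma; the remaining steps are routine bookkeeping with intersection numbers and Lemma~\ref{lemma:handy-adjunction}.
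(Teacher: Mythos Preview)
Your argument is correct and follows the same overall scheme as the paper's proof, but the treatment of smooth points $P\notin C_x\cup C_y$ is genuinely different. The paper disposes of such points in one line by invoking Lemma~\ref{lemma:Carolina} with $k=40$: the monomials $x^8,\,xy^5,\,x^4t\in H^0(\mathbb P,\mathcal O_{\mathbb P}(40))$ and the finiteness of the projection $X\dasharrow\mathbb P(5,7,20)$ outside $C_y$ give $\mult_P(D)\le\frac{3\cdot40\cdot40}{5\cdot7\cdot11\cdot20}=\frac{48}{77}<\frac{18}{25}$. You instead run the pencil $\mathcal L=\langle x^7,y^5\rangle$ directly, prove the member $C$ through $P$ is irreducible via the $\mathbb Z_7$-orbit argument in the chart $y=1$, and get the sharper bound $\mult_P(D)\le D\cdot C=\frac{6}{11}$. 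Your route is more hands-on but entirely self-contained; the paper's is a one-line citation. (Your normal form $t^2+x^8$ versus the paper's $t(t-x^4)$ is an inessential coordinate choice.)

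One small gap: the assertion ``$C$ is smooth at $P$'' is not justified and can fail. In the $\mathbb Z_7$-cover the branch through $P$ is $\{x=x_0\}\cap\tilde X$, locally $t^2+z^3+\epsilon x_0^3 z+\text{const}=0$ in $(z,t)$; when $t_0=0$ and $3z_0^2+\epsilon x_0^3=0$ this acquires a double point. What you actually need, and what this equation shows immediately, is $\mult_P(C)\le 2$, which is more than enough to conclude that $(X,\frac{5}{42}C)$ is log canonical at $P$ and to run Lemma~\ref{lemma:convexity}; the final inequality $\mult_P(D)\le D\cdot C$ then holds regardless of the multiplicity of $C$. Also, a cosmetic slip: the singularity at $O_y$ has type $\frac{1}{7}(4,6)=\frac{1}{7}(2,3)$ (the tangent directions are $z,t$), not $\frac{1}{7}(5,6)$; this does not affect anything.
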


\begin{proof}
  The surface $X$ can be defined by the
quasihomogeneous equation
$$
t(t- x^4)+yz^{3}+xy^{5}+\epsilon x^{3}y^{2}z,
$$
where $\epsilon\in\mathbb{C}$. Note that $X$ is singular at the
points $O_x$, $O_y$, $O_{z}$ and $Q_5=[1:0:0:1]$.

The curve $C_x$ is irreducible. We have
$$
\lct(X, \frac{3}{5}C_x)=\frac{25}{18}.%
$$
Therefore, $\lct(X)\leqslant \frac{25}{18}$. Meanwhile, the curve $C_{y}$ is
reducible. It consists of two irreducible components $L_{yt}$ and $R_y=\{y=t-x^4=0\}$. The curve $L_{yt}$ intersects $R_y$ only at the point $O_z$.
It is easy to see
$$
L_{yt}^2=R_y^2=-\frac{13}{55},\ L_{yt}\cdot R_{y}=\frac{4}{11}.%
$$

Suppose that $\lct(X)<\frac{25}{18}$. Then there is an effective
$\Q$-divisor $D\qlineq -K_X$ such that the pair
$(X,\frac{25}{18}D)$ is not log canonical at some point $P$. By
Lemma~\ref{lemma:convexity}, we may assume that the support of $D$
does not contain the curve $C_x$. Moreover, we may assume that the
support of $D$ does not contain either $L_{yt}$ or $R_y$ since
$$
\lct(X, \frac{3}{7}C_y)=\frac{35}{24}>\frac{25}{18}.%
$$
Then one of the inequalities
\[\mult_{O_z}(D)\leqslant 11D\cdot L_{yt}=\frac{3}{5}<\frac{18}{25}, \ \ \mult_{O_z}(D)\leqslant 11D\cdot R_{y}=\frac{3}{5}<\frac{18}{25}\]
must hold, and hence the point $P$ cannot be the point $O_z$. Also, since $7D\cdot C_x=\frac{6}{11}<\frac{18}{25}$, the point $P$ cannot belong to the curve $C_x$.

We write $D=aL_{yt}+bR_y+\Omega$, where $\Omega$ is an effective $\mathbb{Q}$-divisor whose support contains neither $L_{yt}$ nor $R_y$. If $a\ne 0$, then we have
\[\frac{3}{55}=D\cdot R_y\geqslant aL_{yt}\cdot R_y=\frac{4a}{11}.\]
Therefore, $a\leqslant \frac{3}{20}$. By the same way, we also obtain $b\leqslant \frac{3}{20}$.

Since we have
\[5(D-aL_{yt})\cdot L_{yt}=\frac{3+13a}{11}<\frac{18}{25},\ \  5(D-bR_{y})\cdot R_{y}=\frac{3+13a}{11}<\frac{18}{25}\]
Lemma~\ref{lemma:handy-adjunction} implies that the point $P$ is in the outside of $C_y$.
Consequently, the point $P$ is located in the outside of $C_x\cup C_y$. However,
since $H^0(\P, \mathcal{O}_\P(40))$
contains monomials $x^{8}, xy^5, x^4t$ and the natural
projection $X\dasharrow\mathbb{P}(5,7,20)$ is a finite morphism
outside of the curve $C_{y}$, Lemma~\ref{lemma:Carolina} shows that the point $P$ must belong to
the set $C_x\cup C_y$. This is a contradiction.
\end{proof}

\begin{lemma}
\label{lemma:I-3-W-11-21-29-37-D-95} Let $X$ be a quasismooth
hypersurface of degree $95$ in $\mathbb{P}(11,21,29,37)$. Then
$\lct(X)=\frac{11}{4}$.
\end{lemma}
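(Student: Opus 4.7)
The defining equation may be normalised to $f(x,y,z,t)=yt^{2}+z^{2}t+xy^{4}+x^{6}z=0$; the only singularities of $X$ are then the coordinate points $O_{x},O_{y},O_{z},O_{t}$, of types $\frac{1}{11}(1,7)$, $\frac{1}{21}(1,2)$, $\frac{1}{29}(1,-6)$, $\frac{1}{37}(1,6)$ respectively. Each restricted curve splits into a line and a residual component meeting with tangential contact at one of these singular points: $C_{x}=L_{xt}+R_{x}$ at $O_{y}$, $C_{y}=L_{yz}+R_{y}$ at $O_{t}$, $C_{z}=L_{yz}+R_{z}$ at $O_{x}$, $C_{t}=L_{xt}+R_{t}$ at $O_{z}$, where $R_{x}=\{x=z^{2}+yt=0\}$ and so on. At $O_{y}$ the local equation of $C_{x}$ in the orbifold chart is $t(t+z^{2})$, so Lemma~\ref{lemma:Igusa} gives $\lct(X,C_{x})=\frac{3}{4}$ and hence $\lct(X,\frac{3}{11}C_{x})=\frac{11}{4}$; the same Igusa calculation at $O_{t},O_{x},O_{z}$ produces $\lct(X,\frac{3}{21}C_{y})=\frac{49}{12}$, $\lct(X,\frac{3}{29}C_{z})=\frac{145}{24}$, $\lct(X,\frac{3}{37}C_{t})=\frac{37}{8}$, all strictly larger than $\frac{11}{4}$. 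Thus $\lct(X)\le\frac{11}{4}$.

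For the reverse inequality, assume toward contradiction that an effective $\Q$-divisor $D\sim_{\Q}-K_{X}$ makes $(X,\frac{11}{4}D)$ not log canonical at some $P$. One records the intersections $D\cdot L_{xt}=\frac{1}{203}$, $D\cdot R_{x}=\frac{2}{259}$, $D\cdot L_{yz}=\frac{3}{407}$, $D\cdot R_{y}=\frac{18}{1073}$, $D\cdot R_{z}=\frac{2}{77}$, $D\cdot R_{t}=\frac{12}{319}$, together with the self-intersections and the contact numbers $L_{xt}\cdot R_{x}=\frac{2}{21}$, $L_{yz}\cdot R_{y}=\frac{6}{37}$, $L_{yz}\cdot R_{z}=\frac{2}{11}$, $L_{xt}\cdot R_{t}=\frac{4}{29}$, and the transverse $R_{x}\cdot L_{yz}=\frac{1}{37}$ at $O_{t}$. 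Lemma~\ref{lemma:convexity} applied to each $\frac{3}{a_{i}}C_{i}$ allows us to assume $D$ omits some component of every $C_{i}$. Then Method~3.1 of Section~5 excludes each singular point by intersecting $D$ with a component through that point which is not in $\Supp(D)$: e.g.\ at $O_{y}$ one of $21\cdot\frac{1}{203}=\frac{3}{29}$ or $21\cdot\frac{2}{259}=\frac{6}{37}$ bounds $\mult_{O_{y}}(D)$; at $O_{t}$ either $L_{yz}$ yields $\frac{3}{11}$ or $R_{x}$ yields $\frac{2}{7}$; and $O_{x},O_{z}$ are handled similarly (using $\mult_{O_{z}}(R_{t})=4$ where needed). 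Smooth points on any component $L\in\{L_{xt},R_{x},L_{yz},R_{y},R_{z},R_{t}\}$ are then excluded by Method~3.2: writing $D=mL+\Omega$ with $m$ bounded above through the intersection with $L$'s partner component, Lemma~\ref{lemma:handy-adjunction} gives $\Omega\cdot L$ well below $\frac{4}{11}$ (or $\frac{4}{11r}$ at an index-$r$ singularity of $L$).

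The remaining case is $P$ smooth with $P\notin C_{x}\cup C_{y}\cup C_{z}\cup C_{t}$. Here the Carolina bound $\frac{Ikd}{a_{0}a_{1}a_{2}a_{3}}$ just fails: the smallest $k$ for which $H^{0}(\P,\mathcal{O}_{\P}(k))$ contains two monomials each of the forms $x^{\alpha}y^{\beta}$ and $x^{\gamma}z^{\delta}$ is $k=11\cdot 29=319$, and $\frac{3\cdot 319\cdot 95}{11\cdot 21\cdot 29\cdot 37}\approx 0.367>\frac{4}{11}$. I therefore use the pencil $\mathcal{L}$ on $X$ defined by $\lambda yt+\mu z^{2}=0$ (degree $58$), whose only base component is $L_{yz}$. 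Through $P$ passes a unique member $C_{\alpha}=L_{yz}+D_{\alpha}$ with $\alpha=-y(P)t(P)/z(P)^{2}\ne 0$, and in the chart $x=1$ the residual curve $D_{\alpha}$ (after eliminating $y$ via $yt=-\alpha z^{2}$) is cut out by
$$
(1-\alpha)\,zt^{5}+\alpha^{4}\,z^{7}+t^{4}=0.
$$
This polynomial is irreducible for every $\alpha\ne 0$ (for $\alpha=1$ it is $z^{7}+t^{4}$; for other $\alpha$ a Newton-polygon / discriminant argument applies), so $D_{\alpha}$ is an irreducible curve through $P$ and smooth there. Since $\lct(X,C_{\alpha})\ge\frac{7}{12}$ (controlled by the Igusa calculation for the order-$6$ contact of $L_{yz}$ with $D_{\alpha}$ at $O_{t}$), the pair $(X,\frac{33}{232}C_{\alpha})$ is log canonical and Lemma~\ref{lemma:convexity} allows us to assume $D_{\alpha}\not\subset\Supp(D)$. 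Then
$$
\frac{4}{11}<\mult_{P}(D)\le D\cdot D_{\alpha}=D\cdot C_{\alpha}-D\cdot L_{yz}=\frac{190}{2849}-\frac{3}{407}=\frac{169}{2849}<\frac{4}{11},
$$
a contradiction. The main technical obstacle I anticipate is the delicate sub-case at $O_{t}$ in which $D$ contains both $L_{yz}$ and $R_{x}$ simultaneously (forcing $R_{y},L_{xt}\notin\Supp(D)$): neither of the two available handy-adjunction bounds is individually sharp enough, and one has to carry out a weighted blow-up of $O_{t}$ with weights reflecting the order-$6$ contact, in the style of Lemmas~\ref{lemma:I-6-infinite-series-1-n-2} and~\ref{lemma:I-2-W-14-17-29-41-D-99}, to extract the final contradiction.
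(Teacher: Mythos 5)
Your setup coincides with the paper's (same normal form $yt^{2}+tz^{2}+xy^{4}+x^{6}z=0$, same splittings $C_{x}=L_{xt}+R_{x}$, etc., same threshold $\lct(X,\frac{3}{11}C_{x})=\frac{11}{4}$, and the same pencil $\lambda yt+\mu z^{2}=0$ with $D\cdot D_{\alpha}=\frac{169}{2849}$), but the proof has a genuine gap: the point $O_{t}$ is never excluded. Your claim that ``at $O_{t}$ either $L_{yz}$ yields $\frac{3}{11}$ or $R_{x}$ yields $\frac{2}{7}$'' does not follow from Lemma~\ref{lemma:convexity}: applied to $C_{x}=L_{xt}+R_{x}$ and $C_{y}=L_{yz}+R_{y}$ it only lets you delete one component of each from $\Supp(D)$, and the deleted pair may well be $L_{xt}$ and $R_{y}$. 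Then both $L_{yz}$ and $R_{x}$ can lie in $\Supp(D)$, and the only curve through $O_{t}$ known to be off the support is $R_{y}$, which gives $37\,D\cdot R_{y}=\frac{18}{29}>\frac{4}{11}$ — useless. You acknowledge this in your closing paragraph and defer the weighted blow-up, but that deferred computation is the core of the lemma: one must first pin down the coefficients $a$ of $L_{yz}$ and $b$ of $R_{x}$ in $D$ (via $D\cdot R_{y}$ and $D\cdot L_{xt}$), then perform the weighted blow-up of $O_{t}$ with weights $(13,4)$, a further weighted blow-up at the resulting $\frac{1}{13}(1,2)$-point, and an ordinary blow-up after that, tracking discrepancies and intersection numbers at every stage. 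Until that chain of estimates is carried out, $\lct(X)\geqslant\frac{11}{4}$ is not established.

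A second, smaller error: your irreducibility claim for the residual curve $D_{\alpha}$ fails for the special member $yt+z^{2}=0$ of the pencil, which contains $R_{x}=\{x=yt+z^{2}=0\}$ as a component. Since $R_{x}$ lies entirely inside $\{x=0\}$, it is invisible in your affine chart $x=1$, which is why your factorisation $(1-\alpha)zt^{5}+\alpha^{4}z^{7}+t^{4}$ looks irreducible there. For that member the correct decomposition is $L_{yz}+R_{x}+M$; since $P\notin C_{x}$ one still has $P\in M$ and the bounds $D\cdot M<\frac{4}{11}$, $M^{2}>0$ close the case, but it must be treated separately. Relatedly, Lemma~\ref{lemma:convexity} only guarantees that \emph{some} component of the member — possibly $L_{yz}$ — is omitted from $\Supp(D)$, so you cannot simply assume $D_{\alpha}\not\subset\Supp(D)$; the cleaner route (taken in the paper) is to bound the coefficient of the residual component by its positive intersection with $C_{t}$, along which the pair is already known to be log canonical, and then apply Lemma~\ref{lemma:handy-adjunction} using $D_{\alpha}^{2}>0$.
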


\begin{proof}
We may assume that the surface $X$ is defined by the quasihomogeneous equation
\[t^2y+tz^2+xy^4+x^6z=0.\]
The surface $X$ is singular at the points $O_x$, $O_y$, $O_z$, $O_t$. Each of the divisors $C_x$, $C_y$, $C_z$, and $C_t$ consists of two irreducible and reduced components. The divisor $C_x$ (resp. $C_y$, $C_z$, $C_t$) consists of $L_{xt}$ (resp. $L_{yz}$, $L_{yz}$, $L_{xt}$)
and $R_x=\{x=yt+z^2=0\}$ (resp. $R_y=\{y=zt+x^6=0\}$, $R_z=\{z=xy^3+t^2=0\}$, $R_t=\{t=y^4+x^5z=0\}$).
Also, we see that
\[L_{xt}\cap R_x=\{O_y\}, \ L_{yz}\cap R_y=\{O_t\}, \ L_{yz}\cap R_z=\{O_x\}, \ L_{xt}\cap R_t=\{O_z\}.\]
It is easy to check that $\lct(X, \frac{3}{11}C_x)=\frac{11}{4}$
is less than each of the numbers $\lct(X, \frac{3}{21}C_y)$,
$\lct(X, \frac{3}{29}C_z)$ and $\lct(X, \frac{3}{37}C_t)$.
Therefore, $\lct(X)\leq\frac{11}{4}$.

Suppose that
$\lct(X)<\frac{11}{4}$. Then, there is an effective $\Q$-divisor
$D\qlineq -K_X$ such that the log pair $(X, \frac{11}{4}D)$ is not
log canonical at some point $P\in X$.

The intersection numbers among the divisors $D$, $L_{xt}$, $L_{yz}$, $R_x$, $R_y$, $R_z$, $R_t$ are as follows:

\[D\cdot L_{xt}=\frac{1}{7\cdot 29}, \ \ D\cdot R_x=\frac{2}{7\cdot 37}, \ \ D\cdot R_y=\frac{18}{29\cdot 37}, \]
\[D\cdot L_{yz}=\frac{3}{11\cdot 37}, \ \ D\cdot R_z=\frac{2}{7\cdot 11}, \ \ D\cdot R_t=\frac{12}{11\cdot 29}, \]
\[L_{xt}\cdot R_x=\frac{2}{21}, \ \ L_{yz}\cdot R_y=\frac{6}{37}, \ \ L_{yz}\cdot R_z=\frac{2}{11}, \ \ L_{xt}\cdot R_t=\frac{4}{29},\]

\[L_{xt}^2=-\frac{47}{21\cdot 29}, \ \ R_x^2=-\frac{52}{21\cdot 37}, \ \ R_y^2=-\frac{48}{29\cdot 37},\]

\[L_{yz}^2=-\frac{45}{11\cdot 37}, \ \ R_z^2=\frac{16}{11\cdot 21}, \ \ R_t^2=\frac{104}{11\cdot 29}.\]
By Lemma~\ref{lemma:convexity} we may assume that the support of $D$ does not contain at least one component of each divisor $C_x$, $C_y$, $C_z$, $C_t$.
The inequalities
\[21D\cdot L_{xt}=\frac{3}{29}<\frac{4}{11}, \ \ \  21D\cdot R_x=\frac{6}{37}<\frac{4}{11}\]
imply that $P\ne O_y$.
The inequalities
\[11D\cdot L_{yz}=\frac{3}{37}<\frac{4}{11}, \ \ \  11D\cdot R_z=\frac{2}{7}<\frac{4}{11}\]
imply that $P\ne O_x$.
The inequalities
\[29D\cdot L_{xt}=\frac{1}{7}<\frac{4}{11}, \ \ \  \frac{29}{4}D\cdot R_t=\frac{3}{11}<\frac{4}{11}\]
imply that $P\ne O_z$. The curve $R_t$ is singular at the point $O_z$ with multiplicity $4$.

We write $D=m_0L_{xt}+m_1L_{yz}+m_2R_x+m_3R_y+m_4R_z+m_5R_t+\Omega$, where  $\Omega$ is an effective divisor whose support contains none of the curves $L_{xt}$, $L_{yz}$, $R_x$, $R_y$, $R_z$, $R_t$. Since the pair $(X, \frac{11}{4}D)$ is log canonical at the points $O_x$, $O_y$, $O_z$, the numbers $m_i$ are at most $\frac{4}{11}$. Then by Lemma~\ref{lemma:handy-adjunction} the following inequalities enable us to conclude that either the point $P$ is in the outside of $C_x\cup C_y\cup C_z\cup C_t$ or $P=O_t$:
\[(D-m_0L_{xt})\cdot L_{xt}=\frac{3+47m_0}{21\cdot 29}\leqslant \frac{4}{11},\ \
(D-m_1L_{yz})\cdot L_{yz}=\frac{3+45m_1}{11\cdot 37}\leqslant \frac{4}{11},\]
\[(D-m_2R_{x})\cdot R_{x}=\frac{6+52m_2}{21\cdot 37}\leqslant \frac{4}{11},\ \
(D-m_3R_{y})\cdot R_{y}=\frac{18+48m_3}{29\cdot 37}\leqslant \frac{4}{11},\]
\[(D-m_4R_{z})\cdot R_{z}=\frac{6-16m_4}{11\cdot 21}\leqslant \frac{4}{11},\ \
(D-m_5R_{t})\cdot R_{t}=\frac{12-104m_5}{11\cdot 29}\leqslant \frac{4}{11}.\]

Suppose that $P\ne O_t$. Then we consider the pencil $\mathcal{L}$
defined by $\lambda yt+\mu z^2=0$, $[\lambda:\mu]\in\P^1$. The
base locus of the pencil $\mathcal{L}$ consists of the curve
$L_{yz}$ and the point $O_y$. Let $E$ be the unique divisor in
$\mathcal{L}$ that passes through the point $P$. Since $P\not\in
C_x\cup C_y\cup C_z\cup C_t$, the divisor $E$ is defined by the
equation $z^2=\alpha yt$, where $\alpha\ne 0$.

Suppose that $\alpha\ne -1$. Then the curve $E$ is isomorphic to the curve defined by the equations $yt=z^2$ and $t^2y+xy^4+x^6z=0$. Since the curve $E$ is isomorphic to a general curve in $\mathcal{L}$, it is smooth at the point $P$. The affine piece of $E$ defined by $t\ne 0$ is the curve given by $z(z+xz^7+x^6)=0$. Therefore, the divisor $E$ consists of two irreducible and reduced curves $L_{yz}$ and $C$. We have the intersection number
\[D\cdot C=D\cdot E-D\cdot L_{yz}=\frac{169}{7\cdot 11\cdot 37}.\]
Also, we see
\[C^2=E\cdot C- C\cdot L_{yz}\geq E\cdot C-C_y\cdot C>0\]
since $C$ is different from $R_y$. The multiplicity of $D$ along the curve $C$ is at most $\frac{4}{11}$ since the intersection number $C\cdot C_t$ is positive and the pair $(X, \frac{11}{4}D)$ is log canonical along the curve $C_t$.
Then by Lemma~\ref{lemma:handy-adjunction} the inequality $D\cdot C<\frac{4}{11}$ gives us a contradiction.

Suppose that $\alpha=-1$. Then divisor $E$ consists of three irreducible and reduced curves $L_{yz}$, $R_x$, and $M$.  Note that the curve $M$ is different from the curves $R_y$ and $L_{xt}$. Also, it is smooth at the point $P$. We have
\[D\cdot M=D\cdot E- D\cdot L_{yz}-D\cdot R_x=\frac{147}{7\cdot 11\cdot 37},\]
\[M^2=E\cdot M-L_{yz}\cdot M- R_x\cdot M \geq E\cdot M-C_y\cdot M- C_x\cdot M>0.\]
The multiplicity of $D$ along the curve $M$ is at most $\frac{4}{11}$ since the intersection number $M\cdot C_t$ is positive and the pair $(X, \frac{11}{4}D)$ is log canonical along the curve $C_t$.
By Lemma~\ref{lemma:handy-adjunction} the inequality $D\cdot M<\frac{4}{11}$ gives us a contradiction.
Therefore, $P=O_t$.

Put $D=aL_{yz}+bR_{x}+\Delta$, where $\Delta$ is an effective
$\Q$-divisor whose support contains neither $L_{yz}$ nor $R_x$.
Then $a>0$ since  otherwise we would obtain an absurd inequality
$$
\frac{3}{11}=37D\cdot
L_{yz}\geqslant\mult_{O_t}(D)>\frac{4}{11}.
$$
Therefore, we may assume that
$R_{y}\not\subset\mathrm{Supp}(\Delta)$ by
Lemma~\ref{lemma:convexity}.

If $b>0$, the curve $L_{xt}$ is not contained in the support of $D$, and hence
$$
\frac{3}{21\cdot 29}=D\cdot L_{xt}\geqslant b R_x \cdot L_{xt}=\frac{2b}{21}.%
$$
Therefore, $b\leqslant \frac{3}{58}$. Similarly, we have
$$
\frac{18}{29\cdot 37}=D\cdot R_y\geqslant \frac{6a}{37}+\frac{b}{37}+\frac{\mult_{O_t}(D)-a-b}{37}>\frac{5a}{37}+\frac{4}{11\cdot 37},%
$$
and hence $a<\frac{82}{5\cdot 11\cdot 29}$.

Let $\pi\colon\bar{X}\to X$ be the weighted blow up of the point
$O_t$ with weights $(13,4)$ and let $F$ be the exceptional curve
of the morphism $\pi$. Then $F$ contains two singular points
$Q_{13}$ and $Q_{4}$ of $\bar{X}$ such that $Q_{13}$ is a singular point of
type $\frac{1}{13}(1,2)$ and $Q_{4}$ is a singular point of type
$\frac{1}{4}(3,1)$. Then
$$
K_{\bar{X}}\qlineq\pi^*(K_X)-\frac{20}{37}F,\
\bar{L}_{yz}\qlineq\pi^*(L_{yz})-\frac{4}{37}F,\
\bar{R}_x\qlineq\pi^*(R_x)-\frac{13}{37}F,\
\bar{\Delta}\qlineq\pi^*(\Delta)-\frac{c}{37}F,
$$
where $\bar{L}_{yz}$, $\bar{R}_x$ and $\bar{\Delta}$ are the
proper transforms of $L_{yz}$, $R_x$ and $\Delta$ by $\pi$,
respectively, and $c$ is a non-negative rational number.

The log pull-back of the log pair $(X,\frac{11}{4}D)$ by $\pi$ is
the log pair
$$
\left(\bar{X},\ \frac{11a}{4}\bar{L}_{yz}+ \frac{11b}{4}\bar{R}_x+ \frac{11}{4}\bar{\Delta}+\theta_1 F\right),%
$$
where
$$
\theta_1=\frac{11(4a+13b+c)+80}{4\cdot37}.
$$
This pair is not log canonical at some point $Q\in F$. We have
\[0\leqslant\bar{\Delta}\cdot\bar{R}_x=\frac{6+52b}{21\cdot 37}-\frac{a}{37}-\frac{c}{4\cdot 37}.\]
This inequality shows $4a+c\leqslant\frac{4}{21}(6+52b)$. Then
\[\theta_1=\frac{11(4a+c)}{4\cdot 37}+\frac{143b}{4\cdot37}+\frac{20}{37}
\leqslant\frac{11}{ 21\cdot 37}(6+52b)+\frac{143b}{4\cdot37}+\frac{20}{37}=\frac{1944+5291b}{4\cdot21\cdot37}<1\]
since $b\leqslant\frac{3}{58}$. Note that
$F\cap \bar{R}_x=\{Q_{4}\}$ and $F\cap\bar{L}_{yz}=\{Q_{13}\}$.

Suppose that  the point $Q$ is neither $Q_4$ nor $Q_{13}$. Then the pair $\left(\bar{X},\frac{11}{4}\bar{\Delta}+F\right)$ is not log canonical at the point $Q$. Then
$$
\frac{11c}{16\cdot 13}=\frac{11}{4}\bar{\Delta}\cdot F>1
$$
by Lemma~\ref{lemma:adjunction}. However, $c\leqslant 4a+c\leqslant\frac{4}{21}(6+52b)$. This is a contradiction since $b\leqslant \frac{3}{58}$.
Therefore, the point $Q$ is either $Q_4$ or $Q_{13}$.

Suppose that the point $Q$ is the point $Q_{4}$. Then the log pair
$\left(\bar{X},\frac{11b}{4}\bar{R}_x+ \frac{11}{4}\bar{\Delta}+\theta_1 F\right)$ is not log canonical at the point $Q$. It then follows
from Lemma~\ref{lemma:adjunction} that
\[1<4\left(\frac{11}{4}\bar{\Delta}+\theta_{1}F\right)\cdot\bar{R}_{x}
=11\left(\frac{6+52b}{21\cdot 37}-\frac{a}{37}-\frac{c}{4\cdot 37}\right)+\theta_{1}.\]
However,
\[11\left(\frac{6+52b}{21\cdot 37}-\frac{a}{37}-\frac{c}{4\cdot 37}\right)+\theta_{1}=\frac{1944+5291b}{4\cdot21\cdot37}<1.\]
 Therefore, the point $Q$ must be
the point $Q_{13}$.

Let $\phi\colon\tilde{X}\to \bar{X}$ be the weighted blow up at
the point $Q_{13}$ with weights $(1, 2)$. Let $G$ be the
exceptional divisor of the morphism $\phi$. Then $G$ contains one
singular point $Q_{2}$ of the surface $\tilde{X}$ that is a
singular point of type $\frac{1}{2}(1,1)$. Let $\tilde{L}_{yz}$,
$\tilde{R}_x$, $\tilde{\Delta}$ and $\tilde{F}$ be the proper
transforms of $L_{yz}$, $R_x$, $\Delta$ and $F$ by $\phi$,
respectively. We have
$$
K_{\tilde{X}}\qlineq\phi^*(K_{\bar{X}})-\frac{10}{13}G,\ \tilde{L}_{yz}\qlineq\phi^*(\bar{L}_{yz})-\frac{2}{13}G, \ \tilde{F}\qlineq\phi^*(F)-\frac{1}{13}G,\ \tilde{\Delta}\qlineq\phi^*(\bar{\Delta})-\frac{d}{13}G,%
$$
where $d$ is a non-negative rational number. The log pull-back of the
log pair $(X, \frac{11}{4}D)$ via $\pi\circ \phi$ is
$$
\left(\tilde{X},\frac{11a}{4}\tilde{L}_{yz}+\frac{11b}{4}\tilde{R}_x+\frac{11}{4}\tilde{\Delta}+\theta_1\tilde{F}+\theta_2 G\right),%
$$
where
$$\theta_2=\frac{11}{4\cdot 13}(2a+d)+\frac{\theta_1}{13}+\frac{10}{13}=\frac{1560+11(78a+13b+c+37d)}{4\cdot 13\cdot 37}.$$
This log pair is not log canonical at some point $O\in G$. We have
\[0\leqslant \tilde{\Delta}\cdot\tilde{L}_{yz}=\frac{3+45a}{11\cdot 37}-\frac{b}{37}-\frac{c}{13\cdot 37}-\frac{d}{13}.\]
We then obtain $13b+c+37d\leqslant \frac{13}{11}(3+45a)$.
Since $a<\frac{82}{5\cdot 11\cdot 29}$, we see
\[\theta_2=\frac{1560+11(78a+13b+c+37d)}{4\cdot 13\cdot 37}\leqslant
\frac{1560+858a}{4\cdot 13\cdot 37}+\frac{3+45a}{4\cdot 37}<1. \]

 Note that $\tilde{F}\cap G=Q_2$ and
$Q_{2}\not\in\tilde{L}_{yz}$. Suppose that
$O\not\in\tilde{F}\cup\tilde{L}_{yz}$. The log pair
$\left(\tilde{X},\frac{11}{4}\tilde{\Delta}+G\right)$  is not log
canonical at the point $O$. Applying Lemma~\ref{lemma:adjunction},
we get
$$
1<\frac{11}{4}\tilde{\Delta}\cdot G=\frac{11d}{4\cdot 2},%
$$
and hence $d>\frac{8}{11}$. However, $d\leqslant \frac{1}{37}(13b+c+37d)\leqslant \frac{13}{11\cdot 37}(3+45a)$. This is a contradiction since $a<\frac{82}{5\cdot 11\cdot 29}$.
Therefore, the point $O$ is either the point $Q_2$ or the intersection point of $G$ and $\tilde{L}_{yz}$. In the latter case, the pair $
\left(\tilde{X},\frac{11a}{4}\tilde{L}_{yz}+\frac{11}{4}\tilde{\Delta}+\theta_2 G\right)%
$ is not log canonical at the point $O$. Then,
applying
Lemma~\ref{lemma:adjunction}, we get
\[
1<\left(\frac{11}{4}\tilde{\Delta}+\theta_{2}G\right)\cdot\tilde{L}_{yz}=\frac{11}{4}\left(\frac{3+45a}{11\cdot
37}-\frac{b}{37}-\frac{c}{13\cdot
37}-\frac{d}{13}\right)+\theta_{2}.\] However,
\[\frac{11}{4}\left(\frac{3+45a}{11\cdot
37}-\frac{b}{37}-\frac{c}{13\cdot
37}-\frac{d}{13}\right)+\theta_{2}=\frac{11}{4}\left(\frac{3+45a}{11\cdot
37}\right)+\frac{1560+858a}{4\cdot 13\cdot 37}<1\]
since $a<\frac{82}{5\cdot 11\cdot 29}$.
Therefore, the point $O$ must be the point $Q_2$.

Let $\xi\colon\hat{X}\to \tilde{X}$ be the blow up at the
point $Q_{2}$  and let $H$ be the exceptional
divisor of $\xi$. We also let $\hat{L}_{yz}$, $\hat{R}_x$, $\hat{\Delta}$,
$\hat{G}$, and $\hat{F}$ be the proper transforms of $\tilde{L}_{yz}$,
$\tilde{R}_x$, $\tilde{\Delta}$,  $G$ and $\tilde{F}$ by $\xi$, respectively. Then
$\hat{X}$ is smooth along the exceptional divisor $H$. We have
$$
K_{\hat{X}}\qlineq\xi^*(K_{\tilde{X}}),\
\hat{G}\qlineq\xi^*(G)-\frac{1}{2}H,\
\hat{F}\qlineq\xi^*(\tilde{F})-\frac{1}{2}H,\
\hat{\Delta}\qlineq\xi^*(\tilde{\Delta})-\frac{e}{2}H,%
$$
where $e$ is a non-negative rational number. The log pull-back of the
log pair $(X, \frac{11}{4}D)$ via $\pi\circ \phi\circ \xi$ is
$$
\left(\hat{X},\frac{11a}{4}\hat{L}_{yz}+\frac{11b}{4}\hat{R}_x+\frac{11}{4}\hat{\Delta}+\theta_1\hat{F}+\theta_2 \hat{G}+\theta_{3}H\right),%
$$
where
$$
\theta_3=\frac{\theta_{1}+\theta_{2}}{2}+\frac{11e}{8}
=\frac{2600+11(130a+182b+14c+37d+481e)}{8\cdot 13\cdot 37}.$$
This log pair is not log canonical at some point $A\in H$. We have
\[
\frac{c}{13\cdot 4}-\frac{d}{13\cdot
2}-\frac{e}{2}=\hat{\Delta}\cdot\hat{F}\geqslant 0.\] Therefore,
$2d+26e\leqslant c$.
%\[0\le\tilde{\Delta}\cdot\hat{G}=\frac{d-e}{2}\]
Then
\[\begin{split}\theta_3&=\frac{2600+11(130a+182b+14c)}{8\cdot 13\cdot 37}
+\frac{11(d+13e)}{8\cdot 13}\leqslant\\
&\leqslant \frac{5200+11(260a+364b+65c)}{16\cdot 13\cdot 37}=\\
&=\frac{5200+4004b}{16\cdot 13\cdot 37}+\frac{11\cdot 65(4a+c)}{16\cdot 13\cdot 37}\leqslant\\
&\leqslant \frac{100+77b}{4\cdot 37}+\frac{5\cdot 11(6+52b)}{4\cdot 21 \cdot 37}=\frac{2430+4477b}{4\cdot21\cdot 37}<1\\
\end{split}
\]
since $b\leqslant \frac{3}{58}$ and $4a+c\leqslant \frac{4}{21}(6+52b)$.

Suppose that $A\not\in\hat{F}\cup\hat{G}$. Then the log pair $
\left(\hat{X},\frac{11}{4}\hat{\Delta}+\theta_{3}H\right)
$  is not log canonical at the point $A$. Applying
Lemma~\ref{lemma:adjunction}, we get
$$
1<\frac{11}{4}\hat{\Delta}\cdot H=\frac{11e}{4}.
$$
However, $$e\leqslant \frac{1}{26}(2d+26e)\leqslant \frac{c}{26}\leqslant \frac{1}{26}(4a+c)\leqslant
\frac{4(6+52b)}{21\cdot 26}\leqslant \frac{4}{11}.$$
Therefore, the point $A$ must be either in $\hat{F}$ or in $\hat{G}$.

Suppose that $A\in\hat{F}$. Then the log pair  $
\left(\hat{X},\frac{11}{4}\hat{\Delta}+\theta_1\hat{F}+\theta_{3}H\right)%
$ is not log canonical at the point $A$. Applying Lemma~\ref{lemma:adjunction},
we get
\[
1<\left(\frac{11}{4}\hat{\Delta}+\theta_{3}
H\right)\cdot\hat{F}=\frac{11}{4}\left(\frac{c}{4\cdot 13}-\frac{d}{2\cdot
13}-\frac{e}{2}\right)+
\theta_{3}=\frac{5200+11(260a+364b+65c)}{16\cdot 13\cdot 37}.
\]
However,
\[\frac{5200+11(260a+364b+65c)}{16\cdot 13\cdot 37}=\frac{400+11\cdot 28b}{16\cdot 37}+\frac{11\cdot 5(4a+c)}{16\cdot 37}\leqslant \frac{2430+4477b}{4\cdot 21\cdot 37}<1.
\]
Therefore, the point $A$ is the intersection point of $H$ and
$\hat{G}$. Then the log pair $
\left(\hat{X},\frac{11}{4}\hat{\Delta}+\theta_2
\hat{G}+\theta_{3}H\right) $ is not log canonical at the point
$A$. From Lemma~\ref{lemma:adjunction}, we obtain
\[
1<\left(\frac{11}{4}\hat{\Delta}+\theta_{3}H\right)\cdot\hat{G}=\frac{11}{4}\left(\frac{d}{2}-\frac{e}{2}\right)+\theta_{3}=\frac{2600+11(130a+182b+14c)}{8\cdot
13\cdot 37}
+\frac{77d}{4\cdot 13}.%
\]
However,
\[\frac{2600+11(130a+182b+14c)}{8\cdot 13\cdot 37}
+\frac{77d}{4\cdot 13}=\frac{100+55a}{4\cdot 37}
+\frac{77(13b+c+37d)}{4\cdot 13\cdot 37}\leqslant\frac{121+370a}{4\cdot 37}<1
\]
since $a<\frac{82}{5\cdot 11\cdot 29}$ and $13b+c+37d\leqslant \frac{13}{11}(3+45a)$.
The obtained contradiction completes the
proof.
\end{proof}

\begin{lemma}
\label{lemma:I-3-W-11-37-53-98-D-196}Let $X$ be a quasismooth
hypersurface of degree $196$ in $\mathbb{P}(11,37,53,98)$. Then
$\lct(X)=\frac{55}{18}$.
\end{lemma}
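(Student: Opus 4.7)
The plan is to follow the template of the nearby lemmas in Section~\ref{section:index-3}. First I take the defining equation to be $t^2 + yz^3 + xy^5 + x^{13}z = 0$, whose four monomials all have degree $196$, and verify that $X$ is quasismooth with singular locus $\{O_x, O_y, O_z\}$ (the point $O_t$ is absent because $f(0,0,0,1) = 1$). The curves $C_x$ and $C_y$ are both irreducible and reduced. In the orbifold chart at $O_y$, using $\partial f/\partial x \ne 0$ to eliminate $x$ via the implicit function theorem, the curve $C_x$ is cut out by $t^2 + z^3 = 0$ (up to terms of order $\geq 13$ in the local coordinates $(z,t)$), so by Lemma~\ref{lemma:Igusa} one has $\lct_{O_y}(X, C_x) = \tfrac{5}{6}$; everywhere else on $C_x$ the curve is smooth. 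This yields $\lct(X, \tfrac{3}{11}C_x) = \tfrac{11}{3}\cdot\tfrac{5}{6} = \tfrac{55}{18}$, giving the upper bound $\lct(X) \leq \tfrac{55}{18}$.

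The quickest route to the lower bound is Lemma~\ref{lemma:Boyer-Kollar}, which applies since $C_x$ is irreducible and reduced: it gives
\[ \lct(X) \geq \min\!\left(\frac{a_1 a_2}{dI},\ \lct\!\left(X, \tfrac{3}{11}C_x\right)\right) = \min\!\left(\tfrac{1961}{588},\, \tfrac{55}{18}\right) = \tfrac{55}{18}, \]
since $\tfrac{1961}{588} > \tfrac{55}{18}$. To stay within the paper's self-contained methodology (which does not invoke Lemma~\ref{lemma:Boyer-Kollar}), one argues by contradiction as in Lemma~\ref{lemma:I-2-W-11-43-61-113-D-226}: pick $D \sim_{\mathbb{Q}} -K_X$ with $(X, \tfrac{55}{18}D)$ not log canonical at some $P$, use Lemma~\ref{lemma:convexity} to assume $C_x, C_y \not\subseteq \mathrm{Supp}(D)$, and observe that the bounds
\[ 53\,D \cdot C_x = \tfrac{6}{37} < \tfrac{18}{55}, \qquad 11\,D \cdot C_y = \tfrac{6}{53} < \tfrac{18}{55}, \]
together with the cusp-corrected bound $2\,\mathrm{mult}_{O_y}(D) \leq 37\,D \cdot C_x$, force $P$ to be a smooth point of $X$ outside $C_x$. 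Then Lemma~\ref{lemma:Carolina} applied with $k = 583$, using the monomials $x^{53}, x^{16}y^{11}, z^{11} \in H^0(\mathbb{P}, \mathcal{O}_{\mathbb{P}}(583))$ and the fact that the projection $X \dashrightarrow \mathbb{P}(11,37,53)$ obtained from $t^2 = -(yz^3 + xy^5 + x^{13}z)$ is a finite double cover (so no curve through $P$ is contracted), yields $\mathrm{mult}_P(D) \leq \tfrac{6 \cdot 583}{11 \cdot 37 \cdot 53} = \tfrac{6}{37} < \tfrac{18}{55}$, a contradiction.

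The step requiring the most care is the local log canonical threshold computation at $O_y$: one must verify that after eliminating $x$ by the implicit function theorem the leading terms of $C_x$ in the coordinates $(z,t)$ are exactly $t^2 + z^3$ (an ordinary cusp with first Puiseux pair $(2,3)$, giving $\lct = \tfrac{1}{2}+\tfrac{1}{3} = \tfrac{5}{6}$) and not some worse singularity, and then confirm that $\tfrac{a_1 a_2}{dI} > \tfrac{55}{18}$ so that the cusp at $O_y$ is indeed the place where the global threshold is achieved. Everything else is numerical bookkeeping of the same kind used throughout Sections~\ref{section:index-2} and~\ref{section:index-3}.
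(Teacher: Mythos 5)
Your proposal is correct and the self-contained contradiction argument you give is essentially identical to the paper's proof: same defining equation, the same exclusion of singular points and of $C_x$ via $53\,D\cdot C_x=\tfrac{6}{37}$ and $11\,D\cdot C_y=\tfrac{6}{53}$, and the same application of Lemma~\ref{lemma:Carolina} with $k=583$ and the monomials $x^{53}$, $x^{16}y^{11}$, $z^{11}$ (the paper leaves implicit the finiteness of the projection from $O_t$, which you rightly spell out). The Boyer--Koll\'ar shortcut you mention is a legitimate alternative that the paper explicitly chooses not to use, and your cusp computation at $O_y$ supplies the detail behind the paper's bare assertion that $\lct(X,\tfrac{3}{11}C_x)=\tfrac{55}{18}$.
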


\begin{proof}
 The surface $X$ can be defined by the
quasihomogeneous equation
$$
t^{2}+yz^{3}+xy^{5}+x^{13}z=0.
$$
It is singular at the points $O_x$, $O_{y}$ and $O_z$.
The curves $C_x$ and $C_y$ are irreducible. We have
$$
\frac{55}{18}=\lct\left(X, \frac{3}{11}C_x\right)<\lct\left(X, \frac{3}{37}C_y\right)=\frac{37\cdot 5}{26},%
$$
and hence $\lct(X)\leqslant \frac{55}{18}$.

Suppose that $\lct(X)<\frac{55}{18}$. Then there is an effective
$\Q$-divisor $D\qlineq -K_X$ such that the pair $(X,\frac{55}{18}D)$ is
not log canonical at some point $P$.
 By
Lemma~\ref{lemma:convexity}, we may assume that the support of
the divisor $D$  contains neither  $C_x$ nor $C_y$. Then the inequalities
\[53D\cdot C_x=\frac{6}{37}<\frac{18}{55}, \ \ 11D\cdot C_y=\frac{6}{53}<\frac{18}{55}\]
show that the point $P$ is a smooth point in the outside of $C_x$.
However,
since $H^0(\P, \mathcal{O}_{\P}(583))$ contains the monomials
$x^{53}$, $y^{11}x^{16}$ and $z^{11}$, it follows from
Lemma~\ref{lemma:Carolina} that the point $P$ is either a singular point of $X$ or a point on $C_x$. This is a contradiction.
\end{proof}

\begin{lemma}
\label{lemma:I-3-W-13-17-27-41-D-95} Let $X$ be a quasismooth
hypersurface of degree $95$ in $\mathbb{P}(13,17,27,41)$. Then
$\lct(X)=\frac{65}{24}$.
\end{lemma}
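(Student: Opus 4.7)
The surface $X$ can be taken to be the quasismooth hypersurface defined by
\[
t^{2}x+tz^{2}+zy^{4}+yx^{6}=0
\]
(modulo admissible lower-weight corrections). A direct check shows that $\Sing(X)=\{O_{x},O_{y},O_{z},O_{t}\}$, of types $\tfrac{1}{13}(1,2)$, $\tfrac{1}{17}(1,11)$, $\tfrac{1}{27}(1,20)$, $\tfrac{1}{41}(1,4)$ respectively, and that the divisors $C_{i}$ decompose as $C_{x}=L_{xz}+R_{x}$, $C_{y}=L_{yt}+R_{y}$, $C_{z}=L_{xz}+R_{z}$, $C_{t}=L_{yt}+R_{t}$ with intersection pattern $L_{xz}\cap R_{x}=\{O_{t}\}$, $L_{yt}\cap R_{y}=\{O_{x}\}$, $L_{xz}\cap R_{z}=\{O_{y}\}$, $L_{yt}\cap R_{t}=\{O_{z}\}$. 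The plan is to establish the upper bound first: on the $\tfrac{1}{41}(17,27)$-cover at $O_{t}$ the components $L_{xz}$ and $R_{x}$ have local equations $z=0$ and $z+y^{4}=0$, so Lemma~\ref{lemma:Igusa} applied to $u(u+w^{4})$ gives $\lct_{O_{t}}(\mathbb{C}^{2},\{u(u+w^{4})=0\})=\tfrac{5}{8}$, which rescales to $\lct_{O_{t}}(X,\tfrac{3}{13}C_{x})=\tfrac{65}{24}$. The analogous local computations (contact/intersection orders $2$, $2$, $6$ for $C_{y},C_{z},C_{t}$ respectively) produce the strictly larger values $\tfrac{17}{4}$, $\tfrac{21}{4}$, $\tfrac{41}{8}$, so $\lct(X,\tfrac{3}{13}C_{x})$ realises the minimum of the $\lct(X,\tfrac{I}{a_{i}}C_{i})$ and $\lct(X)\le\tfrac{65}{24}$.

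For the reverse inequality I would argue by contradiction. Assume an effective $D\sim_{\mathbb{Q}}-K_{X}$ and a point $P\in X$ at which $(X,\tfrac{65}{24}D)$ is not log canonical. By Lemma~\ref{lemma:convexity} applied separately to each $C_{i}$ I may assume that at least one component of each $C_{i}$ is not contained in $\Supp(D)$. Elementary intersection estimates then exclude $O_{x}, O_{y}, O_{z}$: e.g.\ $13\,D\cdot L_{yt}=\tfrac{1}{9}<\tfrac{24}{65}$ handles $O_{x}$, and analogous bounds using $L_{xz},R_{z}$ and $L_{yt},R_{t}$ dispose of $O_{y}$ and $O_{z}$. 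Smooth points on $C_{x}\setminus\{O_{t}\}$ are excluded by Lemma~\ref{lemma:handy-adjunction} after writing $D=mL_{xz}+m'R_{x}+\Omega$ and bounding $m,m'$ via intersection with the opposite component (and with $C_{t}$ when needed). Smooth points off $C_{x}$ are excluded by Lemma~\ref{lemma:Carolina} applied to $\mathcal{O}_{\mathbb{P}}(k)$ for a small $k$ such as $k=351$ (two monomials of the form $x^{\alpha}y^{\beta}$ and two of the form $x^{\gamma}z^{\delta}$), noting that the curves contracted by the projection to $\mathbb{P}(13,17,27)$ are exactly the components of $C_{x}$; where the Carolina bound is too weak I would supplement it with a pencil argument using $\lambda xt+\mu z^{2}=0$, whose generic member is irreducible and cuts $D$ in a controlled way.

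The main obstacle is excluding $P=O_{t}$: the tangency of $L_{xz}$ and $R_{x}$ yields $41\,D\cdot R_{x}=\tfrac{4}{9}>\tfrac{24}{65}$, so neither a bare intersection bound nor a naive application of Lemma~\ref{lemma:handy-adjunction} suffices. I would mimic the cascade-blow-up strategy of Lemma~\ref{lemma:I-2-W-14-17-29-41-D-99}: write $D=mL_{xz}+bR_{x}+cR_{y}+\Delta$, derive the constraints $b\le\tfrac{24}{65}$ from log canonicity, $m\le\tfrac{1}{9}$ from $\Delta\cdot R_{x}\ge 0$ (assuming $R_{x}\not\subset\Supp(\Delta)$), and a companion bound on $c$ from $\Delta\cdot R_{y}\ge 0$; then perform a weighted blow-up $\pi\colon\bar{X}\to X$ at $O_{t}$ whose weights are adapted to the $(1,4)$-weighted tangent cone of $R_{x}$ on the orbifold cover. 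One checks that the coefficient $\theta$ of the exceptional curve $E$ in the log pull-back is $<1$ and argues that a putative non-log-canonical point $Q\in E$ cannot be smooth on $\bar X$ (bounding $\bar\Delta\cdot E$) nor lie on $\bar L_{xz}$, $\bar R_{x}$, $\bar R_{y}$ (bounding $\bar\Delta$ against these curves), leaving only the two singular points of $\bar X$ on $E$. A further weighted blow-up at the worse of these, followed by one ordinary blow-up and the resulting inequalities among $m$, $b$, $c$ and the auxiliary multiplicities along the successive exceptional divisors, should produce the final contradiction by Lemma~\ref{lemma:handy-adjunction}, exactly as in Lemmas~\ref{lemma:I-2-W-14-17-29-41-D-99} and~\ref{lemma:I-2-W-11-25-34-43-D-111}. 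The bookkeeping of this cascade is expected to be the bulk of the technical work.
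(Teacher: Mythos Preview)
Your plan is essentially the paper's proof. The upper bound, the exclusion of $O_{x},O_{y},O_{z}$ by intersection with a component of the relevant $C_{i}$, the use of Lemma~\ref{lemma:handy-adjunction} on the components of the $C_{i}$, the pencil $\lambda xt+\mu z^{2}=0$ for the generic smooth point, and the cascade of weighted blow-ups at $O_{t}$ are all exactly what the paper does.

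Two small streamlinings the paper makes that are worth noting. First, at $O_{t}$ the paper observes that if $L_{xz}\not\subset\Supp(D)$ then $41\,D\cdot L_{xz}=\tfrac{3}{17}<\tfrac{24}{65}$ already gives a contradiction, so $L_{xz}\subset\Supp(D)$; by Lemma~\ref{lemma:convexity} this forces $R_{x}\not\subset\Supp(D)$, and hence the decomposition is $D=aL_{xz}+bR_{y}+\Delta$ with no $R_{x}$-term to track. The intersection with $R_{x}$ then gives the sharp bound $a<\tfrac{304}{3\cdot 23\cdot 65}$ (using the multiplicity at $O_{t}$), tighter than your $\tfrac{1}{9}$. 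Second, the actual cascade is: a $(1,4)$ weighted blow-up at $O_{t}$ (one singular point $Q_{4}$ of type $\tfrac{1}{4}(3,1)$ on $F$, met by $\bar{R}_{y}$), then a $(3,1)$ weighted blow-up at $Q_{4}$ (one singular point $O_{3}$ of type $\tfrac{1}{3}(1,2)$ on $E$), then a $(1,2)$ weighted blow-up at $O_{3}$---so three steps, each producing a single singular point, rather than the two-singular-point pattern of Lemma~\ref{lemma:I-2-W-14-17-29-41-D-99}. The bookkeeping is otherwise as you anticipate.
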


\begin{proof}
 The surface $X$ can be defined by the
quasihomogeneous equation
$$
z^{2}t+y^{4}z+xt^{2}+x^{6}y=0.
$$
The surface $X$ is singular at the point $O_x$, $O_{y}$, $O_z$ and $O_t$.
Each of the divisors $C_x$, $C_y$, $C_z$, and $C_t$ consists of two irreducible and reduced components. The divisor $C_x$ (resp. $C_y$, $C_z$, $C_t$) consists of $L_{xz}$ (resp. $L_{yt}$, $L_{xz}$, $L_{yt}$)
and $R_x=\{x=y^4+zt=0\}$ (resp. $R_y=\{y=z^2+xt=0\}$, $R_z=\{z=t^2+x^5y=0\}$, $R_t=\{t=x^6+y^3z=0\}$). The curve $L_{xz}$ intersects $R_x$ (resp. $R_z$) only at the point $O_t$ (resp. $O_y$). Also, the curve $L_{yt}$ intersects $R_y$ (resp. $R_t$) only at the point $O_x$ (resp. $O_z$).

It is easy to check
$$-K_X\cdot L_{xz}=\frac{3}{17\cdot 41},\ -K_X\cdot L_{yt}=\frac{1}{9\cdot 13},\ -K_X\cdot R_{x}=\frac{4}{9\cdot 41},%
$$
$$
 -K_X\cdot R_{y}=\frac{6}{13\cdot 41},\ -K_X\cdot R_{z}=\frac{6}{13\cdot 17},\
 -K_X\cdot R_{t}=\frac{2}{3\cdot 17},%
$$
$$
L_{xz}^2=-\frac{55}{17\cdot 41},\  L_{yt}^2=-\frac{37}{13\cdot 27},\
R_x^2=-\frac{56}{27\cdot 41},\  R_y^2=-\frac{48}{13\cdot 41},\
R_{z}^2=\frac{28}{13\cdot 17}
$$
$$
R_{t}^2=\frac{16}{3\cdot 17},\ L_{xz}\cdot R_{x}=\frac{4}{41}, \ L_{yt}\cdot R_{y}=\frac{2}{13}, \
L_{xz}\cdot R_{z}=\frac{2}{17},\
L_{yt}\cdot R_{t}=\frac{2}{9}.
$$

We have $\lct(X)\leqslant \frac{65}{24}$ since
$$
\frac{65}{24}=\lct\left(X, \frac{3}{13}C_x\right)<\frac{51}{12}=\lct\left(X, \frac{3}{17}C_y\right)<\frac{41}{8}=\lct\left(X, \frac{3}{41}C_t\right)<\frac{21}{4}=\lct\left(X, \frac{3}{27}C_z\right).%
$$

Suppose that $\lct(X)<\frac{65}{24}$. Then there is an effective
$\Q$-divisor $D\qlineq -K_X$ such that the pair
$(X,\frac{65}{24}D)$ is not log canonical at some point $P$. By
Lemma~\ref{lemma:convexity}, we may assume that the support of $D$
 does not contain at least one irreducible
component of each of the curves $C_{x}$, $C_{y}$, $C_{z}$ and
$C_{t}$. The curve $R_z$  is singular at the point $O_y$. The
curve $C_t$ is singular at $O_z$ with multiplicity $3$. Then in
each of the following pairs of inequalities, at least one of two
must hold:
\[\mult_{O_x}(D)\leqslant 13D\cdot L_{yt}=\frac{1}{9}<\frac{24}{65}, \ \
\mult_{O_x}(D)\leqslant 13D\cdot R_y=\frac{6}{41}<\frac{24}{65};\]
\[\mult_{O_y}(D)\leqslant 17D\cdot L_{xz}=\frac{3}{41}<\frac{24}{65}, \ \
\mult_{O_y}(D)\leqslant \frac{17}{2}D\cdot R_z=\frac{3}{13}<\frac{24}{65};\]
\[\mult_{O_z}(D)\leqslant 27D\cdot L_{yt}=\frac{3}{13}<\frac{24}{65}, \ \
\mult_{O_z}(D)\leqslant \frac{27}{3}D\cdot
R_t=\frac{6}{17}<\frac{24}{65};\] Therefore,
the point $P$ can be none of $O_x$, $O_y$, $O_t$.

Put $D=m_0L_{xz}+m_1L_{yt}+m_2R_x+m_3R_y+m_4R_z+m_5R_t+\Omega$, where  $\Omega$ is an effective $\mathbb{Q}$-divisor whose support contains none of  $L_{xz}$, $L_{yt}$, $R_x$, $R_y$, $R_z$, $R_t$. Since the pair $(X, \frac{65}{24}D)$ is log canonical at the points $O_x$, $O_y$, $O_z$, we have $m_i\leqslant \frac{24}{65}$ for each $i$. Since
\[(D-m_0L_{xz})\cdot L_{xz}=\frac{3+55m_0}{17\cdot 41}\leqslant \frac{24}{65},\ \
(D-m_1L_{yt})\cdot L_{yt}=\frac{3+37m_1}{13\cdot 27}\leqslant \frac{24}{65},\]
\[(D-m_2R_{x})\cdot R_{x}=\frac{12+56m_2}{27\cdot 41}\leqslant \frac{24}{65},\ \
(D-m_3R_{y})\cdot R_{y}=\frac{6+48m_3}{13\cdot 41}\leqslant \frac{24}{65},\]
\[(D-m_4R_{z})\cdot R_{z}=\frac{6-28m_4}{13\cdot 17}\leqslant \frac{24}{65},\ \
(D-m_5R_{t})\cdot R_{t}=\frac{2-16m_5}{3\cdot 17}\leqslant \frac{24}{65}\]
Lemma~\ref{lemma:handy-adjunction}
implies that the point $P$ cannot be a smooth point of $X$ on $C_x\cup C_y\cup C_z\cup C_t$. Therefore, the point $P$ is either
a point  in the outside of  $C_x\cup C_y\cup C_z\cup C_t$ or the point $O_t$.

Suppose that the point $P$ is not the point $O_t$.
We consider the pencil $\mathcal{L}$ on $X$ defined by the equations
$\lambda xt+\mu z^2=0$, $[\lambda:\mu]\in\mathbb{P}^1$. Then
there is a unique curve $Z_{\alpha}$ in the pencil $\mathcal{L}$ passing through the point $P$.
 Since the point $P$ is located in the outside of $C_{x}\cup C_{z}\cup C_{t}$, the curve $Z_\alpha$ is defined by an equation of the form
$$
xt+\alpha z^{2}=0,
$$
where $\alpha$ is a non-zero constant.
Note that any component of $C_t$ is not contained in $Z_\alpha$.
The open subset
$Z_{\alpha}\setminus C_{t}$
is a $\mathbb{Z}_{41}$-quotient of the
affine curve
$$
x+\alpha
z^{2}=z^{2}+y^{4}z+x+x^6y=0\subset\mathbb{C}^{3}\cong\mathrm{Spec}\Big(\mathbb{C}\big[x,y,z\big]\Big)
$$
that is isomorphic to the plane affine curve defined
by the equation
$$
z(y^4+(1-\alpha)z+\alpha^6z^{11}y)=0\subset\mathbb{C}^{2}\cong\mathrm{Spec}\Big(\mathbb{C}\big[y,z\big]\Big).
$$
Therefore, if $\alpha\ne 1$, then the curve $Z_\alpha$ consists of
two irreducible components $L_{xz}$ and $C_\alpha$.
%From the equation, we can see $ \mult_{P}(C_{\alpha})\leqslant 4$.
On the
other hand, if $\alpha=1$, then the curve $Z_\alpha$ consists of
three irreducible components $L_{xz}$, $R_y$, and $C_{1}$.
%Also, the curve $C_{1}$ is smooth at the point $P$. By
Since $P\not\in C_{x}\cup C_{y}\cup C_z\cup C_{t}$, the point $P$
must be contained in  $C_\alpha$ (including $\alpha=1$). Also, the
curve $C_\alpha$  is smooth at the point $P$. By
Lemma~\ref{lemma:convexity}, we may assume that $\mathrm{Supp}(D)$
does not contain at least one irreducible component of the curve
$Z_{\alpha}$.

Write $D=mC_\alpha+\Gamma$, where $\Gamma$ is an effective $\mathbb{Q}$-divisor whose support contains $C_\alpha$. Suppose that $m\ne 0$. If $\alpha\ne 1$, then we obtain
\[\frac{3}{17\cdot 41}=D\cdot L_{xz}\geqslant mC_\alpha\cdot L_{xz}=\frac{109m}{17\cdot 41}\]
and hence $m\leqslant \frac{3}{109}$. If $\alpha=1$, then one of the inequalities
\[\frac{3}{17\cdot 41}=D\cdot L_{xz}\geqslant mC_1\cdot L_{xz}=\frac{92m}{17\cdot 41}, \ \
\frac{6}{13\cdot 41}=D\cdot R_y\geqslant mC_1\cdot R_y=\frac{11m}{41}\]
must hold, and hence $m\leqslant \frac{6}{11\cdot 13}$.
We also see that
\[D\cdot C_{\alpha}=\left\{%
\aligned
&D\cdot( Z_\alpha-L_{xz})=\frac{531}{13\cdot 17\cdot 41} \text{ if}\ \alpha\ne 1,\\%
&D\cdot (Z_\alpha-L_{xz}-R_y)=\frac{33}{17\cdot 41} \text{ if}\ \alpha=1.\\%
\endaligned\right.
\]
Also, if  $\alpha\ne 1$, then

\[C_{\alpha}^2=Z_\alpha\cdot C_\alpha-L_{xz}\cdot C_\alpha
\geqslant Z_\alpha\cdot C_\alpha-(L_{xz}+R_x)\cdot C_\alpha  = \frac{41}{3}D\cdot C_\alpha .\]
If $\alpha =1$,
\[C_1^2=Z_\alpha\cdot C_1-(L_{xz}+R_y)\cdot C_1
\geqslant Z_\alpha\cdot C_1-(L_{xz}+R_x+L_{yt}+R_y)\cdot C_1  =
8D\cdot C_1.\] In both cases, we have $C_\alpha^2>0$. Since
\[(D-mC_\alpha)\cdot C_{\alpha}\leqslant D\cdot C_\alpha<\frac{24}{65}\]
Lemma~\ref{lemma:handy-adjunction} gives us a contradiction.
Therefore, the point $P$ must be the point $O_t$.

If $L_{xz}$ is not contained in the support of $D$, then the
inequality $$\mult_{O_t}(D)\leqslant 41D\cdot
L_{xz}=\frac{3}{17}<\frac{24}{65}$$ is a contradiction. Therefore,
the  irreducible component $L_{xz}$ must be contained in the
support of $D$, and hence the curve $R_x$ is not contained in the
support of $D$. Put $D=aL_{xz}+bR_y+\Delta$, where $\Delta$ is an
effective $\mathbb{Q}$-divisor whose support contains neither
$L_{xz}$ nor $R_y$. Then
\[\frac{4}{9\cdot 41}=D\cdot R_x\geqslant aL_{xz}\cdot R_x+\frac{\mult_{O_t}(D)-a}{41}>\frac{3a}{41}+\frac{24}{41\cdot 65}\]
and hence $a\leqslant \frac{44}{585}$.
If $b\ne 0$, then $L_{yt}$ is not contained in the support of $D$. Therefore,
\[\frac{1}{9\cdot 13}=D\cdot L_{yt}\geqslant bR_y\cdot L_{yt}=\frac{2b}{13},\]
and hence $b\leqslant \frac{1}{18}$.

Let $\pi\colon\bar{X}\to X$ be the weighted blow up at the point
$O_t$ with weights $(1,4)$ and let $F$ be the exceptional curve of
the morphism $\pi$. Then $F$ contains one singular point $Q_{4}$
of $\bar{X}$ such that $Q_{4}$ is a singular point of type
$\frac{1}{4}(3,1)$. Then
\[
K_{\bar{X}}\qlineq\pi^*(K_X)-\frac{36}{41}F,\ \
\bar{L}_{xz}\qlineq\pi^*(L_{xz})-\frac{4}{41}F, \ \
\bar{R}_y\qlineq\pi^*(R_y)-\frac{1}{41}F,\ \
\bar{\Delta}\qlineq\pi^*(\Delta)-\frac{c}{41}F,
\]
where $\bar{L}_{xz}$, $\bar{R}_y$ and $\bar{\Delta}$ are the
proper transforms of $L_{xz}$, $R_y$ and $\Delta$ by $\pi$,
respectively, and $c$ is a non-negative rational number. Note that
$F\cap\bar{R}_y=\{Q_{4}\}$.

The log pull-back of the log pair $(X,\frac{65}{24}D)$ by $\pi$ is
the log pair
$$
\left(\bar{X},\ \frac{65a}{24}\bar{L}_{xz}+ \frac{65b}{24}\bar{R}_y+
\frac{65}{24}\bar{\Delta}+\theta_1 F\right),%
$$
where
$$\theta_1=\frac{864+65(4a+b+c)}{24\cdot 41}.$$
This  is not log canonical at some point $Q\in F$.
We have
\[0\leqslant\bar{\Delta}\cdot\bar{L}_{xz}
=\frac{3+55a}{17\cdot 41}-\frac{b}{41}-\frac{c}{41}.\]%
This inequality shows $b+c\leqslant \frac{1}{17}(3+55a)$. Since
$a\leqslant \frac{44}{585}$, we obtain
\[\theta_{1}=\frac{864+260a}{24\cdot 41}+\frac{65(b+c)}{24\cdot 41}\leqslant
\frac{864+260a}{24\cdot 41}+\frac{65(3+55a)}{17\cdot 24\cdot 41}
=\frac{121+65a}{8\cdot17}<1.\]

Suppose that the point $Q$ is neither $Q_4$ nor the intersection
point of $F$ and $\bar{L}_{xz}$. Then, the point $Q$ is not in
$\bar{L}_{xz}\cup \bar{R}_y$. Therefore, the pair $ \left(\bar{X},
\frac{65}{24}\bar{\Delta}+F\right) $ is not log canonical at the
point $Q$, and hence
$$1<\frac{65}{24}\bar{\Delta}\cdot F=\frac{65c}{4\cdot 24}.$$
But $c\leqslant b+c\leqslant \frac{1}{17}(3+55a)<\frac{4\cdot
24}{65}$ since $a\leqslant \frac{44}{585}$. Therefore, the point
$Q$ is either $Q_4$ or the intersection point of $F$ and
$\bar{L}_{xz}$.

Suppose that the point $Q$ is the intersection point of $F$ and
$\bar{L}_{xz}$. Then the point $Q$ is in $\bar{L}_{xz}$ but not in
$\bar{R}_y$. Therefore, the pair $ \left(\bar{X},
\bar{L}_{xz}+\frac{65}{24}\bar{\Delta}+\theta_1 F\right) $ is not
log canonical at the point $Q$. Then
\[1<\left(\frac{65}{24}\bar{\Delta}+\theta_1
F\right)\cdot \bar{L}_{xz}
=\frac{65}{24}\left(\frac{3+55a}{17\cdot 41}- \frac{b+c}{
41}\right)+\theta_1 =\frac{121+65a}{8\cdot 17}.\] However, this is
impossible since $a\leqslant \frac{44}{585}$. Therefore, the point
$Q$ must be the point $Q_4$.

%Then the pair $ \left(\bar{X},
%\bar{R}_{y}+\frac{65}{24}\bar{\Delta}+\theta_1 F\right) $ is not
%log canonical at the point $Q$. Therefore,
%\[1<4\left(\frac{65}{24}\bar{\Delta}+\theta_1
%F\right)\cdot \bar{R}_{y}
%=\frac{4\cdot 65}{24}\left(\frac{6+48b}{13\cdot 41}-\frac{a}{41}-\frac{c}{4\cdot 41}\right)+\theta_1.\]

Let $\psi\colon\tilde{X}\to \bar{X}$ be the weighted  blow up at
the point $Q_4$ with weights $(3,1)$ and let $E$ be the
exceptional curve of the morphism $\psi$. The exceptional curve
$E$ contains one singular point $O_3$ of $\tilde{X}$. This
singular point is of type $\frac{1}{3}(1,2)$. Then
\[
K_{\tilde{X}}\qlineq\psi^*(K_{\bar{X}}),\ \
\tilde{R}_{y}\qlineq\psi^*(\bar{R}_{y})-\frac{3}{4}E, \ \
\tilde{F}\qlineq\psi^*(F)-\frac{1}{4}E,
\ \
\tilde{\Delta}\qlineq\psi^*(\bar{\Delta})-\frac{d}{4}E,
\]
where $\tilde{R}_{y}$, $\tilde{F}$ and $\tilde{\Delta}$ are the
proper transforms of $\bar{R}_{y}$, $F$ and $\bar{\Delta}$ by
$\psi$, respectively, and $d$ is a non-negative rational number.

The log pull-back of the log pair $(X,\frac{65}{24}D)$ by
$\pi\circ\psi$ is the log pair
$$
\left(\tilde{X},\ \frac{65a}{24}\tilde{L}_{xz}+ \frac{65b}{24}\tilde{R}_y+
\frac{65}{24}\tilde{\Delta}+\theta_1 \tilde{F}+\theta_2E\right),%
$$
where
$$\theta_2=\frac{65(3b+d)}{4\cdot 24}+\frac{1}{4}\theta_1.$$
This  is not log canonical at some point $O\in E$.

We have
\[0\leqslant\tilde{\Delta}\cdot\tilde{R}_{y}=\bar{\Delta}\cdot\bar{R}_{y}-\frac{d}{4}= \frac{6+48b}{13\cdot
41}- \frac{4a+c}{4\cdot 41}-\frac{d}{4},\] and hence
$4a+c+41d\leqslant \frac{4}{13}(6+48b)$. Therefore, this
inequality together with $b\leqslant \frac{1}{18}$ gives us
\[\begin{split}\theta_2 &=\frac{65(3b+d)}{4\cdot 24}+
\frac{864+65(4a+b+c)}{4\cdot 24\cdot 41}=\\
&=\frac{864+8060b+65(4a+c+41d)}{4\cdot 24\cdot 41}\leqslant \\
&\leqslant \frac{6+55b}{24}<1.\\
\end{split}
\]

Suppose that the point $O$ is in the outside of $\tilde{R}_{y}$
and $\tilde{F}$. Then the log pair $(E,
\frac{65}{24}\tilde{\Delta}|_{E})$ is not log canonical at the
point $O$, and hence
\[1<\frac{65}{24}\tilde{\Delta}\cdot E=\frac{65d}{72}.\]
However,
\[41d\leqslant 4a+c+41d\leqslant \frac{4}{13}(6+48b)<\frac{41\cdot 72}{65}\]
since $b\leqslant\frac{1}{18}$. This is a contradiction.

Suppose that the point $O$ belongs to $\tilde{R}_{y}$ Then the log
pair $\left(E,  \left(\frac{65b}{24}\tilde{R}_{y}+
\frac{65}{24}\tilde{\Delta}\right)\Big|_{E}\right)$ is not log canonical at the
point $O$, and hence
\[1<\left(\frac{65b}{24}\tilde{R}_{y}+
\frac{65}{24}\tilde{\Delta}\right)\cdot E=\frac{65}{24}\left(b+\frac{d}{3}\right).\]
However,
\[ \frac{65}{24}\left(b+\frac{d}{3}\right)\leqslant
\frac{65}{24}\left(b+\frac{4}{3\cdot 13\cdot
41}\left(6+48b\right)\right)<1\] since $b\leqslant\frac{1}{18}$.
This is a contradiction. Therefore, the point $O$ is the point $O_3$ which is the
intersection point of $E$ and $\tilde{F}$.

%Then the log pair $
%\left(\tilde{X}, \frac{65}{24}\tilde{\Delta}+\theta_1
%\tilde{F}+\theta_2E\right) $ is not log canonical at the point
%$O$. Therefore we obtain
%\[1<3\left(\frac{65}{24}\tilde{\Delta}+\theta_2 E\right)\cdot \tilde{F}
%=\frac{65}{24}\left(\frac{3c}{4}-\frac{d}{4}\right)+\theta_2.\] However,
%\[\frac{65}{24}\left(\frac{3c}{4}-\frac{d}{4}\right)+\theta_2=\frac{31\cdot65(b+c)}{24\cdot 41}+
%\frac{216+65a}{24\cdot 41}\]

Let $\xi\colon\hat{X}\to \tilde{X}$ be the weighted blow up at the
point $O$ with weights $(1,2)$ and let $H$ be the exceptional
divisor of $\xi$. The exceptional divisor $H$ contains a singular point of $\hat{X}$. This singular point is of type $\frac{1}{2}(1,1)$.  We have
$$
K_{\hat{X}}\qlineq\xi^*(K_{\tilde{X}}),\
\hat{E}\qlineq\xi^*(E)-\frac{1}{3}H,\
\hat{F}\qlineq\xi^*(\tilde{F})-\frac{2}{3}H,\
\hat{\Delta}\qlineq\xi^*(\tilde{\Delta})-\frac{e}{3}H,%
$$
where $\hat{E}$, $\hat{F}$, $\hat{\Delta}$,
 be the proper transforms of $E$,
$\tilde{F}$, $\tilde{\Delta}$ by $\xi$, respectively, and
 $e$ is a non-negative rational number. The log pull-back of the
log pair $(X, \frac{65}{24}D)$ via $\pi\circ \psi\circ\xi$ is
$$
\left(\hat{X},\frac{65a}{24}\hat{L}_{xz}+\frac{65b}{24}\hat{R}_y+\frac{65}{24}\hat{\Delta}
+\theta_1\hat{F}+\theta_2 \hat{E}+\theta_{3}H\right),%
$$
where $\hat{L}_{xz}$ and  $\hat{R}_y$ are the proper transforms of $\tilde{L}_{xz}$ and  $\tilde{R}_y$ by $\xi$, respectively, and
$$
\theta_3=\frac{1}{3}(2\theta_{1}+\theta_{2})+\frac{65e}{3\cdot 24}.
$$
This log pair is not log canonical at some point $A\in H$.  We
have
\[0\leqslant \hat{\Delta}\cdot \hat{F}=\bar{\Delta}\cdot F-\frac{d}{12}-\frac{e}{3}=\frac{c}{4}-\frac{d}{12}-\frac{e}{3},\]
and hence $d+4e\leqslant 3c$. Then
\[\begin{split}\theta_3=\frac{1}{3}(2\theta_{1}+\theta_{2})+\frac{65e}{3\cdot
24}=
&=\frac{3}{4}\theta_1+\frac{65(3b+d)}{3\cdot 4\cdot 24}+\frac{65e}{3\cdot 24}\leqslant\\
&=\frac{2592+65(12a+44b+3c)}{3\cdot32\cdot 41}+\frac{65(d+4e)}{3\cdot 4\cdot 24}\leqslant\\
&\leqslant \frac{2592+65(12a+44b+3c)}{3\cdot32\cdot 41}+\frac{65c}{4\cdot 24}=  \\
&= \frac{2592+65(12a+44b+44c)}{3\cdot32\cdot 41}\leqslant
 \\
&\leqslant \frac{216+65a}{8\cdot 41}+\frac{65\cdot 11(3+55a)}{3\cdot8\cdot 17\cdot 41}
=\frac{321+1040a}{3\cdot 8\cdot 17}<1\\
 \end{split}\]
 since $b+c\leqslant \frac{1}{17}(3+55a)$ and $a\leqslant
 \frac{44}{585}$.

Suppose that $A\not\in\hat{F}\cup\hat{E}$. Then the log pair
 $
\left(\hat{X},\frac{65}{24}\hat{\Delta} +\theta_{3}H\right) $ is
not log canonical at the point $A$. Applying
Lemma~\ref{lemma:adjunction}, we get
$$
1<\frac{65}{24}\hat{\Delta}\cdot H=\frac{65e}{48}.
$$
However, $$e\leqslant \frac{1}{4}(d+4e)\leqslant
\frac{3c}{4}\leqslant \frac{3}{4}(b+c)\leqslant
\frac{3(3+55a)}{4\cdot 17}< \frac{48}{65}.$$ Therefore, the point
$A$ must be either in $\hat{F}$ or in $\hat{E}$.

Suppose that $A\in\hat{F}$. Then the log pair $
\left(\hat{X},\frac{65}{24}\hat{\Delta}
+\theta_1\hat{F}+\theta_{3}H\right) $ is not log canonical at the
point $A$. Applying Lemma~\ref{lemma:adjunction}, we get
\[
1<\left(\frac{65}{24}\hat{\Delta}+\theta_{3}
H\right)\cdot\hat{F}=\frac{65}{24}\left(\frac{c}{4}-\frac{d}{12}-\frac{e}{3}\right)+
\theta_{3}=\frac{2592+65(12a+44b+44c)}{3\cdot32\cdot 41}.
\]
However,
\[\frac{2592+65(12a+44b+44c)}{3\cdot32\cdot 41}
\leqslant \frac{321+1040a}{3\cdot 8\cdot 17}<1.
\]
Therefore, the point $A$ is the intersection point of $H$ and
$\hat{E}$. Then the log pair $
\left(\hat{X},\frac{65}{24}\hat{\Delta} +\theta_2
\hat{E}+\theta_{3}H\right)$ is not log canonical at the point $A$.
From Lemma~\ref{lemma:adjunction}, we obtain
\[
1<2\left(\frac{65}{24}\hat{\Delta}+\theta_{3}H\right)\cdot\hat{E}=\frac{65}{24}\left(\frac{2d}{3}-\frac{e}{3}\right)+\theta_{3}=\frac{2592+65(12a+44b+3c)}{3\cdot32\cdot
41}+\frac{65d}{32}
\]
However,
\[\frac{2592+65(12a+44b+3c)}{3\cdot32\cdot 41}+\frac{65d}{32}=\frac{648+715b}{3\cdot8\cdot 41}+\frac{65(4a+c+41d)}{32\cdot 41}\leqslant\frac{12186+21515b}{17\cdot 24\cdot 41}<1
\]
since $b\leqslant\frac{1}{18}$ and $4a+c+41d\leqslant
\frac{4}{13}(6+48b)$. The obtained contradiction completes the
proof.
\end{proof}

\begin{lemma}
\label{lemma:I-3-W-13-27-61-98-D-196} Let $X$ be a quasismooth
hypersurface of degree $196$ in $\mathbb{P}(13,27,61,98)$. Then
$\lct(X)=\frac{91}{30}$.
\end{lemma}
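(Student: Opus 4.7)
The plan is to mirror the strategy of the companion Lemma~\ref{lemma:I-3-W-11-37-53-98-D-196} and its $I=2$ prototype Lemma~\ref{lemma:I-2-W-13-31-71-113-D-226}, since the weight quadruple $(13,27,61,98)$ sits in the same family. First I would put $X$ in the normal form
\[
X=\{\,t^{2}+y^{5}z+xz^{3}+x^{13}y=0\,\}\subset\P(13,27,61,98),
\]
check that its singular locus consists of the three cyclic quotient points $O_{x},O_{y},O_{z}$, and observe that $C_{x}\subset\P(27,61,98)$ and $C_{y}\subset\P(13,61,98)$ are irreducible and reduced. Locally $C_{x}$ has a single $(2,5)$-cusp at $O_{z}$ (analytic model $t^{2}+y^{5}=0$) and is smooth elsewhere, while $C_{y}$ has a single $(2,3)$-cusp at $O_{x}$ (analytic model $t^{2}+z^{3}=0$) and is smooth elsewhere. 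Combining Lemma~\ref{lemma:Igusa} with Proposition~\ref{proposition:lc-by-finite-morphism} I would then obtain
\[
\lct\!\left(X,\tfrac{3}{13}C_{x}\right)=\tfrac{13}{3}\cdot\tfrac{7}{10}=\tfrac{91}{30}<\tfrac{15}{2}=9\cdot\tfrac{5}{6}=\lct\!\left(X,\tfrac{3}{27}C_{y}\right),
\]
yielding the upper bound $\lct(X)\le\tfrac{91}{30}$.

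For the reverse inequality I would assume for contradiction that $\lct(X)<\tfrac{91}{30}$, pick an effective $D\qlineq-K_{X}$ and a point $P\in X$ at which $(X,\tfrac{91}{30}D)$ fails to be log canonical, and use Lemma~\ref{lemma:convexity} to arrange that neither $C_{x}$ nor $C_{y}$ lies in $\mathrm{Supp}(D)$. The intersection numbers
\[
D\cdot C_{x}=\frac{3\cdot 196}{27\cdot 61\cdot 98}=\frac{2}{9\cdot 61},\qquad D\cdot C_{y}=\frac{3\cdot 196}{13\cdot 61\cdot 98}=\frac{6}{13\cdot 61},
\]
together with $\mult_{O_{z}}(C_{x})=\mult_{O_{x}}(C_{y})=2$ and Proposition~\ref{proposition:lc-by-finite-morphism}, then produce the bounds
\[
\mult_{O_{y}}(D)\le\tfrac{6}{61},\quad \mult_{O_{z}}(D)\le\tfrac{1}{9},\quad \mult_{O_{x}}(D)\le\tfrac{3}{61},
\]
all strictly smaller than $\tfrac{30}{91}$; the same intersection estimates rule out every smooth point of $C_{x}\cup C_{y}$ as a location for $P$. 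Hence $P$ must be a smooth point of $X$ lying outside $C_{x}\cup C_{y}$.

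To conclude I would invoke Lemma~\ref{lemma:Carolina}. Since $O_{t}\notin X$ and the defining equation is quadratic in $t$, the projection $X\to\P(13,27,61)$ is finite and contracts no curve, so the alternative hypothesis of that lemma is automatic and it suffices to exhibit a degree $k$ carrying two monomials of each form $x^{\alpha}y^{\beta}$ and $x^{\gamma}z^{\delta}$. Taking $k=a_{0}a_{2}=13\cdot 61=793$ (the natural analogue of $k=11\cdot 53=583$ and $k=13\cdot 71=923$ in the companion lemmas), the identity $13\cdot 34+27\cdot 13=793$ provides the monomials $x^{61},x^{34}y^{13}$ and $x^{61},z^{13}$, so
\[
\mult_{P}(D)\le\frac{3\cdot 793\cdot 196}{13\cdot 27\cdot 61\cdot 98}=\frac{2}{9}<\frac{30}{91},
\]
contradicting non-log-canonicity at $P$ and completing the argument. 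The only step that is not essentially automatic is the selection of $k$: the bound $\frac{3k\cdot 196}{13\cdot 27\cdot 61\cdot 98}<\frac{30}{91}$ forces $k\le 1175$, and one must simultaneously secure two representations in each of the semigroups $\langle 13,27\rangle$ and $\langle 13,61\rangle$; the value $k=793$ meets both constraints comfortably and is the cleanest such choice.
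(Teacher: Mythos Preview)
Your proof is correct and follows essentially the same approach as the paper: normalize the equation, compute $\lct(X,\tfrac{3}{13}C_x)=\tfrac{91}{30}$ for the upper bound, use Lemma~\ref{lemma:convexity} to strip $C_x,C_y$ from $\mathrm{Supp}(D)$, kill the singular points and $C_x$ via the intersection numbers $61D\cdot C_x=\tfrac{2}{9}$ and $13D\cdot C_y=\tfrac{6}{61}$, and finish with Lemma~\ref{lemma:Carolina} at $k=793$ using $x^{61},x^{34}y^{13},z^{13}$. The paper lists one more monomial ($x^{7}y^{26}$) but your pair already suffices, and your explicit remark that the projection to $\P(13,27,61)$ is finite (since $t^2$ appears) cleanly justifies the second hypothesis of that lemma.
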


\begin{proof}
 The surface $X$ can be defined by the
quasihomogeneous equation
$$
t^{2}+y^{5}z+xz^{3}+x^{13}y=0.
$$
The surface $X$ is singular at the points $O_x$, $O_{y}$ and
$O_z$. The curves $C_x$ and $C_y$ are irreducible. We have
$$
\frac{91}{30}=\lct\left(X, \frac{3}{13}C_x\right)
<\lct\left(X, \frac{3}{27}C_y\right)=\frac{15}{2}.%
$$
Therefore, $\lct(X)\leqslant\frac{91}{30}$.

Suppose that $\lct(X)<\frac{91}{30}$. Then there is an effective
$\Q$-divisor $D\qlineq -K_X$ such that the pair
$(X,\frac{91}{30}D)$ is not log canonical at some point $P$. By
Lemma~\ref{lemma:convexity}, we may assume that the support of the
divisor $D$ contains neither the curve $C_x$ nor the curve $C_y$.
Then the inequalities
\[61D\cdot C_x=\frac{2}{9}< \frac{30}{91}, \ \ \ 13D\cdot
C_y=\frac{6}{61}<\frac{30}{91}\] show that the point $P$ is a
smooth point  in the outside of $C_x$. However, $H^0(\P,
\mathcal{O}_{\P}(793))$ contains $x^{61}$, $y^{26}x^{7}$,
$y^{13}x^{34}$ and $z^{13}$, it follows from
Lemma~\ref{lemma:Carolina} that the point $P$ must be
a singular point of $X$ or a point on $C_x$. This is a
contradiction.
\end{proof}

\begin{lemma}
\label{lemma:I-3-W-15-19-43-74-D-148} Let $X$ be a quasismooth
hypersurface of degree $148$ in $\mathbb{P}(15,19,43,74)$. Then
$\lct(X)=\frac{57}{14}$.
\end{lemma}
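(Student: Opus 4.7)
The plan is to begin by establishing the upper bound $\mathrm{lct}(X)\leqslant\frac{57}{14}$ through a direct computation of $\mathrm{lct}(X,\frac{3}{19}C_{y})$. After a coordinate change the defining polynomial of $X$ contains the monomials $t^{2}$, $yz^{3}$, $xy^{7}$ and $x^{7}z$, so that the curve $C_{y}=\{y=0\}$ is cut out on $\mathbb{P}(15,43,74)$ by $t^{2}+x^{7}z=0$. This curve is irreducible and is smooth away from the orbifold point $O_{z}$, where the local $\mu_{43}$-cover carries the $A_{6}$-cusp $t^{2}+x^{7}=0$. Lemma~\ref{lemma:Igusa} gives $\mathrm{lct}\big(\mathbb{C}^{2},t^{2}+x^{7}\big)=\frac{1}{2}+\frac{1}{7}=\frac{9}{14}$, and Proposition~\ref{proposition:lc-by-finite-morphism} then yields $\mathrm{lct}(X,C_{y})=\frac{9}{14}$, hence $\mathrm{lct}(X,\frac{3}{19}C_{y})=\frac{19}{3}\cdot\frac{9}{14}=\frac{57}{14}$. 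The analogous computations for the other coordinate curves give $\mathrm{lct}(X,\frac{3}{15}C_{x})=\frac{25}{6}$ (from the cusp $t^{2}+z^{3}$ on $C_{x}$ at $O_{y}$) and $\mathrm{lct}(X,\frac{3}{43}C_{z})=\frac{129}{14}$ (from $t^{2}+y^{7}$ on $C_{z}$ at $O_{x}$), both strictly larger than $\frac{57}{14}$.

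For the reverse inequality I would follow the scheme of Section~5. Assume $D\qlineq -K_{X}$ is effective with $(X,\frac{57}{14}D)$ not log canonical at some point $P\in X$. Applying Lemma~\ref{lemma:convexity} to the three auxiliary divisors $\frac{3}{a_{i}}C_{i}$, I may assume that none of $C_{x}$, $C_{y}$, $C_{z}$ is contained in $\mathrm{Supp}(D)$. The intersection numbers $D\cdot C_{x}=\frac{6}{19\cdot 43}$, $D\cdot C_{y}=\frac{2}{215}$, $D\cdot C_{z}=\frac{2}{95}$ combined with Lemma~\ref{lemma:handy-adjunction} then exclude every smooth point of $X$ lying on $C_{x}\cup C_{y}\cup C_{z}$ away from $O_{x},O_{y},O_{z}$, since in each case the forced bound on $\mult_{P}(D)$ is strictly less than $\frac{14}{57}$.

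For a point $P\in X\setminus(C_{x}\cup C_{y}\cup C_{z})$ the direct use of Lemma~\ref{lemma:Carolina} is too weak: because $\gcd(15,43)=1$, the smallest $k$ for which $H^{0}(\mathbb{P},\mathcal{O}_{\mathbb{P}}(k))$ contains two different monomials of the form $x^{\gamma}z^{\delta}$ is $k=645=15\cdot 43$, and that value produces only the bound $\mult_{P}(D)\leqslant\frac{6}{19}>\frac{14}{57}$. Instead I would invoke Step~4 of the scheme with the pencil $\mathcal{L}=\{\lambda t^{2}+\mu yz^{3}=0\}$ on $X$, whose base locus is contained in $\{O_{x},O_{y},O_{z}\}$. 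A general member $C_{\alpha}$ is cut out on $X$ by $t^{2}=-\alpha yz^{3}$, and a chart-by-chart irreducibility analysis of the kind carried out in the proofs of Lemmas~\ref{lemma:I-3-W-5-7-11-13-D-33} and~\ref{lemma:I-2-W-9-12-13-16-D-48} shows that, for $\alpha\ne 1$, the curve $C_{\alpha}$ is either irreducible or splits as the sum of two reduced components, and in every case $\mult_{P}(C_{\alpha})\leqslant 2$. The log pair $(X,\frac{57}{14}\cdot\frac{3}{148}C_{\alpha})$ is then log canonical at $P$, so Lemma~\ref{lemma:convexity} lets me assume $C_{\alpha}\not\subset\mathrm{Supp}(D)$, and the bound $D\cdot C_{\alpha}=\frac{888}{12255}$ together with Lemma~\ref{lemma:handy-adjunction} delivers the required contradiction at $P$ (after splitting off a possible component through $P$).

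The main obstacle is the case where $P\in\{O_{x},O_{y},O_{z}\}$, and above all $P=O_{z}$, where the extremum $\frac{57}{14}$ is actually attained. The elementary intersection bounds only give $\mult_{O_{x}}(D)\leqslant\frac{6}{43}$, $\mult_{O_{y}}(D)\leqslant\frac{3}{43}$ and $\mult_{O_{z}}(D)\leqslant\frac{1}{5}$, which are vastly weaker than the needed $\frac{14}{57r}$ at a singularity of index $r$. I would therefore apply Method~3.3 of Section~5 and take, at each of $O_{x}$, $O_{y}$, $O_{z}$, the canonical weighted blow-up of the orbifold singularity; then iterate inversion of adjunction along the exceptional curve and the proper transforms of the coordinate curves passing through the point, using the bounds on the coefficients $m_{i}$ of those components in $D$ coming from the intersection numbers above. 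The case $P=O_{z}$ is the hardest, because the weighted blow-up there must be compatible with the $A_{6}$ structure of $C_{y}$: I expect, as in Lemma~\ref{lemma:I-2-W-14-17-29-41-D-99}, that one or even two further blow-ups of singular points produced on the first exceptional curve will be needed before all the constraints close up and the contradiction emerges.
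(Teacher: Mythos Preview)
Your upper-bound computation and your exclusion of smooth points on $C_{x}\cup C_{y}\cup C_{z}$ are correct. The real gap is your treatment of the singular points. You claim that at a quotient singularity of index $r$ one must push $\mult_{P}(D)$ below $\frac{14}{57r}$, and that the elementary bounds $\frac{6}{43}$, $\frac{3}{43}$, $\frac{1}{5}$ are therefore useless. This is a misconception. With the paper's convention $\mult_{P}(D)=\mult_{Q}(\pi^{*}D)$ on the orbifold chart, Proposition~\ref{proposition:lc-by-finite-morphism} together with Lemma~\ref{lemma:multiplicity} says that if $(X,\frac{57}{14}D)$ is not log canonical at $P$ then $\mult_{P}(D)>\frac{14}{57}$, with \emph{no} factor of $r$. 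Since $\frac{6}{43}$, $\frac{3}{43}$ and $\frac{1}{5}$ are all strictly below $\frac{14}{57}$, the points $O_{x}$, $O_{y}$, $O_{z}$ are excluded immediately by the very inequalities you wrote down; no weighted blow-ups are needed. (You may be conflating the multiplicity threshold with the intersection-number threshold $\frac{1}{r\lambda}$ that appears in Lemma~\ref{lemma:handy-adjunction}; these are different quantities.) The paper's proof records exactly these three bounds and concludes in one line that $P\notin C_{x}\cup C_{y}\cup C_{z}$.

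For smooth points outside the coordinate curves the paper uses the pencil $\lambda z^{3}+\mu xy^{6}=0$ of degree $129$, not your $\lambda t^{2}+\mu yz^{3}=0$ of degree $148$. The paper's choice exploits the fact that $X$ is a double cover of $\mathbb{P}(15,19,43)$ via $t$: every member is then visibly a double cover of a plane curve, so $\mult_{P}(C)\leqslant 2$ is automatic. For $\alpha\ne 1$ the member is irreducible and $D\cdot C=\frac{6}{95}<\frac{14}{57}$ finishes; for $\alpha=1$ it splits into two components with positive self-intersection, each dispatched by Lemma~\ref{lemma:handy-adjunction}. Your pencil might be made to work, but the irreducibility and multiplicity checks are less transparent, and in any event the detour through Method~3.3 that you anticipate is entirely avoidable once the threshold at the singular points is understood correctly.
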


\begin{proof}
The surface $X$ can be defined by the quasihomogeneous equation
$$
t^{2}+yz^{3}+xy^{7}+x^{7}z=0.
$$
The surface $X$ is singular at the points $O_x$, $O_y$ and $O_z$.
The curves $C_x$, $C_y$ and $C_z$ are irreducible. We can see that
$$
\lct\left(X, \frac{3}{19}C_y\right)=\frac{57}{14}<\lct\left(X, \frac{3}{15}C_x\right)=\frac{25}{6}<\lct\left(X, \frac{3}{43}C_z\right)=\frac{129}{14}.%
$$
Therefore, $\lct(X)\leqslant \frac{57}{14}$.

Suppose that $\lct(X)<\frac{57}{14}$. Then there is an effective
$\Q$-divisor $D\qlineq -K_X$ such that the pair
$(X,\frac{57}{14}D)$ is not log canonical at some point $P$. By
Lemma~\ref{lemma:convexity}, we may assume that the support of the
divisor $D$ contains none of $C_x$, $C_y$, $C_z$. Note that the curve
$C_y$ is singular at the point $O_z$. The inequalities
\[19D\cdot C_x =\frac{6}{43}<\frac{14}{57},
\ \ \ \frac{43}{2}D\cdot C_y =\frac{1}{5}<\frac{14}{57}, \ \ \ D\cdot C_z=\frac{2}{95}<\frac{14}{57}\] show
that the point $P$ is located in the outside of $C_x\cup C_y\cup C_z$.

Now we consider the pencil $\mathcal{L}$ on $X$ defined by the
equations $\lambda z^3+\mu xy^6=0$,
$[\lambda:\mu]\in\mathbb{P}^1$. Then there is a unique member $C$
in $\mathcal{L}$ passing through the point $P$. Since the point
$P$ is located in the outside of $C_x\cup C_y\cup C_z$, the curve
$C$ is cut out by the equation of the form $xy^6+\alpha z^3=0$,
where $\alpha$ is a non-zero constant. Since the curve $C$ is a
double cover of the curve defined by the equation $xy^6+\alpha
z^3=0$ in $\mathbb{P}(15,19,43)$, we have  $\mult_P(C)\leqslant
2$. Therefore, we may assume that the support of $D$ does not
contain at least one irreducible component. If $\alpha\ne 1$, then
the curve $C$ is irreducible, and hence the inequality
\[\mult_P(D)\leqslant D\cdot C=\frac{6}{5\cdot 19}<\frac{14}{57}\]
is a contradiction.
If $\alpha=1$, then the curve $C$ consists of two distinct irreducible and reduced curve $C_1$ and $C_2$.  We have
\[D\cdot C_1=D\cdot C_2=\frac{3}{5\cdot 19}, \ \ C_1^2=C_2^2=\frac{11}{19}.\]
Put $D=a_1C_1+a_2C_2+\Delta$, where $\Delta$ is an effective
$\mathbb{Q}$-divisor whose support contains neither $C_1$ nor
$C_2$. Since the pair $(X, \frac{57}{14}D)$ is log canonical at
$O_x$, both $a_1$ and $a_2$ are at most $\frac{14}{57}$. Then a
contradiction follows from Lemma~\ref{lemma:handy-adjunction}
since
\[(D-a_iC_i)\cdot C_i\leqslant D\cdot C_i=\frac{3}{5\cdot 19}<\frac{14}{57}\]
for each $i$.
\end{proof}

\section{Sporadic cases with $I=4$}
\label{section:index-4}

\begin{lemma}
\label{lemma:I-4-W-5-6-8-9-D-24} Let $X$ be a quasismooth
hypersurface of degree $24$ in $\mathbb{P}(5,6,8,9)$. Then
$\lct(X)=1$.
\end{lemma}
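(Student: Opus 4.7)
The plan is to follow the standard scheme. Set $(a_0,a_1,a_2,a_3,d)=(5,6,8,9,24)$, so $I=4$ and $-K_X\sim\mathcal{O}_X(4)$. After a linear coordinate change $X$ may be written as
\[
y^4+z^3+yt^2+x^3t+\alpha\,x^2yz=0
\]
for some $\alpha\in\mathbb{C}$; in particular $O_y,O_z\notin X$, and the singular locus of $X$ consists of $O_x$ of index $5$, $O_t$ of index $9$, one index-$2$ point $P_{yz}\in L_{yz}$ and one index-$3$ point $Q_{yt}\in L_{yt}$. The curve $C_y=\{z^3+x^3t=0\}\subset\mathbb{P}(5,8,9)$ is irreducible, and in the orbifold chart at $O_t$ (where $X$ is parametrised by $(x,z)$ via $y=-(z^3+x^3+\cdots)$) the curve $C_y$ is locally cut out by $z^3+x^3=0$, that is, three distinct lines through the origin. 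Hence Lemma~\ref{lemma:basic-property}\,(2) combined with Proposition~\ref{proposition:lc-by-finite-morphism} gives $\lct_{O_t}(X,C_y)=\frac{2}{3}$, and a straightforward check at the remaining points yields $\lct(X,\frac{4}{6}C_y)=1$; in particular $\lct(X)\leq 1$.

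For the lower bound I suppose $\lct(X)<1$ and pick $D\qlineq -K_X$ and $P\in X$ at which $(X,D)$ is not log canonical. The intersection numbers on $X$ are
\[
D\cdot C_x=\frac{2}{9},\quad D\cdot C_y=\frac{4}{15},\quad D\cdot C_z=\frac{16}{45},\quad D\cdot C_t=\frac{2}{5}.
\]
After a Lemma~\ref{lemma:convexity} adjustment the bound $\mult_{O_t}(D)\leq\frac{9\,D\cdot C_y}{\mult_{O_t}(C_y)}=\frac{4}{5}<1$ rules out $P=O_t$; the analogous estimate using $C_t$ (whose orbifold multiplicity at $O_x$ is at least $2$, since the local equation of $C_t$ on $X$ at $O_x$ is $\alpha yz+z^3+y^4+\cdots=0$) gives $\mult_{O_x}(D)\leq 1$ and excludes $O_x$ via Lemma~\ref{lemma:multiplicity}; while the direct estimates $\mult_{P_{yz}}(D)\leq 2\,D\cdot C_x=\frac{4}{9}$ and $\mult_{Q_{yt}}(D)\leq 3\,D\cdot C_x=\frac{2}{3}$ rule out $P_{yz}$ and $Q_{yt}$. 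For smooth points of $X$ lying on $C_x\cup C_y\cup C_z\cup C_t$, writing $D=mC_\star+\Omega$ for the relevant coordinate curve $C_\star$ and applying Lemma~\ref{lemma:handy-adjunction} together with $C_\star^2=\frac{a_\star^2\,d}{a_0a_1a_2a_3}$ disposes of them.

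It remains to exclude a smooth point $P$ of $X$ outside $C_x\cup C_y\cup C_z\cup C_t$. Lemma~\ref{lemma:Carolina} alone is not sharp enough here, since the smallest $k$ with two monomials in each of the pairs $(x,y)$, $(x,z)$, $(x,t)$ is $k=45$, giving only $\mult_P(D)\leq 2$. I would instead use the pencil $\mathcal{L}\subset|\mathcal{O}_X(14)|$ spanned by the two monomials $xt$ and $yz$, whose base locus on $X$ is the finite set $\{O_x,O_t,Q_{yt}\}$. Through $P$ there is a unique member $D_\mu=\{xt+\mu yz=0\}\cap X$ with $\mu\neq 0,\infty$, and a Bertini-type analysis (in the spirit of Lemma~\ref{lemma:I-6-infinite-series-2}) shows that $D_\mu$ is irreducible for generic $\mu$. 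For such $\mu$ the adjustment of Lemma~\ref{lemma:convexity} gives $D_\mu\not\subset\Supp(D)$, and then
\[
1<\mult_P(D)\leq D\cdot D_\mu=\frac{28}{45}
\]
is the desired contradiction. The main obstacle is to deal with the special values of $\mu$ for which $D_\mu$ becomes reducible, or the component through $P$ acquires a singularity: for these one decomposes $D_\mu=\sum Z_i$, bounds the coefficient of $\Supp(D)$ along each $Z_i$ by intersection with the remaining components, and applies Lemma~\ref{lemma:handy-adjunction} separately on each $Z_i$ to complete the argument.
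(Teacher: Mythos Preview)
Your overall route is the paper's: same normalised equation, same bound $\lct(X,\frac{4}{6}C_y)=1$, same exclusion of the four singular points via $C_x,C_y,C_t$, and the same pencil $\langle xt,yz\rangle\subset|\mathcal{O}_X(14)|$ to handle smooth points off the coordinate curves. (A notational slip: your index-$2$ and index-$3$ points lie on $L_{xt}$ and $L_{xz}$, not $L_{yz}$ and $L_{yt}$.)

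The genuine gap is the part you explicitly flag as an ``obstacle''. You only assert irreducibility of $D_\mu$ for generic $\mu$, but the $\mu$ selecting your point $P$ is one fixed value, so a generic statement is useless. You also omit the bound on $\mult_P(D_\mu)$ that is needed before Lemma~\ref{lemma:convexity} can be invoked (one must know that $(X,\frac{4}{14}D_\mu)$ is log canonical at $P$). The paper's resolution is much simpler than the component-by-component fallback you sketch: working in the chart $x=1$, the pencil member $Z=\{xt=\alpha yz\}$ becomes the plane quintic
\[
z^{3}+\alpha^{2}y^{3}z^{2}-y^{4}+(\epsilon+\alpha)yz=0,
\]
which is irreducible for every $\alpha\neq 0$; moreover this quintic is singular at the origin, so at any other point (in particular at $P$) its multiplicity is at most $3$, whence $\frac{4}{14}\mult_P(Z)<1$. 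With both facts in hand there are no ``special'' $\mu$, and the contradiction $\mult_P(D)\le D\cdot Z=\frac{28}{45}<1$ is immediate. Filling in this explicit affine computation is exactly what is missing from your proposal.
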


\begin{proof}
The surface $X$ can be defined by the
quasihomogeneous equation
$$
z^{3}+yt^{2}-y^{4}+\epsilon x^{2}yz+x^{3}t=0,
$$
where $\epsilon\in\mathbb{C}$. The surface $X$ is singular at the
points $O_x$, $O_t$, $Q_{2}=[0:1:1:0]$ and $Q_{3}=[0:1:0 :1]$.

The curves $C_x$, $C_y$, $C_{z}$ and $C_{t}$ are all irreducible.
We have
$$
1=\lct\left(X,
\frac{4}{6}C_y\right)<\lct\left(X,
\frac{4}{5}C_x\right)=\frac{5}{4}<\lct\left(X,
\frac{4}{8}C_z\right)=2
$$
and $\lct\left(X, \frac{4}{9}C_t\right)>1$.  Therefore,
$\lct(X)\leqslant 1$.

Suppose that $\lct(X)<1$. Then there is an effective $\Q$-divisor
$D\qlineq -K_X$ such that the pair $(X,D)$ is not log canonical at
some point $P$. By Lemma~\ref{lemma:convexity}, we may assume that
the support of the divisor $D$ contains none of the curves $C_x$,
$C_y$, $C_{z}$ and $C_{t}$. Also, the curve $C_y$ is singular at
the point $O_t$ with multiplicity $3$ and the curve $C_t$ is
singular at the point $O_x$. Then the following intersection
numbers show that the point $P$ is located in the outside of the
set $C_x\cup C_y\cup C_z\cup C_t$:
\[3D\cdot C_x=\frac{2}{3}<1, \ \ \frac{9}{3}D\cdot C_y=\frac{4}{5}<1,
\ \ D\cdot C_z=\frac{16}{45}<1, \ \
\frac{5}{2}D\cdot C_t=1.\]

Now we consider the pencil $\mathcal{L}$ on $X$ defined by the
equations $\lambda xt+\mu yz=0$, where $[\lambda: \mu]\in
\mathbb{P}^1$. There is a unique member $Z$ in the pencil
$\mathcal{L}$ passing through the point $P$. Since $P\not\in
C_{x}\cup C_{y}\cup C_{z}\cup C_{t}$, the divisor $Z$ is defined
by an equation of the form
$$
xt=\alpha yz,
$$
where $\alpha$ is non-zero constant. Note that the curve $C_x$ is
not contained in the support of $Z$. The open subset $Z\setminus
C_{x}$ of the curve $Z$ is a $\mathbb{Z}_{5}$-quotient of the
affine curve
$$
t-\alpha yz=z^{3}+yt^{2}+y^{4}+\epsilon
yz+t=0\subset\mathbb{C}^{3}\cong\mathrm{Spec}\Big(\mathbb{C}\big[y,z,t\big]\Big),
$$
that is isomorphic to the plane affine quintic curve  $Z'$ given
by the equation
$$
z^{3}+\alpha^{2}y^{3}z^{2}+y^{4}+(\epsilon+\alpha)yz=0
\subset\mathbb{C}^{2}\cong\mathrm{Spec}\Big(\mathbb{C}\big[y,z\big]\Big).
$$
This affine plane curve $Z'$ is irreducible and hence the curve
$Z$ is also irreducible. The multiplicity of $Z$ at the point $P$
is at most $3$ since the quintic $Z'$ is singular at the origin.
This implies that the log pair $(X, \frac{4}{14}Z)$ is log
canonical at the point $P$. Thus, we may assume that
$\mathrm{Supp}(D)$ does not contain the curve $Z$ by
Lemma~\ref{lemma:convexity}. Then we obtain a contradictory
inequality
$$
\frac{28}{45}=D\cdot Z\geqslant\mult_{P}\big(D\big)>1.%
$$
\end{proof}

\begin{lemma}
\label{lemma:I-4-W-5-6-8-15-D-30} Let $X$ be a quasismooth
hypersurface of degree $30$ in $\mathbb{P}(5,6,8,15)$. Then
$\lct(X)=\nlb 1$.
\end{lemma}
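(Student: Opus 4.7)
The plan is to mirror the strategy used in Lemma~\ref{lemma:I-4-W-5-6-8-9-D-24} for the neighbouring weighted projective space. First I would list the monomials of degree $30$ in $\mathbb{P}(5,6,8,15)$, which are $x^6$, $y^5$, $t^2$, $yz^3$, $x^2y^2z$, $x^3t$, and write the defining equation of $X$ as a generic linear combination. From the Jacobian criterion I determine the singularities of $X$: the points $O_x$, $O_t$, together with one singular point of index $2$ on $L_{yt}$ and one of index $3$ on $L_{yz}$. The anticanonical class satisfies $-K_X\sim_\Q \mathcal{O}_X(4)$ since $I=4$, and $-K_X^2=\tfrac{30\cdot 16}{5\cdot 6\cdot 8\cdot 15}=\tfrac{2}{15}$.

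Next I would analyse each of the four coordinate curves. The curve $C_y$ splits: setting $y=0$ reduces the equation to $t^2+\alpha x^3t+\beta x^6=0$, which factors into two smooth rational components $C_y^{(1)}+C_y^{(2)}$ meeting at $O_z$, whereas $C_x$, $C_z$, $C_t$ are irreducible and reduced. A direct computation (using Lemmas~\ref{lemma:basic-property} and~\ref{lemma:Igusa} together with Proposition~\ref{proposition:lc-by-finite-morphism}) should show that
\[
\lct\!\left(X,\tfrac{4}{6}C_y\right)=1<\lct\!\left(X,\tfrac{4}{5}C_x\right),\;\lct\!\left(X,\tfrac{4}{8}C_z\right),\;\lct\!\left(X,\tfrac{4}{15}C_t\right),
\]
which yields $\lct(X)\leqslant 1$ and identifies the candidate extremal divisor.

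For the opposite inequality I assume there is an effective $\Q$-divisor $D\sim_\Q -K_X$ and a point $P\in X$ at which $(X,D)$ is not log canonical. By Lemma~\ref{lemma:convexity}, I may assume that $\Supp(D)$ contains none of $C_x, C_z, C_t$ and contains at most one of the two components of $C_y$. Using the intersection numbers $-K_X\cdot C_x=\tfrac{4}{72}$, $-K_X\cdot C_z=\tfrac{16}{450}$, $-K_X\cdot C_t=\tfrac{1}{6}$ and the ones for $C_y^{(i)}$, together with the multiplicities of these curves at the singular points, the inequalities of Lemma~\ref{lemma:handy-adjunction} should exclude $P$ from each singular point and each point of $C_x\cup C_y\cup C_z\cup C_t$ one by one; the reducibility of $C_y$ is handled by writing $D=mC_y^{(1)}+\Omega$, bounding $m$ from $\Omega\cdot C_y^{(2)}\geqslant 0$, and then applying Lemma~\ref{lemma:handy-adjunction} on $C_y^{(2)}$.

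The main obstacle is the remaining case where $P$ is a smooth point of $X$ lying off all coordinate curves. Here I would imitate the pencil trick of the previous lemma by considering the pencil $\mathcal{L}=\{\lambda x^3t+\mu yz^3=0\}$ (or alternatively $\{\lambda t^2+\mu yz^3=0\}$) on $X$, whose base locus lies inside $C_x\cup C_y$. The unique member $Z\in\mathcal{L}$ passing through $P$ is then defined by an equation $x^3t=\alpha yz^3$ with $\alpha\ne 0$, and after projecting to the affine chart where a suitable coordinate is invertible one obtains a plane affine curve whose irreducibility can be checked as in the proof of Lemma~\ref{lemma:I-4-W-5-6-8-9-D-24}; the delicate step will be an ad hoc analysis of the exceptional values of $\alpha$ where the fibre could split. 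Once $Z$ is shown to be irreducible with $\mult_P(Z)$ small enough that $\lct(X,\tfrac{4}{d}Z)>1$ (where $d=\deg Z$), Lemma~\ref{lemma:convexity} lets me assume $Z\not\subset\Supp(D)$, and the resulting bound $\mult_P(D)\leqslant D\cdot Z<1$ contradicts the assumption that $(X,D)$ fails to be log canonical at $P$. This contradiction forces $\lct(X)=1$.
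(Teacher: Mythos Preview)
Your overall architecture is right, and the decomposition $C_y=L_{yt}+L$ with the bound on the coefficient via $L_{yt}\cdot L$ is exactly what the paper does for points on $C_y$. However, there are two concrete problems.

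First, the singularity list is wrong. Since $t^{2}$ occurs in the equation, $O_t\notin X$; instead $O_z$ is a singular point of index~$8$, and on $L_{yz}$ (where $\gcd(5,15)=5$) there are \emph{two} index-$5$ points, namely $O_x$ and $Q_5=[1{:}0{:}0{:}1]$, together with an index-$3$ point $Q_3=[0{:}1{:}0{:}1]$ on $L_{xz}$ and an index-$2$ point $Q_2=[0{:}1{:}1{:}0]$ on $L_{xt}$. Several of your stated intersection numbers (e.g.\ $-K_X\cdot C_x$, $-K_X\cdot C_t$) are also off, so the case-by-case exclusion as written would not go through.

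Second, and more substantively, the pencil trick you propose is unnecessary and more delicate than you indicate. Both pencils $\{\lambda x^{3}t+\mu yz^{3}=0\}$ and $\{\lambda t^{2}+\mu yz^{3}=0\}$ contain $L_{yt}$ as a \emph{fixed curve} (set $y=t=0$ in the defining equation), so every member splits off $L_{yt}$ and you would have to analyse the residual moving part and its possible further splittings. The paper bypasses this entirely: with $y$ playing the role of the first variable in Lemma~\ref{lemma:Carolina}, the monomials $x^{6},\,y^{5}\in H^{0}(\mathcal{O}_{\mathbb P}(30))$ (type $y^{\alpha}x^{\beta}$), $y^{5},\,yz^{3}$ (type $y^{\alpha}z^{\beta}$) and $y^{5},\,t^{2}$ (type $y^{\alpha}t^{\beta}$) give $\mult_P(D)\leqslant \frac{4\cdot 30\cdot 30}{5\cdot 6\cdot 8\cdot 15}=1$ for every smooth $P\notin C_y$, so $P$ is forced onto $C_y$ or into $\mathrm{Sing}(X)$. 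The singular points off $C_y$ are $Q_2,Q_3\in C_x$, and $3\,D\cdot C_x=\tfrac12$ disposes of them. After that only the two components of $C_y$ remain, handled exactly as you outline. So your Step~4 can be replaced by a one-line application of Lemma~\ref{lemma:Carolina}, and the pencil argument is not needed.
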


\begin{proof}
 The surface $X$ can be defined by the
quasihomogeneous equation
\[t(t-x^3)-y^5+yz^3+\epsilon x^2y^2z=0.\]
The surface $X$ is singular at the points $O_x$, $O_z$,
$Q_5=[1:0:0:1]$, $Q_3=[0:1:0:1]$ and $Q_2=[0:1:1:0]$.

The curve $C_x$ is irreducible. However, the curve $C_y$ consists
of two irreducible curves $L_{yt}$ and $L=\{y=t-x^3=0\}$. It is
easy to check
$$
1=\lct\left(X, \frac{4}{6}C_y\right)
<\lct\left(X, \frac{4}{5}C_x\right)=\frac{5}{4}.%
$$
Therefore, $\lct(X)\leqslant 1$.

Suppose that $\lct(X)<1$. Then there is an effective $\Q$-divisor
$D\qlineq -K_X$ such that the pair $(X,D)$ is not log canonical at
some point $P$. By Lemma~\ref{lemma:convexity}, we may assume that
the support of the divisor $D$ does not contain the curve $C_x$.
Similarly, we may assume that the support of $D$ does not contain
either $L_{yt}$ or $L$.

We have the following intersection numbers for $L_{yt}$ and $L$:
\[L_{yt}^2=L^2=-\frac{9}{40}, \ \ \ L_{yt}\cdot L=\frac{3}{8}.\]

Since $H^0(\P, \mathcal{O}_{\P}(30))$ contains the monomials
$y^{5}$, $yz^{3}$ and $t^{2}$, it follows from
Lemma~\ref{lemma:Carolina} that the point $P$ is
either a singular point of $X$ or a point on $C_y$. However, since
$3D\cdot C_x=\frac{1}{2}<1$, the point $P$ must belong to the
curve $C_y$.

Since the support of $D$ does not contain either $L_{yt}$ or $L$,
one of the inequalities
\[\mult_{O_z}(D)\leqslant 8D\cdot L_{yt}=\frac{4}{5}<1,
\ \ \mult_{O_z}(D)\leqslant 8D\cdot L=\frac{4}{5}<1\] must hold,
and hence the point $P$ cannot be the point $O_z$.

We put $D=kL+mL_{yt}+\Delta$, where $\Delta$ is an effective
$\mathbb{Q}$-divisor whose support contains neither $L$ nor
$L_{yt}$. If $k\ne 0$, then $m=0$ and
\[\frac{1}{10}=D\cdot L_{yt}\geqslant
kL\cdot L_{yt}=\frac{3k}{8}.\] Therefor, $k\leqslant
\frac{4}{15}$. By the same way, we can also obtain $m\leqslant
\frac{4}{15}$. Then, by Lemma~\ref{lemma:handy-adjunction}, the
inequalities
\[5(D-kL)\cdot L=\frac{4+9k}{8}<1, \ \ 5(D-mL_{yt})\cdot
L_{yt}=\frac{4+9m}{8}<1\] show that the point $P$ cannot belong to
the curve $C_y$. This is a contradiction.
\end{proof}

\begin{lemma}
\label{lemma:I-4-W-9-11-12-17-D-45}Let $X$ be a quasismooth
hypersurface of degree $45$ in $\mathbb{P}(9,11,12,17)$. Then
$\lct(X)=\frac{77}{60}$.
\end{lemma}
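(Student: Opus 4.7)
The surface $X$ is given (up to scaling of the coordinates) by the unique quasihomogeneous equation $f=x^5+yt^2+y^3z+xz^3=0$, since these are the only four monomials of degree $45$ in the weights $(9,11,12,17)$. Its singular locus comprises $O_y$, $O_z$, $O_t$ of indices $11$, $12$, $17$ together with a single $\frac{1}{3}(1,1)$-point $P_3\in L_{yt}$, the $\mathbb{Z}/3$-orbit of the four roots of $x^4+z^3=0$ on $L_{yt}\cap X$. The line $L_{zt}$ lies on $X$, and we have the decompositions $C_x=L_{zt}+R_x$ with $R_x=\{x=t^2+y^2z=0\}$, $C_y=L_{zt}+R_y$ with $R_y=\{y=x^4+z^3=0\}$, while $C_z$ is irreducible.

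My first step would be to establish $\lct(X)\leq\frac{77}{60}$ by computing the three log canonical thresholds of Theorem~\ref{theorem:main}. At $O_t$ the local equation of $X$ eliminates $y$ and presents $L_{zt}$ as the line $\{x=0\}$ and $R_y$ as the $(4,3)$-cusp $\{x^4+z^3=0\}$ in the local cover $\mathbb{C}^2$. Lemma~\ref{lemma:Igusa} then gives $\lct_{O_t}(\mathbb{C}^2,x(x^4+z^3))=\frac{7}{15}$, and Proposition~\ref{proposition:lc-by-finite-morphism} yields
$$
\lct_{O_t}\Bigl(X,\tfrac{4}{11}C_y\Bigr)=\tfrac{11}{4}\cdot\tfrac{7}{15}=\tfrac{77}{60}.
$$
Similar but simpler local checks at $O_y$, $O_z$, $P_3$ show this is the global minimum of $\lct(X,\frac{4}{11}C_y)$; parallel arguments yield $\lct(X,\frac{4}{9}C_x)=\frac{3}{2}$ (attained at $O_z$, where three smooth branches of $L_{zt}+R_x$ meet, via Lemma~\ref{lemma:basic-property}) and $\lct(X,\frac{1}{3}C_z)=\frac{21}{10}$ (attained at $O_y$, a $(5,2)$-cusp of $C_z$). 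The smallest of these is $\frac{77}{60}$.

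For the reverse inequality, I would assume toward a contradiction an effective $\mathbb{Q}$-divisor $D\sim_{\mathbb{Q}}-K_X$ with $(X,\frac{77}{60}D)$ not log canonical at a point $P$, and invoke Lemma~\ref{lemma:convexity} so that $D$ omits $C_z$ and misses at least one component of each of $C_x$ and $C_y$. Intersection bounds from Lemma~\ref{lemma:handy-adjunction} (using the small numbers $-K_X\cdot L_{zt}$, $-K_X\cdot R_x$, $-K_X\cdot R_y$ and $-K_X\cdot C_z$) together with Lemma~\ref{lemma:Carolina} applied with an $\mathcal{O}_{\mathbb{P}}(k)$ containing $x^5$, $y^3z$, $xz^3$ (and using that the projection $X\dashrightarrow\mathbb{P}(9,11,12)$ has only finite fibres off $C_t$) should rule out $P$ being a smooth point off $C_x\cup C_y\cup C_z$, being on any of these curves away from $O_t$, or being one of $O_y,O_z,P_3$. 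The remaining and delicate case $P=O_t$ is where the threshold is attained, and there I would deploy the weighted-blow-up technique of Method~3.3: take $\pi\colon\bar X\to X$ to be the $(3,4)$-weighted blow up at $O_t$ (which resolves the cusp of $R_y$), write the log pull-back as $\bigl(\bar X,\,m\bar L_{zt}+c\bar R_y+\frac{77}{60}\bar\Omega+\theta E\bigr)$, use $\bar\Omega\cdot\bar L_{zt}\geq 0$ and $\bar\Omega\cdot\bar R_y\geq 0$ to bound the multiplicity $a$ of $E$, and then exclude non-log-canonicity at the two singular points $Q_3,Q_4$ of $E$ and at the intersection points with the strict transforms, in close analogy with Lemmas~\ref{lemma:I-2-W-14-17-29-41-D-99} and \ref{lemma:I-3-W-11-21-29-37-D-95}. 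This single blow up should suffice, since the $(3,4)$-blow up resolves the cusp completely; the final bookkeeping at $Q_3$ and $Q_4$ will be the principal technical obstacle.
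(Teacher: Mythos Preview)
Your identification of the common line component is wrong: on $x=0$ the equation becomes $y(t^{2}+y^{2}z)=0$, so $C_{x}=L_{xy}+R_{x}$, and similarly $C_{y}=L_{xy}+R_{y}$; the curve $L_{zt}$ is not contained in $X$ at all (setting $z=t=0$ forces $x^{5}=0$). This looks like a slip, since your local picture at $O_{t}$ is correct, but it should be fixed throughout.

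More seriously, your plan for the lower bound misdiagnoses where the difficulty lies. The point $O_{t}$ is \emph{not} the delicate case: once $\Supp(D)$ omits one of $L_{xy}$ or $R_{y}$, one has either $\mult_{O_{t}}(D)\leqslant 17\,D\cdot L_{xy}=\tfrac{1}{3}$ or, using that $R_{y}$ has multiplicity $3$ at $O_{t}$, $\mult_{O_{t}}(D)\leqslant \tfrac{17}{3}D\cdot R_{y}=\tfrac{4}{9}$, both well below $\tfrac{60}{77}$; no blow-up is needed. The genuine obstacle is ruling out a smooth $P$ off $C_{x}\cup C_{y}\cup C_{z}\cup C_{t}$, and here your proposed use of Lemma~\ref{lemma:Carolina} does not work: the monomial $y^{3}z$ is not of the form $x^{\alpha}y^{\beta}$, so $k=45$ fails the hypothesis, and in fact the smallest $k$ admitting two monomials $x^{\alpha}y^{\beta}$ is $k=99$ (namely $x^{11}$ and $y^{9}$), which gives the bound $\tfrac{4\cdot 99\cdot 45}{9\cdot 11\cdot 12\cdot 17}=\tfrac{15}{17}>\tfrac{60}{77}$. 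The paper instead handles this region with the pencil $\mathcal{L}$ on $X$ cut out by $\lambda t^{2}+\mu y^{2}z=0$: the member through $P$ is shown to be irreducible (or to split controllably when $\alpha=-1$ into $R_{x}$ plus a residual curve $E$), has non-negative self-intersection, and satisfies $D\cdot C\leqslant\tfrac{10}{33}<\tfrac{60}{77}$, which together with Lemma~\ref{lemma:handy-adjunction} yields the contradiction. You will need this (or a comparable) pencil argument; the multiplicity bound from Lemma~\ref{lemma:Carolina} alone is too weak in this case.
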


\begin{proof}
 We may assume that the surface $X$ is defined by
the quasihomogeneous equation
\[t^2y+y^3z+xz^3+x^5=0.\]
It is singular at the points $O_y$, $O_z$, $O_t$, and the point
$Q_3=[1:0:-1:0]$. The curve $C_x$ consists of two irreducible and
reduced  curves $L_{xy}$ and $R_x=\{x=t^2+y^2z=0\}$. The curve
$C_y$ consists of two irreducible and reduced  curves $L_{xy}$ and
$R_y=\{y=z^3+x^4=0\}$. The curves $C_z$ and $C_t$ are irreducible
and reduced. It is easy to check that
$\lct(X,\frac{4}{11}C_y)=\frac{77}{60}$ is less than each of the
numbers $\lct(X,\frac{4}{9}C_x)$, $\lct(X,\frac{4}{12}C_z)$ and
$\lct(X,\frac{4}{17}C_t)$.

Suppose that $\lct(X)<\frac{77}{60}$. Then there is an effective
$\Q$-divisor $D\qlineq -K_X$ such that the pair $(X,
\frac{77}{60}D)$ is not log canonical at some point $P$. By
Lemma~\ref{lemma:convexity} we may assume that the support of $D$
contains neither $C_z$ nor $C_t$. Similarly, we may assume that
the support of $D$ does not contain either $L_{xy}$ or $R_x$.
Also, we may assume that the support of $D$ does not contain
either $L_{xy}$ or $R_y$. Then in each of the following pairs of
inequalities, at least one of two must hold:
\[\mult_{O_z}(D) \leqslant 12D\cdot L_{xy}=\frac{4}{17}<\frac{60}{77}, \ \
\mult_{O_z}(D) \leqslant 12D\cdot
R_x=\frac{8}{11}<\frac{60}{77};\]
\[\mult_{O_t}(D) \leqslant 17D\cdot L_{xy}=\frac{1}{3}<\frac{60}{77}, \ \
\mult_{O_t}(D) \leqslant \frac{17}{3}D\cdot
R_y=\frac{4}{9}<\frac{60}{77}.\] Therefore, the point $P$ can be
neither $O_z$ nor $O_t$. The curve $C_z$ is singular at the point
$O_y$. Then the inequalities \[\frac{11}{2} D\cdot
C_z=\frac{10}{17}<\frac{60}{77},\ \  3D\cdot
C_t=\frac{5}{11}<\frac{60}{77}\] imply that the point $P$ cannot
belong to $C_z\cup C_t$.

We can see that
\[L_{xy}\cdot D=\frac{1}{17\cdot 3}, \ \ R_{x}\cdot D=\frac{2}{33}, \ \ R_{y}\cdot D=\frac{4}{3\cdot 17},
 \ \ L_{xy}\cdot R_x=\frac{1}{6},\]
 \[L_{xy}\cdot R_y=\frac{3}{17}, \ \ L_{xy}^2=-\frac{25}{12\cdot 17}, \ \ R_x^2=-\frac{1}{33}, \ \ R_y^2=\frac{2}{3\cdot 17}.\]

If we write $D=nL_{xy}+\Delta$, where $\Delta$ is an effective
$\Q$-divisor whose support does not contain the curve $L_{xy}$,
then we can see that $n\leqslant\nlb\frac{4}{11}$ since $D\cdot
R_x\geqslant nR_x\cdot L_{xy}$ for $n\ne 0$. By
Lemma~\ref{lemma:handy-adjunction} the inequality
$$
(L_{xy}\cdot D-nL^2_{xy})=\frac{4+25n}{12\cdot17}< \frac{60}{77}
$$ implies that the point $P$ cannot
belong to the curve $L_{xy}$. By the same method, we see that the point
$P$ must be in the outside of $R_x$.

If we write $D=mR_y+\Omega$, where $\Omega$ is an effective
$\Q$-divisor whose support does not contain the curve $R_y$, then
we can see that $0\leqslant\nlb m\leqslant\nlb \frac{1}{9}$ since $D\cdot
L_{xy}\geqslant mR_y\cdot L_{xy}$ for $m\ne 0$. By
Lemma~\ref{lemma:handy-adjunction} the inequality
$$
(R_y\cdot D-mR^2_y)\leqslant R_y\cdot
D<\frac{60}{77}
$$ implies that the point $P$ cannot belong to the curve $R_y$.

Now we consider the pencil $\mathcal{L}$ on $X$ cut out by
$\lambda t^2+\mu y^2z=0$. The base locus of the pencil
$\mathcal{L}$ consists of three points $O_y$, $O_z$, and $Q$. Let
$F$ be the member in $\mathcal{L}$ defined by $t^2+y^2z=0$. The
divisor $F$ consists of two irreducible and reduced curves $R_x$
and $E=\{t^2+y^2z=x^4+z^3=0\}$. The   curve $E$ is smooth in the
outside of the base points. We have
\[E\cdot D=\frac{8}{33}.\]
Since
\[E^2=F\cdot E-R_x\cdot E\geqslant F\cdot E-(L_{xy}+R_x)\cdot E=\frac{25}{4}D\cdot E,\]
the self-intersection $E^2$ is positive.
We write $D=kE+\Gamma$, where $\Gamma$ is an effective
$\Q$-divisor whose support does not contain the curve $E$. Since
$(X, \frac{77}{60}D)$ is log canonical at the point $O_y$, the
non-negative number $k$ is at most $\frac{60}{77}$. By
Lemma~\ref{lemma:handy-adjunction}, the inequality
$$
(E\cdot D-kE^2)\leqslant E\cdot D=\frac{8}{33}<\frac{60}{77}
$$
implies that the point $P$ cannot belong to the curve $E$.

So far we have seen that the point $P$ must lie in the outside of
$C_x\cup C_y\cup C_z\cup C_t\cup E$. In particular, it is a smooth
point. There is a unique member $C$ in $\mathcal{L}$ which passes
through the point $P$. Then the curve $C$ is cut out by
$t^2=\alpha y^2z$ where $\alpha$ is a constant different from $0$
and $-1$. The curve $C$ is isomorphic to the curve defined by
$y^3z+xz^3+x^5=0$ and $t^2=y^2z$. The curve $C$ is smooth in the
outside of the base points and the singular locus of $X$  by the
Bertini theorem, since it is isomorphic to a general curve in the
pencil $\mathcal{L}$. We claim that the curve $C$ is irreducible.
If so then we may assume that the support of $D$ does not contain
the curve $C$ and hence we obtain
\[\mult_{P}(D)\leqslant C\cdot D=\frac{10}{33}<\frac{60}{77}.\]
This is a contradiction.

For the irreducibility of the curve $C$, we may consider the curve
$C$ as a surface in $\mathbb{C}^4$ defined by the equations
$y^3z+xz^3+x^5=0$ and $t^2=y^2z$. Then, we consider the surface in
$\P^4$ defined by the equations $y^3zw+xz^3w+x^5=0$ and
$t^2w=y^2z$.  We take the affine piece defined by $t\ne 0$.
This affine piece is isomorphic to the surface defined by the
equation $y^3zw+xz^3w+x^5=0$ and $w=y^2z$ in $\mathbb{C}^4$. It is
isomorphic to the irreducible hypersurface $y^5z^2+xy^2z^5+x^5=0$ in
$\mathbb{C}^3$. Therefore, the curve $C$ is irreducible.
\end{proof}

\begin{lemma}
\label{lemma:I-4-W-10-13-25-31-D-75}Let $X$ be a quasismooth
hypersurface of degree $75$ in $\mathbb{P}(10,13,25,31)$. Then
$\lct(X)=\frac{91}{60}$.
\end{lemma}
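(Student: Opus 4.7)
The plan is to follow the four--step scheme laid out in Section 4 of the introduction, modeled on the companion lemma~\ref{lemma:I-4-W-9-11-12-17-D-45} for $\mathbb{P}(9,11,12,17)$ (the analogous case with weights scaled up by roughly $13/11$). After a suitable coordinate change I will write the defining equation of $X$ as
$$z^{3}+yt^{2}+xy^{5}+x^{5}z=0,$$
which exhausts the degree-$75$ monomials in variables of weights $(10,13,25,31)$. A direct check shows that $L_{yz}=\{y=z=0\}$ is entirely contained in $X$, and that the singular locus of $X$ consists of three cyclic quotient points $O_x$, $O_y$, $O_t$, of indices $10$, $13$, $31$. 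The divisors $C_x$ and $C_t$ are irreducible and reduced, while $C_y=L_{yz}+R_y$ and $C_z=L_{yz}+R_z$ with $R_y=\{y=0,\ z^{2}+x^{5}=0\}$ and $R_z=\{z=0,\ t^{2}+xy^{4}=0\}$.

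Using Lemma~\ref{lemma:basic-property}, Lemma~\ref{lemma:Igusa} and Proposition~\ref{proposition:lc-by-finite-morphism}, I will compute the four log canonical thresholds $\lct\!\bigl(X,\tfrac{4}{a_i}C_\bullet\bigr)$ and verify that
$$\lct\!\Bigl(X,\tfrac{4}{13}C_y\Bigr)=\tfrac{91}{60}$$
is the minimum (the factor $91=7\cdot 13$ mirrors the factor $77=7\cdot 11$ appearing in the parallel case of weights $(9,11,12,17)$). This gives $\lct(X)\le\tfrac{91}{60}$.

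For the reverse inequality, I will suppose there is an effective $D\qlineq -K_X$ such that $(X,\tfrac{91}{60}D)$ is not log canonical at some $P\in X$. Applying Lemma~\ref{lemma:convexity} to each coordinate divisor we arrange that $\Supp(D)$ omits at least one component of each of $C_x,C_y,C_z,C_t$. All necessary intersection numbers---in particular $D\cdot L_{yz}$, $D\cdot R_y$, $D\cdot R_z$, $D\cdot C_x$, $D\cdot C_t$, together with $L_{yz}^{2}$, $R_y^{2}$, $R_z^{2}$ and the pairwise products $L_{yz}\cdot R_y$, $L_{yz}\cdot R_z$---are obtained from the relations $L_{yz}+R_y\sim\mathcal{O}_X(13)$ and $L_{yz}+R_z\sim\mathcal{O}_X(25)$ on $X$. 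Then Method~3.1 (multiplicity) rules out $O_x$ and $O_y$ and most of the smooth locus of $C_y\cup C_z$; Method~3.2 (Inversion of Adjunction via Lemma~\ref{lemma:handy-adjunction}), applied after writing $D=aL_{yz}+bR_y+cR_z+\Omega$ with $a,b,c\le\tfrac{60}{91}$, removes the remaining smooth points of $L_{yz}\cup R_y\cup R_z\cup C_x\cup C_t$; and Lemma~\ref{lemma:Carolina} applied with a convenient $k$ (for instance $k$ a multiple of $\mathrm{lcm}(10,13)$ so that the required $x^\alpha y^\beta$, $x^\gamma z^\delta$, $x^\mu t^\nu$ monomials of that degree exist) eliminates smooth points of $X\setminus C_x$.

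The main obstacle will be the point $P=O_t$, the highest--index singularity, through which $L_{yz}$ meets both $R_y\cup R_z$. I expect Methods~3.1 and~3.2 to fall short here, just as in the analogous cases of Lemmas~\ref{lemma:I-2-W-14-17-29-41-D-99} and~\ref{lemma:I-3-W-11-21-29-37-D-95}. In that event I will invoke Method~3.3 and perform a weighted blow-up $\pi\colon\bar X\to X$ of $O_t$ with weights chosen according to the local type $\tfrac{1}{31}(10,25)$ (after reducing to a Hirzebruch--Jung style pair), and then iterate the multiplicity/adjunction argument on $\bar X$, possibly blowing up once or twice more along the exceptional locus, until the resulting log pull-back yields a strictly contradictory intersection inequality. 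The bookkeeping for this iterated blow-up---bounding the auxiliary discrepancy coefficient $a$ arising at each step and verifying that the boundary coefficients remain below $1$---will be the principal technical burden of the proof.
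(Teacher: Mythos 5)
Your overall architecture (coordinate thresholds, Lemma~\ref{lemma:convexity}, Methods 3.1/3.2, then a global multiplicity bound) matches the paper's, and your identification of the equation, of the splittings $C_y=L_{yz}+R_y$, $C_z=L_{yz}+R_z$, and of $\lct(X,\frac{4}{13}C_y)=\frac{91}{60}$ as the minimum is correct. But there are concrete problems. First, the singular locus is not three points: $X$ is also singular at an index-$5$ point $Q=[-1:0:1:0]$ on $\{y=t=0\}$ (the Big Table records $O_xO_z=1\times\frac{1}{5}(3,1)$). Since $Q$ lies on $R_y$ and on $C_t$, excluding it needs the singular-point versions of Method 3.1 and Lemma~\ref{lemma:handy-adjunction} with the extra factor $r=5$; as written, your argument treats all of $R_y\setminus\{O_t\}$ as smooth points of $X$ and so does not legitimately cover $Q$. (The required inequalities do hold with the factor $5$, so this is reparable, but it must be checked.) Second, you have misplaced the difficulty: $O_t$ needs no blow-up. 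Both $L_{yz}$ and $R_y$ pass through $O_t$, the latter with multiplicity $2$, and whichever is omitted from $\mathrm{Supp}(D)$ gives $\mult_{O_t}(D)\leqslant 31\,D\cdot L_{yz}=\frac{2}{5}$ or $\mult_{O_t}(D)\leqslant\frac{31}{2}D\cdot R_y=\frac{2}{5}$, both below $\frac{60}{91}$; the same trick with $C_z=L_{yz}+R_z$ disposes of $O_x$. The entire Method 3.3 apparatus you budget for is unnecessary, and the fact that you expect it suggests you have not run these numbers.

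Third, your use of Lemma~\ref{lemma:Carolina} cannot go through the clause you name: the multiplicity bound forces $k<\frac{20150}{91}\approx 221$, whereas two distinct monomials $x^{\mu}t^{\nu}$ of weight $k$ exist only for $k\geqslant 310$. You must instead invoke the alternative hypothesis, namely that $P$ avoids the curves contracted by $X\dasharrow\mathbb{P}(10,13,25)$; this does work with $k=130$ (monomials $x^{13},y^{10}$ and $x^{13},x^{8}z^{2}$ give $\mult_P(D)\leqslant\frac{12}{31}<\frac{60}{91}$) because the contracted curves are exactly the components of $C_y$, already excluded --- but identifying the contracted curves is a step you must actually carry out. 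The paper takes a different route here: it runs the pencil $\lambda t^{2}+\mu xy^{4}=0$, separately excludes the extra component $E=\{t^{2}+xy^{4}=z^{2}+x^{5}=0\}$ of the reducible member, and proves irreducibility of the other members to get $\mult_P(D)\leqslant D\cdot C=\frac{12}{65}$. Once the contracted-curve check is supplied, your Carolina route is in fact the shorter of the two.
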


\begin{proof}
 We may assume that the surface $X$ is defined by
the quasihomogeneous equation
\[t^2y+z^3+xy^5+x^5z=0.\]
It has singular points at $O_x$, $O_y$, $O_t$ and $Q=[-1:0:1:0]$.
The curve $C_x$ and $C_t$  are irreducible and reduced. The curve
$C_y$ (resp. $C_z$) consists of two irreducible reduced curves
$L_{yz}$ and $R_y=\{y=z^2+x^5=0\}$
(resp. $R_z=\{y=t^2+xy^4=0\}$). It is easy to see that
\[\lct(X, \frac{4}{13}C_y)=\frac{91}{60}<\lct(X, \frac{4}{10}C_x)<\lct(X, \frac{4}{25}C_z)<\lct(X, \frac{4}{31}C_t).\]

Also, we have the following intersection numbers:
\[-K_X\cdot L_{yz}=\frac{2}{5\cdot 31}, \ \ -K_X\cdot R_y =\frac{4}{5\cdot 31}, \ \ -K_X\cdot R_z =\frac{4}{5\cdot 13},\]
\[L_{yz}\cdot R_y=\frac{5}{31}, \ \ L_{yz}\cdot R_z=\frac{1}{5}, \ \ L_{yz}^2 =-\frac{37}{10\cdot 31},
 \ \ R_y^2=-\frac{12}{5\cdot 31} , \ \ R_z^2=\frac{12}{5\cdot 13}.\]

Suppose that $\lct(X)<\frac{91}{60}$. Then, there is an effective
$\Q$-divisor $D\qlineq -K_X$ such that the log pair $(X,
\frac{91}{60}D)$ is not log canonical at some point $P\in X$.
Since the curves $C_x$ and $C_t$ are irreducible we may assume
that the support of $D$ contains none of them. The inequalities
\[13D\cdot C_x<\frac{60}{91}, \ \ \ 5D\cdot C_t<\frac{60}{91}\]
show that the point $P$ must lie in the outside of $C_x\cup
C_t\setminus \{O_x, O_t\}$.

By Lemma~\ref{lemma:convexity}, we may assume that the support of
$D$ does not contain either $L_{yz}$ or $R_y$. If the support of
$D$ does not contain~$L_{yz}$, then the inequality
\[31 D\cdot L_{yz}=\frac{2}{5}<\frac{60}{91}\]
shows that the point $P$ cannot be $O_t$. On the other hand, if
the support of $D$ does not contain~$R_y$, then the inequality
\[\frac{31}{2} D\cdot R_{y}=\frac{2}{5}<\frac{60}{91}\]
shows that the point $P$ cannot be $O_t$. Note that the curve $R_y$ is singular at the point $O_t$. We use the same method
for $C_{z}=R_z+L_{yz}$ so that we can see that the point $P$
cannot be $O_x$.

We write $D=mR_y+\Omega$, where $\Omega$ is an effective
$\Q$-divisor whose support does not contain the curve $R_y$. Then
we see $m\leqslant \frac{2}{25}$ since the support of $D$ does not
contain either $L_{yz}$ or $R_y$ and  $D\cdot L_{yz} \geqslant
mR_y\cdot L_{yz}$. Since $R_y\cdot D-mR_y^2<\frac{60}{91}$,
Lemma~\ref{lemma:handy-adjunction} implies that the point $P$ is
located in the outside of $R_y$. Using the same argument for
$L_{yz}$ , we can also see that the point $P$ is located in the
outside of $L_{yz}$. Also, the same method shows that the point
$P$ is located in the outside of $R_z$. Consequently, the point
$P$ must lie in the outside of $C_x\cup C_y\cup C_z\cup C_t$.

Now we consider the pencil $\mathcal{L}$ on $X$ cut out by
$\lambda t^2+\mu xy^4=0$. The base locus of the pencil
$\mathcal{L}$ consists of three points $O_x$, $O_y$, and $Q$. Let
$F$ be the member of $\mathcal{L}$ defined by $t^2+xy^4=0$. The
divisor $F$ consists of two irreducible and reduced curves $R_z$
and $E=\{t^2+xy^4= z^2+x^5=0\}$. The   curve $E$ is smooth in the
outside of $\mathrm{Sing}(X)$.
We have
\[E\cdot D=\frac{8}{5\cdot 13}.\]
Since
\[E^2=F\cdot E-R_z\cdot E\geqslant F\cdot E-(L_{yz}+R_z)\cdot E=\frac{37}{4}D\cdot E,\]
the self-intersection $E^2$ is positive.
We write $D=kE+\Gamma$, where $\Gamma$ is an effective
$\Q$-divisor whose support does not contain the curve $E$. Since
$(X, \frac{91}{60}D)$ is log canonical at the point $O_y$, the
non-negative number $k$ is at most $\frac{60}{91}$. By
Lemma~\ref{lemma:handy-adjunction}, the inequality
$$
(E\cdot D-kE^2)\leqslant E\cdot D=\frac{8}{5\cdot 13}<\frac{60}{91}
$$
implies that the point $P$ cannot belong to the curve $E$.

So far we have seen that the point $P$ must lie in the outside of
$C_x\cup C_y\cup C_z\cup C_t\cup E$. In particular, it is a smooth
point. There is a unique member $C$ in $\mathcal{L}$ which passes
through the point $P$. Then the curve $C$ is cut out by
$t^2=\alpha xy^4$ where $\alpha$ is a constant different from $0$
and $-1$. The curve $C$ is isomorphic to the curve defined by
$xy^5+z^3+x^5z=0$ and $t^2=xy^4$. The curve $C$ is smooth in the
outside of the base points and the singular locus of $X$ by Bertini theorem, since it is
isomorphic to a general curve in the pencil $\mathcal{L}$. We
claim that the curve $C$ is irreducible. If so then we may assume
that the support of $D$ does not contain the curve $C$ and hence
we obtain
\[\mult_{P}(D)\leqslant C\cdot D=\frac{12}{5\cdot 13}<\frac{60}{91}.\]
This is a contradiction.

For the irreducibility of the curve $C$, we may consider the curve
$C$ as a surface in $\mathbb{C}^4$ defined by the equations
$xy^5+z^3+x^5z=0$ and $t^2=xy^4$. Then, we consider the surface in
$\P^4$ defined by the equations $xy^5+w^3z^3+x^5z=0$ and
$t^2w^3=xy^4$.  We then take the affine piece defined by $y\ne 0$.
This affine piece is isomorphic to the surface defined by the
equation $x+w^3z^3+x^5z=0$ and $t^2w^3=x$ in $\mathbb{C}^4$. It is
isomorphic to the hypersurface defined by
$t^2w^3+w^3z^3+t^{10}w^{15}z=0$ in~$\mathbb{C}^3$. It has two
irreducible components $w=0$ and $t^2+z^3+t^{10}w^{12}z=0$. The
former component originates from the hyperplane at infinity in
$\P^4$. Therefore, the curve $C$ must be irreducible.
\end{proof}

\begin{lemma}
\label{lemma:I-4-W-11-17-20-27-D-71}Let $X$ be a quasismooth
hypersurface of degree $71$ in $\mathbb{P}(11,17,20,27)$. Then
$\lct(X)=\frac{11}{6}$.
\end{lemma}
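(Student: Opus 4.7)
The plan is to follow the scheme laid out in Section~\ref{section:preliminaries} and apply the machinery used in the preceding $I=4$ lemmas such as Lemma~\ref{lemma:I-4-W-10-13-25-31-D-75}. By enumerating the degree $71$ monomials in $x,y,z,t$ we find the only admissible ones are $yt^2,y^3z,xz^3,x^4t$, so after a coordinate change the surface $X$ is defined by
\[
yt^2+y^3z+xz^3+x^4t=0.
\]
A direct check shows the singular locus consists of the four points $O_x,O_y,O_z,O_t$ of types $\tfrac{1}{11}(17,20),\tfrac{1}{17}(11,27),\tfrac{1}{20}(17,27),\tfrac{1}{27}(11,20)$ respectively, and that $C_x=L_{xy}+R_x$, $C_y=L_{xy}+R_y$, $C_z=L_{zt}+R_z$, $C_t=L_{zt}+R_t$, where $R_x=\{x=t^2+y^2z=0\}$, $R_y=\{y=z^3+x^3t=0\}$, $R_z=\{z=yt+x^4=0\}$, $R_t=\{t=y^3+xz^2=0\}$. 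First I would compute $\lct\bigl(X,\tfrac{4}{11}C_x\bigr)$ via Lemmas~\ref{lemma:basic-property} and~\ref{lemma:Igusa} together with Proposition~\ref{proposition:lc-by-finite-morphism}: near $O_z$ the divisor $C_x$ lifts on the orbifold cover to $y(t^2+\lambda y^2)=0$ for some unit $\lambda$, whose projectivised tangent cone consists of three distinct points, giving local log canonical threshold $\tfrac{2}{3}$ and hence $\lct_{O_z}(X,\tfrac{4}{11}C_x)=\tfrac{11}{6}$. A parallel check at the remaining singular and smooth points shows this value is the global one, and that $\lct(X,\tfrac{4}{a_i}C_i)\ge\tfrac{11}{6}$ for $i=1,2,3$. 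Consequently $\lct(X)\le\tfrac{11}{6}$.

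For the reverse inequality, suppose $\lct(X)<\tfrac{11}{6}$ and pick an effective $\mathbb{Q}$-divisor $D\sim_{\mathbb{Q}}-K_X$ such that $(X,\tfrac{11}{6}D)$ fails to be log canonical at some point $P\in X$. By Lemma~\ref{lemma:convexity} we may assume that $\Supp(D)$ omits at least one component of each of $C_x,C_y,C_z,C_t$. I would then record the intersection numbers $-K_X\cdot L_{xy}$, $-K_X\cdot L_{zt}$, $-K_X\cdot R_i$, the self-intersections, and the crossings $L_{xy}\cdot R_x$, $L_{xy}\cdot R_y$, $L_{zt}\cdot R_z$, $L_{zt}\cdot R_t$. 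For each singular point $Q\in\{O_x,O_y,O_z,O_t\}$ one of the components of the curves $C_{\ast}$ passing through $Q$ is not in $\Supp(D)$, so the bound
\[
\mult_Q(D)\le r_Q\,D\cdot L
\]
of Method~3.1 (with $L$ that component and $r_Q$ the index of $Q$) would exclude $Q$ as $P$ except possibly at the delicate point. Writing $D=mL+\Omega$ along each component in turn and applying Lemma~\ref{lemma:handy-adjunction} rules out all smooth points of $C_x\cup C_y\cup C_z\cup C_t$ apart from that same delicate point. For a smooth point of $X\setminus(C_x\cup C_z)$ Lemma~\ref{lemma:Carolina} applied to a suitable $k$ (with $H^0(\mathbb{P},\mathcal{O}_{\mathbb{P}}(k))$ containing monomials in $x,y$, $x,z$, and $x,t$) gives a contradiction, and points where the projection from $O_t$ contracts a curve are handled by the pencil $\lambda xt+\mu yz=0$ analysed exactly as in Lemma~\ref{lemma:I-4-W-9-11-12-17-D-45}.

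The main obstacle will be excluding the point $P=O_t$: the bound $27L_{xy}\cdot D=\tfrac{4}{17\cdot 20}$ is too weak, and the inversion-of-adjunction trick only produces the restriction $m\le\tfrac{4}{27}$ on the coefficient of $L_{xy}$ in $D$. I plan to follow Method~3.3 and Lemma~\ref{lemma:I-3-W-11-21-29-37-D-95}: perform a weighted blow up $\pi\colon\bar X\to X$ at $O_t$ with weights $(11,16)$ (the residues of $17$ and $20$ modulo $27$ being chosen to match the eigenstructure of the orbifold chart, exactly as in the $(11,21,29,37,95)$ case where weights $(13,4)$ were used), compute the log pull back of $(X,\tfrac{11}{6}D)$, and track the multiplicity along the exceptional curve $E$ together with $\bar L_{xy}$, $\bar R_x$ and $\bar R_y$. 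Using that $(X,\tfrac{11}{6}D)$ is log canonical at the two singular points of $E$ and repeating the Kollár--Shokurov argument (possibly followed by one further blow up of the residual quotient singularity on $E$ to bring the discrepancies into a controllable range) should force a numerical contradiction, completing the proof. The bookkeeping of the discrepancies and the check that the log pull back remains effective are the computational bottleneck; however, since the geometry of $(11,17,20,27,71)$ mirrors the $(11,21,29,37,95)$ setting closely, the same two step blow up pattern is expected to resolve the issue.
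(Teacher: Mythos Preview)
Your setup is correct, but the proof plan goes astray at the key step. The bound you quote, $27\,L_{xy}\cdot D=\frac{4}{17\cdot 20}$, is wrong: since $L_{xy}=\{x=y=0\}$ passes through $O_z$ and $O_t$, one has $D\cdot L_{xy}=\frac{I}{a_2a_3}=\frac{4}{20\cdot 27}$, so $27\,D\cdot L_{xy}=\frac{1}{5}<\frac{6}{11}$. Moreover $R_y=\{y=z^3+x^3t=0\}$ has multiplicity $3$ at $O_t$, whence $\frac{27}{3}\,D\cdot R_y=\frac{4}{11}<\frac{6}{11}$. Either bound already excludes $P=O_t$ by Method~3.1; no weighted blow-up is needed anywhere in this case. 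The genuinely delicate singular point is $O_y$: there $17\,D\cdot R_z=\frac{16}{27}>\frac{6}{11}$, so the naive bound from $R_z$ fails. The paper handles this with a short two-case argument: write $D=mL_{zt}+\Delta$; if $m=0$ the bound $17\,D\cdot L_{zt}=\frac{4}{11}$ finishes it, while if $m>0$ then $R_z\not\subset\Supp(D)$ and the inequality $D\cdot R_z\geqslant mL_{zt}\cdot R_z+\frac{\mult_{O_y}(D)-m}{17}$ forces $m<\frac{14}{3\cdot 11\cdot 27}$, after which Lemma~\ref{lemma:handy-adjunction} applied to $L_{zt}$ yields a contradiction.

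Two further issues. Your proposed pencil $\lambda xt+\mu yz=0$ is not well-defined, since $\deg(xt)=38\ne 37=\deg(yz)$; the paper uses $\lambda ty+\mu x^4=0$ (both of degree $44$), whose general member decomposes as $L_{xy}+C$ with $C$ irreducible, and the special member $ty+x^4=0$ as $L_{xy}+R_z+M$, allowing Lemma~\ref{lemma:handy-adjunction} to dispose of smooth points outside $C_x\cup C_y\cup C_z\cup C_t$. Also, Lemma~\ref{lemma:Carolina} is not available here: two monomials $x^\gamma z^\delta$ of equal degree first appear at degree $11\cdot 20=220$, but the multiplicity bound it would give is below $\frac{6}{11}$ only for $k<194$. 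So the pencil argument is essential, not optional.
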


\begin{proof}
We may assume that the surface $X$ is defined by the quasihomogeneous equation
\[t^2y+y^3z+xz^3+x^4t=0.\]
The surface $X$ is singular at the points $O_x$, $O_y$, $O_z$, $O_t$. Each of the divisors $C_x$, $C_y$, $C_z$, and $C_t$ consists of two irreducible and reduced components. The divisor $C_x$ (resp. $C_y$, $C_z$, $C_t$) consists of $L_{xy}$ (resp. $L_{xy}$, $L_{zt}$, $L_{zt}$)
and $R_x=\{x=y^2z+t^2=0\}$ (resp. $R_y=\{y=x^3t+z^3=0\}$, $R_z=\{z=x^4+yt=0\}$, $R_t=\{t=y^3+xz^2=0\}$).
Also, we see that
\[L_{xy}\cap R_x=\{O_z\}, \ L_{xy}\cap R_y=\{O_t\}, \ L_{zt}\cap R_z=\{O_y\}, \ L_{zt}\cap R_t=\{O_x\}.\]
One can easily check that $\lct(X, \frac{11}{4}C_x)=\frac{11}{6}$
is less than each of the numbers $\lct(X, \frac{17}{4}C_y)$,
$\lct(X, \frac{20}{4}C_z)$ and $\lct(X, \frac{27}{4}C_t)$.
Therefore, $\lct(X)\leqslant \frac{11}{6}$. Suppose
$\lct(X)<\frac{11}{6}$. Then, there is an effective $\Q$-divisor
$D\qlineq -K_X$ such that the log pair $(X, \frac{11}{6}D)$ is not
log canonical at some point $P\in X$.

The intersection numbers among the divisors $D$, $L_{xy}$, $L_{zt}$, $R_x$, $R_y$, $R_z$, $R_t$ are as follows:

\[D\cdot L_{xy}=\frac{1}{5\cdot 27}, \ \ D\cdot R_x=\frac{2}{5\cdot 17}, \ \ D\cdot R_y=\frac{4}{9\cdot 11}, \]
\[D\cdot L_{zt}=\frac{4}{11\cdot 17}, \ \ D\cdot R_z=\frac{16}{17\cdot 27}, \ \ D\cdot R_t=\frac{3}{5\cdot 11}, \]
\[L_{xy}\cdot R_x=\frac{1}{10}, \ \ L_{xy}\cdot R_y=\frac{1}{9}, \ \ L_{zt}\cdot R_z=\frac{4}{17}, \ \ L_{zt}\cdot R_t=\frac{3}{11},\]

\[L_{xy}^2=-\frac{43}{20\cdot 27}, \ \ R_x^2=-\frac{3}{5\cdot 17}, \ \ R_y^2=\frac{2}{3\cdot 11},\]

\[L_{zt}^2=-\frac{24}{11\cdot 17}, \ \ R_z^2=-\frac{28}{17\cdot 27}, \ \ R_t^2=\frac{21}{20\cdot 11}.\]
By Lemma~\ref{lemma:convexity} we may assume that the support of $D$ does not contain at least one component of each divisor $C_x$, $C_y$, $C_z$, $C_t$.
Since the curve $R_t$ is singular at the point $O_x$ and the curve $R_y$ is singular at the point $O_t$ with multiplicity $3$, in
each of the following pairs of inequalities, at least one of two
must hold:
\[\mult_{O_x}(D)\leqslant 11D\cdot L_{zt}=\frac{4}{17}<\frac{6}{11}, \ \ \  \mult_{O_x}(D)\leqslant \frac{11}{2}D\cdot R_t=\frac{3}{10}<\frac{6}{11};\]
\[\mult_{O_z}(D)\leqslant 20D\cdot L_{xy}=\frac{4}{27}<\frac{6}{11}, \ \ \  \mult_{O_z}(D)\leqslant 20D\cdot R_x=\frac{8}{17}<\frac{6}{11};\]
\[\mult_{O_t}(D)\leqslant 27D\cdot L_{xy}=\frac{1}{5}<\frac{6}{11}, \ \ \  \mult_{O_t}(D)\leqslant \frac{27}{3}D\cdot R_y=\frac{4}{11}<\frac{6}{11}.\]
Therefore,
the point $P$ can be none of $O_x$, $O_z$, $O_t$.

Suppose that the point $P$ is the point $O_y$. We then put $D=mL_{zt}+\Delta$, where $\Delta$
is an effective $\Q$-divisor whose support does not contain the
curve $L_{zt}$.  If $m=0$, then
\[\mult_{O_y}(D)\leqslant 17D\cdot L_{zt}=\frac{4}{11}<\frac{6}{11}.\]
This is a contradiction. Therefore, $m>0$, and hence the support of $D$ does not contain the curve $R_z$.
Since
$$
\frac{16}{17\cdot 27}=D\cdot R_z\geqslant \frac{4m}{17}+\frac{\mult_{O_y}(D)-m}{17}>\frac{3m}{17}+\frac{6}{11\cdot 17}%
$$
we obtain $m<\frac{14}{3\cdot 11\cdot 27}$. However, we obtain
\[17(D-mL_{zt})\cdot L_{zt}=\frac{4+24m}{11}>\frac{6}{11}\]
from Lemma~\ref{lemma:handy-adjunction}. This is a contradiction. Therefore, the point $P$ is a smooth point of $X$.

We write
$D=a_0L_{xy}+a_1L_{zt}+a_2R_x+a_3R_y+a_4R_z+a_5R_t+\Omega$, where  $\Omega$
is an effective $\Q$-divisor whose support contains none of the
curves $L_{xy}$, $L_{zt}$, $R_x$, $R_y$, $R_z$, $R_t$. Since the pair $(X,
\frac{11}{6}D)$ is log canonical at the points $O_x$,
$O_z$, $O_t$, the numbers $a_i$ are at most $\frac{6}{11}$. Then
by Lemma~\ref{lemma:handy-adjunction} the following inequalities
enable us to conclude that the point $P$ is in the outside of
$C_x\cup C_y\cup C_z\cup C_t$:
\[(D-a_0L_{xy})\cdot L_{xy}=\frac{4+43a_0}{20\cdot 27}\leqslant \frac{6}{11},\ \ (D-a_1L_{zt})\cdot L_{zt}=\frac{4+24a_1}{11\cdot 17}\leqslant \frac{6}{11}, \]
\[(D-a_2R_{x})\cdot R_{x}=\frac{2+3a_2}{5\cdot 17}\leqslant \frac{6}{11},
\ \ (D-a_3R_{y})\cdot R_{y}=\frac{4-6a_3}{9\cdot 11}\leqslant \frac{6}{11}, \]
\[(D-a_4R_{z})\cdot R_{z}=\frac{16+28a_4}{17\cdot 27}\leqslant \frac{6}{11},
\ \ (D-a_5R_{t})\cdot R_{t}=\frac{12-21a_5}{20\cdot 11}\leqslant \frac{6}{11}. \]

We consider the pencil $\mathcal{L}$ defined by $\lambda ty+\mu
x^4=0$, $[\lambda:\mu]\in\P^1$. The base locus of the pencil
$\mathcal{L}$ consists of the curve $L_{xy}$ and the point $O_y$.
Let $E$ be the unique divisor in $\mathcal{L}$ that passes through
the point $P$. Since $P\not\in C_x\cup C_y\cup C_z\cup C_t$, the
divisor $E$ is defined by the equation $ty=\alpha x^4$, where
$\alpha\ne 0$.

Suppose that $\alpha\ne -1$. Then the curve $E$ is isomorphic to the curve defined by the equations $ty=x^4$ and $x^4t+y^3z+xz^3=0$. Since the curve $E$ is isomorphic to a general curve in $\mathcal{L}$, it is smooth at the point $P$. The affine piece of $E$ defined by $t\ne 0$ is the curve given by $x(x^2+x^{11}z+z^3)=0$. Therefore, the divisor $E$ consists of two irreducible and reduced curves $L_{xy}$ and $C$. We have
\[D\cdot C=D\cdot E-D\cdot L_{xy}=\frac{267}{5\cdot 17\cdot 27}, \]
\[C^2=E\cdot C- L_{xy}\cdot C\geqslant E\cdot C- L_{xy}\cdot C- R_x\cdot C =\frac{33}{4}D\cdot C>0.\]
By Lemma~\ref{lemma:handy-adjunction} the inequality $D\cdot C<\frac{6}{11}$ gives us a contradiction.

Suppose that $\alpha=-1$. Then divisor $E$ consists of three irreducible and reduced curves $L_{xy}$, $R_z$, and $M$.  Note that the curve $M$ is different from the curves $R_x$ and $L_{zt}$. Also, it is smooth at the point $P$. We have
\[D\cdot M=D\cdot E- D\cdot L_{xy}-D\cdot R_z=\frac{11}{5\cdot 27},\]
\[M^2=E\cdot M-L_{xy}\cdot M- R_z\cdot M \geq E\cdot M-C_x\cdot M- C_z\cdot M=\frac{13}{4}D\cdot M>0.\]
By Lemma~\ref{lemma:handy-adjunction} the inequality $D\cdot M<\frac{6}{11}$ gives us a contradiction.
\end{proof}

\begin{lemma}
\label{lemma:I-4-W-11-17-24-31-D-79}Let $X$ be a quasismooth
hypersurface of degree $79$ in $\mathbb{P}(11,17,24,31)$. Then
$\lct(X)=\frac{33}{16}$.
\end{lemma}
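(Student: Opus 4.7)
The plan is to follow the template already established for the other sporadic $I=4$ cases whose coordinate divisors are reducible, most notably Lemma~\ref{lemma:I-4-W-11-17-20-27-D-71}. First I would record that, up to a coordinate change and rescaling, the only degree $79$ quasihomogeneous polynomial in the weights $(11,17,24,31)$ is
\[
f(x,y,z,t) = t^{2}y + z^{2}t + x^{5}z + xy^{4},
\]
and check via the Jacobian criterion that $\Sing(X)=\{O_x, O_y, O_z, O_t\}$. I would then decompose the coordinate divisors: $C_x = L_{xt}+R_x$ with $R_x=\{x=yt+z^{2}=0\}$, $C_t = L_{xt}+R_t$ with $R_t=\{t=x^{4}z+y^{4}=0\}$, $C_y = L_{yz}+R_y$ with $R_y=\{y=zt+x^{5}=0\}$, $C_z = L_{yz}+R_z$ with $R_z=\{z=t^{2}+xy^{3}=0\}$, and record the incidences $L_{xt}\cap R_x=\{O_y\}$, $L_{xt}\cap R_t=\{O_z\}$, $L_{yz}\cap R_y=\{O_t\}$, $L_{yz}\cap R_z=\{O_x\}$.

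Using Lemmas~\ref{lemma:basic-property} and~\ref{lemma:Igusa} with Proposition~\ref{proposition:lc-by-finite-morphism}, I would compute $\lct(X,\tfrac{4}{11}C_x)=\tfrac{33}{16}$ and verify that this number is strictly less than each of $\lct(X,\tfrac{4}{17}C_y)$, $\lct(X,\tfrac{4}{24}C_z)$, $\lct(X,\tfrac{4}{31}C_t)$, yielding $\lct(X)\leqslant\tfrac{33}{16}$. For the reverse inequality I would argue by contradiction: assume there is an effective $\Q$-divisor $D\qlineq -K_X$ such that $(X,\tfrac{33}{16}D)$ fails to be log canonical at some point $P$, and apply Lemma~\ref{lemma:convexity} four times to arrange that $\Supp(D)$ omits at least one irreducible component of each of $C_x$, $C_y$, $C_z$, $C_t$.

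The next step is routine bookkeeping: tabulate the six intersection numbers $-K_X\cdot L_{xt}$, $-K_X\cdot L_{yz}$, $-K_X\cdot R_i$, the four products $L_{xt}\cdot R_x$, $L_{xt}\cdot R_t$, $L_{yz}\cdot R_y$, $L_{yz}\cdot R_z$, and the six self-intersections. The pair of alternative inequalities $\mult_P(D)\leqslant r\, L\cdot D$ and $\mult_P(D)\leqslant (r/m)\,R\cdot D$ at each of the four coordinate points (with $m=\mult_P(R)$) should exclude three of the four singular points outright, leaving one hard point which I expect to be $O_y$, since there the estimates coming from $L_{xt}\cdot D$ and $R_x\cdot D$ are the tightest. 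Splitting $D=a_1L_{xt}+a_2L_{yz}+b_1R_x+b_2R_y+b_3R_z+b_4R_t+\Omega$ and combining log canonicity bounds $a_i,b_j\leqslant\tfrac{16}{33}$ with inversion of adjunction (Lemma~\ref{lemma:handy-adjunction}) should then rule out all smooth points on $C_x\cup C_y\cup C_z\cup C_t$. For a smooth point $P$ outside these curves I would use the pencil $\lambda yt+\mu z^{2}=0$ (whose base locus is $L_{yz}\cup\{O_y\}$): the member $E$ through $P$ breaks as $L_{yz}+M$ or $L_{yz}+R_x+M$, each residual component is smooth at $P$ with positive self-intersection, and the intersection number $D\cdot M$ is strictly smaller than $\tfrac{16}{33}$, giving the contradiction.

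The main obstacle, as in the degree~$71$ case, is the one residual singular point. I would resolve it by a weighted blow-up $\pi\colon\bar X\to X$ with weights matched to the two tangent coordinates, derive coefficient bounds from the effectivity of $\bar L\cdot\bar\Omega$ and $\bar R\cdot\bar\Omega$, and then iterate: a further weighted blow-up at the remaining singular point of $\bar X$ on the exceptional curve, followed if necessary by an ordinary blow-up. At each stage one must verify that the total log discrepancy coefficient $\theta_i$ stays strictly below $1$ (so that the non-log-canonical centre persists on the new exceptional divisor), and must check one inequality per singular point of the exceptional curve. Assembling these inequalities with the already-derived coefficient bounds should close the argument. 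This iterated-blow-up arithmetic is where the work genuinely lies; the rest is mechanical adaptation of the $\mathbb{P}(11,17,20,27)$ proof.
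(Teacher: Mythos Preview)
Your plan matches the paper's proof in essentially all structural respects: the equation, the decomposition of the four coordinate curves, the pencil $\lambda yt+\mu z^{2}=0$ for smooth points off $C_x\cup C_y\cup C_z\cup C_t$, and the need for an iterated blow-up at one residual singular point. The one correction is that the hard point is $O_t$, not $O_y$. The simple multiplicity bounds at $O_y$ give $17D\cdot L_{xt}=\tfrac{1}{6}$ and $17D\cdot R_x=\tfrac{8}{31}$, both below $\tfrac{16}{33}$, so $O_y$ is excluded outright (as are $O_x$ and $O_z$, the latter using that $R_t$ has multiplicity $4$ there). At $O_t$, however, $R_y=\{y=zt+x^{5}=0\}$ is smooth and $31D\cdot R_y=\tfrac{5}{6}>\tfrac{16}{33}$, so the bound via $R_y$ fails; only $31D\cdot L_{yz}=\tfrac{4}{11}<\tfrac{16}{33}$ works, and the blow-up argument is needed precisely when $L_{yz}\subset\Supp(D)$. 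The paper then writes $D=aL_{yz}+bR_x+\Delta$, bounds $a<\tfrac{23}{264}$ and $b\leqslant\tfrac{1}{12}$, takes a weighted blow-up at $O_t$ with weights $(7,4)$, an ordinary blow-up at the resulting $\tfrac{1}{7}(1,1)$ point, and one further ordinary blow-up at the node of the two exceptional curves---exactly the three-step cascade you anticipated, just centred at a different vertex. Your template Lemma~\ref{lemma:I-4-W-11-17-20-27-D-71} is a slight mismatch in this respect, since there all four singular points were excluded without blow-ups; the closer analogues are Lemmas~\ref{lemma:I-3-W-11-21-29-37-D-95} and~\ref{lemma:I-5-W-11-13-19-25-D-63}.
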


\begin{proof}
We may assume that the surface $X$ is defined by the
quasihomogeneous equation
\[t^2y+tz^2+xy^4+x^5z=0.\]
The surface $X$ is singular at the points $O_x$, $O_y$, $O_z$,
$O_t$. Each of the divisors $C_x$, $C_y$, $C_z$, and $C_t$
consists of two irreducible and reduced components. The divisor
$C_x$ (resp. $C_y$, $C_z$, $C_t$) consists of $L_{xt}$ (resp.
$L_{yz}$, $L_{yz}$, $L_{xt}$) and $R_x=\{x=yt+z^2=0\}$ (resp.
$R_y=\{y=zt+x^5=0\}$, $R_z=\{z=xy^3+t^2=0\}$,
$R_t=\{t=y^4+x^4z=0\}$). Also, we see that
\[L_{xt}\cap R_x=\{O_y\}, \ L_{yz}\cap R_y=\{O_t\}, \ L_{yz}\cap R_z=\{O_x\}, \ L_{xt}\cap R_t=\{O_z\}.\]
One can easily check that $\lct(X, \frac{4}{11}C_x)=\frac{33}{16}$
is less than each of the numbers $\lct(X, \frac{4}{17}C_y)$,
$\lct(X, \frac{4}{24}C_z)$ and $\lct(X, \frac{4}{31}C_t)$.
Therefore, $\lct(X)\leqslant \frac{33}{16}$. Suppose
$\lct(X)<\frac{33}{16}$. Then, there is an effective $\Q$-divisor
$D\qlineq -K_X$ such that the log pair $(X, \frac{33}{16}D)$ is
not log canonical at some point $P\in X$.

The intersection numbers among the divisors $D$, $L_{xt}$,
$L_{yz}$, $R_x$, $R_y$, $R_z$, $R_t$ are as follows:

\[D\cdot L_{xt}=\frac{1}{6\cdot 17}, \ \ D\cdot R_x=\frac{8}{17\cdot 31}, \ \ D\cdot R_y=\frac{5}{6\cdot 31}, \]
\[D\cdot L_{yz}=\frac{4}{11\cdot 31}, \ \ D\cdot R_z=\frac{8}{11\cdot 17}, \ \ D\cdot R_t=\frac{2}{3\cdot 11}, \]
\[L_{xt}\cdot R_x=\frac{2}{17}, \ \ L_{yz}\cdot R_y=\frac{5}{31}, \ \ L_{yz}\cdot R_z=\frac{2}{11}, \ \ L_{xt}\cdot R_t=\frac{1}{6},\]

\[L_{xt}^2=-\frac{37}{17\cdot 24}, \ \ R_x^2=-\frac{40}{17\cdot 31}, \ \ R_y^2=-\frac{35}{24\cdot 31},\]

\[L_{yz}^2=-\frac{38}{11\cdot 31}, \ \ R_z^2=\frac{14}{11\cdot 17}, \ \ R_t^2=\frac{10}{3\cdot 11}.\]
By Lemma~\ref{lemma:convexity} we may assume that the support of
$D$ does not contain at least one component of each divisor $C_x$,
$C_y$, $C_z$, $C_t$. The inequalities
\[17D\cdot L_{xt}=\frac{1}{6}<\frac{16}{33}, \ \ \  17D\cdot R_x=\frac{8}{31}<\frac{16}{33}\]
imply that $P\ne O_y$. The inequalities
\[11D\cdot L_{yz}=\frac{4}{31}<\frac{16}{33}, \ \ \  11D\cdot R_z=\frac{8}{17}<\frac{16}{33}\]
imply that $P\ne O_x$. Since the curve $R_t$ is singular at the
point $O_z$ with multiplicity $4$ the inequalities
\[24D\cdot L_{xt}=\frac{ 24}{6\cdot 17}<\frac{16}{33}, \ \ \  \frac{24}{4}D\cdot R_t=\frac{4}{11}<\frac{16}{33}\]
imply that $P\ne O_z$.

We write
$D=a_1L_{xt}+a_2L_{yz}+a_3R_x+a_4R_y+a_5R_z+a_6R_t+\Omega$, where
$\Omega$ is an effective $\Q$-divisor whose support contains none
of the curves $L_{xt}$, $L_{yz}$, $R_x$, $R_y$, $R_z$, $R_t$.
Since the pair $(X, \frac{33}{16}D)$ is log canonical at the
points $O_x$, $O_y$, $O_z$, the numbers $a_i$ are at most
$\frac{16}{33}$. Then by Lemma~\ref{lemma:handy-adjunction} the
following inequalities enable us to conclude that either the point
$P$ is in the outside of $C_x\cup C_y\cup C_z\cup C_t$ or $P=O_t$:
\[\frac{33}{16}D\cdot L_{xt}-L_{xt}^2=\frac{181}{3\cdot 17\cdot 32}<1, \ \ \
\frac{33}{16}D\cdot R_x-R_x^2=\frac{113}{2\cdot 17\cdot 31}<1, \ \ \ \frac{33}{16}D\cdot R_y-R_y^2=\frac{25}{3\cdot 31}<1,\]
\[\frac{33}{16}D\cdot L_{yz}-L_{xt}^2=\frac{185}{4\cdot 11\cdot 31}<1, \ \ \ \frac{33}{16}D\cdot R_z-R_z^2=\frac{5}{2\cdot 11\cdot 17}<1, \ \ \ \frac{33}{16}D\cdot R_t-R_t^2=\frac{-47}{3\cdot 8\cdot 11}<1.\]

Suppose that $P\ne O_t$. Then we consider the pencil $\mathcal{L}$
defined by $\lambda yt+\mu z^2=0$, $[\lambda:\mu]\in\P^1$. The
base locus of the pencil $\mathcal{L}$ consists of the curve
$L_{yz}$ and the point $O_y$. Let $E$ be the unique divisor in
$\mathcal{L}$ that passes through the point $P$. Since $P\not\in
C_x\cup C_y\cup C_z\cup C_t$, the divisor $E$ is defined by the
equation $z^2=\alpha yt$, where $\alpha\ne 0$.

Suppose that $\alpha\ne -1$. Then the curve $E$ is isomorphic to
the curve defined by the equations $yt=z^2$ and
$t^2y+xy^4+x^5z=0$. Since the curve $E$ is isomorphic to a general
curve in $\mathcal{L}$, it is smooth at the point $P$. The affine
piece of $E$ defined by $t\ne 0$ is the curve given by
$z(z+xz^7+x^5)=0$. Therefore, the divisor $E$ consists of two
irreducible and reduced curves $L_{yz}$ and $C$. We have the
intersection numbers
\[D\cdot C=D\cdot E-D\cdot L_{yz}=\frac{564}{11\cdot 17\cdot 31}, \ \ C\cdot L_{yz}=E\cdot L_{yz}-L_{yz}^2=\frac{2}{11}.\]
Also, we see
\[C^2=E\cdot C- C\cdot L_{yz}>0.\]
By Lemma~\ref{lemma:handy-adjunction} the inequality $D\cdot
C<\frac{16}{33}$ gives us a contradiction.

Suppose that $\alpha=-1$. Then divisor $E$ consists of three
irreducible and reduced curves $L_{yz}$, $R_x$, and $M$.  Note
that the curve $M$ is different from the curves $R_y$ and
$L_{xt}$. Also, it is smooth at the point $P$. We have
\[D\cdot M=D\cdot E- D\cdot L_{yz}-D\cdot R_x=\frac{4\cdot 119}{11\cdot 17\cdot 31},\]
\[M^2=E\cdot M-L_{yz}\cdot M- R_x\cdot M \geq E\cdot M-C_y\cdot M- C_x\cdot M=5D\cdot M>0.\]
By Lemma~\ref{lemma:handy-adjunction} the inequality $D\cdot
M<\frac{16}{33}$ gives us a contradiction. Therefore, $P=O_t$.

 We write $D=aL_{yz}+bR_{x}+\Delta$, where $\Delta$ is an effective divisor whose support contains neither $L_{yz}$ nor $R_x$.
Note that we already assumed that the support of $D$ cannot
contain either $L_{yz}$ or $R_y$. If the support of $D$ contains
$R_y$, then it does not contain $L_{yz}$. However, the inequality
$31D\cdot L_{yz}=\frac{4}{11}<\frac{16}{33}$ shows that $P\ne
O_t$. Therefore, the support of $D$ does not contain the curve
$R_y$. The inequality $D\cdot L_{xt}\geq b R_x \cdot L_{xt}$
implies $b\leqslant \frac{1}{12}$. On the other hand, we have
\[\frac{5}{6\cdot 31}=D\cdot R_y\geq \frac{5a}{31}+\frac{b}{31}+
\frac{\mult_{O_t}(D)-a-b}{31}>\frac{4a}{31}+\frac{16}{31\cdot 33},\]
and hence $a<\frac{23}{4\cdot 66}$.

Let $\pi\colon\bar{X}\to X$ be the weighted blow up of $O_{t}$
with weights $(7,4)$ and let $F$ be the exceptional curve of
$\pi$. Then
$$
K_{\bar{X}}\qlineq \pi^{*}(K_{X})-\frac{20}{31}F,\ %
\bar{L}_{yz}\qlineq \pi^{*}(L_{yz})-\frac{4}{31}F,\ %
\bar{R}_{x}\qlineq \pi^{*}(R_{x})-\frac{7}{31}F,\ %
\bar{\Delta}\qlineq \pi^{*}(\Delta)-\frac{c}{31}F,
$$
where $\bar{\Delta}$, $\bar{L}_{yz}$, $\bar{R}_{x}$ are the proper
transforms of $\Delta$, $L_{yz}$, $R_{x}$, respectively, and $c$
is a non-negative rational number. The curve $F$ contains two
singular points $Q_7$ and $Q_4$ of $\bar{X}$. The point $Q_7$ is a
singular point of type $\frac{1}{7}(1,1)$ and the point $Q_4$ is
of type $\frac{1}{4}(1,3)$.  Note that the curve $\bar{R}_x$
passes through the point $Q_4$ but not the point $Q_7$. The curve
$\bar{L}_{yz}$  passes through the point $Q_7$ but not the point
$Q_4$.

The log pull-back of the log pair $(X,\frac{33}{16}D)$ by $\pi$ is
the log pair
$$
\left(\bar{X},\
\frac{33a}{16}\bar{L}_{yz}+ \frac{33b}{16}\bar{R}_x+ \frac{33}{16}\bar{\Delta}+
\theta_1 F\right),%
$$
where
$$
\theta_1=\frac{33(4a+7b+c)+320}{16\cdot 31}.
$$
This pair is not log canonical at some point $Q\in F$. We have
\[0\leqslant\bar{\Delta}\cdot\bar{R}_x=\frac{8+40b}{17\cdot 31}-\frac{a}{31}
-\frac{c}{4\cdot 31}.\] This inequality shows
$4a+c\leqslant\frac{4}{17}(8+40b)$. Then
\[\theta_1=\frac{33(4a+c)+231b+320}{16\cdot 31}
\leqslant \frac{6496+9207b}{16\cdot 17\cdot 31}<1\] since
$b\leqslant\frac{1}{12}$.

Suppose that  the point $Q$ is neither the point $Q_7$ nor the
point $Q_4$. Then the log pair
$\left(\bar{X},\frac{33}{16}\bar{\Delta}+F\right)$ is not log
canonical at the point $Q$. Then
$$
\frac{33c}{16\cdot 28}=\frac{33}{16}\bar{\Delta}\cdot F>1
$$
by Lemma~\ref{lemma:adjunction}. However, $c\leqslant
4a+c\leqslant\frac{4}{17}(8+40b)$. This is a contradiction since
$b\leqslant \frac{1}{12}$. Therefore, the point $Q$ is either the
point $Q_7$ or the point $Q_4$.

Suppose that the point $Q$ is the point $Q_4$. This point is the
intersection point of $F$ and $\bar{R}_x$. Then the log pair
$\left(\bar{X},\frac{33b}{16}\bar{R}_{x}+
\frac{33}{16}\bar{\Delta}+\theta_1 F\right)$ is not log canonical
at the point $Q$. It then follows from
Lemma~\ref{lemma:adjunction} that
\[1<4\left(\frac{33}{16}\bar{\Delta}+\theta_{1}F\right)\cdot\bar{R}_{x}
=\frac{33\cdot 4}{16}\left(\frac{8+40b}{17\cdot 31}-\frac{a}{31}
-\frac{c}{4\cdot 31}\right)+\theta_{1}.\]
However,
\[\frac{33\cdot 4}{16}\left(\frac{8+40b}{17\cdot 31}-\frac{a}{31}
-\frac{c}{4\cdot 31}\right)+\theta_{1}
=\frac{6496+9207b}{16\cdot 17\cdot 31}<1.\] Therefore,
the point $Q$ is the point $Q_7$. This point is the intersection
point of $F$ and $\bar{L}_{yz}$.

Let $\phi\colon\tilde{X}\to \bar{X}$ be the  blow up at the point
$Q_{7}$. Let $G$ be the exceptional divisor of the morphism
$\phi$. The surface $\tilde{X}$ is smooth along the exceptional
divisor $G$. Let $\tilde{L}_{yz}$, $\tilde{R}_x$, $\tilde{\Delta}$
and $\tilde{F}$ be the proper transforms of $L_{yz}$, $R_x$,
$\Delta$ and $F$ by $\phi$, respectively. We have
$$
K_{\tilde{X}}\qlineq\phi^*(K_{\bar{X}})-\frac{5}{7}G,
\ \tilde{L}_{yz}\qlineq\phi^*(\bar{L}_{yz})-\frac{1}{7}G,
\ \tilde{F}\qlineq\phi^*(F)-\frac{1}{7}G,
\ \tilde{\Delta}\qlineq\phi^*(\bar{\Delta})-\frac{d}{7}G,%
$$
where $d$ is a non-negative rational number. The log pull-back of
the log pair $(X, \frac{33}{16}D)$ via $\pi\circ \phi$ is
$$
\left(\tilde{X},
\frac{33a}{16}\tilde{L}_{yz}+\frac{33b}{16}\tilde{R}_x+\frac{33}{16}\tilde{\Delta}+\theta_1\tilde{F}+\theta_2 G\right),%
$$
where
$$\theta_2=\frac{33}{7\cdot 16}(a+d)+\frac{\theta_1}{7}+\frac{5}{7}
=\frac{2800+33(35a+7b+c+31d)}{7\cdot 16\cdot 31}.$$
This log pair is not log canonical at some point $O\in G$. We have
\[0\leqslant \tilde{\Delta}\cdot\tilde{L}_{yz}
=\frac{4+38a}{11\cdot 31}-\frac{b}{31}-\frac{c}{7\cdot
31}-\frac{d}{7}.\] We then obtain $7b+c+31d\leqslant
\frac{7}{11}(4+38a)$. Since $a\leqslant\frac{23}{264}$, we see
\[\theta_2=\frac{2800+33(35a+7b+c+31d)}{7\cdot 16\cdot 31}\leqslant
\frac{4532+3069a}{11\cdot 16\cdot 31}<1. \]

Suppose that $O\not\in\tilde{F}\cup\tilde{L}_{yz}$. The log pair
$\left(\tilde{X},\frac{13}{8}\tilde{\Delta}+G\right)$  is not log
canonical at the point $O$. Applying Lemma~\ref{lemma:adjunction},
we get
$$
1<\frac{33}{16}\tilde{\Delta}\cdot G=\frac{33d}{16},%
$$
and hence $d>\frac{16}{33}$. However, $d\leqslant
\frac{1}{31}(7b+c+31d)\leqslant \frac{7}{11\cdot 31}(4+38a)$. This
is a contradiction since $a\leqslant\frac{23}{264}$. Therefore,
the point $O$ is either the intersection point of $G$ and
$\tilde{F}$ or the intersection point of $G$ and $\tilde{L}_{yz}$.
In the latter case, the pair $
\left(\tilde{X},\frac{33a}{16}\tilde{L}_{yz}+\frac{33}{16}\tilde{\Delta}+\theta_2 G\right)%
$ is not log canonical at the point $O$. Then, applying
Lemma~\ref{lemma:adjunction}, we get
\[
1<\left(\frac{33}{16}\tilde{\Delta}+\theta_{2}G\right)\cdot\tilde{L}_{yz}=\frac{33}{16}\left(\frac{4+38a}{11\cdot
31}-\frac{b}{31}-\frac{c}{7\cdot
31}-\frac{d}{7}\right)+\theta_{2}.\] However,
\[\frac{33}{16}\left(\frac{4+38a}{11\cdot 31}-\frac{b}{31}-\frac{c}{7\cdot
31}-\frac{d}{7}\right)+\theta_{2} =\frac{4532+3069a}{11\cdot
16\cdot 31}<1.\] Therefore, the point $O$ must be the intersection
point of $G$ and $\tilde{F}$.

Let $\xi\colon\hat{X}\to \tilde{X}$ be the blow up at the
point $O$  and let $H$ be the exceptional
divisor of $\xi$. We also let $\hat{L}_{yz}$, $\hat{R}_x$, $\hat{\Delta}$,
$\hat{G}$, and $\hat{F}$ be the proper transforms of $\tilde{L}_{yz}$,
$\tilde{R}_x$, $\tilde{\Delta}$,  $G$ and $\tilde{F}$ by $\xi$, respectively. Then
$\hat{X}$ is smooth along the exceptional divisor $H$. We have
$$
K_{\hat{X}}\qlineq\xi^*(K_{\tilde{X}})-H,\
\hat{G}\qlineq\xi^*(G)-H,\
\hat{F}\qlineq\xi^*(\tilde{F})-H,\
\hat{\Delta}\qlineq\xi^*(\tilde{\Delta})-eH,%
$$
where $e$ is a non-negative rational number. The log pull-back of the
log pair $(X, \frac{33}{16}D)$ via $\pi\circ \phi\circ \xi$ is
$$
\left(\hat{X},\frac{33a}{16}\hat{L}_{yz}+\frac{33b}{16}\hat{R}_x+\frac{33}{16}\hat{\Delta}+\theta_1\hat{F}+\theta_2 \hat{G}+\theta_{3}H\right),%
$$
where
$$
\theta_3=\theta_1+\theta_2+\frac{33e}{16}-1=\frac{1568+33(63a+56b+8c+31d+217e)}{7\cdot 16\cdot 31}.$$
This log pair is not log canonical at some point $A\in H$. We have
\[
\frac{c}{28}-\frac{d}{7}-e=\hat{\Delta}\cdot\hat{F}\geqslant 0.\]
Therefore, $4d+28e\leqslant c$.

Then
\[\begin{split}\theta_3&=\frac{1568+33(63a+56b+8c)}{7\cdot 16\cdot 31}+\frac{33\cdot 31(d+7e)}{7\cdot 16\cdot 31}\leqslant\\
&\leqslant \frac{6272+33(252a+224b+63c)}{4\cdot7\cdot 16\cdot 31}=\\
&=\frac{6272+7392b}{4\cdot7\cdot 16\cdot 31}+\frac{33\cdot 63(4a+c)}{4\cdot7\cdot 16\cdot 31}\leqslant\\
&\leqslant \frac{28+33b}{2\cdot 31}+\frac{9\cdot 33(1+5b)}{2\cdot 17 \cdot 31}=\frac{773+2046b}{2\cdot17\cdot 31}<1\\
\end{split}
\]
since $b\leqslant \frac{1}{12}$ and $4a+c\leqslant
\frac{4}{17}(8+40b)$. In particular, $\theta_3$ is a positive
number.

Suppose that $A\not\in\hat{F}\cup\hat{G}$. Then the log pair $
\left(\hat{X},\frac{33}{16}\hat{\Delta}+\theta_{3}H\right)
$  is not log canonical at the point $A$. Applying
Lemma~\ref{lemma:adjunction}, we get
$$
1<\frac{33}{16}\hat{\Delta}\cdot H=\frac{33e}{16}.
$$
However, $$e\leqslant \frac{1}{28}(4d+28e)\leqslant \frac{c}{28}\leqslant \frac{1}{28}(4a+c)\leqslant
\frac{4(8+40b)}{17\cdot 28}\leqslant \frac{4}{11}.$$
Therefore, the point $A$ must be either in $\hat{F}$ or in $\hat{G}$.

Suppose that $A\in\hat{F}$. Then the log pair  $
\left(\hat{X},\frac{33}{16}\hat{\Delta}+\theta_1\hat{F}+\theta_{3}H\right)%
$ is not log canonical at the point $A$. Applying Lemma~\ref{lemma:adjunction},
we get
\[
1<\left(\frac{33}{16}\hat{\Delta}+\theta_{3}
H\right)\cdot\hat{F}=\frac{33}{16}\left(\frac{c}{28}-\frac{d}{7}-e\right)+
\theta_{3}=\frac{6272+33(252a+224b+63c)}{4\cdot7\cdot 16\cdot 31}.
\]
However,
\[\frac{6272+33(252a+224b+63c)}{4\cdot7\cdot 16\cdot 31}\leqslant \frac{773+2046b}{2\cdot17\cdot 31}<1.
\]
Therefore, the point $A$ is the intersection point of $H$ and
$\hat{G}$. Then the log pair $
\left(\hat{X},\frac{33}{16}\hat{\Delta}+\theta_2
\hat{G}+\theta_{3}H\right) $ is not log canonical at the point
$A$. From Lemma~\ref{lemma:adjunction}, we obtain
\[
1<\left(\frac{33}{16}\hat{\Delta}+\theta_{3}H\right)\cdot\hat{G}=\frac{33}{16}\left(d-e\right)+\theta_{3}=\frac{1568+33(63a+56b+8c+248d)}{7\cdot
16\cdot 31}.
\]
However,
\[\frac{1568+33(63a+56b+8c+248d)}{7\cdot 16\cdot 31}=\frac{224+297a}{16\cdot 31}
+\frac{33(7b+c+31d)}{2\cdot 7\cdot 31}\leqslant\frac{320+1209a}{16\cdot 31}<1
\]
since $a<\frac{23}{4\cdot 66}$ and $7b+c+31d\leqslant \frac{7}{11}(4+38a)$.
The obtained contradiction completes the
proof.
\end{proof}

\begin{lemma}
\label{lemma:I-4-W-11-31-45-83-D-166} Let $X$ be a quasismooth
hypersurface of degree $166$ in $\mathbb{P}(11,31,45,83)$. Then
$\lct(X)=\frac{55}{24}$.
\end{lemma}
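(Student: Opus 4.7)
The plan is to follow the strategy established for the closely analogous sporadic cases \ref{lemma:I-2-W-11-43-61-113-D-226} and \ref{lemma:I-3-W-11-37-53-98-D-196}, whose weights and codegrees differ from ours only by the size of the exponent on $x$. First I would write $X$ as the quasismooth hypersurface defined by
\[
 t^{2}+yz^{3}+xy^{5}+x^{11}z=0,
\]
check that $\Sing(X)=\{O_x,O_y,O_z\}$ (with $O_t\notin X$ since the $t^{2}$ coefficient is nonzero), and record that the curves $C_x=\{t^{2}+yz^{3}=0\}$ and $C_y=\{t^{2}+x^{11}z=0\}$ are both irreducible and reduced. A direct orbifold-chart computation (at $O_y$ the curve $C_x$ has local equation $t^{2}+z^{3}$, so by Lemma~\ref{lemma:Igusa} and Proposition~\ref{proposition:lc-by-finite-morphism} we get $\lct_{O_y}(X,C_x)=\tfrac56$) shows
\[
\lct\!\left(X,\tfrac{4}{11}C_x\right)=\tfrac{55}{24},
\]
and this number is strictly smaller than $\lct\!\left(X,\tfrac{4}{31}C_y\right)$. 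Since $\tfrac{4}{11}C_x\sim_{\Q}-K_X$, this yields the upper bound $\lct(X)\le\tfrac{55}{24}$.

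For the reverse inequality I would argue by contradiction: assume there is an effective $\Q$-divisor $D\sim_{\Q}-K_X$ such that $(X,\tfrac{55}{24}D)$ fails to be log canonical at some point $P$. By Lemma~\ref{lemma:convexity} I can arrange that neither $C_x$ nor $C_y$ lies in $\Supp(D)$. The key intersection numbers are
\[
 D\cdot C_x=\frac{4\cdot 11\cdot 166}{11\cdot 31\cdot 45\cdot 83}=\frac{8}{1395},\qquad D\cdot C_y=\frac{8}{495}.
\]
Combined with the local structure of $C_x$ and $C_y$ at the singular points (multiplicity $2$ at $O_y$ and $O_z$ respectively from the $t^{2}$ term, multiplicity $1$ at the remaining incidences), these yield
\[
\mult_{O_y}(D)\le\tfrac{31}{2}D\cdot C_x=\tfrac{4}{45},\quad \mult_{O_z}(D)\le 45\,D\cdot C_x=\tfrac{8}{31},\quad \mult_{O_x}(D)\le 11\,D\cdot C_y=\tfrac{8}{45},
\]
and $\mult_{P}(D)\le D\cdot C_x<\tfrac{24}{55}$ (respectively $\le D\cdot C_y$) at any smooth point of $X$ lying on $C_x\cup C_y$. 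Since each of these upper bounds is strictly less than $\tfrac{24}{55}$, the point $P$ cannot lie on $C_x\cup C_y$, nor can it be a singular point of $X$.

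It remains to exclude smooth points $P\in X\setminus C_x$. Here I would invoke Lemma~\ref{lemma:Carolina} with $k=495=11\cdot 45$. The space $H^{0}(\P,\mathcal{O}_{\P}(495))$ contains the required pairs of monomials in each coordinate block, namely $x^{45}$ and $x^{14}y^{11}$ (so both of the form $x^{\alpha}y^{\beta}$) as well as $x^{45}$ and $z^{11}$ (of the form $x^{\gamma}z^{\delta}$). Although no monomial $x^{\mu}t^{\nu}$ exists in this degree, the projection $\psi\colon X\dashrightarrow\P(11,31,45)$ from $O_t$ is in fact a finite morphism because $O_t\notin X$, so there are no contracted curves. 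Lemma~\ref{lemma:Carolina} then gives
\[
 \mult_{P}(D)\le\frac{4\cdot 495\cdot 166}{11\cdot 31\cdot 45\cdot 83}=\frac{8}{31}<\frac{24}{55},
\]
contradicting the non-log-canonicity of $(X,\tfrac{55}{24}D)$ at $P$. The only potentially delicate point, in contrast to the neighboring lemma \ref{lemma:I-4-W-11-17-24-31-D-79}, is the bookkeeping at $O_y$ where $C_x$ acquires an $\mathbb{A}_2$ singularity; however, the factor $\tfrac{1}{\mult_{O_y}(C_x)}=\tfrac12$ built into the intersection estimate above is already sufficient to beat $\tfrac{24}{55}$, so no weighted blow up is needed and the proof should be appreciably shorter than that of \ref{lemma:I-4-W-11-17-24-31-D-79}.
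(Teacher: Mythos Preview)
Your proof is correct and follows essentially the same approach as the paper's: the same normal form for $X$, the same computation $\lct(X,\tfrac{4}{11}C_x)=\tfrac{55}{24}$, reduction via Lemma~\ref{lemma:convexity} to $C_x,C_y\not\subset\Supp(D)$, exclusion of $C_x\cup C_y$ (hence all singular points) via the intersection bounds $45\,D\cdot C_x=\tfrac{8}{31}$ and $11\,D\cdot C_y=\tfrac{8}{45}$, and finally Lemma~\ref{lemma:Carolina} with $k=495$ using the monomials $x^{45}$, $x^{14}y^{11}$, $z^{11}$. Your write-up is in fact slightly more careful than the paper's in two places: you track the multiplicity $\mult_{O_y}(C_x)=2$ explicitly (the paper just uses the cruder bound $45\,D\cdot C_x$ which already suffices), and you spell out why the second clause of Lemma~\ref{lemma:Carolina} applies---namely that $O_t\notin X$ forces the projection to $\P(11,31,45)$ to be a finite morphism, so there are no contracted curves.
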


\begin{proof}
 The surface $X$ can be defined by the
quasihomogeneous equation
$$
t^{2}+yz^{3}+xy^{5}+x^{11}z=0.
$$
The surface $X$ is singular only at the points $O_x$, $O_{y}$ and
$O_z$. The curves $C_x$ and $C_y$ are irreducible. We have
$$
\frac{55}{24}=\lct\left(X, \frac{4}{11}C_x\right)
<\lct\left(X, \frac{4}{31}C_y\right)=\frac{13\cdot 31}{88}.%
$$
Therefore, $\lct(X)\leqslant \frac{55}{24}$.

Suppose that $\lct(X)<\frac{55}{24}$. Then there is an effective
$\Q$-divisor $D\qlineq -K_X$ such that the pair
$(X,\frac{55}{24}D)$ is not log canonical at some point $P$. By
Lemma~\ref{lemma:convexity}, we may assume that the support of the
divisor $D$ contains neither $C_x$ nor $C_y$. Then the
inequalities
\[45D\cdot C_x=\frac{8}{31}<\frac{24}{55}, \ \ 11D\cdot
C_y=\frac{8}{45}<\frac{24}{55}\] show that the point $P$ is a
smooth point in the outside of $C_x$. However, $H^0(\P,
\mathcal{O}_{\P}(495))$ contains the monomials $x^{45}$,
$y^{11}x^{14}$ and $z^{11}$, it follows from
Lemma~\ref{lemma:Carolina} that the point $P$ is
either a singular point of $X$ or a point on $C_x$. This is a
contradiction.
\end{proof}

\begin{lemma}
\label{lemma:I-4-W-13-14-19-29-D-71}
 Let $X$ be a quasismooth
hypersurface of degree $71$ in $\mathbb{P}(13,14,19,29)$. Then
$\lct(X)=\frac{65}{36}$.
\end{lemma}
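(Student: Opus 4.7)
The surface $X$ may be assumed to be cut out by the quasihomogeneous equation
$$t^2 x + yz^3 + y^3 t + x^4 z = 0,$$
which is quasismooth; its singular locus consists of the four coordinate points $O_x, O_y, O_z, O_t$, of types $\tfrac{1}{13}(1,3)$, $\tfrac{1}{14}(13,5)$, $\tfrac{1}{19}(13,10)$, $\tfrac{1}{29}(14,19)$. Each coordinate divisor splits as $C_x = L_{xy} + R_x$, $C_y = L_{xy} + R_y$, $C_z = L_{zt} + R_z$, $C_t = L_{zt} + R_t$, with $R_x = \{x = z^3 + y^2 t = 0\}$, $R_y = \{y = t^2 + x^3 z = 0\}$, $R_z = \{z = y^3 + xt = 0\}$, $R_t = \{t = x^4 + yz^2 = 0\}$, and the two components of each $C_i$ meet at $O_t, O_z, O_x, O_y$ respectively. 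By Lemma~\ref{lemma:Igusa} and Proposition~\ref{proposition:lc-by-finite-morphism}, the local equation of $C_x$ at $O_t$ lifts to $y(y^2+z^3)$ in the orbifold cover, so $\lct_{O_t}(X,C_x) = \tfrac{5}{9}$, giving $\lct(X,\tfrac{4}{13}C_x) = \tfrac{65}{36}$; analogous local computations show that $\lct(X,\tfrac{4}{14}C_y)$, $\lct(X,\tfrac{4}{19}C_z)$, $\lct(X,\tfrac{4}{29}C_t)$ are all strictly larger, so $\lct(X) \leq \tfrac{65}{36}$.

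For the reverse inequality I would argue by contradiction: suppose there is an effective $\Q$-divisor $D \qlineq -K_X$ such that $(X,\tfrac{65}{36}D)$ fails to be log canonical at some point $P$, and use Lemma~\ref{lemma:convexity} to assume $\Supp(D)$ omits at least one irreducible component of each $C_i$. The main tool to rule out singular points is the orbifold intersection inequality $C\cdot D \geq \mult_P(C)\mult_P(D)/r$ applied to a component $C$ of the relevant $C_i$ not in $\Supp(D)$ passing through a singular point $P$ of index $r$, combined with the non-log-canonicity bound $\mult_P(D) > \tfrac{36}{65}$. I would compute the necessary intersection numbers $-K_X\cdot L_{xy} = \tfrac{4}{551}$, $-K_X\cdot L_{zt} = \tfrac{2}{91}$, $-K_X\cdot R_x = \tfrac{6}{203}$, $-K_X\cdot R_y = \tfrac{8}{247}$, $-K_X\cdot R_z = \tfrac{12}{377}$, $-K_X\cdot R_t = \tfrac{8}{133}$ (obtained from the decomposition $C_i = L_* + R_*$ and the intersection formula on the weighted hypersurface); each singular point is then ruled out by a direct numerical check. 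For instance, at $O_t$ pairing with $R_x$ (cuspidal there with $\mult_{O_t}R_x = 2$) gives $\tfrac{6}{203} \geq \tfrac{2\mult_{O_t}(D)}{29}$, so $\mult_{O_t}(D) \leq \tfrac{3}{7} < \tfrac{36}{65}$, a contradiction; $O_x$, $O_y$, $O_z$ are excluded similarly using $L_{zt}$, $L_{zt}$, $L_{xy}$ respectively.

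For a smooth point $P\in C_i$ (away from $\Sing X$) I would apply Lemma~\ref{lemma:handy-adjunction}: writing $D = mC + \Omega$ for the smooth component $C$ of $C_i$ containing $P$, bounding $m$ by pairing with the complementary component of $C_i$, and computing $C^2$ from $C_i\cdot C$ and $L_*\cdot R_*$, the inequality $\lambda(C\cdot D - mC^2) \leq 1$ is verified in every case. For a smooth point $P$ lying off $C_x \cup C_y \cup C_z \cup C_t$ I would use the pencil $\mathcal L = |\lambda y^3 + \mu xt|$ on $X$: since $y^3$ and $xt$ have common weight $42$, both vanish on $L_{xy}$, which is therefore a fixed component of $\mathcal L$, and the moving part $M_\alpha$ of the unique member through $P$ satisfies
$$(-K_X)\cdot M_\alpha = \frac{4\cdot 42\cdot 71}{13\cdot 14\cdot 19\cdot 29} - \frac{4}{19\cdot 29} = \frac{800}{7163} < \frac{36}{65}.$$
A Bertini-type argument shows that $M_\alpha$ is irreducible for all but finitely many $\alpha$ (the exceptional values being $\alpha = 0$, $\alpha = \infty$, and $\alpha = 1$, where $M_\alpha$ decomposes as $R_z$ plus another component, handled componentwise via Lemma~\ref{lemma:convexity} exactly as in the proofs of Lemmas~\ref{lemma:I-6-infinite-series-1} and~\ref{lemma:I-6-infinite-series-2}); the bound $\mult_P(D) \leq D\cdot M_\alpha$ then delivers the contradiction.

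The main obstacle I anticipate, and the point requiring the most care, is $O_t$, because that is exactly where $\lct(X,\tfrac{4}{13}C_x)$ attains its minimum value $\tfrac{65}{36}$; \emph{a priori} this is the kind of singular point that would demand a weighted blow-up, as in Lemma~\ref{lemma:I-4-W-11-17-24-31-D-79}. What saves the argument here is that $R_x$ is cuspidal at $O_t$ with $\mult_{O_t}R_x = 2$: the extra factor of $2$ in the orbifold intersection inequality is precisely what makes the elementary multiplicity estimate sharp enough, so no blow-up is necessary.
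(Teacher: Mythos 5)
Your proposal is correct and follows essentially the same route as the paper's proof: the same defining equation and decomposition $C_i=L_*+R_*$, the same intersection numbers, exclusion of the four singular points by the orbifold multiplicity bound (including the crucial factor $\mult_{O_t}(R_x)=2$ giving $\tfrac{29}{2}D\cdot R_x=\tfrac{3}{7}$), Lemma~\ref{lemma:handy-adjunction} for smooth points of the coordinate curves, and the pencil spanned by $y^3$ and $xt$ (with the reducible member containing $R_z$) for the remaining points. The paper likewise needs no weighted blow-up at $O_t$, confirming your closing remark.
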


\begin{proof}
We may assume that the surface $X$ is defined by the quasihomogeneous equation
\[ty^3+yz^3+xt^2+x^4z=0.\]
The surface $X$ is singular at the points $O_x$, $O_y$, $O_z$, $O_t$. Each of the divisors $C_x$, $C_y$, $C_z$, and $C_t$ consists of two irreducible and reduced components. The divisor $C_x$ (resp. $C_y$, $C_z$, $C_t$) consists of $L_{xy}$ (resp. $L_{xy}$, $L_{zt}$, $L_{zt}$)
and $R_x=\{x=z^3+ty^2=0\}$ (resp. $R_y=\{y=x^3z+t^2=0\}$,
$R_z=\{z=y^3+xt=0\}$, $R_t=\{t=x^4+yz^2=0\}$).
Also, we see that
\[L_{xy}\cap R_x=\{O_t\}, \ L_{xy}\cap R_y=\{O_z\}, \ L_{zt}\cap R_z=\{O_x\}, \ L_{zt}\cap R_t=\{O_y\}.\]
One can easily check that $\lct(X, \frac{4}{13}C_x)=\frac{65}{36}$
is less than each of the numbers $\lct(X, \frac{4}{14}C_y)$,
$\lct(X, \frac{4}{19}C_z)$ and $\lct(X, \frac{4}{29}C_t)$.
Therefore, $\lct(X)\leqslant \frac{65}{36}$. Suppose
$\lct(X)<\frac{65}{36}$. Then, there is an effective $\Q$-divisor
$D\qlineq -K_X$ such that the log pair $(X, \frac{65}{36}D)$ is
not log canonical at some point $P\in X$.

The intersection numbers among the divisors $D$, $L_{xy}$, $L_{zt}$, $R_x$, $R_y$, $R_z$, $R_t$ are as follows:

\[D\cdot L_{xy}=\frac{4}{19\cdot 29}, \ \ D\cdot R_x=\frac{6}{7\cdot 29}, \ \ D\cdot R_y=\frac{8}{13\cdot 19}, \]
\[D\cdot L_{zt}=\frac{2}{7\cdot 13}, \ \ D\cdot R_z=\frac{12}{13\cdot 29}, \ \ D\cdot R_t=\frac{8}{7\cdot 19}, \]
\[L_{xy}\cdot R_x=\frac{3}{29}, \ \ L_{xy}\cdot R_y=\frac{2}{19}, \ \ L_{zt}\cdot R_z=\frac{3}{13}, \ \ L_{zt}\cdot R_t=\frac{2}{7},\]

\[L_{xy}^2=-\frac{44}{19\cdot 29}, \ \ R_x^2=-\frac{3}{14\cdot 29}, \ \ R_y^2=\frac{2}{13\cdot 19},\]

\[L_{zt}^2=-\frac{23}{13\cdot 14}, \ \ R_z^2=-\frac{30}{13\cdot 29}, \ \ R_t^2=\frac{20}{7\cdot 19}.\]
By Lemma~\ref{lemma:convexity} we may assume that the support of $D$ does not contain at least one component of each divisor $C_x$, $C_y$, $C_z$, $C_t$.
Since the curve $R_t$ is singular at the point $O_y$ and the curve $R_y$ is singular at the point $O_z$, in
each of the following pairs of inequalities, at least one of two
must hold:
\[\mult_{O_x}(D)\leqslant 13D\cdot L_{zt}=\frac{2}{7}<\frac{36}{65}, \ \ \ \mult_{O_x}(D)\leqslant  13D\cdot R_z=\frac{12}{29}<\frac{36}{65};\]
\[\mult_{O_y}(D)\leqslant 14D\cdot L_{zt}=\frac{4}{13}<\frac{36}{65}, \ \ \ \mult_{O_y}(D)\leqslant  \frac{14}{2}D\cdot R_t=\frac{8}{19}<\frac{36}{65};\]
\[\mult_{O_z}(D)\leqslant 19D\cdot L_{xy}=\frac{4}{29}<\frac{36}{65}, \ \ \  \mult_{O_z}(D)\leqslant \frac{19}{2}D\cdot R_y=\frac{4}{13}<\frac{36}{65};\]
\[\mult_{O_t}(D)\leqslant 29D\cdot L_{xy}=\frac{4}{19}<\frac{36}{65}, \ \ \  \mult_{O_t}(D)\leqslant \frac{29}{2}D\cdot R_x=\frac{3}{7}<\frac{36}{65}.\]
Therefore,
the point $P$ can be none of $O_x$, $O_y$, $O_z$, $O_t$.

We write
$D=a_0L_{xy}+a_1L_{zt}+a_2R_x+a_3R_y+a_4R_z+a_5R_t+\Omega$, where
$\Omega$ is an effective $\Q$-divisor whose support contains none
of the curves $L_{xy}$, $L_{zt}$, $R_x$, $R_y$, $R_z$, $R_t$.
Since the pair $(X, \frac{65}{36}D)$ is log canonical at the
points $O_x$, $O_y$, $O_z$, $O_t$, the numbers $a_i$ are at most
$\frac{36}{65}$. Then by Lemma~\ref{lemma:handy-adjunction} the
following inequalities enable us to conclude that the point $P$
must be located in the outside of $C_x\cup C_y\cup C_z\cup C_t$:
\[(D-a_0L_{xy})\cdot L_{xy}=\frac{4+44a_0}{19\cdot 29}\leqslant \frac{36}{65},\ \ (D-a_1L_{zt})\cdot L_{zt}=\frac{4+23a_1}{13\cdot 14}\leqslant \frac{36}{65}, \]
\[(D-a_2R_{x})\cdot R_{x}=\frac{12+3a_2}{14\cdot 29}\leqslant \frac{36}{65},
\ \ (D-a_3R_{y})\cdot R_{y}=\frac{8-2a_3}{13\cdot 19}\leqslant \frac{36}{65}, \]
\[(D-a_4R_{z})\cdot R_{z}=\frac{12+30a_4}{13\cdot 29}\leqslant \frac{36}{65},
\ \ (D-a_5R_{t})\cdot R_{t}=\frac{8-20a_5}{7\cdot 19}\leqslant \frac{36}{65}. \]

We consider the pencil $\mathcal{L}$ defined by $\lambda tx+\mu y^3=0$, $[\lambda:\mu]\in\P^1$. The base locus of the pencil consists of the curve $L_{xy}$ and the point $O_x$.
Let $E$ be the unique divisor in $\mathcal{L}$ that passes through the point $P$. Since $P\not\in C_x\cup C_y\cup C_z\cup C_t$, the divisor $E$ is defined by the equation $tx=\alpha y^3$, where $\alpha\ne 0$.

Suppose that $\alpha\ne -1$. Then the curve $E$ is isomorphic to the curve defined by the equations $tx=y^3$ and $xt^2+yz^3+x^4z=0$. Since the curve $E$ is isomorphic to a general curve in $\mathcal{L}$, it is smooth at the point $P$. The affine piece of $E$ defined by $t\ne 0$ is the curve given by $y(y^2+y^{11}z+z^3)=0$. Therefore, the divisor $E$ consists of two irreducible and reduced curves $L_{xy}$ and $C$. We have
\[D\cdot C=D\cdot E-D\cdot L_{xy}=\frac{800}{13\cdot 19\cdot 29}.\]
Also, we see
\[C^2=E\cdot C- C\cdot L_{xy}\geq E\cdot C -C_x\cdot C>0.\]
By Lemma~\ref{lemma:handy-adjunction} the inequality $D\cdot C<\frac{36}{65}$ gives us a contradiction.

Suppose that $\alpha=-1$. Then divisor $E$ consists of three irreducible and reduced curves $L_{xy}$, $R_z$, and $M$.  Note that the curve $M$ is different from the curves $R_x$ and $L_{zt}$. Also, it is smooth at the point $P$. We have
\[D\cdot M=D\cdot E- D\cdot L_{xy}-D\cdot R_z=\frac{572}{13\cdot 19\cdot 29},\]
\[M^2=E\cdot M-L_{xy}\cdot M- R_z\cdot M \geq E\cdot M-C_x\cdot M- C_z\cdot M=\frac{5}{2}D\cdot M>0.\]
By Lemma~\ref{lemma:handy-adjunction} the inequality $D\cdot
M<\frac{36}{65}$ gives us a contradiction.
\end{proof}

\begin{lemma}
\label{lemma:I-4-W-13-14-23-33-D-79}
Let $X$ be a quasismooth
hypersurface of degree $79$ in $\mathbb{P}(13,14,23,33)$. Then
$\lct(X)=\frac{65}{32}$.
\end{lemma}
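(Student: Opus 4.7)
The plan is to mirror the argument used for the structurally analogous Lemma~\ref{lemma:I-4-W-13-14-29-29-D-71} (and its cousin in the $I=4$ series). First I would fix an explicit quasismooth model. Since the only monomials of degree $79$ in $\mathbb{C}[x,y,z,t]$ with $\wt(x,y,z,t)=(13,14,23,33)$ are $t^2x$, $tz^2$, $x^5y$, $zy^4$, I would take
$$X=\bigl\{t^2x+tz^2+x^5y+zy^4=0\bigr\}\subset\mathbb{P}(13,14,23,33).$$
A direct check shows $X$ is singular exactly at $O_x,O_y,O_z,O_t$, and that both $L_{xz}=\{x=z=0\}$ and $L_{yt}=\{y=t=0\}$ lie on $X$. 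Each coordinate curve splits as
$$C_x=L_{xz}+R_x,\ C_y=L_{yt}+R_y,\ C_z=L_{xz}+R_z,\ C_t=L_{yt}+R_t,$$
where $R_x=\{x=tz+y^4=0\}$, $R_y=\{y=tx+z^2=0\}$, $R_z=\{z=t^2+x^4y=0\}$, $R_t=\{t=x^5+zy^3=0\}$; the relevant intersections are concentrated at
$$L_{xz}\cap R_x=\{O_t\},\ L_{xz}\cap R_z=\{O_y\},\ L_{yt}\cap R_y=\{O_x\},\ L_{yt}\cap R_t=\{O_z\}.$$
A routine application of Lemma~\ref{lemma:basic-property} and Lemma~\ref{lemma:Igusa} at $O_t$ gives $\lct(X,\tfrac{4}{13}C_x)=\tfrac{65}{32}$, and this number is strictly smaller than each of $\lct(X,\tfrac{4}{14}C_y)$, $\lct(X,\tfrac{4}{23}C_z)$, $\lct(X,\tfrac{4}{33}C_t)$. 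This provides the upper bound $\lct(X)\leqslant\tfrac{65}{32}$.

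For the lower bound I would suppose there is an effective $D\sim_{\mathbb{Q}}-K_X$ such that $(X,\tfrac{65}{32}D)$ fails to be log canonical at some point $P\in X$. Using Lemma~\ref{lemma:convexity} applied separately to each of $C_x,C_y,C_z,C_t$, I may assume that $\mathrm{Supp}(D)$ omits at least one of the two components of each. I would then compute the twelve intersection numbers $D\cdot L_{xz},\,D\cdot L_{yt},\,D\cdot R_*$ and $L_\bullet\cdot R_\bullet$, together with the self-intersections $L_{xz}^2$, $L_{yt}^2$, $R_*^2$. The singular-point exclusion step is then purely arithmetical: in each of the four pairs
$$\{13D\cdot L_{yt},\,13D\cdot R_y\},\ \{14D\cdot L_{yt},\,7D\cdot R_t\},\ \{23D\cdot L_{xz},\,\tfrac{23}{2}D\cdot R_z\},\ \{33D\cdot L_{xz},\,\tfrac{33}{k}D\cdot R_x\},$$
(taking account of the multiplicities of $R_z$ at $O_y$ and of $R_x$ at $O_t$) at least one entry must be $\leqslant \tfrac{32}{65}$ by the convexity assumption, thereby bounding $\mathrm{mult}_{O_*}(D)$ below the required threshold.

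Next I would write $D=a_1L_{xz}+a_2L_{yt}+a_3R_x+a_4R_y+a_5R_z+a_6R_t+\Omega$ with $\Omega$ not supported on those six curves. Since $(X,\tfrac{65}{32}D)$ is already log canonical at every singular point, each $a_i\leqslant\tfrac{32}{65}$; combining the intersection bounds obtained above with Lemma~\ref{lemma:handy-adjunction} in the form $(D-a_iL)\cdot L\leqslant\tfrac{32}{65}\cdot\tfrac{1}{r}$ eliminates every smooth point lying on $C_x\cup C_y\cup C_z\cup C_t$. Thus $P$ must lie off the union of the coordinate curves.

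The final step, and the one I expect to be the most delicate, is to kill the remaining generic points via a pencil. I would use $\mathcal{L}=\{\lambda tx+\mu z^2=0\}$, which lies in $|\mathcal{O}_X(46)|$ and has base locus $L_{xz}$ plus finitely many points. Let $E_\alpha$ be the unique member passing through $P$; since $P\notin C_x\cup C_z\cup C_t$, the parameter $\alpha$ is nonzero, and in the affine chart $t=1$ the substitution $x=-\alpha z^2$ into the equation of $X$ yields
$$z\bigl[(1-\alpha)z+(-\alpha)^{5}z^{9}y+y^{4}\bigr]=0,$$
so $E_\alpha=L_{xz}+Z_\alpha$ for $\alpha\neq1$ with $Z_\alpha$ irreducible, and $E_1=L_{xz}+R_y+Z'$ with $Z'$ irreducible. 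The residual curve $Z_\alpha$ (resp.\ $Z'$) passes through $P$ smoothly, and one checks $D\cdot Z_\alpha$ and $Z_\alpha^2>0$ by writing $Z_\alpha^2\geqslant E_\alpha\cdot Z_\alpha-(L_{xz}+R_x)\cdot Z_\alpha$ and an analogous bound in the $\alpha=1$ case. Writing $D=\varepsilon Z_\alpha+\Xi$ with $\varepsilon\leqslant\tfrac{32}{65}$ by log canonicity at the neighbouring singular point, Lemma~\ref{lemma:handy-adjunction} applied to $Z_\alpha$ (or to $Z'$) together with the estimate $D\cdot Z_\alpha<\tfrac{32}{65}$ produces the final contradiction. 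The hardest ingredient will be verifying irreducibility of $Z_\alpha$ and $Z'$ and bounding their intersections with $D$ tightly enough to beat $\tfrac{32}{65}$; this is handled by the affine-chart reduction above, exactly as in Lemma~\ref{lemma:I-4-W-13-14-19-29-D-71}.
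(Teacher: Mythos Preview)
Your outline tracks the paper's argument closely for the upper bound, the pencil step, and the exclusion of $O_x$, $O_y$, $O_z$. However, there is a genuine gap at $O_t$. Your claimed pair at $O_t$ is $\{33D\cdot L_{xz},\,33D\cdot R_x\}$ (note $R_x=\{x=zt+y^4=0\}$ is smooth at $O_t$, so $k=1$). Numerically
$$33\,D\cdot L_{xz}=\frac{2}{7}<\frac{32}{65},\qquad 33\,D\cdot R_x=\frac{16}{23}>\frac{32}{65},$$
so the second inequality does \emph{not} bound $\mult_{O_t}(D)$ below the threshold. Lemma~\ref{lemma:convexity} only lets you drop one component of $C_x$ from $\Supp(D)$; if that component is $R_x$ (i.e.\ $L_{xz}\subset\Supp(D)$), your multiplicity argument fails and $P=O_t$ is not excluded.

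This is not a cosmetic issue: the paper's proof devotes most of its length to exactly this case. After reducing to $P=O_t$ with $L_{xz}\subset\Supp(D)$, one writes $D=aL_{xz}+bR_y+\Delta$, bounds $a$ and $b$ via $D\cdot R_x$ and $D\cdot L_{yt}$, then performs a weighted blow-up of $O_t$ with weights $(13,19)$, followed by a second weighted blow-up of the resulting $\frac{1}{19}(3,7)$ point with weights $(3,7)$, and derives contradictions on the exceptional curves using the discrepancy bookkeeping and Lemma~\ref{lemma:handy-adjunction}. Your proposal would need this entire blow-up analysis (or an equivalent substitute) to close the argument. The pencil $\mathcal{L}=\{\lambda xt+\mu z^2=0\}$ is the same as the paper's and handles the smooth locus correctly, but it does not touch $O_t$.
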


\begin{proof}
 The surface $X$ can be defined by the
quasihomogeneous equation
$$
z^{2}t+y^{4}z+xt^{2}+x^{5}y=0.
$$
The surface $X$ is singular at $O_x$, $O_y$, $O_z$ and $O_{t}$.
We have
$$
\lct\left(X, \frac{4}{13}C_x\right)=\frac{65}{32}<\lct\left(X, \frac{4}{13}C_x\right)=\frac{21}{8}<\lct\left(X, \frac{5}{25}C_t\right)=\frac{33}{10}<\lct\left(X, \frac{4}{23}C_z\right)=\frac{69}{20}.%
$$
In particular, $\lct(X)\leqslant \frac{65}{32}$.

 Each of the divisors $C_x$, $C_y$, $C_z$, and $C_t$ consists of two irreducible and reduced components. The divisor $C_x$ (resp. $C_y$, $C_z$, $C_t$) consists of $L_{xz}$ (resp. $L_{yt}$, $L_{xz}$, $L_{yt}$)
and $R_x=\{x=y^4+zt=0\}$ (resp. $R_y=\{y=z^2+xt=0\}$, $R_z=\{z=x^4y+t^2=0\}$, $R_t=\{t=x^5+y^3z=0\}$).
The curve $L_{xz}$ intersects $R_x$ (resp. $R_z$) only at the point $O_t$ (resp. $O_y$). The curve $L_{yt}$ intersects $R_y$ (resp. $R_t$) only at the point $O_x$ (resp. $O_z$).

We suppose that $\lct(X)<\frac{65}{32}$. Then there is an effective
$\Q$-divisor $D\qlineq -K_X$ such that the log pair $(X,
\frac{65}{32}D)$ is not log canonical at some point $P\in X$.

The intersection numbers among the divisors $D$, $L_{xz}$, $L_{yt}$, $R_x$, $R_y$, $R_z$, $R_t$ are as follows:
$$
L_{xz}^2=-\frac{43}{14\cdot 33},\ R_x^2=-\frac{40}{23\cdot 33},\ L_{xz}\cdot R_{x}=\frac{4}{33},\ D\cdot L_{xz}=\frac{4}{14\cdot 33},\ D\cdot R_{x}=\frac{16}{23\cdot 33},%
$$
$$
L_{yt}^2=-\frac{32}{13\cdot 23},\ R_{y}^2=-\frac{38}{13\cdot 33},\ \ L_{yt}\cdot R_{y}=\frac{2}{13},\ D\cdot L_{yt}=\frac{4}{13\cdot 23},\ D\cdot R_{y}=\frac{8}{13\cdot 33},%
$$
%$R_{y}\cdot R_{x}=L_{xz}\cdot R_{y}=\frac{1}{33}$, $R_{x}\cdot L_{yt}={1}{23}$
$$
R_{z}^2=\frac{20}{13\cdot 14},\ L_{xz}\cdot R_{z}=\frac{2}{14},\ D\cdot R_{z}=\frac{8}{13\cdot 14},%
$$
$$
R_{t}^2=\frac{95}{14\cdot 13},\ L_{yt}\cdot R_{t}=\frac{5}{23},\ D\cdot R_{t}=\frac{20}{14\cdot 23}.%
$$
By Lemma~\ref{lemma:convexity} we may assume that the support of $D$ does not contain at least one component of each divisor $C_x$, $C_y$, $C_z$, $C_t$.
Since the curve $R_t$ is singular at the point $O_z$ with multiplicity $3$ and the curve $R_z$ is singular at the point $O_y$, in
each of the following pairs of inequalities, at least one of two
must hold:
\[\mult_{O_x}(D)\leqslant 13D\cdot L_{yt}=\frac{4}{23}<\frac{32}{65}, \ \ \ \mult_{O_x}(D)\leqslant  13D\cdot R_y=\frac{8}{33}<\frac{32}{65};\]
\[\mult_{O_y}(D)\leqslant 14D\cdot L_{xz}=\frac{4}{33}<\frac{32}{65}, \ \ \ \mult_{O_y}(D)\leqslant  \frac{14}{2}D\cdot R_z=\frac{4}{13}<\frac{32}{65};\]
\[\mult_{O_z}(D)\leqslant 23D\cdot L_{yt}=\frac{4}{13}<\frac{32}{65}, \ \ \  \mult_{O_z}(D)\leqslant \frac{23}{3}D\cdot R_t=\frac{10}{21}<\frac{32}{65}.\]
Therefore,
the point $P$ can be none of $O_x$, $O_y$, $O_z$.

Put $D=m_0L_{xz}+m_1L_{yt}+m_2R_x+m_3R_y+m_4R_z+m_5R_t+\Omega$, where  $\Omega$ is an effective $\mathbb{Q}$-divisor whose support contains none of  $L_{xz}$, $L_{yt}$, $R_x$, $R_y$, $R_z$, $R_t$. Since the pair $(X, \frac{65}{32}D)$ is log canonical at the points $O_x$, $O_y$, $O_z$, we have $m_i\leqslant \frac{32}{65}$ for each $i$. Since
\[(D-m_0L_{xz})\cdot L_{xz}=\frac{4+43m_0}{14\cdot 33}\leqslant \frac{32}{65},\ \
(D-m_1L_{yt})\cdot L_{yt}=\frac{4+32m_1}{13\cdot 23}\leqslant \frac{32}{65},\]
\[(D-m_2R_{x})\cdot R_{x}=\frac{16+40m_2}{23\cdot 33}\leqslant \frac{32}{65},\ \
(D-m_3R_{y})\cdot R_{y}=\frac{8+38m_3}{13\cdot 33}\leqslant \frac{32}{65},\]
\[(D-m_4R_{z})\cdot R_{z}=\frac{8-20m_4}{13\cdot 14}\leqslant \frac{32}{65},\ \
(D-m_5R_{t})\cdot R_{t}=\frac{20-95m_5}{14\cdot 23}\leqslant \frac{32}{65}\]
Lemma~\ref{lemma:handy-adjunction}
implies that the point $P$ cannot be a smooth point of $X$ on $C_x\cup C_y\cup C_z\cup C_t$. Therefore, the point $P$ is either
a point  in the outside of  $C_x\cup C_y\cup C_z\cup C_t$ or the point $O_t$.

Suppose that $P\not\in C_{x}\cup C_{y}\cup C_{z}\cup C_{t}$. Then we consider the pencil $\mathcal{L}$ on $X$
defined by the equations $\lambda xt+\mu z^2=0$, $[\lambda :\mu]\in\mathbb{P}^1$. There
is a unique curve $Z_\alpha$ in the pencil passing through the point $P$. This curve is cut out by
$$
xt+\alpha z^{2}=0,
$$
 where $\alpha$ is a non-zero constant.

 The curve
$Z_{\alpha}$ is reduced. But it is always reducible. Indeed, one
can easily check that
$$
Z_{\alpha}=C_{\alpha}+L_{xz}
$$
where $C_{\alpha}$ is a reduced curve whose support contains no
$L_{xy}$. Let us prove that $C_{\alpha}$ is irreducible if
$\alpha\ne 1$.

Any component of the curve $C_t$ is not contained in the curve $Z_\alpha$.
The open subset $Z_{\alpha}\setminus  C_{t}$ of
the curve $Z_{\alpha}$ is a $\mathbb{Z}_{33}$-quotient of the
affine curve
$$
x+\alpha
z^{2}=z^{2}+y^{4}z+x+x^5y=0\subset\mathbb{C}^{3}\cong\mathrm{Spec}\Big(\mathbb{C}\big[x,y,z\big]\Big),
$$
that is isomorphic to a plane affine curve defined by the
equation
$$
z\left((\alpha-1)z+y^{4}-\alpha^5yz^{9}\right)=0\subset\mathbb{C}^{2}\cong\mathrm{Spec}\Big(\mathbb{C}\big[y,z\big]\Big).
$$
Thus, if $\alpha\ne 1$, then the curve $Z_\alpha$ consists of two
irreducible and reduced curves $L_{xz}$ and $C_\alpha$. If
$\alpha=1$, then the curve $Z_\alpha$ consists of three
irreducible and reduced curves $L_{xz}$, $R_y$, and $C_1$. In both
cases, the curve $C_\alpha$ (including $\alpha=1$) is smooth at
the point $P$. By Lemma~\ref{lemma:convexity}, we may assume that
$\mathrm{Supp}(D)$ does not contain at least one irreducible
component of the curve $Z_{\alpha}$.

If $\alpha\ne 1$, then
$$
 D\cdot C_{\alpha}=\frac{8}{13\cdot 14},%
$$
$$C_\alpha^2=Z_\alpha\cdot C_\alpha-L_{xz}\cdot C_\alpha\geqslant Z_\alpha\cdot C_\alpha-(R_x+L_{xz})\cdot C_\alpha=\frac{33}{4}D\cdot C_\alpha>0.$$
If $\alpha=1$, then
$$
 D\cdot C_{1}=\frac{152}{13\cdot 14\cdot 33},%
$$
$$C_1^2=Z_1\cdot C_1-(L_{xz}+R_y)\cdot C_1\geqslant Z_1\cdot C_1-(R_x+L_{xz})\cdot C_1-(L_{yt}+R_y)\cdot C_1=\frac{19}{4}D\cdot C_1>0.$$

We put $D=mC_\alpha+\Delta_\alpha$, where $\Delta_{\alpha}$ is an
effective $\mathbb{Q}$-divisor such that
$C_{\alpha}\not\subset\mathrm{Supp}(\Delta_{\alpha})$. Since
$C_{\alpha}$ intersects the curve $C_t$ and the pair $(X,
\frac{65}{32}D)$ is log canonical along the curve $C_t$, we obtain
$m\leqslant \frac{32}{65}$. Then, the inequality
\[(D-mC_\alpha)\cdot C_\alpha\leqslant D\cdot C_\alpha<\frac{32}{65}\]
implies that the pair $(X, \frac{65}{32}D)$  is log canonical at the point $P$ by Lemma~\ref{lemma:handy-adjunction}. The obtained contradiction conclude that the point $P$ must be the point $O_t$.

If $L_{xz}$ is not contained in the support of $D$, then the
inequality $$\mult_{O_t}(D)\leqslant 33D\cdot
L_{xz}=\frac{2}{7}<\frac{32}{65}$$ is a contradiction. Therefore,
the  curve  $L_{xz}$ must be contained in the support of $D$, and
hence the curve $R_x$ is not contained in the support of $D$. Put
$D=aL_{xz}+bR_y+\Delta$, where $\Delta$ is an effective
$\mathbb{Q}$-divisor whose support contains neither $L_{xz}$ nor
$R_y$. Then
\[\frac{16}{23\cdot 33}=D\cdot R_x\geqslant aL_{xz}\cdot R_x+\frac{\mult_{O_t}(D)-a}{33}>\frac{3a}{33}+\frac{32}{33\cdot 65}\]
and hence $a<\frac{304}{3\cdot 23\cdot 65}$.
If $b\ne 0$, then $L_{yt}$ is not contained in the support of $D$. Therefore,
\[\frac{4}{13\cdot 23}=D\cdot L_{yt}\geqslant bR_y\cdot L_{yt}=\frac{2b}{13},\]
and hence $b\leqslant \frac{2}{23}$.

Let $\pi\colon\bar{X}\to X$ be the weighted blow up at the point
$O_t$ with weights $(13,19)$ and let $F$ be the exceptional curve
of the morphism $\pi$. Then $F$ contains two singular points
$Q_{13}$ and $Q_{19}$ of $\bar{X}$ such that $Q_{13}$ is a singular point of type
$\frac{1}{13}(1,1)$, and $Q_{19}$ is a singular point of type
$\frac{1}{19}(3,7)$. Then
\[
K_{\bar{X}}\qlineq\pi^*(K_X)-\frac{1}{33}F,\ \
\bar{L}_{xz}\qlineq\pi^*(L_{xz})-\frac{19}{33}F, \ \
\bar{R}_y\qlineq\pi^*(R_y)-\frac{13}{33}F,\ \
\bar{\Delta}\qlineq\pi^*(\Delta)-\frac{c}{33}F,
\]
where $\bar{L}_{xz}$, $\bar{R}_y$ and $\bar{\Delta}$
are the proper transforms of $L_{xz}$, $R_y$ and $\Delta$
by $\pi$, respectively, and $c$ is a non-negative rational number.
Note that $F\cap\bar{R}_y=\{Q_{19}\}$ and $F\cap\bar{L}_{xz}=\{Q_{13}\}$.

The log pull-back of the log pair $(X,\frac{65}{32}D)$ by $\pi$ is
the log pair
$$
\left(\bar{X},\ \frac{65a}{32}\bar{L}_{xz}+ \frac{65b}{32}\bar{R}_y+
\frac{65}{32}\bar{\Delta}+\theta_1 F\right),%
$$
where
$$\theta_1=\frac{32+65(19a+13b+c)}{32\cdot 33}.$$
This  is not log canonical at some point $Q\in F$.
We have
\[0\leqslant\bar{\Delta}\cdot\bar{L}_{xz}=\frac{4+43a}{14\cdot 33}-\frac{b}{33}-\frac{c}{13\cdot33}.\]%
This inequality shows $13b+c\leqslant \frac{13}{14}(4+43a)$. Since
$a\leqslant \frac{304}{3\cdot 23\cdot 65}$, we obtain
$$\theta_{1}=\frac{32+1235a}{32\cdot 33}+\frac{65(13b+c)}{32\cdot 33}\leqslant
\frac{32+1235a}{32\cdot 33}+\frac{13\cdot 65(4+43a)}{14\cdot 32\cdot 33}<1.$$

Suppose that the point $Q$ is neither $Q_{13}$ nor $Q_{19}$. Then, the
point $Q$ is not in $\bar{L}_{xz}\cup \bar{R}_y$. Therefore, the
pair $ \left(\bar{X},  \frac{65}{32}\bar{\Delta}+F\right) $ is not
log canonical at the point $Q$, and hence
$$1<\frac{65}{32}\bar{\Delta}\cdot F=\frac{65c}{13\cdot 19\cdot 32}.$$
But $c\leqslant 13b+c\leqslant \frac{13}{14}(4+43a)<\frac{13\cdot 19\cdot
32}{65}$ since $a\leqslant \frac{304}{3\cdot 23\cdot 65}$. Therefore, the point
$Q$ is either $Q_{13}$ or $Q_{19}$.

Suppose that the point $Q$ is $Q_{13}$. Then the point $Q$ is in
$\bar{L}_{xz}$ but not in $\bar{R}_y$. Therefore, the pair $
\left(\bar{X},  \bar{L}_{xz}+\frac{65}{32}\bar{\Delta}+\theta_1
F\right) $ is not log canonical at the point $Q$. However, this is impossible since
\[\begin{split} 13\left(\frac{65}{32}\bar{\Delta}+\theta_1
F\right)\cdot \bar{L}_{xz}
&=\frac{13\cdot 65}{32}\left(\frac{4+43a}{14\cdot 33}-\frac{b}{33}-\frac{c}{13\cdot33}\right)+\theta_1= \\
& =\frac{32+1235a}{32\cdot 33}+\frac{13\cdot 65(4+43a)}{14\cdot
32\cdot 33}<1.\\ \end{split}\]  Therefore, the point $Q$ must be
the point $Q_{19}$.

Let $\psi\colon\tilde{X}\to \bar{X}$ be the weighted  blow up at
the point $Q_{19}$ with weights $(3,7)$ and let $E$ be the
exceptional curve of the morphism $\psi$. The exceptional curve
$E$ contains two singular points $O_3$ and $O_7$ of $\tilde{X}$. The point $O_3$
is of type $\frac{1}{3}(1,2)$ and the point $O_7$ is of type $\frac{1}{7}(4,5)$. Then
\[
K_{\tilde{X}}\qlineq\psi^*(K_{\bar{X}})-\frac{9}{19}E,\ \
\tilde{R}_{y}\qlineq\psi^*(\bar{R}_{y})-\frac{3}{19}E, \ \
\tilde{F}\qlineq\psi^*(F)-\frac{7}{19}E,
\ \
\tilde{\Delta}\qlineq\psi^*(\bar{\Delta})-\frac{d}{19}E,
\]
where $\tilde{R}_{y}$, $\tilde{F}$ and $\tilde{\Delta}$ are the
proper transforms of $\bar{R}_{y}$, $F$ and $\bar{\Delta}$ by
$\psi$, respectively, and $d$ is a non-negative rational number.

The log pull-back of the log pair $(X,\frac{65}{32}D)$ by
$\pi\circ\psi$ is the log pair
$$
\left(\tilde{X},\ \frac{65a}{32}\tilde{L}_{xz}+ \frac{65b}{32}\tilde{R}_y+
\frac{65}{32}\tilde{\Delta}+\theta_1 \tilde{F}+\theta_2E\right),%
$$
where $\tilde{L}_{xz}$ is the proper transform of $\bar{L}_{xz}$ by $\psi$ and
$$\theta_2=\frac{65(3b+d)}{19\cdot 32}+\frac{7}{19}\theta_1+\frac{9}{19}=\frac{9728+65(133a+190b+7c+33d)}{19\cdot 32\cdot 33}.$$
This  is not log canonical at some point $O\in E$.

We have
\[0\leqslant\tilde{\Delta}\cdot\tilde{R}_{y}=\bar{\Delta}\cdot\bar{R}_{y}-\frac{d}{7\cdot 19}= \frac{8+38b}{13\cdot
33}- \frac{19a+c}{19\cdot 33}-\frac{d}{7\cdot 19},\] and hence
$133a+7c+33d\leqslant \frac{133}{13}(8+38b)$. Therefore, this
inequality together with $b<\frac{2}{23}$ gives us
\[\begin{split}\theta_2
&=\frac{9728+65\cdot 190b}{19\cdot 32\cdot 33}+\frac{65(133a+7c+33d)}{19\cdot 32\cdot 33}\leqslant\\
&\leqslant\frac{9728+65\cdot 190b}{19\cdot 32\cdot 33}+\frac{65\cdot 7(8+38b)}{13\cdot 32\cdot 33}<1.\\
\end{split}
\]

Suppose that the point $O$ is in the outside of $\tilde{R}_{y}$
and $\tilde{F}$. Then the log pair $(E,
\frac{65}{32}\tilde{\Delta}|_{E})$ is not log canonical at the
point $O$ and hence
\[1<\frac{65}{32}\tilde{\Delta}\cdot E=\frac{65d}{3\cdot 7\cdot 32}.\]
However,
\[d\leqslant \frac{1}{33}(133a+7c+33d)\leqslant \frac{133}{13\cdot 33}(8+38b)<\frac{3\cdot 7\cdot 32}{65}\]
since $b\leqslant\frac{2}{23}$. This is a contradiction.

Suppose that the point $O$ belongs to $\tilde{R}_{y}$. Then the log
pair $
\left(\tilde{X}, \frac{65b}{32}\tilde{R}_y+
\frac{65}{32}\tilde{\Delta}+\theta_2E\right)
$
 is not log canonical at the
point $O$ and hence
\[1<7\left(
\frac{65}{32}\tilde{\Delta}+\theta_2 E\right)\cdot \tilde{R}_x=\frac{7\cdot 65}{32}\left(\frac{8+38b}{13\cdot
33}- \frac{19a+c}{19\cdot 33}-\frac{d}{7\cdot 19}\right)+\theta_2.\]
However,
\[ \frac{7\cdot 65}{32}\left(\frac{8+38b}{13\cdot
33}- \frac{19a+c}{19\cdot 33}-\frac{d}{7\cdot 19}\right)+\theta_2=\frac{9728+65\cdot 190b}{19\cdot 32\cdot 33}+\frac{65\cdot 7(8+38b)}{13\cdot 32\cdot 33}<1.\]
This is a contradiction. Therefore, the point $O$ is the point $O_3$.

Suppose that the point $O$ belongs to $\tilde{F}$. Then the log
pair $
\left(\tilde{X},
\frac{65}{32}\tilde{\Delta}+\theta_1\tilde{F}+\theta_2E\right)
$
 is not log canonical at the
point $O$ and hence
\[1<3\left(
\frac{65}{32}\tilde{\Delta}+\theta_2 E\right)\cdot \tilde{F}=\frac{3\cdot 65}{32}\left(\frac{c}{13\cdot 19}-\frac{d}{3\cdot 19}\right)+\theta_2.\]
However,
\[ \begin{split} \frac{3\cdot 65}{32}\left(\frac{c}{13\cdot 19}-\frac{d}{3\cdot 19}\right)+\theta_2=&\frac{3\cdot 65 c}{13\cdot 19\cdot 32}+\frac{9728+65(133a+190b+7c)}{19\cdot 32\cdot 33}=\\
&=\frac{512+455a}{ 32\cdot 33}+\frac{65\cdot 190(13b+c)}{13\cdot19\cdot 32\cdot 33}\leqslant\\
&\leqslant \frac{512+455a}{ 32\cdot 33}+\frac{65\cdot 190(4+43a)}{14\cdot19\cdot 32\cdot 33}<1
\end{split}\]
since $13b+c\leqslant \frac{13}{14}(4+43a)$ and $a\leqslant \frac{304}{3\cdot 23\cdot 65}$. This is a contradiction.
\end{proof}

\begin{lemma}
\label{lemma:I-4-W-13-23-51-83-D-166} Let $X$ be a quasismooth
hypersurface of degree $166$ in $\mathbb{P}(13,23,51,83)$. Then
$\lct(X)=\frac{91}{40}$.
\end{lemma}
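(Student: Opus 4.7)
The plan is to follow the proof of Lemma~\ref{lemma:I-4-W-11-31-45-83-D-166} almost verbatim, exploiting the close structural parallel between the two surfaces: both have $I=4$, degree $166$, and highest weight $83$. First, I would fix a defining equation such as
\[
t^{2}+xz^{3}+y^{5}z+x^{11}y=0,
\]
whose monomials all have degree $166$ and whose partials confirm quasismoothness, and observe that $X$ is singular only at $O_x$, $O_y$, $O_z$ of indices $13$, $23$, $51$, respectively, while the curves $C_x$ and $C_y$ are irreducible and reduced. Restricting to $x=0$ gives $t^{2}+y^{5}z=0$; in a local orbifold chart at $O_z$, this is analytically isomorphic to the cusp $t^{2}+y^{5}=0$, so Lemma~\ref{lemma:Igusa} together with Proposition~\ref{proposition:lc-by-finite-morphism} gives $\lct(X,C_x)=\tfrac{7}{10}$ and hence $\lct(X,\tfrac{4}{13}C_x)=\tfrac{91}{40}$. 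A similar computation on $C_y$ (an $A_2$-cusp at $O_x$) shows $\lct(X,\tfrac{4}{23}C_y)>\tfrac{91}{40}$, so $\lct(X)\leqslant\tfrac{91}{40}$.

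For the opposite inequality, I would suppose $\lct(X)<\tfrac{91}{40}$, pick an effective $\Q$-divisor $D\qlineq-K_X$ such that $(X,\tfrac{91}{40}D)$ fails to be log canonical at some point $P$, and use Lemma~\ref{lemma:convexity} to assume $C_x,C_y\not\subset\Supp(D)$. The intersection inequalities
\[
23\,D\cdot C_x=\frac{664}{4233}<\frac{40}{91},\qquad 51\,D\cdot C_x=\frac{664}{1909}<\frac{40}{91},\qquad 13\,D\cdot C_y=\frac{664}{4233}<\frac{40}{91}
\]
exclude the singular points $O_y$, $O_z$, and $O_x$, respectively, while $D\cdot C_x=\tfrac{664}{97359}<\tfrac{40}{91}$ rules out every smooth point of $C_x$. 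Therefore $P$ must be a smooth point of $X$ lying outside $C_x$.

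To close the argument, I would invoke Lemma~\ref{lemma:Carolina} with $k=663=13\cdot 51$: since $H^0(\mathbb{P},\mathcal{O}_{\mathbb{P}}(663))$ contains the monomials $x^{51}$, $x^{28}y^{13}$, and $z^{13}$, the lemma yields
\[
\mult_P(D)\leqslant\frac{4\cdot 663\cdot 166}{13\cdot 23\cdot 51\cdot 83}=\frac{664}{1909}<\frac{40}{91},
\]
which contradicts the fact that $(X,\tfrac{91}{40}D)$ is not log canonical at $P$. I expect the argument to go through without any weighted blow-ups of the sort needed in harder sporadic cases such as Lemmas~\ref{lemma:I-4-W-11-17-20-27-D-71} or~\ref{lemma:I-4-W-11-17-24-31-D-79}; the only nontrivial verifications are the irreducibility of $C_x$ and $C_y$ (immediate, since $t^{2}+y^{5}z$ and $t^{2}+xz^{3}$ are each irreducible as quadratics in $t$) and the availability of the three degree-$663$ monomials for Lemma~\ref{lemma:Carolina}, both of which follow at once from the weight arithmetic.
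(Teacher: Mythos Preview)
Your proposal is correct and follows essentially the same route as the paper's proof: same defining equation, same computation of $\lct(X,\tfrac{4}{13}C_x)=\tfrac{91}{40}$ via the $(2,5)$-cusp at $O_z$, the same reduction via Lemma~\ref{lemma:convexity} to $C_x,C_y\not\subset\Supp(D)$, the same intersection bounds excluding $C_x$ and $O_x$, and the same application of Lemma~\ref{lemma:Carolina} with $k=663$ using $x^{51}$, $x^{28}y^{13}$, $z^{13}$. The paper additionally lists $x^{5}y^{26}$ and states the exact value $\lct(X,\tfrac{4}{23}C_y)=\tfrac{115}{24}$, and it omits your separate $23\,D\cdot C_x$ check (which is subsumed by the $51\,D\cdot C_x$ bound), but these are cosmetic differences.
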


\begin{proof}
 The surface $X$ can be defined by the
quasihomogeneous equation
$$
t^{2}+y^{5}z+xz^{3}+x^{11}y=0.
$$
The surface $X$ is singular only at the points $O_x$, $O_{y}$ and
$O_z$. The curves $C_x$ and $C_y$ are irreducible. We have
$$
\frac{91}{40}=\lct\left(X, \frac{4}{13}C_x\right)<\lct\left(X, \frac{4}{23}C_y\right)=\frac{115}{24},%
$$
and hence $\lct(X)\leqslant \frac{91}{40}$.

Suppose that $\lct(X)<\frac{91}{40}$. Then there is an effective
$\Q$-divisor $D\qlineq -K_X$ such that the pair
$(X,\frac{91}{40}D)$ is not log canonical at some point $P$. By
Lemma~\ref{lemma:convexity}, we may assume that the support of the
divisor $D$ contains neither $C_x$ nor $C_y$. Then the
inequalities
\[51D\cdot C_x=\frac{8}{23}<\frac{40}{91}, \ \ 13D\cdot
C_y=\frac{8}{51}<\frac{40}{91}\] show that the point $P$ is a
smooth point of $X$ in the outside of $C_x$. However, $H^0(\P,
\mathcal{O}_{\P}(663))$ contains $x^{51}$, $y^{13}x^{28}$,
$y^{26}x^{5}$ and $z^{13}$,  and hence it follows from
Lemma~\ref{lemma:Carolina} that the point $P$ is
either a singular point of $X$ or a point on $C_x$. This is a
contradiction.
\end{proof}

\section{Sporadic cases with $I=5$}
\label{subsection:index-5}

\begin{lemma}
\label{lemma:I-5-W-11-13-19-25-D-63}
Let $X$ be a quasismooth
hypersurface of degree $63$ in $\mathbb{P}(11,13,19,25)$. Then
$\lct(X)=\frac{13}{8}$.
\end{lemma}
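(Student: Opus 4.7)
The plan is to follow the scheme outlined in Section~\ref{section:preliminaries} and to adapt the analyses of the structurally analogous Lemmas~\ref{lemma:I-4-W-13-14-23-33-D-79} and~\ref{lemma:I-4-W-11-17-24-31-D-79}. The space of degree-$63$ monomials in $\mathbb{P}(11,13,19,25)$ is spanned by $z^{2}t, yt^{2}, xy^{4}, x^{4}z$, so up to rescaling $X$ is given by
\[
z^{2}t + yt^{2} + xy^{4} + x^{4}z = 0,
\]
which one readily checks to be quasismooth with exactly four singular points $O_{x}, O_{y}, O_{z}, O_{t}$ of indices $11, 13, 19, 25$. Each $C_{i}$ then splits as $L+R$, the two components meeting at the ``opposite'' vertex, and $R_{t}$ acquires multiplicity~$3$ at $O_{z}$.

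The first step is to establish the upper bound $\lct(X) \leq \tfrac{13}{8}$. Near $O_{t}$, which is a cyclic quotient singularity of type $\tfrac{1}{25}(11,19)$, the curves $L_{yz}$ and $R_{y}$ have local equations $z = 0$ and $z + x^{4} = 0$ in an orbifold chart, so pulling back via Proposition~\ref{proposition:lc-by-finite-morphism} and applying Lemma~\ref{lemma:Igusa} to $v(v+u^{4})=0$ gives $\lct_{O_{t}}(X, C_{y}) = \tfrac{5}{8}$, whence $\lct(X, \tfrac{5}{13} C_{y}) = \tfrac{13}{8}$. Parallel computations show that $\lct(X, \tfrac{5}{a_{i}} C_{i})$ attains the strictly larger values $\tfrac{33}{20}, \tfrac{19}{8}, \tfrac{35}{16}$ for $i = 0, 2, 3$, so $C_{y}$ is the minimizer.

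For the reverse inequality, I would assume that some effective $\Q$-divisor $D \qlineq -K_{X}$ makes $(X, \tfrac{13}{8} D)$ non-log-canonical at a point $P\in X$, and by Lemma~\ref{lemma:convexity} arrange that $\Supp(D)$ omits at least one component of each $C_{i}$. Using the multiplicity inequality $r_{P}\,\tfrac{13}{8}\, D\cdot L \leq 1$ applied to an available $L\not\subset\Supp(D)$ through each of $O_{x}, O_{y}, O_{z}$, I should rule these three vertices out. Writing $D$ as a sum over the six special curves plus a residual $\Omega$, with each coefficient at most $\tfrac{8}{13}$ by log canonicity at the already-excluded vertices, Lemma~\ref{lemma:handy-adjunction} along each of the six curves should then leave only $P = O_{t}$ or a smooth point off $C_{x}\cup C_{y}\cup C_{z}\cup C_{t}$. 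The latter case can be dispatched by the pencil spanned by $z^{2}t$ and $xy^{4}$: the unique member through $P$ should split as $L_{yz}$ plus an irreducible residual curve $C_{\alpha}$ of positive self-intersection, and $\tfrac{13}{8}\,D \cdot C_{\alpha} < 1$ combined with Lemma~\ref{lemma:handy-adjunction} produces the desired contradiction, with the one degenerate parameter value producing an extra $R_{x}$ component handled in the same way.

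The main obstacle, as in the $I = 4$ analogues, will be excluding $P = O_{t}$. Since $\mult_{O_{t}}(D) \leq 25\, D \cdot L_{yz}$ is too small when $L_{yz} \not\subset \Supp D$, one is forced to have $L_{yz} \subset \Supp D$; I would then perform the weighted blow-up $\pi \colon \bar{X} \to X$ at $O_{t}$ with weights $(11, 19)$ matching the type of the singularity. The exceptional curve $F$ carries two cyclic quotient points $Q_{11}$ and $Q_{19}$, and the inequalities $\bar{\Delta} \cdot \bar{L}_{yz} \geq 0$ and $\bar{\Delta} \cdot \bar{R}_{x} \geq 0$ combined with Lemma~\ref{lemma:handy-adjunction} should rule out non-log-canonical points away from $\{Q_{11}, Q_{19}\}$ as well as at least one of the two singularities themselves. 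A second weighted blow-up at the surviving singular point, followed if necessary by a third ordinary blow-up in the style of the towers in Lemmas~\ref{lemma:I-4-W-11-17-24-31-D-79} and~\ref{lemma:I-2-W-14-17-29-41-D-99}, should force the resulting numerical constraints on the accumulating coefficients to be mutually incompatible. Juggling those constants is the delicate technical task that will dominate the argument.
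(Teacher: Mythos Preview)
Your overall plan is exactly the paper's: compute the four $\lct(X,\tfrac{5}{a_i}C_i)$, reduce to $P=O_t$ by Lemma~\ref{lemma:convexity} and Lemma~\ref{lemma:handy-adjunction} on the six special curves, handle smooth points off $C_x\cup C_y\cup C_z\cup C_t$ with a pencil, and then run a blow-up tower at $O_t$. Two of your technical choices, however, differ from what the paper actually does and would need correction.

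First, your pencil $\langle z^{2}t,\,xy^{4}\rangle$ has \emph{two} fixed curve components, not one: every member $\alpha z^{2}t+\beta xy^{4}=0$ contains both $L_{yz}$ and $L_{xt}$ (check that each vanishes identically on both lines). So the splitting ``$L_{yz}+C_\alpha$'' you describe is wrong. The paper instead uses the degree-$44$ pencil $\lambda x^{4}+\mu zt=0$, whose only base curve is $L_{xt}$; a general member then splits as $L_{xt}+Z_\alpha$ with $Z_\alpha$ irreducible (and for one value of $\alpha$ as $L_{xt}+R_y+Z_1$), and $D\cdot Z_\alpha<\tfrac{8}{13}$ together with $Z_\alpha^2>0$ finishes via Lemma~\ref{lemma:handy-adjunction}.

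Second, and more importantly, the blow-up at $O_t$ is \emph{not} taken with the na\"\i ve weights $(11,19)$; the paper uses weights $(7,3)$. This extracts an exceptional curve $F$ with discrepancy $-\tfrac{15}{25}$, carrying singular points $Q_7$ of type $\tfrac{1}{7}(1,1)$ and $Q_3$ of type $\tfrac{1}{3}(2,1)$, with $\bar L_{yz}$ through $Q_7$ and $\bar R_x$ through $Q_3$. The inequalities $\bar\Delta\cdot\bar R_x\geqslant 0$ and $\bar\Delta\cdot\bar L_{yz}\geqslant 0$ (together with the a~priori bounds $a<\tfrac{36}{247}$, $b\leqslant\tfrac{5}{38}$ you would obtain from $D\cdot R_y$ and $D\cdot L_{xt}$) eliminate all points of $F$ except $Q_7$. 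The tower then continues with an \emph{ordinary} blow-up at $Q_7$ (exceptional curve $G$, smooth centre), reducing to the node $G\cap\tilde F$, and a third ordinary blow-up there; the resulting chain of inequalities in $a,b,c,d,e$ closes exactly as in Lemmas~\ref{lemma:I-3-W-11-21-29-37-D-95} and~\ref{lemma:I-4-W-11-17-24-31-D-79}. Your anticipated shape of the argument is right, but the specific weight $(7,3)$ is what makes the constants cooperate.
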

\begin{proof}
 The surface $X$ can be defined by the
quasihomogeneous equation
$$
z^{2}t+yt^{2}+xy^{4}+x^{4}z=0,
$$
and $X$ is singular at $O_x$, $O_y$, $O_z$ and $O_{t}$.

The curve $C_x$ (resp. $C_y$, $C_z$, $C_t$) consists of two
irreducible and reduced curves $L_{xt}$ (resp $L_{yz}$, $L_{yz}$,
$L_{xt}$) and $R_x=\{x=z^2+yt=0\}$ (resp. $R_y=\{y=x^4+zt=0\}$,
$R_z=\{z=t^2+xy^3=0\}$, $R_t=\{t=y^4+x^3z=0\}$). The curve
$L_{xt}$ intersects $R_x$ (resp. $R_t$) only at the point $O_y$
(resp. $O_z$). The curve $L_{yz}$ intersects $R_y$ (resp. $R_z$)
 only at the point $O_t$ (resp. $O_x$).

We have the following intersection numbers
$$D\cdot L_{xt}=\frac{5}{13\cdot 19},\ \ D\cdot L_{yz}=\frac{1}{5\cdot 11}, \ \
 D\cdot R_{x}=\frac{2}{5\cdot 13},
\ \ D\cdot R_{y}=\frac{4}{5\cdot 19}, \ \ D\cdot
R_{z}=\frac{10}{11\cdot 13},
 $$
$$D\cdot R_{t}=\frac{20}{11\cdot 19}, \ \
 L_{xt}\cdot R_{x}=\frac{2}{13}, \ \ L_{xt}\cdot R_{t}=\frac{4}{19},\ \ L_{yz}\cdot R_{y}=\frac{4}{25},\
\ L_{yz}\cdot R_{z}=\frac{2}{11},
$$
$$
L_{xt}^2=-\frac{27}{13\cdot 19},\ \ L_{yz}^2=-\frac{31}{11\cdot 25}, \ \ R_x^2=-\frac{28}{13\cdot 25},
 \ \ R_y^2=-\frac{24}{19\cdot 25},
\ \ R_z^2=\frac{12}{11\cdot 13},\ \ R_t^2=\frac{56}{11\cdot 19}.
$$

We have
$$
\lct\left(X, \frac{5}{13}C_y\right)=\frac{13}{8}<\lct\left(X, \frac{5}{11}C_x\right)=\frac{33}{20}<\lct\left(X, \frac{5}{25}C_t\right)=\frac{35}{16}<\lct\left(X, \frac{5}{19}C_z\right)=\frac{19}{8}.%
$$
In particular, we have $\lct(X)\leqslant \frac{13}{8}$.

We suppose that $\lct(X)<\frac{13}{8}$. Then there is an effective
$\Q$-divisor $D\qlineq -K_X$ such that the log pair $(X,
\frac{13}{8}D)$ is not log canonical at some point $P\in X$.

Suppose that the point $P$ is located in the outside of $C_x\cup
C_y\cup C_z\cup C_t$. We consider the pencil $\mathcal{L}$ on $X$
defined by the equations $\lambda x^4+ \mu zt=0$, where $[\lambda:
\mu]\in\mathbb{P}^1$. The curve $L_{xt}$ is the unique base
component of the pencil $\mathcal{L}$. There is a unique member
$Z$ in the pencil $\mathcal{L}$ passing through the point $P$.
Since the point $P$ is in the outside of $C_{x}\cup C_{y}\cup
C_{z}\cup C_{t}$, the curve $Z$ is defined by an equation of the
form
$$\alpha x^4+zt=0,$$
where $\alpha$ is a non-zero constant.

The open subset $Z\setminus C_{z}$ of the curve $Z$ is a
$\mathbb{Z}_{19}$-quotient of the affine curve
$$
\alpha
x^4+t=t+yt^2+xy^{4}+x^4=0\subset\mathbb{C}^{3}\cong\mathrm{Spec}\Big(\mathbb{C}\big[x,y,z\big]\Big),
$$
that is isomorphic to the affine  curve
 given by the equation
$$
x\left(\left(1-\alpha\right)x^3+\alpha^2x^7y+y^4\right)=0
\subset\mathbb{C}^{2}\cong\mathrm{Spec}\Big(\mathbb{C}\big[y,z\big]\Big).
$$

If $\alpha\ne 1$, the divisor $Z$ consists of two irreducible and reduced curves $L_{xt}$ and $Z_{\alpha}$. On the other hand, if $\alpha=1$, then the divisor $Z$ consists of three irreducible and reduced curves $L_{xt}$,
$R_y$ and $Z_1$. Since $P\not\in C_{x}\cup C_{y}\cup C_z\cup C_{t}$, the point $P$ must be contained in  $Z_\alpha$ (including $\alpha=1$). Also, the curve $Z_\alpha$  is smooth at the point $P$.

Write $D=nZ_\alpha+\Gamma$, where $\Gamma$ is an effective $\mathbb{Q}$-divisor whose support contains $Z_\alpha$. Since $Z_\alpha$ passing through the point $O_t$ and the pair $(X, \frac{13}{8}D)$ is log canonical at the point $O_z$, we have $n\leqslant \frac{8}{13}$.
We can easily check
\[D\cdot Z_{\alpha}=\left\{%
\aligned
&D\cdot( Z-L_{xt})=\frac{227}{5\cdot 13\cdot 19} \text{ if}\ \alpha\ne 1,\\%
&D\cdot (Z-L_{xt}-R_y)=\frac{35}{13\cdot 19} \text{ if}\ \alpha=1.\\%
\endaligned\right.
\]
Also, if  $\alpha\ne 1$, then

\[Z_{\alpha}^2=Z\cdot Z_\alpha-L_{xt}\cdot Z_\alpha
\geqslant Z\cdot Z_\alpha-(L_{xt}+R_x)\cdot Z_\alpha  = \frac{33}{5}D\cdot Z_\alpha .\]
If $\alpha =1$,
\[Z_{\alpha}^2=Z\cdot Z_\alpha-(L_{xt}+R_y)\cdot Z_\alpha
\geqslant Z\cdot Z_\alpha-(L_{xt}+R_x+L_{yz}+R_y)\cdot Z_\alpha  =
4D\cdot Z_\alpha.\] In both cases, we have $Z_\alpha^2>0$. Since
\[(D-nZ_\alpha)\cdot Z_{\alpha}\leqslant D\cdot Z_\alpha<\frac{8}{13}\]
Lemma~\ref{lemma:handy-adjunction} shows that the pair $(X,
\frac{13}{8}D)$ is log canonical at the point $P$. This is a
contradiction. Therefore, the point $P$ must belong to the set
$C_x\cup C_y\cup C_z\cup C_t$.

It follows from Lemma~\ref{lemma:convexity} that we may assume
that $\mathrm{Supp}(D)$ does not contain at least one irreducible
component of the curves $C_{x}$, $C_{y}$, $C_{z}$, $C_{t}$. Since
the curve $R_t$ is singular at the point $O_z$ with multiplicity
$3$ and the support of $D$ does not contain either $L_{xt}$ or
$R_t$, one of the inequalities
\[\mult_{O_z}(D)\leqslant 19D\cdot L_{xt}=\frac{5}{13}<\frac{8}{13}, \ \
\mult_{O_z}(D)\leqslant \frac{19}{3}D\cdot R_t=\frac{20}{3\cdot 11}<\frac{8}{13}\]
must hold, and hence the point $P$ cannot be the point $O_z$.
Similarly, we see that the point $P$ can be neither $O_x$ nor
$O_y$.

Now we write
$D=m_0L_{xt}+m_1L_{yz}+m_2R_x+m_3R_y+m_4R_z+m_5R_t+\Omega$, where
$\Omega$ is  an effective $\mathbb{Q}$-divisor whose support
contains none of $L_{xt}$, $L_{yz}$, $R_x$, $R_y$, $R_z$, $R_t$.
Since the pair $(X, \frac{13}{8}D)$ is log canonical at the points
$O_x$, $O_y$, $O_z$, we must have $m_i\leqslant \frac{8}{13}$.
Then the inequalities
\[\left.%
\aligned
&(D-m_0L_{xt})\cdot L_{xt}=\frac{5+27m_0}{13\cdot 19}\\%
&(D-m_1L_{yz})\cdot L_{yz}=\frac{5+31m_1}{11\cdot 25}\\%
&(D-m_2R_{x})\cdot R_{x}=\frac{10+28m_2}{25\cdot 13}\\%
&(D-m_3R_{y})\cdot R_{y}=\frac{20+24m_3}{25\cdot 19}\\
&(D-m_4R_{z})\cdot R_{z}=\frac{10-12m_4}{11\cdot 13}\\
&(D-m_5R_{t})\cdot R_{t}=\frac{20-56m_5}{11\cdot 19}\\
\endaligned\right\}\leqslant\frac{8}{13}\]
imply that the point $P$ must be the point $O_t$.

Put $D=aL_{yz}+bR_x+\Delta$, where $\Delta$ is an effective
$\mathbb{Q}$-divisor whose support contains neither the curve
$L_{yz}$ nor $R_x$. If $a=0$, then we obtain
\[\mult_{O_t}(D)\leqslant 25D\cdot L_{yz}=\frac{5}{11}<\frac{8}{13}.\]
This is a contradiction. Therefore, $a>0$, and hence the support
of $D$ dose not contain the curve $R_y$. Since
$$
\frac{4}{5\cdot 19}=D\cdot R_{y}\geqslant
aL_{yz}\cdot R_{y}+\frac{\mult_{O_t}(D)-a}{25}
>\frac{3a}{25}+\frac{8}{13\cdot 25},%
$$
and hence $a\leqslant \frac{36}{247}$.  If $b>0$, then
\[\frac{5}{13\cdot 19}=D\cdot L_{xt}\geqslant bR_x\cdot
L_{xt}=\frac{2b}{13},\] and hence $b\leqslant\frac{5}{38}$.

Let $\pi\colon\bar{X}\to X$ be the weighted blow up of $O_{t}$
with weights $(7,3)$ and let $F$ be the exceptional curve of
$\pi$. Then
$$
K_{\bar{X}}\qlineq \pi^{*}(K_{X})-\frac{15}{25}F,\ %
\bar{L}_{yz}\qlineq \pi^{*}(L_{yz})-\frac{3}{25}F,\ %
\bar{R}_{x}\qlineq \pi^{*}(R_{x})-\frac{7}{25}F,\ %
\bar{\Delta}\qlineq \pi^{*}(\Delta)-\frac{c}{25}F,
$$
where $\bar{\Delta}$, $\bar{L}_{yz}$, $\bar{R}_{x}$ are the proper
transforms of $\Delta$, $L_{yz}$, $R_{x}$, respectively, and $c$
is a non-negative rational number. The curve $F$ contains two
singular points $Q_7$ and $Q_3$ of $\bar{X}$. The point $Q_7$ is a
singular point of type $\frac{1}{7}(1,1)$ and the point $Q_3$ is
of type $\frac{1}{3}(2,1)$.  Note that the curve $\bar{R}_x$
passes through the point $Q_3$ but not the point $Q_7$. The curve
$\bar{L}_{yz}$  passes through the point $Q_7$ but not the point
$Q_3$.

The log pull-back of the log pair $(X,\frac{13}{8}D)$ by $\pi$ is
the log pair
$$
\left(\bar{X},\
\frac{13a}{8}\bar{L}_{yz}+ \frac{13b}{8}\bar{R}_x+ \frac{13}{8}\bar{\Delta}+
\theta_1 F\right),%
$$
where
$$
\theta_1=\frac{13(3a+7b+c)+120}{8\cdot 25}.
$$
This pair is not log canonical at some point $Q\in F$. We have
\[0\leqslant\bar{\Delta}\cdot\bar{R}_x=\frac{10+28b}{13\cdot 25}-\frac{a}{25}
-\frac{c}{3\cdot 25}.\] This inequality shows
$3a+c\leqslant\frac{3}{13}(10+28b)$. Then
\[\theta_1=\frac{13(3a+c)+91b+120}{8\cdot 25}
\leqslant\frac{6+7b}{8}<1\] since
$b\leqslant\frac{5}{38}$.

Suppose that  the point $Q$ is neither the point $Q_7$ nor the
point $Q_3$. Then the log pair
$\left(\bar{X},\frac{13}{8}\bar{\Delta}+F\right)$ is not log
canonical at the point $Q$. Then
$$
\frac{13c}{8\cdot 21}=\frac{13}{8}\bar{\Delta}\cdot F>1
$$
by Lemma~\ref{lemma:adjunction}. However, $c\leqslant
3a+c\leqslant\frac{3}{13}(10+28b)$. This is a contradiction since
$b\leqslant \frac{5}{38}$. Therefore, the point $Q$ is either the
point $Q_7$ or the point $Q_3$.

Suppose that the point $Q$ is the point $Q_3$. This point is the
intersection point of $F$ and $\bar{R}_x$. Then the log pair
$\left(\bar{X},\frac{13b}{8}\bar{R}_{x}+
\frac{13}{8}\bar{\Delta}+\theta_1 F\right)$ is not log canonical
at the point $Q$. It then follows from
Lemma~\ref{lemma:adjunction} that
\[1<3\left(\frac{13}{8}\bar{\Delta}+\theta_{1}F\right)\cdot\bar{R}_{x}
=\frac{13\cdot 3}{8}\left(\frac{10+28b}{13\cdot 25}-\frac{a}{25}-\frac{c}{3\cdot 25}\right)+\theta_{1}.\]
However,
\[\frac{13\cdot 3}{8}\left(\frac{10+28b}{13\cdot 25}-\frac{a}{25}-\frac{c}{3\cdot 25}\right)+\theta_{1}
=\frac{6+7b}{8}<1.\]  Therefore,
the point $Q$ is the point $Q_7$. This point is the intersection
point of $F$ and $\bar{L}_{yz}$.

Let $\phi\colon\tilde{X}\to \bar{X}$ be the  blow up at the point
$Q_{7}$. Let $G$ be the exceptional divisor of the morphism
$\phi$. The surface $\tilde{X}$ is smooth along the exceptional
divisor $G$. Let $\tilde{L}_{yz}$, $\tilde{R}_x$, $\tilde{\Delta}$
and $\tilde{F}$ be the proper transforms of $L_{yz}$, $R_x$,
$\Delta$ and $F$ by $\pi\circ\phi$, respectively. We have
$$
K_{\tilde{X}}\qlineq\phi^*(K_{\bar{X}})-\frac{5}{7}G,
\ \tilde{L}_{yz}\qlineq\phi^*(\bar{L}_{yz})-\frac{1}{7}G,
\ \tilde{F}\qlineq\phi^*(F)-\frac{1}{7}G,
\ \tilde{\Delta}\qlineq\phi^*(\bar{\Delta})-\frac{d}{7}G,%
$$
where $d$ is a non-negative rational number. The log pull-back of
the log pair $(X, \frac{13}{8}D)$ via $\pi\circ \phi$ is
$$
\left(\tilde{X},
\frac{13a}{8}\tilde{L}_{yz}+\frac{13b}{8}\tilde{R}_x+\frac{13}{8}\tilde{\Delta}+\theta_1\tilde{F}+\theta_2 G\right),%
$$
where
$$\theta_2=\frac{13}{7\cdot 8}(a+d)+\frac{\theta_1}{7}+\frac{5}{7}
=\frac{1120+13(28a+7b+c+25d)}{7\cdot 8\cdot 25}.$$
This log pair is not log canonical at some point $O\in G$.
We have
\[0\leqslant \tilde{\Delta}\cdot\tilde{L}_{yz}
=\frac{5+31a}{11\cdot 25}-\frac{b}{25}-\frac{c}{7\cdot
25}-\frac{d}{7}.\] We then obtain $7b+c+25d\leqslant
\frac{7}{11}(5+31a)$. Since $a\leqslant\frac{36}{247}$, we see
\[\theta_2=\frac{1120+13(28a+7b+c+25d)}{7\cdot 8\cdot 25}\leqslant
\frac{511+273a}{7\cdot 8\cdot 11}<1. \]

Suppose that $O\not\in\tilde{F}\cup\tilde{L}_{yz}$. The log pair
$\left(\tilde{X},\frac{13}{8}\tilde{\Delta}+G\right)$  is not log
canonical at the point $O$. Applying Lemma~\ref{lemma:adjunction},
we get
$$
1<\frac{13}{8}\tilde{\Delta}\cdot G=\frac{13d}{8},%
$$
and hence $d>\frac{8}{13}$. However, $d\leqslant
\frac{1}{25}(7b+c+25d)\leqslant \frac{7}{11\cdot 25}(5+31a)$. This
is a contradiction since $a\leqslant\frac{36}{247}$. Therefore,
the point $O$ is either the intersection point of $G$ and
$\tilde{F}$ or the intersection point of $G$ and $\tilde{L}_{yz}$.
In the latter case, the pair $
\left(\tilde{X},\frac{13a}{8}\tilde{L}_{yz}+\frac{13}{8}\tilde{\Delta}+\theta_2 G\right)%
$ is not log canonical at the point $O$. Then, applying
Lemma~\ref{lemma:adjunction}, we get
\[
1<\left(\frac{13}{8}\tilde{\Delta}+\theta_{2}G\right)\cdot\tilde{L}_{yz}=\frac{13}{8}\left(\frac{5+31a}{11\cdot
25}-\frac{b}{25}-\frac{c}{7\cdot
25}-\frac{d}{7}\right)+\theta_{2}.\] However,
\[\frac{13}{8}\left(\frac{5+31a}{11\cdot 25}-\frac{b}{25}-\frac{c}{7\cdot 25}-\frac{d}{7}\right)+\theta_{2}
=\frac{511+273a}{7\cdot 8\cdot 11}<1.\] Therefore, the point $O$
must be the intersection point of $G$ and $\tilde{F}$.

Let $\xi\colon\hat{X}\to \tilde{X}$ be the blow up at the
point $O$  and let $H$ be the exceptional
divisor of $\xi$. We also let $\hat{L}_{yz}$, $\hat{R}_x$, $\hat{\Delta}$,
$\hat{G}$, and $\hat{F}$ be the proper transforms of $\tilde{L}_{yz}$,
$\tilde{R}_x$, $\tilde{\Delta}$,  $G$ and $\tilde{F}$ by $\xi$, respectively. Then
$\hat{X}$ is smooth along the exceptional divisor $H$. We have
$$
K_{\hat{X}}\qlineq\xi^*(K_{\tilde{X}})-H,\
\hat{G}\qlineq\xi^*(G)-H,\
\hat{F}\qlineq\xi^*(\tilde{F})-H,\
\hat{\Delta}\qlineq\xi^*(\tilde{\Delta})-eH,%
$$
where $e$ is a non-negative rational number. The log pull-back of the
log pair $(X, \frac{13}{8}D)$ via $\pi\circ \phi\circ \xi$ is
$$
\left(\hat{X},\frac{13a}{8}\hat{L}_{yz}+\frac{13b}{8}\hat{R}_x+\frac{13}{8}\hat{\Delta}+\theta_1\hat{F}+\theta_2 \hat{G}+\theta_{3}H\right),%
$$
where
$$
\theta_3=\theta_1+\theta_2+\frac{13e}{8}-1=\frac{560+13(49a+56b+8c+25d+175e)}{7\cdot 8\cdot 25}.$$
This log pair is not log canonical at some point $A\in H$. We have
\[
\frac{c}{21}-\frac{d}{7}-e\hat{\Delta}\cdot\hat{F}\geqslant 0.\]
Therefore, $3d+21e\leqslant c$.

Then
\[\begin{split}\theta_3&=\frac{560+13(49a+56b+8c)}{7\cdot 8\cdot 25}+\frac{13(d+7e)}{7\cdot 8}\leqslant\\
&\leqslant \frac{1680+13(147a+168b+49c)}{3\cdot7\cdot 8\cdot 25}=\\
&=\frac{1680+1284b}{3\cdot7\cdot 8\cdot 25}+\frac{13\cdot 49(3a+c)}{3\cdot7\cdot 8\cdot 25}\leqslant\\
&\leqslant \frac{140+107b}{2\cdot 7\cdot 25}+\frac{7(5+14b)}{4 \cdot 25}=\frac{21+36b}{28}<1\\
\end{split}
\]
since $b\leqslant \frac{5}{38}$ and $3a+c\leqslant
\frac{3}{13}(10+28b)$. In particular, $\theta_3$ is a positive
number.

Suppose that $A\not\in\hat{F}\cup\hat{G}$. Then the log pair $
\left(\hat{X},\frac{13}{8}\hat{\Delta}+\theta_{3}H\right)
$  is not log canonical at the point $A$. Applying
Lemma~\ref{lemma:adjunction}, we get
$$
1<\frac{13}{8}\hat{\Delta}\cdot H=\frac{13e}{8}.
$$
However, $$e\leqslant \frac{1}{21}(3d+21e)\leqslant \frac{c}{21}\leqslant \frac{1}{21}(3a+c)\leqslant
\frac{3(10+28b)}{13\cdot 21}\leqslant \frac{8}{13}.$$
Therefore, the point $A$ must be either in $\hat{F}$ or in $\hat{G}$.

Suppose that $A\in\hat{F}$. Then the log pair  $
\left(\hat{X},\frac{13}{8}\hat{\Delta}+\theta_1\hat{F}+\theta_{3}H\right)%
$ is not log canonical at the point $A$. Applying Lemma~\ref{lemma:adjunction},
we get
\[
1<\left(\frac{13}{8}\hat{\Delta}+\theta_{3}
H\right)\cdot\hat{F}=\frac{13}{8}\left(\frac{c}{21}-\frac{d}{7}-e\right)+
\theta_{3}=\frac{1680+13(147a+168b+49c)}{3\cdot7\cdot 8\cdot 25}.
\]
However,
\[\frac{1680+13(147a+168b+49c)}{3\cdot7\cdot 8\cdot 25}\leqslant \frac{21+36b}{28}<1.
\]
Therefore, the point $A$ is the intersection point of $H$ and
$\hat{G}$. Then the log pair $
\left(\hat{X},\frac{13}{8}\hat{\Delta}+\theta_2
\hat{G}+\theta_{3}H\right) $ is not log canonical at the point
$A$. From Lemma~\ref{lemma:adjunction}, we obtain
\[
1<\left(\frac{13}{8}\hat{\Delta}+\theta_{3}H\right)\cdot\hat{G}=\frac{13}{8}\left(d-e\right)+\theta_{3}=\frac{560+13(49a+56b+8c+200d)}{7\cdot
8\cdot 25}.
\]
However,
\[\frac{560+13(49a+56b+8c+200d)}{7\cdot 8\cdot 25}=\frac{80+91a}{8\cdot 25}
+\frac{13(7b+c+25d)}{7\cdot 25}\leqslant\frac{56+169a}{8\cdot 11}<1
\]
since $a<\frac{36}{247}$ and $7b+c+25d\leqslant \frac{7}{11}(5+31a)$.
The obtained contradiction completes the
proof.
\end{proof}

\begin{lemma}
\label{lemma:I-5-W-11-25-37-68-D-136}
 Let $X$ be a quasismooth
hypersurface of degree $136$ in $\mathbb{P}(11,25,37,68)$. Then
$\lct(X)=\frac{11}{6}$.
\end{lemma}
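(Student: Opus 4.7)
The plan is to mimic the argument of Lemma~\ref{lemma:I-4-W-11-31-45-83-D-166}, which handles the close analogue in weights $(11,31,45,83)$. Since $136=2\cdot 68$ and the only monomials of degree $136$ in $\mathbb{P}(11,25,37,68)$ other than $t^2$ are $yz^3$, $xy^5$, $x^9z$, one may take the defining equation of $X$ to be $t^2+yz^3+xy^5+x^9z=0$. The Jacobian criterion then shows $O_t\notin X$ (because $\partial_t$ does not vanish at $(0,0,0,1)$), and $\Sing(X)=\{O_x,O_y,O_z\}$ of indices $11,25,37$. The curve $C_x\colon t^2+yz^3=0\subset \mathbb{P}(25,37,68)$ is irreducible and reduced, with a cusp of multiplicity $2$ at $O_y$ and smooth elsewhere; combining Lemma~\ref{lemma:Igusa} (which gives cusp threshold $\frac{5}{6}$) with Proposition~\ref{proposition:lc-by-finite-morphism} yields $\lct(X,\tfrac{5}{11}C_x)=\tfrac{11}{5}\cdot\tfrac{5}{6}=\tfrac{11}{6}$, so $\lct(X)\leq \tfrac{11}{6}$. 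The curve $C_y\colon t^2+x^9z=0$ is similarly irreducible with a cusp of multiplicity $2$ at $O_z$, and one checks $\lct(X,\tfrac{1}{5}C_y)=\tfrac{55}{18}>\tfrac{11}{6}$, which justifies excluding $C_y$ from the support of the counterexample in the next step.

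For the reverse inequality, I would assume $\lct(X)<\tfrac{11}{6}$ and choose an effective $\Q$-divisor $D\sim_\Q -K_X$ and a point $P\in X$ at which $(X,\tfrac{11}{6}D)$ is not log canonical; by Lemma~\ref{lemma:convexity} we may assume $C_x,C_y\not\subset\Supp(D)$. Using $D\cdot C_x=\tfrac{2}{185}$ together with Lemma~\ref{lemma:handy-adjunction} and the cusp multiplicity at $O_y$, one obtains $\mult_P(D)\leq \tfrac{2}{185}$ at smooth points of $C_x$, $\mult_{O_y}(D)\leq \tfrac{5}{37}$, and $\mult_{O_z}(D)\leq \tfrac{2}{5}$, each strictly below $\tfrac{6}{11}$, ruling out every point of $C_x$. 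The parallel computation from $D\cdot C_y=\tfrac{10}{407}$ gives bounds $\tfrac{10}{407}$, $\tfrac{10}{37}$, $\tfrac{5}{11}$ at the analogous locations of $C_y$, again all $<\tfrac{6}{11}$, so $P\notin C_x\cup C_y$. Since $O_t\notin X$, every singular point of $X$ has thereby been excluded, so $P$ must be a smooth point of $X$ lying outside $C_x$.

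The final step applies Lemma~\ref{lemma:Carolina} with $k=407$. One checks that $H^0(\mathbb{P},\mathcal{O}_\mathbb{P}(407))$ contains the required pairs of monomials $\{x^{37}, x^{12}y^{11}\}$ and $\{x^{37}, z^{11}\}$; moreover, since the defining equation is monic in $t$, the projection $X\dasharrow \mathbb{P}(11,25,37)$ has only finite fibres and so contracts no curve. The lemma therefore gives
\[
\mult_P(D)\leq \frac{5\cdot 407\cdot 136}{11\cdot 25\cdot 37\cdot 68}=\frac{2}{5}<\frac{6}{11},
\]
contradicting non-log-canonicity at the smooth point $P$. The main obstacle is purely arithmetic bookkeeping (verifying the intersection numbers and the choice of $k$); no weighted blow-up argument is needed here, precisely because $O_t$ does not lie on $X$ and the bounds on $C_x\cup C_y$ already exhaust all the singular points.
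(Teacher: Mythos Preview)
Your proof is correct and follows essentially the same approach as the paper: establish the upper bound via $\lct(X,\tfrac{5}{11}C_x)=\tfrac{11}{6}$, assume a bad divisor $D$ exists with $C_x,C_y\not\subset\Supp(D)$, use the intersection numbers $D\cdot C_x=\tfrac{2}{185}$ and $D\cdot C_y=\tfrac{10}{407}$ to bound multiplicities and rule out all of $C_x\cup C_y$ (hence all singular points), and finish with Lemma~\ref{lemma:Carolina} at $k=407$ using the monomials $x^{37},x^{12}y^{11},z^{11}$. The paper's write-up is terser (it records only $37D\cdot C_x=\tfrac{2}{5}$ and $11D\cdot C_y=\tfrac{10}{37}$), but the content is the same; your only slip is citing Lemma~\ref{lemma:handy-adjunction} where you are really using the multiplicity method (Lemma~\ref{lemma:multiplicity} with Proposition~\ref{proposition:lc-by-finite-morphism}), though this does not affect correctness.
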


\begin{proof}  The surface $X$ can be defined by the
quasihomogeneous equation
$$
xy^5 + x^9z + yz^3 + t^2=0.
$$
The surface $X$ is singular at the points $O_x$, $O_y$ and $O_z$.

The curves $C_x$ and $C_y$ are reduced and irreducible. We have
$$
\frac{11}{6}=\lct\left(X, \frac{5}{11}C_x\right)<\lct\left(X,
\frac{5}{25}C_y\right)=\frac{55}{18}.
$$
Thus, $\lct(X) \leqslant \frac{11}{6}$.

Suppose that $\lct(X) < \frac{11}{6}$. Then there is an effective
$\Q$-divisor $D\qlineq -K_X$ such that the pair $(X,
\frac{11}{6}D)$ is not log canonical at some point $P$. By
Lemma~\ref{lemma:convexity} we may assume that the support of $D$
contains neither $C_x$ nor $C_y$. Then two inequalities
\[37D\cdot C_x=\frac{2}{5}<\frac{6}{11}, \ \ 11D\cdot C_y=\frac{10}{37}<\frac{6}{11}\]
imply that the point $P$ is  neither a singular point of $X$ nor a point on $C_x$.
Since $H^0(\P, \mathcal{O}_\P(407))$ contains $x^{37}$, $z^{11}$
and $x^{12}y^{11}$, we see that this cannot happen by
Lemma~\ref{lemma:Carolina}.
\end{proof}

\begin{lemma}
\label{lemma:I-5-W-13-19-41-68-D-136}
 Let $X$ be a quasismooth
hypersurface of degree $136$ in $\mathbb{P}(13,19,41,68)$. Then
$\lct(X)=\frac{91}{50}$.
\end{lemma}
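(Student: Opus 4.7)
The plan is to run the same argument used for Lemma~\ref{lemma:I-5-W-11-25-37-68-D-136}, since the numerical data ($I=5$, $d=136$, weight $68$ for $t$) is identical and the only substantive change is in the remaining three weights. I would first write down the equation
$$t^2+y^5z+xz^3+x^9y=0$$
for $X$, check that the monomials $t^2,\ x^9y,\ y^5z,\ xz^3$ are enough to verify quasismoothness and that $\Sing X=\{O_x,O_y,O_z\}$, and note that $C_x, C_y$ are each irreducible and reduced.

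Computing log canonical thresholds of $\tfrac{5}{13}C_x$ and $\tfrac{5}{19}C_y$ is the next step. In the orbifold chart at $O_z$, the equation $f|_{z=1}$ reads $t^2+y^5+x+\cdots=0$, so $X$ is smooth there and $C_x=\{x=0\}$ becomes the $A_4$ cusp $t^2+y^5=0$; Proposition~\ref{proposition:lc-by-finite-morphism} and Lemma~\ref{lemma:Igusa} give $\lct(X,C_x)=\tfrac{7}{10}$, hence $\lct(X,\tfrac{5}{13}C_x)=\tfrac{91}{50}$ and the upper bound $\lct(X)\leq\tfrac{91}{50}$. An analogous computation at $O_x$, where $C_y$ acquires an $A_2$ cusp $t^2+z^3=0$, yields the strictly larger number $\tfrac{19}{6}$ for $\tfrac{5}{19}C_y$, so $C_x$ is the genuine obstruction.

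For the opposite inequality I would assume for contradiction the existence of $D\qlineq -K_X$ with $(X,\tfrac{91}{50}D)$ not log canonical at some point $P$, and apply Lemma~\ref{lemma:convexity} to arrange $C_x, C_y\not\subset\Supp D$. The intersection numbers
$$D\cdot C_x=\tfrac{10}{19\cdot 41},\qquad D\cdot C_y=\tfrac{10}{13\cdot 41}$$
then yield $41\,D\cdot C_x=\tfrac{10}{19}<\tfrac{50}{91}$, $19\,D\cdot C_x=\tfrac{10}{41}<\tfrac{50}{91}$, and $13\,D\cdot C_y=\tfrac{10}{41}<\tfrac{50}{91}$, which via Lemma~\ref{lemma:handy-adjunction} rule out $O_x, O_y, O_z$ and all smooth points on $C_x\cup C_y$. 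Hence $P$ must be a smooth point of $X$ lying off $C_x$.

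The main (minor) obstacle is applying Lemma~\ref{lemma:Carolina}: since $\gcd(13,68)=1$ one cannot produce two distinct monomials $x^\mu t^\nu$ of degree $k<884$, while $k$ must be at most $556$ for the resulting multiplicity bound to beat $\tfrac{50}{91}$. I bypass this with the alternate hypothesis of the lemma: because the coefficient of $t^2$ in the defining equation is nonzero, the projection $\psi\colon X\to\P(13,19,41)$ forgetting $t$ is a finite double cover and contracts no curve. Choosing $k=533$, the space $H^0(\P,\O_\P(533))$ contains $x^{41},\ x^{22}y^{13}$ and $x^{41},\ z^{13}$, so Lemma~\ref{lemma:Carolina} produces
$$\mult_P(D)\leq \frac{5\cdot 533\cdot 136}{13\cdot 19\cdot 41\cdot 68}=\frac{10}{19}<\frac{50}{91},$$
contradicting the failure of log canonicity at $P$.
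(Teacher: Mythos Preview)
Your proposal is correct and follows essentially the same approach as the paper: the same normal form for the equation, the same computation of $\lct(X,\tfrac{5}{13}C_x)=\tfrac{91}{50}$ and $\lct(X,\tfrac{5}{19}C_y)=\tfrac{19}{6}$, the same reduction via Lemma~\ref{lemma:convexity} and intersection numbers to a smooth point off $C_x$, and the same appeal to Lemma~\ref{lemma:Carolina} with $k=533$. The only cosmetic differences are that you use the monomial $x^{22}y^{13}$ where the paper uses $x^{3}y^{26}$, and you spell out explicitly that the projection to $\P(13,19,41)$ is finite (which the paper leaves implicit).
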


\begin{proof}
 The surface $X$ can be defined by the quasihomogeneous
equation
$$
x^9y + xz^3 + y^5z + t^2=0.
$$
The surface $X$ is singular only at the points $O_x$, $O_y$ and $O_z$.

The curves $C_x$ and $C_y$ are reduced and irreducible. Also, it is easy to check
$$
\frac{91}{50}=\lct\left(X,
\frac{5}{13}C_x\right)<\lct(X, \frac{5}{19}C_y)=\frac{19}{6}.%
$$
Therefore, $\lct(X)\leqslant \frac{50}{91}$.

Suppose that $\lct(X)<\frac{91}{50}$. Then there is an effective
$\Q$-divisor $D\qlineq -K_X$ such that the pair $(X,
\frac{91}{50}D)$ is not log canonical at some point $P$. By
Lemma~\ref{lemma:convexity} we may assume that the support of
$D$ contains neither $C_x$ nor $C_y$. Then two inequalities
\[41D\cdot C_x=\frac{10}{19}<\frac{50}{91}, \ \ 13D\cdot C_y=\frac{10}{41}<\frac{50}{91}\]
imply that the point $P$ is  neither a singular point of $X$ nor a point on $C_x$.
However,  by
Lemma~\ref{lemma:Carolina} this is impossible since $H^0(\P, \mathcal{O}_\P(533))$ contains $x^{41}$, $z^{13}$
and $x^{3}y^{26}$.
\end{proof}

\section{Sporadic cases with $I=6$}
\label{subsection:index-6}

\begin{lemma}
\label{lemma:I-6-W-7-10-15-19-D-45} Let $X$ be a quasismooth
hypersurface of degree $45$ in $\mathbb{P}(7,10, 15,19)$. Then
$\lct(X)=\frac{35}{54}$.
\end{lemma}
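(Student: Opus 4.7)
The plan is to follow the scheme described earlier in the paper: first establish $\lct(X)\le 35/54$ by exhibiting a witness divisor, then prove the matching lower bound by contradiction, excluding every possible non-log-canonical point of $(X,\tfrac{35}{54}D)$ for $D\sim_{\mathbb{Q}}-K_X$. I begin by observing that the only monomials of weighted degree $45$ in weights $(7,10,15,19)$ are $z^3$, $y^3z$, $xt^2$, $x^5y$, so quasismoothness forces $X$, up to rescaling of coordinates, to be given by
\[
z^3+y^3z+xt^2+x^5y=0,
\]
with singular locus $\{O_x,O_y,O_t\}$ of orbifold types $\tfrac{1}{7}(1,5)$, $\tfrac{1}{10}(7,9)$, $\tfrac{1}{19}(10,15)$ respectively. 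The curve $C_x$ splits as $L_{xz}+R_x$ with $R_x=\{x=z^2+y^3=0\}$, and these two components meet only at $O_t$. Passing to the orbifold cover at $O_t$ with coordinates $(y,z)$, the pullback of $C_x$ is given by $z(z^2+y^3)$, so Lemma~\ref{lemma:Igusa} (with $n_1=1$, $n_2=0$, $m_1=2$, $m_2=3$) combined with Proposition~\ref{proposition:lc-by-finite-morphism} will give $\lct_{O_t}(X,C_x)=5/9$. Elsewhere the local lct of $C_x$ is at least $1$, so $\lct(X,\tfrac{6}{7}C_x)=\tfrac{7}{6}\cdot\tfrac{5}{9}=\tfrac{35}{54}$. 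Parallel computations on $C_y$ (cuspidal at $O_x$), $C_z=L_{xz}+R_z$ (which locally at $O_y$ pulls back to $x(t^2+x^4)$, giving $\lct=3/5$ by Lemma~\ref{lemma:Igusa}), and $C_t$ (smooth) will give $\lct(X,\tfrac{6}{a_i}C_\cdot)\in\{25/18,3/2,19/6\}$, all larger than $35/54$, so $\lct(X)\le 35/54$.

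For the reverse inequality, I will suppose that $(X,\tfrac{35}{54}D)$ fails to be log canonical at some point $P$ and exclude each possible location. To handle a smooth $P\notin C_x$, I apply Lemma~\ref{lemma:Carolina}: the curves contracted by the projection $X\dashrightarrow\mathbb{P}(7,10,15)$ are exactly $L_{xz}$ and $R_x$, both contained in $C_x$, so the multiplicity bound holds without the $x^\mu t^\nu$ hypothesis. Taking $k=105$, for which $H^0(\mathcal{O}_{\mathbb{P}}(105))$ contains $\{x^{15},x^5y^7\}$ and $\{x^{15},z^7\}$, yields $\mult_P(D)\le\tfrac{6\cdot 105\cdot 45}{7\cdot 10\cdot 15\cdot 19}=\tfrac{27}{19}<\tfrac{54}{35}$, the desired contradiction. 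For smooth $P\in C_x$ I use Lemma~\ref{lemma:convexity} to assume that $D$ avoids one of $L_{xz}$, $R_x$, and combine the intersection data $L_{xz}\cdot D=3/95$, $R_x\cdot D=6/95$, $L_{xz}\cdot R_x=3/19$, $L_{xz}^2=-23/190$, $R_x^2=-8/95$ with Lemma~\ref{lemma:handy-adjunction} to bound $\mult_P(D)$ far below $54/35$.

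The remaining task is to exclude the three singular points, for which intersection bounds alone are insufficient. At each I plan to apply Method~3.3, performing a weighted blow-up matched to the orbifold type, followed by Methods~3.1--3.2 on the new surface. At $O_x$ I will blow up with weights $(1,5)$ and use that $C_t$ is smooth at $O_x$ while $C_y$ is cuspidal, to bound the multiplicity on the exceptional $\mathbb{P}(1,5)$. At $O_y$ I will blow up with weights $(7,9)$ and exploit the local splitting of $R_z$ into the two smooth branches $t=\pm i x^2$ on the orbifold cover, together with bounds from $L_{xz}$ and $C_t$ and from convexity. Both cases are quantitatively easier than $O_t$ because the local value of $\lct_{P}(X,\tfrac{6}{7}C_x)$ (and of every other generating curve) at $P\in\{O_x,O_y\}$ is strictly larger than $35/54$, leaving room in the inequalities.

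The genuinely hard case, which I expect to be the main obstacle, is $P=O_t$, because this is exactly where the witness $\tfrac{6}{7}C_x$ saturates the lct. Writing $D=\mu_1 L_{xz}+\mu_2 R_x+\Omega$ with $L_{xz},R_x\not\subset\mathrm{Supp}(\Omega)$, convexity applied to $\tfrac{6}{7}C_x$ will force $\mu_1\mu_2=0$, and $L_{xz}\cdot R_x=3/19$ combined with $L_{xz}\cdot D=3/95$ or $R_x\cdot D=6/95$ will bound the nonzero coefficient by at most $1/5$. I then pass to the $\mathbb{Z}_{19}$-orbifold cover at $O_t$ and perform the $(2,3)$-weighted blow-up that resolves the configuration $z(z^2+y^3)$; the resulting exceptional curve $E$ has discrepancy $4$, and the pullback of $\tfrac{5}{9}C_x$ acquires $E$-coefficient exactly $1$. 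The technical core will be a tight accounting of coefficients after this blow-up: any surplus in $\mu_1$, $\mu_2$ or in the multiplicity of $\Omega$ at $O_t$ should push the $E$-coefficient of the log pullback of $\tfrac{35}{54}D$ strictly above $1$, or equivalently force an impossible negative value of one of the residual intersection numbers $\bar\Omega\cdot\bar L_{xz}$ or $\bar\Omega\cdot\bar R_x$. A secondary blow-up at one of the two orbifold points on $E$ may be needed to close the argument.
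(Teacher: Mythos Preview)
Your upper-bound computation and the treatment of smooth points (both on and off $C_x$) match the paper's proof essentially verbatim. However, two points deserve correction.

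First, you have miscounted the singular points: since $\gcd(a_1,a_2)=\gcd(10,15)=5$, the surface also has a singular point $Q=[0:1:1:0]$ of type $\frac{1}{5}(1,2)$ lying on $R_x$. This is minor, as your adjunction argument for $R_x$ will cover it once the coefficient bound is in hand.

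Second, and more substantively, your plan for the singular points is drastically overcomplicated. You propose weighted blow-ups (Method~3.3) at each of $O_x$, $O_y$, $O_t$, and describe an elaborate two-step blow-up at $O_t$ because ``the witness $\frac{6}{7}C_x$ saturates the lct'' there. But no blow-ups are needed anywhere. The key observation you are missing is that $\lambda=\frac{35}{54}<1$, so non-log-canonicity at a point of index $r$ requires $\mult_P(D)>\frac{54}{35}>1$, a comparatively weak constraint. At $O_t$, after convexity removes one of $L_{xz},R_x$ from $\Supp(D)$, either $19\,D\cdot L_{xz}=\frac{3}{5}$ or $\frac{19}{2}\,D\cdot R_x=\frac{3}{5}$ (using $\mult_{O_t}(R_x)=2$) bounds $\mult_{O_t}(D)\le\frac{3}{5}<\frac{54}{35}$; Method~3.1 alone closes the case. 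At $O_x$, the irreducible curve $C_y$ gives $\mult_{O_x}(D)\le 7\,D\cdot C_y=\frac{18}{19}<\frac{54}{35}$. The points $O_y$ and $Q$ lie on $L_{xz}$ and $R_x$ respectively, and your own coefficient bounds $m\le\frac{2}{5}$, $\epsilon\le\frac{1}{5}$ feed directly into Lemma~\ref{lemma:handy-adjunction}: $10(D-mL_{xz})\cdot L_{xz}=\frac{6+23m}{19}\le\frac{54}{35}$ and $5(D-\epsilon R_x)\cdot R_x=\frac{6+8\epsilon}{19}\le\frac{54}{35}$. The paper's entire proof fits in a short paragraph with no blow-ups; the ``tightness'' at $O_t$ that worries you is illusory, because convexity lets you test $D$ against a curve \emph{not} in its support, and the witness $\frac{6}{7}C_x$ has both components in its support.
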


\begin{proof}
 The surface $X$ can be defined by the
equation $z^{3}-y^{3}z+xt^{2}+x^{5}y=0$. It is singular at the
points $O_x$, $O_y$, $O_t$ and $Q=[0:1:1:0]$.

The curve $C_x$ consists of two irreducible and reduced curves $L_{xz}$ and $R_x=\{x=z^2-y^3=0\}$. These two curves $L_{xz}$ and $R_x$ meets each other at the point $O_t$. Also,
$$
L_{xz}^2=-\frac{23}{10\cdot 19},\ \ \ R_x^2=-\frac{8}{5\cdot 19},\ \ \ L_{xz}\cdot R_{x}=\frac{3}{19}.%
$$
The curve $R_x$ is singular at the point $O_t$.
The curve $C_y$ is irreducible and
$$
\frac{35}{54}=\lct\left(X, \frac{6}{7}C_x\right)<\lct\left(X, \frac{6}{10}C_y\right)=\frac{25}{18}.%
$$
Therefore, $\lct(X)\leqslant \frac{35}{54}$.

Suppose that $\lct(X)<\frac{35}{54}$. Then there is an effective
$\Q$-divisor $D\qlineq -K_X$ such that the pair $(X,\frac{35}{54}D)$ is
not log canonical at some point $P$. By
Lemma~\ref{lemma:convexity}, we may assume that the support of
the divisor $D$ does not contain the curve $C_y$. Similarly, we
may assume that either $L_{xz}\not\subseteq\mathrm{Supp}(D)$ or
$R_{x}\not\subseteq\mathrm{Supp}(D)$.

Since $H^0(\P, \mathcal{O}_{\P}(105))$ contains the monomials
$x^{15}$, $y^{7}x^{5}$ and $z^{7}$, it follows from
Lemma~\ref{lemma:Carolina} that the point $P$ is either a point on $C_x$ or the singular point $O_x$.

Since either $L_{xz}\not\subseteq\mathrm{Supp}(D)$ or
$R_{x}\not\subseteq\mathrm{Supp}(D)$, one of the inequalities
\[\mult_{O_t}(D)\leqslant 19D\cdot L_{xz}=\frac{3}{5}<\frac{54}{35}, \ \
\mult_{O_t}(D)\leqslant \frac{19}{2}D\cdot R_{x}=\frac{3}{5}<\frac{54}{35}\]
must hold, and hence the point $P$ cannot be the point $O_t$.
On the other hand, the inequality $7D\cdot C_y=\frac{18}{19}<\frac{54}{35}$ shows that the point $P$ cannot be the point $O_x$.

Put $D=mL_{xz}+\Omega$, where
$\Omega$ is an effective $\mathbb{Q}$-divisor such that
$L_{xz}\not\subset\mathrm{Supp}(\Omega)$. If $m\ne 0$, then
$$
\frac{6}{5\cdot 19}=D\cdot R_{x}\geqslant  mL_{xz}\cdot R_{x}=\frac{3m}{19},%
$$
and hence $m\leqslant \frac{2}{5}$.
Then,
\[10(D-mL_{xz})\cdot L_{xz}=\frac{6+23m}{19}\leqslant \frac{54}{35}.\]
Thus it follows from
Lemma~\ref{lemma:handy-adjunction} that
the point $P$ cannot belong to $L_{xz}$.

Now we write $D=\epsilon R_{x}+\Delta$, where
$\Delta$ is an effective $\mathbb{Q}$-divisor such that
$R_{x}\not\subset\mathrm{Supp}(\Delta)$. If $\epsilon\ne 0$, then
$$
\frac{3}{5\cdot 19}=D\cdot L_{xz}\geqslant  \epsilon R_{x}\cdot L_{xz}=\frac{3\epsilon}{19},%
$$
and hence $\epsilon\leqslant \frac{1}{5}$. Then
\[5(D-\epsilon R_{x})\cdot R_{x}=\frac{3+8\epsilon}{19}\leqslant \frac{54}{35}.\]
By Lemma~\ref{lemma:handy-adjunction} the point $P$ cannot be
contained in $R_x$ either. Therefore, the point $P$ is located
nowhere.
\end{proof}

\begin{lemma}
\label{lemma:I-6-W-11-19-29-53-D-106} Let $X$ be a quasismooth
hypersurface of degree $106$ in $\mathbb{P}(11,19, 29,53)$. Then
$\lct(X)=\frac{55}{36}$.
\end{lemma}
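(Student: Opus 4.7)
The plan is to follow the pattern established in Lemmas~\ref{lemma:I-5-W-11-25-37-68-D-136} and~\ref{lemma:I-5-W-13-19-41-68-D-136}. A suitable change of coordinates brings $X$ to the normal form
$$t^2 + yz^3 + xy^5 + x^7z = 0,$$
and one verifies that $X$ has precisely three singular points $O_x$, $O_y$, $O_z$ (of types $\frac{1}{11}(8,9)$, $\frac{1}{19}(10,15)$, $\frac{1}{29}(11,24)$ respectively), that $O_t \notin X$, and that $C_x$, $C_y$, $C_z$ are all irreducible and reduced. The curve $C_x = \{t^2 + yz^3 = 0\}$ has an $A_2$ (cuspidal) singularity at $O_y$ on the orbifold cover; applying Lemma~\ref{lemma:basic-property}, Lemma~\ref{lemma:Igusa}, and Proposition~\ref{proposition:lc-by-finite-morphism} yields
$$\lct\!\left(X, \tfrac{6}{11}C_x\right) = \tfrac{11}{6} \cdot \tfrac{5}{6} = \tfrac{55}{36},$$
while $\lct(X, \tfrac{6}{19}C_y)$ and $\lct(X, \tfrac{6}{29}C_z)$ turn out to be strictly larger. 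Hence $\lct(X) \leqslant \frac{55}{36}$.

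For the reverse inequality, suppose $\lct(X) < \frac{55}{36}$ and pick an effective $\mathbb{Q}$-divisor $D \sim_{\mathbb{Q}} -K_X$ such that $(X, \frac{55}{36}D)$ is not log canonical at some point $P \in X$. By Lemma~\ref{lemma:convexity} we may assume $C_x, C_y \not\subset \Supp(D)$; then $D \cdot C_x = \frac{12}{551}$ and $D \cdot C_y = \frac{12}{319}$. Combined with the local multiplicities of $C_x$ and $C_y$ at the singular points (noting in particular that $C_x$ has a cusp at $O_y$ and $C_y$ has an $A_6$ singularity at $O_z$), these produce
$$\mult_{O_x}(D) \leqslant 11\, D \cdot C_y = \tfrac{132}{319}, \quad \mult_{O_y}(D) \leqslant \tfrac{19}{2}\, D \cdot C_x = \tfrac{114}{551}, \quad \mult_{O_z}(D) \leqslant \tfrac{29}{2}\, D \cdot C_y = \tfrac{174}{319},$$
all strictly below $\frac{36}{55}$; so $P$ cannot be a singular point, and smooth points of $C_x \cup C_y$ are excluded at once from the inequalities $D \cdot C_x, D \cdot C_y < \frac{36}{55}$.

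For a smooth point $P \in X \setminus (C_x \cup C_y)$ I would invoke Lemma~\ref{lemma:Carolina} with $k = 319$. Since $O_t \notin X$, the projection $X \dashrightarrow \mathbb{P}(11,19,29)$ has only finite fibers, and $H^0(\mathbb{P}, \mathcal{O}_{\mathbb{P}}(319))$ contains $x^{29}$, $x^{10}y^{11}$, and $z^{11}$, supplying two distinct monomials of each of the forms $x^{\alpha}y^{\beta}$ and $x^{\gamma}z^{\delta}$. The lemma then gives
$$\mult_P(D) \leqslant \frac{6 \cdot 319 \cdot 106}{11 \cdot 19 \cdot 29 \cdot 53} = \frac{12}{19} < \frac{36}{55},$$
the desired contradiction.

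The principal obstacle I anticipate is the upper-bound computation: one must pin down the local analytic types of $C_x$ (and, for the bound at $O_z$, of $C_y$) at the singular points of $X$, passing through the orbifold covers, in order both to justify $\lct(X, \frac{6}{11}C_x) = \frac{55}{36}$ and to secure the intersection estimates of Step~4. Should the basic intersection bound at $O_z$ turn out to be just insufficient (the estimate $29\,D \cdot C_x = \tfrac{348}{551}$ via $C_x$ alone has only a $\sim 3.5\%$ margin against $\tfrac{36}{55}$), a weighted blowup of $O_z$ in the spirit of Lemmas~\ref{lemma:I-4-W-11-17-24-31-D-79} and~\ref{lemma:I-5-W-11-13-19-25-D-63} would have to be inserted; the use of $C_y$ as above, exploiting its singularity at $O_z$, is the cleanest way to sidestep this.
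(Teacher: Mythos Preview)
Your proposal is correct and follows essentially the same approach as the paper: compute $\lct(X,\tfrac{6}{11}C_x)=\tfrac{55}{36}$ from the cusp of $C_x$ at $O_y$, then exclude singular points via intersections with $C_x$ and $C_y$ and smooth points off $C_x$ via Lemma~\ref{lemma:Carolina} with $k=319$. The only cosmetic difference is that the paper handles $O_z$ (and in fact every point of $C_x$) with the single coarser bound $29\,D\cdot C_x=\tfrac{348}{551}<\tfrac{36}{55}$, so your detour through the $A_6$ singularity of $C_y$ at $O_z$ and the caveat about a possible weighted blowup are unnecessary.
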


\begin{proof}
We may assume that the surface $X$ is defined by the quasihomogeneous equation
$$x^7z+xy^5+yz^3+t^2=0.$$
The surface $X$ is singular at $O_x$, $O_y$ and $O_z$.
The curves $C_x$ and $C_y$  are  irreducible. It is easy to see
$$\lct(X, \frac{6}{11}C_x)=\frac{55}{36}<\lct(X, \frac{6}{19}C_y)=\frac{57}{28}.$$

Suppose that $\lct(X)<\frac{55}{36}$. Then there is an effective $\Q$-divisor $D\qlineq -K_X$ such that
the pair $(X, \frac{55}{36}D)$ is not log canonical.
For a smooth point $P\in X\setminus C_x$, we have
\[\mult_P(D)\leqslant \frac{6\cdot 319\cdot 106}{11\cdot 19\cdot 29\cdot 53}<\frac{36}{55}\]
by Lemma~\ref{lemma:Carolina} since $H^0(\P,
\mathcal{O}_\P(319))$ contains the monomials $x^{29}$, $z^{11}$
and $x^{10}y^{11}$. Therefore, either there is a point $P\in C_x$
such that $\mult_{P}(D)>\frac{36}{55}$ or we have
$\mult_{O_x}(D)>\frac{36}{55}$. Since the pairs $(X, \frac{6\cdot
55}{11\cdot 36}C_x)$ and $(X, \frac{6\cdot 55}{19\cdot 36}C_y)$
are log canonical and the curves $C_x$ and $C_y$ are irreducible,
we may assume that the support of $D$ contains neither the curve
$C_x$ nor the curve $C_y$. Then we can obtain
\[\mult_{O_x}(D)\leqslant 11C_y\cdot D \leqslant \frac{11\cdot 19\cdot 106\cdot 6}{11\cdot 19\cdot 29\cdot 53}<\frac{36}{55}\]
and for any point $P\in C_x$
\[\mult_{P}(D)\leqslant 29C_x\cdot D \leqslant \frac{29\cdot 11\cdot 106\cdot 6}{11\cdot 19\cdot 29\cdot 53}<\frac{36}{55}.\]
This is a contradiction. Therefore, $\lct(X)=\frac{55}{36}$.
\end{proof}

\begin{lemma}
\label{lemma:I-6-W-13-15-31-53-D-106}Let $X$ be a quasismooth
hypersurface of degree $106$ in $\mathbb{P}(13,15, 31,53)$. Then
$\lct(X)=\frac{91}{60}$.
\end{lemma}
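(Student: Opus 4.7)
The plan is to verify $\lct(X)=\tfrac{91}{60}$ by producing the upper bound from a cusp of $C_x$ and then excluding all possible non-log-canonical points by intersection estimates together with a low-degree pencil argument for smooth points off $C_x\cup C_y$. Since the only monomials of degree $106$ in $\mathbb{P}(13,15,31,53)$ are $x^7y$, $y^5z$, $xz^3$, $t^2$, after a linear coordinate change I would take $X$ to be the hypersurface $x^7y+y^5z+xz^3+t^2=0$; one checks that $O_t\notin X$, that the singularities of $X$ are $O_x$, $O_y$, $O_z$ of respective indices $13$, $15$, $31$, and that both $C_x\colon y^5z+t^2=0$ and $C_y\colon xz^3+t^2=0$ are irreducible. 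The curve $C_x$ is smooth except at $O_z$, where on the orbifold cover its local equation becomes $y^5+t^2=0$, so by Lemma~\ref{lemma:Igusa} $\lct_{O_z}(X,C_x)=\tfrac{1}{2}+\tfrac{1}{5}=\tfrac{7}{10}$; analogously $C_y$ has a cusp $z^3+t^2=0$ at $O_x$ giving $\lct_{O_x}(X,C_y)=\tfrac{5}{6}$. Consequently $\lct(X,\tfrac{6}{13}C_x)=\tfrac{91}{60}<\tfrac{25}{12}=\lct(X,\tfrac{6}{15}C_y)$ and $\lct(X)\leqslant\tfrac{91}{60}$.

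Suppose now $\lct(X)<\tfrac{91}{60}$ and choose $D\qlineq -K_X$ with $(X,\tfrac{91}{60}D)$ not log canonical at some $P\in X$. Applying Lemma~\ref{lemma:convexity} to $\tfrac{6}{13}C_x$ and $\tfrac{6}{15}C_y$ in turn, I may assume $C_x,C_y\not\subset\Supp(D)$, so $D\cdot C_x=\tfrac{4}{155}$ and $D\cdot C_y=\tfrac{12}{403}$. Using $\mult_P(D)\cdot\mult_P(C)\leqslant r\,D\cdot C$ at a $\tfrac{1}{r}$-singular point of $X$ (via Proposition~\ref{proposition:lc-by-finite-morphism}) together with Lemma~\ref{lemma:multiplicity} yields
\[
\mult_{O_y}(D)\leqslant 15\cdot\tfrac{4}{155}=\tfrac{12}{31},\qquad \mult_{O_z}(D)\leqslant \tfrac{31\cdot 4/155}{2}=\tfrac{2}{5},\qquad \mult_{O_x}(D)\leqslant \tfrac{13\cdot 12/403}{2}=\tfrac{6}{31},
\]
all strictly less than $\tfrac{60}{91}$, and any smooth point of $X$ on $C_x\cup C_y$ is excluded by the sharper bound $\mult_P(D)\leqslant D\cdot C_x$ or $D\cdot C_y$. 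Hence $P$ must be a smooth point of $X$ lying off $C_x\cup C_y$.

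This remaining case is the main obstacle, since Lemma~\ref{lemma:Carolina} is not sharp enough here: the smallest $k$ for which $H^0(\mathbb{P},\mathcal{O}_{\mathbb{P}}(k))$ contains two $x^\alpha y^\beta$- and two $x^\gamma z^\delta$-monomials is $k=403$, and the resulting bound $\tfrac{Ikd}{a_0a_1a_2a_3}=\tfrac{4}{5}$ exceeds $\tfrac{60}{91}$. I would bypass this by working with the pencil $\mathcal{L}=|\lambda y^5+\mu xz^2|\subset|\O_X(75)|$, whose base locus on $X$ consists only of $\{O_x,O_z\}$. For each $\alpha\in\mathbb{C}^*\setminus\{1\}$ the member $F_\alpha=\{y^5+\alpha xz^2=0\}\cap X$ is irreducible (it is a double cover of the irreducible weighted plane curve $y^5+\alpha xz^2=0\subset\mathbb{P}(13,15,31)$), while the unique degenerate member $F_1$ splits into two smooth rational curves $Z_\pm$ locally given by $t=\pm y^{18}/z^{7}$. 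For the $\alpha$ with $P\in F_\alpha$, picking an irreducible component $Z\subseteq F_\alpha$ containing $P$ and applying Lemma~\ref{lemma:convexity} to assume $Z\not\subset\Supp(D)$ yields
\[
\mult_P(D)\leqslant D\cdot Z\leqslant D\cdot F_\alpha=\frac{6\cdot 75\cdot 106}{13\cdot 15\cdot 31\cdot 53}=\frac{60}{403}<\frac{60}{91},
\]
which is the required contradiction. The principal difficulty is thus locating an appropriate low-degree pencil, verifying its base-locus structure (already handled above), and checking the irreducibility of the generic member together with the structure of the single reducible one.
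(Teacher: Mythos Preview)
Your overall strategy closely follows the paper's: establish the upper bound via the cusp of $C_x$ at $O_z$, rule out the three singular points by intersecting $D$ with $C_x$ and $C_y$, and then treat the remaining smooth points with a pencil whose general member double-covers an irreducible weighted plane curve. The paper uses the degree-$93$ pencil $\lvert\lambda z^3+\mu x^6y\rvert$ rather than your degree-$75$ pencil $\lvert\lambda y^5+\mu xz^2\rvert$, but the mechanism is the same and either choice works.

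There are, however, two genuine gaps. First, you only exclude $P\in C_x\cup C_y$, but if $P\in C_z\setminus(C_x\cup C_y)$ then no member $F_\alpha$ with $\alpha\in\mathbb{C}^*$ passes through $P$: at such a point $xz^2=0$ while $y^5\neq 0$, so the only pencil member through $P$ is the boundary divisor $C_x+2C_z$, which your analysis does not cover. The paper fixes this by also assuming $C_z\not\subset\Supp(D)$ via Lemma~\ref{lemma:convexity} (using $\lct(X,\tfrac{6}{31}C_z)=\tfrac{93}{28}>\tfrac{91}{60}$) and observing $D\cdot C_z=\tfrac{4}{65}<\tfrac{60}{91}$. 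Second, in the reducible case $\alpha=1$, Lemma~\ref{lemma:convexity} does not let you choose \emph{which} component of $F_1=Z_++Z_-$ to drop from $\Supp(D)$; it only guarantees that some component is absent. Since $t\neq 0$ at $P$ (because $P\notin C_x\cup C_y$), the point $P$ lies on exactly one of $Z_\pm$, and if the other one is the component removed by the lemma you cannot conclude $\mult_P(D)\leqslant D\cdot Z$. The remedy, exactly as in the paper's treatment of its own split member, is to write $D=a_1Z_++a_2Z_-+\Delta$, bound $a_i\leqslant\tfrac{60}{91}$ from the already-established log canonicity at $O_x$, and apply Lemma~\ref{lemma:handy-adjunction} via $(D-a_iZ_i)\cdot Z_i<\tfrac{60}{91}$.
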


\begin{proof}  The surface $X$ can be defined by the
quasihomogeneous equation
$$
x^7y + xz^3 + y^5z + t^2=0.
$$
The surface $X$ is singular at the points $O_x$, $O_y$ and $O_z$.

The curves $C_x$, $C_y$ and $C_z$ are reduced and irreducible. We have
$$
\lct\left(X,
\frac{6}{13}C_x\right)=\frac{91}{60}<\lct\left(X, \frac{6}{15}C_y\right)=\frac{25}{12}<\lct\left(X, \frac{6}{31}C_z\right)=\frac{93}{28}.
$$
Therefore, $\lct(X) \leqslant \frac{91}{60}$.

Suppose that $\lct(X) < \frac{91}{60}$. Then there is an effective
$\Q$-divisor $D\qlineq -K_X$ such that the pair $(X,
\frac{91}{60}D)$ is not log canonical at some point $P$. By
Lemma~\ref{lemma:convexity} we may assume that the support of
$D$ contains none of $C_x$, $C_y$, $C_z$.
Since $C_y$ is singular at the point $O_z$ and $\frac{31}{2}D\cdot C_y=\frac{6}{13}<\frac{60}{91}$, the point $P$ must be in the outside of $C_y$. Furthermore, the point $P$ is in the outside of $C_x\cup C_z$
since $15D\cdot C_x=\frac{12}{31}<\frac{60}{91}$ and $D\cdot C_z=\frac{4}{65}<\frac{60}{91}$.

Now we consider the pencil $\mathcal{L}$ on $X$ defined by the
equations $\lambda z^3+\mu x^6y=0$,
$[\lambda:\mu]\in\mathbb{P}^1$. Then there is a unique member $C$
in $\mathcal{L}$ passing through the point $P$. Since the point
$P$ is located in the outside of $C_x\cup C_y\cup C_z$, the curve
$C$ is cut out by the equation of the form $x^6y+\alpha z^3=0$,
where $\alpha$ is a non-zero constant. Since the curve $C$ is a
double cover of the curve defined by the equation $x^6y+\alpha
z^3=0$ in $\mathbb{P}(13,15,31)$, we have  $\mult_P(C)\leqslant
2$. Therefore, we may assume that the support of $D$ does not
contain at least one irreducible component. If $\alpha\ne 1$, then
the curve $C$ is irreducible, and hence the inequality
\[\mult_P(D)\leqslant D\cdot C=\frac{12}{65}<\frac{60}{91}\]
is a contradiction.
If $\alpha=1$, then the curve $C$ consists of two distinct irreducible and reduced curve $C_1$ and $C_2$.  We have
\[D\cdot C_1=D\cdot C_2=\frac{6}{65}, \ \ C_1^2=C_2^2=\frac{8}{13}.\]
Put $D=a_1C_1+a_2C_2+\Delta$, where $\Delta$ is an effective
$\mathbb{Q}$-divisor whose support contains neither $C_1$ nor
$C_2$. Since the pair $(X, \frac{91}{60}D)$ is log canonical at
$O_x$, both  $a_1$ and $a_2$ are at most $\frac{60}{91}$. Then a
contradiction follows from Lemma~\ref{lemma:handy-adjunction}
since
\[(D-a_iC_i)\cdot C_i\leqslant D\cdot C_i=\frac{12}{65}<\frac{60}{91}\]
for each $i$.
\end{proof}

\section{Sporadic cases with $I=7$}
\label{subsection:index-7}

\begin{lemma}
\label{lemma:I-7-W-11-13-21-38-D-76}
Let $X$ be a quasismooth
hypersurface of degree $76$ in $\mathbb{P}(11,13,21,38)$. Then
$\lct(X)=\frac{13}{10}$.
\end{lemma}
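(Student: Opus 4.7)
My plan is to follow the standard scheme of the paper's sporadic cases. By quasismoothness and the fact that the only monomials of degree $76$ in $\mathbb{P}(11,13,21,38)$ are $t^2,\, xy^5,\, x^5z,\, yz^3$, after rescaling I may assume $X:\, t^2 + xy^5 + x^5 z + yz^3 = 0$. The singular points are $O_x, O_y, O_z$ of types $\tfrac{1}{11}(2,5)$, $\tfrac{1}{13}(8,12)$, $\tfrac{1}{21}(11,17)$, and each of $C_x, C_y, C_z, C_t$ is irreducible and reduced. Proposition~\ref{proposition:lc-by-finite-morphism} combined with Lemma~\ref{lemma:Igusa} shows that $C_x$ has a cusp at $O_y$ (local lct $\tfrac{5}{6}$), $C_y$ has an $\mathbb{A}_4$ at $O_z$ (local lct $\tfrac{7}{10}$), $C_z$ has an $\mathbb{A}_4$ at $O_x$ (local lct $\tfrac{7}{10}$), and $C_t$ is quasismooth, yielding
\[
\lct(X,\tfrac{7}{11}C_x)=\tfrac{55}{42},\ \ \lct(X,\tfrac{7}{13}C_y)=\tfrac{13}{10},\ \ \lct(X,\tfrac{7}{21}C_z)=\tfrac{21}{10},\ \ \lct(X,\tfrac{7}{38}C_t)=\tfrac{38}{7}.
\]
In particular $\lct(X) \leq \tfrac{13}{10}$, and since each of these four numbers is at least $\tfrac{13}{10}$, Lemma~\ref{lemma:convexity} lets me assume that none of $C_x, C_y, C_z, C_t$ lies in the support of a hypothetical $D \qlineq -K_X$ for which $(X, \tfrac{13}{10} D)$ fails log canonicity at some point $P$.

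The intersection numbers $D\cdot C_x = \tfrac{2}{429}$, $D\cdot C_y = \tfrac{2}{33}$, $D\cdot C_z = \tfrac{14}{143}$, $D\cdot C_t = \tfrac{76}{429}$, combined with the multiplicity-$2$ singularities of $C_x, C_y, C_z$ at the corresponding orbifold points, rule out $O_y$ via the cusp of $C_x$ ($\mult_{O_y}(D)\leq \tfrac{13}{2}D\cdot C_x = \tfrac{1}{33}<\tfrac{10}{13}$), and rule out $O_x$, $O_z$, and every smooth point of $X$ lying on $C_x\cup C_y\cup C_z\cup C_t$ via Lemma~\ref{lemma:handy-adjunction} (using that $C_y$ is smooth at $O_x$ and $C_x$ is smooth at $O_z$).

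The remaining—and most delicate—case is a smooth point $P\in X$ lying off $C_x\cup C_y\cup C_z\cup C_t$. Lemma~\ref{lemma:Carolina} is too weak here: the smallest $k$ carrying two monomials of each of the forms $x^\alpha y^\beta$ and $x^\gamma z^\delta$ in $H^0(\mathbb{P},\mathcal{O}_{\mathbb{P}}(k))$ is $k=231$, yielding only $\mult_P(D)\leq \tfrac{14}{13}$. I instead use the pencil $\mathcal{L}:\lambda xy^5 + \mu x^5 z=0$, whose sole fixed component is $C_x$; the residual pencil consists of divisors $R_{\alpha}\sim\mathcal{O}_X(65)$ cut out by $\alpha y^5 + x^4 z = 0$, with $D\cdot R_{\alpha}=\tfrac{10}{33}$. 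In the chart $x=1$, $R_\alpha$ reduces to the plane curve $t^2 = y^5(\alpha^3 y^{11} + \alpha - 1)$; since the factor $y^5$ has odd order, the right-hand side is not a perfect square for $\alpha\neq 1$, so $R_\alpha$ is irreducible for every such $\alpha$. For the unique member $R_{\alpha_P}$ through $P$: if $\alpha_P\neq 1$, then Lemma~\ref{lemma:convexity} applied to the log canonical pair $(X,\tfrac{7}{65}R_{\alpha_P})$ lets me assume $R_{\alpha_P}\not\subset\Supp(D)$, and then $\tfrac{13}{10}\cdot D\cdot R_{\alpha_P}=\tfrac{13}{33}<1$ combined with Lemma~\ref{lemma:handy-adjunction} forces log canonicity at $P$, a contradiction. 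When $\alpha_P=1$, the member $R_1$ decomposes as $Z_+ + Z_-$ with $Z_\pm:\ t=\pm y^8,\, z=-y^5$ two rational curves joining $O_x$ and $O_z$, each with $D\cdot Z_\pm = \tfrac{5}{33}$ by the $t\mapsto -t$ symmetry; Lemma~\ref{lemma:convexity} lets me put one of $Z_\pm$ outside $\Supp(D)$, and a final application of Lemma~\ref{lemma:handy-adjunction} to that component (bounding the coefficient of the other $Z_\mp$ in $D$ via $D\cdot C_x$ and $C_x\cdot Z_\mp = \tfrac{5}{21}$) completes the proof. The precise numerical verification for this reducible-fiber case is the main technical point.
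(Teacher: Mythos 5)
Your strategy is the paper's: the upper bound $\frac{13}{10}$ comes from $\lct(X,\frac{7}{13}C_y)$, the coordinate curves are excluded by intersection numbers, and the residual smooth points are handled with the pencil spanned by $y^5$ and $x^4z$, whose unique reducible member is treated separately. However, there is a concrete numerical error that breaks one of your exclusion steps as written. You have $D\cdot C_x=\frac{2}{429}$, but the correct value is $\frac{7\cdot 11\cdot 76}{11\cdot 13\cdot 21\cdot 38}=\frac{2}{39}$. This matters at $O_z$: you propose to exclude $O_z$ using that $C_x$ is smooth there, but then the bound is $\mult_{O_z}(D)\leqslant 21\,D\cdot C_x=\frac{14}{13}$, which is \emph{not} less than $\frac{10}{13}$ (equivalently, Lemma~\ref{lemma:handy-adjunction} requires $21\cdot\frac{13}{10}\cdot D\cdot C_x\leqslant 1$, and $\frac{7}{5}>1$). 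The paper instead excludes $O_z$ via $C_y$, which passes through $O_z$ with multiplicity $2$, giving $\mult_{O_z}(D)\leqslant\frac{21}{2}\,D\cdot C_y=\frac{7}{11}<\frac{10}{13}$; the same correction is needed in your argument. (Your exclusion of $O_y$ survives the slip: with the correct value one gets $\frac{13}{2}D\cdot C_x=\frac{1}{3}$, still well below $\frac{10}{13}$.)

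On the final reducible-fiber case, which you flag as the remaining technical point: the paper's resolution is that for the special member $Z=Z_1+Z_2$ one has $Z_1^2>0$, each $Z_i$ passes through $O_x$ where the pair $(X,\frac{13}{10}D)$ is already known to be log canonical, so the coefficient $m$ of the component containing $P$ satisfies $m\leqslant\frac{10}{13}$, and then $(D-mZ_1)\cdot Z_1\leqslant D\cdot Z_1=\frac{5}{33}<\frac{10}{13}$ lets Lemma~\ref{lemma:handy-adjunction} finish. Your alternative bound on $m$ via $D\cdot C_x\geqslant m\,C_x\cdot Z_{\mp}$ also works once $D\cdot C_x$ is corrected, but the positivity of $Z_1^2$ (or an explicit bound on $Z_1^2$ from below) still has to be addressed for the adjunction inequality to close.
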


\begin{proof}
 We may assume that the surface $X$ is defined by
the equation $t^{2}+yz^{3}+xy^{5}+x^{5}z=0$. The surface $X$ is
singular at $O_x$, $O_{y}$ and $O_z$. The curves $C_x$, $C_y$ and
$C_z$ are  irreducible. We have
$$
\frac{21}{10}=\lct(X, \frac{7}{21}C_z)>\frac{55}{42}=\lct(X, \frac{7}{11}C_x)>\lct(X, \frac{7}{13}C_y)=\frac{13}{10}.%
$$
Therefore, $\lct(X)\leqslant \frac{13}{10}$.

Suppose that $\lct(X)<\frac{13}{10}$. Then there is an effective
$\Q$-divisor $D\qlineq -K_X$ such that the pair
$(X,\frac{13}{10}D)$ is not log canonical at some point $P$. By
Lemma~\ref{lemma:convexity}, we may assume that the support of $D$
contains none of the curves $C_x$, $C_y$ and $C_z$.

Since the curve $C_y$ is singular at the point $O_z$, the
inequality $11D\cdot C_y=\frac{2}{3}<\frac{10}{13}$ shows that the
point $P$ does not belong to the curve $C_y$. Also, the inequality
$13D\cdot C_x=\frac{2}{3}<\frac{10}{13}$ implies that the point
$P$ cannot belong to $C_x$ either. The inequality $D\cdot C_z=\frac{14}{11\cdot 13}<\frac{10}{13}$ shows that
the point
$P$ cannot belong to $C_z$

Consider the pencil $\mathcal{L}$ on $X$ defined by the equations
$\lambda y^5+\mu x^4z=0$, $[\lambda:\mu]\in\mathbb{P}^1$. There is
a unique member $Z$ in $\mathcal{L}$ passing through the point
$P$. Since $P\not\in C_{x}\cup C_{y}\cup C_{z}$, the curve $Z$ is
defined by an equation of the form $x^{4}z=\alpha y^{5}$, where
$\alpha$ is a non-zero constant.  The open subset $Z\setminus
C_{x}$ of the curve $Z$ is a $\mathbb{Z}_{11}$-quotient of the
affine curve
$$
z-\alpha
y^{5}=t^{2}+yz^{3}+y^{5}+z=0\subset\mathbb{C}^{3}
\cong\mathrm{Spec}\Big(\mathbb{C}\big[y,z,t\big]\Big)
$$
that is isomorphic to the plane affine curve
$C\subset\mathbb{C}^{2}$ defined by the equation
$$
t^{2}+\alpha^{3}y^{16}+(1+\alpha)y^{5}=0\subset\mathbb{C}^{2}
\cong\mathrm{Spec}\Big(\mathbb{C}\big[y,z\big]\Big).
$$
The curve $C$ is irreducible if $\alpha\ne -1$ and reducible if
$\alpha=1$. Since the $C_{x}$ is not contained in the support of
$Z$, the curve $Z$ is irreducible if $\alpha\ne -1$ and reducible
if $\alpha=1$. From the equation of $C$, we can see that the log
pair $(X, \frac{7}{50}Z)$ is log canonical at the point $P$. By
Lemma~\ref{lemma:convexity}, we may assume that $\mathrm{Supp}(D)$
does not contain at least one irreducible component of the curve
$Z$.

Suppose that $\alpha\ne -1$. Then $Z\not\subseteq\mathrm{Supp}(D)$
and
$$
\frac{10}{33}=D\cdot Z\geqslant\mult_{P}\big(D\big)>\frac{10}{13}.%
$$
This is a contradiction. Thus, $\alpha=-1$. Then it follows from
the equation of $C$ that the curve $Z$ consists of two irreducible
and reduced curves $Z_1$ and $Z_2$. Without loss of generality we
may assume that the point $P$ belongs to the curve $Z_1$.

Put $D=mZ_{1}+\Omega$, where $\Omega$ is an effective
$\mathbb{Q}$-divisor such that
$Z_{1}\not\subset\mathrm{Supp}(\Omega)$. Since the pair $(X,
\frac{13}{10}D)$ is log canonical at the point $O_x$, one has
$m\leqslant \frac{10}{13}$.  Then
$$
\big(D-mZ_{1}\big)\cdot Z_{1}<D\cdot Z_1=\frac{5}{33}<\frac{10}{13}.%
$$
since $ Z_{1}^2>0$.  By
Lemma~\ref{lemma:handy-adjunction}, the log pair $(X,
\frac{13}{10}D)$  is log canonical at the point $P$. This is a
contradiction.
\end{proof}

\section{Sporadic cases with $I=8$}
\label{subsection:index-8}

\begin{lemma}
\label{lemma:I-8-W-7-11-13-23-D-46}Let $X$ be a quasismooth
hypersurface of degree $46$ in $\mathbb{P}(7,11,13,23)$. Then
$\lct(X)=\frac{35}{48}$.
\end{lemma}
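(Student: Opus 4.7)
I begin by noting that the only weighted monomials of degree $46$ in $\P(7,11,13,23)$ are $t^2$, $y^3z$, $xz^3$, and $x^5y$, so after a linear coordinate change $X$ may be taken to be defined by $t^2+y^3z+xz^3+x^5y=0$. The Jacobian criterion shows that $X$ has three quotient singularities, $O_x$ of type $\frac{1}{7}$, $O_y$ of type $\frac{1}{11}$, and $O_z$ of type $\frac{1}{13}$; all four coordinate curves $C_x$, $C_y$, $C_z$, $C_t$ are irreducible. Reading off the orbifold charts shows that $C_x$ has a cuspidal singularity $t^2+y^3=0$ at $O_z$ (and is smooth at the other singular point $O_y\in C_x$), $C_y$ has a cusp $t^2+z^3=0$ at $O_x$, $C_z$ has an $\mathbb{A}_4$-singularity $t^2+x^5=0$ at $O_y$, while $C_t$ is smooth. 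Combining Lemma~\ref{lemma:Igusa} with Proposition~\ref{proposition:lc-by-finite-morphism} gives $\lct(X,C_x)=\lct(X,C_y)=\frac{5}{6}$ and $\lct(X,C_z)=\frac{7}{10}$, so $\lct(X,\frac{8}{7}C_x)=\frac{35}{48}$ is the smallest of the three rescaled coordinate log canonical thresholds and yields the upper bound $\lct(X)\leqslant\frac{35}{48}$.

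For the reverse inequality I assume $\lct(X)<\frac{35}{48}$ and fix an effective $D\qlineq-K_X$ with $(X,\frac{35}{48}D)$ not log canonical at some $P\in X$. Three applications of Lemma~\ref{lemma:convexity} let me assume that none of $C_x$, $C_y$, $C_z$ lies in $\Supp(D)$. Non-log canonicity at $P$ forces $\mult_P(D)>\frac{48}{35}$ (the threshold is the same at smooth and singular points, as is used in the analogous Lemma~\ref{lemma:I-6-W-13-15-31-53-D-106}), while for any irreducible curve $C$ through $P$ not contained in $\Supp(D)$ the orbifold intersection inequality reads $\mult_P(D)\mult_P(C)\leqslant r\cdot D\cdot C$, where $r$ is the index of $X$ at $P$ (or $1$ at a smooth point). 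Using $D\cdot C_x=\frac{16}{143}$, $D\cdot C_y=\frac{16}{91}$, $D\cdot C_z=\frac{16}{77}$, this rules out every possible location for $P$ on the coordinate curves or at a singular point: smooth points of $X$ on $C_x$, $C_y$, $C_z$ give $\mult_P(D)$ bounded by $\frac{16}{143}$, $\frac{16}{91}$, $\frac{16}{77}$ respectively; $\mult_{O_y}(D)\leqslant 11\cdot\frac{16}{143}=\frac{16}{13}$ via the smooth curve $C_x$ at $O_y$; the cuspidal inequality $\mult_{O_z}(D)\leqslant\frac{13}{2}\cdot\frac{16}{143}=\frac{8}{11}$ exploits $\mult_{O_z}(C_x)=2$; and $\mult_{O_x}(D)\leqslant\frac{7}{2}\cdot\frac{16}{91}=\frac{8}{13}$ exploits $\mult_{O_x}(C_y)=2$. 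All of these are strictly less than $\frac{48}{35}$, forcing $P$ to be a smooth point of $X$ lying off $C_x\cup C_y\cup C_z$.

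For such a remaining point I invoke the pencil $\mathcal{L}\subset|\mathcal{O}_X(33)|$ cut out by $\lambda y^3+\mu xz^2=0$, whose base points on $X$ are exactly $O_x$ and $O_z$ and whose general member $\Gamma_\alpha$ satisfies $D\cdot\Gamma_\alpha=\frac{48}{91}$. The step I expect to require the most care is verifying that $\Gamma_\alpha$ is irreducible for generic $\alpha$: since $X\to\mathbb{P}(7,11,13)$ is a double cover branched along $y^3z+xz^3+x^5y$, reducibility of $\Gamma_\alpha$ amounts to this branch polynomial restricting to a perfect square on $\{y^3=\alpha xz^2\}$, and an explicit parametrization shows that the only special values of $\alpha$ are $0$, $\infty$, and $-1$, giving exactly the reducible members $\Gamma_0=3C_y$, $\Gamma_\infty=C_x+2C_z$, and $\Gamma_{-1}=M_++M_-$, where $M_+$ and $M_-$ are the two smooth rational components interchanged by the involution $t\mapsto-t$ and satisfy $D\cdot M_\pm=8H\cdot M_\pm=\frac{24}{91}$ by $\iota$-invariance of $H$ together with $M_++M_-\sim 33H$. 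If the unique $\Gamma_\alpha$ through $P$ is irreducible, then Lemma~\ref{lemma:convexity} applies (since $(X,\frac{35}{48}\Gamma_\alpha)$ is log canonical at the smooth point $P$) to yield $\mult_P(D)\leqslant D\cdot\Gamma_\alpha=\frac{48}{91}<\frac{48}{35}$, a contradiction; the cases $\alpha\in\{0,\infty\}$ place $P$ on an already-excluded coordinate curve, and the case $\alpha=-1$ is dispatched by the same argument with whichever of $M_+$, $M_-$ can be assumed absent from $\Supp(D)$, giving $\mult_P(D)\leqslant\frac{24}{91}<\frac{48}{35}$ and completing the proof.
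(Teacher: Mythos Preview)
Your argument is essentially correct and parallels the paper's closely: you exclude the coordinate curves and the singular points by the same intersection estimates (your bounds $\frac{16}{13}$, $\frac{8}{11}$, $\frac{8}{13}$ coincide with the paper's), and then sweep out the remaining smooth locus by a pencil whose general member is irreducible. The only substantive difference is that you use the pencil $|\mathcal{O}_X(33)|$ cut by $\lambda y^3+\mu xz^2$ rather than the paper's $|\mathcal{O}_X(39)|$ cut by $\lambda x^4y+\mu z^3$; both work, and writing out the affine model in the chart $z=1$ gives $t^2+\frac{\alpha+1}{\alpha}y^3+\frac{1}{\alpha^5}y^{16}=0$, confirming irreducibility for $\alpha\neq-1$ and the splitting $\Gamma_{-1}=M_++M_-$ you describe.

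There is, however, a gap in your handling of $\alpha=-1$. Lemma~\ref{lemma:convexity} applied with $D_2=\frac{8}{33}(M_++M_-)$ guarantees only that \emph{some} component is absent from $\Supp(D)$, not the one through $P$; since $M_+\cap M_-=\{O_x,O_z\}$ and $P$ lies on exactly one branch, the bare bound $\mult_P(D)\leqslant D\cdot M_\pm$ may not be available. The paper meets the identical situation for its components $Z_1,Z_2$ and resolves it by writing $D=mZ_1+\Omega$ with $P\in Z_1$, bounding $m\leqslant\frac{48}{35}$ via the already-established log canonicity at the base point $O_x$, and then applying Lemma~\ref{lemma:handy-adjunction} together with $Z_1^2>0$. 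The same fix works for your pencil: computing local intersection numbers at the base points gives $M_+\cdot M_-=\frac{3}{7}+\frac{8}{13}=\frac{95}{91}$, while $(M_++M_-)^2=\frac{198}{91}$, so $M_\pm^2=\frac{4}{91}>0$; hence $(D-mM_+)\cdot M_+\leqslant D\cdot M_+=\frac{24}{91}<\frac{48}{35}$, and Lemma~\ref{lemma:handy-adjunction} finishes.
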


\begin{proof}
 The surface $X$ can be defined by the
equation $t^{2}+y^{3}z+xz^{3}+x^{5}y=0$. The surface $X$ is
singular at the points $O_x$, $O_y$ and $O_z$. The curves $C_x$,
$C_y$ and $C_{z}$ are irreducible. We have
$$
\frac{35}{48}=\lct\left(X, \frac{8}{7}C_x\right)<\lct\left(X,
\frac{8}{13}C_z\right)=\frac{91}{80}<\lct\left(X,
\frac{8}{11}C_y\right)=\frac{55}{48}.
$$
In particular, $\lct(X)\leqslant \frac{35}{48}$. Suppose that
$\lct(X)<\frac{35}{48}$. Then there is an effective $\Q$-divisor
$D\qlineq -K_X$ such that the pair $(X,\frac{35}{48}D)$ is not log
canonical at some point $P$. By Lemma~\ref{lemma:convexity}, we
may assume that the support of the divisor $D$  contains none of
the curves $C_x$, $C_y$ and $C_{z}$.

Since the curve $C_x$ is singular at the point $O_z$, the
inequality $$11D\cdot C_x=\frac{16}{13}<\frac{48}{35}$$ shows that
the point $P$ cannot belong to $C_x$. Also, the inequality
$$
7D\cdot C_{y}=\frac{16}{13}<\frac{48}{35}
$$
implies that the point $P$ is not in $C_y$. Since
$$
D\cdot C_{z}=\frac{16}{7\cdot 11}<\frac{48}{35},%
$$
the point $P$ cannot be in $C_z$ either.

Consider the pencil $\mathcal{L}$ on $X$ defined by the equations
$\lambda x^4y+\mu z^3=0$, $[\lambda:\mu]\in\mathbb{P}^1$. There is
a unique member $Z$ in $\mathcal{L}$ passing through the point
$P$. Since $P\not\in C_{x}\cup C_{y}\cup C_{z}$, the curve $Z$ is
defined by an equation of the form $x^{4}y=\alpha z^{3}$, where
$\alpha$ is a non-zero constant.  The open subset $Z\setminus
C_{x}$ of the curve $Z$ is a $\mathbb{Z}_{7}$-quotient of the
affine curve
$$
y-\alpha
z^{3}=t^{2}+y^{3}z+z^{3}+y=0\subset\mathbb{C}^{3}
\cong\mathrm{Spec}\Big(\mathbb{C}\big[y,z,t\big]\Big)
$$
that is isomorphic to the plane affine curve
$C\subset\mathbb{C}^{2}$ defined by the equation
$$
t^{2}+\alpha^{3}z^{10}+(1+\alpha)z^{3}=0\subset\mathbb{C}^{2}
\cong\mathrm{Spec}\Big(\mathbb{C}\big[y,z\big]\Big).
$$
The curve $C$ is irreducible if $\alpha\ne -1$ and reducible if
$\alpha=-1$. Since the $C_{x}$ is not contained in the support of
$Z$, the curve $Z$ is irreducible if $\alpha\ne -1$ and reducible
if $\alpha=-1$. From the equation of $C$, we can see that the log
pair $(X, \frac{35}{234}Z)$ is log canonical at the point $P$. By
Lemma~\ref{lemma:convexity}, we may assume that $\mathrm{Supp}(D)$
does not contain at least one irreducible component of the curve
$Z$.

Suppose that $\alpha\ne -1$. Then $Z\not\subseteq\mathrm{Supp}(D)$
and
$$
\frac{48}{77}=D\cdot Z\geqslant\mult_{P}\big(D\big)>\frac{48}{35}.%
$$
This is a contradiction. Thus, $\alpha=-1$. Then it follows from
the equation of $C$ that the curve $Z$ consists of two irreducible
and reduced curves $Z_1$ and $Z_2$. Without loss of generality we
may assume that the point $P$ belongs to the curve $Z_1$.

Put $D=mZ_{1}+\Omega$, where $\Omega$ is an effective
$\mathbb{Q}$-divisor such that
$Z_{1}\not\subset\mathrm{Supp}(\Omega)$. Since the pair $(X,
\frac{48}{35}D)$ is log canonical at the point $O_x$, one has
$m\leqslant \frac{35}{48}$.  Then
$$
\big(D-mZ_{1}\big)\cdot Z_{1}<D\cdot Z_1=\frac{24}{77}<\frac{48}{35}.%
$$
since $ Z_{1}^2>0$.  By
Lemma~\ref{lemma:handy-adjunction}, the log pair $(X,
\frac{48}{35}D)$  is log canonical at the point $P$. This is a
contradiction.
\end{proof}

\begin{lemma}
\label{lemma:I-8-W-7-18-27-37-D-81} Let $X$ be a quasismooth
hypersurface of degree $81$ in $\mathbb{P}(7,18,27,37)$. Then
$\lct(X)=\frac{35}{72}$.
\end{lemma}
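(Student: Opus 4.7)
The plan is to follow the standard scheme developed throughout the paper. I would take the defining equation of $X$ to be
\[
z^3+y^3z+xt^2+x^9y=0,
\]
which gives a quasismooth hypersurface with singular points $O_x$ (index $7$), $O_y$ (index $18$), $O_t$ (index $37$), together with a single index-$9$ point $Q$ on $L_{yz}\cap X$ (coming from $\gcd(18,27)=9$ and the vanishing of $z(z^2+y^3)$ on $L_{yz}$). The curve $C_x$ is cut out by $z(z^2+y^3)=0$ in $\P(18,27,37)$, hence splits as $C_x=L_{xz}+R_x$ where $R_x=\{x=z^2+y^3=0\}$; the two components meet only at $O_t$, and in the local orbifold chart near $O_t$ the union pulls back to the curve $z(z^2+y^3)=0$ in $\C^2$. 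The curves $C_y$, $C_z$, $C_t$ are irreducible.

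First I would establish the upper bound $\lct(X)\le\tfrac{35}{72}$. By Lemma~\ref{lemma:Igusa} the log canonical threshold at the origin of $z(z^2+y^3)=0$ equals $\min(1,\infty,\tfrac{5}{9})=\tfrac59$. Applying Proposition~\ref{proposition:lc-by-finite-morphism} to the orbifold chart at $O_t$ gives $\lct_{O_t}(X,C_x)=\tfrac59$, and direct checks at the other points show this is the minimum, so $\lct(X,\tfrac{8}{7}C_x)=\tfrac78\cdot\tfrac59=\tfrac{35}{72}$. A parallel computation shows $\lct(X,\tfrac{8}{18}C_y)$, $\lct(X,\tfrac{8}{27}C_z)$, $\lct(X,\tfrac{8}{37}C_t)$ are all strictly larger; hence $\lct(X)\le\tfrac{35}{72}$.

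For the reverse inequality, I would assume there is an effective $\Q$-divisor $D\qlineq -K_X$ with $(X,\tfrac{72}{35}D)$ not log canonical at some point $P$ and rule out each possibility for $P$. Using Lemma~\ref{lemma:Carolina} with a suitable $k$ (so that $H^0(\P,\O_\P(k))$ contains monomials of the three required forms) would exclude $P$ being a smooth point outside $C_x$. By Lemma~\ref{lemma:convexity} I may assume $\Supp(D)$ omits $C_y$, $C_z$, and at least one component of $C_x$. The inequalities $a_iC_j\cdot D$ with Lemma~\ref{lemma:handy-adjunction} exclude smooth points of $C_x\cup C_y\cup C_z\cup C_t$ (including smooth points of $L_{xz}$ and $R_x$ away from $O_t$, using the bounds on the coefficients of $L_{xz}$ and $R_x$ in $D$ that follow from $D\cdot R_x$ and $D\cdot L_{xz}$ respectively). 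The singular points $O_x$, $O_y$, $Q$ are excluded by intersecting $D$ with a component of an appropriate $C_*$ passing through them.

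The serious obstacle is $P=O_t$, where the bound $\tfrac{35}{72}$ is attained. Here I would write $D=aL_{xz}+bR_x+\Delta$ with $R_x\not\subseteq\Supp(\Delta)$, use $D\cdot L_{xt}\ge bR_x\cdot L_{xt}$ (after possibly switching the omitted component) to bound $b$, and use intersection with a further curve (for example $C_y$, which passes through $O_t$) to bound $a$ and $\mult_{O_t}(\Delta)$ from above. Then I would perform a weighted blow-up $\pi\colon\bar X\to X$ at $O_t$ with weights $(27,7)$ (or $(27,10)$, the two natural choices given the local weights mod $37$), producing an exceptional curve $E$ that contains two quotient singularities on which $\bar L_{xz}$ and $\bar R_x$ sit separately. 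Writing the log pull-back and imposing the two nef-type inequalities $\bar\Delta\cdot\bar L_{xz}\ge 0$ and $\bar\Delta\cdot\bar R_x\ge 0$ bounds the discrepancy coefficient of $E$ in $\pi^*D$, and then Lemma~\ref{lemma:handy-adjunction} applied at the two singular points of $E$ yields contradictory bounds on $a$ and $b$, exactly as in Lemmas~\ref{lemma:I-6-infinite-series-1-n-2}, \ref{lemma:I-4-W-11-17-24-31-D-79}, and \ref{lemma:I-4-W-13-14-23-33-D-79}. If a single weighted blow-up does not suffice, a second blow-up at one of the two infinitely-near quotient points (following the pattern of those lemmas) will close the argument. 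The main difficulty is purely computational: choosing the correct weights for the first blow-up so that the resulting inequalities are incompatible with the a priori bounds $a\le\tfrac{8}{18}$, $b\le\cdots$ derived from the intersection numbers and the singularity of $R_x$ at $O_t$.
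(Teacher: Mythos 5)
Your setup (the splitting $C_x=L_{xz}+R_x$ meeting only at $O_t$, the Igusa computation giving $\lct_{O_t}(X,C_x)=\tfrac59$ and hence $\lct(X,\tfrac87C_x)=\tfrac{35}{72}$, Lemma~\ref{lemma:Carolina} off $C_x$, and Lemma~\ref{lemma:handy-adjunction} along $L_{xz}$ and $R_x$) matches the paper's proof. The problem is the case $P=O_t$, which is the only part of your argument that is not actually carried out: you declare it "the serious obstacle," propose a weighted blow-up with weights you have not determined, and concede you may need a second blow-up and that the incompatibility of the resulting inequalities remains to be checked. That is a plan, not a proof, and it is a plan for a step that is in fact trivial. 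Since both $L_{xz}$ and $R_x$ pass through $O_t$ and Lemma~\ref{lemma:convexity} lets you assume one of them is not in $\mathrm{Supp}(D)$, you immediately get either
$$
\mult_{O_t}(D)\leqslant 37\,D\cdot L_{xz}=\frac{4}{9}\qquad\text{or}\qquad \mult_{O_t}(D)\leqslant 37\,D\cdot R_{x}=\frac{8}{9},
$$
and both are far below $\tfrac{72}{35}$, so $P\ne O_t$ with no blow-up at all. The reason $O_t$ is not dangerous even though the threshold $\tfrac{35}{72}$ is attained there is that it is attained only by the divisor $\tfrac87C_x$, which contains \emph{both} branches through $O_t$; once one branch is removed from $\mathrm{Supp}(D)$ the multiplicity at $O_t$ collapses. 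The blow-up machinery of Lemmas~\ref{lemma:I-4-W-11-17-24-31-D-79} and \ref{lemma:I-4-W-13-14-23-33-D-79} is needed precisely when these one-line bounds exceed the target, which does not happen here.

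Two smaller corrections. First, the pair you should assume non-log-canonical is $(X,\tfrac{35}{72}D)$, not $(X,\tfrac{72}{35}D)$; the multiplier is the candidate threshold itself, and the resulting multiplicity bound to contradict is $\mult_P(D)>\tfrac{72}{35}$. Second, $C_z$ is not irreducible: restricting the defining equation to $z=0$ gives $x(t^2+x^8y)=0$, so $C_z$ contains $L_{xz}$ as a component; this does not affect the argument since $C_z$ is never needed, but you should not invoke Lemma~\ref{lemma:convexity} to remove all of $C_z$ from $\mathrm{Supp}(D)$. (Also, the index-$9$ point $Q=[0:1:1:0]$ lies on $L_{xt}=\{x=t=0\}$, not on $L_{yz}$, and it sits on $R_x$, so it is excluded by your adjunction step along $R_x$.)
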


\begin{proof}
  The surface $X$ can be defined by the
quasihomogeneous equation
$$
z^{3}-y^{3}z+xt^{2}+x^{9}y=0.
$$
The surface $X$ is singular at the points $O_x$, $O_y$, $O_t$ and
$Q=[0:1:1:0]$.

The curve $C_x$ consists of two irreducible and reduced curves
$L_{xz}$ and $R_x=\{x=z^2-y^3=0\}$. These two curves intersect
each other only at the point $O_t$. Also,
$$
L_{xz}^2=-\frac{47}{18\cdot 37},\ \ \ R_x^2=-\frac{20}{9\cdot 37},\ \ \
 L_{xz}\cdot R_{x}=\frac{3}{37}.%
$$
 The curve $C_y$ is irreducible and
$$
\frac{35}{72}=\lct\left(X, \frac{8}{7}C_x\right)
<\lct\left(X, \frac{8}{18}C_y\right)=\frac{15}{8}.%
$$

Suppose that $\lct(X)<\frac{35}{72}$. Then there is an effective
$\Q$-divisor $D\qlineq -K_X$ such that the pair
$(X,\frac{35}{72}D)$ is not log canonical at some point $P$. By
Lemma~\ref{lemma:convexity}, we may assume that the support of the
divisor $D$ does not contain the curve $C_y$. Similarly, we may
assume that either $L_{xz}\not\subseteq\mathrm{Supp}(D)$ or
$R_{x}\not\subseteq\mathrm{Supp}(D)$.

Since either $L_{xz}\not\subseteq\mathrm{Supp}(D)$ or
$R_{x}\not\subseteq\mathrm{Supp}(D)$, one of the inequalities
\[\mult_{O_t}(D)\leqslant 37D\cdot
L_{xz}=\frac{4}{9}<\frac{72}{35}, \ \ \ \mult_{O_t}(D)\leqslant
37D\cdot R_{x}=\frac{8}{9}<\frac{72}{35}\] must hold, and hence
the point $P$ cannot be $O_t$. Since $\mult_{O_x}(D)\leqslant
7D\cdot C_y=\frac{24}{37}<\frac{72}{35}$, the point $P$ cannot be
the point $O_x$.

Put $D=mL_{xz}+\Omega$, where $\Omega$ is an effective
$\mathbb{Q}$-divisor such that
$L_{xz}\not\subset\mathrm{Supp}(\Omega)$. If $m\ne 0$, then
$$
\frac{16}{18\cdot 37}=D\cdot R_{x}\geqslant  mL_{xz}\cdot R_{x}=\frac{3m}{37},%
$$
and hence $m\leqslant \frac{8}{27}$. Since
$$
18\big(D-mL_{xz}\big)\cdot
L_{xz}=\frac{8+47m}{37}\leqslant\frac{72}{35}
$$
it follows from Lemma~\ref{lemma:handy-adjunction} that the point
$P$ cannot belong to $L_{xz}$.

Now we write  $D=\epsilon Z_{x}+\Delta$, where $\Delta$ is an
effective $\mathbb{Q}$-divisor such that
$Z_{x}\not\subset\mathrm{Supp}(\Delta)$. If $\epsilon\ne 0$, then
$$
\frac{8}{18\cdot 37}=D\cdot L_{xz}\geqslant  \epsilon R_{x}\cdot L_{xz}=\frac{3\epsilon}{37},%
$$
and hence $\epsilon\leqslant \frac{4}{27}$. Since
$$
9\big(D-\epsilon R_{x}\big)\cdot
R_{x}=\frac{8+20\epsilon}{37}\leqslant\frac{72}{35}
$$
it follows from Lemma~\ref{lemma:handy-adjunction} that the point
$P$ cannot belong to $R_{x}$. Consequently, the point $P$ must be
a smooth point in the outside of $C_x$.  However, since $H^0(\P,
\mathcal{O}_{\P}(189))$ contains the monomials $x^{27}$,
$y^{7}x^{9}$ and $z^{7}$, it follows from
Lemma~\ref{lemma:Carolina} that $P$ must be either a
singular point of $X$ or a point on $C_x$. This is a
contradiction.
\end{proof}

\section{Sporadic cases with $I=9$}
\label{subsection:index-9}

\begin{lemma}
\label{lemma:I-9-W-7-15-19-32-D-64} Let $X$ be a quasismooth
hypersurface of degree $64$ in $\mathbb{P}(7,15,19,32)$. Then
$\lct(X)=\frac{35}{54}$.
\end{lemma}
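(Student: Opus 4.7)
First I would pin down $X$ via the quasihomogeneous equation $t^{2} + xz^{3} + y^{3}z + x^{7}y = 0$ (the only four degree-$64$ monomials in these weights, and a routine check shows that quasismoothness forces all four coefficients to be non-zero). A Jacobian computation gives $\mathrm{Sing}(X) = \{O_x, O_y, O_z\}$ of indices $7, 15, 19$, while $O_t \notin X$. The curves $C_x$, $C_y$, $C_z$ are all irreducible, and inspection in the respective orbifold covers models them by $t^{2}+y^{3}=0$ at $O_z$, $t^{2}+z^{3}=0$ at $O_x$, and $t^{2}+x^{7}=0$ at $O_y$. Lemma~\ref{lemma:Igusa} combined with Proposition~\ref{proposition:lc-by-finite-morphism} then gives $\lct(X, C_x) = \lct(X, C_y) = \tfrac{5}{6}$ and $\lct(X, C_z) = \tfrac{9}{14}$, so the three boundary thresholds $\lct(X, \tfrac{9}{7}C_x) = \tfrac{35}{54}$, $\lct(X, \tfrac{9}{15}C_y) = \tfrac{25}{18}$, and $\lct(X, \tfrac{9}{19}C_z) = \tfrac{19}{14}$ have minimum $\tfrac{35}{54}$; this yields the upper bound $\lct(X) \le \tfrac{35}{54}$.

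For the reverse inequality I would argue by contradiction: assume an effective $D \sim_{\mathbb{Q}} -K_X$ makes $(X, \tfrac{35}{54}D)$ non-log-canonical at some point $P$. Three applications of Lemma~\ref{lemma:convexity} let me assume $C_x, C_y, C_z \not\subset \Supp(D)$, since the pair $(X, \tfrac{35}{54} \cdot \tfrac{9}{7} C_x) = (X, \tfrac{5}{6}C_x)$ is log canonical, and the analogous pairs for $C_y, C_z$ have strictly larger thresholds. The arithmetic input for everything that follows consists of $D \cdot C_x = \tfrac{6}{95}$, $D \cdot C_y = \tfrac{18}{133}$, $D \cdot C_z = \tfrac{6}{35}$ together with the cuspidal multiplicities $\mult_{O_z}(C_x) = \mult_{O_x}(C_y) = \mult_{O_y}(C_z) = 2$ extracted from the orbifold models above. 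By Lemma~\ref{lemma:multiplicity} combined with Proposition~\ref{proposition:lc-by-finite-morphism}, non-log-canonicality at $P$ forces $\mult_P(D) > \tfrac{54}{35}$.

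I would then exclude $P$ in four cases. For a smooth point $P$ outside $C_x$, Lemma~\ref{lemma:Carolina} with $k = 133$ applies, because $H^{0}(\mathbb{P}, \mathcal{O}_{\mathbb{P}}(133))$ contains the pairs $\{x^{19}, x^{4}y^{7}\}$ of type $x^{\alpha}y^{\beta}$ and $\{x^{19}, z^{7}\}$ of type $x^{\gamma}z^{\delta}$, and the projection $X \dashrightarrow \mathbb{P}(7,15,19)$ contracts no curve (its only candidate base point $O_t$ is not on $X$); the resulting $\mult_P(D) \le \tfrac{6}{5}$ is strictly below $\tfrac{54}{35}$. For a smooth point $P \in C_x$ the direct bound $\mult_P(D) \le D \cdot C_x = \tfrac{6}{95}$ already contradicts $\mult_P(D) > \tfrac{54}{35}$. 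At each of the three orbifold points I apply the standard estimate $\mult_P(D) \le r \cdot (D \cdot C)/\mult_P(C)$ with the most convenient curve through $P$, obtaining $\mult_{O_z}(D) \le \tfrac{3}{5}$ via the cuspidal $C_x$, $\mult_{O_y}(D) \le \tfrac{18}{19}$ via the smooth $C_x$, and $\mult_{O_x}(D) \le \tfrac{9}{19}$ via the cuspidal $C_y$, each strictly less than $\tfrac{54}{35}$. The main obstacle, and what keeps the proof free of weighted blowups (unlike several neighbouring cases in the paper), is recognising the precise cuspidal structure of $C_x$ at $O_z$ in its orbifold chart: the factor $\mult_{O_z}(C_x) = 2$ coming from this cusp is exactly what makes the singular-point bound $\tfrac{3}{5}$ tight enough to exclude $O_z$ by elementary means; once the orbifold charts and cuspidal models are set up, every other step is routine.
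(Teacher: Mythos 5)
Your proof is correct and follows essentially the same route as the paper: the upper bound comes from the cusp of $C_x$ at $O_z$, and the lower bound combines Lemma~\ref{lemma:convexity}, the intersection numbers $D\cdot C_x=\frac{6}{95}$ and $D\cdot C_y=\frac{18}{133}$, and Lemma~\ref{lemma:Carolina} in degree $133$. One small quibble: your closing claim that the cusp multiplicity $\mult_{O_z}(C_x)=2$ is what makes the exclusion of $O_z$ possible is an overstatement --- the cruder bound $\mult_{O_z}(D)\leqslant 19\,D\cdot C_x=\frac{6}{5}<\frac{54}{35}$, which ignores that factor, already suffices, and this is exactly what the paper uses.
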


\begin{proof}
 The surface $X$ can be defined by the
quasihomogeneous equation
$$
t^{2}+y^{3}z+xz^{3}+x^{7}y=0.
$$
The surface $X$ is singular only at the points $O_x$, $O_y$ and
$O_z$. The curves $C_x$ and $C_y$ are irreducible, and
$$
\frac{35}{54}=\lct\left(X, \frac{9}{7}C_x\right)
<\lct\left(X, \frac{9}{15}C_y\right)=\frac{25}{18}.%
$$
In particular, $\lct(X)\leqslant \frac{35}{54}$.

Suppose that $\lct(X)<\frac{35}{54}$. Then there is an effective
$\Q$-divisor $D\qlineq -K_X$ such that the pair
$(X,\frac{35}{54}D)$ is not log canonical at some point $P$. By
Lemma~\ref{lemma:convexity}, we may assume that the support of the
divisor $D$ contains neither the curve $C_x$ nor the curve $C_y$.
Then two inequalities $19D\cdot C_x=\frac{6}{5}<\frac{54}{35}$,
$7D\cdot C_y=\frac{18}{19}<\frac{54}{35}$ show that the point $P$
must be a smooth point in the outside of $C_x$.

Note that  $H^0(\P, \mathcal{O}_{\P}(133))$ contains the monomials
$x^{19}$, $y^7x^{4}$ and $z^{7}$ and hence it follows from
Lemma~\ref{lemma:Carolina} that the point $P$ is either a singular
point of $X$ or a point on $C_x$. This is a contradiction.
\end{proof}

\section{Sporadic cases with $I=10$}
\label{subsection:index-10}

\begin{lemma}
\label{lemma:I-10-W-7-19-25-41-D-82} Let $X$ be a quasismooth
hypersurface of degree $82$ in $\mathbb{P}(7,19,25,41)$. Then
$\lct(X)=\frac{7}{12}$.
\end{lemma}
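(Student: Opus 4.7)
The plan is to follow the general strategy of Section~4 with $\lambda = \frac{7}{12}$. After an admissible coordinate change, we may assume that $X$ is defined by
\[
t^2 + y^3z + xz^3 + x^9y = 0;
\]
a partial-derivative check shows that $X$ is quasismooth and that its only singularities are $O_x$ of type $\frac{1}{7}$, $O_y$ of type $\frac{1}{19}$ and $O_z$ of type $\frac{1}{25}$, with $O_t \notin X$. Both $C_x = \{t^2 + y^3z = 0\}$ and $C_y = \{t^2 + xz^3 = 0\}$ are irreducible; by eliminating $x$ (respectively $y$) in the charts $z = 1$ and $x = 1$ one sees that $C_x$ is smooth at $O_y$ and has a cusp of multiplicity $2$ at $O_z$, while $C_y$ is smooth at $O_z$ and has a cusp of multiplicity $2$ at $O_x$. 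Lemma~\ref{lemma:Igusa} gives $\lct_{O_z}(X, C_x) = \frac{5}{6}$ for the cusp, whence $\lct(X, \frac{10}{7}C_x) = \frac{7}{10}\cdot\frac{5}{6} = \frac{7}{12}$ and hence $\lct(X) \leqslant \frac{7}{12}$.

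\medskip

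For the reverse inequality we suppose for contradiction that $\lct(X) < \frac{7}{12}$ and take an effective $\Q$-divisor $D \qlineq -K_X$ such that $(X, \frac{7}{12}D)$ is not log canonical at some point $P$; then $\mult_P(D) > \frac{12}{7}$. Since $C_x$ and $C_y$ are both irreducible, Lemma~\ref{lemma:convexity} lets us arrange that neither appears in $\Supp(D)$. A direct computation gives
\[
D \cdot C_x = \frac{10 \cdot 82}{19 \cdot 25 \cdot 41} = \frac{4}{95}, \qquad D \cdot C_y = \frac{10 \cdot 82}{7 \cdot 25 \cdot 41} = \frac{4}{35},
\]
and using the intersection bound $\mult_P(D) \leqslant r(D \cdot C)/\mult_P(C)$, valid whenever $C \not\subset \Supp(D)$ and $P$ is a point of index $r$ on $X$, we rule out the three singular points via
\[
\mult_{O_x}(D) \leqslant \frac{7}{2}(D \cdot C_y) = \frac{2}{5}, \quad \mult_{O_y}(D) \leqslant 19(D \cdot C_x) = \frac{4}{5}, \quad \mult_{O_z}(D) \leqslant \frac{25}{2}(D \cdot C_x) = \frac{10}{19},
\]
each of which is strictly less than $\frac{12}{7}$. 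Any other point of $C_x$ is smooth on both $X$ and $C_x$, where the naked bound $\mult_P(D) \leqslant D \cdot C_x = \frac{4}{95}$ already suffices.

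\medskip

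The remaining case is a smooth point $P$ of $X$ in the complement of $C_x$, which we propose to handle by Lemma~\ref{lemma:Carolina} with $k = 175$. The vector space $H^0(\P, \O_\P(175))$ contains the pairs $\{x^{25}, x^6y^7\}$ of type $x^\alpha y^\beta$ (solutions of $7\alpha + 19\beta = 175$) and $\{x^{25}, z^7\}$ of type $x^\gamma z^\delta$ (solutions of $7\gamma + 25\delta = 175$), and the projection $\psi : X \dashrightarrow \P(7,19,25)$ contracts no curve because $O_t \notin X$ and the $t^2$-term in the defining equation prevents any vertical fibre of $\psi$ from lying on $X$. Lemma~\ref{lemma:Carolina} therefore yields
\[
\mult_P(D) \leqslant \frac{10 \cdot 175 \cdot 82}{7 \cdot 19 \cdot 25 \cdot 41} = \frac{20}{19} < \frac{12}{7},
\]
which is the final contradiction. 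The only genuinely delicate step will be identifying the cuspidal singularity of $C_x$ at $O_z$ correctly on the orbifold chart, where the $\mu_{25}$-action must be accounted for; after that the argument is entirely a string of short intersection-number estimates and, in particular, no weighted blow-up is required.
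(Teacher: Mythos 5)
Your proposal is correct and follows essentially the same route as the paper: the upper bound comes from the cusp of $C_x$ at $O_z$ giving $\lct(X,\tfrac{10}{7}C_x)=\tfrac{7}{12}$, the singular points and the points of $C_x$ are excluded by the intersection bounds $D\cdot C_x=\tfrac{4}{95}$ and $D\cdot C_y=\tfrac{4}{35}$, and the remaining smooth points off $C_x$ are handled by Lemma~\ref{lemma:Carolina} with $k=175$ and the monomials $x^{25}$, $x^{6}y^{7}$, $z^{7}$. The only (harmless) differences are that you record the sharper bounds $\tfrac{r(D\cdot C)}{\mult_P(C)}$ at $O_x$ and $O_z$, where the paper's cruder bounds $25D\cdot C_x=\tfrac{20}{19}$ and $7D\cdot C_y=\tfrac{4}{5}$ already suffice, and that you spell out the finite-fibre hypothesis of Lemma~\ref{lemma:Carolina}, which the paper leaves implicit.
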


\begin{proof}
 The surface $X$ can be defined by the
quasihomogeneous equation
$$
t^{2}+y^{3}z+xz^{3}+x^{9}y=0.
$$
It is singular at the points $O_{x}$, $O_{y}$ and $O_z$.

The curves $C_{x}$ and $C_y$ are irreducible. We have
$$
\frac{7}{12}=\lct\left(X, \frac{10}{7}C_x\right)<\lct\left(X, \frac{10}{19}C_y\right)=\frac{19}{12},%
$$
and hence $\lct(X)\leqslant \frac{7}{12}$.

Suppose that $\lct(X)<\frac{7}{12}$. Then there is an effective
$\Q$-divisor $D\qlineq -K_X$ such that the pair
$(X,\frac{7}{12}D)$ is not log canonical at some point $P$. By
Lemma~\ref{lemma:convexity}, we may assume that the support of the
divisor $D$ contains neither the curve $C_x$ nor the curve
$C_{y}$. Since $25D\cdot C_x=\frac{20}{19}<\frac{12}{7}$ and
$7D\cdot C_y=\frac{4}{5}<\frac{12}{7}$, the point $P$ must be a
smooth point in the outside of the curve $C_x$. Note that
$H^0(\P, \mathcal{O}_{\P}(175))$ contains the monomials $x^{25}$,
$x^{6}y^{7}$ and $z^{7}$, and hence the point $P$ cannot be a
smooth point in the outside of $C_x$ by
Lemma~\ref{lemma:Carolina}. Consequently, $\lct(X)=\frac{7}{12}$.
\end{proof}

\begin{lemma}
\label{lemma:I-10-W-7-26-39-55-D-117} Let $X$ be a quasismooth
hypersurface of degree $117$ in $\mathbb{P}(7,26,39,55)$. Then
$\lct(X)=\frac{7}{18}$.
\end{lemma}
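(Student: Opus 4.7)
The plan is to follow the template of Lemmas~\ref{lemma:I-8-W-7-18-27-37-D-81} and~\ref{lemma:I-9-W-7-15-19-32-D-64}, adapted to $I = 10$. The only degree-$117$ monomials in $\mathbb{P}(7,26,39,55)$ are $x^{13}y$, $y^3 z$, $z^3$ and $t^2 x$, so after a linear change of coordinates
\[
t^2 x + z^3 + y^3 z + x^{13} y = 0
\]
defines $X$. One checks $X$ is quasismooth, singular at $O_x$ (index $7$), $O_y$ (index $26$), $O_t$ (index $55$), and at a single further point $Q$ on $L_{xt}$ (index $13$) cut out by $z^2 + y^3 = 0$. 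The divisor $C_x$ splits as $L_{xz} + R_x$, with $L_{xz} = \{x = z = 0\}$ smooth and $R_x = \{x = 0,\ z^2 + y^3 = 0\}$ cuspidal at $O_t$; these two components meet only at $O_t$. On the smooth cover at $O_t$, where $X$ is parametrised by $(y,z)$ via the implicit equation $x = -(z^3 + y^3 z + x^{13}y)$, the divisor $C_x$ pulls back to $z(z^2 + y^3) = 0$, so Lemma~\ref{lemma:Igusa} combined with Proposition~\ref{proposition:lc-by-finite-morphism} yields $\lct_{O_t}(X, C_x) = \tfrac{5}{9}$, while at every other point of $C_x$ the local threshold is $\geq 1$. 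Hence $\lct(X, \tfrac{10}{7} C_x) = \tfrac{7}{18}$. Analogous local computations for the irreducible curve $C_y = \{xt^2 + z^3 = 0\}$ and the reducible curve $C_z = L_{xz} + R_z$, locally $x(t^2 + x^{12}y) = 0$ near $O_y$, give $\lct(X, \tfrac{10}{26}C_y) = \tfrac{13}{6}$ and $\lct(X, \tfrac{10}{39}C_z) = \tfrac{21}{10}$, both strictly greater than $\tfrac{7}{18}$. Therefore $\lct(X) \leq \tfrac{7}{18}$.

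For the reverse inequality, suppose $\lct(X) < \tfrac{7}{18}$ and pick an effective $\mathbb{Q}$-divisor $D \sim_{\mathbb{Q}} -K_X$ such that $(X, \tfrac{7}{18}D)$ is not log canonical at a point $P$. By Lemma~\ref{lemma:convexity} I may assume that $C_y, C_z \not\subseteq \Supp(D)$, and that at least one of $L_{xz}, R_x$ is not in $\Supp(D)$. The key intersections
\[
-K_X \cdot L_{xz} = \tfrac{1}{143},\quad -K_X \cdot R_x = \tfrac{2}{143},\quad -K_X \cdot C_y = \tfrac{6}{77},\quad L_{xz} \cdot R_x = \tfrac{3}{55}
\]
(together with the self-intersections one reads off from $C_x \cdot L_{xz}$ and $C_x \cdot R_x$) feed into Lemma~\ref{lemma:handy-adjunction}: using $C_y$ rules out $P = O_x$; using $L_{xz}$ rules out $O_y$, $O_t$, $Q$ whenever $L_{xz} \not\subseteq \Supp(D)$; using $R_x$ rules out $O_t$ and $Q$ whenever $R_x \not\subseteq \Supp(D)$; and smooth points of $C_x$ are excluded after writing $D = aL_{xz} + bR_x + \Omega$ and controlling $a, b$ from the above intersections. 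For a smooth point $P \in X \setminus C_x$, Lemma~\ref{lemma:Carolina} with $k = 273$ applies, since $H^0(\mathbb{P}, \mathcal{O}(273))$ contains the pair $\{x^{39}, x^{13}y^7\}$ and the pair $\{x^{39}, z^7\}$, and the projection $X \dashrightarrow \mathbb{P}(7,26,39)$ contracts only the curves $L_{xz}$ and $R_x$, both contained in $C_x$; this yields
\[
\mult_P(D) \leq \frac{10 \cdot 273 \cdot 117}{7 \cdot 26 \cdot 39 \cdot 55} = \frac{9}{11} < \frac{18}{7},
\]
contradicting the failure of log canonicity at $P$.

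The only remaining, and most delicate, case is $P = O_t$ with $L_{xz} \subseteq \Supp(D)$, which by convexity forces $R_x \not\subseteq \Supp(D)$. Writing $D = aL_{xz} + \Delta$ with $R_x \not\subseteq \Supp(\Delta)$, the bound $\tfrac{2}{143} = R_x \cdot D \geq a \cdot L_{xz} \cdot R_x = \tfrac{3a}{55}$ gives $a \leq \tfrac{10}{39}$. Because $R_x$ has multiplicity $2$ at $O_t$, Lemma~\ref{lemma:handy-adjunction} does not apply directly, so the plan is to perform a weighted blow up $\pi: \bar X \to X$ at $O_t$ whose weights are adapted to the cusp $z^2 + y^3$ (namely the $(2,3)$-weighted blow up on the smooth cover, lifted through the index-$55$ quotient). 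Denoting by $\bar L_{xz}, \bar R_x, \bar\Delta$ the proper transforms and by $F$ the exceptional curve, the log pull-back reads
\[
\Bigl(\bar X,\ \tfrac{7a}{18}\bar L_{xz} + \tfrac{7}{18}\bar\Delta + \theta F\Bigr),
\]
and its non log canonical centre must lie on $F$. The non-negativity of $\bar\Delta \cdot \bar L_{xz}$ and $\bar\Delta \cdot \bar R_x$ controls the new discrepancy coefficient $\theta$, after which Lemma~\ref{lemma:handy-adjunction} applied on $\bar X$ to $\bar L_{xz}$, to $\bar R_x$, and to the intersections $\bar\Delta \cdot F$ rules out both the at most two singular points of $\bar X$ lying on $F$ and every smooth point of $F$. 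This bookkeeping step is structurally identical to the $O_t$-analysis already carried out in Lemmas~\ref{lemma:I-8-W-7-18-27-37-D-81}, \ref{lemma:I-6-infinite-series-1-n-2}, and~\ref{lemma:I-4-W-11-17-24-31-D-79}, and it is the main obstacle of the proof.
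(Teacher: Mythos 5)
Your proposal follows the paper's argument almost step for step: the decomposition $C_x=L_{xz}+R_x$ with $L_{xz}\cdot R_x=\frac{3}{55}$, the local computation $\lct_{O_t}(X,C_x)=\frac{5}{9}$ giving $\lct(X,\frac{10}{7}C_x)=\frac{7}{18}$, the exclusion of $O_x$ via $7D\cdot C_y=\frac{6}{11}$, the exclusion of all points of $L_{xz}$ and $R_x$ by bounding the coefficients $a\leqslant\frac{10}{39}$, $b\leqslant\frac{5}{39}$ and applying Lemma~\ref{lemma:handy-adjunction}, and the exclusion of smooth points off $C_x$ via Lemma~\ref{lemma:Carolina} with $k=273$ and the bound $\frac{9}{11}$. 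All of that is correct and is exactly what the paper does.

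The problem is your final paragraph. You have misdiagnosed $P=O_t$ as "the most delicate case" requiring a weighted blow up, and you then leave that analysis entirely unexecuted ("structurally identical to\dots"); had the blow up actually been necessary, this would be a genuine gap, since none of the required discrepancy bookkeeping is carried out. But it is not necessary. By Lemma~\ref{lemma:convexity} at least one of $L_{xz}$, $R_x$ is omitted from $\Supp(D)$, and whichever one is omitted passes through $O_t$, so Method~3.1 gives directly
$$
\mult_{O_t}(D)\leqslant 55\,D\cdot L_{xz}=\frac{5}{13}
\qquad\text{or}\qquad
\mult_{O_t}(D)\leqslant 55\,D\cdot R_x=\frac{10}{13},
$$
and both are far below the threshold $\frac{18}{7}$ forced by non-log-canonicity of $(X,\frac{7}{18}D)$ at an index-$55$ point. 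The margin is enormous here precisely because $I=10$ makes $\lambda=\frac{7}{18}$ small; the weighted blow ups in Lemmas~\ref{lemma:I-6-infinite-series-1-n-2} and~\ref{lemma:I-4-W-11-17-24-31-D-79} are needed only when the corresponding estimates fail by a hair, which never happens in this lemma. One further slip: $Q=[0:1:1:0]$ satisfies $z^2-y^3=0$ but $z\neq 0$, so $Q\in R_x$ and $Q\notin L_{xz}$; it cannot be "ruled out using $L_{xz}$" as you assert, but it is covered by your (and the paper's) estimate $13(D-\epsilon R_x)\cdot R_x=\frac{10+32\epsilon}{55}$ on the curve $R_x$.
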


\begin{proof}
The surface $X$ can be defined by the equation
$z^{3}-y^{3}z+xt^{2}+x^{13}y=0$. It is singular at the points
$O_x$, $O_y$, $O_t$ and $Q=[0:1:1:0]$.

The curve $C_x$ consists of two irreducible curves $L_{xz}$ and
$R_x=\{x=z^2-y^3=0\}$.  These two curves intersect each other only
at the point $O_t$. It is easy to check
$$
L_{xz}^2=-\frac{71}{26\cdot 55},\ \ \ R_{x}^2=-\frac{32}{13\cdot 55},
\ \ \ L_{xz}\cdot R_{x}=\frac{3}{55}.%
$$
On the other hand, the curve $C_y$ is irreducible. We have
$$
\frac{7}{18}=\lct\left(X, \frac{10}{7}C_x\right)<
\lct\left(X, \frac{10}{26}C_y\right)=\frac{13}{6}.%
$$
In particular, $\lct(X)\leqslant \frac{7}{18}$.

Suppose that $\lct(X)<\frac{7}{18}$. Then there is an effective
$\Q$-divisor $D\qlineq -K_X$ such that the pair
$(X,\frac{7}{18}D)$ is not log canonical at some point $P$. By
Lemma~\ref{lemma:convexity}, we may assume that the support of the
divisor $D$ does not contain the curve $C_y$. Similarly, we may
assume that either $L_{xz}\not\subset\mathrm{Supp}(D)$ or
$R_{x}\not\subset\mathrm{Supp}(D)$.

Since $7D\cdot C_y=\frac{6}{11}<\frac{18}{7}$, the point $P$
cannot be the point $O_x$. Meanwhile, since the support of $D$
does not contain at least one components of $C_x$, one of the
inequalities
\[\mult_{O_t}(D)\leqslant 55D\cdot
L_{xz}=\frac{5}{13}<\frac{18}{7},\]
\[\mult_{O_t}(D)\leqslant 55D\cdot
R_{x}=\frac{10}{13}<\frac{18}{7}\] must hold,
and hence the point $P$ cannot be the point $O_t$.

Put $D=mL_{xz}+\Omega$, where $\Omega$ is an effective
$\mathbb{Q}$-divisor such that
$L_{xz}\not\subset\mathrm{Supp}(\Omega)$. If $m\ne 0$, then
$$
\frac{10}{13\cdot 55}=D\cdot R_{x}\geqslant  mL_{xz}\cdot R_{x}=\frac{3m}{55},%
$$
and hence $m\leqslant \frac{10}{39}$. Then
$$
26\big(D-mL_{xz}\big)\cdot L_{xz}=\frac{10+71m}{55}<\frac{18}{7},$$
and hence Lemma~\ref{lemma:handy-adjunction} implies that the
point $P$ cannot belong to $L_{xz}$.

Now we write $D=\epsilon R_{x}+\Delta$, where $\Delta$ is an
effective $\mathbb{Q}$-divisor such that
$R_{x}\not\subset\mathrm{Supp}(\Delta)$. If $\epsilon\ne 0$, then
$$
\frac{10}{26\cdot 55}=D\cdot L_{xz}\geqslant  \epsilon R_{x}\cdot L_{xz}=\frac{3\epsilon}{55},%
$$
and hence $\epsilon\leqslant \frac{5}{39}$. Then
\[13(D-\epsilon
R_{x}\big)\cdot R_{x}=\frac{10+32\epsilon}{55}<\frac{18}{7}.\]
Thus, Lemma~\ref{lemma:adjunction} shows that the point $P$ is not
on $R_x$.

Therefore, the point $P$ must be a smooth point in the outside of
the curve $C_x$. Since $H^0(\P, \mathcal{O}_{\P}(273))$ contains
the monomials $x^{39}$, $y^{7}x^{13}$ and $z^{7}$, it follows from
Lemma~\ref{lemma:Carolina} that $P$ is either a point on $C_x$ or
a singular point of $X$. This is a contradiction.
\end{proof}

%\newpage

\part{The Big
Table} \label{section:table}

The tables contains the following information on del Pezzo surfaces.
\begin{itemize}
\item The first column: the weights
$(a_0,a_1,a_2,a_3)$ of the weighted projective space
$\mathbb{P}$.
\item The second column: the degree of the surface
$X\subset\mathbb{P}$.

\item The third column: the self-intersection number $K_{X}^{2}$ of an anticanonical divisor of $X$.

\item The
fourth column: the rank $\rho$ of the Picard group of the surface $X$.

\item The
fifth column: the global log canonical theeshold $\mathrm{lct}(X)$ of $X$.

\item The sixth column:
the possible monomials in $x$, $y$, $z$, $t$ in the defining
equation $f(x, y, z, t)=0$ of the surface $X$.

\item The seventh column: the information on the singular points
of $X$. We use the standard notation for cyclic quotient
singularities along with the following convention: when we write,
for instance, $O_xO_y=n\times\frac{1}{r}(a, b)$, we mean that
there are $n$ cyclic quotient singularities of type
$\frac{1}{r}(a, b)$ cut out on $X$ by the equations $z=t=0$ that
are different from the point $O_{x}$ in the case when $O_{x}\in X$
and $O_{x}$ is not of type $\frac{1}{r}(a, b)$, and that are
different from the point $O_{y}$ in the case when $O_{y}\in X$ and
$O_{y}$ is not of type $\frac{1}{r}(a, b)$.
\end{itemize}

\setbox1=\hbox{\parbox{1.5\linewidth}{

\begin{center}
 Log del Pezzo surfaces with $I=1$
\label{table:Boyer}

% [inline block 0: 16 envs, 51364 chars -> data_tex | \begin{longtable}{|c|c|c|c|c|c|c|} %\hline \multicolumn{4}{|c|}...]

\end{center}

}} \rotatebox{90}{\box1}

\newpage


\begin{thebibliography}{137}
\label{section:references}


\bibitem{Ara02}
C.\,Araujo, \emph{K\"ahler--Einstein metrics for some quasi-smooth log del Pezzo surfaces}\\
Transactions of the American Mathematical Society \textbf{354} (2002), 4303--3312%

\smallskip


\bibitem{Boy08}
C.\,Boyer, \emph{Sasakian geometry: the recent work of Krzysztof Galicki}\\
arXiv:0806.0373 (2008)

\smallskip

\bibitem{BoGaNa02}
C.\,Boyer, K.\,Galicki, M.\,Nakamaye, \emph{Sasakian-Einstein structures on $9\#(S^{2}\times S^{3})$}\\
Transactions of the American Mathematical Society \textbf{354} (2002), 2983--2996%

\smallskip

\bibitem{BoGaNa03}
C.\,Boyer, K.\,Galicki, M.\,Nakamaye, \emph{On the geometry of Sasakian--Einstein $5$-manifolds}\\
Mathematische Annalen \textbf{325} (2003), 485--524%

\smallskip

\bibitem{BW79}
J.\,W.\,Bruce, C.\,T.\,C.\,Wall, \emph{On the classification of cubic surfaces}\\
Journal of the London Mathematical Society \textbf{19} (1979), 245--256%

\smallskip

\bibitem{Ch01b}
I.\,Cheltsov, \emph{Log canonical thresholds on hypersurfaces}\\
Sbornik: Mathematics \textbf{192} (2001), 1241--1257%

\smallskip

\bibitem{Ch07a}
I.\,Cheltsov, \emph{Fano varieties with many selfmaps}\\
Advances in Mathematics \textbf{217} (2008), 97--124%

\smallskip

\bibitem{Ch08a}
I.\,Cheltsov, \emph{Double spaces with isolated singularities}\\
Sbornik: Mathematics \textbf{199} (2008), 291--306%

\smallskip

\bibitem{Ch08}
I.\,Cheltsov, \emph{Log canonical thresholds and K\"ahler--Einstein metrics on Fano threefold  hypersurfaces}\\
Izvestiya: Mathematics, to appear

\smallskip

\bibitem{Ch08d}
I.\,Cheltsov, \emph{Extremal metrics on two Fano varieties}\\
Sbornik: Mathematics, to appear

\smallskip

\bibitem{Ch07b}
I.\,Cheltsov, \emph{Log canonical thresholds of del Pezzo surfaces}\\
Geometric and Functional Analysis, \textbf{18} (2008), 1118--1144%

\smallskip

\bibitem{Ch07c}
I.\,Cheltsov, \emph{On singular cubic surfaces}\\
arXiv:0706.2666 (2007)%

\smallskip

\bibitem{ChPaWo}
I.\,Cheltsov, J.\,Park, J.\,Won, \emph{Log canonical thresholds of certain Fano hypersurfaces}\\
arXiv:math.AG/0706.0751 (2007)%

\smallskip

\bibitem{ChSh08c}
I.\,Cheltsov, C.\,Shramov, \emph{Log canonical thresholds of smooth Fano threefolds. With an appendix by Jean-Pierre Demailly}\\
Russian Mathematical Surveys \textbf{63} (2008), 73--180%


\smallskip

\bibitem{ChSh09c}
I.\,Cheltsov, C.\,Shramov, \emph{Del Pezzo zoo}\\
arXiv:0904.0114 (2009)%


\smallskip

\bibitem{DeKo01}
J.-P.\,Demailly, J.\,Koll\'ar, \emph{Semi-continuity of complex singularity exponents\\ and  K\"ahler--Einstein metrics on Fano orbifolds}\\
Annales Scientifiques de l'\'Ecole Normale Sup\'erieure \textbf{34} (2001), 525--556%

\smallskip

\bibitem{Do02}
S.\,Donaldson, \emph{Scalar curvature and stability of toric varieties}\\
Journal of Differential Geometry \textbf{62} (2002), 289--349%

\smallskip

\bibitem{Elk81}
R.\,Elkik, \emph{Rationalit\`e des singularit\`es canoniques}\\
Inventiones Mathematicae \textbf{64} (1981), 1--6%

\smallskip


\bibitem{Fu83}
A.\,Futaki, \emph{An obstruction to the existence of Einstein--K\"ahler metrics}\\
Inventiones Mathematicae \textbf{73} (1983), 437--443%

\smallskip

\bibitem{GaMaSpaYau06}
J.\,Gauntlett, D.\,Martelli, J.\,Sparks, S.-T.\,Yau, \emph{Obstructions to the existence of Sasaki-Einstein metrics}\\
Communications in Mathematical Physics \textbf{273} (2007) 803--827%

\smallskip


\bibitem{Hw06b}
J.-M.\,Hwang, \emph{Log canonical thresholds of divisors on Fano manifolds of Picard rank $1$}\\
Compositio Mathematica \textbf{143} (2007), 89--94%

\smallskip

\bibitem{IF00}
A.\,R.\,Iano-Fletcher, \emph{Working with weighted complete intersections}\\
L.M.S. Lecture Note Series \textbf{281} (2000), 101--173%

\smallskip

\bibitem{IshiiPr01}
S.\,Ishii, Yu.\,Prokhorov, \emph{Hypersurface exceptional singularities}\\
International Journal of Mathematics \textbf{12} (2001), 661--687%

\smallskip

\bibitem{JoKo01}
J.\,Johnson, J.\,Koll\'ar, \emph{Fano hypersurfaces in weighted projective $4$-spaces}\\%
Experimental Mathematics \textbf{10} (2001), 151--158%

\smallskip

\bibitem{JoKo01b}
J.\,Johnson, J.\,Koll\'ar, \emph{K\"ahler--Einstein metrics on log del Pezzo surfaces in weighted projective $3$-spaces}\\%
Annales de l'Institut Fourier \textbf{51} (2001), 69--79%

\smallskip

\bibitem{Ko97}
J.\,Koll\'ar, \emph{Singularities of pairs}\\
Proceedings of Symposia in Pure Mathematics \textbf{62} (1997), 221--287%

\smallskip

\bibitem{KeMa99}
S.\,Keel, J.\,McKernan, \emph{Rational curves on quasi-projective surfaces}\\
Memoirs of the American Mathematical Society \textbf{669} (1999)%

\smallskip

\bibitem{Kud01}
S.\,Kudryavtsev, \emph{On purely log terminal blow ups}\\
Mathematical Notes \textbf{69} (2002), 814--819

\smallskip

\bibitem{Kud02}
S.\,Kudryavtsev, \emph{Classification of three-dimensional exceptional log-canonical hypersurface singularities. I}\\
Izvestiya: Mathematics \textbf{66} (2002), 949--1034

\smallskip

%\bibitem{Kud03}
%S.\,Kudryavtsev, \emph{Classification of exceptional log del Pezzo surfaces with $\delta=1$}\\
%Izvestiya: Mathematics \textbf{67} (2003), 461--497
%
%\smallskip

%\bibitem{Kud04}
%S.\,Kudryavtsev, \emph{Classification of three-dimensional exceptional log-canonical hypersurface singularities. II}\\
%Izvestiya: Mathematics \textbf{68} (2004), 355--364

\smallskip

\bibitem{Ku99}
T.\,Kuwata, \emph{On log canonical thresholds of reducible plane curves}\\
American Journal of Mathematics \textbf{121} (1999), 701--721

\smallskip

%\bibitem{Lu99}
%Zh.\,Lu, \emph{On the Futaki invariants of complete intersections}\\
%Duke Mathematical Journal \textbf{100} (1999), 359--372%
%
%\smallskip

\bibitem{Lub83}
M.\,L\"ubke, \emph{Stability of Einstein-Hermitian vector bundles}\\
Manuscripta Mathematica \textbf{42} (1983), 245--257%

\smallskip


\bibitem{MarPr99}
D.\,Markushevich, Yu.\,Prokhorov, \emph{Exceptional quotient singularities}\\
American Journal of Mathematics \textbf{121} (1999), 1179--1189

\smallskip

\bibitem{Mat57}
Y.\,Matsushima, \emph{Sur la structure du groupe d'hom\'eomorphismes analytiques\\ d'une certaine  vari\'et\'e k\"ahl\'erienne}\\
Nagoya Mathematical Journal \textbf{11} (1957), 145--150%

\smallskip

\bibitem{Na90}
A.\,Nadel, \emph{Multiplier ideal sheaves and K\"ahler--Einstein metrics of positive scalar  curvature}\\
Annals of Mathematics \textbf{132} (1990), 549--596%

\smallskip


\bibitem{PhSeSt07}
D.\,Phong, N.\,Sesum, J.\,Sturm, \emph{Multiplier ideal sheaves and the K\"ahler-Ricci flow}\\
Communications in Analysis and Geometry \textbf{15} (2007), 613--632%

\smallskip


\bibitem{Pr98plt}
Yu.\,Prokhorov, \emph{Blow-ups of canonical singularities}\\
arXiv:math/9810097 (1998)

\smallskip

\bibitem{Pr01}
Yu.\,Prokhorov, \emph{Lectures on complements on log surfaces}\\
MSJ Memoirs \textbf{10} (2001)%

\smallskip

\bibitem{Pu04d}
A.\,Pukhlikov, \emph{Birational geometry of Fano direct products}\\
Izvestiya: Mathematics \textbf{69} (2005), 1225--1255%

\smallskip

\bibitem{Rie80}
M.\,Reid, \emph{Canonical $3$-folds}\\
Journées de G\`eometrie Alg\`ebrique d'Angers (1980), 273--310%

\smallskip

\bibitem{RoTh06}
J.\,Ross, R.\,Thomas, \emph{An obstruction to the existence of constant scalar curvature K\"ahler metrics}\\
Journal of Differential Geometry \textbf{72} (2006), 429--466%

\smallskip

\bibitem{Rub08}
Y.\,Rubinstein, \emph{Some discretizations of geometric evolution equations\\ and the Ricci iteration on the space of K\"ahler metrics}\\
Advances in Mathematics \textbf{218} (2008), 1526--1565%

\smallskip

\bibitem{Sho00}
V.\,Shokurov, \emph{Complements on surfaces}\\
Journal of Mathematical Sciences \textbf{102} (2000), 3876--3932 %

\smallskip

\bibitem{Spa07}
J.\,Sparks, \emph{New results in Sasaki-Einstein geometry}\\
arXiv:math/0701518 (2007)

\smallskip


\bibitem{Ti87}
G.\,Tian, \emph{On K\"ahler--Einstein metrics on certain K\"ahler manifolds with $c_{1}(M)>0$}\\
Inventiones Mathematicae \textbf{89} (1987), 225--246%

\smallskip

\bibitem{Ti90}
G.\,Tian, \emph{On Calabi's conjecture for complex surfaces with positive first Chern class}\\
Inventiones Mathematicae \textbf{101} (1990), 101--172%

\smallskip

\bibitem{Ti90b}
G.\,Tian, \emph{On a set of polarized K\"ahler metrics on algebraic manifolds}\\
Journal of Diffe\-rential Geometry \textbf{32} (1990), 99--130%

\smallskip

\bibitem{Ti97}
G.\,Tian, \emph{K\"ahler--Einstein metrics with positive scalar curvature}\\
Inventiones Mathematicae \textbf{130} (1997), 1--37%

\smallskip


\bibitem{TiYa87}
G.\,Tian, S.-T.\,Yau, \emph{K\"ahler--Einstein metrics metrics on
complex surfaces with $C_{1}>0$} Communications in Mathematical Physics  \textbf{112} (1987), 175--203%

\smallskip

\bibitem{YauYu03}
S.\,S.-T.\,Yau, Y.\,Yu, \emph{Classification of 3-dimensional isolated rational hypersurface singularities with $\mathbb{C}^*$-action}\\
arXiv:math/0303302 (2003)

\smallskip

\bibitem{Zh06}
Q.\,Zhang, \emph{Rational connectedness of log $\mathbb{Q}$-Fano varietiess}\\
Journal fur die Reine und Angewandte Mathematik \textbf{590} (2006), 131--142%

\end{thebibliography}
\end{document}